\documentclass[10pt]{amsart}
\usepackage{enumitem}

\usepackage{graphicx}
\usepackage{geometry}
\usepackage{appendix}
\usepackage{color}
\usepackage{tikz-cd}

\definecolor{refblue}{RGB}{0, 0, 153}
\definecolor{citegreen}{RGB}{0, 115, 0}
\definecolor{linkred}{RGB}{191, 26, 61}

\usepackage{leftidx}
\usepackage{graphicx}

\usepackage[all]{xy}
\usepackage{amssymb}
\usepackage{setspace}
\usepackage{stmaryrd}
\usepackage{amsthm}
\usepackage{colonequals}
\usepackage[colorlinks,linkcolor=refblue,citecolor=citegreen,urlcolor=linkred,bookmarks=false,hypertexnames=true]{hyperref} 
\usepackage[nameinlink]{cleveref}

\usepackage[backend=biber, doi=false, url=false, isbn=false, maxbibnames=50, style=alphabetic, giveninits=true]{biblatex}
\renewbibmacro{in:}{}

\newtheorem{theorem}{Theorem}[section]
\newtheorem{lemma}[theorem]{Lemma}
\newtheorem{conjecture}[theorem]{Conjecture}
\newtheorem{proposition}[theorem]{Proposition}
\newtheorem{corollary}[theorem]{Corollary}
\newtheorem*{corollary*}{Corollary}
\newtheorem{construction}[theorem]{Construction}

\newtheorem*{conjecture*}{Conjecture}

\newtheorem{maintheorem}{Theorem}

\newcounter{mainconj}
\newtheorem{mainconj}[mainconj]{Conjecture}

\theoremstyle{definition}
\newtheorem{definition}[theorem]{Definition}
\newtheorem*{definition*}{Definition}
\newtheorem{example}[theorem]{Example}

\theoremstyle{remark}
\newtheorem{remark}[theorem]{Remark}

\numberwithin{equation}{section}

\newcommand{\stackcite}[1]{\cite[\href{https://stacks.math.columbia.edu/tag/#1}{#1}]{stk}}

\usepackage{tikz}
\usetikzlibrary{shapes.geometric, arrows}

\newcommand{\bbA}{\mathbb A}

\newcommand{\C}{\mathbb C}

\newcommand{\G}{\mathbb G}

\newcommand{\bbN}{\mathbb N}

\newcommand{\Q}{\mathbb Q}
\newcommand{\R}{\mathbb R}

\newcommand{\bw}{\mathbf{w}}

\newcommand{\Z}{\mathbb Z}

\newcommand{\scrL}{\mathscr L}

\newcommand{\cA}{\mathcal{A}}

\newcommand{\cC}{\mathcal{C}}

\newcommand{\cE}{\mathcal{E}}
\newcommand{\cF}{\mathcal{F}}
\newcommand{\cG}{\mathcal{G}}
\newcommand{\cH}{\mathcal{H}}

\newcommand{\cM}{\mathcal{M}}

\newcommand{\cO}{\mathcal{O}}

\newcommand{\cQ}{\mathcal{Q}}
\newcommand{\cR}{\mathcal{R}}
\newcommand{\cS}{\mathcal{S}}
\newcommand{\cT}{\mathcal{T}}

\newcommand{\cV}{\mathcal{V}}
\newcommand{\cW}{\mathcal{W}}
\newcommand{\cX}{\mathcal{X}}

\newcommand{\cZ}{\mathcal{Z}}

\newcommand{\frakS}{\mathfrak S}

\newcommand{\frakb}{\mathfrak b}

\newcommand{\frakg}{\mathfrak g}

\newcommand{\frakm}{\mathfrak m}

\newcommand{\frakt}{\mathfrak t}

\newcommand{\bB}{\mathbf{B}}

\newcommand{\bG}{\mathbf{G}}
\newcommand{\bT}{\mathbf{T}}

\newcommand{\GIT}{/\hspace{-1mm}/}

\newcommand{\Fil}{\mathrm{Fil}}

\newcommand{\HT}{\mathrm{HT}}

\newcommand{\GL}{\operatorname{GL}} 
\newcommand{\SL}{\operatorname{SL}} 
\newcommand{\SO}{\operatorname{SO}} 
\newcommand{\Sp}{\operatorname{Sp}} 
\newcommand{\GSp}{\operatorname{GSp}} 
\newcommand{\Gm}{\operatorname{\G}_{\mathrm m}} 

\newcommand{\co}{\colon} 

\newcommand{\Aff}{\operatorname{Aff}} 
\newcommand{\CRing}{\operatorname{CRing}} 
\newcommand{\Gpd}{\operatorname{Gpd}} 
\newcommand{\Rep}{\operatorname{Rep}} 
\newcommand{\Set}{\operatorname{\mathrm{Set}}} 

\newcommand{\Spec}{\operatorname{Spec}} 
\newcommand{\Spf}{\operatorname{Spf}} 
\newcommand{\Spm}{\operatorname{Spm}} 

\newcommand{\gr}{\operatorname{gr}} 
\newcommand{\Gr}{\operatorname{Gr}} 

\newcommand{\ab}{\operatorname{ab}} 
\newcommand{\ad}{\operatorname{ad}} 
\newcommand{\Ad}{\operatorname{Ad}} 
\newcommand{\Art}{\mathfrak A} 
\newcommand{\Aut}{\operatorname{Aut}} 
\newcommand{\coker}{\operatorname{coker}} 
\newcommand{\colim}{\mathop{\mathstrut\rm colim}\limits} 
\newcommand{\cont}{\operatorname{cont}} 
\newcommand{\End}{\operatorname{End}} 
\newcommand{\Gal}{\operatorname{Gal}} 
\newcommand{\Hom}{\operatorname{Hom}} 
\newcommand{\id}{\operatorname{id}} 
\newcommand{\im}{\operatorname{im}} 
\newcommand{\Lie}{\operatorname{Lie}} 
\newcommand{\Pic}{\operatorname{Pic}} 
\newcommand{\pr}{\operatorname{pr}} 
\newcommand{\rank}{\operatorname{rank}} 
\newcommand{\Res}{\mathrm{Res}}
\newcommand{\Tot}{\operatorname{Tot}} 
\newcommand{\tr}{\operatorname{tr}} 
\newcommand{\red}{\operatorname{red}} 
\newcommand{\et}{\operatorname{\acute{e}t}} 
\newcommand{\eqto}{\xrightarrow{\sim}} 
\newcommand{\rhobar}{\overline\rho}
\newcommand{\Qp}{{\Q}_p}
\newcommand{\Zp}{{\Z}_p}
\newcommand{\Qpbar}{\overline\Q_p}

\newcommand{\rig}{\mathrm{rig}}
\newcommand{\tri}{\mathrm{tri}}
\newcommand{\vreg}{\mathrm{vreg}}
\newcommand{\cris}{\mathrm{cris}}

\newcommand{\cyc}{\mathrm{cyc}}
\newcommand{\dR}{\mathrm{dR}}
\newcommand{\pdR}{\mathrm{pdR}}
\newcommand{\Ga}{\mathbb G_{\mathrm a}}
\newcommand{\triv}{\mathrm{triv}}
\newcommand{\ver}{{\mathrm{ver}}}
\newcommand{\Vect}{\mathrm{Vect}}
\newcommand{\br}[1]{\llbracket #1\rrbracket}
\renewcommand{\hyphen}{\text{-}}
\newcommand{\Tors}{\mathrm{Tors}}
\newcommand{\Ru}{\mathrm R_{\mathrm u}}
\newcommand{\Mtri}{\mathcal M^{\triangle}}
\newcommand{\Wtri}{W^{\triangle}}

\newcommand{\GK}{{\cG_K}}

\newcommand{\BdR}{B_{\dR}}

\newcommand{\unr}{\mathrm{unr}}

\newcommand{\PG}{{(\varphi,\Gamma_K)}}
\newcommand{\GPG}{{\gG\text{-}(\varphi,\Gamma_K)}}

\newcommand{\PGcat}{{\Phi\Gamma}}

\newcommand{\uHom}{\underline{\Hom}}
\newcommand{\uAut}{\underline{\Aut}}
\newcommand{\uk}{\underline{k}}

\newcommand{\vareps}{{\varepsilon}}

\newcommand{\wh}{\widehat}
\newcommand{\ovl}{\overline}
\newcommand{\wtl}{\widetilde}

\newcommand{\sq}{\square}

\newcommand{\xto}{\xrightarrow}
\newcommand{\into}{\hookrightarrow}
\newcommand{\onto}{\twoheadrightarrow}
\newcommand{\Cont}{\mathrm{Cont}}
\newcommand{\Map}{\mathrm{Map}}
\newcommand{\wt}{\mathrm{wt}}
\newcommand{\fil}{\mathrm{fil}}

\newcommand{\Sen}{\mathrm{Sen}}

\newcommand{\Lift}{\mathrm{Lift}}
\newcommand{\rec}{\mathrm{rec}}
\newcommand{\reg}{\mathrm{reg}}
\newcommand{\irreg}{\mathrm{irreg}}
\newcommand{\alg}{\mathrm{alg}}
\newcommand{\der}{\mathrm{der}}

\newcommand{\gtilde}{\widetilde{\mathfrak g}}
\newcommand{\gth}{\widehat{\widetilde{\mathfrak g}}}
\newcommand{\that}{\widehat{\mathfrak t}}

\newcommand{\gG}{\mathsf G}
\newcommand{\gH}{\mathsf H}
\newcommand{\gB}{\mathsf B}
\newcommand{\gM}{\mathsf M}

\newcommand{\gT}{\mathsf T}

\newcommand{\gU}{\mathsf U}
\newcommand{\gP}{\mathsf P}

\newcommand{\wtimes}{\widehat\otimes}
\newcommand{\sic}{\mathrm{sc}}
\newcommand{\gW}{\mathsf W}
\newcommand{\bW}{\mathbf W}

\newcommand{\Dtri}{D^{\triangle}}
\newcommand{\sat}{\mathrm{sat}}
\newcommand{\sss}{\mathrm{ss}}
\newcommand{\wdelta}{{w_\delta}}
\newcommand{\wsat}{{w_{\mathrm{sat}}}}
\newcommand{\adm}{\mathrm{adm}}
\newcommand{\Frac}{\mathrm{Frac}}
\newcommand{\Lbar}{{\overline{L}}}
\newcommand{\Rig}{\mathrm{Rig}}
\newenvironment{enum}{
\begin{enumerate}
  \setlength\itemsep{0pt}
  \setlength{\parskip}{0pt}
  \setlength{\parsep}{0pt}
}{\end{enumerate}}

\usepackage{microtype}

\usepackage{mathrsfs}

\begin{document}

\title[The trianguline variety for reductive groups]{The trianguline variety for reductive groups}

\author[A.~Conti]{Andrea Conti}
\address{Andrea Conti, IWR, Heidelberg University, Heidelberg, Germany}
\email{contiand@gmail.com, andrea.conti@iwr.uni-heidelberg.de}
\thanks{The first author is funded by the Deutsche Forschungsgemeinschaft (DFG, German Research Foundation) - Project-ID 444845124 – TRR 326.}

\author[M.~Moakher]{Mohamed Moakher} 
 \address{Mohamed Moakher, Department of Mathematics, University of Pittsburgh, Pittsburgh, USA}
 \email{mom224@pitt.edu}

\author{Julian Quast}
 \address{Julian Quast, Faculty of Mathematics, University of Duisburg-Essen, Essen, Germany}
\email{julian.quast@uni-due.de, me@julianquast.de}
\thanks{The third author is funded by the Deutsche Forschungsgemeinschaft (DFG, German Research Foundation) – Project-ID 444845124 – TRR 326 and Project-ID 517234220}

\subjclass[2020]{Primary 14J10,	13A50; Secondary 14L24, 16R10, 14M35}

\date{\today}


\keywords{Trianguline, crystalline, almost de Rham representations, reductive groups, $(\varphi,\Gamma)$-modules, $p$-adic deformation}

\begin{abstract} 
    We study the trianguline variety for split connected reductive groups. 
    We generalize a theorem of Breuil, Hellmann, and Schraen about its local structure, establishing smoothness over the loci determined by various regularity conditions on the triangulation parameter, and normality at certain points outside of these smooth loci.
    Along the way, we prove a crystallinity criterion for $\PG$-modules with $\gG$-structure. 
\end{abstract}

\maketitle




\setcounter{tocdepth}{1}

\tableofcontents

\section*{Introduction}
\label{sec_intro}

Let $p$ be a prime. 
Let $K/\Qp$ be a finite extension, with fixed algebraic closure $\overline{K}$, and absolute Galois group $\GK := \Gal(\overline{K}/K)$. 
Emerton's proof of the overconvergent Fontaine--Mazur conjecture \cite[Proposition 1.1.2]{Emertonlocglob} shows that, up to a cyclotomic twist, a global Galois representation $\rho:\Gal(\ovl\Q/\Q)\to\GL_2(\Qpbar)$ is attached to a $p$-adic overconvergent eigenform if and only if it is almost everywhere unramified and \emph{trianguline} at $p$ in the sense of Colmez \cite{ColmezTri}, i.e., the associated cyclotomic $(\varphi,\Gamma)$-module is an extension of two rank-one $(\varphi,\Gamma)$-modules. Those $\rho$ that are crystalline at $p$ correspond to classical eigenforms.  Thus, after restriction to a decomposition group at $p$, crystalline representations appear naturally inside families of trianguline representations. Moreover, one can deduce geometric properties of the eigencurve from properties of the (local) trianguline deformation spaces.

When $\GL_2$ is replaced with $\GL_n$, there is an obvious notion of a trianguline representation $\rho\colon \GK \to\GL_n(\Qpbar)$, namely that the associated $(\varphi,\Gamma_K)$-module is obtained by successive extensions of rank-one objects. The eigenvariety of a unitary group split at $p$ contains a Zariski-dense set of classical points, whose associated Galois representation is crystalline (hence trianguline) at $p$, so that the results of Kedlaya--Pottharst--Xiao \cite{KPX} allow one to show that the Galois representation is trianguline at $p$ everywhere on the eigenvariety. In the series of works \cite{BHSmodulaire,bhssmooth,BHS19} Breuil, Hellmann and Schraen exploit this fact to describe some properties of such eigenvarieties via the \emph{trianguline variety}. This is a  rigid analytic variety that parametrizes pairs consisting of a trianguline Galois representation and the parameter of a triangulation, and which originates in the work of Kisin \cite{Kis03}. 
The local geometry of such a variety is described by an explicit scheme constructed in the spirit of the Grothendieck--Springer resolutions of singularities.

When $\GL_n$ is replaced by an arbitrary split connected reductive group $\gG$, de Daruvar \cite{dedar2020} proposed a Tannakian definition of trianguline representations, and used it to define an analogue of the trianguline variety of Breuil--Hellmann--Schraen, and to establish several of its basic properties. However, his results rely on a number of technical assumptions, and do not provide a description of a local model of the trianguline variety.

In this work, we extend the study of the $\GL_n$-trianguline variety carried out in \cite{BHS19} to the case of an arbitrary split connected reductive group. Our approach builds in part on \cite{dedar2020}, but we are able to remove the technical assumptions imposed there. We also generalize the local results in \cite{BHS19}, describing in particular the local geometry of the trianguline variety in terms of Grothendieck--Springer resolutions. 



\subsection*{Extensions of $\PG$-modules}
Let $L/\Qp$ be a finite extension with ring of integers $\cO_L$ and residue field $k$.
We assume that $L$ contains the Galois closure of $K$ in $\overline{K}$.
Let $\gG$ be a split connected reductive group over $\cO_L$ with Borel subgroup $\gB$ and fiberwise maximal torus $\gT$ over $\cO_L$. We let $\gU$ be the unipotent radical of $\gB$.

 We let $\Gamma_K=\Gal(K(\mu_{p^\infty})/K)$, for $\mu_{p^\infty}$ the group of $p$-power roots of unity in $\overline{K}$, and work with the category $\PGcat_{K,L}^+$ of cyclotomic $(\varphi,\Gamma_K)$-modules over the Robba ring $\cR_{K,L}$. 
We say that a Galois representation $\rho\colon \GK\to \gG(L)$ is \emph{trianguline} if the associated $\PG$-module,  which we can see as a fiber functor $\Rep_{L}(\gG)\to \PGcat_{K,L}^+$, factors through $\Rep_L(\gB)$ after possibly extending $L$. The parameter $\delta\colon \gT^\vee(K) \to L^\times$ of the triangulation, where $\gT^\vee$ is the dual torus, is obtained by restricting the fiber functor from $\Rep_L(\gB)$ to $\Rep_L(\gT)$.

Consider a continuous representation $\rhobar : \GK\to \gG(k)$, and
let $R^{\square}_{\rhobar}$ be the universal framed deformation ring of $\rhobar$ with coefficients in $\cO_L$ as in \cite{defT, defG}. 
Let $\cX_{\rhobar}$ be the rigid generic fiber of $\Spf(R^{\square}_{\rhobar})$. Let $\cT^{\gT}$ be the rigid analytic space parametrizing continuous characters of $\gT^{\vee}(K)$ and let $\cT_{0,\gB}^{\gT}$ be the subspace of those which are \emph{$\gB$-regular} in the sense of \Cref{def_Breg}.
We denote by $U_{\tri}(\rhobar) \subseteq \cX_{\rhobar} \times \cT^{\gT}$ the subset of pairs $(\rho, \delta)$ such that $\delta\in \cT_{0,\gB}^{\gT}$, and $\rho$ admits a triangulation with parameter $\delta$.

The \emph{trianguline variety} $X_{\tri}(\rhobar)$ is defined as the Zariski closure of $U_{\tri}(\rhobar)$, endowed with the reduced rigid analytic structure. In order to investigate its geometric properties, the standard approach is to introduce a larger space $\cS_{\gB}$ of \emph{$\gB$-regular rigidified $\gB$-$\PG$-modules}, where in addition to the data corresponding to a triangulation, one also specifies a ``rigidification'' of the corresponding $\gT$-$\PG$-module (see \Cref{sec_SB}). Our first main result establishes the geometric structure of this space.

\begin{maintheorem}[\Cref{SB_representable}]\label{mainA}
The space $\cS_\gB$ is representable by a rigid analytic space over $L$. Moreover, it is a vector bundle, in particular smooth, of relative dimension $\dim \gU \cdot [K:\Qp]$ over $\cT_{0,\gB}^{\gT}$.
\end{maintheorem}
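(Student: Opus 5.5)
We will proceed by dévissage along a filtration of the unipotent radical $\gU$ by normal subgroups with abelian (in fact, vector-group) graded pieces, mirroring the $\GL_n$ argument of \cite{BHS19} where one adds one rank-one step at a time. Concretely, I take the descending central series $\gU=\gU_1\supset\gU_2\supset\dots\supset\gU_{m+1}=1$ given by root height, so that each $\gU_i/\gU_{i+1}$ is a vector group, normal in $\gB$, on which $\gT$ acts through the roots of height $i$. Set $\gB_i:=\gB/\gU_{i+1}$, so $\gB_0=\gT$ and $\gB_m=\gB$. Let $\cS_i$ be the functor of $\gB$-regular rigidified $\gB_i$-$\PG$-modules, i.e. a $\gB_i$-$\PG$-module over $\cR_{K,A}$ together with a rigidification of its pushout to $\gT$. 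The claim is proved by induction on $i$: each $\cS_i$ is representable and is a vector bundle over $\cS_{i-1}$ of rank $\dim(\gU_i/\gU_{i+1})\cdot[K:\Qp]$. The base case $\cS_0\cong\cT^{\gT}_{0,\gB}$ holds because a rigidified $\gT$-$\PG$-module is the same thing as its parameter $\delta$, via rank-one $(\varphi,\Gamma)$-modules $\cR_{K,A}(\delta)$ for $\gT$ split.

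For the inductive step, fix $D_{i-1}\in\cS_{i-1}(A)$. Lifts of $D_{i-1}$ to a $\gB_i$-torsor are governed by nonabelian cohomology with coefficients in the twisted vector group $V:=(\gU_i/\gU_{i+1})_{D_{i-1}}$. The extension $1\to\gU_i/\gU_{i+1}\to\gB_i\to\gB_{i-1}\to1$ is central-by-abelian enough that, because $\gB_i\to\gB_{i-1}$ admits a splitting over $\gT$ and the rigidification is fixed, the set of lifts is a torsor under $H^1_{\PG}(V)$. The obstruction lies in $H^2_{\PG}(V)$, and automorphisms of a lift that are trivial on $D_{i-1}$ come from $H^0_{\PG}(V)$. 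Since $\gT$ acts on $\gU_i/\gU_{i+1}$ through roots, $V$ is, as a $(\varphi,\Gamma)$-module, a direct sum of rank-one objects $\cR_{K,A}(\alpha(\delta))$ over positive roots $\alpha$ of height $i$. Here $\alpha(\delta)$ denotes $\delta$ composed with the coroot $\alpha^\vee\colon\G_m\to\gT^\vee$ (up to the sign convention of \Cref{def_Breg}). Note that the twist depends only on the $\gT$-part of $D_{i-1}$, because $\gU$ acts trivially on the graded piece. The $\gB$-regularity condition of \Cref{def_Breg} is designed to exclude $\alpha(\delta)$ of the forms $z^{-\mathbf k}$ and $\varepsilon z^{\mathbf k}$. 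By the results of \cite{KPX} on cohomology of rank-one modules over affinoids, we get $H^0=H^2=0$, and $H^1$ is locally free of rank $[K:\Qp]$ with formation commuting with base change. Hence the lifts are unobstructed and rigid (there are no extra automorphisms, which is what makes the functor a sheaf rather than a stack), and the fibre is an $H^1(V)$-torsor.

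To get representability, we use that $H^1_{\PG}$ of the family $\bigoplus_\alpha\cR(\alpha(\delta))$ over $\cT^{\gT}_{0,\gB}$ is a coherent, locally free sheaf $\cE_i$ of rank $\dim(\gU_i/\gU_{i+1})[K:\Qp]$ by \cite{KPX}, compatible with base change. It follows that $\cS_i\to\cS_{i-1}$ is a torsor under the pullback of the vector bundle $\mathbb V(\cE_i)$. Torsors under vector bundles on (quasi-Stein/affinoid-covered) rigid spaces are representable, since they are locally trivial in the analytic topology: $H^1$ of a coherent sheaf vanishes on affinoids. So $\cS_i$ is representable and is étale-locally, indeed analytically locally, isomorphic to $\cS_{i-1}\times\bbA^{r_i}$. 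A canonical trivialisation exists globally because the $\gT$-splitting of $\gB_i$ gives a zero section. Moreover, the composition of vector-bundle torsors over a base on which all the $\cE_j$ are pulled back from $\cT^{\gT}_{0,\gB}$ is again a vector bundle. Summing ranks yields $\dim\gU\cdot[K:\Qp]$, and smoothness follows.

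The main obstacle I expect is the inductive step's identification of lifts with $H^1$: for non-abelian $\gB_i$ one must check carefully that the twisted form $V$ depends only on the torus part, and that the torsor structure is well defined functorially in $A$. One must also justify the passage from "torsor under a vector bundle" to "vector bundle" globally. If the canonical zero section argument fails, I would instead argue locally on $\cT^{\gT}_{0,\gB}$ and conclude that $\cS_\gB$ is a successive affine-bundle, which still gives smoothness and the dimension.
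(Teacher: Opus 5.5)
Your strategy is essentially the paper's. The paper also inducts along the central series of $\gU$ (\Cref{better_unipotent_H1}), and each of its steps is exactly your extension $\gB_i\to\gB_{i-1}$ with abelian kernel, twisted only through $\gT$. As in your plan, it shows that lifts of a fixed $D_{i-1}$ form a torsor under $H^1_{\varphi,\Gamma_K}(V)$. Freeness and rigidity come from $H^0=0$, existence of lifts from $H^2=0$, and the rank and base change from \Cref{coh_char_A}. Two points need correcting, though.

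First, the claimed canonical zero section is false for $i\ge 2$. The $\gT$-splitting gives a section of $\cS_i\to\cS_0$, not of $\cS_i\to\cS_{i-1}$, so it only produces lifts over the split points of $\cS_{i-1}$. Lifting a non-split $D_{i-1}$ canonically would need a group-theoretic section of $\gB_i\to\gB_{i-1}$, and this does not exist in general. Already for $\GL_3$, a section of $\gB\to\gB/Z(\gU)$ would split the Heisenberg group $\gU\to\gU/Z(\gU)$ and force $\gU$ to be abelian. This is exactly the restriction in de Daruvar's approach that the paper avoids. For the same reason, an iterated torsor under vector bundles is not automatically a vector bundle.

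Your fallback is the correct argument, and it is what the paper does. Work over an affinoid $\Sp(A)\subseteq\cT^{\gT}_{0,\gB}$. Inductively, $\cS_{i-1}$ restricted to $\Sp(A)$ is a vector bundle over an affinoid, hence quasi-Stein. So the torsor $\cS_i\to\cS_{i-1}$ under the pullback of $\mathbb V(\cE_i)$ is trivial by Kiehl's Theorem B. This gives a non-canonical isomorphism of $\cS_\gB$ restricted to $\Sp(A)$ with a vector bundle of rank $\dim\gU\cdot[K:\Qp]$, and one then glues.

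Second, you take for granted that the obstruction to lifting along the non-split extension $\gB_i\to\gB_{i-1}$ lies in $H^2_{\varphi,\Gamma_K}(V)$. This is not formal in the $(\varphi,\Gamma_K)$-setting and is where the paper's real work lies (\Cref{sec_obs_thy}). There it writes $\gB_i\cong(\gU_i/\gU_{i+1})\times\gB_{i-1}$ via a scheme-theoretic section, encodes the multiplication by an algebraic $2$-cocycle, and from it builds a $2$-cocycle in the standard $(\varphi,\Gamma)$-complex. It then uses the comparison of the standard complex with the Herr complex to conclude that this class vanishes when $H^2=0$.
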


\Cref{mainA} was proved by de Daruvar under the technical hypothesis that $p > 2$ and $\gG$ has no simple factors of type $G_2$, $F_4$ or $E_8$ (see \cite[Theorem 5.10]{dedar2020}). The main difficulty is handling non-abelian extension problems of $\PG$-modules. 
We give a new proof  inspired by the construction of the Selmer varieties in the Chabauty--Kim method \cite{Kim}. It involves an induction on the central series of the unipotent radical $\gU$, and allows us to remove all of these hypotheses.


For a summary of the proof of \Cref{mainA}, we refer to the end of the introduction.

\subsection*{Regularity conditions} We are interested in regularity conditions on the parameter $\delta$ that guarantee the uniqueness of a triangulation of this given parameter (see \Cref{sec:regularity}). Unfortunately, while we expect $\gB$-regularity to be sufficient for this purpose, we do not currently know how to prove it. 

We therefore consider conditions of two kinds. The first involves choosing a finite set $\Lambda\subseteq X^*_+(\gT^{\der})$ of $\Q$-generators of $X^*_+(\gT^{\der})_\Q$ and requiring that $\delta\circ(\mu-\lambda)^{\vee}$ be a regular character for each $\lambda\in \Lambda$ and each weight $\mu\neq \lambda$ appearing in the representation $V_{\lambda}$ of highest weight $\lambda$ (see \Cref{def_reg}); we call this $\Lambda$-regularity. Under such condition, uniqueness of a triangulation is proven in \Cref{uniqueness_Lambda_1} using the Plücker datum introduced below. When $\Lambda$ is taken to be the set of quasi-fundamental weights, one recovers the notion of very regularity used in \cite{dedar2020}. Another useful choice is the \emph{Hilbert basis} $\frakS$ of $X^*_+(\gT^{\der})$, i.e. its minimal generating set as a monoid. 

The second possibility is to consider a carefully chosen faithful representation $r:\gG\hookrightarrow\GL_m$ and transport to $\gG$ a condition on the parameter that is sufficient for uniqueness of triangulations over $\GL_m$ (see \Cref{def:rregular} and \Cref{uniqueness_r_1}); we call this $r$-regularity. We make such a condition explicit for several classical groups in \Cref{ex:rregular}. In general, we expect a form of ``$\gB$-regularity'' to be sufficient for the uniqueness of a triangulation (see \Cref{conj_Ger}), and we verify this for the classical groups $\Sp_{2n}$ and $\GSp_{2n}$. 

As an immediate consequence of our study of $\cS_\gB$, we deduce that the trianguline variety $X_{\tri}(\rhobar)$ can equivalently be defined as the closure of the points satisfying a stronger regularity condition on the parameter, if it is given by a Zariski dense Zariski open subset of $\cT_{0,\gB}^{\gT}$ (\Cref{independence_of_regularity_condition}). This applies in particular to the two kinds of regularity conditions introduced above. To the best of our knowledge, this argument is new even for $\gG = \GL_n$.
In particular, $X_{\tri}(\rhobar)$ coincides with the trianguline variety of \cite{BHSmodulaire} for $\gG = \GL_n$ and \cite{dedar2020} for general $\gG$.

\subsection*{The smooth locus of the trianguline variety}
Inside of the space $\cS_{\gB}$, we can identify the locus $\cS_{\gB}(\rhobar)$ of deformations of our fixed $\rhobar$. It comes equipped with a surjection $\pi_{\rhobar} : \cS_{\gB}(\rhobar) \to U_{\tri}(\rhobar)$, that allows us to study the geometry of $X_{\tri}(\rhobar)$ via that of $\cS_{\gB}(\rhobar)$. 
We write $U^*_{\tri}(\rhobar) \subset U_{\tri}(\rhobar)$ for the subset of points for which $\delta$ satisfies one of the regularity conditions mentioned above. We write $\cS^{*}_{\gB}(\rhobar) = \pi_{\rhobar}^{-1}(U^*_{\tri}(\rhobar))$, and denote by $\omega' \colon X_{\tri}(\rhobar) \to \cT^{\gT}$ the natural projection.

\begin{maintheorem}[{\Cref{thm_equidimensional}}]\label{mainB} Assume that $\cS^{*}_{\gB}(\rhobar) \neq \emptyset$. 
\begin{enum}
    \item $X_{\tri}(\rhobar)$ is equidimensional of dimension $\dim \gG_L + [K:\Qp] \cdot \dim \gB_L$.
    \item $\pi_{\rhobar}$ is smooth of relative dimension $\dim \gT_L$.
    \item $U^*_{\tri}(\rhobar)$ is smooth and Zariski open in $X_{\tri}(\rhobar)$ and $\omega'|_{U_{\tri}^{*}(\rhobar)}$ is smooth.
\end{enum}
\end{maintheorem}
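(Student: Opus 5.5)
The plan follows \cite{BHS19} for $\GL_n$: compute the geometry on $\cS_\gB(\rhobar)$, where \Cref{mainA} gives strong control, and transport it to $U^*_{\tri}(\rhobar)$ via $\pi_{\rhobar}$.

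\emph{Step 1: structure of $\cS_\gB(\rhobar)$.} Recall that $\cS_\gB$ is a vector bundle of rank $\dim\gU\cdot[K:\Qp]$ over $\cT^{\gT}_{0,\gB}$, so it is smooth of dimension $\dim\gT\,(1+[K:\Qp])+\dim\gU\,[K:\Qp]=\dim\gT+[K:\Qp]\dim\gB$. A point of $\cS_\gB$ gives a $\gB$-$\PG$-module, hence by pushout a $\gG$-$\PG$-module. The locus where this module is étale, with a framed Galois representation reducing to $\rhobar$, is an admissible open. The space $\cS_\gB(\rhobar)$ is a $\gG$-torsor over that open, coming from the choice of framing. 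Hence it is smooth of dimension $\dim\gG+\dim\gT+[K:\Qp]\dim\gB$. I expect to argue this by the $\GL_n$ method through $r$-regular étale loci, or directly, using that étaleness of the $\gG$-module is open (Kedlaya--Liu) and that $\gG$-torsors over smooth spaces are smooth.

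\emph{Step 2: fibers of $\pi_{\rhobar}$ over $U^*_{\tri}(\rhobar)$.} Over a point $(\rho,\delta)$ with $\delta$ regular in the chosen sense, the triangulation with parameter $\delta$ is unique by \Cref{uniqueness_Lambda_1} or \Cref{uniqueness_r_1}. The fiber of $\pi_{\rhobar}$ is therefore the set of rigidifications of the fixed $\gT$-$\PG$-module, which is a $\gT$-torsor. I would check that this holds in families. Concretely, $\pi_{\rhobar}$ restricted to $\cS^*_\gB(\rhobar)$ should be a $\gT$-torsor onto its image, with $\gT$ acting freely by changing the rigidification. For this I would use the automorphism computation: the automorphisms of a regular $\gB$-$\PG$-module that fix the underlying $\gG$-module are exactly $\gT$, since $\gB$-regularity kills $H^0$ of the unipotent part. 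This would give (2) over $U^*$, at least after identifying the image with $U^*_{\tri}(\rhobar)$.

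\emph{Step 3: from the torsor to the trianguline variety.} The delicate point is that $U^*_{\tri}(\rhobar)$ is a priori only a subset of $\cX_{\rhobar}\times\cT^{\gT}$. I would show it is the image of a smooth map which is a monomorphism modulo $\gT$, so it is locally closed and smooth of dimension $\dim\gG+[K:\Qp]\dim\gB$. Its Zariski openness in $X_{\tri}(\rhobar)$ would follow from a Kisin-type argument. First, the regularity condition is Zariski open on $\cT^{\gT}$. Second, over the regular locus, the closure of $U_{\tri}$ adds no new points, by the global triangulation results of \cite{KPX} applied through $\Rep(\gG)$; I would apply these to the image under a faithful representation $r$ and pull back via uniqueness. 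This is the step I expect to be the main obstacle, namely showing that points of $X_{\tri}(\rhobar)$ with regular parameter lie in $U_{\tri}$. Equidimensionality (1) then follows. $U^*_{\tri}(\rhobar)$ is nonempty, equidimensional of the stated dimension, and Zariski dense in $X_{\tri}(\rhobar)$ by \Cref{independence_of_regularity_condition}. Every irreducible component of $X_{\tri}(\rhobar)$ therefore meets it and has that dimension.

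\emph{Step 4: smoothness of $\omega'$.} Finally, $\omega'|_{U^*}$ is smooth because $\omega'\circ\pi_{\rhobar}$ is the composite $\cS^*_\gB(\rhobar)\to\cT^{\gT}_{0,\gB}$ of a $\gG$-torsor over an open of a vector bundle. This composite is smooth. Since $\pi_{\rhobar}$ is smooth and surjective, smoothness descends to $\omega'|_{U^*}$.
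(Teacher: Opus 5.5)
Step 1 (which is \Cref{dimension_of_SB}), the pointwise part of Step 2, and Step 4 (granted (2)) are fine. Step 3, however, rests on a false claim. You propose to get Zariski openness by showing that every point of $X_{\tri}(\rhobar)$ with parameter in $\cT^{\gT}_*$ already lies in $U_{\tri}(\rhobar)$. This fails in general. By \Cref{prop:boundary}, a point $x=(\rho,\delta)\in X_{\tri}(\rhobar)$ is only known to be trianguline of some parameter $\wtl\delta$ with $\wtl\delta\delta^{-1}$ algebraic. At critical (companion) points one has $\wtl\delta\neq\delta$, i.e.\ $w_{\sat}\neq w_\delta$, while $\delta$ is perfectly regular. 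For $\GL_n$ these are the points with $w_x\neq w$ of \cite{BHS19}; for $\GL_2$ they are the critical points of the eigencurve. Such points are exactly what \Cref{sec:locirr} studies under \ref{reg_hyp}, which includes $\delta\in\cT^{\gT}_*$. So $U^*_{\tri}(\rhobar)$ is in general strictly smaller than $X_{\tri}(\rhobar)\cap(\cX_{\rhobar}\times\cT^{\gT}_*)$. Since your proof of (1) goes through (3), it does not stand either.

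The second gap is the passage from Step 2 to (2). A free $\gT$-action whose orbits are the fibres of $\pi_{\rhobar}$, or even a monomorphism modulo $\gT$, says nothing about the reduced structure of the Zariski closure $X_{\tri}(\rhobar)$ at $y=\pi_{\rhobar}(x)$. For example, the normalization of a nodal cubic with one preimage of the node removed is a bijective monomorphism from a smooth curve onto a singular one; multiply by $\Gm$ for a torsor version. So ``torsor onto its image, hence locally closed and smooth'' does not follow, even if openness were known.

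The paper instead proves (1) first. On the Zariski open dense $W\subseteq X_{\tri}(\rhobar)$ over which the modification $f$ of \Cref{blowupLambda}/\Cref{blowupr} is an isomorphism, uniqueness of triangulations (\Cref{uniqueness_Lambda_1}, \Cref{uniqueness_r_1}) identifies $\pi_{\rhobar}^{-1}(W)$ with the $\gT$-torsor of rigidifications of the triangulation over $W$. This gives $\dim X_{\tri}(\rhobar)=\dim\cS^*_{\gB}(\rhobar)-\dim\gT_L$ and equidimensionality.

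For (2) the paper compares completed local rings. By \Cref{local_structure_of_SB}, $\wh\cO_{\cS^*_{\gB}(\rhobar),x}\cong R_{\rho_y,\Dtri_y}\br{x_1,\dots,x_d}$, and $R_{\rho_y}\to R_{\rho_y,\Dtri_y}$ is surjective by \Cref{trianguline_def_ring}. Hence $\wh\cO_{X_{\tri}(\rhobar),y}\br{X_1,\dots,X_d}\to\wh\cO_{\cS^*_{\gB}(\rhobar),x}$ is surjective. By (1) the dimensions agree, so it is an isomorphism once $\wh\cO_{X_{\tri}(\rhobar),y}$ is a domain. This unibranchness is the missing idea. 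The paper deduces it from reducedness, the factorization of $\pi_{\rhobar}$ through the normalization of $X_{\tri}(\rhobar)$, and the connectedness of the fibre $\pi_{\rhobar}^{-1}(y)\cong\gT_L$. Parts (3) and (4) are then deduced from (2) as in \cite{BHSmodulaire}, not the other way round.
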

The theorem does not imply the existence of a regular  trianguline lift of $\rhobar$, rather it depends on it. \Cref{mainB} was proved by de Daruvar under the same technical assumptions as \Cref{mainA}, that we remove. 
In particular, this allows us to remove the same assumptions on a result of Fakhruddin--Khare--Patrikis \cite{Fakhruddin_2022} on the existence of lifts of global mod $p$ representations trianguline at $p$. We present this global application in \Cref{sec:application}.

\subsection*{Triangulinity and parameters of points of $X_\tri(\rhobar)$}
For a point $x = (\rho, \delta) \in X_{\tri}(\rhobar) \setminus U_{\tri}(\rhobar)$ it is not a priori clear whether $\rho$ is trianguline and how $\delta$ is related to $\rho$. The following theorem ensures that the trianguline variety deserves its name for any $\gG$.



\begin{maintheorem}[{\Cref{prop:boundary}}]\label{main:boundary}
If $x=(\rho,\delta)\in X_{\tri}(\rhobar)$, then $\rho$ is trianguline of a parameter $\wtl\delta$ such that $\delta^{-1}\wtl\delta$ is algebraic. 
\end{maintheorem}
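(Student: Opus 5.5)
The plan is to reduce to the $\GL_n$ case through Tannakian formalism together with the global triangulation results of Kedlaya--Pottharst--Xiao \cite{KPX}, as in \cite[Theorem 6.3.13]{KPX} and \cite{BHSmodulaire}.

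First I would fix a faithful representation $r\colon \gG\into \GL_m$, or more flexibly the family of irreducible representations $V_\lambda$ for $\lambda$ in a finite generating set $\Lambda$. For each such $\lambda$ I would consider the rank-one sub-object given by the highest weight line, which I will call the Plücker datum. On $U_{\tri}(\rhobar)$, $V_\lambda\circ\rho$ has a $(\varphi,\Gamma_K)$-stable saturated rank-one submodule $\cR_{K,L}(\delta\circ\lambda^\vee)$ coming from the $\gB$-stable highest weight line. The family version of \cite[Theorem 6.3.13]{KPX} (global triangulation over a reduced rigid space on which a Zariski dense set of points carries a triangulation) then gives, over $X_{\tri}(\rhobar)$, for every point $x$ a nonzero map
\[ \cR_{K,L}(\delta_x\circ\lambda^\vee)\to D_{\rig}(V_\lambda\circ\rho_x)\otimes\cR_{K,L}. \]
Its saturation is a rank-one submodule $\cR_{K,L}(\delta_x\circ\lambda^\vee\cdot \prod_\tau \tau^{k_{\lambda,\tau}})$ with $k_{\lambda,\tau}\ge 0$. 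This step is essentially a formal consequence of KPX applied to the representations $V_\lambda\circ\rho$ over a suitable blow-up/normalization of $X_{\tri}(\rhobar)$.

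Second, I would need compatibility among the lines. The line for $V_{\lambda+\mu}$ should be the image of the tensor product of the lines for $V_\lambda$ and $V_\mu$ under the Cartan projection $V_\lambda\otimes V_\mu\to V_{\lambda+\mu}$. Over $U_{\tri}$ this compatibility holds, and by density it persists generically over an irreducible component; at $x$ it holds after saturation. The lines must also satisfy the Plücker relations, which are closed conditions and so pass to the limit pointwise. These data define, for each $\lambda$, a point of $\gG/\gB$ over the Robba ring, i.e.\ a $\gB$-reduction. I would use the characterisation, presumably proved earlier via the Plücker datum for the uniqueness statement, that a compatible system of saturated highest-weight lines in $V_\lambda$ over dominant $\lambda$ is equivalent to a $\gB$-structure on the fiber functor. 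The saturation exponents $k_{\lambda,\tau}$ are additive in $\lambda$ after the saturations are compared. They therefore extend to a cocharacter-valued datum, and the parameter $\wtl\delta$ of the resulting triangulation satisfies $\wtl\delta\circ\lambda^\vee=(\delta\circ\lambda^\vee)\prod_\tau\tau^{k_{\lambda,\tau}}$. Hence $\delta^{-1}\wtl\delta$ is algebraic: it is determined on $\gT^\vee(K)$ by $\gT^\vee$-weights via the embeddings $\tau$. One must extend from $\gT^{\der}$ data to all of $\gT$ using the center. The central part of $\delta$ is already determined by $\det$-type characters, which pass to the limit directly by continuity.

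The main obstacle is the second step. Saturation can fail to be compatible with tensor products and Cartan projections, since saturating the images of two lines does not obviously saturate their product's image. One also has to check that the Plücker relations for saturated lines still give a $\gB$-reduction over $\cR_{K,L}$, not merely over its fraction field. My first attempt would be to work over the fraction field (or after inverting $t$), where the lines clearly define an $\cR_{K,L}[1/t]$-point of $\gG/\gB$. Because $\gG/\gB$ is proper, and saturation is exactly taking the unique extension over the Bézout-type ring $\cR_{K,L}$, this point extends uniquely to an $\cR_{K,L}$-point. Uniqueness then forces $\varphi,\Gamma_K$-stability. The algebraicity of the ratio then follows from the fact that two rank-one modules agreeing after inverting $t$ differ by $\prod_\tau\tau^{k_\tau}$.
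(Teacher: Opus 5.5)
Your architecture matches the paper's proof: apply the global triangulation theorem of \cite{KPX} to finitely many $\eta_D(V_\lambda)$ over a blow-up $X'\to X_{\tri}(\rhobar)$ (\Cref{blowupLambda}), saturate the resulting lines at a point over $x$, check the Plücker relations after inverting $t$, and read off algebraicity from the saturation. The paper does the relation check by the density argument of \Cref{Zariski_dense_set_trick}, using that the cokernel of $u_\mu[\tfrac{1}{t}]$ is locally free. Your passage from $\cR_{K,L}[\tfrac{1}{t}]$ to $\cR_{K,L}$ is equivalent to the paper's appeal to \cite[Proposition 2.2.2]{BC}, so the step you flagged as the main obstacle is the easy part. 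Properness of $\gG/\gB$ is not what makes it work, since $\cR_{K,L}$ is not a valuation ring. What makes it work is the Plücker embedding into the projectivization of a free module over the Bézout ring $\cR_{K,L}$, where a line over $\cR_{K,L}[\tfrac{1}{t}]$ extends uniquely by intersecting, which is just saturation.

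The genuine gap is in your first step. The theorem of \cite{KPX} does not apply to ``a Zariski dense set of points carrying a triangulation''. It requires that on a Zariski dense set of points $y$ the space $\Hom(\cR_{K,\kappa(y)}(\delta_y\circ\lambda^\vee),\eta_{D_y}(V_\lambda))$ be one-dimensional with saturated image. Points of $U_{\tri}(\rhobar)$ only have $\gB$-regular parameters, which constrain $\delta\circ\alpha^\vee$ for positive roots $\alpha$. One-dimensionality instead needs $\delta\circ(\mu-\lambda)^\vee\in\cT_{\reg}$ for every weight $\mu\neq\lambda$ of $V_\lambda$ (\Cref{rank_oner}), and $\mu-\lambda$ is in general a sum of negative roots, not a root, so the Hom can jump. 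You must therefore restrict to the $\Lambda$-regular locus and prove that $U^{\Lambda}_{\tri}(\rhobar)$ is Zariski dense in $X_{\tri}(\rhobar)$. This is \Cref{independence_of_regularity_condition}, and it is not formal. It uses that $\cS_\gB$ is a vector bundle over $\cT^{\gT}_{0,\gB}$ (\Cref{SB_representable}), hence that $\cS_\gB(\rhobar)$ is smooth and each of its connected components meets the preimage of a Zariski open dense subset of parameters. It also uses that $\pi_{\rhobar}$ surjects onto $U_{\tri}(\rhobar)$.

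Second, the choice of $\Lambda$ matters for the conclusion. If $\Lambda$ only spans $X^*_+(\gT^{\der})_\Q$ (e.g.\ the quasi-fundamental weights), your argument gives algebraicity of $(\delta^{-1}\wtl\delta)\circ\chi^\vee$ only for $\chi$ in a finite-index sublattice. Then $\delta^{-1}\wtl\delta$ is algebraic only up to a character of finite order, and additivity of the exponents does not repair this. You need $\Lambda$ to generate $X^*_+(\gT^{\der})$ as a monoid. This is why the paper takes $\Lambda=\frakS$, the Hilbert basis, and hence needs density of the $\frakS$-regular locus specifically. The faithful-representation variant you mention does not remove the issue: it needs the same density argument, and a suitable weight-multiplicity-free $r$ does not exist for $F_4$ or $E_8$. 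Your treatment of the central characters is fine, and more explicit than the paper's.
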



The fact that $\rho$ is trianguline follows from the results of \cite[\S 6]{KPX}. To apply these, it is essential to describe a triangulation $\Dtri$, seen as a fiber functor $\eta_{\Dtri}:\Rep_L(\gB)\to \PGcat^+_{K,L}$, in terms of its \emph{Pl\"ucker datum}. This consists of the data of a saturated rank-one $\PG$-module over $\cR_{K,L}$ inside of $\eta_{\Dtri}(V_\lambda)$ for each irreducible $\gG$-representation $V_\lambda$ of highest weight $\lambda\in X_+^*(\gT)$ (see \Cref{pluckerdatum}). The key point is that this combinatorial datum extends to the boundary of the trianguline variety thanks to \cite[Theorem 6.3.9]{KPX}.
Pl\"ucker data were originally introduced in unpublished work of Drinfeld to give a Tannakian description of the flag variety of $\gG$ (see for instance \cite{Haines}), and they were first applied to the setting of triangulable $\PG$-modules in de Daruvar’s work.

For the description of the parameter $\wtl\delta$, it is necessary to view $X_\tri(\rhobar)$ as the closure of the set of ``$\frakS$-regular'' points; this is justified by our earlier result on the independence of $X_\tri(\rhobar)$ from the chosen regularity condition. 
The advantage of working with $\frakS$-regularity is that the algebraicity of $\delta^{-1}\wtl\delta$ can be recovered from the corresponding result for $\GL_n$, by applying it to finitely many representations of $\gG$ whose highest weights generate the monoid $X^*_+(\gT^{\der})$.


\subsection*{Local geometry outside the smooth locus}
Our goal in the following is to describe the geometry of $X_{\tri}(\rhobar)$ outside of the smooth loci $U_{\tri}^{*}(\rhobar)$. As in \cite{BHS19}, we define in \Cref{sec:Xnew} the reductive group
\begin{align*}
    \bG \colonequals &\Spec(L) \times_{\Spec(\Qp)} \Res_{K/\Qp}\gG_K \cong \underbrace{\gG_L \times \dots \times \gG_L}_{[K:\Qp] \text{ times}}. 
\end{align*}
We define a Borel subgroup $\mathbf B$ and a torus $\mathbf T$ similarly, and the Lie algebras $\frakg,\frakb,\frakt$. 
The group $\bG$ appears naturally when considering weights of characters $\gT^\vee(K)\to L^\times$, e.g. of the parameter of a triangulation. 
We attach to $\bG$ 
an $L$-scheme
$$ X = \{(g_1\bB,g_2\bB,\psi) \in \bG/\bB \times \bG/\bB \times \frakg \mid \Ad(g_1^{-1})\psi, \Ad(g_2^{-1})\psi \in \frakb\}$$
which parametrizes pairs of flags compatible with an element of the Lie algebra of $\bG$. Its irreducible components $X^w$ are indexed by elements $w$ of the Weyl group $\bW$ of $(\bG,\bT)$: each $X^w$ is the Zariski closure of the locus $V^w$ where the two flags are in relative position $w$, that is $(g_1\bB,g_2\bB)\in V^{w}$ if and only if $g_1^{-1}g_2\in \bB w\bB$.
By a result of Bezrukavnikov--Riche \cite[Theorem 2.2.1]{zbMATH06155581}, $X^w$ is Cohen--Macaulay. Using Serre's criterion, Breuil--Hellmann--Schraen showed that $X^w$ is normal \cite[Theorem 2.3.6]{BHS19}.

Let $x=(\rho,\delta)\in X_{\tri}(\rhobar)$.
The local geometry of $X_{\tri}(\rhobar)$ at $x$ can be related to that of an irreducible component of $X$, as we will now explain. By \Cref{main:boundary}, the $\gG$-$\PG$-module $D$ associated to $\rho$ admits a triangulation $\Dtri$ of some parameter $\wtl\delta$. In this situation, we have the following simple characterization of almost de Rham representations.

\begin{proposition}[\Cref{lemmaKPX6313}]\label{propIntro}
The following are equivalent:
\begin{enumerate}[label=(\roman*)]
    \item $\rho$ is Hodge--Tate;
    \item $\rho$ is almost de Rham;
    \item $\delta$ is locally algebraic;
    \item $\wtl\delta$ is locally algebraic.
\end{enumerate}
If any of the above holds, then both $\delta$ and $\wtl\delta$ are the product of a smooth character with an algebraic character whose dual is conjugate to the Hodge--Tate cocharacter $\varpi_\HT$ of $\rho$.
\end{proposition}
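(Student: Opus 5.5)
Throughout, fix $x=(\rho,\delta)\in X_{\tri}(\rhobar)$, let $D$ be the $\gG$-$\PG$-module of $\rho$, and let $\Dtri$ be the triangulation of parameter $\wtl\delta$ provided by \Cref{main:boundary}, so that $\delta^{-1}\wtl\delta$ is algebraic. Since an algebraic character is in particular locally algebraic, (iii)$\Leftrightarrow$(iv) is immediate. The remaining equivalences will be reduced to $\GL_n$ via Tannakian formalism. All four conditions can be tested on finitely many representations: on the side of $\rho$, being Hodge--Tate or almost de Rham can be checked after composing with a faithful representation $r\colon\gG\into\GL_m$. On the side of $\wtl\delta$, local algebraicity of a character of $\gT^\vee(K)$ can be checked after composing with a finite set of cocharacters of $\gT^\vee$ spanning $X_*(\gT^\vee)_\Q=X^*(\gT)_\Q$. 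More conveniently, it suffices to use the characters $\wtl\delta\circ\mu^\vee$ for $\mu$ ranging over the weights of $r$, which span $X^*(\gT)_\Q$ because $r$ is faithful.

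For (iv)$\Rightarrow$(ii) and (i): push the triangulation along $r$. Restricting $\eta_{\Dtri}$ to $r|_{\gB}$ and choosing a $\gB$-stable flag of $r$ with one-dimensional $\gT$-graded pieces gives a triangulation of the $\GL_m$-$\PG$-module $r_*D$. Its rank-one graded pieces are $\cR_{K,L}(\wtl\delta\circ\mu^\vee)$ for the weights $\mu$ of $r$, counted with multiplicity. If every such character is locally algebraic, each graded piece is de Rham. The category of almost de Rham $\PG$-modules is stable under extensions, since $B_{\dR}$-representations with nilpotent Sen-type operator are closed under extension (Fontaine). Hence $r\circ\rho$ is almost de Rham, and by faithfulness so is $\rho$. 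Being almost de Rham implies Hodge--Tate, because the Sen operator is semisimple with integer eigenvalues; here I will use the standard fact that for an almost de Rham representation the Sen weights are integers, and for these extension-of-rank-one objects the Sen operator is diagonalizable. I expect this last point to need care. Concretely, I will invoke the $\GL_n$ statement \cite[Lemma 6.3.13]{KPX}-style argument, namely that a trianguline $\GL_m$-module with locally algebraic parameters is Hodge--Tate iff almost de Rham, rather than reprove it.

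For (i)$\Rightarrow$(iv): if $\rho$ is Hodge--Tate, then $r\circ\rho$ is Hodge--Tate and trianguline with parameters $(\wtl\delta\circ\mu^\vee)_\mu$. By the known $\GL_m$ result (KPX / Berger--Chenevier), each rank-one piece of a Hodge--Tate trianguline module has Hodge--Tate weight of its character equal to an integer. A character of $K^\times$ whose Sen weight is integral is, by the classification of rank-one $\PG$-modules, of the form $x^{\mathbf k}\cdot$(smooth character). Thus each $\wtl\delta\circ\mu^\vee$ is locally algebraic, hence so is $\wtl\delta$ by the spanning remark above. This closes the cycle (i)$\Rightarrow$(iv)$\Rightarrow$(ii)$\Rightarrow$(i).

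For the final assertion, write $\wtl\delta=\delta_{\mathrm{sm}}\cdot\delta_{\alg}$, with $\delta_{\alg}$ algebraic, corresponding to a cocharacter $\nu$ of $\bT$ (one per embedding). The Hodge--Tate cocharacter $\varpi_\HT$ of $\rho$ is determined Tannakially by the Sen weights on all representations. On $V\in\Rep_L(\gG)$, the filtration by $\eta_{\Dtri}$ has graded pieces with weights $\langle\mu,\nu\rangle$ for $\mu$ running over the $\gT$-weights of $V$. Therefore $\varpi_\HT$ and $\nu$ induce the same weight multisets on every representation, so they are $\bW$-conjugate (a cocharacter is determined up to conjugacy by its character on representations). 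The same then holds for $\delta$, because it differs from $\wtl\delta$ by an algebraic character and is therefore again of the form smooth times algebraic. For its algebraic part, I will appeal to the relation in \Cref{main:boundary}, expecting the algebraic twist to be a $\bW$-conjugation. This is the main obstacle: \Cref{main:boundary} only says that $\delta^{-1}\wtl\delta$ is algebraic, so I will need to revisit its proof to extract that the weights of $\delta$ and $\wtl\delta$ differ by a Weyl permutation, as in the $\GL_n$ case \cite[Theorem 6.3.13]{KPX}.
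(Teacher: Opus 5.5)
You handle (iii)$\Leftrightarrow$(iv), (iv)$\Rightarrow$(ii) via extensions of de Rham rank-one pieces, (i)$\Rightarrow$(iv), and the last assertion for $\wtl\delta$ in the same way the paper does, through \Cref{localgpdR} and \Cref{Worbit}. One small fix: you need the weights of the faithful $r$ to generate $X^*(\gT)$ over $\Z$, which they do, and not merely to span $X^*(\gT)_\Q$, because a character whose square is locally algebraic need not be locally algebraic.

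\textbf{First gap: almost de Rham does not imply Hodge--Tate.} For extensions of rank-one objects this implication is false, and no $\GL_m$ statement of the kind you want to cite exists. Take $\gG=\GL_2$ and $\rho(g)=\begin{pmatrix}1&\log\chi^\cyc(g)\\0&1\end{pmatrix}$.
\begin{itemize}
\item It is trianguline of algebraic parameter $(1,1)$.
\item It is almost de Rham, since $e_1$ and $e_2-\log t\cdot e_1$ are $\GK$-invariant in $B_\pdR\otimes\rho$.
\item Its Sen operator is nonzero nilpotent, because $\log\chi^\cyc$ is not a coboundary in $H^1(\GK,\C_p)$. So $\rho$ is not Hodge--Tate.
\end{itemize}
Such points do lie on $X_\tri(\rhobar)$. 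Take $K=\Qp$, $p$ odd and $\rhobar$ trivial. The residually trivial upper-triangular lifts factor through the free pro-$p$ group $\cG_{\Qp}(p)$ of rank $2$, so they form an irreducible polydisc. Those with $\gB$-regular diagonal are Zariski dense in it, hence $(\rho,(1,1))\in X_\tri(\rhobar)$.

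The paper's own route has the same defect: the proof of \Cref{localgpdR} uses that being Hodge--Tate is stable under extensions. What is provable is (i)$\Rightarrow$(ii)$\Leftrightarrow$(iii)$\Leftrightarrow$(iv). Getting back to (i) needs an extra hypothesis, e.g.\ $\varpi_\HT$ regular as in \ref{reg_hyp}. In that case the image of the $\gG$-Sen operator in $\frakt\GIT\bW$ equals that of $\mathrm d\varpi_\HT$ (\Cref{Worbit}), which comes from a regular element of $\frakt$. Hence the Sen operator is semisimple with integral eigenvalues on every representation. The final assertion of the statement only uses (ii), so it is unaffected.

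\textbf{Second gap: the last assertion for $\delta$.} You flag this yourself. The proof of \Cref{prop:boundary} does not supply it: there \cite[Lemma 3.3.4]{BHS19} only gives that each $(\wtl\delta\delta^{-1})\circ\lambda^\vee$, $\lambda\in\frakS$, is algebraic, which says nothing about a Weyl permutation. The missing idea is to interpolate the Sen parameter across the family.
\begin{itemize}
\item On a Zariski-dense set of points $y$ of an affinoid neighbourhood $U$ of $x$, $\delta_y$ is the parameter of an actual triangulation of $\rho_y$.
\item At those points the Sen parameter of $\rho_y$ equals the image of $\wt(\delta_y)$ in $\frakt\GIT\bW$ (\Cref{Worbit}, i.e.\ \cite[Proposition 2.9]{BHSmodulaire}).
\item Both sides are analytic on $U$ (coefficients of characteristic polynomials of Sen operators on the $\eta_D(V)$). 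So the equality holds on all of $U$ and specializes to $x$.
\item Since $\rho_x$ is almost de Rham, its Sen parameter is the image of $\mathrm d\varpi_{\HT,x}$.
\item \Cref{invariantlemmanew} with $A=L$ turns equality in $\frakt\GIT\bW$ into $\bW$-conjugacy of $\wt(\delta_x)$ and $\mathrm d\varpi_{\HT,x}$.
\end{itemize}
Together with local algebraicity of $\delta_x$, this gives the claim.
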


Assume from now on that $\delta$ is locally algebraic and is either $\Lambda$-regular or $r$-regular.  By \Cref{propIntro}, $\delta$ is the product of a smooth character with the dual of a conjugate of $\varpi_\HT$ by an element $\wdelta\in\bW$. Let $\Mtri \colonequals \Dtri[\tfrac{1}{t}]$ be the induced triangulation of the $\gG$-$\PG$-module $\cM \colonequals D[\tfrac{1}{t}]$ over $\cR_{K,L}[\tfrac{1}{t}]$. We consider the almost de Rham $\gG$-$\BdR$-representation $W$ associated to $\cM$, the almost de Rham $\gB$-$\BdR$-representation $\Wtri$ associated to $\Mtri$, and the almost de Rham $\gG$-$\BdR^+$-representation $W^+$ associated to $D$ (see \Cref{secBdRBdRplusnew}). We further assume that the Hodge--Tate cocharacter $\varpi_\HT$ attached to $\rho$ is regular.

Let $X_{\rho,\Mtri}$ be the functor of deformations to Artinian $L$-algebras of $\rho$ and $\Mtri$. 
Given an Artinian $L$-algebra $A$, $X_{\rho,\Mtri}^\sq(A)$ consists of tuples $x_A=(\rho_A,\Mtri_A,\Wtri_A,W_A^+)$ deforming $(\rho,\Mtri,\Wtri,W^+)$ and satisfying the natural compatibility conditions. By choosing a trivialization of the potentially de Rham fiber functor $D_{\pdR}\circ\eta_W\colon \Rep_L(\gG)\to \Vect_L$,
we obtain a framed deformation functor $X_{\rho,\Mtri}^\sq$.
To each $x_A$ we attach:
\begin{itemize}
\item a Hodge--Tate flag $x_{W^+_A,\HT}\in (\bG/\bB)(A)$ as in \Cref{firstflag} by constructing a Plücker datum from the Hodge--Tate filtration and $\varpi_{\HT}$,
\item a flag $x_{\Wtri_A,\tri}\in (\bG/\bB)(A)$  as in \Cref{secondflag}, coming from the triangulation $\Mtri_A$,
\item a nilpotent element $N_{W_A}\in \frakg(A)$ as in \Cref{pdR_charanew}, coming from the $\Ga$-action on $D_{\pdR}\circ \eta_{W_A}$.
\end{itemize}
By \Cref{pointgtilde1} and \Cref{pointgtilde2}, these data determine an $A$-point of $X$:
$$ x_{\pdR,A}=(x_{\Wtri_A,\tri}, x_{W_A^+,\HT}, N_{W_A})\in X(A). $$  
For $\gG=\GL_n$, this recovers the element $w$ of \cite{BHS19}.

The resulting map $X_{\rho,\Mtri}^\sq \to X$ allows us to define the $w_\delta$-component $X_{\rho,\Mtri}^{\sq,w_\delta}$ of $X_{\rho,\Mtri}^\sq$, and we define $X_{\rho,\Mtri}^{w_\delta}$ as its image under the forgetful map $X_{\rho,\Mtri}^\sq\to X_{\rho,\Mtri}$. We prove that there exists a chain of formally smooth morphisms
\begin{align}\label{correspondence}
    \wh{X_{\tri}(\rhobar)}_x \xrightarrow{\cong} X_{\rho, \Mtri}^{w_\delta} \leftarrow X_{\rho, \Mtri}^{\square, w_\delta} \rightarrow X_{D, \Mtri}^{\square, w_\delta} \rightarrow X^{\square, w_\delta}_{W^+, \Wtri} \xrightarrow{\cong} \widehat X^{w_\delta}_{x_\pdR}.
\end{align}
Here $X_{D, \Mtri}^{\square, w_\delta}$ is defined in terms of deforming the $\gG$-$\PG$-module $D$ instead of $\rho$, and $X^{\square, w_\delta}_{W^+, \Wtri}$ is given by deforming $W^+$ together with the induced triangulation $\Wtri$ of $W$. 

For the purposes of this introduction, we say that $x=(\rho,\delta)$ is ``sufficiently regular'' if it satisfies the assumptions listed above. As a consequence of \eqref{correspondence} we have the following.
\begin{maintheorem}[\Cref{three_five_eleven}, \Cref{three_seven_eight}, Local model of the trianguline variety]\label{mainthm:local}
For a sufficiently regular point $x$, the completion of $X_{\tri}(\rhobar)$ at $x$ is pro-represented by a noetherian complete local domain of residue field $L$ and dimension $[K:\Q_p]\cdot (\dim\gG+\dim\gB)$, whose associated formal scheme is  isomorphic to $\wh X_{x_{\pdR}}^{w_\delta}$ up to formally smooth morphisms.
\end{maintheorem}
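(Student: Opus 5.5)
The plan is to deduce the theorem from the chain of formally smooth morphisms \eqref{correspondence}, combined with the geometry of the component $X^{w_\delta}$ of $X$ established by Bezrukavnikov--Riche and Breuil--Hellmann--Schraen. First I will construct each functor in \eqref{correspondence} and show it is pro-representable, or at least has a formally smooth cover by a pro-representable functor. Then I will verify each arrow in turn:
\begin{enumerate}
\item The identification $\wh{X_{\tri}(\rhobar)}_x\cong X_{\rho,\Mtri}^{w_\delta}$ comes from the uniqueness of triangulations under $\Lambda$- or $r$-regularity (\Cref{uniqueness_Lambda_1}, \Cref{uniqueness_r_1}). This uniqueness lets the triangulation $\Mtri$ deform uniquely along $X_{\tri}(\rhobar)$, using the density of $U^*_\tri$ and \Cref{main:boundary}.
\item The framing map $X^{\sq,w_\delta}_{\rho,\Mtri}\to X^{w_\delta}_{\rho,\Mtri}$ is a torsor under the formal group of $\gG$, hence formally smooth.
\item The map to $X^{\sq,w_\delta}_{D,\Mtri}$ forgets from $\rho$ to $D$, which is an equivalence of deformation problems over Artinian $L$-algebras, up to framing.
\item The map $X^{\sq,w_\delta}_{D,\Mtri}\to X^{\sq,w_\delta}_{W^+,\Wtri}$ should be formally smooth. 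Obstructions to lifting $(D,\Mtri)$ given a lift of $(W^+,\Wtri)$ lie in an $H^2$ of $\PG$-modules of the form $\cM^{\tri}$ twisted by $t$-adic modifications, and regularity of $\delta$ makes these vanish. This is a $\gB$-version of the BHS argument via the Beauville--Laszlo type gluing of $\cR_{K,L}[1/t]$-modules and $\BdR^+$-lattices.
\item The last identification $X^{\sq,w_\delta}_{W^+,\Wtri}\cong \wh X^{w_\delta}_{x_\pdR}$ comes from \Cref{pointgtilde1} and \Cref{pointgtilde2}: the almost de Rham data with framing are equivalent to the triple (flag, flag, nilpotent). Because $\varpi_\HT$ is regular, the Hodge--Tate filtration is determined by a full flag, so the map to $X$ is an isomorphism onto the completion, landing in the $w_\delta$-component by construction.
\end{enumerate}

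Next, $X^{w_\delta}$ is normal and Cohen--Macaulay by \cite[Theorem 2.3.6]{BHS19} applied componentwise to $\bG$, hence unibranch at every point. So $\wh X^{w_\delta}_{x_\pdR}$ is the spectrum of a complete local domain: normal excellent local rings have normal, hence domain, completions. Its dimension is $\dim X^{w_\delta}=\dim \bG = [K:\Qp]\dim\gG$, since $X^w$ is the closure of a vector bundle over the $\bG$-orbit of relative position $w$, of dimension $\dim \bG/\bB\cdot 2-\ell(w)+\dim\frakb\cap\Ad(w)\frakb$.

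To transfer these properties back along the chain of formally smooth morphisms, I will use the fact that formally smooth morphisms of complete local noetherian $L$-algebras with residue field $L$ are power series extensions. So being a domain (equivalently, unibranch and irreducible) descends and ascends, and dimensions differ by the relative dimensions, which are tangent-space computations. I will compute these relative dimensions: framing adds $\dim\gG$ at the level of $W$; the $\PG$-to-$\BdR$ step contributes $[K:\Qp]\dim\gB+\dim\gT$ minus corrections via the Euler characteristic formula; and the step $\rho\leftrightarrow D$ adds $\dim \gG$. The bookkeeping must yield $[K:\Qp](\dim\gG+\dim\gB)$. As a check, this matches \Cref{mainB}, since $X_\tri(\rhobar)$ is equidimensional of that dimension and $x$ lies on it. In fact I will use \Cref{mainB}(1) directly, together with irreducibility of the completion, to get the dimension without tracking constants.

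I expect the main obstacle to be the formal smoothness of $X^{\sq,w_\delta}_{D,\Mtri}\to X^{\sq,w_\delta}_{W^+,\Wtri}$ for a general $\gG$. For $\GL_n$ this reduces to cohomology of rank-one objects, but here one must handle non-abelian $\gB$-torsors. My first approach will be to filter $\gU$ by its central series, as in the proof of \Cref{mainA}, reducing the obstruction at each step to $H^2$ of rank-one $\PG$-modules $\cR(\delta\circ\alpha^\vee)$ and their $t$-modifications. Regularity will then force these groups to vanish.
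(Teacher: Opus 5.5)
Your overall architecture (the chain of formally smooth maps, a central-series induction for the key formal smoothness, and normality of $X^{w}$ transported along power-series extensions) is the paper's. Step~1, however, is a genuine gap, and it is the step that carries the actual content of the theorem.

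Uniqueness of triangulations (\Cref{uniqueness_Lambda_1}, \Cref{uniqueness_r_1}) only makes $X_{\rho,\Mtri}\to X_\rho$ a closed immersion (\Cref{triimmersion}). It does not produce triangulations over Artinian thickenings of $x$ inside $X_{\tri}(\rhobar)$, and \Cref{main:boundary} is purely pointwise. The paper obtains the factorization $\iota^\triangle_x$ from a KPX-type interpolation of Pl\"ucker data after a proper birational modification (\Cref{blowupLambda}, \Cref{blowupr}, \Cref{factor_over_trianguline_deformations}), and \Cref{373} shows it is a closed immersion.

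More seriously, nothing in your sketch identifies the image with $X^{w_\delta}_{\rho,\Mtri}$. This needs three further inputs:
\begin{enumerate}
\item \Cref{Twdelta}, which compares the Sen parameter with $\wt(\delta_A)$ and with $\mathrm{d}\varpi_\HT$ (using \Cref{invariantlemmanew} near the regular point $\mathrm{d}\varpi_\HT$) to show that $\Theta_x$ factors through $\wh T^{w_\delta}_{(0,0)}$;
\item \Cref{BHS3.6.2}(3), which distinguishes the irreducible components $X^{w}_{\rho,\Mtri}$, $w\in\bW(x_\pdR)$, by the $\wh T^{w}_{(0,0)}$ through which they factor;
\item a dimension count: $\wh{X_{\tri}(\rhobar)}_x$ is equidimensional of dimension $\dim X_{\tri}(\rhobar)=\dim X_{\rho,\Mtri}$, hence a union of irreducible components of $X_{\rho,\Mtri}$, of which only the $w_\delta$-component can occur (\Cref{three_seven_eight}).
\end{enumerate}
This is what makes the completion a domain. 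For the same reason, ``landing in the $w_\delta$-component by construction'' in your Step~5 is circular. $X^{\sq}_{W^+,\Wtri}$ is the completion of all of $X$ at $x_\pdR$, and $x_\pdR\in X^{w_\delta}$ is an output of \Cref{three_seven_eight}, not an input. A priori $\wh X^{w_\delta}_{x_\pdR}$ could be empty.

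Second, your dimension bookkeeping is wrong, and so is the claimed agreement with \Cref{mainB}. The $D_\pdR$-framing is a torsor under the formal completion of $\bG$, of relative dimension $[K:\Qp]\dim\gG$. It is not a torsor under $\gG$ of dimension $\dim\gG$, as you say in Step~2. The correct count is:
\begin{itemize}
\item $[K:\Qp]\dim\gG$ for $\wh X^{w_\delta}_{x_\pdR}$;
\item plus $[K:\Qp]\dim\gB$ for $X^{\sq,w_\delta}_{D,\Mtri}$;
\item plus $\dim\gG$ for the Galois framing;
\item minus $[K:\Qp]\dim\gG$ for the $D_\pdR$-framing.
\end{itemize}
The total is $\dim\gG+[K:\Qp]\dim\gB$. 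This is what \Cref{mainB}(1) says, and it differs from $[K:\Qp](\dim\gG+\dim\gB)$ as soon as $K\neq\Qp$. The latter number is the dimension of the ring pro-representing $X^{\sq,w_\delta}_{D,\Mtri}$ (\Cref{three_five_eleven}), and that is how the dimension in the statement has to be read. You cannot obtain it for $\wh{X_{\tri}(\rhobar)}_x$ itself. Your formula for $\dim V^w$ is also off: $U^w$ has dimension $\dim\bG/\bB+\ell(w)$ and the fibres have dimension $\dim\frakb-\ell(w)$. Your conclusion $\dim X^w=\dim\bG$ is nevertheless right.

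Finally, in Step~4 you have the right obstacle and the right strategy, but vanishing of $H^2_{\varphi,\Gamma_K}$ only produces \emph{some} lift. To lift compatibly with a prescribed $(W^+,\Wtri)$, \Cref{prescribed_lifting} also uses:
\begin{itemize}
\item surjectivity of $H^1_{\varphi,\Gamma_K}(V[\tfrac{1}{t}])\to H^1(\GK,W_{\dR}(V[\tfrac{1}{t}]))$, which comes from local algebraicity of $\delta$ (\cite[Lemma 3.4.3]{BHS19});
\item base change of $H^1$, which comes from cohomological regularity;
\item \cite[Lemma 3.4.5]{BHS19}, to complete the cube.
\end{itemize}
The torus part is handled separately by the formal smoothness of $\wt-\wt(\delta)\colon\wh\cT^{\gT}_{\delta}\to\wh\frakt$ (\Cref{lem355}). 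Your ``relative $H^2$ of $t$-modifications'' could package the first two inputs through the long exact sequence, but as written it is not precise enough to carry the argument.
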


We defer an explanation of the proof of \Cref{mainthm:local} to the end of the introduction, and turn now to a concrete consequence for the geometry of the trianguline variety. 

\begin{corollary}[\Cref{three_seven_ten}]
At a sufficiently regular point $x$, the variety $X_{\tri}(\rhobar)$ is normal, hence irreducible, and Cohen--Macaulay.
\end{corollary}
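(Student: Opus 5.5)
We deduce the corollary from \Cref{mainthm:local} together with the normality and Cohen--Macaulayness of the components $X^{w}$ recalled above. The idea is that both properties are local, are detected on completed local rings, and can be transported along the chain of formally smooth morphisms \eqref{correspondence} from $\widehat X^{w_\delta}_{x_\pdR}$ to $\widehat{X_{\tri}(\rhobar)}_x$.

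\emph{Step 1: properties of the local model.} By Bezrukavnikov--Riche \cite[Theorem 2.2.1]{zbMATH06155581}, $X^{w_\delta}$ is Cohen--Macaulay, and by \cite[Theorem 2.3.6]{BHS19}, applied to $\bG$, it is normal. Since $X^{w_\delta}$ is of finite type over $L$, its local ring at $x_\pdR$ is excellent. Hence the completion $\widehat{\cO}_{X^{w_\delta},x_\pdR}$ is again normal and Cohen--Macaulay; normality passes to the completion because the completion map of an excellent ring is regular.

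\emph{Step 2: transfer along \eqref{correspondence}.} Formally smooth morphisms between complete local noetherian $L$-algebras with residue field $L$ are, up to isomorphism, of the form $A\to A\br{x_1,\dots,x_n}$. Normality and the Cohen--Macaulay property both ascend and descend along such faithfully flat maps with regular fibres: ascent is clear for power series rings, and descent holds for any faithfully flat local map. Walking along the zigzag
\[
\widehat X^{w_\delta}_{x_\pdR}\cong X^{\sq,w_\delta}_{W^+,\Wtri}\leftarrow X^{\sq,w_\delta}_{D,\Mtri}\leftarrow X^{\sq,w_\delta}_{\rho,\Mtri}\to X^{w_\delta}_{\rho,\Mtri}\cong \widehat{X_{\tri}(\rhobar)}_x,
\]
we alternately ascend and descend. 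This shows that $\widehat{\cO}_{X_{\tri}(\rhobar),x}$ is normal and Cohen--Macaulay.

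The main point needing care is that the intermediate functors are pro-represented by complete local noetherian rings, and that the morphisms between them are genuinely formally smooth at the level of these rings, including when relative dimensions are zero. This is exactly the content of \Cref{mainthm:local}, so the argument reduces to standard commutative algebra once that theorem is granted.

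\emph{Step 3: back to $X_{\tri}(\rhobar)$.} The local ring $\cO_{X_{\tri}(\rhobar),x}$ of the rigid space is excellent, with completion the ring in Step 2. Since $\cO\to\widehat\cO$ is faithfully flat, normality and the Cohen--Macaulay property descend from $\widehat{\cO}_{X_{\tri}(\rhobar),x}$ to $\cO_{X_{\tri}(\rhobar),x}$.

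A normal local ring is a domain, so exactly one irreducible component of $X_{\tri}(\rhobar)$ passes through $x$; this is the meaning of ``irreducible'' at $x$. It is also consistent with \Cref{mainthm:local}, which already asserts that the completion is a domain, a statement stronger than local irreducibility.
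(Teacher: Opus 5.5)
Your proposal is correct and follows essentially the same route as the paper. The paper also transports normality and the Cohen--Macaulay property along the chain of formally smooth morphisms linking $\widehat X^{w_\delta}_{x_\pdR}$ and $\widehat{X_{\tri}(\rhobar)}_x$; for normality it cites \Cref{BHS3.6.2}(2), which already packages the ascent and descent you carry out by hand, and for Cohen--Macaulayness it uses \cite[Proposition 2.3.3]{BHS19} together with passage to the completion. Your explicit final descent from $\widehat{\cO}_{X_{\tri}(\rhobar),x}$ to $\cO_{X_{\tri}(\rhobar),x}$ only spells out a step the paper leaves implicit.
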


\subsection*{Accumulation of crystalline points}

We wish to show that sufficiently nice crystalline points of $X_{\tri}(\rhobar)$ accumulate at every sufficiently regular point. As an intermediate step towards this result, we prove a criterion under which a triangulable $\gG$-$\PG$-module is crystalline. 

\begin{maintheorem}[\Cref{lincrys}, \Cref{cryscriterion}, Crystallinity criterion]
Let $D$ be a $\gG$-$\PG$-module over $\cR_{K,L}$ equipped with a triangulation $\Dtri$ of parameter $\delta\colon\gT^\vee(K)\to L^\times$. Assume that $\delta\vert_{\gT^\vee(\cO_K)}$ is algebraic and that the weights of $\delta$ have ``enough gaps'' with respect to $\gB$. 
Then $D$ is crystalline.
\end{maintheorem}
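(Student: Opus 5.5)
Since $\delta|_{\gT^\vee(\cO_K)}$ is algebraic, I would first show that $D$ is de Rham and then upgrade to crystalline; both steps reduce, via the Tannakian formalism, to statements about the rank-one graded pieces of the triangulation.

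\emph{Step 1: reduce to $D[\tfrac1t]$.} Since $\delta|_{\gT^\vee(\cO_K)}$ is algebraic, I would twist by the dual algebraic character, using the torus part of the triangulation. This produces a $\gG$-$\PG$-module $D'$ with $D'[\tfrac1t]=D[\tfrac1t]$ and a triangulation whose parameter $\delta'$ is unramified, i.e.\ trivial on $\gT^\vee(\cO_K)$. Each rank-one graded piece $\cR_{K,L}(\delta'\circ\mu^\vee)$ is then crystalline of weight $0$. Hence the $\gT$-$\PG$-module $\gr D'$ is crystalline, and we can work with the $\gB$-structure $\Mtri=\Dtri[\tfrac1t]$ over $\cR_{K,L}[\tfrac1t]$.

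\emph{Step 2: split extensions by induction on the central series of $\gU$.} Crystallinity of $D$ is equivalent to $D[\tfrac1t]$ being isomorphic to $D_\cris$-type data, i.e.\ the $\gB$-$\PG$-module $\Mtri$ becoming crystalline. Since $\gr\Mtri$ is crystalline, the obstruction lives in successive extension classes. Filter $\gU=\gU_1\supset\gU_2\supset\cdots$ by its lower central series; the graded pieces are sums of root groups $\gU_\alpha$ with $\alpha$ of fixed height, as in the proof of \Cref{SB_representable}. At each stage the relevant obstruction or extension class lies in
\[
H^1_{\PG}\bigl(\cR_{K,L}[\tfrac1t](\delta\circ\alpha^\vee)\bigr)
\]
modulo the crystalline subspace $H^1_f$. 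A rank-one $\cR_{K,L}[\tfrac1t]$-module $\cR_{K,L}[\tfrac1t](\chi)$ with $\chi$ unramified up to an algebraic factor has every extension de Rham. Moreover, it is crystalline whenever $\chi$ avoids the exceptional characters $x^{-i}$ and $|x|x^{i}$ (with $x$ the identity character and $|x|$ its absolute value). Under the "enough gaps" hypothesis I would replace $D$ by $D'$ and argue on $\cR_{K,L}$ itself. The required input is $H^1(\cR_{K,L}(\chi))=H^1_g=H^1_f$ when the Hodge–Tate weights of $\chi$ are sufficiently negative, which holds since $\dim H^1_f=\dim H^1-\dim H^0(\chi^{-1}\varepsilon)$ differences vanish; I would use this to show each successive extension is crystalline. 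Inductively, the crystalline $\gB/\gU_i$-modules form an exact Tannakian subcategory, and extending by a crystalline vector group representation with $H^1=H^1_f$ stays crystalline.

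\emph{Step 3: conclude.} Finally, $D=\eta_{\Dtri}\circ\mathrm{Res}$, and $\Rep(\gG)\to\Rep(\gB)$ preserves crystallinity, so $D$ is crystalline.

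\emph{Main obstacle.} The hard step is the precise translation of "enough gaps with respect to $\gB$" into the vanishing $H^1/H^1_f=0$ for each root $\alpha$ of $\gU$. This requires computing the weights of $\delta\circ\alpha^\vee$ for every embedding $\tau\colon K\to L$ and checking that these avoid the critical range. I would first verify it for simple roots and then use additivity of weights along heights.
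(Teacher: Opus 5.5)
Your skeleton matches the paper's: induction along the central series of $\gU$, the abelian input $H^1=H^1_f$ coming from the gaps, and the observation that crystallinity depends only on $D[\tfrac{1}{t}]$. But the step that carries the theorem is asserted, not proved.

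A lift of the $\gB/\gU^i$-$\PG$-module $D_{\gB/\gU^i}$ to $\gB/\gU^{i+1}$ is not an extension of $\PG$-modules in any linear sense. The set of such lifts is only a set-theoretic torsor under $H^1_{\varphi,\Gamma_K}(V^i/V^{i+1})$, where $V^i/V^{i+1}=\eta_{D_\gT}(\Lie(\gU^i/\gU^{i+1}))$ (\Cref{H1_Ext_U}, \Cref{expH1_new}), and this torsor has no distinguished base point. So knowing that every class in this $H^1$ is crystalline tells you nothing until you have two more things:
\begin{enumerate}
\item[(a)] a crystallinity criterion for $\gH$-$\PG$-modules that is compatible with the torsor structure;
\item[(b)] a crystalline point in each fiber.
\end{enumerate}
Your phrase ``exact Tannakian subcategory'' cannot supply this. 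Crystalline objects are not closed under extensions, and that is precisely the issue.

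The paper fills exactly this gap.
\begin{enumerate}
\item[(i)] \Cref{set_of_crystalline_extensions} identifies crystalline $\gH$-$\PG$-modules with the kernel of $H^1_{\varphi,\Gamma_K}(\gH(\cR_{K,L}))\to H^1(\Gamma_K,\gH(\cR_{K,L}[\tfrac{1}{t}]))$. This uses $\check H^1_{\et}(L,\gH)=\{*\}$ for the solvable subquotients of $\gB$ (\Cref{H1_vanishing_groups}).
\item[(ii)] \Cref{crystalline_extensions_exist} produces a crystalline lift $D'$ by lifting the filtered $\varphi$-module datum $(b,\underline\mu)$.
\item[(iii)] The diagram \eqref{six_diagram}, together with the exponential, shows that twisting $D'$ by a class whose image in $H^1(\Gamma_K,V^i/V^{i+1}[\tfrac{1}{t}])$ vanishes stays in that kernel.
\end{enumerate}
By contrast, what you call the main obstacle is immediate. ``Enough gaps'' is by definition slight positivity of the Hodge--Tate weights of $\eta_{D_\gT}(\Lie\gU)$, and \Cref{H1f} then gives $H^1=H^1_f$ on every graded piece at once.

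Your Step 1 and the detour through $\cR_{K,L}[\tfrac{1}{t}]$ work against you.
\begin{enumerate}
\item[(i)] No twist is needed: crystallinity of the rank-one pieces already follows from algebraicity of $\delta|_{\gT^\vee(\cO_K)}$ (\Cref{cryschar}).
\item[(ii)] The twist destroys your hypothesis. Making $\delta'$ unramified is not a twist of the $\gG$-module; it is a $t$-modification of the $\gB$-structure, available in general only in the direction that shrinks the gaps. It puts all weights of $\delta'\circ\alpha^\vee$ at $0$. For such $\chi\neq 1$ one has $H^1_f(\cR_{K,L}(\chi))=0$ but $\dim_L H^1=[K:\Qp]$, so the hypothesis you need in Step 2 is gone.
\item[(iii)] The gaps live in the $\cR_{K,L}$-lattice, not in $\Mtri$. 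Your claim that every extension of $\cR_{K,L}[\tfrac{1}{t}]$ by $\cR_{K,L}[\tfrac{1}{t}](\chi)$ with $\chi$ locally algebraic is de Rham is false. The map $H^1_{\varphi,\Gamma_K}(\cR_{K,L}[\tfrac{1}{t}](\chi))\to H^1(\GK,W_\dR(\cR_{K,L}[\tfrac{1}{t}](\chi)))$ is surjective (\cite[Lemma 3.4.3]{BHS19}), and the target is $H^1(\GK,\BdR)\otimes_{\Qp}L\neq 0$.
\item[(iv)] With the paper's convention (cyclotomic weight $+1$), the input you need is $H^1=H^1_f$ for \emph{slightly positive} weights, not ``sufficiently negative'' ones. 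The dimension count you quote is valid only when $\Fil^0D_\dR=0$.
\end{enumerate}
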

We outline the proof. Let $D_0$ denote the restriction of $\Dtri$ to a $\gT$-$\PG$-module. Under the hypotheses of the theorem, we wish to show that every lift of $D_0$ to a $\gB$-$\PG$-module is crystalline. By the proof of \Cref{mainA}, we know that the set of all lifts is in bijection with the $\PG$-cohomology group $H^1_{\varphi,\Gamma_K}(\eta_{D_0}(\Lie \gU))$. We adapt the criterion of Emerton--Gee \cite[Lemma 6.3.1]{emertongee} to the setting of $\PG$-modules and show that every class in $H^1_{\varphi,\Gamma_K}(\eta_{D_0}(\Lie \gU))$ is crystalline. However, this alone is insufficient to conclude since it is not immediately clear, unlike the $\GL_n$ case, that these crystalline classes correspond to crystalline lifts. To bridge this gap, we use the following observation: the crystallinity of a $\PG$-module $M$ over $\cR_{K,L}$ is equivalent to the triviality of the $\Gamma_K$-action on $M[\tfrac{1}{t}]$. Consequently, within the set $H^{1}_{\varphi,\Gamma_K}(\gB(\cR_{K,L}))$ of isomorphism classes of $\gB$-$\PG$-modules, we can show that the crystalline ones are precisely the preimage of the trivial element under the natural map $H^{1}_{\varphi,\Gamma_K}(\gB(\cR_{K,L}))\to H^{1}(\Gamma_K,\gB(\cR_{K,L}[\tfrac{1}{t}]))$, where the target denotes the non-abelian continuous cohomology set. The result then follows by exploiting the compatibility of this map with the exponential map $\exp\colon \Lie(\gU)\to \gU$ for abelian $\gU$, and inducting on the central series of $\gU$. 

For the purposes of this introduction, we say that a crystalline point $(\rho,\delta)$ of $X_{\tri}(\rhobar)$ is ``sufficiently nice'' if it satisfies the conditions in \Cref{def_acc_property} (it is noncritical, with regular semisimple crystalline Frobenius, and sufficiently spaced Hodge--Tate weights). Non-criticality can actually be deduced from the weights of $\delta$ being sufficiently spaced, thanks to \Cref{lemma:noncritical}.

\begin{maintheorem}[\Cref{acc_property}, Accumulation property]
If $x$ is a sufficiently regular point of $X_{\tri}(\rhobar)$ with algebraic weight, then sufficiently nice crystalline points accumulate at $x$.
\end{maintheorem}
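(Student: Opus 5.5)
The plan follows the strategy of \cite[Theorem 3.9.1]{BHS19} in the $\GL_n$ case, transported to $\gG$ via the local model. Let $x=(\rho,\delta)$ be sufficiently regular with algebraic weight. Since $X_{\tri}(\rhobar)$ is normal at $x$ (\Cref{three_seven_ten}), it is irreducible at $x$, so the goal is to show that the Zariski closure of the set of sufficiently nice crystalline points near $x$ contains every irreducible component through $x$; by irreducibility, this is a single component.

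\textbf{Step 1: weight space and the crystallinity criterion.} Consider the projection $\omega'\colon X_{\tri}(\rhobar)\to\cT^{\gT}$. I would restrict to an affinoid neighbourhood $U$ of $x$ inside the smooth locus $U^*_{\tri}(\rhobar)$ where possible. In $\cT^{\gT}$, the characters $\delta'$ with $\delta'|_{\gT^\vee(\cO_K)}$ algebraic, with weights that have ``enough gaps'' with respect to $\gB$ and are sufficiently spaced, accumulate at $\omega'(x)$. This holds because dominant algebraic weights that are far from the walls are Zariski dense in every neighbourhood in weight space. At such a point $(\rho',\delta')\in U_{\tri}(\rhobar)$, the crystallinity criterion (\Cref{cryscriterion}) shows that $\rho'$ is crystalline. \Cref{lemma:noncritical} shows it is noncritical. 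Regular semisimplicity of the crystalline Frobenius is a Zariski open condition on the smooth part of $\delta'$, namely that the eigenvalues $\delta'(\varpi_K)$ avoid finitely many hypersurfaces determined by roots. So it holds on a dense open subset once it holds somewhere, and I would check that it does by perturbing the unramified part of $\delta'$.

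\textbf{Step 2: the density argument.} To transfer density from weight space to $X_{\tri}(\rhobar)$, I would use \Cref{mainB}(3): $\omega'$ is smooth on $U^*_{\tri}(\rhobar)$, hence open there, and its fibres over nice weights meet every neighbourhood of $x$. Following \cite{BHS19}, I would define the accumulation property for a point $y$ as follows: for every affinoid neighbourhood $V$ of $y$, nice crystalline points are Zariski dense in some union of irreducible components of $V$ containing $y$. One first proves this at points of $U^*_{\tri}(\rhobar)$ with smooth $\omega'$, using openness of $\omega'$ and the density in weight space from Step 1. One then deduces it at $x$ itself. If $x$ already lies in the smooth locus, Step 1 suffices. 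If not, points of the smooth locus accumulate at $x$, because $U^*_{\tri}(\rhobar)$ is Zariski open and dense in $X_{\tri}(\rhobar)$. Irreducibility of $\wh{X_{\tri}(\rhobar)}_x$ from \Cref{three_five_eleven} then ensures that the accumulating points in the unique local component fill it.

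\textbf{Main obstacle.} The hardest part is making the accumulation at $x$ uniform, since $x$ itself may be critical or non-crystalline. The standard trick is to work with the accumulation at the level of affinoid neighbourhoods rather than points, and to use that $\omega'(V)$ contains an open neighbourhood of $\omega'(x)$. That containment requires $\omega'$ to be open near $x$, which is not given directly by \Cref{mainB} outside $U^*_{\tri}$. I would try to obtain it from the local model: the map $X^{w_\delta}\to\frakt$ is flat, via the Bezrukavnikov--Riche Cohen--Macaulay property together with equidimensionality of fibres, and this flatness transfers along the formally smooth chain \eqref{correspondence} to show that $\omega'$ is flat, hence open, at $x$.
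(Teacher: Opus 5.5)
Your plan is essentially the paper's proof. Openness of $\omega'$ near $x$ comes from flatness transferred from the local model (\Cref{flatness_result}). The nice points come from \Cref{cryscriterion} and \Cref{lemma:noncritical}, with the locus of non-regular-semisimple Frobenius discarded as a Zariski closed set. Density is then propagated from a good locus to $x$ using normality at $x$; the only minor difference is that your good locus is $U^*_{\tri}(\rhobar)$, where $\omega'$ is smooth, rather than the locus $V$ of \Cref{blowupLambda,blowupr}.

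One detail should be made explicit: ``sufficiently spaced'' must be measured against the slopes. Shrink to a neighbourhood of $x$ on which $v_p(\delta'(\varpi_K))$ is constant and take $C$ larger than the resulting bound, so that hypothesis \eqref{ktau} of \Cref{lemma:noncritical} holds at the nearby points.
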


\subsection*{Crystalline points in local deformation spaces}

Our original motivation for writing this paper is to make progress towards the following conjecture.

\begin{mainconj}\label{conj_cryst} The Zariski closure of the set of regular crystalline points in $\cX_{\rhobar}$ is a union of irreducible components. 
\end{mainconj}

Special cases of this conjecture have been proved by Colmez, Kisin, Nakamura, B\"ockle for $\gG=\GL_2$ \cite{ColmezTri,Kisin,Nakamura_TwoDim,Nakamura,Boeckle3}, by Chenevier for $\gG=\GL_n$, $K=\Q_p$ and absolutely irreducible $\rhobar$ \cite{chenevier2010sur}, and by  Iyengar \cite{Iyengar_2020} for $\gG=\GL_n$ and general $K$ and $\rhobar$. For $\gG=\GSp_{2n}$, this was proved by Aoki under certain hypotheses on $\rhobar$ \cite{aoki}.
By \cite[Corollary 15.29]{defG} $\cX_{\rhobar}$ is normal, so the conjecture may equivalently be stated in terms of connected components.

In order to prove \Cref{conj_cryst} for general $\gG$, one can follow the strategy used in the $\GL_n$ case. Using our result on the accumulation property of crystalline points, one needs to generalize \cite[Corollary 3.13]{HellmannMargerinSchraen} in order to translate this accumulation property from the trianguline variety to the Galois deformation space by comparing tangent spaces, as explained in the proof of  \cite[Proposition 5.10]{Iyengar_2020}. 

For $\gG = \GL_n$ \cite{BIP2}, and $\gG = \GSp_{2n}$ under certain hypotheses on $\rhobar$ \cite{aoki}, it is in fact known that the Zariski closure of the set of regular crystalline points is all of $\cX_{\rhobar}$. Given \Cref{conj_cryst},
this follows by understanding the irreducible components of $\cX_{\rhobar}$ and then constructing a crystalline lift on each one using a result of Emerton--Gee \cite{emertongee}.
By \cite[Corollary 15.30, Theorem 16.4]{defG}, the irreducible components are understood for most $\gG$.
Further progress on the lifting problem has been made by Lin \cite{LinIrredLifting, LinReducibleLifting}, but certain critical cases of reducible $\rhobar$ remain open.
We plan to address these problems in future work.


\subsection*{Further possible applications}

Along the lines of \cite{BHS19}, the local results of this paper should have global consequences. 
For instance, when $\gG$ admits an eigenvariety $\cE$ carrying a family of Galois representations, then one can typically apply an analogue of \Cref{prop:boundary} to show that such a family is everywhere trianguline. Then one might be able to apply our work to the description of ``companion points'' on $\cE$, i.e. points whose associated local Galois representations are isomorphic up to twist. With this purpose in view, we already introduce in \Cref{def:wsat} an analogue of the element $w_x$ appearing in the description of companion points from \cite[Theorem 4.2.3]{BHS19}, that we denote by $w_\sat$ following \cite{mowlavi}: it is the relative position of the two flags belonging to $x_{\pdR}$. We prove that $w_\sat \preceq w_{\delta}$.

If furthermore the Taylor--Wiles method can be used to construct a ``patched eigenvariety'' for $\gG$, then one could hope for a classicality criterion along the lines of \cite[Theorem 3.9]{bhssmooth}. These ingredients are now available for many classical groups. 

We remark that many of our results could be applied, or generalized with little effort, to the case of a disconnected reductive group, which would be relevant when considering Galois representations valued in the $L$- or $C$-group of a reductive group. For example Theorem A can be generalized using the notion of cohomological regularity. Theorem C follows from it by the same argument.


\smallskip

\subsection*{Proof of Theorem A}
We explain our proof of \Cref{mainA} in more detail.
For an affinoid $L$-algebra $A$ we fix a map $\Sp(A) \to \cT_{0,\gB}^{\gT}$, which defines a $\gT\hyphen(\varphi, \Gamma_K)$-module $D_{0}$.
We wish to describe the pullback of $\cS_{\gB}$ to $\Sp(A)$, so we fix an affinoid $A$-algebra $A'$.
The $A'$-points are given by the space of lifts $\Lift^\gB_{\gT}(D_{0, A'})$ of $D_{0, A'}=D_0\otimes_A A'$ to $\gB$-$\PG$-modules.
We will establish a functorial bijection $\Lift^\gB_{\gT}(D_{0, A'}) \cong H^{1,\star}_{\varphi, \Gamma_K}(\gU(\cR_{K,A'}))$ (\Cref{H1_Ext_U}), where the set $H^{1,\star}_{\varphi, \Gamma_K}(\gU(\cR_{K,A'}))$ is a non-abelian twisted version of $\PG$-cohomology where actions on $\gU(\cR_{K,A'})$ are induced by $D_{0, A'}$ from the conjugation of $\gT$ on $\gU$.

To explain this isomorphism, let us assume for simplicity that we were working with representations of an abstract group $\Gamma$ instead of $\PG$-modules.
Let $f \colon \Gamma \to \gT(L)$ be a homomorphism.
A lift of $f$ to a homomorphism $\tilde f \colon \Gamma \to \gB(L)$ consists of a pair $(f', f) \colon \Gamma \to \gU(L) \rtimes \gT(L)$, where $f' \in Z^1(\Gamma, \gU(L))$ is a non-abelian $1$-cocycle. 
The set of equivalence classes of such lifts is given by $H^1(\Gamma, \gU(L))$.
The conjugation action of $\gT$ on $\gU$ induces actions of $\varphi$ and $\Gamma_K$ on $\gU(\cR_{K,A'})$, which we denote by $\star$ (see \Cref{sec_form_smooth1} for detailed definitions).
We can think of $\gU(\cR_{K,A'})$ as a `non-abelian $\PG$-module' analogous to the non-abelian $\Gamma$-module $\gU(L)$ in the case of representations. Beware that this is not the same as a $\gU$-valued $\PG$-module! Moreover, we do not make an attempt at defining what a `non-abelian $\PG$-module' should be. The evident computations lead to the definition of the cohomology set $H^{1,\star}_{\varphi, \Gamma_K}(\gU(\cR_{K,A'}))$ and an isomorphism to $\Lift^\gB_{\gT}(D_{0, A'})$.

We are left to establish the representability of the functor $A' \mapsto H^{1,\star}_{\varphi, \Gamma_K}(\gU(\cR_{K,A'}))$.
This is achieved in \Cref{better_unipotent_H1} by constructing a functorial bijection $H^{1,\star}_{\varphi, \Gamma_K}(\gU(\cR_{K,A'})) \cong H^{1}_{\varphi, \Gamma_K}(\eta_{D_{0,A'}}(\Lie \gU))$, where $\eta_{D_{0,A'}}\colon \Rep_{L}(\gT)\to \PGcat^+_{K,A'}$ is the fiber functor corresponding to $D_{0,A'}$, and where $\Lie(\gU)$ is viewed as a $\gT$-representation via the adjoint action. If $\gU$ is abelian, then this simply given by the exponential map. In general, the proof proceeds by induction over the central series of $\gU$, so that each step is given by a central extension $\gU^{n+1}\to \gU_{n+1}\to \gU_n$. Using an idea in \cite{Kim}, we show that $H^{1,\star}_{\varphi, \Gamma_K}(\gU_{n+1}(\cR_{K,A'}))$ is an $H^{1}_{\varphi, \Gamma_K}(\eta_{D_{0,A'}}(\Lie \gU^{n+1}))$-torsor  over $H^{1,\star}_{\varphi, \Gamma_K}(\gU_n(\cR_{K,A'}))$. To show that the torsor is non-empty, a key input is the vanishing of $H^{2}_{\varphi, \Gamma_K}(\eta_{D_{0,A'}}(\Lie \gU))$, which by the calculations of \Cref{sec_obs_thy} gives the existence of a lift at each step. 
We note that the difference of our argument with that of de Daruvar, is that he inducts on a different stratification of $\gB$ in which each step is a central extension that admits a group-theoretic section. This is only possible under the hypotheses on $\gG$ of de Daruvar. 


\subsection*{Proof of Theorem D}
We explain the proof of \Cref{mainthm:local} in more detail, which is equivalent to establishing the correspondence \eqref{correspondence}. The essential work consists of two parts:
\begin{enum}
\item Proving that the map $X_{D, \Mtri}^{\square} \rightarrow X^{\square}_{W^+, \Wtri}$ (before restricting to $w$-components) is formally smooth (\Cref{lem356}).
    \item Showing that the natural map $\wh{X_{\tri}(\rhobar)}_x \to X_\rho$ to the framed deformation space 
    factors over an isomorphism $\wh{X_{\tri}(\rhobar)}_x \xrightarrow{\sim} X_{\rho, \Mtri}^{w_\delta}$ (\Cref{three_seven_eight}).
\end{enum}
\subsubsection*{Part (1): formal smoothness of $X_{D, \Mtri}^{\square} \rightarrow X^{\square}_{W^+, \Wtri}$.}
We explain the proof of \Cref{lem356}. 
First, it is sufficient to prove formal smoothness of the map $X_{D, \Mtri} \rightarrow X_{W^+, \Wtri}$ of unframed deformation problems.
This map can be factored as
$$ X_{D, \Mtri} \rightarrow \widehat \cT^\gT_{\delta} \times_{\that} X_{W^+, \Mtri} \rightarrow X_{W^+, \Wtri}, $$
where the second map is easily shown to be formally smooth.
So the result follows by base-change from the following Theorem.

\begin{theorem}[{\Cref{thm_formal_smoothness}}]\label{thm:fsmooth}
    Let $\Mtri$ be a $\gB$-$(\varphi, \Gamma_K)$-module over $\cR_{K,L}[\tfrac{1}{t}]$. 
    Assume that the parameter $\delta$ of $\Mtri$ is locally algebraic and $\gB$-regular.
    Then the morphism of pseudofunctors
    \begin{equation}
        X_{\Mtri} \longrightarrow  \widehat \cT^\gT_{\delta} \times_{\widehat \frakt} X_{\Wtri} \label{formally_smooth_morphism_intro}
    \end{equation}
    is formally smooth.
\end{theorem}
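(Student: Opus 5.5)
We argue by induction along the central series of $\gU$, as in the proof of \Cref{mainA}. The goal is to reduce formal smoothness to a surjectivity statement for an abelian cohomology map. Throughout, $A\to A'$ denotes a small surjection of Artinian local $L$-algebras with kernel $I$, $I\cdot\frakm_A=0$. We are given three pieces of data:
\begin{enumerate}
\item a deformation $\Mtri_{A'}$ over $A'$;
\item a deformation $\delta_A$ of the parameter over $A$;
\item a deformation $\Wtri_A$ of the associated $\gB$-$\BdR$-representation over $A$,
\end{enumerate}
such that $\Mtri_{A'}\otimes A'$ restricted along $A\to A'$ is compatible with $(\delta_A,\Wtri_A)$. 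Concretely, $\delta_A$ and $\Wtri_A$ induce the same $\gT$-$\BdR$-representation, namely the one detected by the weights in $\that$. We must lift $\Mtri_{A'}$ to $\Mtri_A$ inducing $(\delta_A,\Wtri_A)$.

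First we treat the torus. The $\gT$-part of $\Mtri$ is a product of rank-one objects $\cR_{K,L}[\tfrac1t](\delta\circ\lambda^\vee)$. The functor of deformations of such a rank-one $(\varphi,\Gamma_K)$-module over $\cR[\tfrac1t]$ is the fiber product $\widehat\cT_\delta\times_{\that}X_{W_0}$, where $W_0$ is the associated $\gT$-$\BdR$-representation. Indeed, inverting $t$ forgets exactly the integral Sen/$\BdR^+$-lattice data, which is compensated by the de Rham side. For $\GL_1$ this is \cite{BHS19}, and for a split torus it follows by taking products. So the torus level is smooth, and we may fix $\Mtri_{A,0}:=$ the $\gT$-module determined by $\delta_A$ and $\Wtri_A$.

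Next we pass to the unipotent part. By the analogue of \Cref{H1_Ext_U} over $\cR[\tfrac1t]$, lifts of $\Mtri_{A,0}$ to $\gB$-objects are classified by $H^{1,\star}_{\varphi,\Gamma_K}(\gU(\cR_{K,A}[\tfrac1t]))$. Likewise, lifts of the torus part of $\Wtri_A$ are classified by $H^1(\GK,\gU(\BdR\otimes A))$ with the twisted action. Using the Kim-style torsor structure along $\gU^{n+1}\to\gU_{n+1}\to\gU_n$, it suffices to prove three things:
\begin{enumerate}
\item[(a)] obstructions vanish on the $(\varphi,\Gamma)$ side in a way compatible with the $\BdR$ side;
\item[(b)] for each graded piece the map
\[ H^1_{\varphi,\Gamma_K}(\eta(\Lie\gU^{n+1})[\tfrac1t]\otimes I)\to H^1(\GK,W(\Lie\gU^{n+1})\otimes I) \]
is surjective;
\item[(c)] the map on $H^2$ is injective.
\end{enumerate}
Given these, an inductive lifting exists, layer by layer, matching the prescribed $\Wtri_A$.

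The graded pieces are direct sums of rank-one objects $\cR[\tfrac1t](\delta\circ\alpha^\vee)$ for positive roots $\alpha$. So the key computation concerns a locally algebraic character $\chi$ with $\chi$ not of the form $z^{-k}$ or $\epsilon z^{k}$ in the relevant sense. This is exactly what $\gB$-regularity guarantees. For such $\chi$ we need to compare $H^i_{\varphi,\Gamma}(\cR[\tfrac1t](\chi))$ with $H^i(\GK,\BdR(\chi))$. We compute both as colimits over $t^{-k}$ twists. By \cite{KPX} the cohomology $H^i(\cR(\chi t^{-k}))$ stabilises: $H^2$ vanishes and $H^1$ has dimension $[K:\Qp]$ for $k\gg0$, under regularity. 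The $\BdR$ cohomology is also $[K:\Qp]$-dimensional in degree $1$, via the classes $\log\chi$ over the embeddings. The comparison map is then surjective because the Sen-weight direction is hit by varying the character. This is the main obstacle I expect. I would try to follow \cite[Lemma 3.4.? / Prop.~3.5]{BHS19}, where the corresponding statement is proved for $\GL_n$ by dévissage to rank one. The point is to check that regularity excludes exactly the characters where $H^2$ fails to vanish or $H^1$ jumps, and that the non-abelian gluing is compatible with both cohomologies functorially in $A$.
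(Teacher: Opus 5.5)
Your outline is essentially the paper's proof. There too one inducts along the central series so that each step is an abelian extension $\gU^n/\gU^{n+1}\to\gB/\gU^{n+1}\to\gB/\gU^n$, handled by \Cref{prescribed_lifting}; the appendix's obstruction class produces lifts, since these layers need not have a group-theoretic section. $\gB$-regularity makes the graded pieces cohomologically regular, so all $H^2_{\varphi,\Gamma_K}$ vanish, and your (c) is then vacuous. The comparison of torsors is closed by surjectivity of $H^1_{\varphi,\Gamma_K}(\cR_{K,L}[\tfrac1t](\chi))\to H^1(\GK,W_{\dR}(\cR_{K,L}[\tfrac1t](\chi)))$ for $\chi=\delta\circ\alpha^\vee$, $\alpha\in\Phi^+(\gB,\gT)$. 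The paper quotes this from \cite[Lemma 3.4.3]{BHS19} and combines it with \cite[Lemma 3.4.5]{BHS19}; your adjustment by $I$-valued classes is an unpacked version of the latter.

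The one thing to fix is your heuristic for that rank-one surjectivity: there is no character to vary in a root-graded piece. Instead, both sides have dimension $[K:\Qp]$. At level $t^{-k}\cR_{K,L}(\chi)$ the kernel is $H^1_g$, which for $\chi\in\cT_0$ equals $H^1_f$. This vanishes once $k\gg0$, because then $\Fil^0 D_{\dR}=D_{\dR}$ and $H^0=0$. So the map is injective, hence bijective.
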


We prove this theorem in the following abstract setting, from which the result follows by inducting on the central series of $\gU$. Consider a short exact sequence of algebraic groups over $L$
\begin{align}
    0 \to \gU \to \gP \to \gM \to 0, \label{the_extension_of_groups}
\end{align} 
where we now assume that $\gU$ is abelian.
We let $\mathcal{M}_0$ be an $\gM$-trivial $\gM$-$\PG$-module over $\cR_{K,L}[\tfrac{1}{t}]$. Let $\cM$ be a lift of $\cM_0$ to $\gP$. We define $W_0$ and $W$ by the corresponding almost de Rham $\gG$-$\BdR$-representations. We want to show the formal smoothness of the morphism
\begin{align}
    X_{\cM} \to X_{\cM_0} \times_{X_{W_0}} X_{W} \label{pres_explanation}
\end{align} 
in \Cref{prescribed_lifting}. Here, by $X_?$ we denote a corresponding deformation functor.

For a surjection $A_2 \twoheadrightarrow A_1$ of artinian local $L$-algebras with residue field $L$, an $A_i$-valued point of the target of \eqref{pres_explanation} amounts to a compatible pair $(\cM_{0,A_i}, W_{A_i})$ lifting $(\cM_0, W)$ and mapping to the same lift $W_{0, A_i}$ of $W_0$. Checking formal smoothness of \eqref{pres_explanation} amounts to completing the cube
\begin{center}
    \begin{tikzcd}[column sep=small, row sep=small, every arrow/.append style={scale=0.6}]
    & \cM_{A_1} \ar[dd, |->] \ar[rr, |->] && W_{A_1} \ar[dd, |->] \\
    \cM_{A_2} \ar[dd, dashed, |->] \ar[ur, dashed, |->] \ar[rr, dashed, |->] && W_{A_2} \ar[dd, |->] \ar[ur, |->] \\
    & \cM_{0, A_1} \ar[rr, |->] && W_{0, A_1} \\
    \cM_{0, A_2} \ar[ur, |->] \ar[rr, |->] && W_{0, A_2} \ar[ur, |->]
    \end{tikzcd}
\end{center}

\smallskip

\noindent for a given lift $\cM_{A_1}$ making the back square commute. Here, the arrows pointing right are given by the $W_{\dR}$-functor, the arrows pointing down are given by applying the map $\gP \to \gM$ and the arrows pointing to the back are given by reduction modulo the kernel of $A_2 \twoheadrightarrow A_1$.

We first focus on finding a lift $\cM_{A_2}$ of $\cM_{0, A_2}$, setting aside the rest of the cube. As in the proof of \Cref{mainA}, the set of equivalence classes of such lifts forms a set-theoretic torsor under $H^{1}_{\varphi, \Gamma_K}(\eta_{\cM_{0,A_2}}(\Lie \gU))$, which may a priori be empty.
The main difficulty in establishing non-emptiness is that we operate in a setting where the map $\gP \to \gM$ need not admit a group-theoretic section.
However, since $\gU$ is a product of additive groups, a scheme-theoretic section does exist.
This section allows us to write $\gP = \gU \times \gM$ as schemes, with the multiplication on $\gP$ determined by an algebraic $2$-cocycle $z \colon \gM \times \gM \to \gU$. In \Cref{sec_obs_thy}, we develop a general obstruction theory for non-abelian $(\varphi, \Gamma_K)$-modules, and apply it here. Using $z$, we construct a class in $ H^{2}_{\varphi, \Gamma_K}(\eta_{\cM_{0,A_2}}(\Lie \gU))$ associated to $\cM_{0,A_2}$ (which may be viewed as an element of a non-abelian cohomology group $H^{1}_{\varphi, \Gamma_K}(\gM(\cR_{K,A_2}[\tfrac{1}{t}]))$). We show that a lift exists if and only if
this class vanishes, and the regularity assumption on $\cM_0$ ensures that $H^{2}_{\varphi, \Gamma_K}(\eta_{\cM_{0,A_2}}(\Lie \gU))$ vanishes.

We have a similar obstruction theory for almost de Rham $B_{\dR}$-representations, via $D_{\pdR}$ and working with algebraic group cohomology of $\Ga$.
Here we use the fact that cohomology of $\Ga$ in characteristic $0$ vanishes in degrees $\geq 2$.
Combining these facts enables us to complete the cube.



\subsubsection*{Part (2): the isomorphism $\wh{X_{\tri}(\rhobar)}_x \cong X_{\rho, \Mtri}^{w_\delta}$.}
We construct in \Cref{factor_over_trianguline_deformations} a map $\iota_x^\triangle\colon\wh{X_{\tri}(\rhobar)}_x \to X_{\rho,\Mtri}$  by interpolating triangulations using the $\gG$-version of results of \cite[\S 6.3]{KPX} (see \Cref{blowupLambda,blowupr}). Then we argue similarly to \cite[Section 3]{BHS19} to show that $\iota_x^\triangle$ is a closed immersion with image $X_{\rho,\Mtri}^\wdelta$. We thus reduce ourselves to showing the pro-representability of the latter functor, which is the main output of the work of \Cref{sec35}. We use \Cref{thm:fsmooth} in a key way to prove this representability.

\smallskip

\noindent\textbf{Organization of the paper.} In \Cref{sec:alggrp}, we provide some preliminaries on algebraic groups, the Tannakian formalism and torsors. In \Cref{sec:HPG}, we introduce $\PG$-modules with $\gG$-structure and their cohomology and the various regularity conditions for parameters and triangulable $\PG$-modules with $\gG$-structure.
In \Cref{secBdRBdRplusnew}, we extend the theory of almost de Rham $B_\dR$ and $B_\dR^+$-representations to arbitrary $\gG$, introducing the group $\bG$ and the scheme $X$ along the way. 
In \Cref{sec_form_smooth} we prove the key technical result, a general lifting theorem for $\PG$-modules with a prescribed almost de Rham $\BdR^+$-representation. 
In \Cref{sec35}, we introduce some deformation functors for triangulable $\PG$-modules, and compare them to completions of the scheme $X$.
In \Cref{sec:cryscrit}, we prove the crystallinity criterion for $\gG$-$\PG$-modules.
In \Cref{sec:trivar} we prove the main results on the geometry of the trianguline variety. 
In \Cref{sec:application}, we use our results to remove an assumption from the main theorem of \cite{Fakhruddin_2022} on the existence of trianguline lifts of global mod $p$ representations.

\smallskip

\noindent\textbf{Acknowledgments.} 
The third-named author wants to thank Vytautas Pa\v{s}k\={u}nas for helpful conversations about \cite{BIP2}. We also wish to thank Toby Gee, Eugen Hellmann, Gautier Ponsinet, and Otmar Venjakob for useful exchanges. The second-named author would like to thank Bogdan Ion and Carl Wang-Erickson for many helpful discussions and suggestions. 

\smallskip

\noindent\textbf{Notation.} We will use the following notation:
\begin{enumerate}
    \item $K$ is a finite extension of $\Qp$ contained in a fixed algebraic closure $\Qpbar$, with ring of integers $\mathcal{O}_K$ and fixed uniformizer $\varpi_K$. $K_0 \subseteq K$ is the maximal subfield unramified over $\Qp$, $K_{\infty} = K(\mu_{p^{\infty}})$, $\Gamma_K = \Gal(K_{\infty}/K)$. We write $\chi^\cyc$ for the $p$-adic cyclotomic character of $G_\Q$ (and its restriction to various subgroups) and take its Hodge--Tate weight to be 1, following the convention in \cite{bhssmooth}.
    \item $L$ is a finite extension of $\Q_p$ contained in $\Qpbar$, such that $L$ contains the Galois closure of $K$ over $\Q_p$. We write $\cO_L$ for the ring of integers of $L$, and $k$ for its residue field. We write $\Sigma$ for the set of field homomorphisms $K \hookrightarrow L$.
    \item For a field $E$, we denote by $\Art_E$ the category of local Artinian $E$-algebras with residue field $E$. We denote by $\widehat{\Art}_E$ the pro-category of $\Art_E$. We write $\ovl E$ for a fixed algebraic closure of $E$, and $\cG_E$ for the absolute Galois group $\Gal(\ovl E/E)$. 
    \item Algebraic groups will be denoted with a serif font $\gG,\gB,\gT$, except for the specific groups $\bG,\bB,\bT$ defined in \Cref{sec:HTflagsnew} and denoted in boldface. For a reductive group $\gG$ over $L$ with Borel subgroup $\gB$ and maximal torus $\gT$, we write $\Phi(\gB,\gT)$ (respectively $\Phi^+(\gB,\gT),\Delta(\gB,\gT)$) for the set of roots (respectively positive roots, simple roots) attached to $(\gB,\gT)$. 
    \item If $\gH' \to \gH$ is a morphism of algebraic groups over $L$, we write $\Res_{\gH'}^{\gH}\colon \Rep_{L}(\gH)\to \Rep_{L}(\gH')$ for the corresponding restriction functor.
    \item For a commutative ring $A$, $\Vect_A$ is the category of finitely generated projective $A$-modules, and $\CRing_A$ is the category of commutative $A$-algebras. For $B\in\CRing_A$, we denote by $\phi_B\colon \Vect_A\to\Vect_B$ the base-change functor. Similarly for a scheme $X$, the category $\Vect_X$ is the category of $\cO_X$-modules which are locally finite free.
    \item For a commutative $L$-algebra $A$ and an $L$-group scheme $\gG$, $\gG\hyphen\Tors_A$ is the groupoid of right étale $\gG$-torsors on $A$. We always work in the big étale topos of $\Qp$.
    \item $\Gpd$ denotes the $(2,1)$-category of essentially small groupoids. We identify $\Set$ with a full sub-$2$-category of $\Gpd$ by taking a set to a discrete groupoid.
    \item We use Tate's theory of rigid analytic spaces. Following \cite{ConradIrrComp,BHSmodulaire} we define the (analytic) Zariski closed subsets of a rigid space $X$ as the vanishing loci of coherent ideal sheaves.
\end{enumerate}


\section{Preliminaries on algebraic groups}
\label{sec:alggrp}

For the present work, we need generalizations of many representation-theoretic objects to a linear algebraic group $\gH$ over $L$.
This is achieved using the Tannakian formalism.

\subsection{The Tannakian formalism}

We closely follow \cite{DeligneMilne} in our notations and conventions.
A \emph{tensor category}\footnote{The notion of tensor category, up to fixing an identity object, is the same as the notion of a symmetric monoidal category, see \cite[Remark 1.4(b)]{DeligneMilne}.} $(\cC, \otimes)$ is a category $\cC$ together with a bifunctor $\otimes \colon \cC \times \cC \to \cC$ and compatible associativity and commutativity constraints, such that there merely exists an identity object, \cite[Definition 1.1]{DeligneMilne}.
A \emph{tensor functor} $(\cC, \otimes) \to (\cC', \otimes')$ is a functor $F \colon \cC \to \cC'$ together with a natural isomorphism $F(X) \otimes F(Y) \eqto F(X \otimes Y)$ satisfying certain compatibilities, \cite[Definition 1.8]{DeligneMilne}. A \emph{morphism of tensor functors} is a natural transformation satisfying the compatibilities of \cite[Definition 1.12]{DeligneMilne}.

Let $A$ be a commutative ring.
The prototypical example of a tensor category is the category of vector bundles $(\Vect_A, \otimes_A)$ over $A$.
It is not in general abelian, but comes with a notion of exact sequence and its Hom-sets have the structure of $A$-modules. We will axiomatize this situation as follows.

An \emph{exact category} is a pair $(\cA, \cE)$ consisting of an additive category $\cA$ and a distinguished class $\cE$ of composable morphisms $X \to Y \to Z$ satisfying some axioms which we will refer to as \emph{exact sequences}, \cite[Definition 2.1]{Buehler}. If $\cA$ is an $A$-linear category, i.e. a category enriched in $A$-modules, we speak of an \emph{$A$-linear exact category}. If $\cA$ is an $A$-linear exact category endowed with the structure of a tensor category, such that composition and tensor product of morphisms are $A$-bilinear, we speak of an \emph{$A$-linear exact tensor category}.

If $(\cC, \otimes)$ is a tensor category and $\eta_1, \eta_2 \colon (\cC, \otimes) \to (\Vect_A, \otimes_A)$ are tensor functors, we denote by $\uHom^{\otimes}(\eta_1, \eta_2) \colon \CRing_{A} \to \Set$ the functor which associates to a commutative $A$-algebra $B$ the set of morphisms $\phi_B \circ \eta_1 \to \phi_B \circ \eta_2$ of tensor functors $(\cC, \otimes) \to (\Vect_B, \otimes_B)$, where $\phi_B \colon \Vect_A \to \Vect_B$ is the natural scalar extension functor, \cite[§1, (1.13.1)]{DeligneMilne}.
If $(\cC, \otimes)$ is rigid, see \cite[Definition 1.7]{DeligneMilne} for a definition, then every morphism $\phi_B \circ \eta_1 \to \phi_B \circ \eta_2$ is invertible by \cite[Proposition 1.13]{DeligneMilne}.
Then the functor $\uAut^{\otimes}(\eta_1) \colonequals \uHom^{\otimes}(\eta_1, \eta_1)^{\times}$ agrees with $\uHom^{\otimes}(\eta_1, \eta_1)$.

We denote by $\Rep_L(\gH)$ the $L$-linear tensor category of algebraic representations of $\gH$ on finite-dimensional $L$-vector spaces.
We denote by $\omega_{\gH} \colon \Rep_L(\gH) \to \Vect_L$ the forgetful functor.

\subsection{Torsors}
\label{sec:tors}

Throughout the paper, by a \emph{torsor} we mean a right torsor for the étale topology of schemes or rigid analytic spaces.
In this section, $X$ will denote an affine scheme or a rigid analytic space over $L$.


We will use the following reinterpretation of torsors. An \emph{$L$-linear fiber functor} between two $L$-linear exact tensor categories is an $L$-linear, exact and faithful tensor functor.

\begin{theorem}[Saavedra Rivano, 1972]\label{tannakian_fundamental_equivalence}
     The functor $\eta \mapsto \uHom^{\otimes}(\omega_L \otimes_L \cO_X, \eta)$ defines an equivalence between the groupoid of $L$-linear fiber functors $\Rep_L(\gH) \to \Vect_X$ and the groupoid of right $\gH$-torsors over $X$. A quasi-inverse is given by sending a $\gH$-torsor $\cQ$ to the fiber functor
    \begin{align*}
        \eta_{\cQ}\colon \Rep_L(\gH)&\longrightarrow \Vect_X, \quad\quad V \mapsto \cQ\times^\gH V.
    \end{align*}
    In case $X$ is a rigid analytic space, $\gH$ is understood as a rigid analytic group.
\end{theorem}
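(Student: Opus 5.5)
The plan is to deduce the statement from the classical case where $X$ is the spectrum of a field or affine ring, as in \cite[Theorem 3.2]{DeligneMilne}, and then to globalize by étale descent. First I check that both constructions are well defined. Given a torsor $\cQ$, the contracted product $\cQ\times^\gH V$ is an étale sheaf. It is étale-locally isomorphic to $V\otimes_L\cO_X$, so by étale descent for vector bundles (fpqc descent in the affine case, Kiehl/étale descent for coherent sheaves in the rigid case) it is a vector bundle. The functor $V\mapsto\cQ\times^\gH V$ is $L$-linear, tensor, exact and faithful, because these properties hold after an étale cover trivializing $\cQ$, and exactness and faithfulness can be checked étale-locally. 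Conversely, given $\eta$, the functor $\uHom^{\otimes}(\omega_L\otimes_L\cO_X,\eta)$ on $X$-schemes (or $X$-rigid spaces) carries a right action of $\uAut^\otimes(\omega_L)=\gH$, by precomposition, using the reconstruction theorem $\gH\cong\uAut^\otimes(\omega_\gH)$. The action is simply transitive wherever the functor has a section, since $\Rep_L(\gH)$ is rigid and every tensor morphism is therefore invertible.

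The main step is to show that $\uHom^{\otimes}(\omega_L\otimes\cO_X,\eta)$ is representable and has sections étale-locally, i.e.\ that it is a torsor. For affine $X=\Spec A$, I would write $\cO(\gH)=\colim V_i$ as a filtered union of finite-dimensional subrepresentations. The standard argument of \cite[Theorem 3.2]{DeligneMilne} (extended to $\eta$ valued in $\Vect_A$, as in Deligne's \emph{Catégories tannakiennes}) shows that the functor is represented by $\Spec\eta(\cO(\gH))$, where $\eta$ is extended to ind-objects. Here $\eta(\cO(\gH))$ is a faithfully flat $A$-algebra, flatness coming from exactness of $\eta$ and faithfulness from faithfulness of $\eta$. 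This gives an fpqc torsor. Since $\gH$ is smooth (we are in characteristic $0$), fpqc torsors are étale-locally trivial: a smooth surjective scheme admits sections étale-locally. For rigid $X$, I would work on an admissible affinoid cover $\Sp(A_j)$ and apply the affine construction to $\Spec A_j$. Analytification then gives a rigid $\gH^{\rig}$-torsor over $\Sp(A_j)$, étale-locally trivial because étale sections over $\Spec A_j$ analytify. These local torsors glue via the canonical identifications of the $\uHom^\otimes$ functors on overlaps.

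Finally, I verify that the two functors are mutually quasi-inverse. The natural map $\cQ\to\uHom^\otimes(\omega_L\otimes\cO_X,\eta_\cQ)$ sends $q\mapsto(v\mapsto[q,v])$ and is $\gH$-equivariant, hence a morphism of torsors and thus an isomorphism. The evaluation map $\uHom^\otimes(\omega_L\otimes\cO_X,\eta)\times^\gH V\to\eta(V)$ is an isomorphism, as one checks étale-locally where both sides are trivial. Both constructions are functorial in isomorphisms, which gives the equivalence of groupoids. I expect the representability and faithful flatness step for general base $A$ to be the main obstacle. If citing Deligne directly is unavailable, I would reduce it to the field case by noting that exactness of $\eta$ forces $\eta(\cO(\gH))$ to be a filtered colimit of projective modules, with fibers at points being nonzero by faithfulness.
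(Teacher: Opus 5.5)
Your proposal is correct and follows essentially the same route as the paper. The paper cites Saavedra Rivano (and \cite[Theorem 3.2]{DeligneMilne}) for the scheme case, transfers it to affinoids via the equivalence between vector bundles on $\Spec A$ and on $\Sp A$ (\cite[Proposition 4.7.2]{FresnelVanDerPut}, details in \cite[Theorem 2.7]{dedar2020}), and glues; you do the same but spell out representability, étale-local triviality and analytification, using that equivalence $\Vect_{\Sp A_j}\simeq\Vect_{\Spec A_j}$ (compatible with tensor products and exactness) only implicitly when you ``apply the affine construction to $\Spec A_j$''.
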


The notation $\cQ\times^\gH V$ means the following:
We first take the product of $\cQ$ with the geometric vector bundle over $X$ associated with $V$ and then take the quotient by the free action of $\gH$.
The resulting geometric vector bundle has a natural action of $\gH$ which carries over to the corresponding locally free $\cO_X$-module.

\begin{proof}
    If $X$ is a scheme over $L$ the equivalence is given by \cite[Théorème 2.2]{SaavedraRivano}, see \cite[Theorem 3.2]{DeligneMilne} when $X = \Spec A$ is affine.
    If $A$ is an affinoid $L$-algebra, then by \cite[Proposition 4.7.2]{FresnelVanDerPut} the categories of vector bundles on $\Spec A$ and $\Sp A$ are canonically equivalent.
    One verifies, that this establishes the equivalence in the analytic case, see \cite[Theorem 2.7]{dedar2020}. The general case follows by gluing.
\end{proof}
We record the following lemma.
\begin{lemma}\label{trivialtorsoroverA}
    Let $A\in \Art_L$, and let  $\cQ$ be an $\gH$-torsor over $A$. Then $\cQ$ is a trivial $\gH$-torsor if and only if $\cQ \otimes_A L$ is trivial.
\end{lemma}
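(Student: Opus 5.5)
The forward direction is immediate: a section of $\cQ$ over $\Spec A$ restricts to a section over $\Spec L$. For the converse, the plan is to lift a given $L$-point of $\cQ$ to an $A$-point using smoothness of $\cQ$ over $A$. Since $\cQ$ is an $\gH$-torsor, it is affine of finite type over $A$: it is étale-locally isomorphic to $\gH_A$, and affine finite-type morphisms descend along étale covers, which is effective for affine morphisms. Because $L$ has characteristic $0$, $\gH$ is smooth over $L$ by Cartier's theorem. Smoothness is étale-local on the base, so $\cQ\to\Spec A$ is smooth.

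Suppose $\cQ\otimes_A L$ is trivial, and choose a section $s_0\in\cQ(L)$. Since $A\in\Art_L$, the surjection $A\onto L$ factors as a finite chain of small extensions $A=A_n\onto A_{n-1}\onto\dots\onto A_0=L$, each with nilpotent (indeed square-zero) kernel. Smoothness of $\cQ$ over $A$ gives formal smoothness, that is, the infinitesimal lifting property for $A$-algebras along nilpotent thickenings. Applying it successively lifts $s_0$ to a point $s\in\cQ(A)$.

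A right torsor with a section is trivial: the map $\gH_A\to\cQ$, $h\mapsto s\cdot h$, is an isomorphism of torsors. This can be checked étale-locally, where it is just right translation on $\gH$.

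The only step needing any care is the smoothness of $\cQ$, which rests on $\gH$ being smooth. That is automatic in characteristic zero. Alternatively, since the residue field $L$ is separably closed in the relevant sense only after étale extension, one could instead argue via Hensel's lemma for smooth schemes over henselian local rings; the lifting-through-nilpotents argument above avoids this.
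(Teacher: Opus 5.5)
Your proof is correct and follows the same route as the paper: smoothness of $\cQ$ makes $\cQ(A)\to\cQ(L)$ surjective, and a torsor is trivial exactly when it has a section. You spell out the steps the paper leaves implicit, namely representability, smoothness via Cartier, and lifting along the nilpotent thickening $A\to L$. Your closing aside about $L$ being ``separably closed after étale extension'' is muddled, but it is not needed for the argument.
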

\begin{proof}
    By smoothness of $\cQ$, $\cQ(A) \to \cQ(L)$ is surjective. We then note that $\cQ$ is trivial if and only if $\cQ(A) \neq \emptyset$.
\end{proof}

We now return to the setting of the introduction, i.e. $\gG$ is a split connected reductive group over $L$ and $\gB$ is a Borel subgroup, which contains a torus $\gT$.

\begin{definition}
    Let $\cQ$ be a $\gG$-torsor over $X$. A $\gB$\emph{-reduction} of $\cQ$ is the data $(\cQ^{\triangle},i)$ of a $\gB$-torsor $\cQ^{\triangle}$ over $X$ together with an isomorphism $i \colon \cQ^{\triangle}\times^\gB \gG \xrightarrow{\sim} \cQ$ of $\gG$-torsors. This is equivalent to the datum of a factorization
\begin{center}
    \begin{tikzcd}
         \Rep_L(\gG) \arrow[rr,"\eta_{\cQ}"] \arrow[d] && \Vect_X
        \\ \Rep_L(\gB) \arrow[urr,swap,"\eta_{\cQ^\triangle}"].
    \end{tikzcd}
\end{center}
\end{definition}

\begin{lemma} \label{Bredandflgs}
    The functor sending an $X$-scheme $Y$ to the set of $\gB$-reductions of $\cQ\times_X Y$ is represented by $\cQ/\gB$. The analogous statement holds in the category of rigid analytic spaces.
\end{lemma}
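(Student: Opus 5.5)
The plan is to build mutually inverse maps between $\gB$-reductions of $\cQ_Y := \cQ\times_X Y$ and $Y$-points of $\cQ/\gB$, naturally in $Y$. First I would record that the fppf/étale quotient $\cQ/\gB$ exists as a scheme (resp. rigid space) over $X$. Étale locally on $X$ we may trivialize $\cQ\cong X\times\gG$, and then $\cQ/\gB\cong X\times \gG/\gB$, which is projective over $X$. These local quotients glue along the transition functions, which are given by left multiplication by sections of $\gG$ and hence commute with the right $\gB$-action. Effectivity of étale descent for these relatively projective objects is ensured by the $\gG$-equivariant ample line bundle on $\gG/\gB$. In the rigid case the same gluing is done with analytification. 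The projection $\cQ\to\cQ/\gB$ is then a $\gB$-torsor, since it is so locally, where it is $X\times\gG\to X\times\gG/\gB$.

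Next, given a $Y$-point $s\colon Y\to \cQ/\gB$, set $\cQ^\triangle := Y\times_{s,\cQ/\gB}\cQ$. This is a $\gB$-torsor over $Y$, being a pullback of the $\gB$-torsor $\cQ\to\cQ/\gB$. The inclusion $\cQ^\triangle\to\cQ_Y$ is $\gB$-equivariant, so it induces a $\gG$-equivariant map $i\colon\cQ^\triangle\times^\gB\gG\to\cQ_Y$, $[q,g]\mapsto qg$. Any morphism of $\gG$-torsors is an isomorphism, so $(\cQ^\triangle,i)$ is a $\gB$-reduction.

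Conversely, given a reduction $(\cQ^\triangle,i)$, the composite $\cQ^\triangle\to\cQ^\triangle\times^\gB\gG\xrightarrow{i}\cQ_Y\to\cQ/\gB$ is $\gB$-invariant. It therefore descends along the $\gB$-torsor $\cQ^\triangle\to Y$ to a morphism $Y\to\cQ/\gB$. One checks that these two constructions are inverse up to the natural notion of isomorphism of reductions; it suffices to check this étale locally, where everything is trivial. There a reduction is a coset $g\gB$ with $g\in\gG(Y)$, since the section is unique up to $\gB(Y)$. Naturality in $Y$ is clear because both constructions commute with pullback.

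The main obstacle is the representability of the quotient $\cQ/\gB$, in particular in the rigid analytic setting. There I would either argue by relative analytification of the algebraic construction when $X=\Sp A$ and $\cQ$ comes from an algebraic torsor (using \Cref{tannakian_fundamental_equivalence} and the cited equivalence of vector bundles on $\Spec A$ and $\Sp A$), or directly glue the local products $U\times(\gG/\gB)^{\rig}$. A subtler point is that "set of reductions" must mean isomorphism classes. Since $\gB$ is its own normalizer in $\gG$, reductions have no nontrivial automorphisms compatible with $i$, so the groupoid is discrete and the functor is well-defined.
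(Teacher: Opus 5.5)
Your argument is correct and is the standard proof. The paper gives no proof of this lemma and treats it as known; the closely related Plücker description in \Cref{pluckerdatum} is attributed to de Daruvar and Haines. The three ingredients you use are exactly what is needed:
\begin{itemize}
\item identify $\cQ/\gB$ with the twisted flag variety $\cQ\times^{\gG}(\gG/\gB)$;
\item observe that $\cQ\to\cQ/\gB$ is a $\gB$-torsor;
\item pass between a section $s$ and the reduction $Y\times_{s,\cQ/\gB}\cQ$, with the inverse obtained by descending the $\gB$-invariant map $\cQ^\triangle\to\cQ/\gB$ along $\cQ^\triangle\to Y$.
\end{itemize}
You do not actually need an étale-local check for the inverse property. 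One composite is the identity by the cartesian square defining $\cQ^\triangle$. The other is canonically isomorphic to the identity via the $\gB$-equivariant map $q\mapsto i([q,1])$, which lands in $Y\times_{s,\cQ/\gB}\cQ\subseteq\cQ_Y$ by construction of $s$ and is compatible with the two maps $i$.

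One justification should be corrected. Discreteness of the groupoid of reductions has nothing to do with $\gB$ being self-normalizing. It holds because $q\mapsto[q,1]$ is a monomorphism $\cQ^\triangle\to\cQ^\triangle\times^{\gB}\gG$. Hence a morphism between two reductions that is compatible with the maps $i$ is uniquely determined, and in particular automorphisms are trivial. Self-normalization would only matter if you wanted to identify reductions with Borel subgroup schemes of the inner form $\uAut_{\gG}(\cQ)$, which is not needed here.

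In the rigid case, there is a cleaner way to construct $\cQ/\gB$ than gluing rigid spaces along étale covers. Twist a $\gG$-equivariant closed embedding $\gG/\gB\hookrightarrow\prod_{\lambda\in\Omega}\mathbb{P}(V_\lambda)$ by $\cQ$, realizing $\cQ/\gB$ as a closed subspace of $\prod_{\lambda\in\Omega}\mathbb{P}(\eta_{\cQ}(V_\lambda))$. This only uses descent of vector bundles and of coherent ideals, and it matches how the paper later works with reductions via Plücker data.
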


Denote by $X_+^*(\gT)$ the set of dominant weights of $\gT$ with respect to $\gB$. For all $\lambda\in X^*_+(\gT)$, denote by $V_\lambda\in \Rep_{L}(\gG)$ the irreducible representation of $\gG$ of highest weight $\lambda$, and by $V_\lambda^{\mu}$ the $\mu$-isotypic subspace of $V_\lambda$ seen as a representation of $\gT$.

Let $\gG^{\der}$ be the derived subgroup of $\gG$, $\gG^{\sic}$ be the simply-connected cover of $\gG^{\der}$, and $\gG^{\ad}$ be the adjoint group of $\gG$. We denote by $\gT^{\ad}$ the image of $\gT$ in $\gG^{\ad}$, it is a maximal torus. Let $\gT^{\der} \colonequals \gT \cap G^{\der}$ and let $\gT^{\sic}$ be the preimage of $\gT^{\der}$ in $\gG^{\sic}$. 

\begin{lemma}\label{T_mod_Tder} \phantom{a}
    \begin{enumerate}
        \item The group $\gT^{\der} = \gT \cap \gG^{\der}$ is connected, hence a maximal torus of $\gG^{\der}$.
        \item The surjection $\gT \to \gG^{\ab}$ descends to an isomorphism $\gT/\gT^{\der} \cong \gG^{\ab}$.
        \item We have $\gT = \gT^{\der} \cdot Z(\gG)^0$. In particular, $\gT^{\ad}$ is also the image of $\gT^{\der}$ in $\gG^{\ad}$.
        \item The group $\gT^{\sic}$ is connected, hence a maximal torus of $\gG^{\sic}$.
    \end{enumerate}
\end{lemma}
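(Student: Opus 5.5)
The plan is to use the standard structure theory of connected reductive groups over a field of characteristic zero: $\gG = \gG^{\der}\cdot Z(\gG)^0$, where $\gG^{\der}$ is semisimple, $Z(\gG)^0$ is a central torus, and $\gG^{\der}\cap Z(\gG)^0$ is finite. Since $\gT$ is a maximal torus it contains the central torus $Z(\gG)^0$. I will prove (3) first and derive the other items from it.

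For (3), let $t\in\gT$ and write $t = gz$ with $g\in\gG^{\der}$ and $z\in Z(\gG)^0\subseteq \gT$. Then $g = tz^{-1}\in \gT\cap\gG^{\der}$, so $\gT = (\gT\cap\gG^{\der})\cdot Z(\gG)^0$. The same decomposition holds with $\gT^{\der}$ replaced by any maximal torus $\gT'$ of $\gG^{\der}$ contained in $\gT$. For such a $\gT'$, the product $\gT'\cdot Z(\gG)^0$ is a torus. It is maximal, since its dimension is $\operatorname{rank}\gG^{\der}+\dim Z(\gG)^0 = \operatorname{rank}\gG$. Hence $\gT = \gT'\cdot Z(\gG)^0$. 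The second claim of (3) follows because $Z(\gG)^0$ maps trivially to $\gG^{\ad}$.

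For (1), the key point is that $\gT\cap\gG^{\der}$ is commutative and consists of semisimple elements, so it is a diagonalizable subgroup of $\gG^{\der}$. It contains the maximal torus $\gT'$ from above. It also centralizes $\gT'$, and hence lies in $C_{\gG^{\der}}(\gT') = \gT'$, using that centralizers of maximal tori in connected reductive groups are the tori themselves. Therefore $\gT\cap\gG^{\der} = \gT'$, which is connected. The only step needing care is the existence of $\gT'$, namely that $(\gT\cap\gG^{\der})^0$ is a maximal torus of $\gG^{\der}$. To see this, note that $\gT$ maps onto $\gG/\gG^{\der}=\gG^{\ab}$ with kernel $\gT\cap\gG^{\der}$, so $\dim(\gT\cap\gG^{\der}) = \dim\gT-\dim\gG^{\ab} = \operatorname{rank}\gG^{\der}$.

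For (2), the map $\gT\to\gG^{\ab}$ is surjective because $\gG = \gG^{\der}\cdot\gT$. This holds since $\gG^{\der}\cdot Z(\gG)^0 = \gG$ and $Z(\gG)^0\subseteq\gT$. The kernel of this map is $\gT\cap\gG^{\der}=\gT^{\der}$. Working in characteristic zero, the induced map $\gT/\gT^{\der}\to\gG^{\ab}$ is therefore an isomorphism.

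For (4), let $\pi\colon\gG^{\sic}\to\gG^{\der}$ be the central isogeny. Its kernel is central in $\gG^{\sic}$, hence lies in every maximal torus of $\gG^{\sic}$. The identity component $(\pi^{-1}\gT^{\der})^0$ maps onto $\gT^{\der}$, since an isogeny maps identity components onto identity components. It is therefore a torus $S$ of dimension $\operatorname{rank}\gG^{\sic}$, i.e. a maximal torus, and so $S\supseteq\ker\pi$. It follows that $\pi^{-1}\gT^{\der} = S\cdot\ker\pi = S$ is connected. I expect no serious obstacle anywhere. The only subtle inputs are $C(\text{maximal torus})=$ torus and centrality of $\ker\pi$, both of which are standard.
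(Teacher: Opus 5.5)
Your proof is correct, but for (1) it takes a different route from the paper's.

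\emph{The paper's route.} It never exhibits a maximal torus of $\gG^{\der}$ inside $\gT$ directly. Instead it passes to the isogeny $\pi\colon \gG^{\der}\times Z(\gG)^0\to\gG$ and notes that $\pi^{-1}(\gT)=(\gT\cap\gG^{\der})\times Z(\gG)^0$. It then uses that $\pi^{-1}(\gT)^0$ is a maximal torus and that it contains the central kernel $\ker\pi$. Since the quotient of $\pi^{-1}(\gT)$ by $\ker\pi$ is the connected group $\gT$, it concludes that $\pi^{-1}(\gT)$ is connected. Items (2) and (3) are then read off.

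\emph{Your route.} You first obtain $\gT=(\gT\cap\gG^{\der})\cdot Z(\gG)^0$ pointwise from $\gG=\gG^{\der}\cdot Z(\gG)^0$. A dimension count, using surjectivity of $\gT\to\gG^{\ab}$ and $\operatorname{rank}\gG=\operatorname{rank}\gG^{\der}+\dim Z(\gG)^0$, then shows that $\gT'=(\gT\cap\gG^{\der})^0$ is a maximal torus of $\gG^{\der}$. Finally, the commutative group $\gT\cap\gG^{\der}$ lies in $C_{\gG^{\der}}(\gT')=\gT'$.

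\emph{Comparison.} Your version is more direct and gives the maximality in (1) explicitly, at the cost of invoking the rank formula. The paper's version proves (1) and (4) by literally the same mechanism: a central kernel contained in the identity component, together with a connected quotient. Your (4) coincides with the paper's argument.

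\emph{Cosmetic points.} In (3), the assertion $\gT=\gT'\cdot Z(\gG)^0$ for an arbitrary maximal torus $\gT'$ of $\gG^{\der}$ inside $\gT$ is never used afterwards. In (1), only the commutativity of $\gT\cap\gG^{\der}$ matters, not its diagonalizability. In (4), the step ``it is therefore a torus'' deserves a word: a connected finite central extension of a torus is commutative with trivial unipotent part. The paper glosses over the same point when it asserts that $(\gT^{\sic})^0$ contains $Z(\gG^{\sic})$.
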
 

\begin{proof}
    We have a surjection $\pi \colon \gH \colonequals [\gG,\gG] \times Z(\gG)^0 \to \gG$.
    Since $Z(\gG) \subseteq \gT$, we have $\pi^{-1}(\gT) = (\gT \cap \gG^{\der}) \times Z(\gG)^0$.
    Moreover, $\pi^{-1}(\gT)^0 = (\gT \cap \gG^{\der})^0 \times Z(\gG)^0$ is a maximal torus of $\gH$.
    As $\ker(\pi) = \gG^{\der} \cap Z(\gG)^0$ is central in $\gG$, it is central in $\gG^{\der}$, and so is central in $\gH$.
    Hence, $\ker(\pi) \subseteq \pi^{-1}(\gT)^0$. As $\pi^{-1}(\gT)/\ker(\pi) \cong \gT$ is connected, this implies that $\pi^{-1}(\gT)$ is connected.
    We conclude that $\gT \cap \gG^{\der}$ is connected. (2) and (3) follow immediately.
    As $\pi' \colon \gG^{\sic} \to \gG^{\der}$ is a central extension, we have $\ker(\pi') \subseteq Z(\gG^{\sic}) \subseteq (\gT^{\sic})^0$.
    Since $\gT^{\sic}/\ker(\pi') \cong \gT^{\der}$ is connected, $\gT^{\sic}$ is connected.
\end{proof}

There are natural inclusions
$$ X^*(\gT)\supseteq X^*(\gT^{\ad})\subseteq X^*(\gT^{\der})\subseteq X^*(\gT^{\sic}) $$
such that $X^*(\gT^{\sic})/X^*(\gT^{\ad})$ is finite. 
Therefore we have $X^*(\gT^{\sic})_{\Q} = X^*(\gT^{\ad})_{\Q} \subseteq X^*(\gT)_{\Q}$.
Denote by $\alpha_1,\dots,\alpha_n\in X^*_+(\gT)$ the simple roots of $\gG$ associated to $\gB$, by $\alpha_1^{\vee}, \dots, \alpha_n^{\vee} \in X_*(\gT)_{\Q}$ the corresponding coroots and let $\beta_1,\dots,\beta_n\in X^*(\gT^{\sic})_{\Q}$ be a basis dual to the coroots.
We call $\beta_1,\dots,\beta_n$ \emph{fundamental weights}.

\begin{definition}\label{def_quasi_fund_wt}
    For all $i\in \{1,\dots,n\}$, denote by $\omega_i\in X^*_+(\gT^{\ad})\subseteq X^*_+(\gT)$ the smallest positive multiple of $\beta_i$ which is in $X^*(\gT^{\ad})$. The $\omega_i$'s are called the \emph{quasi-fundamental weights} of $\gG$ with respect to $\gB$. 
\end{definition}

\begin{lemma}\label{decomp_weights} We have a natural identification $X^*(\gG) = \{\lambda \in X^*(\gT) \mid \langle \lambda, \alpha_i^{\vee} \rangle = 0\}$.
    In particular, every $\lambda \in X^*(\gT)$ can be written uniquely as $\lambda = \sum_{i=1}^n \langle \lambda, \alpha_i^{\vee} \rangle \beta_i + \lambda_0$, where $\langle \lambda, \alpha_i^{\vee} \rangle \in \Z$ and $\lambda_0 \in X^*(\gG)$. Moreover, the set $X_0 \colonequals \{\lambda \in X^*(\gT) \mid \lambda_0 = 0\}$ maps isomorphically onto $X^*(\gT^{\der})$. This provides us with canonical decompositions $X^*(\gT) = X^*(\gT^{\der}) \oplus X^*(\gG)$ and $X^*_+(\gT) = X^*_+(\gT^{\der}) \oplus X^*(\gG)$.
\end{lemma}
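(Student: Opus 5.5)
The plan is to build everything from the pairing of $X^*(\gT)$ with the coroots. First I prove $X^*(\gG) = \{\lambda \in X^*(\gT) \mid \langle \lambda, \alpha_i^\vee\rangle = 0 \ \forall i\}$. Restriction along $\gT \into \gG$ is injective on characters, because conjugates of $\gT$ cover a dense open subset of $\gG$. The image is contained in the right-hand side: a character of $\gG$ is trivial on $\gG^{\der}$, hence on each image of $\alpha_i^\vee\colon \Gm \to \gG^{\der}$. Conversely, suppose $\lambda$ pairs to zero with every coroot. Then $\lambda$ is invariant under the Weyl group, since $s_i\lambda = \lambda - \langle\lambda,\alpha_i^\vee\rangle\alpha_i$. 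Moreover $\lambda$ is trivial on $\gT^{\der}$, because $X_*(\gT^{\der})_\Q$ is spanned by the coroots and $\gT^{\der}$ is a torus by \Cref{T_mod_Tder}(1). By \Cref{T_mod_Tder}(2), $\lambda$ therefore factors through $\gT/\gT^{\der} \cong \gG^{\ab}$, so it is a character of $\gG$.

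For the decomposition, take $\lambda \in X^*(\gT)$ and set $\lambda_0 := \lambda - \sum_i \langle\lambda,\alpha_i^\vee\rangle \beta_i$. This lives a priori only in $X^*(\gT)_\Q$, where we view $\beta_i \in X^*(\gT^{\sic})_\Q = X^*(\gT^{\ad})_\Q \subseteq X^*(\gT)_\Q$. By duality, $\langle\lambda_0,\alpha_j^\vee\rangle = 0$ for all $j$. Uniqueness is also immediate: pairing any such expression with $\alpha_j^\vee$ recovers the coefficients, and the $\beta_i$ are linearly independent. Since the statement treats $\lambda_0$ as an element of $X^*(\gG)$, the integrality question has to be handled through $X_0$.

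The main obstacle is the integrality bookkeeping, i.e.\ making $X^*(\gT) = X^*(\gT^{\der}) \oplus X^*(\gG)$ precise. The restriction $X^*(\gT) \to X^*(\gT^{\der})$ is surjective because $\gT^{\der} \subseteq \gT$ is a subtorus. Its kernel is $X^*(\gT/\gT^{\der}) = X^*(\gG^{\ab}) = X^*(\gG)$ by \Cref{T_mod_Tder}(2), using the first step for the last identification. So we get an exact sequence
\[ 0 \to X^*(\gG) \to X^*(\gT) \to X^*(\gT^{\der}) \to 0. \]
The subset $X_0 = \{\lambda \mid \lambda_0 = 0\}$ consists of the integral characters lying in the $\Q$-span of the $\beta_i$, equivalently those killed by $Z(\gG)^0$ (using \Cref{T_mod_Tder}(3)). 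Restriction is injective on $X_0$, since $X_0 \cap X^*(\gG) = 0$. For surjectivity I would use $\gT = \gT^{\der}\cdot Z(\gG)^0$ again: a character $\mu$ of $\gT^{\der}$ extends to $\gT$ trivially on $Z(\gG)^0$ exactly when $\mu$ is trivial on $\gT^{\der}\cap Z(\gG)^0$. That intersection is central in $\gG^{\der}$, and central elements act trivially under characters in the root span. This is where I expect the care to be needed, since $X_0$ is only defined inside $X^*(\gT)$ and in general $X_0 \oplus X^*(\gG)$ may have finite index in $X^*(\gT)$. I would make the identification at the level of $\Q$-spans and read the "canonical decomposition" statement via the isomorphism $X_0 \cong X^*(\gT^{\der})$ together with the integrality of $\lambda_0$ whenever $\lambda \in X_0 + X^*(\gG)$.

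Finally, for dominant weights: $\lambda$ is dominant if and only if all $\langle\lambda,\alpha_i^\vee\rangle \ge 0$. These pairings depend only on the $X^*(\gT^{\der})$-component, since $X^*(\gG)$ pairs to zero with the coroots. Hence the decomposition restricts to $X^*_+(\gT) = X^*_+(\gT^{\der}) \oplus X^*(\gG)$.
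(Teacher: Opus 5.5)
Your first step, identifying $X^*(\gG)$ with the characters of $\gT$ that pair trivially with all coroots, is correct and is the paper's argument. The rational existence and uniqueness of $\lambda_0$ is also fine.

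The gap is the surjectivity of the restriction map $X_0 \to X^*(\gT^{\der})$. Your extension criterion is right. Since $\gT = \gT^{\der}\cdot Z(\gG)^0$ and $X_0$ is exactly the group of characters of $\gT$ trivial on $Z(\gG)^0$, the image of $X_0$ is precisely the group of characters of $\gT^{\der}$ that are trivial on the finite group $\gT^{\der}\cap Z(\gG)^0 = \gG^{\der}\cap Z(\gG)^0$. Your justification that every $\mu$ kills this group only works for $\mu$ in the root span, and a general character of $\gT^{\der}$ is not of that form. For $\gG=\GL_2$:
\begin{itemize}
\item $\gT^{\der}=\{\mathrm{diag}(t,t^{-1})\}$ and $\gT^{\der}\cap Z(\gG)^0=\{\pm 1\}$;
\item the generator $\mathrm{diag}(t,t^{-1})\mapsto t$ of $X^*(\gT^{\der})$ takes the value $-1$ at $-1$;
\item consequently $X_0=\Z(e_1-e_2)$ restricts onto $2\Z$.
\end{itemize}
This also contradicts your own correct remark that $X_0\oplus X^*(\gG)$ can have finite index in $X^*(\gT)$. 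Given the exact sequence $0\to X^*(\gG)\to X^*(\gT)\to X^*(\gT^{\der})\to 0$ that you established, $X_0\to X^*(\gT^{\der})$ is bijective if and only if $X^*(\gT)=X_0\oplus X^*(\gG)$. So the reinterpretation you propose at the end rests on a false isomorphism.

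In fact the integral assertions of the lemma cannot be proved, because they fail in general. Take $\gG=\GL_n$ with $n\ge 2$ and $\lambda=e_1$:
\begin{itemize}
\item $\beta_1=e_1-\tfrac{1}{n}(e_1+\dots+e_n)$, so $\lambda_0=\tfrac{1}{n}(e_1+\dots+e_n)\notin X^*(\gG)$;
\item $X_0$ is the root lattice, and its image in $X^*(\gT^{\der})$ has index $n$;
\item the dominant weight $e_1$ does not lie in $X_0+X^*(\gG)$, so the decomposition of $X^*_+(\gT)$ in your last paragraph fails too.
\end{itemize}
These integral statements are all equivalent to each other, and they hold exactly when $\gG^{\der}\cap Z(\gG)^0$ is trivial. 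The paper's proof has the same hole. Injectivity, correct rank and saturation of $X_0$ in $X^*(\gT)$ do not force its image in $X^*(\gT^{\der})$ to be everything: $\Z(e_1-e_2)$ is saturated in $\Z^2$ yet restricts onto $2\Z$.

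What your argument does establish is the following:
\begin{itemize}
\item the identification of $X^*(\gG)$;
\item the decomposition after tensoring with $\Q$;
\item the exact sequence above, which splits but only non-canonically (since $X^*(\gT^{\der})$ is free);
\item the fact that $X^*_+(\gT)$ is the full preimage of $X^*_+(\gT^{\der})$ under restriction, i.e.\ an $X^*(\gG)$-torsor over it rather than a canonical direct sum.
\end{itemize}
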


\begin{proof}
    By \Cref{T_mod_Tder} we have a natural identification $X^*(\gG) = \{\lambda \in X^*(\gT) \mid \lambda(\gT^{\der}) = 1\}$.
    As the coroots generate $\gT^{\der}$, the right hand side identifies with $\{\lambda \in X^*(\gT) \mid \langle \lambda, \alpha_i^{\vee} \rangle = 0\}$, as claimed. It follows that $\langle \lambda_0, \alpha_i^{\vee}\rangle = 0$ for all $i$.
    Since $X_0 \cap X^*(\gG^{\ab}) = 0$ and $X_0$ has the correct rank, to prove that $X_0$ maps isomorphically to $X^*(\gT^{\der})$, it suffices to show that $X_0$ is saturated. This is clear by definition. The decomposition $X^*(\gT) = X_0 \oplus X^*(\gG)$ is canonical and the rest follows easily.
\end{proof}

Let $\Lambda \subset X_+^*(\gT^{\der})$ be a finite set of $\Q$-generators of $X^*_+(\gT^{\der})_\Q$. Consider the map $\wp\colon \bbN^{\Lambda}\to X_+^*(\gT^{\der}), ~(n_{\lambda})_{\lambda\in \Lambda}\mapsto \sum_{\lambda\in \Lambda} n_{\lambda} \lambda$, and let $R_{\Lambda}\subseteq \bbN^{\Lambda}\times \bbN^{\Lambda}$ be a finite set of generators\footnote{Every finitely generated commutative monoid is finitely presented.} of the equivalence relation given by $\wp$.

\begin{theorem}\label{pluckerdatum}
    Let $\cQ$ be a $\gG$-torsor over $X$ with associated fiber functor $\eta_{\cQ}$. Seeing the $\lambda$-isotypic subspace $V_\lambda^\lambda$ of $V_\lambda$ as a $\gB$-representation, the map $(\cQ^\triangle,i)\mapsto \big(\eta_{\cQ^\triangle}(V^{\lambda}_\lambda)\subseteq \eta_{\cQ^\triangle}(V_\lambda)\xrightarrow[i]{\sim}\eta_{\cQ}(V_\lambda)  \big)_{\lambda\in X^*_+(\gT)}$ induces a bijection between the following data: 
    \begin{enumerate}
        \item[(1)] A $\gB$-reduction $(\cQ^\triangle,i)$ of $\cQ$.
        \item[(2)] A Plücker datum: a family of saturated sub-line bundles $\mathscr{L}_\lambda$ of $\eta_{\cQ}(V_\lambda)$ for $\lambda\in X^*_+(\gT)$ such that for all $\lambda,\mu \in  X_+^*(\gT)$, the image of $\mathscr{L}_{\lambda+\mu}$ in $\eta_{\cQ}(V_{\lambda})\otimes \eta_{\mathcal{Q}}(V_{\mu})$ via the inclusion $\eta_{\cQ}(V_{\lambda+\mu})\hookrightarrow \eta_{\cQ}(V_{\lambda})\otimes \eta_{\cQ}(V_{\mu})$ coincides with $\mathscr{L}_\lambda\otimes \mathscr{L}_\mu$.
    \item[(3)] A family of saturated sub-line bundles $\mathscr{L}_{\lambda}\subseteq \eta_{\cQ}(V_{\lambda})$ for $\lambda\in \Lambda$ satisfying the following conditions:
    \begin{itemize}
        \item For every $(n_\lambda)_{\lambda}\in \bbN^{\Lambda} $, $\bigotimes_{\lambda \in \Lambda} \mathscr{L}_{\lambda}^{\otimes n_\lambda} \subseteq \eta_{\cQ}(V_{\sum_\lambda n_\lambda \lambda})\subseteq \bigotimes_{\lambda \in \Lambda}\eta_{\cQ}(V_\lambda)^{\otimes n_\lambda};$
        \item For every $ ((n_\lambda)_{\lambda},(m_\lambda)_{\lambda})\in R_{\Lambda}$, $\bigotimes_{\lambda \in \Lambda} \mathscr{L}_{\lambda}^{\otimes n_\lambda}=\bigotimes_{\lambda \in \Lambda} \mathscr{L}_{\lambda}^{\otimes m_\lambda}.$
    \end{itemize}
    \end{enumerate}
    Moreover, given two $\gG$-torsors $\cQ_1$ and $\cQ_2$ over $X$ with $\gB$-reductions $(\cQ_1^\triangle,i_1)$ and $(\cQ_2^\triangle,i_2)$, a morphism of $\gG$-torsors $f\colon \cQ_1 \to \cQ_2$ is induced from a morphism of $\gB$-torsors $f^\triangle\colon \cQ_1^\triangle \to \cQ_2^\triangle$ if and only if the corresponding morphism of fiber functors respects the Plücker data of $\cQ_1$ and $\cQ_2$.
\end{theorem}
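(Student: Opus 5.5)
The plan is to reduce to the split case and then to a statement about the flag variety. By \Cref{Bredandflgs}, $\gB$-reductions of $\cQ$ over $X$ are the $X$-points of $\cQ/\gB$. All three kinds of data, and the map between them, are compatible with base change and are étale-local on $X$, because saturated sub-line bundles and equalities of subbundles descend. So we may assume $\cQ$ is trivial, $\cQ = \gG\times X$, with $\eta_\cQ(V) = V\otimes\cO_X$. The statement then becomes the following. For an $L$-algebra (or affinoid) $R$, the map
\[
(\gG/\gB)(R)\to \prod_\lambda \mathbb P(V_\lambda)(R),\qquad g\gB\mapsto (g V_\lambda^\lambda)_\lambda ,
\]
is a bijection onto the Plücker data in (2), and likewise onto the data in (3). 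The rigid case follows from the scheme case over affinoids by \Cref{tannakian_fundamental_equivalence}, together with the equivalence of vector bundles on $\Spec A$ and $\Sp A$, and then by gluing.

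For (1)$\Leftrightarrow$(2), I first check that the map lands in Plücker data. The inclusion $V_{\lambda+\mu}\hookrightarrow V_\lambda\otimes V_\mu$ sends the highest weight line to $V_\lambda^\lambda\otimes V_\mu^\mu$, and this is compatible with $\gB$. Next, choose $\lambda_0$ regular dominant. The map $\gG/\gB\to\mathbb P(V_{\lambda_0})$ is then a closed immersion, since the stabiliser of the highest weight line is exactly $\gB$. So a $\gB$-reduction is determined by $\mathscr L_{\lambda_0}$, which gives injectivity.

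Surjectivity is the main obstacle. I must show that a compatible family $(\mathscr L_\lambda)$ forces $\mathscr L_{\lambda_0}$ to lie in the closed subscheme $\gG/\gB\subseteq \mathbb P(V_{\lambda_0})$, and that $\mathscr L_\lambda = g V_\lambda^\lambda$ for every $\lambda$. The first attempt uses Kostant's theorem: $\gG\cdot[v_{\lambda_0}]$ is cut out by quadrics, namely by the condition that $v\otimes v$ lies in the Cartan component $V_{2\lambda_0}\subseteq V_{\lambda_0}^{\otimes 2}$. The Plücker condition for $\lambda=\mu=\lambda_0$ gives $\mathscr L_{\lambda_0}^{\otimes 2} = \mathscr L_{2\lambda_0}\subseteq V_{2\lambda_0}\otimes R$. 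This works over fields, and in families it holds locally where $\mathscr L_{\lambda_0}$ is generated by one section $v$, using that Kostant's equations cut out the scheme-theoretic orbit. That gives $g\gB\in(\gG/\gB)(R)$ with $\mathscr L_{\lambda_0} = gV_{\lambda_0}^{\lambda_0}$.

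For any other $\lambda$, the condition $\mathscr L_\lambda\otimes\mathscr L_{\lambda_0} = \mathscr L_{\lambda+\lambda_0}$ determines $\mathscr L_\lambda$ as a saturated line. The determination goes as follows: $\mathscr L_\lambda$ is the unique line $\ell$ with $\ell\otimes \mathscr L_{\lambda_0} = \mathscr L_{\lambda+\lambda_0}$, found by contracting against a local generator of $\mathscr L_{\lambda_0}$. The line $gV_\lambda^\lambda$ satisfies the same condition, so the two coincide.

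For (2)$\Leftrightarrow$(3), a datum indexed by $\Lambda$ extends to one indexed by $X^*_+(\gT^{\der})$, hence to $X^*_+(\gT)$. On the $X^*(\gG)$ factor the lines are forced, because $V_{\lambda_0+\chi} = V_{\lambda_0}\otimes\chi$ (\Cref{decomp_weights}). For $\nu = \wp(n)$ I set $\mathscr L_\nu := \bigotimes\mathscr L_\lambda^{\otimes n_\lambda}$. The first bullet in (3) places this line inside $\eta(V_\nu)$. The relations in $R_\Lambda$ make it independent of the choice of $n$, since they generate the equivalence relation. For $\nu$ not in the image of $\wp$, some multiple $k\nu$ is in the image. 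I then define $\mathscr L_\nu$ as the unique saturated line whose $k$-th tensor power is $\mathscr L_{k\nu}$. Its existence comes from pulling back through the identification of $\mathscr L_{k\nu}$ with $g V_{k\nu}^{k\nu}$, obtained by running the Kostant argument for a regular element in the image of $\wp$. The Plücker relations then hold because they hold after multiplying by $k$. The final statement about morphisms follows by the same reduction to trivial torsors. A morphism $f$ respects the lines for a regular $\lambda_0$ exactly when it carries $g_1\gB$ to $g_2\gB$, which is the same as being induced from $\gB$-torsors.
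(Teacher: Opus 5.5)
Your reduction to a trivial torsor, the closed immersion $\gG/\gB\hookrightarrow\mathbb P(V_{\lambda_0})$ for a regular dominant $\lambda_0$, and your use of the ideal-theoretic form of Kostant's theorem to get $\mathscr L_{\lambda_0}=gV_{\lambda_0}^{\lambda_0}$ are all sound. (That form says the quadrics expressing $v\otimes v\in V_{2\lambda_0}$ generate the ideal of the cone of highest weight vectors.) The gap is the next step, which is circular. You say that $gV_\lambda^\lambda$ ``satisfies the same condition'', i.e.\ $gV_\lambda^\lambda\otimes\mathscr L_{\lambda_0}=\mathscr L_{\lambda+\lambda_0}$. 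Under the Cartan embedding $gV_\lambda^\lambda\otimes gV_{\lambda_0}^{\lambda_0}=gV_{\lambda+\lambda_0}^{\lambda+\lambda_0}$, so this is exactly the assertion $\mathscr L_{\lambda+\lambda_0}=gV_{\lambda+\lambda_0}^{\lambda+\lambda_0}$, which is an instance of what you are proving. Your contraction argument only shows that $\mathscr L_\lambda$ is determined by the pair $(\mathscr L_{\lambda_0},\mathscr L_{\lambda+\lambda_0})$; that is injectivity, not surjectivity. The same unproved identification recurs in your treatment of (3), where you assert $\mathscr L_{k\nu}=gV_{k\nu}^{k\nu}$ for the $g$ obtained from a single regular element of the image of $\wp$.

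The missing step is easy to supply. The weight $\lambda+\lambda_0$ is again regular dominant, and the Pl\"ucker relation holds for the pair $(\lambda+\lambda_0,\lambda+\lambda_0)$. So your Kostant argument gives, \'etale locally, some $g'$ with $\mathscr L_{\lambda+\lambda_0}=g'V^{\lambda+\lambda_0}_{\lambda+\lambda_0}$. Inside $V_\lambda\otimes V_{\lambda_0}$ this is an equality of decomposable saturated lines $g'V_\lambda^\lambda\otimes g'V_{\lambda_0}^{\lambda_0}=\mathscr L_\lambda\otimes gV_{\lambda_0}^{\lambda_0}$. Contracting against local functionals equal to $1$ on unimodular generators shows that the corresponding factors agree. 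Hence $g'\gB=g\gB$ by your closed immersion, and $\mathscr L_\lambda=gV^\lambda_\lambda$.

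For (3) it is simpler still. Take $\nu_0=\sum_{\lambda\in\Lambda}\lambda$. It is regular, because some multiple of a regular dominant weight lies in $\wp(\bbN^\Lambda)$, so every simple coroot pairs positively with some $\lambda\in\Lambda$. The first bullet, applied to $n=(1)_{\lambda\in\Lambda}$ and to $2n$, gives $(\bigotimes_\lambda\mathscr L_\lambda)^{\otimes 2}\subseteq V_{2\nu_0}$. Kostant then yields $\bigotimes_\lambda\mathscr L_\lambda=gV^{\nu_0}_{\nu_0}=\bigotimes_\lambda gV^\lambda_\lambda$, and factor comparison gives $\mathscr L_\lambda=gV^\lambda_\lambda$ for every $\lambda\in\Lambda$. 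The Pl\"ucker datum of $g\gB$ is then the required extension, with no $k$-th roots and no appeal to $R_\Lambda$.

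Once repaired, your route differs from the paper's. The paper cites de Daruvar and Haines for (1)$\Leftrightarrow$(2). For (3)$\Rightarrow$(2) it builds $\mathscr L_{k\mu'}$ from tensor products, using $R_\Lambda$ for independence of the chosen expression. It then applies Kostant--Lichtenstein to each $k\mu'$ separately and extracts a $k$-th root of the highest weight vector. Working with one regular weight instead produces a single flag $g\gB$, so all Pl\"ucker relations hold automatically. Your ideal-theoretic use of Kostant's theorem is also what is needed over non-reduced bases such as $\Spec\cR_{K,A}$ with $A$ Artinian, where the theorem is later applied.
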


\begin{proof}
    The proof of the equivalences between (1) and (2) can be found in \cite[\S 2.3]{dedar2020} (see also \cite[Proposition 5.7]{Haines}). The implication $(2)\Rightarrow (3)$ is immediate, note that $\scrL_0 = \cO_X$.
    It remains to show $(3)\Rightarrow (2)$, that is to construct a Plücker datum from a datum as in $(3)$. As explained in the proof of \cite[Lemma A.2]{Var04}, this can be reduced to the case where $\mathcal{Q}$ is trivial. Let $\mu \in X_+^*(\gT)$, then by \Cref{decomp_weights} we can write $\mu=\mu'+\lambda_0$ for $\mu'\in X_+^*(\gT^{\der})$ and $\lambda_0\in X^*(\gG)$. Moreover, there exists $k\in \mathbb{N}$ such that $k\mu'=\wp((n_\lambda)_\lambda)$ for some $(n_\lambda)_\lambda\in \mathbb{N}^{\Lambda}$. We define $$\mathscr{L}_{(n_{\lambda})}\colonequals \bigotimes_{\lambda \in \Lambda} \mathscr{L}_{\lambda}^{\otimes n_\lambda} \subseteq V_{k\mu'}.$$ We first show that this is independent of the choice of $(n_\lambda)_\lambda$. Let us consider elements of $\bbN^{\Lambda}$ satisfying $\wp((n_\lambda)_\lambda)=\wp((m_\lambda)_\lambda)$. Since $R_{\Lambda}$ generates all relations on $\Lambda$, there exists a chain of equivalences 
    $$ (n_\lambda)_\lambda \sim (n_\lambda^1)_\lambda \sim \cdots \sim  (n_\lambda^k)_\lambda \sim (m_\lambda)_\lambda, $$
    where $a\sim b$ means that there exists $w\in \bbN^{\Lambda}$ such that $(a-w,b-w)\in R_{\Lambda}$, or $(b-w,a-w)\in R_{\Lambda}$, or $a=b$ (see \cite{Her70}). The defining relations of our datum imply that $\mathscr{L}_{a}=\mathscr{L}_{b}$ whenever $a\sim b$. Hence $\scrL_{k\mu'} \colonequals \mathscr{L}_{(n_{\lambda})}$ is well defined. We also define $\scrL_{k\mu} \colonequals\scrL_{k\mu'} \otimes \eta_{\cQ}(\lambda_0^k)$. It remains to construct a line $\mathscr{L}_{\mu}\subseteq V_\mu$. Since $\scrL_{k\mu'}\otimes \scrL_{k\mu'}\subset V_{2k\mu'}\subset V_{k\mu'}\otimes V_{k\mu}$, a theorem of Kostant and Lichtenstein (see \cite[Corollary p. 369] {ProcesiLG} and \cite{Li82}) gives the existence of $g\in \gG^{\der}$ and a highest weight vector $v\in V_{k\mu'}$ such that $\mathscr{L}_{k\mu'}$ is generated by $gv$. Moreover, up to scaling $v$, there exists a highest weight vector $v_{\mu'}\in V_{\mu'}$ such that $v_{\mu'}^{\otimes k}=v$. We then set $\mathscr{L}_{\mu'}$ to be the line generated by $gv_{\mu'}$, and $\mathscr{L}_{\mu}\colonequals \mathscr{L}_{\mu'}\otimes \eta_{\cQ}(\lambda_0) \subset V_{\mu}$. One verifies easily that the family $(\mathscr{L}_{\mu})_{\mu\in X_+^*(\gT)}$ satisfies the conditions of a Plücker datum, completing the proof. 
\end{proof}

\begin{example}\label{examples_for_Lambda}
The two main examples of $\Lambda$ we work with are the following.
\begin{itemize}
    \item The set $\Omega = \{\omega_1, \dots, \omega_n\} \subset X^*(\gT)_{\Q}$ of quasi-fundamental weights. This is the setting of \cite{dedar2020} for which the author proves the above theorem. In this case we can take $R_{\Omega} = \emptyset$.
    \item The \emph{Hilbert basis} $\mathfrak{S}$ of $X_+^*(\gT^{\der})$, namely the minimal generating set of the monoid $X_+^*(\gT^{\der})$. Since $X_+^*(\gT^{\der})$ is a strongly convex rational cone (meaning $x, -x \in X_+^*(\gT^{\der})$ implies $x = 0$) this generating set is finite and unique. When $\gG^{\der}$ is simply connected, $\mathfrak{S}$ consists precisely of the fundamental weights.
\end{itemize}
\end{example}

The advantage of the description $(3)$ is that it allows us to work with a finite input.

\begin{example}\label{example_plucker_datum}
    The Plücker datum associated to the flag $g\gB$ for $g\in \gG$ is given by $\{gV_{\lambda}^{\lambda}\subseteq V_{\lambda}\}_{\lambda\in X^*_+(\gT)}$. Indeed, when $g=1$ then the flag $\gB$ has stabilizer $\gB$ and it is easy to check that $\gB$ stabilizes $V_{\lambda}^{\lambda}\subseteq V_{\lambda}$. So, $g \gB$ has stabilizer $g\gB g^{-1}$, which stabilizes the above system of lines.
\end{example}

\subsection{$\gH$-filtrations}\label{sec:Gfil}

Let $A$ be an $L$-algebra. We refer to \cite[Section 6.1]{Lev13} for the basics on $\gH$-gradings and $\gH$-filtrations that we only briefly recall here.
Let $\Fil_A$ (resp. $\Gr_A$) be the category of decreasing, separated, and exhaustive $\Z$-filtered (resp. graded) $A$-modules $M$, where $M$ and $\gr^i(M)$ are finitely generated projective $A$-modules. 
Note that $\Fil_A$ and $\Gr_A$ are rigid tensor categories, but $\Fil_A$ is never abelian, and neither is $\Gr_A$ if $A$ is not a field. We say that a sequence $0\to M'\to M\to M''\to 0$ in $\Fil_A$ is exact if
\[ 0\to\gr^i(M')\to\gr^i(M)\to\gr^i(M'')\to 0\] 
is an exact sequence of projective $A$-modules for every $i$. 
There are forgetful functors $\omega_{\Fil}$ and $\omega_{\Gr}$ from $\Fil_A$ and $\Gr_A$ to $\Vect_A$, and an exact tensor functor $\fil\colon \Gr_A\to \Fil_A, (M,\gr^i)\mapsto(M,\cF^i\colonequals \oplus_{j\ge i}\gr^j$).

An \emph{$\gH$-filtration} (respectively, an \emph{$\gH$-grading}) over $A$ is an $L$-linear fiber functor $\cF\colon\Rep_L(\gH)\to\Fil_A$ (respectively, $\cG\colon \Rep_L(\gH)\to\Gr_A$). We say that a $\gH$-filtration $\cF$ is \emph{neutral} if $\omega_\Fil\circ\cF=\omega_\gH\otimes_LA$. We say that $\cF$ is \emph{splittable} if there exists a $\gH$-grading $\cG$ such that $\cF=\fil\circ\cG$.
Given an $L$-linear fiber functor $\eta : \Rep_L(\gH) \to \Vect_A$, a $\gH$-filtration (respectively, a $\gH$-grading) \emph{on} $\eta$ is a factorization through $\Fil_A$ (respectively, $\Gr_A$) via the forgetful functor.
An $\gH$-grading $\mathcal{G}$ of $\eta$ corresponds to a morphism of group schemes $\nu_{\mathcal{G}}\colon \G_{\mathrm m, A} \to \uAut^\otimes(\eta)$. 


\subsection{Cocharacter associated to a $\gG$-filtration}\label{sec_cocharacters}

Let $\gG,\gB$ be our chosen connected reductive group and Borel subgroup. 
For the rest of this subsection, we stick to the case when $A=L$. 
Let $\eta\colon \Rep_L(\gG)\to \Vect_{\Qpbar}$ be a fiber functor equipped with a $\gG$-filtration $\cF$.
Since the coefficient ring is an algebraically closed field, $\eta$ is neutralizable.
Since $\gG$ is reductive, the filtration is splittable (see \cite[Theorem 4.2.13]{DatOrlikRapoport} or \cite[Ch. IV, Proposition 2.2.5]{SaavedraRivano}).
We can associate to $\cF$ a conjugacy class of cocharacters $[\nu_{\cF}]\in X_*(\gG_{\Qpbar})/\gG(\Qpbar)$ as follows:
\begin{enum}
    \item[(1)] Choose a neutralization $\tau_0\colon \eta\xrightarrow{\sim} \omega_\gG\otimes_L \Qpbar$, and let $\tau_0(\cF)$ the induced neutral filtration on $\omega_\gG \otimes_L  \Qpbar$.
    \item[(2)] Choosing a splitting of $\tau_0(\cF)$, we get a cocharacter $\nu_{\cF}\colon \mathbb{G}_{\mathrm m,\Qpbar}\to \gG_{\Qpbar}$. 
\end{enum}
By \cite[Proposition 6.1.15]{Lev13} the conjugacy class $[\nu_{\cF}]$ is independent of all choices. Given the isomorphism $X_*(\gT)/\gW \cong X_*(\gG_{\Qpbar})/\gG(\Qpbar)$ induced by the natural inclusion, this conjugacy class corresponds to a unique dominant cocharacter $\varpi\in X_\ast(\gT)$ (i.e. such that $\langle\alpha,\varpi\rangle \ge 0$ for all positive roots $\alpha$). 

\begin{definition}\label{regcochar}
We say that a dominant cocharacter $\varpi \colon \Gm \to \gG_L$ is \emph{regular} if $\langle\alpha,\varpi\rangle>0$ for all positive roots $\alpha$ of $\gG$. 
\end{definition}

Conversely, let $\nu\colon \mathbb{G}_{\mathrm m,\Qpbar}\to \gG_{\Qpbar}$ be a cocharacter. Then \cite[Proposition 6.1.4]{Lev13} attaches to $\nu$ a neutral $\gG$-grading to $\nu$, hence a $\gG$-filtration $\fil^\bullet_\nu$ defined over a finite extension of $L$ (i.e. a filtration of $\omega_{\gG}\otimes_L\Qpbar\colon \Rep_L(\gG)\to\Vect_{\Qpbar}$, $\omega_{\gG}$ the forgetful functor) whose associated conjugacy class of cocharacters is $[\nu]$.


\begin{lemma}\label{laststepfiltration}
For every $\lambda\in X^*_+(\gT)$,
$$ \langle  \lambda,\varpi\rangle=\inf\{\ i\in \Z \ | \ \eta_{\cF}^i(V_{\lambda})\neq 0\ \}, $$
and if $\varpi$ is regular, then $\eta_{\cF}^{\langle \lambda,\varpi\rangle}(V_{\lambda})$ is 1-dimensional.
\end{lemma}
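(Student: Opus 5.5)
The plan is to reduce everything to weight-space combinatorics for a split cocharacter. The reduction goes as follows. By \cite[Proposition 6.1.15]{Lev13} and the construction in \Cref{sec_cocharacters}, after choosing a neutralization $\tau_0$ and a splitting, the filtration $\eta_\cF$ becomes $\fil^\bullet_\nu$ on $\omega_\gG\otimes_L\Qpbar$ for some cocharacter $\nu$. Here $\nu = g\varpi g^{-1}$ for some $g\in\gG(\Qpbar)$. Both statements (the smallest index with nonzero filtration step and the dimension of that step) are invariant under isomorphism of filtered fiber functors and under the automorphism $g$ of $\omega_\gG\otimes\Qpbar$. Hence we may assume $\nu=\varpi$ is dominant and factors through $\gT$.

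In that case, for any $V\in\Rep_L(\gG)$, the filtration is
\[ \fil^i_\varpi(V)=\bigoplus_{j\ge i}V_{(j)}, \qquad V_{(j)}=\bigoplus_{\mu:\ \langle\mu,\varpi\rangle=j}V^\mu , \]
where $V^\mu$ are the $\gT$-weight spaces. One point needs care: with decreasing filtrations, the convention in \cite[Proposition 6.1.4]{Lev13} may be $j\ge i$ or $j\le i$ depending on the sign. I will fix it so that the answer reads as stated, namely that the extreme index is attained by the highest weight. If the convention were the opposite, the statement would instead be about $\sup$ and the lowest weight, and $\varpi$ would be replaced by $w_0\varpi$ accordingly. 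This reduction is therefore the one step to check carefully.

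Apply this to $V=V_\lambda$. Every weight $\mu$ of $V_\lambda$ has the form $\mu=\lambda-\sum_\alpha n_\alpha\alpha$ with $n_\alpha\in\bbN$ and $\alpha$ simple. Since $\varpi$ is dominant, $\langle\mu,\varpi\rangle\le\langle\lambda,\varpi\rangle$. Thus $\langle\lambda,\varpi\rangle$ is the extremal index $j$ with $V_{(j)}\neq0$, which gives the characterization of $\langle\lambda,\varpi\rangle$ as the extreme nonzero step (the one-dimensional step sitting there). If $\varpi$ is regular, then $\langle\alpha,\varpi\rangle>0$ for every simple $\alpha$. So equality $\langle\mu,\varpi\rangle=\langle\lambda,\varpi\rangle$ forces all $n_\alpha=0$, i.e. $\mu=\lambda$. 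Hence the extreme graded piece equals $V_\lambda^\lambda$, which is one-dimensional. Since the filtration is split, the corresponding filtration step has that same dimension.

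The main obstacle is purely bookkeeping: matching the sign and indexing conventions of $\fil_\nu$ with the $\inf$ in the statement, so that the extreme step of the filtration corresponds to the highest weight $\lambda$ rather than to the lowest weight $w_0\lambda$. Once that is fixed, the result is immediate from the weight decomposition.
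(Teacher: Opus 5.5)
Your reduction and weight argument are the same as the paper's. Choose $\tau_0$ and the splitting so that $\eta_{\cF}=\fil\circ\eta_{\varpi}$, hence $\eta^i_{\cF}(V_\lambda)=\bigoplus_{\langle\mu,\varpi\rangle\ge i}V_\lambda^\mu$. Then compare $\langle\mu,\varpi\rangle$ with $\langle\lambda,\varpi\rangle$ using dominance (strict under regularity) and $\dim V_\lambda^\lambda=1$. The mathematical core is correct.

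The one step you singled out, however, you resolved wrongly.

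\begin{itemize}
\item There is no sign freedom. The functor $\fil$ is fixed in \Cref{sec:Gfil} by $\cF^i=\bigoplus_{j\ge i}\gr^j$, which is exactly the convention you wrote down.
\item With it, $\{i\in\Z \mid \eta^i_{\cF}(V_\lambda)\neq 0\}=\{i\le\langle\lambda,\varpi\rangle\}$. So $\langle\lambda,\varpi\rangle$ is the \emph{maximum} of this set, attained by the highest weight $\lambda$.
\item Its infimum is $-\infty$. This holds for every exhaustive decreasing filtration of a nonzero space, whatever sign convention is used for the grading. So no choice of convention makes the literal ``$\inf$'' true, and your pairing of ``$\inf$'' with the highest weight is incorrect.
\end{itemize}

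The ``$\inf$'' in the statement should be read as ``$\max$'', equivalently $\inf\{i\mid \eta^{i+1}_{\cF}(V_\lambda)=0\}$. That is what both your computation and the paper's proof actually establish. It is also how the lemma is used in \Cref{firstflag}: there the top step $\eta^{-\langle\lambda,\varpi_{\HT,\tau}\rangle}_{\cF_{\HT,A,\tau}}$, for the dominant cocharacter $-\varpi_{\HT,\tau}$, is taken as the Pl\"ucker line. State this explicitly instead of hedging with ``extreme index''.
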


\begin{proof}
Choosing $\tau_0$ and the splitting of $\tau_0(\cF)$ accordingly, we can assume that $\eta_{\cF}=\fil\circ\eta_{\varpi}$, where $\eta_{\varpi}\colon \Rep_L(\gG)\to \Gr_{\Qpbar}$ is given by the cocharacter $\varpi$. In particular, for $\lambda\in X^*_+(\gT)$, we have $\eta_{\cF}^i(V_{\lambda})=\oplus_{\langle \mu,\varpi\rangle\ge i} V_\lambda^{\mu}$, where $V_{\lambda}=\oplus_{\mu}V_\lambda^{\mu}$ is the decomposition into weight spaces. We note that $\langle \lambda,\varpi \rangle \ge \langle \mu,\varpi \rangle$ for any occurring $\mu \neq \lambda$, with a strict inequality if $\varpi$ is regular. Since $\dim V_{\lambda}^{\lambda}=1$, we get the result.
\end{proof}

We can associate to $\varpi$ an $L$-point $\delta(\varpi)$ on $\Lie(\gT)\GIT{\gW} $ by taking the image of $1$ under the map 
$$\mathrm{d}\varpi\colon L =\Lie(\Gm)\to \Lie(\gT)\to  \Lie(\gT)\GIT\gW.$$ 
Let $A$ be an $L$-algebra, $m\in \Lie(\gG)(A)$, and denote by $\overline{m}$ its image in $\Lie(\gG)\GIT \gG\cong \Lie(\gT)\GIT\gW$. Let $\mathcal{R}(\gG)$ denote the representation ring of $\gG$, which can be identified as $\mathcal{R}(\gG) = \Z[X^*(\gT)]^{\gW}$ (see for instance \cite[20.28]{MilneRed}), where $\Z[X^*(\gT)]$ is the group algebra generated by the symbols $\mathrm{e}^\lambda$ for $\lambda\in X^*(\gT)$. There is a $A$-linear map $A[X^*(\gT)]^{\gW}\to A[\Lie(\gT)\GIT \gW], \lambda \mapsto \mathrm{d}\lambda^*(t) $, where $\mathrm{d}\lambda \colon \Lie(\gT)\to \Ga$ is the derivative map and $A[\Ga]=A[t]$ is the ring of regular functions. Therefore, we can see $\overline{m}$ as a group homomorphism $\mathcal{R}(\gG)\to A$, or as a map $\Rep_L(\gG)\to A$ that satisfies the required compatibilities. 

\begin{lemma}\label{effectonrhoV}
    The image of $(V,\rho_V)\in \Rep_L(\gG)$ under $\overline{m}$ is equal to $\tr(\mathrm{d}\rho_V(m))$. In particular, the image of $(V,\rho_V)$ under $\delta(\varpi)$ is given by the image of $1$ under the map
    $$ A=\Lie(\Gm)(A)\xrightarrow{\mathrm{d}\varpi}\Lie(\gG)(A)\xrightarrow{\mathrm{d}\rho_V}\Lie(\GL(V))(A)\xrightarrow{\tr}A. $$
\end{lemma}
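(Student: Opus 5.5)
The statement has two parts: first identify the image of $(V,\rho_V)$ under $\overline{m}$, then specialize to $\delta(\varpi)$. The plan is to compare both sides as ring homomorphisms $\mathcal{R}(\gG)\to A$ (equivalently, as additive functions on $\Rep_L(\gG)$) and to reduce to elements of $\Lie(\gT)$ using invariance under the adjoint action.

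\textbf{Step 1: invariance of the trace.} The function $m\mapsto \tr(\mathrm{d}\rho_V(m))$ is a regular function on $\Lie(\gG)$. It is $\Ad(\gG)$-invariant, because
$$\mathrm{d}\rho_V(\Ad(g)m)=\rho_V(g)\,\mathrm{d}\rho_V(m)\,\rho_V(g)^{-1}.$$
Hence it factors through $\Lie(\gG)\GIT\gG\cong\Lie(\gT)\GIT\gW$. The map $\overline{m}$ is, by construction, the composite of $\Lie(\gG)\to\Lie(\gG)\GIT\gG$ with the function $\Lie(\gT)\GIT\gW\to\Ga$ attached to the class of $V$ in $\mathcal{R}(\gG)$. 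So it suffices to show that these two invariant functions agree on $\Lie(\gG)\GIT\gG$, and for that it suffices to check agreement after restricting to $\Lie(\gT)$. This reduction uses the Chevalley restriction isomorphism $L[\Lie(\gG)]^{\gG}\cong L[\Lie(\gT)]^{\gW}$, under which restriction to $\Lie(\gT)$ is injective on invariant functions. The identity then holds for all $A$-points by functoriality, since it is an identity of regular functions.

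\textbf{Step 2: the computation on $\Lie(\gT)$.} Decompose $V=\bigoplus_\mu V^\mu$ into $\gT$-weight spaces, so that the class of $V$ is $\sum_\mu \dim V^\mu\,\mathrm{e}^\mu\in\Z[X^*(\gT)]^{\gW}$. For $h\in\Lie(\gT)(A)$, the endomorphism $\mathrm{d}\rho_V(h)$ acts on $V^\mu$ by the scalar $\mathrm{d}\mu(h)$. Therefore
$$\tr(\mathrm{d}\rho_V(h))=\sum_\mu \dim V^\mu\,\mathrm{d}\mu(h).$$
This is exactly the image of $\sum_\mu\dim V^\mu\,\mathrm{e}^\mu$ under the linear map $\mathrm{e}^\lambda\mapsto\mathrm{d}\lambda^*(t)$, evaluated at $h$. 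This proves the first claim.

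\textbf{Step 3: the statement for $\delta(\varpi)$.} By definition, $\delta(\varpi)$ is $\overline{m}$ for $m=\mathrm{d}\varpi(1)\in\Lie(\gT)(L)\subseteq\Lie(\gG)(L)$, base-changed to $A$ as needed. Applying Step 1 to this $m$ gives the displayed composite $\tr\circ\mathrm{d}\rho_V\circ\mathrm{d}\varpi$ evaluated at $1$.

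The main point requiring care is the identification, made implicitly in the paper, of $\overline{m}$ as a map on $\mathcal{R}(\gG)$. One must confirm that the map $\mathrm{e}^\lambda\mapsto\mathrm{d}\lambda^*(t)$ used in the definition is additive, as the trace formula requires, and not multiplicative in the group-algebra structure. Once that convention is fixed, everything reduces to the weight-space computation of Step 2 together with Chevalley restriction.
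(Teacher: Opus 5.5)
Your proof is correct and follows the same strategy as the paper: reduce to elements of $\Lie(\gT)$, then compute the trace weight space by weight space against the class $\sum_\mu \dim V^\mu\,\mathrm{e}^\mu$. The only difference is the reduction step. The paper argues ``by linearity and up to conjugation'' that one may take $m=\mathrm{d}\alpha(1)$ with $\alpha\in X_*(\gT)$, which does not literally cover non-semisimple $m\in\Lie(\gG)(A)$. Your use of the $\Ad$-invariance of $\tr\circ\mathrm{d}\rho_V$ together with the Chevalley restriction isomorphism $L[\Lie(\gG)]^{\gG}\cong L[\Lie(\gT)]^{\gW}$ makes that reduction rigorous for all $A$-points.
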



\begin{proof}
The derivation gives an isomorphism $X_*(\gT)\otimes A\cong \Lie(\gT)(A)$ given by sending a cocharacter $\alpha$ to $\mathrm{d}\alpha(1)$. As $m$ can be written as a sum of rank one elements, we can assume by linearity, and  up to conjugation,  that $m=\mathrm{d}\alpha(1)$ for some $\alpha\in X_*(\gT)$. The identification $\mathcal{R}(\gG)\to \Z[X^*(\gT)]^{\gW}$ is given by $[V,\rho_V]\mapsto \sum_{\chi \in X^*(\gT)}\dim V^{\chi}\cdot\mathrm{e}^\chi$, where $V^{\chi}$ is the $\chi$-isotypic component of $V$. Therefore, the image of $[V,\rho_V]$ under $\overline{\alpha}$ is $\sum_{\chi\in X^*(\gT)}\dim V^{\chi}\cdot \langle \chi,\alpha\rangle$. But this is exactly the trace of $\mathrm{d}\alpha(1)$ acting on $V$.
\end{proof}

\subsection{Continuous semilinear actions on torsors}
\label{sec:cont_semilin_act}

Let $\gH$ be a linear algebraic group over $L$, let $R$ be an $L$-algebra and let $A$ be a topological $R$-algebra with a continuous action of a topological group $\Gamma$.
For $\gamma \in \Gamma$, let us write $A_{\gamma}$ for the $A$-algebra $A$ with structure map $\gamma \colon A \to A$.
Let $\cQ$ be an $\gH$-torsor over $A$.
We can define $\gamma^*\cQ \colonequals \Spec(A) \times_{\gamma, \Spec(A)} \cQ$.
A \emph{semilinear action} of $\Gamma$ on $\cQ$ is a family of isomorphisms $a(\gamma) \colon \gamma^*\cQ \to \cQ$ for $\gamma \in \Gamma$, such that the cocycle condition $a(\gamma_1\gamma_2) = a(\gamma_1) \circ \gamma_1^*a(\gamma_2)$ holds.
We say that the action of $\Gamma$ on $T$ is \emph{continuous}, if for every $V \in \Rep_L(\gH)$, the action of $\Gamma$ on the finite projective $A$-module $\cQ \times^\gH V$ is continuous for the quotient topology induced by any $A$-linear surjection $A^n \twoheadrightarrow \cQ \times^\gH V$.\footnote{One can see this as the topology of \cite{ConradTopologies} on the closed subscheme $\{(1-e)X=0\} \subseteq M_n$, where $e \in M_n(A)$ is an idempotent with image isomorphic to $\cQ \times^\gH V$. This shows independence of the presentation and every $A$-linear map between finite projective $A$-modules is continuous.}

If $\cQ$ happens to be the base change $T_A$ of a torsor $T$ defined over $R$, then we have a canonical isomorphism $\gamma^* T_A = T_{A_{\gamma}} \cong T_A$ induced by the equality $A = A_{\gamma}$. A map $\gamma^* T_A \to T_A$ over $A$ thus corresponds to a map of $R$-schemes $T_A \to T_A$ compatible with $\gamma$.

Now assume in addition, that $\cQ = H_A$ is a trivial torsor.
Since $G_A = \gamma^*H_A$, the maps $a(\gamma)$ assemble into a map $a \colon \Gamma \to H(A)$, which satisfies the cocycle condition $a(\gamma_1\gamma_2) = a(\gamma_1)H(\gamma_1)(a(\gamma_2))$, showing that $a \in Z^1(\Gamma, H(A))$. It turns out that $a$ is continuous for the topology on $H(A)$ explained in \cite[Proposition 2.1]{ConradTopologies} if and only if the action of $\Gamma$ is continuous in the above sense: One direction is clear, for the other direction consider a faithful representation $H \hookrightarrow \GL(V)$ and use that $H(A)$ carries the subspace topology of $\GL(V)$.

For the purpose of the proofs in this paper we may often assume that the torsor is trivial, in which case the characterization by continuous cocycles comes in handy.

\section{$\gH$-$\PG$-modules}
\label{sec:HPG}

The goal of this section is to introduce $\PG$-modules valued in a linear algebraic group $\gH$, extending work of de Daruvar \cite{dedar2020}.
In \Cref{sec:reminders_robba_rings} we recall how $\PG$-modules over various Robba rings are defined, and we introduce the notion of cohomological regularity.
In \Cref{secHPGmod} we give the definition of an $\gH$-$\PG$-module over the Robba ring $\cR_{K,A}$ for an algebraic group $\gH$ and an affinoid algebra $A$, and over $\cR_{K,A}[\tfrac{1}{t}]$ for a finite-dimensional $\Qp$-algebra $A$.
In \Cref{trivialphigammamodules} we give an algebraic description of $\gH$-$\PG$-modules with trivial underlying torsor.
In \Cref{sec_param}, we define parameters for a split reductive group $\gG$, and relate them to $\gT$-valued $\PG$-modules for a maximal split torus $\gT$.

\subsection{Robba rings and $\PG$-modules}
\label{sec:reminders_robba_rings}



In order to fix the notation, we recall some definitions from the theory of $\PG$-modules over Robba rings. We mostly follow \cite[Section 2]{KPX} and \cite{Berger2002}.

Let $K$ be our fixed $p$-adic local field and $K_0$ the maximal subfield of $K$ unramified over $\Q_p$. We write $K_\infty \colonequals K(\mu_{p^\infty})$ and $K_{0,\infty} \colonequals K_0(\mu_{p^\infty})$, and we define $K_0'$ as the maximal subfield of $K_{0,\infty}$ unramified over $\Q_p$. We also let $e_K \colonequals [K_\infty : K_{0,\infty}]$ and $H_K\colonequals\Gal(\Qpbar/K_\infty)$. We let $\Gamma_K \colonequals \GK/H_K = \Gal(K_{\infty}/K)$ and let $\Delta_K \subseteq \Gamma_K$ be the $p$-torsion subgroup. Note that $\Delta_K\subseteq \{\pm 1\}$ and is trivial if $p\neq 2$. The group $\Gamma_K' \colonequals \Gamma_K/\Delta_K$ is pro-cyclic and we choose $\gamma_K \in \Gamma_K$ such that its image in $\Gamma_K'$ is a topological generator.

For $s, r \in\R_{>0}$ with $0<s\le r$, we write $A^1[s,r]$ for the rigid analytic annulus of center $0$ over $K_0'$ and variable $T$ defined by $p^{\tfrac{-r}{p-1}}\le\lvert T\rvert\le p^{\tfrac{-s}{p-1}}$, and $\cR_K^{[s,r]}$ for the ring of rigid analytic functions on $A^1[s,r]$. We use the obvious notation for semi-open annuli, e.g. $A^1(0,r] = \bigcup_{0 < s \leq r} A^1[s,r] $. 



For fixed $r>0$, we let $\cR_K^r=\bigcap_{s\to 0}\cR_K^{[s,r]}$  (i.e. the interior radius is fixed, the exterior radius approaches 1), equipped with the structure of a Fréchet space with respect to the collection of the Gauss norms on the $\cR_K^{[s,r]}$, $0<s\le r$. 
We define the \emph{Robba ring} by $\cR_K \colonequals \bigcup_{r\to 0}\cR_K^r$  (i.e. the interior radius also approaches 1). We simply consider $\cR_K$ as a ring; we do not need to specify a topology on it, since all of the $\cR_K$-valued objects we consider will arise from a $\cR_K^r$-valued object for some $r$. 


We explain the relation of the Robba ring $\cR_K$ defined in an abstract variable $T$ with certain period rings.
For this we follow \cite{Berger2002}.
In \cite[§1.3]{Berger2002} the ring $\bB^{\dagger,R}_{K}$ is defined for every $R>0$.
We use a capital $R$ to distinguish this radius from our radius $r$, since the two are normalized differently.
They are related by the formula $pR = r$.
By \cite[Proposition 1.4]{Berger2002}, there exists some $n(K) \in \Z_{\geq 0}$ and $\pi_K \in \bB_K^{\dagger, R_{n(K)}}$, for $R_n \colonequals p^{n-1}(p-1)$ such that for every $R \geq R_{n(K)}$ every element $x \in \bB_K^{\dagger, R}$ can be written\footnote{In \cite{Berger2002} this is stated with $F$, which is our $K_0$, but there is an erratum saying that $F$ should be replaced with $K_0'$ in the definition of $\bB_K^{\dagger}$, hence also in the definition of $\bB_K^{\dagger, R}$, see \cite{BergerErrata}.} as $x = \sum_{i \in \Z} a_i \pi_K^i$, where $a_i \in K_0'$ and $\sum_{i \in \Z} a_i T^i$ converges on the semi-open annulus $\{p^{-1/(e_K R)} \leq |T| < 1\}$.

The ring $\bB^{\dagger,R}_K$ inherits a Fréchet topology from $\widetilde{\bB}^{\dagger, R}_{\rig}$ defined in \cite[Definition 2.16]{Berger2002}.
In \cite[§2.6]{Berger2002} the ring $\bB^{\dagger,R}_{\rig, K}$ is defined as the completion of $\bB^{\dagger,R}_K$ with respect to the Fréchet topology. It inherits a natural action of $\Gamma_K$.
There is a Frobenius map $\varphi \colon \bB_{\rig, K}^{\dagger,R} \to \bB_{\rig, K}^{\dagger,R/p}$ commuting with the action of $\Gamma_K$.

For every $r>0$, the ring $\cR_K^r$ (respectively, $\cR_K$) is denoted\footnote{Again, $F$ should be replaced with $K_0'$ in the definition of $\cH_F^{\alpha}$ and $\cH_F$.} $\cH_F^\alpha$ with $\alpha=p^{\tfrac{-r}{p-1}}$ (respectively, $\cH_F$) in \cite[Section 2]{Berger2002}. By \cite[Proposition 2.31]{Berger2002}, translated into our notation, for $r \le C(\pi_K)\colonequals\tfrac{1}{e_Kp^{n(K)-1}}$ the evaluation map $T\mapsto\pi_K$ defines an isomorphism
\begin{equation}\label{Ttopi} \cR_K^r\xrightarrow{\sim}\bB_{\rig,K}^{\dagger,r/p}. \end{equation}
The choice of $\pi_K$ is not unique, but the theory of $\PG$-modules over the Robba ring does not depend on it : two different choices simply give a different isomorphism \eqref{Ttopi}, and the restriction on $r$ is not relevant since we are interested in $\PG$-modules over $\cR_K$, i.e. all of our objects need only be defined for $r$ arbitrarily close to 0. We fix a choice of $\pi_K$ once and for all. 




We equip $\cR_K^r$ with the $\Gamma_K$-action and the Frobenius map $\cR_K^r\to\cR_K^{r/p}$ induced by the respective structures on $\bB_{\rig,K}^{\dagger,r/p}$ via the isomorphism \eqref{Ttopi}. These structures depend on the choice of $\pi_K$ (i.e. they might be defined by different formulas in the variable $T$), but $\cR_K^r$ with the extra structure is uniquely determined up to isomorphism via the isomorphism \eqref{Ttopi}.

\subsubsection{$(\varphi, \Gamma_K)$-modules over $\cR_{K,A}$ and $\cR_{K,X}$}

In the following definitions, let $0 < s \leq r \leq C(\pi_K)$ and let $A\in \Aff_L$. 
We set $\cR^{[s,r]}_{K,A} \colonequals \cR_K^{[s,r]}\wh\otimes_{\Q_p}A$, $\cR_{K,A}^r \colonequals \bigcap_{0 < s \leq r}\cR_{K,A}^{[s,r]}$ and $\cR_{K,A} \colonequals \bigcup_{0 < r \leq C(\pi_K)}\cR_{K,A}^r$. By \cite[Lemme 1.3 (i)]{chenevier2010sur} $\cR^{[s,r]}_{K,A}$ is the ring of functions on $A^1[s,r] \times \Sp(A)$ and $\cR^r_{K,A}$ is the ring of functions on $A^1(0,r] \times \Sp(A)$.


\begin{definition}\phantom{a}
    \begin{enum}
        \item A \emph{$(\varphi,\Gamma_K)$-module over $\cR_{K,A}^{r}$} is a finite projective module $D^{r}$ over $\cR^r_{K,A}$ endowed with an isomorphism $\Phi_{D^r}\colon \varphi^* D^{r}\xrightarrow{\sim} D^{r/p} \colonequals D^{r} \otimes_{\cR_{K,A}^{r}} \cR_{K,A}^{r/p}$ of $\cR_{K,A}^{r/p}$-modules together with a continuous semilinear action of $\Gamma_K$ commuting with $\Phi_{D^r}$. 
        \item A \emph{$(\varphi,\Gamma_K)$-module over $\cR_{K,A}$} is a finite projective module $D$ over $\cR_{K,A}$ endowed with a morphism $\Phi_D\colon \varphi^* D \to D$ together with a semilinear action of $\Gamma_K$ commuting with $\Phi_D$, which is the base change to $\cR_{K,A}$ of a $(\varphi,\Gamma_K)$-module over $\cR_{K,A}^{r}$ for some $r \in(0,C(\pi_K)]$.
    \end{enum}
\end{definition}

\begin{remark}
    Note that a free $\cR_{K}$-module $D$ equipped with a commuting, semilinear, continuous action of $\varphi$ and $\Gamma_K$ descends to a $(\varphi,\Gamma_K)$-module over $\cR_{K}^{r}$ for some $r>0$ by \cite[Theorem I.3.3]{Ber08}. Hence $D$ is a $\PG$-module over $\cR_{K}$.
\end{remark}

Now let $X$ be a rigid analytic space over $L$.

\begin{definition}[{cf. \cite[Definition 6.2.1]{KPX}, \cite[Definition 3.3]{dedar2020}}]
For $? = r,\varnothing$, a $\PG$-module over $\cR^?_{K,X}$ is the datum of a $\PG$-module $D_A$ over $\cR^?_{K,A}$ for each open affinoid subdomain $\Sp A$ of $X$, together with isomorphisms $i_{AB}\colon D_B\otimes_{\cR^?_{K,B}}\cR^?_{K,A}\cong D_A$ for every $B$ with $\Sp B\subset \Sp A$, satisfying the usual cocycle conditions.
\end{definition}

Note that we do not define the objects $\cR^?_{K,X}$ themselves; they just appear as a convenient notation. 
One checks that this definition recovers the previous one in the case when $X$ is affinoid. However, as remarked in \cite{KPX}, for $X$ not quasi-compact a $\PG$-module over $\cR_{K,X}$ might not arise as a base change from $\cR^r_{K,X}$ for some $r$, as the required $r$ is not necessarily bounded away from zero as one ranges over the affinoid subdomains of $X$.

Throughout the text, we almost always work over affinoid $X$, with the exception of \Cref{sec:trivar} in which we invoke \cite[Theorem 6.10]{dedar2020} for the $\PG$-module attached to a universal deformation.

We denote by $\Phi\Gamma_{K,A}^+$ the category of $(\varphi,\Gamma_K)$-modules over $\cR_{K,A}$, with the obvious choice of morphisms.
For a rigid space $X$, we use the notation $\Phi\Gamma_{K,X}^+$.

\subsubsection{$(\varphi, \Gamma_K)$-modules over $\cR_{K,A}[\tfrac{1}{t}]$}\label{sec:phigammat}


We now turn to the definition of $\PG$-modules after inverting $t \in \cR_K$. We first recall a few definitions from $p$-adic Hodge theory.

We choose and fix a system $\varepsilon = (\zeta_{p^n})_{n\geq 0}$ of $p^n$-th roots $\zeta_{p^n}$ of $1$ in $\Qpbar$ such that $\zeta_{p^{n+1}}^p = \zeta_{p^n}$, and denote by $t$ the associated ``Fontaine's $2i\pi$ element''\footnote{See the discussion in \cite[after Definition 4.4.7]{BrinonConrad} on the choice of $t$.}. More precisely, if $R=\lim_{x\mapsto x^p}\cO_{\C_p}$ is the tilt of $\cO_{\C_p}$, $\varepsilon\in R$ is a compatible system of roots of unity with $\varepsilon^{(0)}=1$ and $\varepsilon^{(1)}\ne 1$, and $[ \cdot ]\colon R\to W(R)$ denotes the Teichm\"uller lift, we choose $t=\log([\varepsilon])$, which is defined as a power series in $[\varepsilon]-1$ that converges in $B_\dR$. 

We use the same notation both for $t\in\cR_{K}$ and $t\in B_\dR^+$, as is the case in \cite{Berger2002}, since this should not create confusion in what follows. In \cite[Section 2.2]{Berger2002} Berger defines an injective homomorphism $\iota_0:\tilde\bB^{\dagger,r_0}_{K}\to B_\dR^+$ for $r_0=\tfrac{p-1}{p}$ (that amounts to writing elements as power series that converge in $B_\dR$). Note that, for every $r$, $\tilde\bB^{\dagger,r}_{K}$ contains elements obtained as Teichm\"uller lifts of elements of the field $\mathbf E$ from \cite[Section 1.1]{Berger2002}; in particular, we can define an element $t=\log([\varepsilon])\in \tilde\bB^{\dagger,r}_{K}$ exactly as we did for $t\in B_\dR$. We denote the two $t$ with the same letter, since $\iota_0$ maps one to the other when $r\le r_0$.

For $r\le\min\{p-1,C(\pi_K)\}$, the preimage of $t\in\bB^{\dagger,r/p}_{K}$ under \eqref{Ttopi} gives an element $t\in\cR_K^{r}$, and composing $\iota_0$ with \eqref{Ttopi} gives a map $\cR_K^{r}\to B_{\dR}$ sending $t$ to $t$. 



\begin{definition}\label{def_PG_RKAoneovert}
    Let $A\in \Art_L$.
    A \emph{$(\varphi,\Gamma_K)$-module over $\cR_{K,A}[\tfrac{1}{t}]$} is a finite free $\cR_{K,A}[\tfrac{1}{t}]$-module $\cM$ equipped with a semilinear endomorphism $\varphi$ and a semilinear action of  $\Gamma_K$ commuting with $\varphi$, such that there exists an $\cR_K$-lattice $D$ of $\cM$ (i.e. $D$ is a finite projective $\cR_K$-submodule of $\cM$ with $D[\tfrac{1}{t}] \cong \cM$) stable under $\varphi$ and $\Gamma_K$, which is a $(\varphi,\Gamma_K)$-module over $\cR_K$.
    \\ We denote by $\Phi\Gamma_{K,A}$ the category of $(\varphi,\Gamma_K)$-modules over $\cR_{K,A}[\tfrac{1}{t}]$.
\end{definition}

\begin{remark} We do not know whether a $\PG$-module over $\cR_{K,A}[\tfrac{1}{t}]$ always admits an $\cR_{K,A}$-lattice, that is stable under the action of $\varphi$ and $\Gamma_K$, compare \cite[Remark 3.3.1]{BHS19}. We could also have imposed existence of an $\cR_{K,A}$-lattice as a condition, but we chose this weaker condition in order to be compatible with the definition given in \cite[§3.3]{BHS19}.
\end{remark}

In \Cref{def_PG_RKAoneovert} we intentionally do not impose any continuity condition on the action of $\Gamma_K$.
It turns out that the existence of an $\cR_K$-lattice implies that a $(\varphi,\Gamma_K)$-module over $\cR_{K,A}[\tfrac{1}{t}]$ always satisfies the continuity condition in \Cref{def_cont_one_over_t}. It will be important for studying the relationship between extensions and cohomology in \Cref{lattice_lemma}.

We will first discuss continuous maps from a more general perspective.
For the next lemma, we remind the reader that on a finite free $\cR_{K,A}^r$-module $M$ there is a canonical topology by choosing an isomorphism $M \cong (\cR_{K,A}^r)^N$ and taking the product topology on the target. This topology does not depend on the choice of a basis.

\begin{lemma}\label{topol_RKAoneovert}
    Let $A\in \Art_L$, let $X$ be an affine scheme of finite type over $L$, let $S$ be a profinite set, and let $c \colon S \to X(\cR_{K,A}[\tfrac{1}{t}])$ be a map. The following are equivalent.
    \begin{enum}
        \item For all $f \in \cO(X)$ the map $f \circ c \colon S \to X(\cR_{K,A}[\tfrac{1}{t}]) \to \bbA^1(\cR_{K,A}[\tfrac{1}{t}]) = \cR_{K,A}[\tfrac{1}{t}]$ takes values in $t^{-n}\cR_{K,A}^r$ for some $0 < r \leq C(\pi_K)$ and some $n \geq 0$ and is continuous.
        \item There exists a closed immersion $f \colon X \to \bbA^N$, such that the map $f \circ c \colon S \to X(\cR_{K,A}[\tfrac{1}{t}]) \to \bbA^N(\cR_{K,A}[\tfrac{1}{t}]) = (\cR_{K,A}[\tfrac{1}{t}])^N$ takes values in $(t^{-n}\cR_{K,A}^r)^N$ for some $0 < r \leq C(\pi_K)$ and some $n \geq 0$ and is continuous.
    \end{enum}
    Moreover, this condition is functorial in $S$ and $X$.
\end{lemma}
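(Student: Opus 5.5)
The implication (1) $\Rightarrow$ (2) is immediate. Choose generators $f_1,\dots,f_N$ of $\cO(X)$ as an $L$-algebra; they give a closed immersion $f=(f_1,\dots,f_N)\colon X\to\bbA^N$. By (1), each $f_i\circ c$ takes values in some $t^{-n_i}\cR^{r_i}_{K,A}$ and is continuous. Set $r=\min_i r_i$ and $n=\max_i n_i$. Since the inclusions $t^{-n_i}\cR^{r_i}_{K,A}\subseteq t^{-n}\cR^{r}_{K,A}$ are continuous, the map $f\circ c$ lands in $(t^{-n}\cR^r_{K,A})^N$ and is continuous for the product topology.

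For (2) $\Rightarrow$ (1), fix $g\in\cO(X)$. Since $f$ is a closed immersion, $\cO(\bbA^N)=L[x_1,\dots,x_N]\to\cO(X)$ is surjective, so $g$ is the image of some polynomial $P(x_1,\dots,x_N)$. Then $g\circ c=P\circ(f\circ c)$. It therefore suffices to prove a statement about the ring $\cR_{K,A}[\tfrac{1}{t}]$. Namely, sums and products of continuous maps $S\to t^{-n}\cR^r_{K,A}$ are continuous maps into $t^{-2n}\cR^{r}_{K,A}$ (respectively $t^{-n}\cR^r_{K,A}$), and constants from $L$ are harmless.

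The key input is the following. Multiplication $t^{-n}\cR^r_{K,A}\times t^{-m}\cR^r_{K,A}\to t^{-n-m}\cR^r_{K,A}$ is continuous, and addition into $t^{-\max(n,m)}\cR^r_{K,A}$ is continuous. Here $t^{-n}\cR^r_{K,A}$ carries the topology transported along the bijection with $\cR^r_{K,A}$ given by multiplication by $t^n$. This requires $r$ small enough that $t\in\cR^r_K$, so that $t\cdot$ is a continuous injection $\cR^r_{K,A}\to\cR^r_{K,A}$. Under that bijection, multiplication corresponds to multiplication in the Fréchet algebra $\cR^r_{K,A}$, which is jointly continuous. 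Indeed, on each $\cR^{[s,r]}_{K,A}$, a Banach algebra over the finite-dimensional $A$, the product is bounded, and the Fréchet topology is the inverse-limit topology.

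The one point needing care is that the inclusion $t^{-n}\cR^r_{K,A}\subseteq t^{-n-1}\cR^r_{K,A}$ is continuous. This follows because it corresponds to multiplication by $t$ on $\cR^r_{K,A}$. Similarly, the restriction maps $\cR^r_{K,A}\to\cR^{r'}_{K,A}$ for $r'\le r$ are continuous, since they are compatible with the Gauss norms.

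For functoriality in $S$, note that precomposing with a continuous map $S'\to S$ preserves condition (1). For functoriality in $X$, a morphism $h\colon X\to Y$ sends $f\in\cO(Y)$ to $f\circ h\in\cO(X)$, so condition (1) for $c$ implies it for $h\circ c$.

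I expect no serious obstacle. The only subtlety is checking that the topology on $t^{-n}\cR^r_{K,A}$ is compatible when $n$ or $r$ changes, and that $t\in\cR^r_K$ for small $r$.
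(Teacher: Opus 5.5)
Your proposal is correct and follows the same route as the paper: generators of $\cO(X)$ give (1)$\Rightarrow$(2), and for (2)$\Rightarrow$(1) you lift $g$ to a polynomial and use that $\cR^r_{K,A}$ is a topological ring while the power of $t$ stays bounded below; functoriality is handled the same way, and you additionally check continuity of the comparison maps between the various $t^{-n}\cR^r_{K,A}$. One small slip: in your sentence on sums and products the ``respectively'' is reversed (products land in $t^{-2n}\cR^r_{K,A}$, sums in $t^{-n}\cR^r_{K,A}$), though your next paragraph states this correctly.
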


We denote by $\cC(S, X(\cR_{K,A}[\tfrac{1}{t}]))$ the set of maps $c$ satisfying the equivalent conditions of \Cref{topol_RKAoneovert}.

\begin{proof}
The idea here is similar to \cite[§4]{defG}. 
    The implication from (1) to (2) is clear, since $X$ is of finite type, so we can find a closed immersion into some $\bbA^N$ and then use the projections to $\bbA^1$. From (2) to (1), we use a surjection $\cO(\bbA^N) \to \cO(X)$, so we lift $f$ to some $\tilde f : \bbA^N \to \bbA^1$. To deduce continuity of $f \circ c$ it is enough to show, that the polynomial $\tilde f$ induces a continuous map $(t^{-n}\cR_{K,A}^r)^N \to t^{-m}\cR_{K,A}^r$ for some $m \geq 0$. This is clear, since $\cR_{K,A}^r$ is a topological ring and the exponent of $t$ is bounded from below. 
    Functoriality in $S$ is clear, and functoriality in $X$ follows from part (1).
\end{proof}

\begin{definition}\label{def_cont_one_over_t}
    Let $A\in \Art_L$, and let $\cM$ be a finite free $\cR_{K,A}[\tfrac{1}{t}]$-module equipped with a semilinear $\Gamma_K$-action.
    We say that the action is \emph{continuous} if after choosing an $\cR_{K,A}[\tfrac{1}{t}]$-basis $e_1, \dots, e_d \in \cM$ the map $c \colon \Gamma_K \to M_d(\cR_{K,A}[\tfrac{1}{t}])$ given by the matrix representing the operator $\gamma^*\cM \to \cM$ for each $\gamma \in \Gamma_K$ satisfies the following condition:
    There exists some $0 < r \leq C(\pi_K)$ and some $n \in \Z_{\geq 0}$, such that $c$ takes values in $t^{-n}M_d(\cR_{K,A}^r)$ and is continuous for the natural topology\footnote{We take an isomorphism $t^{-n}M_d(\cR_{K,A}^r) \cong (\cR_{K,A}^r)^{d^2}$ and the topology induced by the product topology. This does not depend on the choice of a basis of $t^{-n}M_d(\cR_{K,A}^r)$.} as an $\cR_{K,A}^r$-module.
\end{definition}

One verifies that this condition does not depend on the choice of a basis of $\cM$.

\subsubsection{Cohomology of $\PG$-modules (Herr complex versus standard complex)}
\label{sec:Herr_vs_std}

In \Cref{sec_form_smooth} we face the phenomenon that extensions of $\PG$-modules are naturally classified by $1$-cocycles that are defined in terms of standard $1$-cocycles for group cohomology.
In this section we will define the standard cohomology groups of a $\PG$-module and relate it to cohomology of the Herr complex.

Let $A\in \Art_L$, and let $\cM$ be a $\PG$-module over $\cR_{K,A}[\tfrac{1}{t}]$.
There are obvious analogs of the definitions and statements discussed in this subsection for a $\PG$-module over $\cR_{K,A}$ for an affinoid $\Qp$-algebra $A$, which are left to the reader.
We recall that $\cM = D[\tfrac{1}{t}]$ for a $\PG$-module $D$ over $\cR_K$. Moreover, $D = \bigcup_{0 < r \leq C(\pi_K)} D^r$ and $D^r = \bigcap_{0 < s \leq r} D^{[s,r]}$, where $D^{[r,s]}:=D\widehat\otimes_{\cR_K^r}\cR_K^{[r,s]}$. 

The \emph{Herr complex} is a three-term complex in cohomological degrees $0,1,2$, given by
\begin{align}
    \big[\cM^{\Delta_K} \xrightarrow{(\varphi-1, \gamma_K-1)} \cM^{\Delta_K} \oplus \cM^{\Delta_K} \xrightarrow{(\varphi-1)\pr_2 - (\gamma_K-1)\pr_1} \cM^{\Delta_K} \big]. \label{Herr_complex}
\end{align}
We write $B^i_{\varphi,\gamma_K}(\cM)$, $Z^i_{\varphi,\gamma_K}(\cM)$ and $H^i_{\varphi,\gamma_K}(\cM)$ for the groups of coboundaries, cocycles and cohomology classes defined by the Herr complex.

In the spirit of \Cref{topol_RKAoneovert}, we define $\cC(\Gamma_K^i, \cM) := \bigcup_{m \geq 0} \bigcup_{0 < r \leq C(\pi_K)} \Cont(\Gamma_K^i, t^{-m} D^r)$. As in \Cref{sec_std_cplx}, we define the standard complex for $\PG$-cohomology by 
\begin{align}
    C^{\bullet}_{\varphi, \Gamma_K}(\cM) \colonequals \Tot \big[\cC(\Gamma_K^{\bullet},\cM) \xrightarrow{\varphi-1} \cC(\Gamma_K^{\bullet},\cM) \big].
\end{align}

\begin{lemma}\label{group_to_two_term}
    The map 
    \begin{align}
        \cC((\Gamma_K')^{\bullet}, \cM^{\Delta_K}) \longrightarrow \big[\cM^{\Delta_K} \xrightarrow{\gamma_K-1} \cM^{\Delta_K} \big] 
    \end{align}
    given by the identity in degree $0$ and evaluation at $\gamma_K$ in degree $1$ is a chain homotopy equivalence.
\end{lemma}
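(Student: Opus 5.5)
The approach is to exploit that $\Gamma_K'$ is pro-cyclic, topologically generated by the image of $\gamma_K$. For such a group, continuous cohomology with coefficients in a suitable module is computed by the two-term complex $[M \xrightarrow{\gamma-1} M]$. Here the coefficient module is $\cM^{\Delta_K}$, which carries the topology and boundedness conditions built into $\cC$. First we check that the map in the statement is a chain map. In degree $0$, a $0$-cochain is an element $m$, and its standard differential is $g\mapsto gm-m$. Evaluating at $\gamma_K$ gives $(\gamma_K-1)m$, so the square commutes. In degree $1$, the target complex is zero in degree $2$, so there is nothing to check.

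Next we reduce to the case where $\Gamma_K'$ is isomorphic to $\Zp$. When $p>2$ this holds directly. When $p=2$, $\Gamma_K'$ could a priori be $\Zp\times$ (finite of prime-to-$p$ order). To handle this, pass to the open subgroup topologically generated by $\gamma_K^{N}$ and use Hochschild--Serre: the finite quotient has order invertible in the $\Qp$-vector space $\cM$, so taking invariants is exact. It may well be that $\Gamma_K'$ is already torsion-free here (it embeds in $\Zp^\times/\{\pm1\}$), in which case this step disappears.

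The homotopy inverse will be the map $\psi$ sending $m\in\cM^{\Delta_K}$ in degree $1$ to the unique continuous $1$-cocycle $c_m$ with $c_m(\gamma_K)=m$. This cocycle is defined on $\gamma_K^n$, $n\ge0$, by $c_m(\gamma_K^n)=(1+\gamma_K+\dots+\gamma_K^{n-1})m$, and extended to $\Gamma_K'\cong\Zp$ by continuity. Constructing it is the main obstacle. We must show that the partial sums $\sum_{i<n}\gamma_K^i m$ depend continuously on $n$ for the $p$-adic topology, inside a fixed $t^{-k}D^r$. The plan is to use that $\cM=D[\tfrac1t]$ with $D=\bigcup D^r$, and that $D^r=\bigcap D^{[s,r]}$ is a Fréchet space on which $\Gamma_K$ acts continuously; this continuity comes from $D$ being a $\PG$-module over $\cR_K^r$. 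The element $t$ is $\Gamma_K$-stable up to the unit $\chi^\cyc$, so the action preserves $t^{-k}D^r$. On each Banach piece $D^{[s,r]}$, some open subgroup $\gamma_K^{p^j\Zp}$ acts with $\|\gamma-1\|<1$, as in the standard $\Zp$-module argument. Then $\sum_{i<p^{j}a}\gamma_K^i=(\gamma_K^{p^j a}-1)/(\gamma_K-1)$ can be expanded as a convergent binomial-type series in $(\gamma_K^{p^j}-1)$, which gives uniform continuity in $n$. These estimates must be compatible as $s\to0$, and this is where the work lies. Dividing by $\gamma_K-1$ is understood formally through the polynomial identity, so no invertibility of $\gamma_K-1$ is needed.

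Finally we verify that the two maps are homotopy inverse. The composite (two-term complex) $\to$ (standard complex) $\to$ (two-term complex) is the identity. For the other composite we need a homotopy $h\colon C^{i}\to C^{i-1}$ on the standard complex with $\psi\circ\mathrm{ev}-\id=dh+hd$. In degree $1$ we can take $h=0$ on cocycles, since $c$ and $c_{c(\gamma_K)}$ agree on the dense subgroup $\gamma_K^\Z$ and hence everywhere by continuity. For the higher cochain groups we would use the general fact that continuous cohomology of $\Zp$ with coefficients in a suitable topological module vanishes in degrees $\ge2$, with an explicit contracting homotopy that extends the degree $1$ construction, using the same continuity estimates as in the main step.
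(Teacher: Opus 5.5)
\textbf{The gap: degrees $\geq 2$.} Your degree-$0$ and degree-$1$ analysis is essentially sound. The cocycle $c_m$ is exactly $\gamma\mapsto \frac{\gamma-1}{\gamma_K-1}\,m$, and a continuous cocycle is determined by its value at $\gamma_K$. The problem is in degrees $\geq 2$. For evaluation at $\gamma_K$ to be a chain homotopy equivalence, the standard complex $\cC((\Gamma_K')^{\bullet},\cM^{\Delta_K})$ must have no cohomology in degrees $\geq 2$. You only invoke this as a ``general fact'', together with an unspecified contracting homotopy ``using the same continuity estimates''. For Fr\'echet coefficients such as $t^{-k}D^r$, this vanishing is precisely the nontrivial content of the lemma. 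Your partial-sum estimates only concern $1$-cochains and do not address it.

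Your degree-$1$ homotopy, ``$h=0$ on cocycles'', is also not yet a homotopy. The map $h$ must be defined on all cochains, and in degrees $\geq 2$, where $\psi\circ\mathrm{ev}=0$, it must be a contraction. Conversely, once acyclicity in degrees $\geq 2$ is known, the homotopy is formal: all terms are $\Qp$-vector spaces, and a quasi-isomorphism of complexes of vector spaces is a chain homotopy equivalence.

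The paper supplies the missing input for all degrees at once, in \Cref{comparison_herr_Gamma_modules}:
\begin{itemize}
\item By \Cref{inverter_augmetation}, $\gamma_K-1$ is injective on $\Zp\br{\Gamma_K'}$ with image the augmentation ideal. So $[\Zp\br{\Gamma_K'}\xrightarrow{\gamma_K-1}\Zp\br{\Gamma_K'}]$ is a projective resolution of $\Zp$.
\item The completed bar resolution $\Zp\br{(\Gamma_K')^{\bullet+1}}$ is another projective resolution in profinite $\Zp\br{\Gamma_K'}$-modules, because surjections of profinite $\Zp$-modules admit continuous sections (\Cref{projectivity_lemma}).
\item Hence the two resolutions are chain homotopy equivalent. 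Applying $\Hom_{\cont}(-,M)$ with $M=(t^{-m}D^r)^{\Delta_K}$, a limit of Banach spaces for which $\Hom_{\cont}(\Zp\br{\Gamma^{i+1}},M)=\Cont(\Gamma^{i+1},M)$, gives the equivalence naturally in $M$.
\item This naturality lets one pass to the colimit over $m$ and $r$.
\end{itemize}

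\textbf{Two smaller points.} First, your reduction step has the cases reversed. For $p$ odd, $\Delta_K$ is trivial and $\Gamma_K'=\Gamma_K\subseteq\Zp^\times$ can contain nontrivial prime-to-$p$ torsion; for example $\Gamma_{\Qp}\cong\Zp^\times$. For $p=2$, the quotient $\Gamma_K/\Delta_K$ is torsion-free, hence isomorphic to $\Z_2$. The reduction is in any case unnecessary: your partial-sum construction adapts directly to any pro-cyclic group with an open subgroup isomorphic to $\Zp$, which is the generality of \Cref{inverter_augmetation}. A Hochschild--Serre argument would also only compare cohomology, leaving you to match $[M\xrightarrow{\gamma_K-1}M]$ with the complex for $\gamma_K^N$.

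Second, compatibility of your estimates as $s\to 0$ is not where the work lies. Continuous extensions from the dense image of $\Z_{\geq 0}$ are unique, so the extensions on the various $D^{[s,r]}$ glue automatically to a continuous map into $D^r=\varprojlim_s D^{[s,r]}$.
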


\begin{proof}
    Write $\cM = \bigcup_{m \geq 0} \bigcup_{0 < r \leq C(\pi_K)} t^{-m} D^r$ and pass to the colimit in \Cref{comparison_herr_Gamma_modules}.
\end{proof}


\begin{lemma}\label{Delta_trick}
    The chain map
    \begin{equation} \label{chain_inflation}
    \begin{tikzcd}
        \cM^{\Delta_K} \arrow[d] \arrow[r, "\delta^0"] & \cC(\Gamma_K',\cM^{\Delta_K}) \arrow[d] \arrow[r, "\delta^1"]  & \cC((\Gamma_K')^2,\cM^{\Delta_K}) \arrow[d] \arrow[r, "\delta^2"] & \cdots \\
        \cM \arrow[r, "\delta^0"] & \cC(\Gamma_K,\cM) \arrow[r, "\delta^1"]  & \cC(\Gamma_K^2,\cM) \arrow[r, "\delta^2"] & \cdots
    \end{tikzcd}
    \end{equation}
    induces a quasi-isomorphism $\cC((\Gamma_K')^{\bullet}, \cM^{\Delta_K}) \eqto \cC(\Gamma_K^{\bullet}, \cM)$.
\end{lemma}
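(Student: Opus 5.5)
The plan is to factor the vertical chain map \eqref{chain_inflation} through the Hochschild--Serre (inflation--restriction) formalism for the finite normal subgroup $\Delta_K \subseteq \Gamma_K$. Since $\Delta_K$ has order at most $2$ and $\cM$ is a $\Q_p$-vector space, the functor of $\Delta_K$-invariants is exact on $\Delta_K$-modules and higher continuous cohomology of $\Delta_K$ vanishes. The identification of each term $\cC(\Gamma_K^i,\cM)$ as a colimit over $m$ and $r$ of spaces of continuous functions $\Cont(\Gamma_K^i, t^{-m}D^r)$ allows us to work with a fixed Fréchet $\Gamma_K$-module $N = t^{-m}D^r$ (stable under $\Gamma_K$ up to enlarging $m$; the $\Gamma_K$-action preserves $t^{-m}D^r$ for $r$ small). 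We then pass to the filtered colimit, which is exact and commutes with cohomology.

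Key steps, in order. First, for fixed $N$, we show that the inflation map $\Cont((\Gamma_K')^\bullet, N^{\Delta_K}) \to \Cont(\Gamma_K^\bullet, N)$ is a quasi-isomorphism. For this, I would use the standard argument: $\Delta_K$ is finite of order invertible in $N$. The averaging operator $e = |\Delta_K|^{-1}\sum_{\delta\in\Delta_K}\delta$ is then a $\Gamma_K$-equivariant continuous idempotent projecting $N$ onto $N^{\Delta_K}$, since $\Delta_K$ is normal (indeed central) in the abelian group $\Gamma_K$. This splits $N = N^{\Delta_K}\oplus (1-e)N$ as $\Gamma_K$-modules. Next, we check that continuous cohomology of $\Gamma_K$ with coefficients in $(1-e)N$ vanishes. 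Here the plan is to apply the Hochschild--Serre spectral sequence, or to use the direct fact that a topological generator-free argument applies: $\Delta_K$ acts on $(1-e)N$ through the nontrivial character, so multiplication by $-1$ (that is, by $\delta\in\Delta_K$, which acts trivially on cohomology since inner automorphisms by elements of an abelian group act trivially) is both $-1$ and $1$ on $H^\bullet(\Gamma_K,(1-e)N)$. Hence these groups vanish because $2$ is invertible. Then $H^\bullet(\Gamma_K, N^{\Delta_K}) \cong H^\bullet(\Gamma_K', N^{\Delta_K})$. To see this, write $\Gamma_K \cong \Gamma_K'\times\Delta_K$ when $p=2$ and $\Delta_K$ nontrivial (the sequence splits since $\Gamma_K'\cong\Z_2$ is free pro-$2$ of rank one), and apply a Künneth-type decomposition, or again Hochschild--Serre with $H^{>0}(\Delta_K, N^{\Delta_K})=0$ in $\Q_p$-modules.

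Finally, the colimit over $(m,r)$ of these quasi-isomorphisms is again a quasi-isomorphism. Compatibility of the maps with the colimit transition maps is clear, since inflation is natural in the coefficients.

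The main obstacle I expect is making the Hochschild--Serre, or the conjugation-acts-trivially, argument rigorous for continuous cochains with Fréchet (non-discrete) coefficients. I would sidestep spectral sequences entirely and give explicit chain homotopies: the action of $\delta$ on continuous cochains is homotopic to the identity via the usual explicit homotopy for conjugation by a group element, and this homotopy visibly preserves continuity. Combined with the idempotent splitting, this gives the result without topological subtleties.
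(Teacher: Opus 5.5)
Your proof is correct, but it is organized differently from the paper's. The paper makes the same reduction to a fixed $t^{-m}D^r$ (the terms are filtered unions, so cohomology commutes with them). It then concludes in one step from the inflation--restriction sequence \cite[Proposition 1.6.7]{NSW2e}, using only that $H^i(\Delta_K, t^{-m}D^r)=0$ for $i\ge 1$, since $\lvert\Delta_K\rvert\le 2$ is invertible.

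You instead proceed in two stages:
\begin{itemize}
\item You split $N=N^{\Delta_K}\oplus(1-e)N$ with the averaging idempotent.
\item You kill $H^\bullet(\Gamma_K,(1-e)N)$ by the central-conjugation trick: conjugation by $\delta$ together with $a\mapsto\delta a$ induces the identity on cohomology, while for central $\delta$ it is just the coefficient action, i.e.\ $-1$.
\end{itemize}
You then treat the $\Delta_K$-invariant part separately.

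Note, however, that this last step still appeals to Hochschild--Serre, or to a Künneth formula, which is no easier with Fréchet coefficients. Hochschild--Serre applied directly to $N$, with $H^q(\Delta_K,N)=0$ for $q\ge 1$ and $H^0(\Delta_K,N)=N^{\Delta_K}$, already gives the result with no splitting. So as written, your Steps 1--2 are logically redundant, and the argument is no more elementary than the paper's one-liner.

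The splitting would pay off only if you also handled the invariant part by hand. For example, using the section $s\colon\Gamma_K'\to\Gamma_K$:
\begin{itemize}
\item Restriction along $s$ is a left inverse of inflation.
\item $\mathrm{cor}\circ\mathrm{res}=[\Gamma_K:s(\Gamma_K')]=\lvert\Delta_K\rvert$ is invertible, so restriction is injective.
\item Hence restriction is bijective and inflation is an isomorphism.
\end{itemize}
All of this can be written with explicit continuous cochain formulas. That would genuinely address the continuity concern you raise, which the paper's citation of a result stated for discrete modules glosses over.
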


\begin{proof}
    We have
    \begin{align*}
        \cC(\Gamma_K^i,\cM) &= \bigcup_{m \geq 0} \bigcup_{0 < r \leq C(\pi_K)} \Cont(\Gamma_K^i, t^{-m} D^r) \\
        \cC((\Gamma_K')^i, \cM^{\Delta_K}) &= \bigcup_{m \geq 0} \bigcup_{0 < r \leq C(\pi_K)} \Cont((\Gamma_K')^i, (t^{-m} D^r)^{\Delta_K})
    \end{align*}
    So we may prove the claim for $t^{-m}D^r$ in place of $\cM$.
    Since the chain map \eqref{chain_inflation} induces the inflation maps $H^i(\Gamma_K', t^{-m}D^r) \to H^i(\Gamma_K, (t^{-m}D^r)^{\Delta_K})$ and $H^i(\Delta_K, t^{-m}D^r) = 0$ for $i \geq 1$,
    the claim follows from the inflation-restriction sequence \cite[Proposition 1.6.7]{NSW2e}.
\end{proof}

\begin{proposition}\label{H1_comparison}
    There are canonical isomorphisms $H^i_{\varphi, \gamma_K}(\cM) \cong H^i_{\varphi, \Gamma_K}(\cM)$ for all $i \geq 0$.
\end{proposition}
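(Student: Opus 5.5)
The plan is to compare the two total complexes built from the $\Gamma_K$-cohomology of $\cM$ by passing through two intermediate complexes. By definition,
$$C^{\bullet}_{\varphi,\Gamma_K}(\cM)=\Tot\big[\cC(\Gamma_K^{\bullet},\cM)\xrightarrow{\varphi-1}\cC(\Gamma_K^{\bullet},\cM)\big].$$
The Herr complex \eqref{Herr_complex} is likewise the total complex of the square
$$\big[\cM^{\Delta_K}\xrightarrow{\gamma_K-1}\cM^{\Delta_K}\big]\xrightarrow{\varphi-1}\big[\cM^{\Delta_K}\xrightarrow{\gamma_K-1}\cM^{\Delta_K}\big],$$
up to the usual sign convention, which I will check matches $(\varphi-1)\pr_2-(\gamma_K-1)\pr_1$. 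So it suffices to produce chain maps between the rows that commute with $\varphi-1$ and are quasi-isomorphisms. A map of double complexes that is a quasi-isomorphism on each column induces a quasi-isomorphism on total complexes; since only two columns are involved, this follows from the long exact sequence of the mapping cone (or the five lemma).

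First, I consider the intermediate double complex
$$\Tot\big[\cC((\Gamma_K')^{\bullet},\cM^{\Delta_K})\xrightarrow{\varphi-1}\cC((\Gamma_K')^{\bullet},\cM^{\Delta_K})\big].$$
It maps to the Herr complex via \Cref{group_to_two_term}, using the identity in degree $0$ and evaluation at $\gamma_K$ in degree $1$. In the other direction, it maps to the standard complex via the inflation chain map \eqref{chain_inflation} of \Cref{Delta_trick}.

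I need $\varphi$-equivariance of both maps. Since $\varphi$ commutes with $\Gamma_K$, it preserves $\cM^{\Delta_K}$. Acting on cochains pointwise, $\varphi$ commutes with the group coboundaries, with evaluation at $\gamma_K$, and with inflation. I also need $\varphi$ to preserve the growth and continuity conditions in $\cC(\cdot,\cM)$. It maps $t^{-m}D^r$ into $t^{-m}D^{r/p}$, because $\varphi(t)=pt$ and $\Phi$ is defined over $\cR^{r/p}$; it is continuous there, and it preserves the $\Delta_K$-invariants. Hence pointwise application of $\varphi$ preserves $\cC(\Gamma_K^i,\cM)$.

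With these compatibilities, \Cref{group_to_two_term} and \Cref{Delta_trick} give quasi-isomorphisms of each column, and the cone argument yields quasi-isomorphisms of the total complexes. This gives the canonical isomorphisms $H^i_{\varphi,\gamma_K}(\cM)\cong H^i_{\varphi,\Gamma_K}(\cM)$ for all $i$. For $i\ge 3$ both sides vanish: the Herr side trivially, and the standard side because the comparison is a quasi-isomorphism.

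The main obstacle is bookkeeping rather than conceptual. The chain homotopy equivalence in \Cref{group_to_two_term} must be compatible with $\varphi$ after passing to the colimit over $r$ and $m$. I would handle this by noting that the relevant comparison, \Cref{comparison_herr_Gamma_modules}, is functorial in the continuous $\Gamma_K'$-module. It is then applied to the $\varphi$-equivariant maps $t^{-m}D^r\to t^{-m}D^{r/p}$, so the induced maps on cohomology commute with $\varphi$. The resulting isomorphism is canonical because it depends only on $\gamma_K$, which is already fixed, and not on $r$ or $m$.
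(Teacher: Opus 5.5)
Your proposal is correct and follows the paper's proof. You use the same intermediate complex built from $\cC((\Gamma_K')^{\bullet},\cM^{\Delta_K})$, compared to the standard complex via \Cref{Delta_trick} and to the Herr complex via \Cref{group_to_two_term}; your mapping-cone/five-lemma argument on the two columns plays the role of the paper's spectral sequence degenerating at $E_2$. The compatibility with $\varphi$ that you flag as the main obstacle is in fact immediate: only the chain maps themselves (identity, evaluation at $\gamma_K$, inflation) need to commute with pointwise $\varphi$, not any homotopy inverses.
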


\begin{proof} 
    The idea is to apply \Cref{Delta_trick} to the complex $[\cM \xrightarrow{\varphi - 1} \cM]$. We consider the resulting double complexes: Let us write ${}_1\mathscr D = [\cC(\Gamma_K^{\bullet},\cM) \xrightarrow{\varphi-1} \cC(\Gamma_K^{\bullet},\cM)]$ and ${}_2\mathscr D = [\cC((\Gamma_K')^{\bullet},\cM^{\Delta_K}) \xrightarrow{\varphi-1} \cC((\Gamma_K')^{\bullet},\cM^{\Delta_K})]$ with ${}_1\mathscr D^{p,q} = \cC(\Gamma_K^q, \cM)$ for $q \in \{0,1\}$, zero otherwise, and similarly for ${}_2\mathscr D$. We have a map ${}_1\mathscr D \to {}_2\mathscr D$ induced by \eqref{chain_inflation}.

    We have associated spectral sequences with ${}_iE_0^{p,q} = {}_i\mathscr D^{p,q}$.
    We have ${}_1E_1^{0,q} = {}_1E_1^{1,q} = H^q(\cC(\Gamma_K^{\bullet}, \cM))$ and ${}_2E_1^{0,q} = {}_2E_1^{1,q} = H^q(\cC((\Gamma_K')^{\bullet}, \cM^{\Delta_K}))$.
    It follows from \Cref{Delta_trick}, that the induced map ${}_1E_1^{p,q} \to {}_2E_1^{p,q}$ is an isomorphism for all $q \geq 0$.
    Since ${}_iE_2^{0,q} = \ker({}_iE_1^{0,q} \xrightarrow{\varphi-1} {}_iE_1^{0,q})$ and ${}_iE_2^{1,q} = \coker({}_iE_1^{1,q} \xrightarrow{\varphi-1} {}_iE_1^{1,q})$, we deduce that the map  ${}_1E_2^{p,q} \to {}_2E_2^{p,q}$ is an isomorphism for all $q \geq 0$. The spectral sequence degenerates on the second page, so the map $$H_{\varphi, \Gamma_K}^q(\cM) \cong H^q(\Tot({}_1\mathscr D)) \to H^q(\Tot({}_2\mathscr D)) \cong H_{\varphi, \Gamma_K'}^q(\cM^{\Delta_K})$$ is an isomorphism for the evident definition of $H_{\varphi, \Gamma_K'}^q(\cM^{\Delta_K})$. We can now run a similar argument using \Cref{group_to_two_term} to see that $H_{\varphi, \Gamma_K'}^q(\cM^{\Delta_K}) \cong H_{\varphi, \gamma_K}^q(\cM^{\Delta_K})$.
\end{proof}

\subsubsection{Cohomological regularity}
\label{sec_coh_reg}

We would like to impose regularity conditions on $\PG$-modules in a purely cohomological way.
We anticipate that this formulation will be convenient when working with $\PG$-modules with values in $L$- and $C$-groups.
We first study the cohomology of rank $1$ objects.
The following lemma is standard.

\begin{lemma}\label{coh_char} Let $\delta \colon K^{\times} \to L^{\times}$ be a continuous character. Then:
\begin{enum}
    \item\label{character_coh_0} $H^0_{\varphi, \gamma_K}(\cR_{K,L}(\delta)) \cong L$ if and only if $\delta = z^{\mathbf k}$ for some $\mathbf k \in \Z_{\leq 0}^{\Sigma}$. Otherwise $H^0_{\varphi, \gamma_K}(\cR_{K,L}(\delta)) = 0$.
    \item\label{character_coh_2} $H^2_{\varphi, \gamma_K}(\cR_{K,L}(\delta)) \cong L$ if and only if $\delta = z^{\mathbf k}|N_{K/\Qp}(z)|$ for some $\mathbf k \in \Z_{\geq 1}^{\Sigma}$. Otherwise $H^2_{\varphi, \gamma_K}(\cR_{K,L}(\delta)) = 0$. 
    \item $\dim_L H^1_{\varphi, \gamma_K}(\cR_{K,L}(\delta)) = [K:\Qp]+1$ if and only if either $\delta = z^{\mathbf k}$ for some $\mathbf k \in \Z_{\leq 0}^{\Sigma}$ or $\delta = z^{\mathbf k}|N_{K/\Qp}(z)|$ for some $\mathbf k \in \Z_{\geq 1}^{\Sigma}$. Otherwise $\dim_L H^1_{\varphi, \gamma_K}(\cR_{K,L}(\delta)) = [K:\Qp]$. 
\end{enum}
\end{lemma}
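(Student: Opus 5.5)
This lemma is standard; the plan is to reduce to the known computations of Colmez ($K=\Qp$) and Nakamura (general $K$), as recorded in \cite[Proposition 6.2.8]{KPX}, and make the identifications with our conventions explicit. By \Cref{H1_comparison} it is harmless to compute with the Herr complex \eqref{Herr_complex}, or equivalently with the standard complex. Throughout, $\cR_{K,L}(\delta)$ denotes the rank-one $\PG$-module attached to $\delta$ as in \cite[Construction 6.2.4]{KPX}, normalized so that the cyclotomic character has Hodge--Tate weight $1$ as in \cite{bhssmooth}.

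\emph{Step 1: $H^0$.} A nonzero invariant $x\in\cR_{K,L}(\delta)^{\varphi=1,\Gamma_K=1}$ gives an injection $\cR_{K,L}\to\cR_{K,L}(\delta)$, i.e. a nonzero element of $\Hom(\cR_{K,L},\cR_{K,L}(\delta))$. Rank-one submodules of $\cR_{K,L}(\delta)$ are the $t_\Sigma^{\mathbf k}\cR_{K,L}(\delta)$ with $\mathbf k\ge0$, where $t_\Sigma^{\mathbf k}$ denotes the product of the $\sigma$-components $t_\sigma^{k_\sigma}$ of $t$. Here one uses Berger's/KPX's description of saturation: the quotient by the image of $\cR_{K,L}$ is $t$-torsion and is supported at the $\sigma$-components of $t$. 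Since $t_\Sigma^{\mathbf k}\cR_{K,L}(\delta)\cong\cR_{K,L}(\delta z^{\mathbf k})$, we get $H^0\neq0$ if and only if $\delta z^{\mathbf k}=1$ for some $\mathbf k\ge0$. This gives $\delta=z^{-\mathbf k}$, and $H^0$ is then one-dimensional. This is exactly \cite[Proposition 6.2.8(1)]{KPX}.

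\emph{Step 2: $H^2$.} Use Tate duality for $\PG$-modules \cite[Theorem 4.4.5]{KPX}, namely $H^2(\cR(\delta))\cong H^0(\cR(\delta^{-1}\varepsilon))^\vee$, where $\varepsilon=z\lvert N_{K/\Qp}(z)\rvert$ (summed over embeddings) is the character of $\cR_{K,L}(\chi^\cyc)$ in our normalization. By Step 1, $H^2\ne0$ if and only if $\delta^{-1}\varepsilon=z^{-\mathbf k'}$ with $\mathbf k'\ge0$, i.e. $\delta=z^{\mathbf k}\lvert N_{K/\Qp}(z)\rvert$ with $\mathbf k=\mathbf k'+\mathbf 1\in\Z_{\ge1}^\Sigma$.

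\emph{Step 3: $H^1$.} Use the Euler--Poincaré formula $\sum(-1)^i\dim H^i=-[K:\Qp]\cdot\rank$ \cite[Theorem 4.4.5(1)]{KPX}. This gives $\dim H^1=[K:\Qp]+\dim H^0+\dim H^2$. The two exceptional families are disjoint, since $\lvert N_{K/\Qp}(z)\rvert$ is not algebraic. Hence $H^0$ and $H^2$ are never simultaneously nonzero, and the dichotomy follows from Steps 1 and 2.

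The only real obstacle is bookkeeping of conventions: the sign of Hodge--Tate weights and the precise form of the dualizing character. Otherwise the statement is a direct citation of \cite[Proposition 6.2.8]{KPX} combined with \Cref{H1_comparison}.
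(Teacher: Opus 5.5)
Your proof is correct and follows the paper's structure. You compute $H^0$ from the known rank-one computation, get $H^2$ by Tate duality against $\cR_{K,L}(\delta^{-1}N_{K/\Qp}(z)|N_{K/\Qp}(z)|)$ reducing to $H^0$, and get $H^1$ from the Euler characteristic formula. The only real difference is in the $H^0$ step, where the paper goes through $B$-pairs (Berger's equivalence plus Nakamura's computation) rather than citing \cite[Proposition 6.2.8]{KPX} or a saturation sketch like yours. Also, the dualizing character is $z^{\mathbf 1}|N_{K/\Qp}(z)|$, a product over the embeddings rather than a sum.
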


\begin{proof}
    Denote by $B_L$ the trivial $L$-$B$-pair.
    For an $L$-$B$-pair $W$, we have $H^0(\GK, W) \cong \Hom(B_L, W)$ by \cite[Proposition 2.2 (1)]{NakClass}.
    The latter is canonically isomorphic to $\Hom_{\PGcat^+_{K,L}}(\cR_{K,L}, D(W))$ by \cite[Theorem A]{Berger_2008constr}.
    This in turn is the same as $H^0_{\varphi, \gamma_K}(D(W))$, which follows easily by inspection of the Herr complex.
    Part $(1)$ now follows from \cite[Proposition 2.14]{NakClass}.
    
    By Tate duality, \cite[Theorem 5.7]{LiuCohAndDuality}, $H^2_{\varphi, \gamma_K}(\cR_{K,L}(\delta)) \cong H^0_{\varphi, \gamma_K}(\cR_{K,L}(\delta^{-1}N_{K/\Qp}(z)|N_{K/\Qp}(z)|))$ and as $N_{K/\Qp}(z) = z^{\mathbf 1}$ part $(2)$ also follows.

    Part $(3)$ follows from the Euler characteristic formula \cite[Theorem 1.2]{LiuCohAndDuality} and by combining $(1)$ and $(2)$.
\end{proof}

We have a generalization of \Cref{coh_char} with affinoid coefficients.
Let $A \in \Aff_L$.

\begin{lemma}\label{coh_char_A}
    Let $\delta \colon K^{\times} \to A^{\times}$ be a continuous character and let $D \colonequals \cR_{K,A}(\delta)$. Then:
    \begin{enum}
        \item $H^0_{\varphi, \gamma_K}(D) = 0$ if and only if for all maximal ideals $\frakm$ of $A$, the character $\delta \colon K^{\times} \to (A/\frakm)^{\times}$ is not $z^{\mathbf k}$ for some $\mathbf k \in \Z_{\leq 0}^{\Sigma}$.
        \item $H^2_{\varphi, \gamma_K}(D) = 0$ if and only if for all maximal ideals $\frakm$ of $A$, the character $\delta \colon K^{\times} \to (A/\frakm)^{\times}$ is not $z^{\mathbf k}|N_{K/\Qp}(z)|$ for some $\mathbf k \in \Z_{\geq 1}^{\Sigma}$.
        \item $H^1_{\varphi, \gamma_K}(D)$ is a finite projective $A$-module of rank $[K:\Qp]$ if and only if for all maximal ideals $\frakm$ of $A$, the character $\delta \colon K^{\times} \to (A/\frakm)^{\times}$ is neither $z^{\mathbf k}$ for some $\mathbf k \in \Z_{\leq 0}^{\Sigma}$ nor $z^{\mathbf k}|N_{K/\Qp}(z)|$ for some $\mathbf k \in \Z_{\geq 1}^{\Sigma}$.
    \end{enum}
\end{lemma}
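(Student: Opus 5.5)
The plan is to deduce the affinoid statement from the field case \Cref{coh_char} by combining the finiteness and base-change results for $\PG$-cohomology over affinoid algebras from \cite{KPX}. By \cite[Theorem 4.4.5]{KPX}, the Herr complex $C^\bullet_{\varphi,\gamma_K}(D)$ is a perfect complex of $A$-modules concentrated in degrees $[0,2]$. Moreover, its formation commutes with arbitrary base change $A\to B$ of affinoid algebras, in the derived sense: $C^\bullet(D)\otimes^{\mathbf L}_A B\simeq C^\bullet(D_B)$. The Euler characteristic of $C^\bullet(D)$ is $-[K:\Qp]$, since it is locally constant and equal to $-[K:\Qp]$ at each point by \cite[Theorem 1.2]{LiuCohAndDuality}.

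For (2), I use that $C^\bullet$ sits in degrees $[0,2]$. The top cohomology is therefore right exact under base change, so $H^2(D)\otimes_A A/\frakm\cong H^2(D_{A/\frakm})$. Since $H^2(D)$ is a finite $A$-module, Nakayama's lemma shows that $H^2(D)=0$ if and only if $H^2(D)\otimes_A A/\frakm=0$ for all maximal ideals $\frakm$. Here $A/\frakm$ is a finite extension $L'$ of $L$, and I apply \Cref{coh_char} over $L'$, which is legitimate since it contains the Galois closure of $K$. This gives exactly the stated criterion.

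For (1), I would dualize. The dual complex $R\Hom_A(C^\bullet(D),A)$ is again perfect, and by Tate duality in families \cite[Theorem 4.4.5]{KPX} it is identified with $C^\bullet(D^\vee(\chi))[2]$, where $\chi$ is the character $N_{K/\Qp}(z)|N_{K/\Qp}(z)|$. Vanishing of $H^0(D)$ is not directly detected fibrewise, so I would instead argue as follows.
\begin{itemize}
\item If $H^0(D_{A/\frakm})=0$ for all $\frakm$, then by duality $H^2(D^\vee(\chi)_{A/\frakm})=0$ for all $\frakm$, so $H^2(D^\vee(\chi))=0$ by (2).
\item The complex $C^\bullet(D^\vee(\chi))$ is then perfect with amplitude in $[0,1]$.
\item By the same Nakayama argument applied to the dual complex, $C^\bullet(D)$ has amplitude in $[1,2]$, so $H^0(D)=0$.
\end{itemize}
Conversely, if $H^0(D)=0$ but some fibre $H^0(D_{A/\frakm})\neq 0$, the fibre has the special form $z^{\mathbf k}$. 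In this direction I would work locally and use that $H^0(D)\hookrightarrow H^0(D_{A'})$ for a suitable reduced affinoid neighbourhood. The main obstacle is that this direction can genuinely fail for nonreduced $A$, or when $\delta$ equals $z^{\mathbf k}$ only at isolated points. For example, on a one-dimensional family through the point $z^0$, the space $H^0$ vanishes globally, yet the fibre at that point is nonzero. So I expect the statement requires either the ``if'' direction only, or an interpretation of (1) as vanishing after every base change. I would prove the ``if'' direction as above and check the intended reading against the paper's later use.

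For (3), suppose the fibrewise conditions of (1) and (2) hold. Then $H^0$ and $H^2$ vanish after every base change, so $C^\bullet(D)$ is perfect with amplitude concentrated in degree $1$. Hence $H^1(D)$ is finite projective, of rank $-\chi=[K:\Qp]$. Conversely, if $H^1(D)$ is projective of rank $[K:\Qp]$, then base change to fibres yields the exact relation between fibre dimensions. Combined with \Cref{coh_char}(3), this gives $\dim H^1(D_{A/\frakm})=[K:\Qp]$ at every $\frakm$, which excludes the special characters.
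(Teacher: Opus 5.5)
\textbf{The gap is in the converse of (3).} Most of the rest is correct. Your arguments for (2) in both directions (right-exactness of the top cohomology of a perfect complex in degrees $[0,2]$, plus Nakayama), for the ``if'' direction of (1) by dualizing to (2), and for the ``if'' direction of (3) all work. They run in a different order from the paper. The paper proves the ``if'' part of (1) first, via the injection $\cR_{K,A}\hookrightarrow\prod_{\frakm,n}\cR_{K,A/\frakm^n}$ and induction over Artinian quotients, and then gets (2) from (1) by Tate duality. Your order is cleaner, and it makes explicit the amplitude statement that (3) actually needs.

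You are also right that the converse of (1) is false, e.g.\ for $K=\Qp$, $A=L\langle T\rangle$, $\delta=\unr(1+pT)$. The paper's proof only establishes the ``if'' direction there. The cause is positive dimensionality, not nonreducedness. For Artinian local $A$ with $\frakm^e=0\neq\frakm^{e-1}$, the converse does hold: $\frakm^{e-1}D$ has nonzero $H^0$ when the residual character is special, and $H^0$ is left exact.

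In (3) you write that ``base change to fibres yields the exact relation between fibre dimensions''. This is unjustified. Projectivity of $H^1_{\varphi,\gamma_K}(D)$ does not make $H^1$ commute with base change, because $H^1_{\varphi,\gamma_K}(D_{A/\frakm})$ also picks up $\operatorname{Tor}_1^A(H^2_{\varphi,\gamma_K}(D),A/\frakm)$.

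In fact the implication is false. Take $A=L\langle T\rangle$ and $\delta=z^{\mathbf k}|N_{K/\Qp}(z)|\cdot\unr(1+pT)$ with $\mathbf k\in\Z_{\geq 1}^{\Sigma}$.
\begin{itemize}
\item Restricting to $\cO_K^\times$ shows that no fibre is $z^{\mathbf m}$ with $\mathbf m\leq 0$. Your own duality argument then gives $C^\bullet_{\varphi,\gamma_K}(D)\simeq[P^1\xrightarrow{g}P^2]$ in degrees $1,2$.
\item Likewise, the fibre is of the form $z^{\mathbf m}|N_{K/\Qp}(z)|$ with $\mathbf m\geq 1$ only at $T=0$. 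So $H^2_{\varphi,\gamma_K}(D)=\operatorname{coker} g$ is torsion, supported at $T=0$.
\item Since $A$ is a PID, $H^1_{\varphi,\gamma_K}(D)=\ker g$ is free. By the Euler characteristic it has rank $\rank P^1-\rank P^2=[K:\Qp]$.
\item Yet the fibre at $T=0$ is $z^{\mathbf k}|N_{K/\Qp}(z)|$, and $H^2_{\varphi,\gamma_K}(D)\otimes_A A/(T)\cong H^2_{\varphi,\gamma_K}(D_{A/(T)})\neq 0$.
\end{itemize}

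The paper's argument for this direction, namely that the Euler characteristic forces $H^0=H^2=0$, fails at the same point: the Euler characteristic only sees ranks and cannot rule out torsion in $H^2$. What can actually be proved is (2) as stated, together with the ``if'' directions of (1) and (3). These suffice for the later applications to $\gB$-regular parameters.
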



\begin{proof}
    Part (1) for $A = L$ is part (1) of \Cref{coh_char}.
    If $A$ is artinian, then part (1) follows easily by induction.
    We now extend part (1) to affinoid coefficients.
    Assume first that $\delta$ is not $z^{\mathbf k}$ for some $\mathbf k \in \Z_{\leq 0}^{\Sigma}$ after composing with every projection $A \to A/\frakm$.
    The map $A \to \prod_{\frakm, n} A/\frakm^n$, where the product varies over all maximal ideals of $A$ and all positive integers $n$ is injective. By explicit inspection of the rings this also holds for $\cR^{[s,r]}_{K,A} \to \prod_{\frakm, n} \cR^{[s,r]}_{K, A/\frakm^n}$.
    By passing to the limit and then to the colimit also $\cR_{K,A} \to \prod_{\frakm, n} \cR_{K, A/\frakm^n}$ is injective.
    We conclude that the map $H^0_{\varphi, \gamma_K}(\cR_{K,A}(\delta)) \to \prod_{\frakm, n} H^0_{\varphi, \gamma_K}(\cR_{K,A/\frakm}(\delta)) = 0$ is injective.
    Again, by Tate duality \cite[Theorem 2.3.11]{KPX} part (2) follows.

    For part (3), first assume that for all maximal ideals $\frakm$ of $A$, the character $\delta : K^{\times} \to (A/\frakm)^{\times}$ is neither $z^{\mathbf k}$ for some $\mathbf k \in \Z_{\leq 0}^{\Sigma}$ nor $z^{\mathbf k}|N_{K/\Qp}(z)|$ for some $\mathbf k \in \Z_{\geq 1}^{\Sigma}$. We know by \cite[Theorem 4.4.5 (1)]{KPX} that $C_{\varphi, \gamma_K}^{\bullet}(D)$ is quasi-isomorphic to a perfect complex of $A$-modules concentrated in cohomological degrees $[0,2]$. So, by parts (1) and (2), $C_{\varphi, \gamma_K}^{\bullet}(D)$ is quasi-isomorphic to a finite projective $A$-module concentrated in degree $1$. Its rank follows from the Euler characteristic formula.

    Assume that $H^1_{\varphi, \gamma_K}(D)$ is a finite projective $A$-module of rank $[K:\Qp]$.
    Then by the Euler characteristic formula, \cite[Theorem 4.4.5 (2)]{KPX}, $H^0_{\varphi, \gamma_K}(D) = H^2_{\varphi, \gamma_K}(D) = 0$.
    We conclude by parts (1) and (2).
\end{proof}

This motivates the following definition for general $\PG$-modules.

\begin{definition}\label{def_coh_reg}
    Let $D$ be a $\PG$-module over $\cR_{K,A}$.
    We say that $D$ is \emph{cohomologically regular} if $H^1_{\varphi, \gamma_K}(D)$ is a finite projective $A$-module of constant rank $[K:\Qp] \cdot \rank D$.
    We make the analogous definition for $\PG$-modules over $\cR_{K,A}[\tfrac{1}{t}]$ when $A \in \Art_L$.
\end{definition}

\begin{lemma}\label{coh_reg_ses}
    Let $0 \to D_1 \to D_2 \to D_3 \to 0$
    be a short exact sequence of $\PG$-modules over $\cR_{K,A}$.
    If $D_1$ and $D_3$ are cohomologically regular, then so is $D_2$.
    The analogous statement holds for $\PG$-modules over $\cR_{K,A}[\tfrac{1}{t}]$ under the assumption that $A\in \Art_L$.
\end{lemma}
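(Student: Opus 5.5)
The plan is to characterize cohomological regularity by the vanishing of $H^0$ and $H^2$, and then propagate that vanishing through the long exact sequence. For affinoid $A$, \cite[Theorem 4.4.5]{KPX} says $C^\bullet_{\varphi,\gamma_K}(D)$ is quasi-isomorphic to a perfect complex of $A$-modules in degrees $[0,2]$ with Euler characteristic $-[K:\Qp]\rank D$. Its formation also commutes with base change. From this I would prove the following.

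\emph{Claim.} $D$ is cohomologically regular if and only if the base change $D_x$ to every maximal ideal $x$ of $A$ satisfies $H^0_{\varphi,\gamma_K}(D_x)=H^2_{\varphi,\gamma_K}(D_x)=0$.

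For the forward direction, suppose $H^1$ is projective of the expected rank. Represent the complex by a perfect complex $P^0\to P^1\to P^2$. By the Euler characteristic, the alternating sum of ranks of $H^0$ and $H^2$ at each point vanishes. I would then argue as in the last paragraph of the proof of \Cref{coh_char_A}.

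I am less sure this direction holds. It may be cleaner to replace the claim by a direct argument: $D_1,D_3$ regular implies, by the claim's converse, that $H^2(D_{1,x})=H^0(D_{3,x})=0$ pointwise. The converse direction is the real content. If $H^0$ and $H^2$ vanish at all points, then top-degree base change gives $H^2(D)\otimes A/\frakm=H^2(D_x)=0$, so $H^2(D)=0$ by Nakayama. The complex then truncates to a perfect complex in degrees $[0,1]$, whose $H^0$ vanishes fibrewise, so it is quasi-isomorphic to a projective module in degree $1$. Its rank is given by the Euler characteristic.

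Given this, the lemma follows by a pointwise argument. Base change to each point $x$ keeps $0\to D_{1,x}\to D_{2,x}\to D_{3,x}\to0$ exact, since the modules are projective over $\cR_{K,A}$. The long exact sequence of Herr cohomology then gives
\[
H^0(D_{1,x})\to H^0(D_{2,x})\to H^0(D_{3,x}),
\]
so $H^0(D_{2,x})=0$, and similarly $H^2(D_{2,x})=0$. Applying the converse direction to $D_2$, with $\rank D_2=\rank D_1+\rank D_3$, yields regularity of $D_2$.

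The main obstacle is the forward direction: deducing from projectivity of $H^1$ that $H^0$ and $H^2$ vanish pointwise. For this I would use that $H^0\subseteq$ a projective module, and that $H^2$ has projective $H^1$ next to it in the perfect complex. More concretely, since $H^1$ is projective, the complex splits locally, and $H^0(D_x)$ and $H^2(D_x)$ are the fibre cohomologies of a two-term perfect complex of Euler characteristic $0$ in degrees $0$ and $2$. I expect that this forces the desired vanishing only after combining with the global Euler characteristic computed from ranks. The global computation gives that the generic ranks of $H^0$ and $H^2$ are equal. To finish, I would also use that $H^0(D)$ injects into the product of its fibres, as in the proof of \Cref{coh_char_A}.

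If this proves problematic, I would instead note that in the applications $A$ is artinian. In that case one can argue by dévissage to $A=L$, where dimensions are computed directly: regularity is equivalent to $\dim H^1=[K:\Qp]\rank D$, equivalently $\dim H^0=\dim H^2=0$ by the Euler characteristic, together with projectivity checked by length counting.

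The $\cR_{K,A}[\tfrac1t]$ version is identical. For $A\in\Art_L$, the Herr cohomology of $\cM$ is the colimit of that of $t^{-m}D$, which is finite-dimensional for large $m$. The same long exact sequence and Euler characteristic arguments then apply, with \Cref{H1_comparison} allowing the use of either complex.
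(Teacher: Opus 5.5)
The proposal has a genuine gap. Your converse direction (pointwise vanishing of $H^0$ and $H^2$ implies $H^1$ projective of rank $[K:\Qp]\rank D$) is fine, and so is the pointwise long exact sequence. But the argument rests on the \emph{forward} direction of your Claim. That is the step that turns the regularity of $D_1$ and $D_3$ into $H^0(D_{i,x})=H^2(D_{i,x})=0$. You do not prove it, and for non-artinian $A$ it is false.

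Take $A=L\langle T\rangle$ and $D=\cR_{K,A}(\delta)$ with $\delta=\varepsilon\cdot\unr(1+pT)$. Here $\unr(1+pT)$ is trivial on $\cO_K^\times$ and sends $\varpi_K$ to $1+pT\in A^\times$.
\begin{itemize}
\item At $T=0$ we have $\delta_0=\varepsilon=z^{\mathbf 1}|N_{K/\Qp}(z)|$, so $H^2(D_0)\cong L$ by \Cref{coh_char}. At $x\neq0$, $\delta_x$ is not of this shape, since that would need $\unr(1+px)=1$.
\item No $\delta_x$ is algebraic, as that would force $1+px=|\cO_K/\varpi_K|$, impossible for $|x|\le1$. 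Hence $H^0(D_x)=0$ for every maximal ideal.
\item Since $A$ is a principal ideal domain, the perfect, base-change compatible complex $C^\bullet_{\varphi,\gamma_K}(D)$ of \cite[Theorem 4.4.5]{KPX} gives universal coefficient sequences
\[ 0\to H^i(D)\otimes_A k(x)\to H^i(D_x)\to \mathrm{Tor}_1^A\big(H^{i+1}(D),k(x)\big)\to 0 . \]
\item For $i=0$ these show $H^1(D)$ is torsion-free, hence free. Its rank is $[K:\Qp]$, because $H^0(D)=0$ and $H^2(D)$ is supported at $T=0$.
\end{itemize}
So $D$ is cohomologically regular in the sense of \Cref{def_coh_reg}, yet $H^2(D)\otimes_A A/(T)\cong L$. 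The Euler characteristic only sees generic ranks. Neither your rank count nor the injectivity of $H^0(D)$ into its fibres can exclude such a torsion $H^2$. The same example contradicts the ``only if'' part of \Cref{coh_char_A}(3).

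The paper's one-line proof goes through \Cref{coh_char_A}, i.e.\ through exactly this implication (the step ``by the Euler characteristic formula $H^0=H^2=0$''). So your route is the paper's and shares its weak point. Your argument does prove two things.
\begin{itemize}
\item For $A\in\Art_L$, the forward direction holds by counting lengths. This covers the $\cR_{K,A}[\tfrac1t]$ statement, using that $H^i(t^{-m}D)\to H^i(\cM)$ is an isomorphism for $m\gg0$. Indeed $\ell_A(H^0)-\ell_A(H^1)+\ell_A(H^2)=-\ell(A)[K:\Qp]\rank D$, and $H^1$ free of rank $[K:\Qp]\rank D$ forces $H^0=H^2=0$.
\item For affinoid $A$, your converse direction plus the pointwise long exact sequence prove the lemma once ``cohomologically regular'' is replaced by ``$H^0(D_x)=H^2(D_x)=0$ for every maximal ideal $x$''.
\end{itemize}
That pointwise condition is exactly what \Cref{Breg_coh_reg} supplies for the pieces $V^n/V^{n+1}$. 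It is also what \Cref{better_unipotent_H1} subsequently uses, namely vanishing of $H^0$ and $H^2$ after base change. So this is the version to state and prove. Your fallback is not available: \Cref{better_unipotent_H1} applies the lemma over affinoid, not artinian, $A$.
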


\begin{proof}
    This follows from \Cref{coh_char_A} and the long exact sequence.
\end{proof}

\begin{lemma}\label{coh_reg_param}
    Let $\cM = \cR_{K,L}(\delta)[\tfrac{1}{t}]$ for some continuous character $\delta \colon K^{\times} \to L^{\times}$.
    If neither $\delta$ nor $|N_{K/\Qp}(z)|\delta^{-1}$ is algebraic, then $\cM$ is cohomologically regular.
\end{lemma}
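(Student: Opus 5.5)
We need to show that $H^1_{\varphi,\gamma_K}(\cM)$ is an $L$-vector space of dimension $[K:\Qp]$, where $\cM=\cR_{K,L}(\delta)[\tfrac1t]$. The plan is to write $\cM$ as a colimit of lattices and reduce to \Cref{coh_char}. Explicitly,
\[
\cM=\bigcup_{m\ge 0} t^{-m}\cR_{K,L}(\delta),\qquad t^{-m}\cR_{K,L}(\delta)\cong \cR_{K,L}(\delta z^{-\mathbf m})\ \text{ for } \mathbf m=(m,\dots,m)\in\Z^{\Sigma},
\]
using the standard identification $t\,\cR_{K,L}\cong\cR_{K,L}(N_{K/\Qp}(z))=\cR_{K,L}(z^{\mathbf 1})$; here one has to be careful with the sign convention for the Hodge--Tate weight of $t$. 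Since each term of the Herr complex \eqref{Herr_complex} commutes with filtered colimits (the $\Delta_K$-invariants do too), we get
\[
H^i_{\varphi,\gamma_K}(\cM)=\varinjlim_m H^i_{\varphi,\gamma_K}\bigl(\cR_{K,L}(\delta z^{-\mathbf m})\bigr).
\]

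\textbf{Step 1: the lattices are regular.} The hypothesis says that $\delta$ is not algebraic, i.e.\ not of the form $z^{\mathbf k}$ with $\mathbf k\in\Z^\Sigma$, and that $\delta$ is not of the form $z^{\mathbf k}|N_{K/\Qp}(z)|$ with $\mathbf k\in\Z^{\Sigma}$. Both conditions are stable under multiplying $\delta$ by $z^{-\mathbf m}$. Hence, by \Cref{coh_char}, every lattice $\cR_{K,L}(\delta z^{-\mathbf m})$ has $H^0=H^2=0$ and $H^1$ of dimension exactly $[K:\Qp]$.

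\textbf{Step 2: the transition maps are isomorphisms on $H^1$.} For each $m$, consider the inclusion $t^{-m}\cR_{K,L}(\delta)\hookrightarrow t^{-m-1}\cR_{K,L}(\delta)$. Its cokernel $Q$ is a torsion $\PG$-module, isomorphic to a twist of $\cR_{K,L}/t$. By the long exact sequence, it suffices to show $H^0(Q)=H^1(Q)=0$. I would compute the cohomology of $\cR_{K,L}/t$-type quotients through the standard description
\[
\cR_{K,L}/t\;\simeq\;\prod_{n\ge n_0}K_n\otimes_{\Qp}L,
\]
with $\varphi$ shifting the factors. Taking $\varphi$-invariants, this reduces to the $\Gamma_K$-cohomology of $(K_\infty\otimes_{\Qp}L)(\delta')$ for a character $\delta'$ with Sen weights determined by $\delta$ and $m$. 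That cohomology vanishes unless some embedding-wise Sen weight is $0$, i.e.\ unless the weight of $\delta$ at some embedding $\sigma$ is an integer.

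This reveals a subtlety. The hypothesis only says $\delta$ is not algebraic, so $\delta$ can still be locally algebraic at some embeddings, and there the torsion cohomology need not vanish. I therefore do not want to rely on Step 2 as stated, and instead use a dimension argument.

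\textbf{Step 2$'$: dimension argument.} The colimit of the $H^0$'s is $0$, since each term vanishes. Passing to the colimit in the long exact sequences gives the Euler characteristic identity for $\cM$, provided the colimit of the torsion contributions stabilises. Concretely, the quotient $\cM/\cR_{K,L}(\delta)$ is a colimit of torsion modules. Its $H^0$ equals its $H^1$ (the Euler characteristic of torsion objects is zero), and both are finite-dimensional, being supported only at finitely many embedding weights. Combining this with $H^2(\cR_{K,L}(\delta))=0$ and $H^1(\cR_{K,L}(\delta))=[K:\Qp]$ yields $\dim H^1(\cM)=[K:\Qp]$.

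\textbf{Main obstacle.} The hardest part is controlling the torsion cohomology when $\delta$ is partially algebraic. I would handle it with the explicit Sen-weight computation above, or alternatively by citing the known fact that for non-algebraic $\delta$ one has $H^i(\cM)=H^i(t^{-m}\cR_{K,L}(\delta))$ for $m\gg0$ (as in \cite[\S 3.3]{BHS19}). Then Step 1 applied to that lattice finishes the proof.
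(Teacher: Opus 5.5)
Your skeleton matches the paper's: write $\cM=\varinjlim_m t^{-m}\cR_{K,L}(\delta)$ with $t^{-m}\cR_{K,L}(\delta)\cong\cR_{K,L}(\delta z^{-\mathbf m})$, note that the hypothesis is stable under $\delta\mapsto\delta z^{-\mathbf m}$, and apply \Cref{coh_char} to every lattice. The paper's proof then only records $H^0(\cM)=0$, and $H^2(\cM)=0$ via Tate duality on each lattice. It does not spell out how the rank of $H^1(\cM)$ follows. Your Step 2 supplies exactly this, but you abandon it for a wrong reason.

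The quotient $Q_m=t^{-m-1}\cR_{K,L}(\delta)/t^{-m}\cR_{K,L}(\delta)$ can have nonzero cohomology only if $\wt_\sigma(\delta z^{-\mathbf m-\mathbf 1})=0$, i.e.\ $\wt_\sigma(\delta)=m+1$, for some $\sigma\in\Sigma$. This is one specific integer for each $m$, not ``some integer''. So partial algebraicity of $\delta$ can spoil at most $[K:\Qp]$ values of $m$.

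For $m\gg0$ you therefore get $H^0(Q_m)=0$. Since $H^0(t^{-m-1}\cR_{K,L}(\delta))=0$, the long exact sequence makes $H^1(t^{-m}\cR_{K,L}(\delta))\to H^1(t^{-m-1}\cR_{K,L}(\delta))$ injective. It is then bijective, because both sides have dimension $[K:\Qp]$ by Step 1. Hence $H^1(\cM)\cong H^1(t^{-m}\cR_{K,L}(\delta))$ for $m\gg0$, which finishes the proof.

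This is the ``known fact'' you fall back on at the end. It holds for every $\delta$; the hypothesis on $\delta$ enters only through Step 1.

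Step 2$'$ is not a way around the computation. The module $\cM/\cR_{K,L}(\delta)$ is not finitely generated. The finite-dimensionality of its $H^0$ and $H^1$, and their equality, are precisely this eventual vanishing of $H^\ast(Q_m)$ together with $\chi(Q_m)=0$. So keep Step 2, restricted to $m\gg0$, and drop Step 2$'$.
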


\begin{proof}
    Let $D= \cR_{K,L}(\delta)$. We have $H^i_{\varphi, \gamma_K}(\cM) = \varprojlim_j
 H^i_{\varphi, \gamma_K}(t^{-j}D)$, as filtered colimits are exact, and an inclusion $H^0_{\varphi, \gamma_K}(D) \subseteq H^0_{\varphi, \gamma_K}(t^{-j}D)$.
    So $H^0_{\varphi, \gamma_K}(\cM)=0$ if and only if for all $j \geq 0$, $t^{-j}D$ does not have character of the form $\delta = z^{\mathbf k}$ for some $\mathbf k \in \Z_{\leq 0}^{\Sigma}$, i.e. if $\delta$ is not algebraic. For $H^2_{\varphi, \gamma_K}(\cM)$, we apply Tate duality \cite[Theorem 5.7]{LiuCohAndDuality} to see that $H^2_{\varphi, \gamma_K}(t^{-j}D) = H^0_{\varphi, \gamma_K}((t^{-j}D)^{\vee}(|N_{K/\Qp}(z)|)) = 0$ for all $j \geq 0$, since the parameter $z^j\delta^{-1}|N_{K/\Qp}(z)|$ of $(t^{-j}D)^{\vee}(|N_{K/\Qp}(z)|)$ is not algebraic.
\end{proof}

The functor
$$ \cT \colon \Aff_L \longrightarrow \Set, \quad A \mapsto \Hom_{\cont}(K^{\times}, A^{\times}) $$
is representable by a smooth quasi-Stein rigid analytic space over $L$ of dimension $[K:\Qp]+1$, \cite[Proposition 6.1.1]{KPX}. 

Recall that we have chosen $L$ so that it contains the images of all maps $K \hookrightarrow \Qpbar$.

\begin{definition}\label{def_algchar} A continuous character $\delta : K^{\times} \to A^{\times}$ is \emph{algebraic} if it is of the form $\delta(z) = z^{\mathbf k} = \prod_{\tau \in \Sigma} \tau(z)^{k_{\tau}}$ for some $\mathbf k = (k_{\tau})_{\tau \in \Sigma} \in \Z^{\Sigma}$.
\end{definition}



We define a character $\varepsilon \colon K^{\times} \to \Qp^{\times} \subseteq L^{\times}, ~z \mapsto \prod_{\tau \in \Sigma} \tau(z) \cdot |z|_K$, where the absolute value $|\cdot|_K \colon K \to \R_{\geq 0}$ is normalized such that $|\pi_K|_K = |\cO_K/\pi_K|^{-1}$.
We let $\rec_K \colon K^{\times} \to \cG_K^{\ab}$ be the reciprocity map, normalized so that the uniformizer is sent to a geometric Frobenius element.
Then, for the cyclotomic character $\chi^\cyc \colon \cG_K^{\ab} \to \Qp^{\times}$, we have $\varepsilon = \rec_K \circ \chi^\cyc$.

\begin{definition}\label{def_reg}
We define $\cT_0 \subseteq \cT$ to be the Zariski open complement of the set of $\delta, \varepsilon\delta$ for $\delta : K^{\times} \to L^{\times}$ an algebraic character.
The points of $\cT(L)$ given by $z \mapsto z^{-\mathbf k}$ and $z \mapsto \varepsilon(z)z^{\mathbf k}$ for $\mathbf k \in \Z^{\Sigma}_{\geq 0}$ form a discrete subspace $\cT_{\irreg} \subseteq \cT$. We define $\cT_{\reg}$ to be its complement.\footnote{Our space $\cT_{\reg}$ is the same as that in \cite[§3.7]{BHS19}, \cite[§5.2]{dedar2020} and \cite[§3]{aoki}. When $K=\Qp$ it is the same as that in \cite[Définition 2.28]{chenevier2010sur}.}
\end{definition}



\subsection{$\gH$-$(\varphi, \Gamma_K)$-modules}
\label{secHPGmod}

In the entire \Cref{secHPGmod}, we allow $\gH$ to be an arbitrary linear algebraic group over $L$, not necessarily reductive and not necessarily connected. For example, we will sometimes take $\gH$ to be a Borel subgroup of a reductive group in the following definition.

\begin{definition}\label{def_HPG_module}
    Let $A\in \Aff_L$ (resp. $A\in \Art_L$). An \emph{$\gH$-$(\varphi, \Gamma_K)$-module over $\cR_{K,A}$} (resp. $\cR_{K,A}[\tfrac{1}{t}]$) is an $L$-linear fiber functor $\eta_D\colon \Rep_L(\gH) \to \PGcat_{K,A}^+$ (resp. $\eta_{\cM}\colon \Rep_L(\gH) \to \PGcat_{K,A}$). We denote by $\PGcat_{K,A}^{\gH,+}$ (resp. $\PGcat_{K,A}^\gH$) the groupoid of $\gH$-$\PG$-modules over $\cR_{K,A}$ (resp. $\cR_{K,A}[\tfrac{1}{t}]$).
\end{definition}


\begin{proposition}\label{torsor_HPG_modules}
    Let $A\in \Aff_L$. The functor $\eta_D \mapsto \uHom^{\otimes}(\omega_L \otimes_L \cR_{K,A}, \eta_D)$ defines an equivalence between the groupoid of $\gH$-$\PG$-modules over $\cR_{K,A}$ and the groupoid of triples $(D, \Phi, C)$, where $D$ is a right $\gH$-torsor on $\Spec(\cR_{K,A})$, 
    $\Phi \colon \varphi^* D \to D$ is an isomorphism of $\gH$-torsors and $C = (C(\gamma))_{\gamma \in \Gamma_K}$ is a family of isomorphisms of $\gH$-torsors $C(\gamma) \colon \gamma^* D \to D$, such that the following conditions hold.
    \begin{enum}
        \item $C(\gamma_1\gamma_2) = C(\gamma_1) \circ \gamma_1^* C(\gamma_2)$ for all $\gamma_1, \gamma_2 \in \Gamma_K$ (cocycle condition); \label{HPG_def_1}
        \item $\Phi \circ \varphi^* C(\gamma) = C(\gamma) \circ \gamma^* \Phi$ for all $\gamma \in \Gamma_K$ (commutativity of Frobenius with $\Gamma_K$ action); \label{HPG_def_2}
        \item For all $V \in \Rep_L(\gH)$ the $\cR_{K,A}$-module $D \times^\gH V$ equipped with the induced $\varphi$ and $\Gamma_K$-actions is a $(\varphi, \Gamma_K)$-module over $\cR_{K,A}$ in the usual sense. \label{HPG_def_3}
    \end{enum}
    In \eqref{HPG_def_2} we are implicitly using the fact that $\varphi^* \gamma^* D \cong \gamma^* \varphi^* D$, since the actions of $\varphi$ and $\gamma$ on $\cR_{K,A}$ commute. The same holds with $\cR_{K,A}$ replaced by $\cR_{K,A}[\tfrac{1}{t}]$ and $\PGcat_{K,A}^+$ replaced by $\PGcat_{K,A}$.
\end{proposition}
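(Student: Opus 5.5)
The plan is to reduce to \Cref{tannakian_fundamental_equivalence} applied over the affine scheme $X=\Spec(\cR_{K,A})$ (resp. $\Spec(\cR_{K,A}[\tfrac{1}{t}])$), and then transport the extra $\varphi$- and $\Gamma_K$-structure across the equivalence by functoriality.

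First, forgetting the $\varphi$- and $\Gamma_K$-structure, an $\gH$-$\PG$-module $\eta_D$ gives an $L$-linear exact faithful tensor functor $\omega\circ\eta_D\colon\Rep_L(\gH)\to\Vect_{\cR_{K,A}}$. Here exactness in $\PGcat^+_{K,A}$ is taken to mean exactness of the underlying modules, and faithfulness transfers because the forgetful functor is faithful. By \Cref{tannakian_fundamental_equivalence}, $\uHom^{\otimes}(\omega_L\otimes_L\cR_{K,A},\omega\circ\eta_D)$ is an $\gH$-torsor $D$ over $\Spec(\cR_{K,A})$, and $\eta_{D}(V)\cong D\times^\gH V$ functorially.

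Next I would use that the equivalence commutes with base change along ring maps $f\colon R\to R'$: fiber functors pull back via $\phi_{R'}$, torsors pull back via $f^*$, and $f^*(D\times^\gH V)\cong f^*D\times^\gH V$. For $f=\varphi$, the Frobenius $\Phi_V\colon\varphi^*\eta_D(V)\to\eta_D(V)$ is natural in $V$ and compatible with tensor products, since $\eta_D$ is a tensor functor into $\PGcat^+$. It is therefore an isomorphism of tensor functors $\phi_\varphi\circ\omega\eta_D\to\omega\eta_D$. By full faithfulness of the Tannakian equivalence, this is the same as an isomorphism of torsors $\Phi\colon\varphi^*D\to D$. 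The same argument gives $C(\gamma)\colon\gamma^*D\to D$.

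Conditions \eqref{HPG_def_1} and \eqref{HPG_def_2} follow from the corresponding identities for each $V$, again by faithfulness on morphisms. Condition \eqref{HPG_def_3} holds because $D\times^\gH V\cong\eta_D(V)$ with its structure is a $\PG$-module. Conversely, given $(D,\Phi,C)$, define $\eta(V)=D\times^\gH V$ with the induced $\varphi$ and $\Gamma_K$-actions. By \eqref{HPG_def_3} this lands in $\PGcat^+_{K,A}$. It is a tensor functor, and it is exact and faithful because exactness and faithfulness can be checked on underlying modules. Morphisms match on both sides: a morphism of torsors commuting with $\Phi$ and $C$ corresponds to a tensor isomorphism commuting with all $\Phi_V$ and $C_V(\gamma)$. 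The two constructions are quasi-inverse because the underlying Tannakian constructions are.

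For the $[\tfrac{1}{t}]$ version the argument is identical, working over $\Spec(\cR_{K,A}[\tfrac{1}{t}])$. One point needs care: objects of $\Vect$ there must be finite projective. Since $A\in\Art_L$, $\cR_{K,A}[\tfrac{1}{t}]$-modules underlying objects of $\PGcat_{K,A}$ are finite free, so the Tannakian theorem for affine schemes over $L$ applies.

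The main obstacle is the bookkeeping of exactness and fiber-functor conditions. One must check that exact sequences in $\PGcat^+_{K,A}$ coincide with sequences exact on underlying modules, so that $\omega\circ\eta_D$ is genuinely a fiber functor. In the converse direction one must check that the continuity in \eqref{HPG_def_3} is the only additional requirement. I would handle this by declaring exactness in $\PGcat^+$ to be underlying exactness and noting that every $\PG$-stable subquotient arising from $\Rep_L(\gH)$ is again a $\PG$-module by \eqref{HPG_def_3}.
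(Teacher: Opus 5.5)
Your proposal is correct and follows the paper's route: the paper proves this in one line by invoking the Tannakian equivalence of \Cref{tannakian_fundamental_equivalence} over $\Spec(\cR_{K,A})$ (resp.\ $\Spec(\cR_{K,A}[\tfrac{1}{t}])$), with a pointer to de Daruvar's Proposition~3.6. Your argument supplies the details the paper leaves implicit, namely transporting $\Phi$ and $C(\gamma)$ through the compatibility of that equivalence with pullback along $\varphi$ and $\gamma$, and using condition (3) to land in $\PG$-modules.
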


\begin{proof}
    This follows from \Cref{tannakian_fundamental_equivalence}. 
    See also \cite[Proposition 3.6]{dedar2020}.
\end{proof}

Let $D$ be an $\gH_1$-$\PG$-module over either $\cR_{K,A}$ or $\cR_{K,A}[\tfrac{1}{t}]$, and let $f\colon \gH_1 \to \gH_2$ be any morphism of linear algebraic groups. We will denote by either $f \circ D$ or $D \times^{\gH_1} \gH_2$ the $\gH_2$-$\PG$-module obtained by composition of the fiber functor with the functor $\Rep_L(\gH_2) \to \Rep_L(\gH_1)$.
On the level of torsors, it is formation of the right $\gH_2$-torsor $D \times^{\gH_1} \gH_2$ equipped with the $\varphi$ and $\Gamma_K$-actions inherited from $D$. \Cref{torsor_HPG_modules} tells us, that the $\varphi$ and $\Gamma_K$ actions on $D \times^{\gH_1} \gH_2$ satisfy conditions \eqref{HPG_def_1}, \eqref{HPG_def_2} and \eqref{HPG_def_3}.

\begin{lemma}\label{lattice_lemma}
     Let $A\in \Art_L$. Let $0\to \cM_1 \to \cM \to \cM_2\to 0$ be an exact sequence of finite free $\cR_{K,A}[\tfrac{1}{t}]$-modules equipped with commuting, semilinear, 
     actions of $\varphi$ and $\Gamma_K$, such that the action of $\Gamma_K$ is continuous in the sense of \Cref{def_cont_one_over_t}.
     Then $\cM$ is a $\PG$-module over $\cR_{K,A}[\tfrac{1}{t}]$ if and only if $\cM_1$ and $\cM_2$ are $\PG$-modules over $\cR_{K,A}[\tfrac{1}{t}]$.
\end{lemma}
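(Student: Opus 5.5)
The statement is an equivalence between $\cM$ having a stable $\cR_{K,A}$-lattice that is a $\PG$-module and the same property for $\cM_1$ and $\cM_2$. The plan is to transport lattices along the exact sequence by intersection and image in one direction, and by gluing with a $t$-power adjustment in the other.

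\emph{Only if.} Let $D\subseteq \cM$ be a $\varphi$- and $\Gamma_K$-stable lattice that is a $\PG$-module over $\cR_K$ (viewing $A$ as a finite-dimensional $L$-algebra, hence $\cR_{K,A}$ as a finite $\cR_K$-algebra). Set $D_1\colonequals D\cap \cM_1$ and $D_2\colonequals \im(D\to\cM_2)$; both are $\varphi$- and $\Gamma_K$-stable. The module $D_2$ is a finitely generated torsion-free $\cR_K$-module, and $D_1$ is saturated in $D$, so $D/D_1\cong D_2$ is torsion-free. Since $\cR_K$ is a Bézout domain, both are finite free. After inverting $t$ we have $D_2[\tfrac1t]=\cM_2$ and $D_1[\tfrac1t]=\cM_1$, the latter because $\cM_1\cap t^{-n}D=t^{-n}D_1$. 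The Frobenius on $D$ is an isomorphism $\varphi^*D\to D$, and by the standard argument for saturated $(\varphi,\Gamma)$-submodules (cf. \cite[Lemma 2.2.3 / Prop. 2.2.6]{KPX}) it restricts to an isomorphism on the saturated submodule $D_1$ and on the quotient $D_2$. Continuity of the $\Gamma_K$-actions and descent to some $\cR_K^r$ are inherited from $D$. Hence $D_1$ and $D_2$ are $\PG$-modules over $\cR_K$, which gives this direction.

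\emph{If.} Take lattices $D_1\subseteq\cM_1$ and $D_2\subseteq \cM_2$ that are $\PG$-modules. Choose a lift $s$ of a basis of $D_2$ to $\cM$, and let $D'$ be the $\cR_K$-span of $D_1$ and $s$. Then $D'$ is a free lattice of $\cM$ that is an extension of $D_2$ by $D_1$ as modules, but it need not be stable. Writing the actions of $\varphi$ and $\gamma\in\Gamma_K$ on $s$ in block upper-triangular form, the off-diagonal entries $c(\varphi)$ and $c(\gamma)$ lie in $\Hom(D_2,\cM_1)=\Hom(D_2,D_1)[\tfrac1t]$. For $\varphi$ there is a single matrix, so $c(\varphi)\in t^{-n}\Hom(D_2,D_1)$ for some $n$. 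For $\Gamma_K$, the continuity hypothesis in \Cref{def_cont_one_over_t} is exactly what guarantees a uniform bound $t^{-n}$ over all $\gamma$ simultaneously, with values in a fixed $\cR_{K}^r$-level.

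Now replace $D_2$ by $t^{m}D_2$ for $m\ge n$, i.e. use the lattice $D''=D_1+t^m\langle s\rangle$. Since $\varphi(t)=pt$ and $\gamma(t)=\chi^\cyc(\gamma)t$ with units as factors, this scaling multiplies the off-diagonal entries by $t^m$ up to units, so they land in $\Hom(D_2,D_1)$. Then $D''$ is $\varphi$- and $\Gamma_K$-stable. It is an extension of the $\PG$-modules $t^mD_2$ and $D_1$, so $\varphi^*D''\to D''$ is an isomorphism by the five lemma, and the $\Gamma_K$-action is continuous and defined over some $\cR_K^r$. Therefore $D''$ is a $\PG$-module over $\cR_K$, and $\cM$ is a $\PG$-module over $\cR_{K,A}[\tfrac1t]$.

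The main obstacle is the uniformity in $\gamma$ of the pole order of the cocycle $c(\gamma)$; this is precisely where the continuity assumption is used. I would also need to check that the resulting $\Gamma_K$-action on $D''$ is continuous for the Fréchet topology of $D''^r$. This should follow because $D''^r$ sits as a closed sub-$\cR_K^r$-module of $t^{-n}$ times a finite free module, with the subspace topology.
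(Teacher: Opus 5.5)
Your proposal is correct and follows the paper's proof. For the forward direction you take the intersection and the image, and for the converse you use a section, bound the pole orders of the off-diagonal terms uniformly in $\gamma$ via continuity, and rescale by a power of $t$; the paper enlarges $D_1$ to $t^{-n}D_1$ where you shrink $D_2$ to $t^mD_2$, which gives the same lattice up to a global factor $t^n$. The one step you should make explicit is that finite generation of $D_1=D\cap\cM_1$ comes from the splitting of $D\to D_2$ (the Bézout property alone does not give it, since $\cR_K$ is not noetherian).
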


\begin{proof}
    We need to show that if $\cM_1$ and $\cM_2$ admit $\cR_{K}$-lattices stable under $\varphi$ and $\Gamma_K$, then so does $\cM$.
    We let $D_1$ and $D_2$ be such lattices which are the base change of $D_1^{r_0}$ and $D_2^{r_0}$ for some $0 < r_0 \leq C(\pi_K)$, and we choose an $\cR_K^{r_0}$-basis $e_1,\dots,e_n$ of $D_2^{r_0}$.
    Given an $\cR_K$-linear section $s\colon \cM_2\to \cM$, we define maps $f_i\colon \Gamma_K\to \cM_1, ~\gamma \mapsto \gamma \cdot s(e_i)-s(\gamma\cdot e_i)$.
    By the continuity condition on the actions of $\Gamma_K$ on $\cM_1$ and $\cM_2$, there exist $n_i \geq 0$ and $0 < r_i \leq C(\pi_K)$, such that $f_i$ takes values in $t^{-n_i}D_1^{r_i}$.
    Now we let $n_0\ge 0$ be such that $\varphi(D_1)\subset t^{-n_0}D_1$, and we set $n=\max_{0\le i \le n}n_i$ and $r = \min_{0 \le i \le n} r_i$. Then the $\cR_K$-module $D^r \colonequals t^{-n} D_1^r\oplus s(D_2^r)$ is a lattice inside $\cM$ that is stable under $\varphi$ and $\Gamma_K$.


    Assume that $\cM$ is a $\PG$-module and write $\cM = D[\tfrac{1}{t}]$ for a $\PG$-module $D$ over $\cR_{K}$.
    The image $D_2$ of $D$ in $\cM_2$ is a finitely generated, torsion-free $\cR_K$-submodule, hence free (see \cite[§2.2.3]{BC}).
    It is stable under the actions of $\varphi$ and $\Gamma_K$, hence $\cM_2$ is a $\PG$-module.
    It follows that the surjection $D \to D_2$ has an $\cR_K$-linear section, which proves that $D_1 := \cM_1 \cap D$ is a finitely generated $\cR_K$-module.
    Hence $D_1$ is a $\PG$-module and so is $\cM_1$.
\end{proof}

\begin{lemma}\label{three_prime}
    In \Cref{torsor_HPG_modules} condition \eqref{HPG_def_3} can be replaced by
    \begin{enum} 
        \item[(3')] There is a faithful representation $V \in \Rep_L(\gH)$, such that the $\cR_{K,A}$-module $D \times^\gH V$ equipped with its natural Frobenius and $\Gamma_K$-actions is a $(\varphi, \Gamma_K)$-module over $\cR_{K,A}$ in the usual sense.
    \end{enum}
    The analogous result holds over $\cR_{K,A}[\tfrac{1}{t}]$.
\end{lemma}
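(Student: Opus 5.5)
The plan is to prove (3') $\Rightarrow$ (3); the converse is trivial because $\gH$ has a faithful representation. The standard fact I will use is that every $W \in \Rep_L(\gH)$ is a subquotient of a finite direct sum of representations $V^{\otimes a} \otimes (V^\vee)^{\otimes b}$. For affine algebraic groups one can even avoid duals by also using $\det V^{-1}$, but allowing duals is harmless. The functor $W \mapsto D \times^\gH W$ is exact and is a tensor functor to $\cR_{K,A}$-modules with semilinear commuting $\varphi$ and $\Gamma_K$-actions. So it suffices to show that the class of those $W$ for which $D\times^\gH W$ is a $\PG$-module in the usual sense contains $V$ and is stable under four operations: tensor products, duals, direct sums, and passage to subobjects and quotients in $\Rep_L(\gH)$.

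Stability under tensor products, duals and direct sums is routine. Each of these objects is finite projective because the torsor is étale-locally trivial. Its Frobenius is again an isomorphism, and the $\Gamma_K$-action is continuous and defined over some $\cR^r_{K,A}$ for a common $r$, since these properties pass to tensor constructions of $\PG$-modules.

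The main step is subquotients. Let $0\to W_1\to W\to W_2\to 0$ be exact in $\Rep_L(\gH)$. Applying $D\times^\gH(-)$ gives a short exact sequence of finite projective $\cR_{K,A}$-modules, locally split because the torsor is étale-locally trivial, and the maps are $\varphi$- and $\Gamma_K$-equivariant. Suppose the middle term $D\times^\gH W$ is a $\PG$-module. I would then argue as in the second half of \Cref{lattice_lemma}.

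First, $D\times^\gH W_2$ is a quotient of a $\PG$-module by a saturated sub. It is finite projective, its Frobenius is an isomorphism by the five lemma, and it descends to some $\cR^r_{K,A}$ because we can take the images of a model $(D\times^\gH W)^r$. Continuity of the $\Gamma_K$-action follows from the quotient topology, using the footnote remark that every $A$-linear map between finite projective modules is continuous.

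Second, for $D\times^\gH W_1$ we take the kernel, which is a direct summand. We use $r$-models of the torsor itself: the torsor over $\cR_{K,A}$ descends to some $\cR^r_{K,A}$ together with its actions, since the descended faithful $V$ determines it via the Tannakian equivalence of \Cref{tannakian_fundamental_equivalence} applied over $\cR^r_{K,A}$. Then $\varphi$-isomorphy of the kernel again follows from the five lemma.

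Over $\cR_{K,A}[\tfrac1t]$ with $A\in\Art_L$, the subquotient step is exactly \Cref{lattice_lemma}. The remaining points there are that continuity in the sense of \Cref{def_cont_one_over_t} holds for the subquotients, which follows from \Cref{topol_RKAoneovert} applied to the cocycle into $\GL(W)$ after trivializing, and that tensor constructions of lattices give lattices.

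I expect the main obstacle to be the descent to a common radius $r$ and the continuity bookkeeping, since only $D\times^\gH V$ is assumed to come from $\cR^r_{K,A}$. I would handle both uniformly by reducing to the case of a trivial torsor, which is étale-local over $\cR_{K,A}$. In that case the actions are given by a cocycle $\Gamma_K\to\gH(\cR_{K,A})$ together with an element of $\gH(\cR_{K,A})$ for $\varphi$. Faithfulness of $V$ realizes $\gH$ as a closed subgroup of $\GL(V)$. The hypothesis on $D\times^\gH V$ says that these matrices lie in $\GL(V)(\cR^r_{K,A})$ with continuous dependence on $\gamma$, and then also in $\gH(\cR^r_{K,A})$ by closedness. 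Every other $W$ is then handled by functoriality in \Cref{topol_RKAoneovert} applied to $\gH\to\GL(W)$.
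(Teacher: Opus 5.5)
Your overall strategy matches the paper's. You show that the class of $W$ for which $D\times^{\gH}W$ is a $\PG$-module contains $V$ and is closed under $\otimes$, $\oplus$, $(-)^\vee$ and passage to pieces of short exact sequences. Over $\cR_{K,A}[\tfrac1t]$ you then combine tensor constructions of lattices with \Cref{lattice_lemma}.

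You differ from the paper in using subquotients, and that difference is needed. The paper embeds $W$ as a \emph{subrepresentation} of a tensor construction of $V$, which is not possible for non-reductive $\gH$. Take $\gB_2\subset\GL_2$ with $V$ the standard representation. The character $\begin{pmatrix}a&b\\0&d\end{pmatrix}\mapsto d$ is a quotient of $V$. But every tensor construction of $V$ is restricted from $\GL_2$, and every $\gB_2$-stable line in such a restriction has dominant weight, so this character is not a sub of any of them. Since \Cref{lattice_lemma} treats subs and quotients alike, your argument over $\cR_{K,A}[\tfrac1t]$ goes through. Continuity there is inherited from $\cM\times^{\gH}T$ and needs no trivialization.

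The gap is the step you flag yourself: producing a model of $D\times^{\gH}W$ over some $\cR^r_{K,A}$. Your uniform treatment via étale-local trivialization does not work. Describing $\Phi$ and the $C(\gamma)$ by an element of $\gH$ and a cocycle needs a trivialization of $D$ over $\cR_{K,A}$ itself, because these maps compare $D$ with $\varphi^*D$ and $\gamma^*D$. An étale cover of $\Spec\cR_{K,A}$ trivializing $D$ carries no $\varphi$- or $\Gamma_K$-action. Moreover, having an $\cR^r_{K,A}$-model with continuous $\Gamma_K$-action cannot be checked étale-locally. Nor can you hope for global triviality: for $\gH=\mu_2$, a square root of the unit $T$ gives a nontrivial torsor. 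Your other justification is circular: the model of $D\times^{\gH}V$ cannot be said to determine a model of the torsor ``via the Tannakian equivalence over $\cR^r_{K,A}$'', because that equivalence needs a fiber functor on all of $\Rep_L(\gH)$ over $\cR^r_{K,A}$, which is what you are building. Two smaller points are also unjustified as stated. Images of a model need not be projective, since for affinoid $A$ there is no Bézout argument. And the five lemma gives only injectivity of $\Phi$ on the sub and surjectivity on the quotient.

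The torsor descent you want is true, via a limit argument. $D$ is an $\gH$-reduction of the frame bundle $P$ of $(D\times^{\gH}V)^{r_0}$, i.e. a section over $\Spec\cR_{K,A}=\lim_r\Spec\cR^r_{K,A}$ of the finitely presented, separated scheme $P/\gH$, so it descends to some $\cR^r_{K,A}$. Since $\cR^r_{K,A}\to\cR_{K,A}$ is injective, the descended reduction is respected by the model's $\varphi$- and $\Gamma_K$-structure. Each $D^r\times^{\gH}W$ is then a model with $\Phi$ bijective, and your subquotient closure gives continuity.

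Alternatively, in the spirit of the paper, avoid torsor models altogether. Write $W=T_1/T_2$ with $T_2\subseteq T_1\subseteq T$ and $T$ a tensor construction of $V$. Descend projectors onto the summands $D\times^{\gH}T_i$ to some $\cR^s_{K,A}$ over which they preserve $(D\times^{\gH}T)^s$. The pieces $(D\times^{\gH}T)^s\cap D\times^{\gH}T_i$ are then $\varphi$- and $\Gamma_K$-stable direct summands, and $\Phi$ becomes bijective after shrinking $s$. Their quotient is the required model.
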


\begin{proof}
    If $W \in \Rep_L(\gH)$ is an arbitrary representation, by \cite[Proposition 3.1]{DeligneHodgeCycles} $W$ can be realized as a subrepresentation of a representation $V'$ obtained from $V$ by applying the operations $\otimes$, $\oplus$ and $(-)^{\vee}$. Since $D \times^\gH V'$ is obtained from $D \times^\gH V$ by applying these operations in the same way, we see that $D \times^\gH V'$ is the base change of a $\PG$-module $D^r \times^\gH V'$ over $\cR_{K,A}^r$. Since we have an inclusion $D^r \times^\gH W \subseteq D^r \times^\gH V'$, we get continuity of the $\Gamma_K$-action on $D^r \times^\gH W$, hence $D \times^\gH W$ is the base change of a $\PG$-module.

    Let $(\cM, \Phi, C)$ be a triple over $\cR_{K,A}[\tfrac{1}{t}]$ as in \Cref{torsor_HPG_modules} satisfying \eqref{HPG_def_1}, \eqref{HPG_def_2} and assume that $\cM \times^{\gH} V$ is a $\PG$-module over $\cR_{K,A}[\tfrac{1}{t}]$, that is, it has an $\cR_K$-lattice $D$ which is a $\PG$-module.
    Applying the operations $\otimes$, $\oplus$ and $(-)^{\vee}$ to an $\cR_K$-lattice gives us an $\cR_K$-lattice in $\cM \times^{\gH} V'$. It follows from \Cref{lattice_lemma} that $\cM \times^{\gH} W$ is a $\PG$-module.    
\end{proof}


\subsection{$\gH$-trivial $\gH$-$(\varphi,\Gamma_K)$-modules}
\label{trivialphigammamodules}

\begin{definition}
    Let $A\in \Aff_L$ (resp. $A\in \Art_L$). We say that an $\gH$-$(\varphi,\Gamma_K)$-module over $\cR_{K,A}$ (resp. $\cR_{K,A}[\tfrac{1}{t}]$) is \emph{$\gH$-trivial} if it is neutralizable as a fiber functor, equivalently by \Cref{torsor_HPG_modules}, if its associated $\gH$-torsor is trivial.
\end{definition}
\begin{lemma}\label{trivialforB}
    Let $\gB$ be a Borel subgroup of a reductive group $\gG$ over $L$, and let $A\in \Art_L$. Then any $\gB$-$\PG$-module over $\cR_{K,A}$ (resp. $\cR_{K,A}[\tfrac{1}{t}]$) is $\gB$-trivial.
\end{lemma}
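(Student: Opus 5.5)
By \Cref{torsor_HPG_modules}, being $\gB$-trivial only concerns the underlying $\gB$-torsor, so I will forget $\varphi$ and $\Gamma_K$ and show that every $\gB$-torsor over $R=\cR_{K,A}$ (resp. $R=\cR_{K,A}[\tfrac{1}{t}]$) is trivial. The plan is to use the Levi decomposition $\gB=\gU\rtimes\gT$ and treat the torus part and the unipotent part separately.

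\textbf{Torus part.} The $\gT$-torsor $\cQ\times^{\gB}\gT$ is trivial. Since $\gT\cong\Gm^r$ is split, this amounts to showing $\Pic(R)=0$. For $A=L$, finite projective modules over $\cR_{K,L}$ are free, because $\cR_{K,L}$ is a finite product of B\'ezout domains (the Robba ring over $K_0'\otimes_{\Qp}L$). The same holds after inverting $t$. For general $A\in\Art_L$, the ring $\cR_{K,A}=\cR_{K,L}\otimes_L A$ is a nilpotent thickening of $\cR_{K,L}$, since $A$ is finite over $L$ with nilpotent maximal ideal. Projective modules lift uniquely up to isomorphism along nilpotent thickenings, so every finite projective $R$-module of rank one is free, and $\Pic(R)=0$.

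\textbf{Unipotent part.} Choosing a trivialization of the $\gT$-torsor, the $\gB$-torsor $\cQ$ reduces to a torsor under a $\gU$-form. More precisely, the fiber of $H^1(R,\gB)\to H^1(R,\gT)$ over the trivial class is the image of $H^1(R,\gU)$, where no twisting is needed because the $\gT$-torsor is trivial. The group $\gU$ has a central series whose graded pieces are isomorphic to $\Ga^{n_i}$, with $\gT$-stable terms. For an affine scheme we have $H^1_{\et}(\Spec R,\Ga)=H^1(R,\cO)=0$. I then induct along the central extensions $1\to\gU^{n+1}\to\gU_{n+1}\to\gU_n\to 1$, using the exact sequence of pointed sets
\[
H^1(R,\gU^{n+1})\to H^1(R,\gU_{n+1})\to H^1(R,\gU_n).
\]
Together with the vanishing of $H^1$ for vector groups, this gives $H^1(R,\gU)=\{*\}$. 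Hence $\cQ$ is trivial.

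\textbf{Expected obstacle.} The only delicate step is the torus part, namely the vanishing of $\Pic$ for Artinian coefficients and over $\cR_{K,A}[\tfrac{1}{t}]$. For the latter I plan to argue directly: $\cR_{K,L}[\tfrac{1}{t}]$ is a localization of a product of B\'ezout domains, hence again a product of B\'ezout domains, so its rank-one projectives are free. The nilpotent-lifting argument then transfers this to $\cR_{K,A}[\tfrac{1}{t}]=\cR_{K,L}[\tfrac{1}{t}]\otimes_L A$.

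Alternatively, I could avoid the global Picard group computation by using \Cref{trivialtorsoroverA}. This would reduce the claim to reduction modulo $\frakm_A$, but it requires smoothness of the torsor over the non-Artinian ring $R$. The nilpotent-thickening argument for torsors under smooth groups gives this directly: $\cQ$ is trivial if and only if $\cQ\otimes_A L$ is trivial. It therefore suffices to treat $A=L$, which is handled by the B\'ezout argument above.
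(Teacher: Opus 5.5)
Your proposal is correct and is essentially the paper's proof. The paper gets $\Pic(\cR_{K,L})=1$ from the B\'ezout property and transfers it to $\cR_{K,A}$ along the nilpotent thickening (phrased via $\Spec L\to\Spec A$ being a universal homeomorphism, where you lift projective modules), then concludes by the long exact sequence in non-abelian cohomology using the vanishing of $H^1_{\et}$ of $\Ga$ on affine schemes. You additionally spell out the case with $t$ inverted and the induction along the central series of $\gU$, both of which the paper leaves implicit; in that induction the kernel of $\gU_{n+1}\to\gU_n$ should be written $\gU^{n+1}/\gU^{n+2}$.
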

\begin{proof}
    Since $\cR_{K,L}$ is a Bézout domain, we have $H^1_{\et}(\Spec(\cR_{K,L}),\Gm)=\Pic(\cR_{K,L}) = 1$. It follows that $H^1_{\et}(\Spec(\cR_{K,A}),\Gm)=1$, since $\Spec L \to \Spec A$ is a universal homeomorphism \stackcite{0BR6}. Since $H^1_{\et}(\Spec(\cR_{K,L}),\Ga)=0$, we get the result by the long exact sequence in non-abelian cohomology.
\end{proof}
We will prove in \Cref{G_PG_Pair_is_triv_PG_mod} that $\gH$-trivial $\gH$-$(\varphi,\Gamma_K)$-modules can be described in terms of a variant of nonabelian cocycles defined as follows.

\begin{definition}\label{def_H_PG_pair} Let $A\in \Aff_L$, let $\cZ^1(\Gamma_K, \gH(\cR_{K,A})) \colonequals \bigcup_{0 < r \leq C(\pi_K)} Z^1_{\cont}(\Gamma_K, \gH(\cR_{K,A}^r))$, and let $0 < r_0 \leq C(\pi_K)$.
    \begin{enum}
        \item An \emph{$\gH$-$(\varphi,\Gamma_K)$-pair} over $\cR^{r_0}_{K,A}$ is a pair $(M,c^{r_0}) \in \gH(\cR^{r_0/p}_{K,A})\times Z^1_{\cont}(\Gamma_K, \gH(\cR^{r_0}_{K,A}))$ such that for all $\gamma\in \Gamma_K$
        \begin{align}
            M \varphi(c^{r_0}(\gamma))=c^{r_0/p}(\gamma)\gamma(M), \label{Mc_comp}
        \end{align}
        where $c^{r_0/p}$ is the image of $c^{r_0}$ under the natural map $\gH(\cR^{r_0}_{K,A}) \to \gH(\cR^{r_0/p}_{K,A})$.
        \item An \emph{$\gH$-$(\varphi,\Gamma_K)$-pair} over $\cR_{K,A}$ is a pair $(M, c) \in \gH(\cR_{K,A}) \times \cZ^1(\Gamma_K, \gH(\cR_{K,A}))$, such that
        \begin{align}
            M \varphi(c(\gamma))=c(\gamma)\gamma(M).
\label{Mc_comp_easy}
        \end{align}

    \end{enum}
\end{definition}

Since $\gH(\cR_{K,A}) = \bigcup_{0 < r \leq C(\pi_K)} \gH(\cR_{K,A}^r)$, we have
\begin{align}
    \{\text{$\gH$-$\PG$-pairs over }\cR_{K,A}\} = \bigcup_{0 < r \leq C(\pi_K)} \{\text{$\gH$-$\PG$-pairs over }\cR^r_{K,A}\}. \label{pair_comparison}
\end{align}
In particular, our definition of $\gH$-$\PG$-pair over $\cR_{K,A}$ agrees with \cite[Definition 3.23]{dedar2020}.

\begin{example}
    Let $V$ is a finite dimensional $L$-vector space, then a $\GL(V)$-$\PG$-pair over $\mathcal{R}_{K,A}$ is equivalent to the data of a $\PG$-module structure on $V\otimes_L\cR_{K,A}$. For such a pair $(M,c)$, this is given by defining
    \begin{align*}
        \varphi\colon V\otimes_L\mathcal{R}_{K,A}&\longrightarrow V\otimes_L \mathcal{R}_{K,A} 
        \\ v\otimes r &\mapsto M(v\otimes \varphi(r)),
    \end{align*}
    and for all $\gamma\in \Gamma_K$,
    \begin{align*}
        \gamma\colon V\otimes_L\mathcal{R}_{K,A}&\longrightarrow V\otimes_L \mathcal{R}_{K,A} 
        \\ v\otimes r &\mapsto c(\gamma)(v\otimes \gamma(r)).
    \end{align*}
    Conversely, given a $\PG$-structure, we recover $(M,c)$  using the equalities $\varphi(v\otimes 1)=M(v\otimes 1)$ and $\gamma(v\otimes 1)=c(\gamma)(v\otimes 1)$ (for more details, see \cite[Lemma 3.24]{dedar2020}).
\end{example}

\begin{definition}\label{def_HPG_pair_inv_t}
    Let $A\in \Art_L$. An \emph{$\gH$-$\PG$-pair} over $\cR_{K,A}[\tfrac{1}{t}]$ is a pair $(M,c) \in \gH(\cR_{K,A}[\tfrac{1}{t}]) \times Z^1(\Gamma_K, \gH(\cR_{K,A}[\tfrac{1}{t}]))$, such that there exists a faithful representation $(\rho_V,V)\in\Rep_L(\gH)$ for which the pair $(\rho_V(M),\rho_V(c))$ induces a $\PG$-module structure over $\mathcal{R}_{K,A}[\tfrac{1}{t}]$ on the tensor product $V\otimes_L \cR_{K,A}[\tfrac{1}{t}]$.
    In particular, the pair $(M,c)$ satisfies \eqref{Mc_comp_easy}.
\end{definition}

\begin{definition}\label{H1triv} 
    Let $A\in \Art_L$. We define $Z^1_{\varphi,\Gamma_K}(\gH(\cR_{K,A}))$ to be the pointed set of $\gH$-$(\varphi,\Gamma_K)$-pairs over $\cR_{K,A}$. We define $H^1_{\varphi,\Gamma_K}(\gH(\cR_{K,A}))$ to be its quotient by the right action of $\gH(\cR_{K,A})$ given by 
    $$(M, c) \cdot h \colonequals (h^{-1}M\varphi(h), ~ [\gamma \mapsto h^{-1}c(\gamma)\gamma(h)]).$$
    We similarly define the set $Z^1_{\varphi,\Gamma_K}(\gH(\cR_{K,A}[\tfrac{1}{t}]))$ of $\gH$-$\PG$-pairs over $\cR_{K,A}[\tfrac{1}{t}]$ and its quotient $H^1_{\varphi,\Gamma_K}(\gH(\cR_{K,A}[\tfrac{1}{t}]))$ by the right action of $\gH(\cR_{K,A}[\tfrac{1}{t}])$.
\end{definition}

\begin{construction}\label{construction}
    Let $A\in \Aff_L$ (resp. $A\in\Art_L$), and let $(M,c) \in \gH(\cR_{K,A}) \times Z^1(\Gamma_K, \gH(\cR_{K,A}))$ $($resp. $\in \gH(\cR_{K,A}[\tfrac{1}{t}]) \times Z^1(\Gamma_K, \gH(\cR_{K,A}[\tfrac{1}{t}])))$. We attach to $(M,c)$ a trivial $\gH$-torsor $D_{(M,c)}$ $($resp. $\mathcal{M}_{(M,c)})$ over $\cR_{K,A}$ $($resp. $\cR_{K,A}[\tfrac{1}{t}])$ equipped with a commuting action of $\varphi$ and $\Gamma_K$ as follows. The isomorphism $\Phi \colon \varphi^*\gH_{\cR_{K,A}} \xrightarrow{\sim} \gH_{\cR_{K,A}}$ is defined by the cartesian diagram
    \begin{center}
        \begin{tikzcd}
            \varphi^* \gH_{\cR_{K,A}} \arrow[r] \arrow[d] \arrow[rr,bend left =20, "\Phi"] & \gH_{\cR_{K,A}} \arrow[r, swap, "\cdot M"] \arrow[d] & \gH_{\cR_{K,A}}
            \\ \Spec(\cR_{K,A} )\arrow[r, "\varphi"] & \Spec(\cR_{K,A}),
        \end{tikzcd}
    \end{center}
    and the morphisms $C(\gamma) \colon \gamma^* \gH_{\cR_{K,A}} \to \gH_{\cR_{K,A}}$ are obtained from $c$ in the same way, replacing $M,\varphi,$ and $\Phi$ by $c(\gamma),\gamma$, and $C(\gamma)$ respectively. The construction of $\cM_{(M,c)}$ is analogous.
    In this definition, we do not impose any continuity conditions.
\end{construction}

\begin{proposition}\label{G_PG_Pair_is_triv_PG_mod}
    The map $(M,c) \mapsto D_{(M,c)}$ $($resp. $(M,c) \mapsto \cM_{(M,c)})$ from $Z^1_{\varphi,\Gamma_K}(\gH(\cR_{K,A}))$ $($resp. $Z^1_{\varphi,\Gamma_K}(\gH(\cR_{K,A}[\tfrac{1}{t}]))$ to the set of $\gH$-trivialized $\gH$-$\PG$-modules over $\cR_{K,A}$ $($resp $\cR_{K,A}[\tfrac{1}{t}])$ is a bijection. It induces a bijection from $H^1_{\varphi,\Gamma_K}(\gH(\cR_{K,A}))$ $($resp. $H^1_{\varphi,\Gamma_K}(\gH(\cR_{K,A}[\tfrac{1}{t}]))$ to the set of isomorphism classes of $\gH$-trivial $\gH$-$\PG$-modules over $\cR_{K,A}$ $($resp. $\cR_{K,A}[\tfrac{1}{t}])$.
\end{proposition}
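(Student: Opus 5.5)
The plan is to use \Cref{torsor_HPG_modules}, which identifies $\gH$-$\PG$-modules with triples $(D,\Phi,C)$ satisfying \eqref{HPG_def_1}--\eqref{HPG_def_3}. We then restrict to triples whose underlying torsor is the trivial torsor $\gH_{\cR_{K,A}}$; a trivialization is exactly this choice.

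First we compute all isomorphisms $\varphi^*\gH_{\cR_{K,A}}\to\gH_{\cR_{K,A}}$ and $\gamma^*\gH_{\cR_{K,A}}\to\gH_{\cR_{K,A}}$. By the remark in \Cref{sec:cont_semilin_act}, $\varphi^*\gH_{\cR_{K,A}}$ is canonically identified with $\gH_{\cR_{K,A}}$. An automorphism of the trivial right torsor is left multiplication by a unique element of $\gH(\cR_{K,A})$, since it is determined by the image of the identity section. Hence $\Phi$ corresponds bijectively to some $M\in\gH(\cR_{K,A})$, and each $C(\gamma)$ to some $c(\gamma)$. This is the content of \Cref{construction}, so $(M,c)\mapsto(\Phi,C)$ is a bijection onto all pairs of such families.

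Next we translate the conditions in \Cref{torsor_HPG_modules}.
\begin{enumerate}
\item Unwinding $\gamma_1^*C(\gamma_2)$ as multiplication by $\gamma_1(c(\gamma_2))$, condition \eqref{HPG_def_1} becomes the cocycle identity $c(\gamma_1\gamma_2)=c(\gamma_1)\gamma_1(c(\gamma_2))$.
\item In the same way, condition \eqref{HPG_def_2} becomes \eqref{Mc_comp_easy}.
\item For condition \eqref{HPG_def_3}, \Cref{three_prime} lets us test only one faithful $V$. The module $D\times^\gH V=V\otimes_L\cR_{K,A}$ carries the $\varphi$ and $\gamma$ actions given by $\rho_V(M)$ and $\rho_V(c)$.
\end{enumerate}
Over $\cR_{K,A}[\tfrac1t]$, the statement that this is a $\PG$-module is literally \Cref{def_HPG_pair_inv_t}. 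Over $\cR_{K,A}$, we must show it is equivalent to $c$ landing in some $Z^1_{\cont}(\Gamma_K,\gH(\cR^r_{K,A}))$, with $M$ correspondingly defined over some $\cR^{r}$, using \eqref{pair_comparison}. Continuity of $c$ is equivalent to continuity of the matrix cocycle $\rho_V\circ c$, because $\gH(\cR^r_{K,A})$ carries the subspace topology from $\GL(V)$. Descent of the $\PG$-module to some $\cR^r_{K,A}$ is likewise the same as $\rho_V(M)$ and $\rho_V(c)$ having entries in $\cR^r$, which we pull back along the closed immersion $\gH\hookrightarrow\GL(V)$. This gives the first bijection.

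For the second claim, we compute isomorphisms between trivialized objects $D_{(M,c)}$ and $D_{(M',c')}$. As a torsor map, such an isomorphism is left multiplication by some $h^{-1}$ with $h\in\gH(\cR_{K,A})$. Compatibility with $\Phi$ and $C(\gamma)$ gives $M'=h^{-1}M\varphi(h)$ and $c'(\gamma)=h^{-1}c(\gamma)\gamma(h)$, which is exactly the action in \Cref{H1triv}. Every $\gH$-trivial module admits some trivialization, and any two trivializations differ by an element of $\gH(\cR_{K,A})$, so isomorphism classes correspond to orbits. The same argument works verbatim after inverting $t$.

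The main obstacle is the third step over $\cR_{K,A}$: matching the topological and radius conditions on the pair with the $\PG$-module conditions on $V\otimes\cR_{K,A}$. In particular, one has to check that a $\PG$-module structure on $V\otimes\cR_{K,A}$ given by matrices in $\rho_V(\gH)$ descends to some $\cR^r$ with the descended data still lying in $\gH(\cR^r)$. I would handle this using the closed immersion $\gH\hookrightarrow\GL(V)$ together with the description $\gH(\cR_{K,A})=\bigcup_r\gH(\cR^r_{K,A})$.
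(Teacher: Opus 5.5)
Your proposal is correct and follows essentially the same route as the paper: identify trivialized objects with pairs via \Cref{construction} and \Cref{torsor_HPG_modules}, reduce condition \eqref{HPG_def_3} to a single faithful representation via \Cref{three_prime}, and read off the action of \Cref{H1triv} from a change of trivialization. The paper writes this out only after inverting $t$ and cites de Daruvar for the $\cR_{K,A}$ case. In that case the one point you leave implicit is routine: after shrinking $r$, any descent to $\cR^r_{K,A}$ can be taken to be $V\otimes_L\cR^r_{K,A}$ itself, and then the closed immersion $\gH\hookrightarrow\GL(V)$ puts $M$ and $c$ in $\gH(\cR^{r/p}_{K,A})$ and $\gH(\cR^r_{K,A})$.
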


\begin{proof}
    The statement before inverting $t$ is proved in \cite[Proposition 3.27]{dedar2020}.
    The part about $Z^1_{\varphi,\Gamma_K}(\gH(\cR_{K,A}))$ is not explicitly stated there, but follows from the proof. Although the proof after inverting $t$ follows the same lines, we spell out some details for the convenience of the reader.

    We first need to check that \Cref{construction} yields a $\PG$-module.
    Let $V$ be a faithful representation, such that the $\cR_{K,A}[\tfrac{1}{t}]$-module $V \otimes_L \cR_{K,A}[\tfrac{1}{t}]$ equipped with the actions of $\varphi$ and $\Gamma_K$ induced by $(M,c)$ is a $\PG$-module.
    By \Cref{torsor_HPG_modules} and \Cref{three_prime} the resulting fiber functor takes values in $\PGcat_{K,A}$.

    Given an $\gH$-trivial $\gH$-$\PG$-module $(\mathcal{M},\Phi,C)$ over $\mathcal{R}_{K,A}[\tfrac{1}{t}]$, we choose a section $m\in \mathcal{M}(\cR_{K,A}[\tfrac{1}{t}])$ and we let $\varphi^*m\in \varphi^*\mathcal{M}(\cR_{K,A}[\tfrac{1}{t}])$ be its preimage under the isomorphism $i$ given by the following pullback diagram
    \begin{center}
    \begin{tikzcd}
            \varphi^*\mathcal{M} \arrow[r, "i"]\arrow[d] &\mathcal{M}\arrow[d] 
            \\ \Spec(\cR_{K,A})\arrow[r,"\varphi"]& \Spec(\cR_{K,A}).
        \end{tikzcd}
    \end{center}
    Since $\mathcal{M}$ is an $\gH$-torsor, there is a unique $M\in \gH(\cR_{K,A}[\tfrac{1}{t}])$ such that $\Phi(\varphi^*m)=M\cdot m$. For $\gamma\in \Gamma_K$, we similarly obtain a unique $c(\gamma)\in \gH(\cR_{K,A}[\tfrac{1}{t}])$ satisfying $C(\gamma)(\gamma^*m)=c(\gamma)\cdot m$, where $\gamma^*m$ is defined as above. Now given a faithful representation $(V,\rho_V) \in \Rep_L(\gH)$, the section $m$ induces an isomorphism $\mathcal{M}\times^{\gH}V\cong V\otimes_L\cR_{K,A}[\tfrac{1}{t}]$. One checks that the induced $\PG$-module structure on $V\otimes_L\cR_{K,A}[\tfrac{1}{t}]$ is given by the pair $(\rho_V(M),\rho_V(c))$. This shows that the pair $(M,c)$ is an $\gH$-$\PG$-pair over $\cR_{K,A}[\tfrac{1}{t}]$, which is the inverse of \Cref{construction}.

    If we choose a different section $m'\in \mathcal{M}(\cR_{K,A}[\tfrac{1}{t}])$, then there exists an element $h\in \gH(\cR_{K,A}[\tfrac{1}{t}])$ such that $m'=m\cdot h$. One can verify that the pair obtained from $m'$ is conjugate, under the action given in \Cref{H1triv}, to the pair given by $m$ via the element $h$.    
\end{proof}

We establish a continuity condition on the cocycles arising in pairs in the spirit of \Cref{topol_RKAoneovert} and \Cref{def_cont_one_over_t}.

\begin{lemma}\label{pair_is_ind_cont}
    Let $(M,c) \in Z^1_{\varphi,\Gamma_K}(\gH(\cR_{K,A}[\tfrac{1}{t}]))$.
    Then $c \colon \Gamma_K \to \gH(\cR_{K,A}[\tfrac{1}{t}])$ satisfies the equivalent conditions of \Cref{topol_RKAoneovert}.
\end{lemma}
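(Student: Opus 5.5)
The plan is to reduce to the matrix case by means of a faithful representation. By \Cref{def_HPG_pair_inv_t} there is a faithful representation $(\rho_V,V)\in\Rep_L(\gH)$ for which $(\rho_V(M),\rho_V(c))$ defines a $\PG$-module structure on $\cM_V \colonequals V\otimes_L\cR_{K,A}[\tfrac{1}{t}]$. Choose a basis of $V$; this gives a closed immersion $\gH\hookrightarrow\GL(V)\hookrightarrow M_d\times\bbA^1\cong\bbA^{d^2+1}$, where the last coordinate is $\det^{-1}$. By \Cref{topol_RKAoneovert}(2) it then suffices to prove two things. First, the matrix-valued map $\gamma\mapsto\rho_V(c(\gamma))$ lands in $t^{-n}M_d(\cR^r_{K,A})$ for some $r,n$ and is continuous. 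Second, the same holds for $\gamma\mapsto\det\rho_V(c(\gamma))^{-1}$.

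For the first point, use that $\cM_V$ is a $\PG$-module over $\cR_{K,A}[\tfrac{1}{t}]$, so it contains a $\varphi,\Gamma_K$-stable $\cR_K$-lattice $D$ which is a $\PG$-module over $\cR_K$. Descend $D$ to $D^{r}$ over $\cR_K^{r}$ for some $r$, with continuous $\Gamma_K$-action. Since $A$ is finite over $L$, the module $\cR_{K,A}$ is finite free over $\cR_K$, so the standard basis $e_1,\dots,e_d$ of $\cM_V$ and a finite $\cR_K^r$-basis of $D^r$ can be compared after shrinking $r$. We get $t^{m}D^r\subseteq \bigoplus_i \cR^r_{K,A}e_i\subseteq t^{-m}D^r$ for some $m$. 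Shrinking $r$ is needed so that the transition matrices and their inverses are defined over $\cR^r_K$ with bounded $t$-denominators.

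The column of $\rho_V(c(\gamma))$ at $e_i$ is $\gamma\cdot e_i$. Since $e_i\in t^{-m}D^r$, and $\Gamma_K$ preserves $t^{-m}D^r$ (as $\gamma(t)=\chi^\cyc(\gamma)t$), the map $\gamma\mapsto\gamma\cdot e_i\in t^{-m}D^r\subseteq t^{-2m}\bigoplus_j\cR^r_{K,A}e_j$ is continuous. This uses the continuity of the $\Gamma_K$-action on $D^r$ and the fact that the inclusion of finite free $\cR_K^r$-modules is continuous. This gives the required bound and continuity for $\rho_V(c)$.

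For the second point, apply the same argument to $\det V$, or to the dual $V^\vee$, which computes $\rho_V(c(\gamma))^{-1}$ up to transpose. The module $\cM_V^\vee$ is again a $\PG$-module with lattice $D^\vee$, so its entries are continuous with bounded poles as well; hence so is $\det^{-1}$, being a polynomial in these entries.

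The main obstacle is the lattice comparison: the lattice is an $\cR_K$-lattice rather than an $\cR_{K,A}$-lattice, so $D^r$ need not be compatible with the standard $\cR_{K,A}$-basis. I would handle this using finiteness of $A$ over $L$ together with \Cref{lattice_lemma}-style arguments, which bound $t$-denominators uniformly at a fixed radius.
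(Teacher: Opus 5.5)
Your proposal is correct and follows essentially the paper's route. The paper also passes to a faithful representation with an extra coordinate for $\det^{-1}$: it takes $W=V\oplus(\bigwedge^d V)^*$, so that $\gH\subseteq\SL(W)\subseteq M_{d+1}$ is a closed immersion, and then uses continuity of the matrix cocycle of the $\PG$-module $\cM_{(M,c)}\times^{\gH}W$. The paper takes that continuity for granted; your lattice comparison $t^{m}D^r\subseteq\bigoplus_i\cR^r_{K,A}e_i\subseteq t^{-m}D^r$ actually justifies it.

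For the $\det^{-1}$ coordinate, note that the $\cR_K$-linear dual of $D$ does not sit inside $\cM_V^\vee$; the lattice there would have to be $\{f\in\cM_V^\vee : f(D)\subseteq\cR_{K,A}\}$. You can avoid dual lattices altogether via $c(\gamma)^{-1}=\gamma\bigl(c(\gamma^{-1})\bigr)$, which gives bounded poles and continuity of $\rho_V(c(\gamma))^{-1}$ directly from your first step.
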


\begin{proof}
    Let $V$ be a faithful representation of $\gH$ of dimension $d$.
    By forming $W = V \oplus (\bigwedge^d V)^*$, we achieve that $\gH$ takes values in $\SL(W)$.
    Choose a basis $e_1, \dots, e_{d+1}$ of $W$.
    The resulting cocycle $\Gamma_K \to M_{d+1}(\cR_{K,A}[\tfrac{1}{t}])$ for the action on $\cM_{(M,c)} \times^{\gH} W$ is continuous in the sense of \Cref{def_cont_one_over_t}.
    Since $\gH \subseteq \SL(W) \subseteq M_{d+1}$ is a closed immersion, we have established (2) of \Cref{topol_RKAoneovert}.
\end{proof}

\subsection{Parameters}
\label{sec_param}

We go back to our choice of split connected reductive group $\gG$ over $L$ with Borel subgroup $\gB$ and split maximal torus $\gT$. We define parameter spaces for $\gG$ and their $\gB$-regular and very regular subspaces.

\subsubsection{Parameters for $\gT$}

We describe the relation between $\gT$-$\PG$-modules and characters, following \cite[Example 3.13]{dedar2020}. 
We denote by $\gT^\vee$ the dual torus of $\gT$, i.e. the unique split torus $\gT^\vee$ equipped with an isomorphism $X^\ast(\gT^\vee) \cong X_\ast(\gT)$. 
Let $A\in \Aff_L$, and let $\delta \colon \gT^\vee(K) \to A^{\times}$ be a continuous character.
After choosing an isomorphism $\Gm^n \cong \gT^\vee$, we have a tuple $\underline\delta = (\delta_1, \dots, \delta_n)$ and $(\varphi, \Gamma_K)$-modules of character type $\cR_{K,A}(\delta_i)$, see \cite[Construction 6.2.4]{KPX}. By passing to the associated $\gT$-torsor, we obtain a $\gT$-trivial $\gT$-$(\varphi, \Gamma_K)$-module, which we will denote by $\cR_{K,A}(\delta)$ and which is independent of the choice of isomorphism. 

Conversely, let $D$ be a $\gT$-$\PG$-module over $\cR_{K,A}$. Even if $D$ is not $\gT$-trivial, we will attach to it a continuous character $\gT^\vee(K)\to A^\times$ as follows. For every $\chi\in X^\ast(\gT)$, the rank-one $\PG$-module $\eta_D(\chi)$ over $\cR_{K,A}$ is, by \cite[Theorem 6.2.14(1)]{KPX}, of the form $\cR_{K,A}(\delta_\chi)\otimes_A\mathscr{L}$ for a continuous character $\delta_{\chi} \colon K^\times\to A^\times$ and a line bundle $\mathscr{L}$ over $A$ carrying trivial $\varphi$ and $\Gamma_K$-actions. The association $\chi\mapsto\delta_\chi$ defines a homomorphism from $X^\ast(\gT)$ to $\Hom_{\mathrm{cont}}(K^{\times},A^\times)$, which in turn corresponds to a continuous character $\delta \colon \gT^\vee(K)\to A^\times$ (see the calculation (3.3) of \cite[Section 3]{dedar2020}). 
\begin{definition}
    Let $D$ be a $\gT$-$\PG$-module over $\cR_{K,A}$. The continuous character $\delta \colon \gT^\vee(K)\to A^\times$ obtained as above is called the \emph{parameter} of $D$.

    If $D$ is a $\gB$-$(\varphi, \Gamma_K)$-module over $\cR_{K,A}$, we say that $\delta \colon \gT^\vee(K)\to A^\times$ is the \emph{parameter} of $D$, if it is the parameter of $D \times^\gB \gT$.
\end{definition}



\begin{remark}
Choosing an isomorphism $\gT\cong\G_m^n$ and applying \cite[Theorem 6.2.14 (1)]{KPX}, one can show that every $\gT$-$\PG$-module over $\cR_{K,A}$ is Zariski-locally on $A$ of the form $\cR_{K,A}(\delta)$.
\end{remark}

\begin{definition}
    We define $\cT^\gT$ by
\begin{align}
    \cT^\gT \colon \Aff_L \longrightarrow \Set, \quad A \mapsto \Hom_{\cont}(\gT^\vee(K), A^{\times}), \label{TT}
\end{align}
which is smooth quasi-Stein of dimension $\dim\gT \cdot ([K:\Qp]+1)$.
\end{definition} 

\begin{example}
    When $\gG = \GL_n$, $\gB = \gB_n$ is the Borel subgroup of upper triangular matrices and $\gT=\gT_n$ is the maximal torus of diagonal matrices, then we have a natural choice of an isomorphism $\Gm^n \cong \gT_n$. In particular, $\cT^{\gT_n} \cong \cT^n$ and a parameter for a $\gT_n$-$(\varphi, \Gamma_K)$-module amounts to a tuple $\underline\delta = (\delta_1, \dots, \delta_n)$ of characters $\delta_i \colon K^{\times} \to A^{\times}$. In this paper we try to describe parameters in a basis free way, which means that for a general split reductive group $\gG$ with maximal torus $\gT$ we prefer not to choose an isomorphism between $\Gm^n$ and $\gT$.
\end{example}

In the following lemma, we establish an analogue of \cite[Lemma 6.2.13]{KPX} for $\PG$-modules over $\cR_{K,A}[\tfrac{1}{t}]$, giving a criterion for a rank-one object to be of character type. 

\begin{lemma}\label{class_rk_one_after_inv_t}
    Let $A\in \Art_L$, and let $\cM_A$ be a $\PG$-module over $\cR_{K,A}[\tfrac{1}{t}]$ of rank one such that $\cM_A\otimes_A L\cong \cR_{K,L}[\frac{1}{t}](\delta)$. Then there exists a continuous character $\delta_A\colon K^\times \to A^\times$ lifting $\delta$ such that $\cM_A\cong \cR_{K,A}[\tfrac{1}{t}](\delta_A)$.
\end{lemma}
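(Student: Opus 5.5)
Since $\cM_A\otimes_A L\cong\cR_{K,L}[\tfrac1t](\delta)$ and $A$ is Artinian, the strategy is to reduce to the known statement over $\cR_{K,A}$, namely \cite[Lemma 6.2.13]{KPX}, by producing a $\PG$-stable rank-one lattice, and then to remove the ambiguity coming from powers of $t$. Throughout I view $A$ as a finite-dimensional $L$-vector space, so that $\cR_{K,A}=\cR_{K,L}\otimes_L A$ and all modules are finite free over $\cR_{K,L}$.

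\textbf{Step 1 (lattice).} By \Cref{def_PG_RKAoneovert}, $\cM_A=D[\tfrac1t]$ for some $\PG$-stable $\cR_K$-lattice $D$, which is a $\PG$-module over $\cR_K$ and is finite free over $\cR_{K,L}$. The issue is that $D$ need not be an $\cR_{K,A}$-submodule. To fix this, replace $D$ by the $\cR_{K,A}$-span $A\cdot D$. This is still finitely generated over $\cR_{K,L}$ (since $A$ is finite over $L$), still $\varphi$- and $\Gamma_K$-stable, and still torsion-free inside $\cM_A$. Hence it is free over the Bézout ring $\cR_{K,L}$, and it is a $\PG$-module over $\cR_{K,A}$ in the sense of \cite[\S2]{KPX} once we know it is projective over $\cR_{K,A}$.

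\textbf{Step 2 (projectivity, the main obstacle).} Projectivity of $D_A:=A\cdot D$ over $\cR_{K,A}$ is the step I expect to be delicate. I would argue by induction on the length of $A$, using a small extension $0\to I\to A\to A'\to 0$ with $I\cong L$. The submodule $I\cM_A$ is a rank-one $\PG$-module over $\cR_{K,L}[\tfrac1t]$ isomorphic to $\cM_A\otimes_AL$, and $D_A\cap I\cM_A$ together with the image of $D_A$ in $\cM_{A'}$ are lattices by \Cref{lattice_lemma}. By saturating, i.e. replacing $D_A$ by $\{x\in\cM_A : \frakm_A^k x\subseteq D_A \text{ for suitable } k\}$ intersected appropriately, one can arrange that $D_A\otimes_A L\to D_A/\frakm_A D_A$ is injective and that each layer is a lattice. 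The local flatness criterion over the Artinian ring $A$ then makes $D_A$ flat, hence projective of rank one. If this saturation turns out to be awkward, the alternative is to prove directly that the map $\mathrm{Tor}_1^A(D_A,L)\to 0$ vanishes, using the fact that multiplication by $\frakm_A$ on $\cM_A$ has kernel equal to the socle.

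\textbf{Step 3 (character).} Once $D_A$ is a rank-one $\PG$-module over $\cR_{K,A}$, \cite[Lemma 6.2.13]{KPX} (or \cite[Theorem 6.2.14]{KPX}, with the line bundle trivial because $A$ is local) gives $D_A\cong\cR_{K,A}(\delta'_A)$ for some continuous character $\delta'_A$. Its reduction satisfies $\cR_{K,L}(\delta'_A\bmod\frakm)[\tfrac1t]\cong\cR_{K,L}(\delta)[\tfrac1t]$. Rank-one objects over $\cR_{K,L}$ that become isomorphic after inverting $t$ differ by $t^{\mathbf k}$, i.e. by an algebraic character $z^{\mathbf k}$; this follows from the classification of rank-one sub-objects, \cite[Prop.~2.14]{NakClass}. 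So $\delta=\bar\delta'_A z^{\mathbf k}$. Set $\delta_A:=\delta'_A z^{\mathbf k}$. Then $\cR_{K,A}(\delta_A)\cong t^{\mathbf k}\cR_{K,A}(\delta'_A)$, which has the same localization, and therefore $\cM_A\cong\cR_{K,A}[\tfrac1t](\delta_A)$ with $\delta_A$ lifting $\delta$.
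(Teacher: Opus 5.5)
Your Step 2 is a genuine gap, and it is where the whole content of the lemma sits. The span $A\cdot D$ from Step 1 need not be flat over $A$. Take $A=L[\epsilon]$, $\cM_A=\cR_{K,A}[\tfrac{1}{t}]$ trivial, and $D=\cR_{K,L}\oplus\epsilon t^{-1}\cR_{K,L}$. This is a legitimate $\varphi$- and $\Gamma_K$-stable lattice as in \Cref{def_PG_RKAoneovert}, and it is already an $\cR_{K,A}$-module. However, $D\cap\epsilon\cM_A=\epsilon t^{-1}\cR_{K,L}\neq\epsilon D=\epsilon\cR_{K,L}$, so $D\otimes_A L$ has $t$-torsion and $D$ is not projective over $\cR_{K,A}$.

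Your proposed repair is not an argument. The condition ``$D_A\otimes_A L\to D_A/\frakm_A D_A$ injective'' is vacuous, since these are the same module. What you need is injectivity of $\frakm_A\otimes_A D_A\to D_A$, i.e. $D_A\cap I\cM_A=ID_A$ for a small extension. You do not specify any saturation that stays finitely generated and $\PG$-stable while becoming $A$-flat. Moreover, the saturation you sketch makes no essential use of rank one. Yet the remark following \Cref{def_PG_RKAoneovert} says it is not known in general whether $\PG$-modules over $\cR_{K,A}[\tfrac{1}{t}]$ admit $\PG$-stable $\cR_{K,A}$-lattices. So a purely module-theoretic saturation should not be expected to work.

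The missing ingredient is cohomological and specific to rank one. Build the flat lattice inductively along a small extension $0\to I\to A\to A'\to 0$, starting from $\cR_{K,A'}(\delta_{A'})\subset\cM_{A'}$, and pick any lift $\tilde\delta_A$ of $\delta_{A'}$. Then $\cM_A$ contains a $\PG$-stable free lattice exactly when the class of $\cM_A$ relative to $\cR_{K,A}(\tilde\delta_A)[\tfrac{1}{t}]$ in $H^1_{\varphi,\Gamma_K}(\cR_{K,L}[\tfrac{1}{t}])\otimes_L I$ comes from $H^1_{\varphi,\Gamma_K}(\cR_{K,L})\otimes_L I$. That surjectivity is \cite[Lemma 3.3.3]{BHS19}.

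This is exactly how the paper argues, and it needs no lattice at all:
\begin{itemize}
\item It inducts on the nilpotency index of $\frakm_A$ and lifts $\delta_{e-1}$ to $\delta_e$ using smoothness of the character space.
\item It twists so that the reduction is trivial.
\item It classifies $\cM_A$ as an extension of $\cR_{K,A/\frakm_A^{e-1}}[\tfrac{1}{t}]$ by $\frakm_A^{e-1}\cM_A\cong\cR_{K,A}[\tfrac{1}{t}]\otimes_A\frakm_A^{e-1}$, via $H^1_{\varphi,\Gamma_K}(\cR_{K,A}[\tfrac{1}{t}]\otimes_A\frakm_A^{e-1})\cong H^1_{\varphi,\Gamma_K}(\cR_{K,A}\otimes_A\frakm_A^{e-1})$.
\item It notes that the latter group is exactly hit by lifts of the trivial character, through $H^1(\GK,\frakm_A^{e-1})\xrightarrow{\sim}H^1_{\varphi,\Gamma_K}(\cR_{K,A}\otimes_A\frakm_A^{e-1})$ from the proof of \cite[Lemma 6.2.13]{KPX}.
\end{itemize}

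With that input your Step 3, the correction by an algebraic character, becomes unnecessary; it is fine once a flat lattice is available. As a minor point, the lattice $D$ in \Cref{def_PG_RKAoneovert} is only an $\cR_K$-module, not a priori an $\cR_{K,L}$-module. This is harmless after passing to $A\cdot D$.
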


\begin{proof}
    We proceed by induction on the nilpotency index $e$ of $\mathfrak{m}_A$. If $e=1$, then there is nothing to prove. 
    Otherwise, assume that the result holds for nilpotency index $e-1$, so that
    $$ \cM_A\otimes_A A/\mathfrak{m}_A^{e-1}\cong \cR_{K,A/\mathfrak{m}_A^{e-1}}[\tfrac{1}{t}](\delta_{e-1}). $$
    Let $\delta_e\colon K^\times \to A^\times$ be a lift of $\delta_{e-1}$ (which exists by smoothness of the space of characters of $K^\times$). Twisting $\cM_A$ by $\delta_e^{-1}$, we may assume that $\delta_{e-1}$ is trivial. We then obtain an $A$-linear exact sequence
     $$ 0 \to  \mathfrak{m}_A^{e-1} \cM_A \to \cM_A \to \cR_{K,A/\mathfrak{m}_A^{e-1}}[\tfrac{1}{t}] \to 0. $$
     Using the $A$-module structure on $\cM_A$, we find that the options for $\cM_A$ are parametrized by $H^1_{\varphi,\Gamma_K}(\mathfrak{m}_A^{e-1} \cM_A)$. Moreover, there is an isomorphism $\mathfrak{m}_A^{e-1} \cM_A\cong \cR_{K,A}[\tfrac{1}{t}]\otimes_A \mathfrak{m}_A^{e-1}$, and 
     $$ H^1_{\varphi,\Gamma_K}(\cR_{K,A}[\tfrac{1}{t}]\otimes_A \mathfrak{m}_A^{e-1})=\colim_n H^1_{\varphi,\Gamma_K}(t^{-n}\cR_{K,A}\otimes_A \mathfrak{m}_A^{e-1})\cong H^1_{\varphi,\Gamma_K}(\cR_{K,A}\otimes_A \mathfrak{m}_A^{e-1}), $$
     where we use \cite[Lemma 3.3.3]{BHS19} for the last isomorphism. By the proof of \cite[Lemma 6.2.13]{KPX}, the lifts $\delta_A\colon K^\times \to A^\times$ of the trivial character with coefficients in $(A/\mathfrak{m}_A^{e-1})^\times$ are parametrized by $H^1(\GK,\mathfrak{m}_A^{e-1})$, and the map $\delta_A\mapsto \cR_{K,A}(\delta_A)$ induces an isomorphism $$H^1(\GK,\mathfrak{m}_A^{e-1})\xrightarrow{\sim} H^1_{\varphi,\Gamma_K}(\cR_{K,A}\otimes_A\mathfrak{m}_A^{e-1}).$$
     This completes the proof.
\end{proof}

\subsubsection{Deformations of parameters}\label{defofparameters} If $A\in \Aff_L$, we say that a continuous character $\delta\colon \gT^\vee(K)\to A^\times$ is \emph{algebraic} if every $\delta_i$ as above is algebraic. It is easy to see that this does not depend on the chosen splitting, and the following remark provides a more intrinsic definition.

\begin{remark}\label{algdual}
A character $\delta\colon\gT^\vee(K)\to A^\times$ is algebraic if and only if it factors as
\[ \gT^\vee(K)\to\gT^\vee(K\otimes_{\Q_p}L)=\bT^\vee(L) \xto{\mu} L^\times \hookrightarrow A^\times, \]
where the first map is induced by $K\to K\otimes_{\Q_p}L, ~x\mapsto x\otimes 1$, and we require that the second one be obtained by evaluating an algebraic homomorphism $\mu : \bT^\vee\to\Gm$ on $L$ points. In particular, we can attach to $\delta$ the algebraic homomorphism $\mu^\vee\colon \Gm\to\bT$ dual to $\mu$. 
\end{remark}

\begin{definition}\label{def_weightpar}
Let $A\in \Art_L$ and $\delta_A \in \cT^\gT(A)$. We define the \emph{weight} of $\delta_A$ as the $\Qp$-linear map $\wt(\delta_A) : \Lie(\gT^{\vee})(K) \to A$ given by $\wt(\delta_A)(x) \colonequals \tfrac{d}{dt}|_{t=0}\delta_A(\exp(tx))$.

We say that $\delta_A \in \cT^\gT(A)$ is smooth if $\wt(\delta_A)=0$, and we say that it is locally algebraic if it is a product of a smooth and an algebraic character.
\end{definition}
For $\delta \in \cT^\gT(L)$, the functor
$$ A\in \Art_{L} \mapsto \{ \delta_A\in \cT^\gT(A) \ | \ \delta_A\equiv \delta \bmod \mathfrak{m}_A \} $$
is prorepresentable by the completion $\wh\cT^\gT_{\delta}$ of $\cT^\gT$ at $\delta$. We let $\widehat{\frakt}$ be the completion of $\frakt$ at $0\in \frakt(L)$. Since $$\Hom_{\Qp}(\Lie(\gT^{\vee})(K), A) = \Lie(\gT)(K \otimes_{\Qp} A) =\frakt (A) , $$
we get a morphism of formal schemes $\wt - \wt(\delta) \colon \wh\cT^\gT_{\delta} \to \wh\frakt$. The following is proved in the same way as \cite[Lemma 3.5.5]{BHS19}.

\begin{lemma}\label{lem355}
    The map $\wt - \wt(\delta) \colon \widehat\cT^\gT_{\delta} \to \that$ is formally smooth of relative dimension $\dim \gT_L$.
\end{lemma}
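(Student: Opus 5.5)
The plan is to reduce to the case $\gT=\Gm$, i.e.\ to characters of $K^\times$, and then use the explicit structure of $K^\times$. I would first choose a splitting $\gT^\vee\cong\Gm^n$. This identifies $\wh\cT^\gT_\delta\cong\prod_i\wh\cT_{\delta_i}$ and $\that\cong\prod_i\wh{(K\otimes_{\Qp}L)}$, the completion at $0$. Under these identifications the map $\wt-\wt(\delta)$ becomes the product of the corresponding rank-one maps $\delta_{i,A}\mapsto\wt(\delta_{i,A})-\wt(\delta_i)$. A product of formally smooth maps is formally smooth, and relative dimensions add. Since $\dim\gT_L=n$, it therefore suffices to show that for a single continuous character $\delta\colon K^\times\to L^\times$ the map $\wh\cT_\delta\to\wh{(K\otimes_{\Qp}L)}$ is formally smooth of relative dimension $1$.

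For the rank-one case I would use the decomposition $K^\times\cong\varpi_K^{\Z}\times\mu(K)\times U$, where $U\subseteq 1+\varpi_K\cO_K$ is an open subgroup isomorphic to $\Z_p^{[K:\Qp]}$ and $\mu(K)$ is finite. Fix $A\in\Art_L$ and a lift $\delta_A$ of $\delta$. Write $\delta_A=\delta\cdot(1+\epsilon)$ with $\epsilon\colon K^\times\to 1+\frakm_A$ a continuous character. I would first check that $1+\epsilon$ is trivial on the torsion $\mu(K)$, because $1+\frakm_A$ is uniquely divisible. On $U$, taking logarithms, which make sense on $1+\frakm_A$, I would check that $\log(1+\epsilon)|_U$ is a continuous homomorphism $U\to\frakm_A$. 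Such a homomorphism is $\Zp$-linear, so it extends uniquely to a $\Qp$-linear map on $\log U$, an open lattice in $K$. Its differential is then exactly $\wt(\delta_A)-\wt(\delta)$, viewed as an element of $\frakm_A\otimes_{\Qp}K$.

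This gives an isomorphism of deformation functors
\[
\wh\cT_\delta(A)\cong\frakm_A\times\Hom_{\Qp}(K,\frakm_A),\qquad \delta_A\mapsto\bigl(\epsilon(\varpi_K),\ \log(1+\epsilon)|_{U}\bigr).
\]
Here the second factor maps identically to $\that(A)=\frakm_A\otimes_{\Qp}K$, and the first factor records the value at $\varpi_K$. The extension from $U$ to all of $\cO_K^\times$ is unique, again by unique divisibility of $1+\frakm_A$ and the finiteness of $\cO_K^\times/U\mu(K)$. Hence $\wh\cT_\delta\cong\wh{\mathbb A}^1\times\that$ over $\that$. This is formally smooth of relative dimension $1$.

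The main care lies in one point. One must verify that the weight map, defined through $\exp$ on $\Lie(\gT^\vee)(K)$, agrees with the logarithmic description. One must also verify that every $\Qp$-linear map $K\to\frakm_A$ really exponentiates to a continuous character on $U$, possibly after shrinking $U$ so that $\exp$ converges; this gives surjectivity. Both points are standard and are exactly the content of \cite[Lemma 3.5.5]{BHS19}, which the argument transcribes coordinatewise.
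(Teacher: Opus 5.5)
Your proposal is correct and takes essentially the paper's approach. The paper simply says the lemma is proved as \cite[Lemma 3.5.5]{BHS19}; your splitting $\gT^\vee\cong\Gm^n$, followed by the rank-one computation via $K^\times\cong\varpi_K^{\Z}\times\mu(K)\times U$ and $\log$ on $1+\frakm_A$, is exactly that argument transcribed coordinatewise. One small remark: no shrinking of $U$ is needed for surjectivity, because $\exp$ on the nilpotent ideal $\frakm_A$ is a finite sum, so any $\Qp$-linear $\ell\colon K\to\frakm_A$ already gives the continuous character $\exp\circ\ell\circ\log$ on $U$.
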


\subsubsection{$\gB$-regular parameters}\label{sec:Breg}

\begin{definition}\label{def_Breg}
Given a character $\alpha \colon \gT \to \Gm$, its dual $\alpha^{\vee} \colon \Gm \to \gT^{\vee}$ induces a map $\cT^\gT \to \cT$.
We define $\cT_{0,\gB}^{\gT} \subseteq \cT^\gT$ as the preimage of $\prod_{\alpha \in \Phi^+(\gB,\gT)} \cT_0$ under the map $\cT^\gT \to \prod_{\alpha \in \Phi^+(\gB,\gT)} \cT$. \\
For $A\in \Aff_L$, a parameter $\gT^{\vee}(K) \to A^{\times}$ is called \emph{$\gB$-regular} if it lies in $\cT_{0,\gB}^{\gT}(A)$.
\end{definition}
We note that $\gT$ acts on the root subgroups of $\gB$ by a nontrivial character $\alpha$, which induces a surjection $\cT^\gT \to \cT$. Therefore, $\cT_{0,\gB}^{\gT}$ is Zariski open and Zariski dense in $\cT^\gT$ whose complement is of codimension $(\dim \gT - 1) \cdot ([K:\Qp]+1)$.

\begin{remark}\label{rmk_B_solvable}
    The notion of ``regular parameter'' of \cite[Definition 5.4]{dedar2020} is strictly stronger, as there $\alpha$ varies over all roots. We also remark that the notion of $\gB$-regularity only depends on the pair $(\gB, \gT)$. It makes sense for every solvable group $\gB$ with $\gB/\Ru(\gB) \cong \gT$ such that the induced roots of $\gT$ are non-trivial, in particular all quotients of a Borel. We do not pursue this generalization here.
\end{remark}

\begin{example}
    Let $(\gB_n,\gT_n)$ be the standard torus of $\GL_n$, so that $\cT^{\gT_n} = \cT^n$.
    Then $\cT^{\gB_n,\gT_n}_0 \subseteq \cT^n$ is the complement of those $\delta = (\delta_1, \dots, \delta_n) \colon (K^{\times})^n \to L^{\times}$ for which for some $1 \leq i < j \leq n$ the character $\delta_i\delta_j^{-1}$ or $\varepsilon\delta_j\delta_i^{-1}$ is algebraic. Our set $\cT^{\gB_n, \gT_n}_0$ is strictly larger than the set $\cT^n_0$ defined in the paragraph after \cite[Lemma 3.4.3]{BHS19}, which is defined in a similar fashion with $<$ replaced by $\neq$. 
\end{example}

\subsection{Triangulable $\GPG$-modules}
\label{sectriPG}

We go back to our chosen reductive group $\gG$ over $L$, with Borel subgroup $\gB$ containing a maximal torus $\gT$. Let $A\in \Aff_L$, and let $D$ be a $\gG$-$(\varphi, \Gamma_K)$-module over $\cR_{K,A}$, and let $\cM$ be a $\gG$-$(\varphi, \Gamma_K)$-module over $\cR_{K,A}[\tfrac{1}{t}]$ if $A$ is finite-dimensional.

\begin{definition}\label{def_triangulation} \phantom{a}
    \begin{enumerate}
        \item A ($\gB$-)\emph{triangulation} of $D$ is a $\gB$-$(\varphi, \Gamma_K)$-module $\Dtri$ over $\cR_{K,A}$ together with an isomorphism of $\gG$-$(\varphi, \Gamma_K)$-modules $\iota\colon \Dtri \times^\gB \gG\xrightarrow{\sim} D$ over $\cR_{K,A}$. We can associate to $\Dtri$ a $\gT$-$(\varphi, \Gamma_K)$-module $\Dtri \times^\gB \gT$, which determines via the discussion in \Cref{sec_param} a continuous character $\gT^{\vee}(K) \to A^{\times}$, called the \emph{parameter} of the triangulation.
        
        \item A ($\gB$-)\emph{triangulation} of $\cM$ is a $\gB$-$(\varphi, \Gamma_K)$-module $\Mtri$ over $\cR_{K,A}[\tfrac{1}{t}]$ together with an isomorphism of $\gG$-$(\varphi, \Gamma_K)$-modules $\iota\colon \Mtri \times^\gB \gG\xrightarrow{\sim} \cM$ over $\cR_{K,A}[\tfrac{1}{t}]$ such that the associated $\gT$-$(\varphi, \Gamma_K)$-module $\Mtri \times^\gB \gT$ \emph{is of character type}. That is, $\Mtri \times^\gB \gT\cong \cR_{K,A}[\frac{1}{t}](\delta)$, where $\delta\colon \gT^{\vee}(K) \to A^{\times}$ is a continuous character. We refer to $\delta$ as a \emph{parameter} of the triangulation.
    \end{enumerate}
    We say that $D$ (resp. $\cM$) is ($\gB$-)\emph{triangulable} if it admits a $\gB$-triangulation.
\end{definition} 

This is equivalent to \cite[Definition 4.9]{dedar2020}. If $D$ is equipped with a triangulation $\Dtri$, then $D$ is determined by $\Dtri$ up to a unique isomorphism. Consequently, instead of using the term $\gG$-$\PG$-modules equipped with a triangulation, we will use $\gB$-$\PG$-modules.

\begin{remark}
When $\gG=\GL_n$, \Cref{def_triangulation} is equivalent to the classical definition of triangulable $(\varphi,\Gamma_K)$-modules of rank $n$: a $\GL_n$-$(\varphi,\Gamma_K)$-module over $\cR_{K,A}$ is triangulable if and only if it admits a filtration whose nontrivial subquotients are rank 1 $(\varphi,\Gamma_K)$-modules over $\cR_{K,A}$. This follows from \Cref{torsor_HPG_modules} and \cite[Example 3.11]{dedar2020}. 
\end{remark}

\begin{remark}\label{borelindependent}
Since all of the Borel subgroups of $\gG$ over $L$ are $\gG(L)$-conjugate, the property of $D$ being $\gB$-triangulable is independent of the choice of $\gB$, so we simply say that $D$ is \emph{triangulable}.
\end{remark}

\begin{lemma}
    Let $A\in \Art_L$, and let $\cM_A^{\triangle}$ be a $\gB$-$\PG$-module over $\cR_{K,A}[\tfrac{1}{t}]$ deforming $\Mtri=\cM_A^{\triangle}\otimes_A L$. If $\Mtri\times^{\gB}\gG$ is triangulable, then so is $\Mtri_A\times^{\gB}\gG$.
\end{lemma}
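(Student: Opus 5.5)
The plan is to show that $\Mtri_A$ itself, with the tautological isomorphism $\Mtri_A\times^\gB\gG\xrightarrow{\sim}\Mtri_A\times^\gB\gG$, is a triangulation in the sense of \Cref{def_triangulation}(2). The hypothesis should be read as saying that $\Mtri$ is a triangulation of $\Mtri\times^\gB\gG$. Then $\Mtri\times^\gB\gT\cong\cR_{K,L}[\tfrac1t](\delta)$ for some continuous character $\delta\colon\gT^\vee(K)\to L^\times$. Over $\cR_{K,A}[\tfrac1t]$ the only condition to check is that $\cM_{0,A}\colonequals\Mtri_A\times^\gB\gT$ is of character type.

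First I fix an isomorphism $\gT\cong\Gm^n$, with corresponding basis $\chi_1,\dots,\chi_n$ of $X^*(\gT)$. Since $\Rep_L(\gT)$ is semisimple with simple objects the characters, and $\chi\mapsto\eta(\chi)$ is compatible with tensor products, a $\gT$-$\PG$-module over $\cR_{K,A}[\tfrac1t]$ is determined up to isomorphism by the $n$ rank-one $\PG$-modules $\eta_{\cM_{0,A}}(\chi_i)$. These are $\PG$-modules by \Cref{torsor_HPG_modules}. Conversely, any $n$-tuple of rank-one objects assembles into a $\gT$-$\PG$-module; this is the same dictionary as in \Cref{sec_param}, and it is how $\cR_{K,A}[\tfrac1t](\delta_A)$ is built from the $\delta_{A,i}$. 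Triviality of the underlying torsor is not an issue: for $\Gm^n$ this follows from the Picard group computation in the proof of \Cref{trivialforB}, and in any case it is not needed for the argument below.

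Next, reducing modulo $\frakm_A$ gives
\[ \eta_{\cM_{0,A}}(\chi_i)\otimes_AL\cong\eta_{\Mtri\times^\gB\gT}(\chi_i)\cong\cR_{K,L}[\tfrac1t](\delta_i), \]
where $\delta_i=\delta\circ\chi_i^\vee$. By \Cref{class_rk_one_after_inv_t}, there are continuous lifts $\delta_{A,i}\colon K^\times\to A^\times$ of $\delta_i$ with $\eta_{\cM_{0,A}}(\chi_i)\cong\cR_{K,A}[\tfrac1t](\delta_{A,i})$. Let $\delta_A\colon\gT^\vee(K)\to A^\times$ be the character corresponding to the tuple $(\delta_{A,i})$. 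The isomorphisms on the generators $\chi_i$ extend uniquely, by tensor compatibility, to an isomorphism of fiber functors $\eta_{\cM_{0,A}}\cong\eta_{\cR_{K,A}[\frac1t](\delta_A)}$. Hence $\cM_{0,A}$ is of character type, $\delta_A$ is a parameter lifting $\delta$, and $\Mtri_A\times^\gB\gG$ is triangulable.

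The only step of real substance is the rank-one classification \Cref{class_rk_one_after_inv_t}, which is already available, so I expect no serious obstacle. The point to handle carefully is the bookkeeping that identifies a $\gT$-fiber functor with its values on a basis of $X^*(\gT)$, including tensor compatibility. There is also an interpretive subtlety: if the hypothesis only provided some other triangulation $\Mtri{}'$ of $\Mtri\times^\gB\gG$, I would first have to transport the deformation $\Mtri_A$ to a deformation of $\Mtri{}'$. I will instead read the statement as saying that $\Mtri$ is itself a triangulation, which is also how it is used later.
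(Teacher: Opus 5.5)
Your proposal is correct and follows the paper's route. The paper's proof is the single remark that the statement is an immediate consequence of \Cref{class_rk_one_after_inv_t}, and your argument spells out that application: you split $\gT\cong\Gm^n$, apply the lemma to each rank-one module $\eta_{\Mtri_A\times^\gB\gT}(\chi_i)$, and reassemble the resulting $\delta_{A,i}$ into $\delta_A$. Your reading of the hypothesis, namely that $\Mtri$ is itself the triangulation (so that $\Mtri\times^\gB\gT$ is of character type), is also how the paper uses it.
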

\begin{proof}
    This is an immediate consequence of \Cref{class_rk_one_after_inv_t}.
\end{proof}

\begin{definition}\label{def_trianguline_rep}
    A $\gG$-valued Galois representation $\rho \colon \GK \to \gG(L)$ is \emph{trianguline} if there is a finite extension $L'/L$ such that the associated $\GPG$-module $D_{\rig}\circ \eta_{\rho_{L'}}$ is triangulable.
\end{definition}

\begin{remark}
    If $D$ is a triangulable $\gG$-$\PG$-module over $\cR_{K,A}$, then for any $V \in \Rep_L(\gG)$, the $\PG$-module $\eta_D(V)$ is trianguline.
    Indeed, the image of $\gB$ in $\GL(V)$ is contained in a Borel subgroup $\gB'$ of $\GL(V)$. If $D^{\triangle}$ is a $\gB$-$\PG$-module, such that $D^{\triangle} \times^{\gB} \gG \cong D$, then $D^{\triangle} \times^\gB \gB'$ is a triangulation of $D \times^\gG \GL(V)$. In particular, $D$ satisfies the ``Tannakian definition'' of being trianguline.
    The converse implication was proved in \cite[Proposition 5.10]{conti2022lifting}. 
    For our purposes we only need that a crystalline $\gG$-valued representation gives rise to a triangulable $\PG$-module, which was shown by de Daruvar \cite[Corollary 4.34]{dedar2020}.
\end{remark}

\begin{lemma}\label{pluckercoordinatesphi-gamma}
    Let $D$ be a $\gG$-$(\varphi, \Gamma_K)$-module over $\cR_{K,A}$, and let $\Lambda\subset X_+^*(\gT^\der)$ be a finite set of $\Q$-generators of $X^*_+(\gT^{\der})_\Q$. Then the following are equivalent:
\begin{enumerate}
    \item The datum of a triangulation $\Dtri$.
    \item The datum of Plücker coordinates, that is, a family of saturated $(\varphi, \Gamma_K)$-submodules of rank one 
    $$ \big(\mathscr{L}_{\lambda} \subseteq \eta_{D}(V_{\lambda})\big)_{\lambda\in X_+^*(\gT)}, $$
    such that for all $\lambda,\mu \in  X_+^*(\gT)$, the image of $\mathscr{L}_{\lambda+\mu}$ in $\eta_{D}(V_{\lambda})\otimes \eta_{D}(V_{\mu})$, via the inclusion $\eta_{D}(V_{\lambda+\mu})\subseteq \eta_{D}(V_{\lambda})\otimes \eta_{D}(V_{\mu})$, coincides with $\mathscr{L}_\lambda\otimes \mathscr{L}_\mu$.
    \item The datum of a family of saturated $(\varphi, \Gamma_K)$-submodules of rank one $\mathscr{L}_{\lambda}\subseteq \eta_{D}(V_{\lambda})$ for $\lambda\in \Lambda$ satisfying the following conditions:
    \begin{itemize}
        \item For every $(n_\lambda)_{\lambda}\in \bbN^{\Lambda} $, $\bigotimes_{\lambda \in \Lambda} \mathscr{L}_{\lambda}^{\otimes n_\lambda} \subseteq \eta_{D}(V_{\sum_\lambda n_\lambda \lambda})\subseteq \bigotimes_{\lambda \in \Lambda}\eta_{D}(V_\lambda)^{\otimes n_\lambda},$
        \item For every $ ((n_\lambda)_{\lambda},(m_\lambda)_{\lambda})\in R_{\Lambda}$, $\bigotimes_{\lambda \in \Lambda} \mathscr{L}_{\lambda}^{\otimes n_\lambda}=\bigotimes_{\lambda \in \Lambda} \mathscr{L}_{\lambda}^{\otimes m_\lambda}.$
        \end{itemize}
\end{enumerate}
    If $A\in \Art_L$,  the analogous statement holds for a $\gG$-$(\varphi, \Gamma_K)$-module $\cM$ over $\cR_{K,A}[\tfrac{1}{t}]$, where we further assume that the rank-one $(\varphi,\Gamma_K)$-modules $\mathscr{L_\lambda}$ over $\cR_{K,A}[\tfrac{1}{t}]$ are of character type.
\end{lemma}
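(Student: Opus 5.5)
The plan is to reduce \Cref{pluckercoordinatesphi-gamma} to \Cref{pluckerdatum}, applied to the underlying $\gG$-torsor of $D$ over $X=\Spec(\cR_{K,A})$. Through \Cref{torsor_HPG_modules} the data are organised as follows. A triangulation $\Dtri$ consists of a $\gB$-reduction $(\cQ^\triangle,i)$ of the torsor $\cQ$ underlying $D$, together with $\varphi$- and $\Gamma_K$-structures on $\cQ^\triangle$ that are compatible via $i$ with those on $D$. The compatibility is expressed through isomorphisms $\Phi^\triangle\colon\varphi^*\cQ^\triangle\to\cQ^\triangle$ and $C^\triangle(\gamma)$ that induce $\Phi$ and $C(\gamma)$. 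By the final statement of \Cref{pluckerdatum}, such lifts exist, and are then unique, precisely when $\Phi$ and $C(\gamma)$ respect the Plücker data.

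Concretely, the $\gB$-reduction $\varphi^*(\cQ^\triangle,i)$ of $\varphi^*\cQ$ has Plücker datum $(\varphi^*\scrL_\lambda)_\lambda$. The isomorphism $\Phi\colon \varphi^*\cQ\to\cQ$ is induced by a morphism of $\gB$-torsors exactly when $\Phi(\varphi^*\scrL_\lambda)=\scrL_\lambda$ for all $\lambda$, that is, when each $\scrL_\lambda$ is $\varphi$-stable. The same argument with $C(\gamma)$ shows that each $\scrL_\lambda$ must be $\Gamma_K$-stable. The uniqueness part gives the cocycle condition \eqref{HPG_def_1} and the commutation \eqref{HPG_def_2} for $(\Phi^\triangle,C^\triangle)$, because both sides of each identity lift the same $\gG$-morphism. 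Condition \eqref{HPG_def_3}, via \Cref{three_prime}, follows because $\cQ^\triangle\times^\gB W$ for $W$ a faithful $\gB$-representation is a $\varphi,\Gamma_K$-stable subquotient of objects of the form $\eta_D(V)$. For continuity I would use \Cref{lattice_lemma} or a direct argument on submodules. One could alternatively observe that every $\gB$-representation is a subquotient of restrictions of $\gG$-representations, and then rely on stability of the relevant filtrations. Conversely, a triangulation gives lines $\scrL_\lambda=\eta_{\Dtri}(V_\lambda^\lambda)$, which are saturated rank-one $\PG$-submodules. This gives (1)$\Leftrightarrow$(2).

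For (2)$\Leftrightarrow$(3), the direction (2)$\Rightarrow$(3) is immediate. For (3)$\Rightarrow$(2), \Cref{pluckerdatum} produces a unique Plücker datum $(\scrL_\mu)_\mu$ extending the given lines on $\Lambda$, so what remains is to show that each $\scrL_\mu$ is $\varphi$- and $\Gamma_K$-stable. For this I use uniqueness: the transported family $\Phi(\varphi^*\scrL_\mu)$ is again a Plücker datum, and it agrees with $(\scrL_\mu)$ on $\Lambda$ because those lines are stable. By the bijection (2)$\leftrightarrow$(3) of \Cref{pluckerdatum} the two data coincide, and the same argument applies for $\Gamma_K$. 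Saturation as $\PG$-submodules follows from saturation as sub-line bundles.

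Over $\cR_{K,A}[\tfrac1t]$ with $A\in\Art_L$ the argument is the same, with \Cref{trivialforB} available to trivialize torsors. There is one extra requirement, namely that $\Mtri\times^\gB\gT$ is of character type. The $\gT$-module has the $\eta(\lambda)=\scrL_\lambda$ as its rank-one pieces for dominant $\lambda$, and these generate $X^*(\gT)$ up to inverses. So the character-type assumption on the $\scrL_\lambda$ passes to the $\gT$-module, because tensor products and duals of character-type modules are again of character type.

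The main obstacle I expect is checking that $\Phi$ really acts on the Plücker data through pullback, which requires that $\varphi^*$ commutes with the Tannakian constructions. A second difficulty is that the proof of \Cref{pluckerdatum} (3)$\Rightarrow$(2) reduces to trivial torsors, so I need the uniqueness part of that result to be stated over general bases, with the base here being $\Spec\cR_{K,A}$ and not an affinoid.
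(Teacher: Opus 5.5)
Your proposal is correct and takes the same route as the paper. The paper cites de Daruvar, applies \Cref{pluckerdatum}, and notes that because $\varphi$ and $\Gamma_K$ preserve the Pl\"ucker coordinates they lift to the $\gB$-reduction, which is your use of the final statement of that theorem; your uniqueness argument for (3)$\Rightarrow$(2) spells out a step the paper leaves implicit. One point you skip is that in the $[\tfrac{1}{t}]$ case the extended lines $\mathscr{L}_\mu$ obtained from (3) must also be of character type (the $\mathscr{L}_\lambda$ for $\lambda\in\Lambda$ alone do not generate $X^*(\gT)$). This is harmless, since any rank-one $\PG$-module over $\cR_{K,A}[\tfrac{1}{t}]$ with $A\in\Art_L$ is of character type: apply \cite[Theorem 6.2.14]{KPX} to a lattice of its reduction to $L$, then use \Cref{class_rk_one_after_inv_t}.
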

\begin{proof}
    A proof is provided in \cite[Theorem 4.13, Proposition 4.14]{dedar2020}. One simply applies \Cref{pluckerdatum}, and observes that since the actions of $\varphi$ and $\Gamma_K$ preserve the Plücker coordinates $\mathscr{L}_\lambda$, they induce corresponding actions on $D^\triangle$. Note that the condition $\mathscr{L}_0\cong \cR_{K,A}$ is automatic.
\end{proof}

\subsection{Regularity conditions}\label{sec:regularity}
We look for regularity conditions on a parameter $\delta$ that guarantee the uniqueness of a triangulation of a parameter $\delta$. The conditions we consider are of two kinds ``$\Lambda$-regularity'' and ``$r$-regularity''.
\subsubsection{$\Lambda$-regular parameters}\label{sec:Lambda_reg}
The following generalizes the notion of ``very regularity'' from \cite{dedar2020}.

\begin{definition}
    Let $\Lambda\subset X_+^*(\gT^\der)$ be a finite set of $\Q$-generators of $X^*_+(\gT^{\der})_\Q$. We define the subspace $\cT_{\Lambda}^\gT \subset \cT^\gT$ (resp. $\cT_{0,\Lambda}^\gT\subset \cT^\gT$) to be the space of parameters $\delta$ such that for all $\lambda\in \Lambda$ and all weights $\mu\neq \lambda$ of $V_{\lambda}$, one has $\delta\circ(\mu-\lambda)^{\vee}\in \cT_{\reg}$ (resp.  $\delta\circ(\mu-\lambda)^{\vee}\in \cT_{0}$).  
\end{definition}
Let us fix  a finite set of $\Q$-generators $\Lambda$ of $X^*_+(\gT^{\der})_\Q$ as above. The following is proven as in \cite[Lemma 6.3]{dedar2020}.
\begin{lemma}\label{rank_oner}
Let $D$ be a $\gG$-$(\varphi, \Gamma_K)$-module over $\cR_{K,A}$ 
with triangulation $D^{\triangle}$ of parameter $\delta\in \cT_{\Lambda}^\gT(A)$.
Then for all $\lambda \in \Lambda$, the space $\Hom_{\PGcat_{K,A}^{+}}(\cR_{K,A}(\delta \circ \lambda^{\vee}), \eta_{D}(V_{\lambda}))$ is a projective $A$-module of rank $1$. The analogous statement holds for a $\gG$-$(\varphi, \Gamma_K)$-module $\cM$ over $\cR_{K,A}[\tfrac{1}{t}]$ with triangulation of parameter $\delta\in \cT_{0,\Lambda}^\gT(A)$. 
\end{lemma}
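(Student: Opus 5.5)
The plan is to reduce to a statement about an ordinary $\PG$-module $\eta_D(V_\lambda)$ that carries a filtration by $\PG$-submodules whose graded pieces are of character type, with characters controlled by the weights of $V_\lambda$, and then to compute Hom's by dévissage.

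\textbf{Step 1: the filtration.} Fix $\lambda\in\Lambda$. Order the $\gT$-weights $\mu$ of $V_\lambda$ compatibly with the dominance order, that is, by the height of $\lambda-\mu$. This gives a $\gB$-stable filtration $0\subset F_0 = V_\lambda^\lambda\subset F_1\subset\cdots\subset F_m = V_\lambda$ whose graded pieces are direct sums of weight spaces $V_\lambda^\mu$, on which $\gB$ acts through $\gT$. Applying the exact fiber functor $\eta_{\Dtri}$ gives a filtration of $\eta_D(V_\lambda)=\eta_{\Dtri}(\Res^\gG_\gB V_\lambda)$ by $\PG$-submodules. The graded pieces are $\eta_{\Dtri\times^\gB\gT}(V_\lambda^\mu)$. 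Working Zariski locally on $A$ as in the remark after the definition of parameters, each of these is of the form $\cR_{K,A}(\delta\circ\mu^\vee)^{\oplus\dim V_\lambda^\mu}\otimes\mathscr L$, and the bottom piece is $\cR_{K,A}(\delta\circ\lambda^\vee)\otimes\mathscr L_\lambda$.

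\textbf{Step 2: dévissage of Hom.} Twist by $\delta^{-1}\circ\lambda^\vee$. Then $\Hom(\cR_{K,A}(\delta\circ\lambda^\vee),\eta_D(V_\lambda))=H^0_{\varphi,\gamma_K}$ of a module $N$ filtered with graded pieces of character type $\delta\circ(\mu-\lambda)^\vee$. These are trivial for $\mu=\lambda$ (giving $H^0$ equal to the line $\mathscr L_\lambda$) and lie in $\cT_\reg$ otherwise. It suffices to show $H^0$ and $H^1$ of every graded piece with $\mu\neq\lambda$ vanish. One could then conclude as follows: with $F'=N/F_0$, the sequence $0\to H^0(F_0)\to H^0(N)\to H^0(F')$ and $H^0(F')=0$ give $H^0(N)\cong H^0(F_0)=\Gamma(\mathscr L_\lambda)$, which is projective of rank $1$.

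Actually only $H^0$ vanishing of the pieces is needed. This comes from \Cref{coh_char_A}(1), because $\cT_\reg$ excludes $z^{-\mathbf k}$ for $\mathbf k\ge 0$ at every maximal ideal, that is, the characters $z^{\mathbf k}$ with $\mathbf k\in\Z^\Sigma_{\le 0}$. An induction on the filtration then gives $H^0(F')=0$ by left exactness of $H^0$. I should double check the direction of the filtration: since $V_\lambda^\lambda$ is the $\gB$-stable line (by \Cref{example_plucker_datum}), it is a sub, so the bottom piece is indeed $\delta\circ\lambda^\vee$ and Hom into it is nonzero.

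\textbf{Step 3: the $\tfrac1t$ version and the obstacle.} Over $\cR_{K,A}[\tfrac1t]$ the same argument works, but $H^0$ of $\cR_{K,A}[\tfrac1t](\delta')$ vanishes only if $\delta'$ is not algebraic at all. This is exactly why $\cT_0$ is used there, as in the proof of \Cref{coh_reg_param}, extended to Artinian $A$ by induction on length. I expect the main subtlety to be the affinoid case. First, making sure the graded pieces are genuinely of character type requires localizing on $A$, which is harmless for checking projectivity of rank $1$. Second, the identification of $H^0(F_0)$ with a line bundle is needed. Both follow from \cite[Theorem 6.2.14]{KPX} together with the pointwise criterion in \Cref{coh_char_A}.
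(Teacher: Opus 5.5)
Your proposal is correct and is essentially the paper's argument: filter $\eta_{\Dtri}(V_\lambda)$ by a $\gB$-stable flag with $V_\lambda^\lambda$ at the bottom, twist by $(\delta\circ\lambda^\vee)^{-1}$, and use left exactness of $H^0$ together with \Cref{coh_char_A}(1) on the pieces of weight $\mu\neq\lambda$. The differences are minor: the paper uses a complete flag instead of your height filtration, and it needs no localization on $A$, since \cite[Theorem 6.2.14]{KPX} already gives the graded pieces globally as $\cR_{K,A}(\delta\circ\mu_i^\vee)\otimes_A\mathscr L_i$. Your passing suggestion that $H^1$ of the pieces should vanish is false, since $H^1$ of a rank-one $\PG$-module has rank at least $[K:\Qp]$; as you noted yourself, only the vanishing of $H^0$ is used.
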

\begin{proof} We only give the proof over $\cR_{K,A}$, the proof over $\cR_{K,A}[\tfrac{1}{t}]$ being the same. Let $n = \dim V_{\lambda}$. The $\gB$-representation $V_{\lambda}|_{\gB}$ admits a complete flag
$$  0 = \Fil^0 V_{\lambda} \subset \Fil^1 V_{\lambda} \subset \dots \subset \Fil^n V_{\lambda} = V_{\lambda}|_{\gB}, $$
where $\gr^i V_{\lambda} = V_{\lambda}^{\mu_i}$ for some character $\mu_i \in X^*(\gT)$ restricted to $\gB$.
Therefore,  $$\eta_{D^{\triangle}}(\gr^i V_{\lambda}) = \gr^i \eta_{D^{\triangle}}(V_{\lambda}) = \cR_{K,A}(\delta \circ \mu_i^{\vee}) \otimes_A \scrL_i$$ 
for some finite projective $A$-module $\scrL_i$, see \cite[Theorem 6.2.14]{KPX}.
We have $\Hom_{\PGcat_{K,A}^{+}}(\cR_{K,A}(\delta \circ \lambda^{\vee}), \eta_{D^{\triangle}}(V_\lambda)) = H^0_{\varphi, \gamma_K}(\eta_{D^{\triangle}}(V_{\lambda})(- \delta \circ \lambda^{\vee}))$.
The short exact sequence
$$ 0 \to V_{\lambda}^{\lambda} \to V_{\lambda} \to V_{\lambda}/V_{\lambda}^{\lambda} \to 0 $$
induces a short exact sequence 
$$ 0 \to \scrL_1 \to \eta_{D^{\triangle}}(V_{\lambda})(-\delta \circ \lambda^{\vee}) \to \eta_{D^{\triangle}}(V_{\lambda}/V_{\lambda}^{\lambda})(-\delta \circ \lambda^{\vee}) \to 0 $$
after applying $\eta_{D^{\triangle}}$ and twisting with $-\delta \circ \lambda^{\vee}$. So the long exact sequence gives
$$ 0 \to \scrL_1 \to H^0_{\varphi, \gamma_K}(\eta_{D^{\triangle}}(V_{\lambda})(-\delta \circ \lambda^{\vee})) \to H^0_{\varphi, \gamma_K}(\eta_{D^{\triangle}}(V_{\lambda}/V_{\lambda}^{\lambda})(-\delta \circ \lambda^{\vee})). $$
Since $\delta \circ (\mu_i - \lambda)^{\vee}\in \cT_{\reg}$, we have  $H^0(\cR_{K,A}(\delta \circ (\mu_i - \lambda)^{\vee})) = 0$ by \Cref{coh_char_A}.
Arguing by induction, we find that $H^0_{\varphi, \gamma_K}(\eta_{D^{\triangle}}(V_{\lambda}/V_{\lambda}^{\lambda})(- \delta \circ \lambda^{\vee})) = 0$.
We conclude that $\scrL_1 = H^0_{\varphi, \gamma_K}(\eta_{D^{\triangle}}(V_{\lambda})(-\delta \circ \lambda^{\vee}))$.
\end{proof}

Combining \Cref{pluckercoordinatesphi-gamma} and \Cref{rank_oner}, we obtain the following.

\begin{corollary}\label{uniqueness_Lambda_1}
    Let $\delta \in \cT^{\gT}_{\Lambda}(A)$.
    Then any $\gG$-$(\varphi, \Gamma_K)$-module $D$ over $\cR_{K,A}$ has at most one triangulation of parameter $\delta$.
    \end{corollary}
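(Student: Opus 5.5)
The plan is to reduce uniqueness of the triangulation to uniqueness of its Plücker datum in the finite form (3) of \Cref{pluckercoordinatesphi-gamma}. That lemma identifies a triangulation $\Dtri$ of $D$ (up to the isomorphism $\iota$) with a family of saturated rank-one $\PG$-submodules $\scrL_\lambda\subseteq\eta_D(V_\lambda)$, $\lambda\in\Lambda$, satisfying the stated compatibilities. So it suffices to show that when the parameter is $\delta\in\cT^\gT_\Lambda(A)$, each $\scrL_\lambda$ is determined by $D$ and $\delta$ alone.

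Fix a triangulation $\Dtri$ of parameter $\delta$ and $\lambda\in\Lambda$. The submodule $\scrL_\lambda$ attached to $\Dtri$ is $\eta_{\Dtri}(V_\lambda^\lambda)$, which is the first step $\Fil^1$ of the flag used in the proof of \Cref{rank_oner}. By that proof, $\scrL_\lambda\cong\cR_{K,A}(\delta\circ\lambda^\vee)\otimes_A\scrL_1$ for some rank-one projective $A$-module $\scrL_1$. Moreover, the natural map
\[ \scrL_1\otimes_A\cR_{K,A}(\delta\circ\lambda^\vee)\to\eta_D(V_\lambda) \]
is an isomorphism onto $\scrL_\lambda$, and $\scrL_1=\Hom_{\PGcat^+_{K,A}}(\cR_{K,A}(\delta\circ\lambda^\vee),\eta_D(V_\lambda))$. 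Here $\eta_D(V_\lambda)$ is identified with $\eta_{\Dtri}(V_\lambda)$ via $\iota$.

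The key point is that the module $H_\lambda\colonequals\Hom_{\PGcat^+_{K,A}}(\cR_{K,A}(\delta\circ\lambda^\vee),\eta_D(V_\lambda))$ depends only on $D$ and $\delta$, not on $\Dtri$. Therefore $\scrL_\lambda$ equals the image of the evaluation map $H_\lambda\otimes_A\cR_{K,A}(\delta\circ\lambda^\vee)\to\eta_D(V_\lambda)$. This image is intrinsic to $D$ and $\delta$.

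Given two triangulations $\Dtri_1,\Dtri_2$ of parameter $\delta$, their Plücker data therefore agree on $\Lambda$. By the bijection (1)$\Leftrightarrow$(3) of \Cref{pluckercoordinatesphi-gamma}, together with the last assertion of \Cref{pluckerdatum}, the identity of $D$ is induced by an isomorphism $\Dtri_1\cong\Dtri_2$ compatible with the $\iota$'s. This is exactly the required uniqueness.

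The main thing to verify is that the inclusion $\scrL_1\subseteq H^0_{\varphi,\gamma_K}(\eta_{\Dtri}(V_\lambda)(-\delta\circ\lambda^\vee))$ obtained in \Cref{rank_oner} is an equality. That is what identifies the abstract Hom-space with the specific sub-line. It follows from the vanishing of $H^0$ of the quotient, proved there using $\Lambda$-regularity.

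One subtlety is the twist by the line bundle $\scrL_1$ over $A$. The image of the evaluation map recovers $\scrL_\lambda$ regardless of how $\scrL_1$ is trivialized, so this causes no problem. Hence the argument is essentially formal once \Cref{rank_oner} is available.
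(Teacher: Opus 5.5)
Your proposal is correct and is essentially the paper's argument. The paper obtains the corollary simply by ``combining \Cref{pluckercoordinatesphi-gamma} and \Cref{rank_oner}'': under $\Lambda$-regularity, each Pl\"ucker line $\scrL_\lambda$ for $\lambda\in\Lambda$ is recovered intrinsically from the rank-one space $\Hom(\cR_{K,A}(\delta\circ\lambda^\vee),\eta_D(V_\lambda))$, so the finite Pl\"ucker datum, and hence the triangulation, depends only on $D$ and $\delta$. You have just made explicit the steps the paper leaves implicit, namely the equality $\scrL_1=H^0$ from the proof of \Cref{rank_oner} and the identification of the image of the evaluation map.
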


\begin{corollary}\label{uniqueness_Lambda_2}
    Let $D$ be a $\gG$-$\PG$-module over $\cR_{K,L}$ equipped with a triangulation $\Dtri$ of parameter $\delta\in \cT^{\gT}_{\Lambda}(L)$, and let $A\in \Art_L$. If $D_A$ is a $\gG$-$\PG$-module over $\cR_{K,A}$ deforming $D$, then $D_A$ admits at most one triangulation deforming $\Dtri$. The analogous statement holds for a $\gG$-$(\varphi, \Gamma_K)$-module $\cM$ over $\cR_{K,L}[\tfrac{1}{t}]$ with triangulation of parameter $\delta\in \cT_{0,\Lambda}^\gT(L)$. 
\end{corollary}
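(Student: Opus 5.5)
The plan is to reduce \Cref{uniqueness_Lambda_2} to \Cref{uniqueness_Lambda_1} (and its analogue over $\cR_{K,A}[\tfrac{1}{t}]$) by showing that the parameter of any deformed triangulation is automatically $\Lambda$-regular. Given a triangulation $\Dtri_A$ of $D_A$ deforming $\Dtri$, its parameter $\delta_A\in\cT^\gT(A)$ reduces to $\delta$ modulo $\frakm_A$.

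First I will check that $\delta_A\in\cT^\gT_\Lambda(A)$. The conditions defining $\cT^\gT_\Lambda$ (resp. $\cT^\gT_{0,\Lambda}$) only require that, for each $\lambda\in\Lambda$ and each weight $\mu\neq\lambda$ of $V_\lambda$, the character $\delta_A\circ(\mu-\lambda)^\vee$ lie in $\cT_\reg$ (resp. $\cT_0$). Membership of an $A$-point in these open subspaces is detected at the unique maximal ideal of $A$, which is exactly what \Cref{coh_char_A} is sensitive to. So the condition holds for $\delta_A$ because it holds for $\delta$. In particular, \Cref{rank_oner} applies to $D_A$ with the parameter $\delta_A$.

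The main subtlety is that two deformations $\Dtri_A$ and $\Dtri_A'$ of $\Dtri$ might a priori have different parameters $\delta_A\neq\delta'_A$, so \Cref{uniqueness_Lambda_1}, which fixes the parameter, does not apply directly. I will handle this via Plücker data, using \Cref{pluckercoordinatesphi-gamma}(3). For each $\lambda\in\Lambda$, the triangulation $\Dtri_A$ gives a saturated rank-one submodule $\scrL_\lambda\cong\cR_{K,A}(\delta_A\circ\lambda^\vee)\otimes\mathscr N\subseteq\eta_{D_A}(V_\lambda)$, with $\mathscr N$ a line bundle on $A$ and hence trivial since $A$ is local. I will then argue intrinsically that $\scrL_\lambda$ is determined by $D_A$ together with its reduction $\scrL_\lambda\otimes_A L$. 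The tool is the filtration argument of \Cref{rank_oner}, applied to $\eta_{D_A}(V_\lambda)\otimes\scrL'^{-1}_\lambda$, where $\scrL'_\lambda$ is the line coming from $\Dtri'_A$.

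The graded pieces of this module are $\cR_{K,A}(\delta_A\circ\mu^\vee\cdot(\delta'_A\circ\lambda^\vee)^{-1})$. For $\mu\neq\lambda$, these reduce mod $\frakm_A$ to $\delta\circ(\mu-\lambda)^\vee$, so they have vanishing $H^0$ by \Cref{coh_char_A}. It follows that any nonzero morphism $\scrL'_\lambda\to\eta_{D_A}(V_\lambda)$ must land in the top step $\scrL_\lambda$. Applying this to the inclusion of $\scrL'_\lambda$ gives $\scrL'_\lambda\subseteq\scrL_\lambda$, and since both are saturated of rank one, $\scrL'_\lambda=\scrL_\lambda$. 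The Plücker data for $\lambda\in\Lambda$ therefore agree, and \Cref{pluckercoordinatesphi-gamma} gives $\Dtri_A=\Dtri'_A$ as triangulations; in particular $\delta_A=\delta'_A$.

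For the version over $\cR_{K,L}[\tfrac{1}{t}]$ the same argument works verbatim. There I will use the $\cT_0$-condition together with \Cref{coh_reg_param}-type vanishing of $H^0$ after inverting $t$ (nonalgebraicity of $\delta\circ(\mu-\lambda)^\vee$), and the character-type requirement in \Cref{pluckercoordinatesphi-gamma}, which \Cref{class_rk_one_after_inv_t} guarantees for deformations.
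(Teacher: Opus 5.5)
Your proposal is correct and is essentially the paper's proof: you reduce via \Cref{pluckercoordinatesphi-gamma} to the rank-one lines $\scrL_\lambda\subseteq\eta_{D_A}(V_\lambda)$ for $\lambda\in\Lambda$, use regularity of $\delta\circ(\mu-\lambda)^\vee$ plus d\'evissage over $A$ to get vanishing of $H^0_{\varphi,\Gamma_K}$ of the suitably twisted quotient $\eta_{D_A}(V_\lambda)/\scrL_\lambda$, deduce that one deformed line lies in the other, and conclude by saturation. The differences are cosmetic: the paper twists by the parameter of $\Dtri_A$ and compares $H^0$ of the whole module, whereas you show $\Hom(\scrL'_\lambda,\eta_{D_A}(V_\lambda)/\scrL_\lambda)=0$ along the $\gB$-flag, and your preliminary check that $\delta_A\in\cT^\gT_\Lambda(A)$ is harmless but never actually used.
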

\begin{proof}
      We prove the statement for $D$, the proof for $\cM$ being essentially the same. Let $\Dtri_A$ be a triangulation  of $D_A$ deforming $\Dtri$, we will show that it is unique. By \Cref{pluckercoordinatesphi-gamma}, it suffices to show that for each $\lambda\in \Lambda$, the associated saturated $(\varphi,\Gamma_K)$-submodule of rank-one $\mathscr{L}_{\lambda,A} \subset \eta_{D_A}(V_{\lambda})$ deforming $\mathscr{L}_{\lambda}\subset \eta_{D}(V_{\lambda})$ is uniquely determined. Up to twisting, we may assume that the parameter of $\Dtri_A$ is trivial, so that in particular, $\mathscr{L}_{\lambda}\cong \mathcal{R}_{K,A}[\tfrac{1}{t}]$. Suppose that there exists another saturated rank-one $\PG$-module $\mathscr{L}'_{\lambda,A} \subseteq \eta_{D_A}(V_{\lambda})$ with $\mathscr{L}'_{\lambda,A}\equiv \mathscr{L}_{\lambda} \bmod \mathfrak{m}_A$. By \Cref{rank_oner}, we have 
     $$ H^0_{\varphi,\Gamma_K}(\eta_{D}(V_{\lambda})/\mathscr{L}_{\lambda})=0. $$
     By dévissage and the left exactness of $H^0$, as in \cite[Lemma 2.3.7]{BC}, we find that
    $$ H^0_{\varphi,\Gamma_K}(\eta_{D_A}(V_{\lambda})/\mathscr{L}_{\lambda,A})=H^0_{\varphi,\Gamma_K}(\eta_{D_A}(V_{\lambda})/\mathscr{L}_{\lambda,A}')=0.$$
    It follows that $H^0_{\varphi,\Gamma_K}(\mathscr{L}_{\lambda,A})=H^0_{\varphi,\Gamma_K}(\mathscr{L}_{\lambda,A}')\subset \mathscr{L}'_{\lambda,A}$, hence  $\mathscr{L}_{\lambda,A}\subseteq \mathscr{L}'_{\lambda,A}$. Since the modules are saturated, they must be equal.
\end{proof}

\subsubsection{$r$-regular parameters}
The conditions on $\delta$ resulting from \Cref{sec:Lambda_reg} are quite strong and require some additional work to make explicit.
We want to present a second method of ensuring uniqueness of the triangulation.

\begin{lemma}\label{emb_lemma_asdfjkl}
    Let $r\colon \gG_1 \to \gG_2$ be an embedding of split connected reductive groups over $L$, and let $\gT_i \subseteq \gB_i \subseteq \gG_i$ be tori and Borel subgroups for $i=1,2$, satisfying $r(\gT_1)\subseteq \gT_2$ and $r(\gB_1) \subseteq \gB_2$. Let $D$ be a $\gG_1$-$\PG$-module over $\cR_{K,A}$, $\delta \colon \gT_1^{\vee}(K) \to A^{\times}$ be a continuous character. If $D \times^{\gG_1} \gG_2$ admits at most one triangulation of parameter $\delta \circ r^{\vee}$, then $D$ admits at most one triangulation of parameter $\delta$.
\end{lemma}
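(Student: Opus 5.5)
Given two triangulations $(\Dtri_1,\iota_1)$ and $(\Dtri_2,\iota_2)$ of $D$ with parameter $\delta$, push both forward along $r$. Set $\Dtri_{i}' \colonequals \Dtri_i \times^{\gB_1} \gB_2$. Since $r(\gB_1)\subseteq \gB_2$, we have $\Dtri_i' \times^{\gB_2}\gG_2 \cong (\Dtri_i \times^{\gB_1}\gG_1)\times^{\gG_1}\gG_2$. Composing this with $\iota_i\times^{\gG_1}\gG_2$ exhibits $\Dtri_i'$ as a triangulation of $D\times^{\gG_1}\gG_2$. Its parameter is that of $\Dtri_i'\times^{\gB_2}\gT_2 = (\Dtri_i\times^{\gB_1}\gT_1)\times^{\gT_1}\gT_2$, because $\gB_1\to\gB_2\to\gT_2$ factors through $\gT_1$ (unipotent radicals map into unipotent radicals). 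Unwinding the construction of parameters through $\chi\mapsto\delta_\chi$ for $\chi\in X^*(\gT_2)$ restricted to $\gT_1$, this parameter is $\delta\circ r^\vee$. By hypothesis, there is then an isomorphism of $\gB_2$-$\PG$-modules $f'\colon\Dtri_1'\to\Dtri_2'$ compatible with the identifications with $D\times^{\gG_1}\gG_2$. This holds in the sense used in \Cref{uniqueness_Lambda_1}, namely equality as reductions of the fixed $\gG_2$-object.

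It remains to descend this to an equality of $\gB_1$-reductions of $D$. I would work with torsors and \Cref{Bredandflgs}. A $\gB_1$-reduction of the underlying $\gG_1$-torsor $\cQ$ of $D$ is a section $s_i$ of $\cQ/\gB_1$ over $\Spec\cR_{K,A}$, stable under $\varphi$ and $\Gamma_K$. The $\gB_2$-reduction $\Dtri_i'$ is the image of $s_i$ under the map
\[
\cQ/\gB_1 \longrightarrow (\cQ\times^{\gG_1}\gG_2)/\gB_2,
\]
which is \'etale-locally $\gG_1/\gB_1\to\gG_2/\gB_2$. The key step is that this map is a monomorphism, i.e.\ $r^{-1}(\gB_2)=\gB_1$ scheme-theoretically. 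The group $r^{-1}(\gB_2)$ is a closed subgroup of $\gG_1$ containing $\gB_1$, hence parabolic and in particular connected. It is also solvable, since it embeds into $\gB_2$. A connected solvable group containing the Borel $\gB_1$ equals $\gB_1$, so equality holds on reduced subgroups; in characteristic $0$ every group scheme is reduced, which settles the scheme-theoretic statement. Consequently $\gG_1/\gB_1\to\gG_2/\gB_2$ is a monomorphism, indeed a closed immersion since $\gG_1/\gB_1$ is proper. Hence $s_1=s_2$ follows from $r(s_1)=r(s_2)$.

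Finally, equal sections of $\cQ/\gB_1$ give the same $\gB_1$-subtorsor of $\cQ$. The $\varphi$- and $\Gamma_K$-structures on each $\Dtri_i$ are the ones induced from $D$ via $\iota_i$, since $\Dtri_i\subseteq\cQ$ is stable. Therefore $\Dtri_1$ and $\Dtri_2$ coincide as triangulations; equivalently, the last part of \Cref{pluckerdatum} gives an isomorphism of $\gB_1$-objects lifting the identity. The main subtlety I expect is the bookkeeping in the first step, namely identifying the induced triangulation's parameter with $\delta\circ r^\vee$ and checking that "uniqueness" means uniqueness as reductions of the fixed $\gG$-object rather than merely up to abstract isomorphism. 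The group-theoretic fact $r^{-1}(\gB_2)=\gB_1$ is the real input and is standard.
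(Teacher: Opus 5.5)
Your proof is correct. Its key input is the same as the paper's, namely $r^{-1}(\gB_2)=\gB_1$. The paper writes this as $\gB_1=\gG_1\cap\gB_2$ and proves it exactly as you do: the intersection is parabolic, hence connected, and it is solvable. The paper does this after passing to $\Qpbar$, where maximality of Borel subgroups among connected solvable subgroups applies.

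Where you differ is in how this equality is turned into uniqueness. The paper stays in the fiber-functor language. There, triangulations are factorizations $\Rep_L(\gG_1)\to\Rep_L(\gB_1)\to\PGcat^+_{K,A}$. The paper then cites \cite[Lemma 5.11]{conti2022lifting}, which says that $\Rep_L(\gB_1)$ is the pushout of $\Rep_L(\gG_1)$ and $\Rep_L(\gB_2)$ over $\Rep_L(\gG_2)$. Hence a factorization through $\Rep_L(\gB_1)$ is determined by the induced one through $\Rep_L(\gB_2)$.

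You instead pass to torsors via \Cref{Bredandflgs}. You then use that $\cQ/\gB_1\to(\cQ\times^{\gG_1}\gG_2)/\gB_2$, which is \'etale-locally $\gG_1/\gB_1\to\gG_2/\gB_2$, is a monomorphism, and in fact a closed immersion. This route is self-contained: it avoids the external categorical lemma by proving directly the only part of the pushout property that uniqueness needs. It also makes explicit two points the paper leaves implicit. First, the pushed-forward triangulation has parameter $\delta\circ r^\vee$, which is your observation that $\gU_1$ maps into $\gU_2$. Second, uniqueness is meant as uniqueness of reductions of the fixed $\gG$-object, after which the $\varphi$- and $\Gamma_K$-structures are forced.

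The paper's route is shorter, and it lets the same pushout lemma be reused for the existence statement in the proof of \Cref{triimmersion}. Your closed-immersion formulation should serve there as well. The point is that a section over $\Spec\cR_{K,A}$ that lands in the closed subscheme $\cQ/\gB_1$ after the injective base change $\cR_{K,A}\hookrightarrow\cR_{K,A'}$ already lands in it over $\cR_{K,A}$.
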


\begin{proof}
    We have $\gB_1 \subseteq \gB_2 \cap \gG$.
    To show equality, we may work over $\Qpbar$. 
    The intersection $\gB_2 \cap \gG$ is a parabolic subgroup of $\gG$ by \cite[Theorem 6.2.7]{springerLAG}, hence connected by \cite[Corollary 6.4.10]{springerLAG}. It is also solvable, hence we have equality.

    Triangulations of $D$ of parameter $\delta$ correspond to factorizations of fiber functors $\Rep_L(\gG_1) \to \Rep_L(\gB_1) \to \PGcat_{K,A}^+$. Composing with the restriction functor $\Rep_L(\gG_2) \to \Rep_L(\gG_1)$, we have at most one factorization $\Rep_L(\gB_2) \to \PGcat_{K,A}^+$, corresponding to a triangulation of $D \times^{\gG_1} \gG_2$ of parameter $\delta \circ r^{\vee}$.
    But since $\gB_1 = \gG_1 \cap \gB_2$, the category $\Rep_L(\gB_1)$ arises as a pushout of $\Rep_L(\gG_1)$ and $\Rep_L(\gB_2)$ over $\Rep_L(\gG_2)$ (\cite[Lemma 5.11]{conti2022lifting}), so there is at most one factorization $\Rep_L(\gB_1) \to \PGcat_{K,A}^+$.
\end{proof}


By induction from \Cref{rank_oner} one deduces the following.

\begin{lemma}\label{GLner}
    Let $D$ be a $\GL_n$-$(\varphi, \Gamma_K)$-module over $\cR_{K,A}$ (resp. $\cR_{K,A}[\tfrac{1}{t}]$) and let $\delta : \gT_n^{\vee}(K) \to A^{\times}$ be a continuous character.
    Assume that $\delta \circ \mu^{\vee}\in \cT_{\reg}(A)$  for all $\mu \in \Phi^-(\GL_n, \gB_n)$.    
    Then $D$ has at most one triangulation of parameter $\delta$.
\end{lemma}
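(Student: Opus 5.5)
The plan is to induct on $n$, cutting along the partial flag given by the first line $\Fil^1$ of the standard flag. In the Plücker language of \Cref{pluckercoordinatesphi-gamma}, a $\gB_n$-triangulation of $D$ of parameter $\delta=(\delta_1,\dots,\delta_n)$ is the same as a filtration
\[ 0=D_0\subset D_1\subset\cdots\subset D_n=D \]
by saturated $\PG$-submodules whose graded pieces are rank one, with $D_i/D_{i-1}\cong \cR_{K,A}(\delta_i)\otimes_A\scrL_i$ and $\scrL_i$ a line bundle on $A$. The $n=1$ case is trivial. Over $\cR_{K,A}[\tfrac{1}{t}]$ the graded pieces are required to be of character type.

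The first step is to show that $D_1$ is determined by $\delta$, by rerunning the argument of \Cref{rank_oner} with $V=$ standard representation and highest weight line $\Fil^1$. Concretely, we twist by $\delta_1^{-1}$ and look at the left exact sequence
\[ 0\to \scrL_1 \to H^0_{\varphi,\gamma_K}\big(D(\delta_1^{-1})\big)\to H^0_{\varphi,\gamma_K}\big((D/D_1)(\delta_1^{-1})\big). \]
The module $(D/D_1)(\delta_1^{-1})$ is a successive extension of the pieces $\cR_{K,A}(\delta_i\delta_1^{-1})\otimes\scrL_i$ for $i\geq 2$. Each character $\delta_i\delta_1^{-1}$ is $\delta\circ\mu^\vee$ for a negative root $\mu$, so it lies in $\cT_{\reg}$ by hypothesis. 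Hence $H^0$ of each piece vanishes by \Cref{coh_char_A}(1), and by dévissage $H^0$ of the whole quotient vanishes. Over $\cR_{K,A}[\tfrac{1}{t}]$ one would instead need the $\cT_0$-type vanishing, as in \Cref{coh_reg_param}. I will read the hypothesis in that case as providing the corresponding condition, exactly as in the $\cR_{K,A}[\tfrac{1}{t}]$ part of \Cref{rank_oner}.

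The conclusion of this step is $\scrL_1=\Hom(\cR_{K,A}(\delta_1),D)$. Therefore $D_1$ is the image of the evaluation map $\cR_{K,A}(\delta_1)\otimes_A\Hom(\cR_{K,A}(\delta_1),D)\to D$, so it depends only on $D$ and $\delta_1$. Any two triangulations of parameter $\delta$ therefore share the same $D_1$.

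Next I pass to $D/D_1$. This is a $\GL_{n-1}$-$\PG$-module, and both triangulations induce triangulations of it with parameter $(\delta_2,\dots,\delta_n)$. That truncated parameter still satisfies the hypothesis, because the negative roots of $\GL_{n-1}$ form a subset of those of $\GL_n$. By induction the induced filtrations on $D/D_1$ coincide, and taking preimages in $D$ shows the two filtrations coincide. By \Cref{pluckercoordinatesphi-gamma}, or directly from the description of $\gB_n$-torsors as flags, the two $\gB_n$-reductions are then equal.

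The main point to check is that "at most one triangulation" means equality as reductions of the given $D$. Two triangulations with the same filtration give the same $\gB_n$-reduction, and the isomorphism $\iota$ is compatible with the reduction, so this holds. I would also check that $D/D_1$ is again a genuine $\PG$-module, which follows from saturation, with \Cref{lattice_lemma} handling the case after inverting $t$.
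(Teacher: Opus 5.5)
Your proposal is correct and is the argument the paper intends (its proof is just ``by induction from \Cref{rank_oner}''): the $H^0$-vanishing argument for the standard representation identifies $D_1$ as the image of $\cR_{K,A}(\delta_1)\otimes_A\Hom(\cR_{K,A}(\delta_1),D)$, and the truncated parameter $(\delta_2,\dots,\delta_n)$ inherits the hypothesis, so induction applies to $D/D_1$. Reading the hypothesis as a $\cT_0$-condition in the $\cR_{K,A}[\tfrac{1}{t}]$ case is in fact necessary, because the literal $\cT_{\reg}$ version fails there: for $\delta_2=\delta_1z^{\mathbf 1}$ one has $\delta_2\delta_1^{-1}\in\cT_{\reg}$, yet $\cR_{K,L}[\tfrac1t](\delta_1)\oplus\cR_{K,L}[\tfrac1t](\delta_2)\cong\cR_{K,L}[\tfrac1t](\delta_1)^{\oplus 2}$ has a triangulation of parameter $(\delta_1,\delta_2)$ for every line in $L^2$.
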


It is also clear that \Cref{GLner} has optimal assumptions.
We expect that the natural generalization of \Cref{GLner} holds:
\begin{conjecture}\label{conj_Ger}
    Let $D$ be a $\gG$-$(\varphi, \Gamma_K)$-module over $\cR_{K,A}$, and let $\delta \colon \gT^{\vee}(K) \to A^{\times}$ be a continuous character.
    Assume that $\delta \circ \lambda^{\vee}\in \cT_{\reg}(A)$ for all $\lambda \in \Phi^-(\gG, \gB)$.       
    Then $D$ has at most one triangulation of parameter $\delta$.
\end{conjecture}

\begin{definition}\label{def:rregular}
    Let $r\colon \gG\to \GL_m$ be a faithful representation satisfying the hypotheses of \Cref{emb_lemma_asdfjkl}. We define the subspaces $\cT^{\gT}_r\subset \cT^{\gT}$ and $\cT^{\gT}_{0,r}\subset \cT^{\gT}$ to be the spaces of parameters $\delta$ such that $\delta \circ r^{\vee}\in \cT^m_{\reg}$ and $\delta \circ r^{\vee}\in \cT^m_0$, respectively. These spaces are non-empty if and only if $r$ is multiplicity-free as a $\gT$-representation.
    In that case $\cT^{\gT}_r$ and $\cT^{\gT}_{0,r}$ are Zariski open and Zariski dense in $\cT^{\gT}$.
\end{definition}

The following is a reformulation of \Cref{emb_lemma_asdfjkl}.

\begin{corollary}\label{uniqueness_r_1}Let $r\colon \gG\to \GL_m$ be a faithful representation satisfying the hypotheses of \Cref{emb_lemma_asdfjkl} and $\delta \in \cT^{\gT}_{r}(A)$.
    Then any $\gG$-$(\varphi, \Gamma_K)$-module $D$ over $\cR_{K,A}$ has at most one triangulation of parameter $\delta$.
    \end{corollary}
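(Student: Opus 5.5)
The plan is to reduce directly to \Cref{emb_lemma_asdfjkl} with $\gG_1=\gG$, $\gG_2=\GL_m$, $\gB_2=\gB_m$ and $\gT_2=\gT_m$. The hypotheses of that lemma are exactly the ones built into \Cref{def:rregular}, namely $r(\gT)\subseteq\gT_m$ and $r(\gB)\subseteq\gB_m$. Given a $\gG$-$\PG$-module $D$ over $\cR_{K,A}$ and $\delta\in\cT^{\gT}_r(A)$, the lemma says that it suffices to show that $D\times^{\gG}\GL_m$ has at most one triangulation of parameter $\delta\circ r^{\vee}$.

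For this we apply \Cref{GLner} to the $\GL_m$-$\PG$-module $D\times^{\gG}\GL_m$ and the character $\delta\circ r^{\vee}\colon\gT_m^{\vee}(K)\to A^\times$. We must check its hypothesis: $(\delta\circ r^{\vee})\circ\mu^{\vee}\in\cT_{\reg}(A)$ for every $\mu\in\Phi^-(\GL_m,\gB_m)$. This is precisely what $\delta\circ r^{\vee}\in\cT^m_{\reg}$ is meant to encode, since membership in $\cT^m_{\reg}$ is the $\GL_m$-analogue of the regularity condition on the differences $\delta_i\delta_j^{-1}$ for $i\ne j$. The only real bookkeeping is to make sure the notion $\cT^m_{\reg}$ used in \Cref{def:rregular} matches the condition in \Cref{GLner}. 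If $\cT^m_{\reg}$ is read as imposing $\cT_{\reg}$-membership on all root characters $\delta_i\delta_j^{-1}$ with $i\neq j$, it contains in particular the negative roots required there, so the inclusion goes in the right direction.

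Once this is checked, \Cref{GLner} gives uniqueness of the triangulation of $D\times^{\gG}\GL_m$. \Cref{emb_lemma_asdfjkl} then transfers uniqueness back to $D$. I expect no substantial obstacle. The only point to verify is the compatibility of parameters: the triangulation of $D\times^{\gG}\GL_m$ induced from a triangulation of $D$ has parameter $\delta\circ r^{\vee}$. This follows by functoriality of the construction in \Cref{sec_param}, because $\Dtri\times^{\gB}\gB_m\times^{\gB_m}\gT_m=(\Dtri\times^{\gB}\gT)\times^{\gT}\gT_m$, and pushing a $\gT$-module of parameter $\delta$ along $r|_{\gT}$ yields parameter $\delta\circ r^{\vee}$.
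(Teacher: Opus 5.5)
Your proposal is correct and follows the same route the paper intends when it calls this corollary a reformulation of \Cref{emb_lemma_asdfjkl}. You apply \Cref{GLner} to $D\times^{\gG}\GL_m$ with parameter $\delta\circ r^{\vee}$, using the standard reading of $\cT^m_{\reg}$ (all $\delta_i\delta_j^{-1}$, $i\neq j$, lie in $\cT_{\reg}$), and then transfer uniqueness back to $D$ via \Cref{emb_lemma_asdfjkl}.
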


\begin{lemma}\label{uniqueness_r_2}
    Let $r\colon \gG\to \GL_m$ be a faithful representation satisfying the hypotheses of \Cref{emb_lemma_asdfjkl}. Let $D$ be a $\gG$-$\PG$-module over $\cR_{K,L}$ equipped with a triangulation $\Dtri$ of parameter $\delta\in \cT^{\gT}_r(L)$, and let $A\in \Art_L$. If $D_A$ is $\gG_1$-$\PG$-module over $\cR_{K,A}$ deforming $D$, then $D_A$ itself admits at most one triangulation deforming $\Dtri$.
\end{lemma}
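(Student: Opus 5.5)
We reduce to the $\GL_m$ case, following the same pattern that gives \Cref{uniqueness_r_1} from \Cref{emb_lemma_asdfjkl}. Suppose $\Dtri_A$ and $\Dtri_A{}'$ are two triangulations of $D_A$, both deforming $\Dtri$. Pushing them out along $r$ gives two triangulations $\Dtri_A\times^{\gB}\gB_m$ and $\Dtri_A{}'\times^{\gB}\gB_m$ of the $\GL_m$-$\PG$-module $D_A\times^{\gG}\GL_m$. Both deform the triangulation $\Dtri\times^{\gB}\gB_m$ of $D\times^{\gG}\GL_m$, whose parameter $\delta\circ r^\vee$ lies in $\cT^m_{\reg}(L)$.

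The key step is the $\GL_m$ statement: a deformation over $A\in\Art_L$ of a $\GL_m$-$\PG$-module whose triangulation has parameter in $\cT^m_{\reg}$ has at most one triangulation deforming the given one. We prove this by induction along the flag, exactly as in the proof of \Cref{uniqueness_Lambda_2}.

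Let $\Fil^\bullet_A$ and $\Fil'^\bullet_A$ be the two filtrations. The first steps $\Fil^1_A$ and $\Fil'^1_A$ are saturated rank-one submodules lifting $\Fil^1$. After twisting, we may assume the parameter of $\Fil^1_A$ is trivial. The regularity condition, combined with \Cref{coh_char_A} and dévissage over the graded pieces, gives $H^0_{\varphi,\Gamma_K}(\eta(V)/\Fil^1)(-\delta_1)=0$. Lifting this to $A$ by dévissage on $\frakm_A$ (as in \cite[Lemma 2.3.7]{BC}), we get that all of $H^0$ of the twist lies in $\Fil^1_A\cap\Fil'^1_A$. Since both modules are saturated, $\Fil^1_A=\Fil'^1_A$. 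We then pass to the quotient, which is again a deformation with a regular parameter, and iterate.

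One subtlety arises here. Over $A$ the twist used above must be by the deformed character of $\Fil'^1_A$. We handle this by comparing the two rank-one lifts via $\Hom(\Fil'^1_A,\eta/\Fil^1_A)$, which vanishes by the same regularity; this shows the composite map $\Fil'^1_A\to\eta/\Fil^1_A$ is zero. This is the step I expect to need the most care, but it is the standard argument for $\GL_n$, essentially \cite[Lemma 2.3.7]{BC} or \Cref{GLner} in the Artinian setting.

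From the $\GL_m$ statement, the two pushed-out $\gB_m$-triangulations coincide, meaning they correspond to the same factorization $\Rep_L(\GL_m)\to\Rep_L(\gB_m)\to\PGcat^+_{K,A}$. To conclude, we repeat the final step of \Cref{emb_lemma_asdfjkl}. Since $\gB=\gG\cap\gB_m$, the category $\Rep_L(\gB)$ is the pushout of $\Rep_L(\gG)$ and $\Rep_L(\gB_m)$ over $\Rep_L(\GL_m)$ by \cite[Lemma 5.11]{conti2022lifting}. Hence the factorizations of $\eta_{D_A}$ through $\Rep_L(\gB)$ are determined by the compatible factorization through $\Rep_L(\gB_m)$. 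So $\Dtri_A\cong\Dtri_A{}'$ compatibly with the isomorphisms to $D_A$, which proves uniqueness.
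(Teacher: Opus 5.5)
Your proposal is correct and is essentially the paper's argument: push forward along $r$, apply the $\GL_m$ uniqueness of deformed triangulations with regular parameter (\cite[Lemma 2.3.7]{BC} applied to $\eta_D(V_r)$), and descend to $\gB$ via the pushout description of $\Rep_L(\gB)$ from the proof of \Cref{emb_lemma_asdfjkl}. The ``subtlety'' you flag is not an obstacle, because twisting by the deformed character $\delta_{1,A}$ of $\Fil^1_A$ alone suffices: $(\eta_{D_A}(V_r)/\Fil'^1_A)\otimes\cR_{K,A}(\delta_{1,A}^{-1})$ reduces modulo $\frakm_A$ to $(\eta_D(V_r)/\Fil^1)\otimes\cR_{K,L}(\delta_1^{-1})$, whose $H^0$ vanishes. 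Your $\Hom$-based variant works as well.
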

\begin{proof}
    This follows from the proof of \Cref{emb_lemma_asdfjkl} and \cite[Lemma 2.3.7]{BC} applied to $\eta_{D}(V_r)$.
\end{proof}

\begin{example}\label{ex:rregular} We will indicate how to calculate the regularity condition of \Cref{def:rregular} for some classical groups. We need to look for irreducible weight multiplicity-free representations $r\colon \gG\to \GL(V)$. It is sufficient that the kernel of $r$ is contained in $Z(\gG)$, since the flag varieties of $\gG$ and $\gG^{\ad}$ coincide and the uniqueness of a triangulation modulo $Z(\gG)$ is sufficient for the uniqueness of the triangulation for $\gG$. Hence, the assumptions of \Cref{emb_lemma_asdfjkl} will be satisfied. 

There is an exhaustive list of irreducible weight multiplicity-free representations in \cite[Theorem 4.6.3]{howe} for all almost-simple groups $\gG$. In addition, since $V$ is weight multiplicity-free, there is a unique complete flag in $V|_{\gB}$. Our notation refers to \cite[Planches II-IV, IX]{planches}.
    \begin{enum}
        \item Consider $\gG = \SO_{2l+1}$ (or $\mathrm{Spin}_{2l+1}$) for $l \geq 2$ (type $B_l$). The root system is defined in the vector space $\mathbb R^l$ with basis $\varepsilon_1, \dots, \varepsilon_l$. The positive roots are $\varepsilon_i$ for $1 \leq i \leq l$ and $\varepsilon_i \pm \varepsilon_j$ for $1 \leq i < j \leq l$.
        Let $V$ be the standard representation of $\gG$.
        The unique flag in $V|_{\gB}$ has weights ordered as $\varepsilon_1, \dots, \varepsilon_l, 0, -\varepsilon_l, \dots, -\varepsilon_1$. This gives us the positive weight differences $\varepsilon_i \pm \varepsilon_j$ for $1 \leq i < j \leq l$ and $2\varepsilon_i$ for $1 \leq i \leq l$.
        The resulting regularity condition is strictly stronger than the one formulated in \Cref{conj_Ger}: In addition to asking that $\delta \circ (-\varepsilon_i^{\vee})$ is not of the form $z^{\mathbf k}$ for $\mathbf k \in \Z_{\leq 0}^{\Sigma}$, we also ask this for $(\delta \circ \varepsilon_i^{\vee})^2$.
        \item Consider $\gG = \Sp_{2l}$ for $l \geq 2$ (type $C_l$). The root system is defined in the vector space $\mathbb R^l$ with basis $\varepsilon_1, \dots, \varepsilon_l$. The positive roots are $2 \varepsilon_i$ for $1 \leq i \leq l$ and $\varepsilon_i \pm \varepsilon_j$ for $1 \leq i < j \leq l$. Let $V$ be the standard representation of $\gG$.
        We can pick as a Borel representation a full flag in $V$ such that the weights of $V$ are ordered as $\varepsilon_1, \dots, \varepsilon_l, -\varepsilon_l, \dots, -\varepsilon_1$. The resulting weight differences are exactly the positive roots. So \Cref{conj_Ger} holds in this case!
        \item Consider $\gG = \GSp_{2l}$ for $l \geq 2$ (type $C_l$). The root system is defined in the vector space $\mathbb R^{l+1}$ with basis $\varepsilon_0, \dots, \varepsilon_{l}$ (with $\varepsilon_0$ corresponding to the similitude character). 
        The positive roots are $\vareps_i-\vareps_j,\vareps_i+\vareps_j-\vareps_0$, and $2\vareps_i-\vareps_0$. 
        We can pick as a Borel representation a full flag in $V$ such that the weights of $V$ are ordered as $\varepsilon_1, \dots, \varepsilon_l, \varepsilon_{0}-\varepsilon_l, \dots, \varepsilon_{0}-\varepsilon_1$. The resulting weight differences are exactly the positive roots, so we obtain \Cref{conj_Ger} once more.
        \item Consider $\gG = \SO_{2l}$ (or $\mathrm{Spin}_{2l}$) for $l \geq 3$ (type $D_l$). The root system is defined in the vector space $\mathbb R^l$ with basis $\varepsilon_1, \dots, \varepsilon_l$. The positive roots are $\varepsilon_i \pm \varepsilon_j$ for $1 \leq i  < j \leq l$. Let $V$ be the standard representation of $\gG$. We can pick as a Borel representation a full flag in $V$ such that the weights of $V$ are ordered as $\varepsilon_1, \dots, \varepsilon_l, -\varepsilon_l, \dots, -\varepsilon_1$. The weight differences are the positive roots and $2 \varepsilon_i$ for $1 \leq i \leq l$. Also in this case the resulting regularity condition is stronger than the one predicted by \Cref{conj_Ger}, and of the same form as for $B_l$.
        \item Consider $\gG = \mathrm G_2$. We define the root system in the plane $\{x \in \mathbb R^3 \mid x_1 + x_2 + x_3 = 0\}$. The base is $\alpha_1 = \varepsilon_1 - \varepsilon_2$ (short root), $\alpha_2 = -2\varepsilon_1 + \varepsilon_2 + \varepsilon_3$ (long root). The positive roots are $\alpha_1, \alpha_2, \alpha_1+\alpha_2,2\alpha_1+\alpha_2,3\alpha_1+\alpha_2,3\alpha_1+2\alpha_2$. Let $V$ be the (faithful) $7$-dimensional representation of $\gG$ of highest weight $2\alpha_1+\alpha_2$. We can pick as a Borel representation a full flag in $V$ such that the weights of $V$ are ordered as $2\alpha_1+\alpha_2,\alpha_1+\alpha_2,\alpha_1,0,-\alpha_1,-\alpha_1-\alpha_2,-2\alpha_1-\alpha_2$. The weight differences are the positive roots and $2\alpha_1$, $2\alpha_1+2\alpha_2$ and $4\alpha_1+2\alpha_2$. So the resulting regularity condition is stronger than the one predicted by \Cref{conj_Ger}, but it is of a similar mild form as in the case $B_l$.
    \end{enum}
   The above computations can also be done for the exterior and symmetric powers of the standard representation of $\SL_n$ and its dual, the spin representation of $\mathrm{Spin}_{2l+1}$, the two half-spin representations of $\mathrm{Spin}_{2l}$, the two $27$-dimensional representations of $\mathrm E_6$ and the $56$-dimensional representation of $\mathrm E_7$. For $\mathrm F_4$ and $\mathrm E_8$ there are no weight multiplicity-free representations.
\end{example}


\section{$\gH$-$B_{\dR}$- and $\gH$-$B_{\dR}^+$-representations}
\label{secBdRBdRplusnew}

In \Cref{secBdRnew} below, we let $\gH$ be an arbitrary linear algebraic group over $L$, not necessarily reductive.
Let $A$ be a finite-dimensional $L$-algebra.

\subsection{Definitions}
\label{secBdRnew}

We use Fontaine's period rings $\BdR^+$ and $\BdR$ equipped with their natural topology and $\GK$-action, that we introduced in \Cref{sec:phigammat}.
A \emph{$\BdR$-representation} (resp. \emph{$\BdR^+$-representation}) of $\GK$ is a finite-dimensional $\BdR$-vector space $W$ (resp. finite free $\BdR^+$-module $W^+$) with a continuous semilinear action of $\GK$.

An \emph{$A \otimes_{\Qp} \BdR$-representation} of $\GK$ is a $\BdR$-representation $W$ together with a map of $\Qp$-algebras $A \to \End_{\Rep_{\BdR}(\GK)}(W)$, such that $W$ is a finite free\footnote{As a finite $\BdR$-algebra $A \otimes_{\Qp} \BdR$ is semi-local, so finite projective modules of constant rank are free.} $A \otimes_{\Qp} \BdR$-module. We denote by $\Rep_{A \otimes_{\Qp} \BdR}(\GK)$ the category of such representations. We similarly define the category $\Rep_{A \otimes_{\Qp} \BdR^+}(\GK)$ of $A \otimes_{\Qp} \BdR^+$-representations of $\GK$.

\begin{definition}
    An \emph{$\gH$-$A \otimes_{\Qp} \BdR^+$-representation} of $\GK$ is an $L$-linear fiber functor $$\eta_W\colon \Rep_L(\gH)\longrightarrow \Rep_{A \otimes_{\Qp} \BdR^+}(\GK).$$ We denote by $\Rep^{\gH}_{A \otimes_{\Qp} \BdR^+}(\GK)$ the category of such representations together with isomorphisms. \\ We define \emph{$\gH$-$(A \otimes_{\Qp} \BdR)$-representations} in the same way by replacing $\BdR^+$ with $\BdR$. 
\end{definition}


We introduce the period rings $B^+_{\pdR} \colonequals \BdR^+[\log t]$ and $B_{\pdR} \colonequals \BdR \otimes_{\BdR^+} B_{\pdR}^+$ (c.f. \cite[§4.3]{FontaineAst}): they are simply polynomial rings in the variable $\log t$, equipped with a continuous $\GK$-action that extends that on $\BdR$ and $\BdR^+$ compatibly with the notation $\log t$, i.e. satisfying $g(\log(t))=\log(t)+\log(\varepsilon(g))$ for $g\in \GK$. We equip $B_{\pdR}$ with a $B_{\dR}$-derivation $\nu_{B_{\pdR}}$ satisfying $\nu_{B_{\pdR}}(\log(t))=-1$ (i.e. the derivation $-\delta/\delta\log t$ if we consider elements of $B_\pdR$ as polynomials in $\log t$ with $\BdR$-coefficients). 

We say that a $\BdR$-representation $W$ of $\GK$ is \emph{almost de Rham} if $D_\pdR(W) \colonequals (B_{\pdR} \otimes_{\BdR} W)^{\GK}$ is a $K$-vector space of dimension $\dim_{\BdR} W$. The almost de Rham $\BdR$-representations form a full tensor subcategory $\Rep_{\pdR}(\GK)$ of $ \Rep_{\BdR}(\GK)$ stable under kernels, cokernels, extensions and duals.\footnote{See \cite[Section 3.7]{FontaineAst}, where $\Rep_{\pdR}(\GK)$ is noted $\Rep_{\dR,\Z}(\GK)$.}

Recall (cf. \cite[Proposition 3.1.1]{BHS19}) that $\Ga$ acts on $D_\pdR(W)$ via the endomorphism induced by $\nu_{B_\pdR}\otimes 1$ on $B_{\pdR} \otimes_{\BdR} W$. This gives rise to an equivalence of $K$-linear abelian tensor categories
\begin{align}
    D_\pdR \co \Rep_{\pdR}(\GK) \eqto \Rep_K(\Ga). \label{equiv_pdR_Ga}
\end{align}

Let $A$ be a finite-dimensional $\Qp$-algebra. 
We say that an $A \otimes_{\Qp} \BdR$-representation $W$ of $\GK$ is \emph{almost de Rham} if it is so as a $\BdR$-representation. We denote the category of almost de Rham $A \otimes_{\Qp} \BdR$-representations by $\Rep_{\pdR, A}(\GK)$. By \cite[Proposition 3.1.4]{BHS19} the equivalence \eqref{equiv_pdR_Ga} can be upgraded to an equivalence of $A$-linear tensor categories
\begin{align}
    D_{\pdR,A} \colon \Rep_{\pdR, A}(\GK) \eqto \Rep_{A \otimes_{\Qp} K}(\Ga). \label{equiv_pdR_Ga_A}
\end{align}
Recall that, for a ring homomorphism $B\to C$, $\phi_C:\Vect_B\to\Vect_C$ denotes the extension of scalars.

\begin{definition}
    Let $A$ be a finite dimensional $\Qp$-algebra. An $\gH$-$A\otimes_{\Qp}\BdR$-representation $\eta_{W_A}$ of $\GK$ is called \emph{almost de Rham} if for every $V\in \Rep_L(\gH)$, the $A\otimes_{\Qp}\BdR$-representation  $\eta_{W_A}(V)$ of $\GK$ is almost de Rham. An $\gH$-$A\otimes_{\Qp}\BdR^+$-representation $\eta_{W_A^+}$ of $\GK$ is called \emph{almost de Rham} if $\phi_{\BdR}\circ \eta_{W_A^+}$ is almost de Rham. 
\end{definition}
Recall that $\omega_{\gH}:\Rep_L(\gH)\to\Vect_L$ is the forgetful functor. We will also write 
$$ \omega_{\mathbb{G}_{a,A\otimes K}} \colon \Rep_{A\otimes_{\Qp}K}(\Ga) \longrightarrow \Vect_{A\otimes_{\Qp}K}$$ for the canonical forgetful functor. 

Later, we will construct $\gB$-flags on the period modules $D_{\pdR}$. For this, will need to trivialize the corresponding fiber functors, in the sense of the following definition. 
\begin{definition}\label{deftrivialization}
     Let $A$ be a finite dimensional $L$-algebra. A \emph{trivialization} of an $\gH$-$A\otimes_{\Qp}\BdR$-representation $\eta_{W_A}$ is an equivalence
     $$ \alpha_A\colon \phi_{A\otimes_{\Qp}K}\circ \omega_{\gH}\xrightarrow{\sim} \omega_{\mathbb{G}_{a,A\otimes K}} \circ D_{\pdR,A}\circ \eta_{W_A}.  $$
\end{definition}

\begin{lemma}\label{pdR_charanew}
    The data $(W_A,\alpha_A)$ of an almost de Rham  $\gH$-$A\otimes_{\Qp}\BdR$-representation equipped with a trivialization is equivalent to the datum of a nilpotent element $N_{W_A}\in \Lie(\gH)(A\otimes_{\Qp}K)$.
\end{lemma}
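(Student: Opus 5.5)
The plan is to translate the data $(W_A,\alpha_A)$ through the equivalence \eqref{equiv_pdR_Ga_A} and then apply Tannakian reconstruction. The statement then reduces to describing tensor representations of $\Ga$ over $A\otimes_{\Qp}K$.

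First, since $\eta_{W_A}$ is almost de Rham, the composite $D_{\pdR,A}\circ\eta_{W_A}$ is an $L$-linear exact faithful tensor functor $\Rep_L(\gH)\to\Rep_{A\otimes_{\Qp}K}(\Ga)$. Exactness and faithfulness are preserved because \eqref{equiv_pdR_Ga_A} is an equivalence of $A$-linear tensor categories. Conversely, any such functor comes from a unique (up to unique isomorphism) almost de Rham $\gH$-$A\otimes_{\Qp}\BdR$-representation by composing with a quasi-inverse of $D_{\pdR,A}$. A trivialization $\alpha_A$ identifies the underlying fiber functor $\omega_{\mathbb{G}_{a,A\otimes K}}\circ D_{\pdR,A}\circ\eta_{W_A}$ with the trivial one $\phi_{A\otimes_{\Qp}K}\circ\omega_\gH$. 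So the pair $(W_A,\alpha_A)$ becomes a $\Ga$-action on the neutral fiber functor $\phi_{A\otimes K}\circ\omega_\gH$ by tensor automorphisms. By Tannakian duality (\Cref{tannakian_fundamental_equivalence} and \cite{DeligneMilne}), $\uAut^\otimes(\phi_{A\otimes K}\circ\omega_\gH)=\gH_{A\otimes K}$. Hence such data are the same as homomorphisms of group schemes $\Ga_{,A\otimes_{\Qp}K}\to\gH_{A\otimes_{\Qp}K}$.

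Second, we identify homomorphisms $\Ga\to\gH$ over the base $R=A\otimes_{\Qp}K$ with nilpotent elements of $\Lie(\gH)(R)$. This base is a finite $\Q$-algebra, in particular of characteristic $0$. We proceed in three steps.

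\begin{itemize}
\item Given a homomorphism $u$, take $N=\mathrm{d}u(1)$. To see that $N$ is nilpotent, fix a faithful $V$. The operator $\rho_V(N)$ is nilpotent because representations of $\Ga$ in characteristic $0$ are given by $s\mapsto\exp(sN_V)$ with $N_V$ nilpotent. This is also how $\nu_{B_\pdR}$ acts on $D_\pdR$, as in \cite[Proposition 3.1.1]{BHS19}.
\item Conversely, given nilpotent $N\in\Lie(\gH)(R)$, set $u(s)=\exp(sN)$, computed in $\GL(V)$ for a faithful $V$. This is a finite sum, hence a homomorphism $\Ga_R\to\GL(V)_R$.
\item It remains to check that $\exp(sN)$ lands in $\gH_R$. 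We use that $\gH$ is the stabilizer of a line $\ell$ in some representation $W$ built from $V$ (Chevalley), and that $\Lie\gH$ is the stabilizer of $\ell$ in $\mathfrak{gl}(W)$. Then $N$ preserves $\ell\otimes R$, so $\exp(sN)$ preserves it as well. Alternatively, we argue functorially via the Tannakian description: $N$ acts on each $V\otimes R$ as a tensor derivation, so $\exp(sN)$ acts as a tensor automorphism.
\end{itemize}

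The two constructions are mutually inverse, since a homomorphism from $\Ga$ is determined by its differential in characteristic $0$.

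The main obstacle is this last step, that $\exp(sN)\in\gH(R[s])$ over a non-reduced base $R$. I would handle it with the tensor-derivation argument. In the Tannakian picture, $\Lie(\gH)(R)$ is identified with families $(N_V)_V$ of endomorphisms that are compatible with morphisms and act as derivations on tensor products. Exponentiating such a family gives a family compatible with morphisms and multiplicative on tensor products, which is exactly a point of $\uAut^\otimes=\gH$. Finally, I would check that this correspondence is compatible with isomorphisms on both sides. Changing $\alpha_A$ by $h\in\gH(R)$ conjugates $N$ by $\Ad(h)$, which yields the equivalence of groupoids.
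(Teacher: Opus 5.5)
Your proposal is correct and follows essentially the same route as the paper. Both arguments transport the data along $D_{\pdR,A}$ to a tensor functor into $\Rep_{A\otimes_{\Qp}K}(\Ga)$ with trivialized underlying fiber functor, and then identify the resulting tensor-compatible family of nilpotent operators $(N_V)_V$ with an element of $\Lie(\gH)(A\otimes_{\Qp}K)$, using the Tannakian description of $\Lie(\gH)$ as tensor derivations; your detour through homomorphisms $\Ga\to\gH_{A\otimes_{\Qp}K}$ (and the optional Chevalley check) simply spells out the ``standard calculation'' the paper leaves implicit.
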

\begin{proof}
    Composing with $D_{\pdR,A}$, we see that $(W_A,\alpha_A)$ is equivalent to the datum of a fiber functor 
    $$ \eta' \colon \Rep_L(\gH) \longrightarrow \Rep_{A\otimes_{\Qp}K}(\Ga) $$
    equipped with an equivalence $\alpha_A' \colon \phi_{A\otimes_{\Qp}K} \circ \omega_{\gH}\xrightarrow{\sim} \omega_{\mathbb{G}_{a,A\otimes K}} \circ \eta'$.
    For each $V \in \Rep_L(\gH)$ the datum of a $\Ga$-action on $(A \otimes_{\Qp} K) \otimes_{L} V$ is equivalent to a nilpotent endomorphism $N_V \in \End_{A \otimes_{\Qp} K}((A \otimes_{\Qp} K) \otimes_{L} V)$. A standard calculation shows that $N_V$ is natural in $V$ and $N_{V \otimes W} = N_V \otimes \id_W + \id_V \otimes N_W$ if and only if the family $(N_V)_{V \in \Rep_L(\gH)}$ assembles to a fiber functor $\eta'$ as above.
    It is known that the set of natural transformations $E \colon \eta' \to \eta'$ satisfying $E(V \otimes W) = E(V) \otimes \id_W + \id_V \otimes E(W)$ is in bijection with $\Lie (\gH)(A \otimes_{\Qp} K)$.
\end{proof}

\subsection{$A$-$B$-pairs and the functors $W_\dR^+,W_\dR,W_e$}
We recall the definition of $B$-pair following \cite[§2]{Berger_2008constr}.
We introduce the ring $B_e \colonequals B_{\cris}^{\varphi=1}$, which carries a continuous $\GK$-action.

\begin{definition}
    Let $A\in \Art_L$.
    \begin{enum}
        \item An \emph{$A \otimes_{\Qp} B_e$-representation} is a free $A \otimes_{\Qp} B_e$-module of finite rank equipped with a continuous semilinear action of $\GK$. We write $\Rep_{A\otimes_{\Qp}B_e}$ for the category of $A \otimes_{\Qp} B_e$-representations of $\GK$, with the evident choice of morphisms.
        \item An \emph{$A$-$B$-pair} is a pair $(M_e, M_{\dR}^+)$, where $M_e$ is an $A \otimes_{\Qp} B_e$-representation and $M_{\dR}^+$ is a $\GK$-stable $A \otimes_{\Qp} B_{\dR}^+$-lattice in $M_{\dR} \colonequals B_{\dR} \otimes_{B_e} M_e$.
    \end{enum}
\end{definition}

Berger's equivalence between $\PGcat^+_K$ and the category of $B$-pairs is refined in \cite[Proposition 3.5.1]{BHS19} to an equivalence between the category of $\PGcat_{K,A}^+$ and the category of $A$-$B$-pairs, mapping a $\PG$-module $D_A$ over $\cR_{K,A}$ to an $A$-$B$-pair $(W_{e,A}(D_A),W_{\dR,A}^+(D_A))$. The two components of this equivalence give functors
\begin{align*} W_{\dR,A}^+\colon \PGcat_{K,A}^+\longrightarrow \Rep_{A\otimes_{\Qp}\BdR^+}(\GK), \quad 
W_{e,A} \colon \PGcat_{K,A}^+ \longrightarrow \Rep_{A\otimes_{\Qp}B_e}(\GK), \end{align*}
that extend the evident functors from $A$-linear $\GK$-representations to $A$-$B_\dR^+$- and $A$-$B_e$-representations obtained by extending scalars to the respective period rings. 
For  $\mathcal{M}_A\in \PGcat_{K,A}$, we define 
\[ W_{e,A}(\mathcal{M}_A)\colonequals W_{e,A}(D_A), \quad W_{\dR,A}(\mathcal{M}_A)\colonequals B_\dR\otimes_{B_\dR^+}W_{\dR,A}^+(D_A), \]
where $D_A\subset \cM_A$ is a $\PG$-module over $\cR_{K,A}$ such that $D_A=\mathcal{M}_A[\tfrac{1}{t}]$. These definitions are independent of the choice of $D_A$.

\begin{lemma}\label{Btensoreq}
    The functors $W_{\dR,A}^+$ and $W_{e,A}$ are $A$-linear tensor functors, and $W_{e,A}$ yields an exact tensor equivalence
    \begin{equation} 
        W_{e,A}\colon \PGcat_{K,A}\xrightarrow{\sim} \Rep_{A\otimes_{\Qp}B_e}(\GK). 
    \end{equation}
\end{lemma}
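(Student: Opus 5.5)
Our plan is to reduce both assertions to the corresponding facts for $\PG$-modules over $\cR_{K,A}$ and $B$-pairs, i.e. to the equivalence of \cite[Proposition 3.5.1]{BHS19} between $\PGcat^+_{K,A}$ and $A$-$B$-pairs.

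\emph{Tensor structure.} Since that equivalence is compatible with tensor products and duals, the two components $W^+_{\dR,A}$ and $W_{e,A}$ inherit natural isomorphisms $W(D\otimes D')\cong W(D)\otimes W(D')$ satisfying the associativity and commutativity constraints. $A$-linearity is also inherited, because the $A$-action on $D_A$ is transported through the equivalence. For the version over $\cR_{K,A}[\tfrac1t]$ we first check that $W_{e,A}(\cM_A)$ does not depend on the lattice $D_A$. Given two lattices $D_A,D'_A$, the sum $D_A+D'_A$ is again a $\PG$-stable lattice (by the argument of \Cref{lattice_lemma}), and $D_A+D'_A\subseteq t^{-n}D_A$ for some $n$. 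So it suffices to compare $D_A\subseteq D''_A$ with $t^nD''_A\subseteq D_A$. Under Berger's dictionary, modifications of a $\PG$-module supported at $t$ change only the $\BdR^+$-lattice and leave $W_e$ unchanged, so the inclusions induce canonical isomorphisms on $W_{e,A}$. This makes $W_{e,A}$ a well-defined tensor functor on $\PGcat_{K,A}$, since $D_A\otimes D'_A$ is a lattice in $\cM_A\otimes\cM'_A$.

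\emph{Full faithfulness.} Let $f\colon W_{e,A}(\cM)\to W_{e,A}(\cM')$ be a morphism. Choose lattices $D,D'$ and $n$ such that $f\otimes\BdR$ maps $W^+_{\dR}(D)$ into $t^{-n}W^+_{\dR}(D')=W^+_\dR(t^{-n}D')$; such $n$ exists because these are lattices. Then $(f,f\otimes\BdR)$ is a morphism of $B$-pairs and corresponds to a unique morphism $D\to t^{-n}D'$. Inverting $t$ gives faithfulness and fullness. Uniqueness uses that morphisms of $B$-pairs are determined by their $B_e$-component, since $M_{\dR}^+\subseteq B_\dR\otimes_{B_e}M_e$.

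\emph{Essential surjectivity and exactness.} Given an $A\otimes B_e$-representation $M_e$, we need a $\GK$-stable $A\otimes\BdR^+$-lattice in $B_\dR\otimes_{B_e}M_e$. Berger's argument gives a $\BdR^+$-lattice, namely the span of a basis over $B_e$; this is stable under an open subgroup and then, by compactness, a $\GK$-stable lattice can be found by summing finitely many translates. Making that lattice $A$-stable and free over $A\otimes\BdR^+$ is the main obstacle. We would handle it by taking the $A$-span, which is still a finitely generated $\GK$-stable lattice, and then invoking the freeness criterion for $A$-$B$-pairs in \cite[\S3.5]{BHS19}: an $A\otimes\BdR^+$-module that is $\BdR^+$-free and whose generic fibre is $A\otimes \BdR$-free is itself free, as in their Lemma 3.5.x. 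This produces a $\PG$-module over $\cR_{K,A}$, whose $t$-localization maps to $M_e$. Exactness of $W_{e,A}$ follows because short exact sequences in $\PGcat_{K,A}$ admit compatible lattices by \Cref{lattice_lemma}, and $W_{e,A}$ is exact on $B$-pairs. The inverse functor is also exact, being a tensor equivalence between categories whose exact structures are the split-on-underlying-modules ones, so $W_{e,A}$ is an exact tensor equivalence.
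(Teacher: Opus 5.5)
Your treatment of the tensor structure and of full faithfulness is fine in spirit. The essential-surjectivity step, however, has a genuine gap, and it is not a detail.

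You want, for every $A\otimes_{\Qp}B_e$-representation $M_e$, a $\GK$-stable lattice in $\BdR\otimes_{B_e}M_e$ that is free over $A\otimes_{\Qp}\BdR^+$. That would give an $A$-$B$-pair and hence an object of $\PGcat^+_{K,A}$. Combined with your full faithfulness (applied to $M_e=W_{e,A}(\cM)$), this would show that every $\cM\in\PGcat_{K,A}$ has a $\PG$-stable $\cR_{K,A}$-lattice. The remark after \Cref{def_PG_RKAoneovert} (following \cite[Remark~3.3.1]{BHS19}) says exactly this is not known. The definition only provides an $\cR_K$-lattice, and the phrase ``$D_A$ a $\PG$-module over $\cR_{K,A}$'' in the definition of $W_{e,A}(\cM_A)$ has to be read in that sense.

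Concretely, the freeness criterion you invoke is false. Take $K=L=\Qp$, $A=\Qp[\epsilon]/(\epsilon^2)$ and $M^+=\BdR^+e_1\oplus\BdR^+e_2$ with $\epsilon e_1=te_2$, $\epsilon e_2=0$, $g(e_1)=e_1$, $g(e_2)=\chi^{\cyc}(g)^{-1}e_2$. This is a $\GK$- and $A$-stable $\BdR^+$-lattice, free over $\BdR^+$, and $M^+[\tfrac1t]=(A\otimes_{\Qp}\BdR)e_1$ is free. But $M^+/\epsilon M^+\cong\BdR^+\oplus\BdR^+/t\BdR^+$ has torsion, so $M^+$ is not free over $A\otimes_{\Qp}\BdR^+$. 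Taking an $A$-span therefore gives you no control over freeness.

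The preceding step is also unjustified. Since $B_e\cap\BdR^+=\Qp$ by the fundamental exact sequence, the $\BdR^+$-span of a $B_e$-basis is stable under an open $H\subseteq\GK$ only if the cocycle on $H$ takes values in $\GL_n(\Qp)$. That means $M_e|_H$ would have to come from a Galois representation, and continuity of the cocycle gives nothing of the sort. A $\GK$-stable $\BdR^+$-lattice does exist, but via a boundedness argument instead. The cocycle has compact image, hence lies in $M_n(t^{-k}\BdR^+)$ for some $k$. So every translate $g\Lambda$ lies in $t^{-k}\Lambda$, and the sum of all translates is a $\GK$-stable lattice.

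The way around is never to ask for $\cR_{K,A}$-lattices; this is the set-up of \cite{BHS19}, to which the paper appeals. Work throughout with $\PG$-stable $\cR_K$-lattices and Berger's equivalence without coefficients. The $A$-action is transported by functoriality. Your lattice-independence and full-faithfulness arguments go through verbatim here, since the sum of two $\cR_K$-lattices is again one ($\cR_K$ being B\'ezout).

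For essential surjectivity, pick any $\GK$-stable $\BdR^+$-lattice as above, take the corresponding $\PG$-module $D$ over $\cR_K$, and set $\cM=D[\tfrac1t]$ with its induced $A$-action. What remains is the only non-formal content of the lemma, namely freeness statements:
\begin{itemize}
\item $W^+_{\dR,A}$ takes values in free $A\otimes_{\Qp}\BdR^+$-modules;
\item $W_{e,A}$ takes values in free $A\otimes_{\Qp}B_e$-modules;
\item the $\cM$ just built is free over $\cR_{K,A}[\tfrac1t]$.
\end{itemize}
The paper's proof treats everything except freeness as formal and imports the freeness input from \cite[Lemma~3.3.5, Proposition~3.5.1]{BHS19}. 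Your proposal neither cites nor proves it, and the criterion it relies on for this purpose fails.
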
 

\begin{proof}
The only part of the statement that is not formal is the fact that a free $\cR_{K,A}$-module is mapped by $W_{\dR}^+$ to a free $\BdR^+$-module and by $W_{e,A}$ to a free $B_{e,A}$-module. This is proved for $W_{\dR,A}^+$ in \cite[Lemma 3.3.5]{BHS19}, and for $W_{e,A}$ in \cite[Proposition 3.5.1]{BHS19}. 
\end{proof}



Now let $\cM$ be an $\gH$-$(\varphi, \Gamma_K)$-module over $\cR_{K,A}[\tfrac{1}{t}]$. The composition $W_{\dR}(\eta_{\mathcal{M}})\colon \Rep_L(\gH) \to \Rep_{A \otimes_{\Qp} B_\dR}(\GK)$ defines an $\gH$-$A \otimes_{\Qp} \BdR$-representation of $\GK$. We similarly define an $\gH$-$A \otimes_{\Qp} \BdR^+$-representation $ W^{+}_{\dR}(\eta_D)$ for a $\gH$-$\PG$-module $D$ over $\cR_{K,A}$.

\begin{proposition}\label{localgpdR}
Let $\cM$ be a triangulable $\gG$-$\PG$-module over $\cR_{K,L}[\tfrac{1}{t}]$, and let $\delta\colon \gT^\vee(K)\to L^\times$ be the parameter of a $\gB$-triangulation of $\cM$. Then the following are equivalent:
\begin{enumerate}[label=(\roman*)]
\item $\cM$ is Hodge--Tate;
\item $W_\dR\circ\eta_\cM$ is almost de Rham;
\item $\delta$ is locally algebraic.
\end{enumerate}
Moreover, if $\cM_A$ is a triangulable $\gG$-$\PG$-module over $\cR_{K,A}[\tfrac{1}{t}]$, then $W_\dR(\eta_{\cM_A})$ is almost de Rham if and only if $W_\dR(\eta_{\cM_A\otimes_AL})$ is almost de Rham. 
\end{proposition}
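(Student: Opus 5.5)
The plan is to reduce everything to the $\GL_n$ case, i.e.\ to \cite[Lemma 3.3.6 and Proposition 3.3.7]{BHS19} (and \cite[§3.7]{FontaineAst}), by applying the fiber functor to a single faithful representation. Fix the triangulation $\Mtri$ of parameter $\delta$, and choose a faithful representation $V\in\Rep_L(\gG)$. As noted in the remark following \Cref{def_trianguline_rep}, the image of $\gB$ in $\GL(V)$ lies in a Borel subgroup $\gB'$ of $\GL(V)$, so $\Mtri\times^\gB\gB'$ is a triangulation of the $\PG$-module $\eta_\cM(V)$ over $\cR_{K,L}[\tfrac1t]$. Its graded pieces are $\eta_{\Mtri}(V^{\mu})$ for the $\gT$-weights $\mu$ of $V$, i.e.\ rank-one objects $\cR_{K,L}[\tfrac1t](\delta\circ\mu^\vee)$ with multiplicity $\dim V^\mu$.

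Next I would show that each of (i), (ii), (iii) for $\cM$ is equivalent to the same property for $\eta_\cM(V)$, with parameter $(\delta\circ\mu^\vee)_\mu$.

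\begin{enumerate}
\item For (ii): almost de Rham representations form a tensor subcategory stable under subquotients, duals and direct sums. Every $W\in\Rep_L(\gG)$ is a subquotient of tensor constructions on $V$ (\cite[Proposition 3.1]{DeligneHodgeCycles}, as used in \Cref{three_prime}). Hence $W_\dR\circ\eta_\cM$ is almost de Rham iff $W_\dR(\eta_\cM(V))$ is.
\item For (i): the same argument works, using that Hodge--Tate representations (via the Sen operator, i.e.\ semisimplicity with integral eigenvalues) are stable under these operations.
\item For (iii): since $V$ is faithful, the weights $\mu$ of $V$ span $X^*(\gT)_\Q$. So $\delta$ is locally algebraic iff each $\delta\circ\mu^\vee$ is locally algebraic. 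One direction is clear. For the other, apply \Cref{def_weightpar}: the weight $\wt(\delta)$ is integral in each embedding, and restricted to $\gT^\vee(\cO_K)$ it agrees, after passing to a finite-index subgroup, with an algebraic character. Some care with torsion in $X^*(\gT)/\langle\mu\rangle$ may be needed, handled by replacing $\delta$ with a power.
\end{enumerate}

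This reduces the equivalences to the case of a triangulable $(\varphi,\Gamma_K)$-module over $\cR_{K,L}[\tfrac1t]$. There, the rank-one case is explicit: $W_\dR(\cR_{K,L}[\tfrac1t](\delta'))$ is almost de Rham iff it is Hodge--Tate iff the Sen weight of $\delta'$ is integral, iff $\delta'$ is locally algebraic. Since almost de Rham is stable under extensions, (iii)$\Rightarrow$(ii) follows by dévissage. (ii)$\Rightarrow$(i) is standard (\cite[§3.7]{FontaineAst}). For (i)$\Rightarrow$(iii), Hodge--Tate passes to subquotients, so each graded piece is Hodge--Tate, which forces each $\delta\circ\mu^\vee$ to have integral weights.

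For the final assertion over $A\in\Art_L$, almost de Rhamness is a condition on $W_\dR(\eta_{\cM_A}(V))$ as a $\BdR$-representation. By \Cref{class_rk_one_after_inv_t}, its graded pieces are rank-one objects of character type $\delta_A\circ\mu^\vee$, which are successive extensions of copies of the reduction mod $\frakm_A$. Extension-stability of $\Rep_\pdR(\GK)$ then gives the claim, as in \cite[Lemma 3.3.6 ff.]{BHS19}.

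The main obstacle I expect is the step (i)$\Leftrightarrow$(iii) at the level of parameters. Passing from locally algebraic compositions $\delta\circ\mu^\vee$ to local algebraicity of $\delta$ itself requires the weights of $V$ to generate $X^*(\gT)$ up to finite index, together with a check that the smooth part extends.
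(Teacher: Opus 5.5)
Your argument is correct and uses the same core mechanism as the paper: push the triangulation into a Borel subgroup of $\GL(V)$, reduce by dévissage to the rank-one pieces $\cR_{K,L}[\tfrac1t](\delta\circ\mu^\vee)$, and apply \cite[Lemma 3.3.6]{BHS19} to those. What differs is how you pass between representations and the parameter.

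The paper never fixes a faithful $V$. Almost de Rhamness of $W_\dR\circ\eta_\cM$ is by definition a condition on every $V\in\Rep_L(\gG)$, so it runs the dévissage for every $V$. Since every $\lambda\in X^*(\gT)$ is a weight of some $V$, it gets that $\delta\circ\lambda^\vee$ is locally algebraic for all $\lambda$. Testing on a basis of $X^*(\gT)$ then gives local algebraicity of $\delta$. So the paper needs neither Tannakian generation nor any argument about which characters the weights generate, and the ``main obstacle'' you anticipate does not arise for it.

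Your single-$V$ route costs you \cite[Proposition 3.1]{DeligneHodgeCycles} and stability of the relevant categories under tensor operations, but it is equally valid. The obstacle is illusory there too. Because $V$ is faithful, $\gT\to\GL(V)$ is a closed immersion into the diagonal torus, so the weights of $V$ generate $X^*(\gT)$ over $\Z$, not just over $\Q$. Additivity of $\lambda\mapsto\delta\circ\lambda^\vee$ then finishes the argument. This matters, because your proposed fix of ``replacing $\delta$ with a power'' would not work. Local algebraicity amounts to integrality of $\wt(\delta)$, and $\delta^n$ can be locally algebraic while $\delta$ is not (take weight $\tfrac12$).

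Your treatment of the statement over $A$ is the same dévissage the paper cites from \cite[Remark 3.1.5]{BHS19}.

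One caveat applies equally to your write-up and the paper's. The implications into (i) are your ``(ii)$\Rightarrow$(i) is standard'' and the paper's claim that Hodge--Tate is stable under extensions. Both need ``Hodge--Tate'' for an object over $\cR_{K,L}[\tfrac1t]$ to be read lattice-independently, for example as existence of a $\GK$-stable $\BdR^+$-lattice with Hodge--Tate reduction, or as integrality of the Sen weights. For a fixed lattice this fails: the nonsplit self-extension of the trivial character given by the class of $\log\chi^{\cyc}$ is almost de Rham and trianguline of algebraic parameter $(1,1)$, but it is not Hodge--Tate.
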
 


\begin{proof}
By definition, $W_\dR\circ\eta_\cM$ is almost de Rham if and only if $W\colonequals W_\dR(\eta_\cM(V))$ is almost de Rham for every $V\in\Rep_L(\gG)$. Since $\cM$ admits a $\gB$-triangulation, $\eta_\cM(V)$ admits a $\gB'$-triangulation for a Borel subgroup $\gB'\subset\GL(V)$, with parameter $\delta_V$ obtained by composing $\delta$ with a map $\gT^{',\vee}\to\gT^\vee$ where $\gT'\subset\gB'$ is a maximal torus. Since the category of almost de Rham representations is stable under taking subquotients and extensions, $\eta_\cM(V)$ is almost de Rham if and only if $W_\dR(\cR_{K,L}(\delta_V))$ is almost de Rham. By \cite[Lemma 3.3.6 (ii)]{BHS19} and the preceding discussion, $W_\dR(\cR_{K,L}(\delta_V))$ is almost de Rham if and only if $\delta_V$ is locally algebraic. Therefore, $W_\dR\circ\eta_\cM$ is almost de Rham if and only if $\delta_V$ is locally algebraic for every $V$. This is equivalent to $\delta\circ\lambda^\vee$ being locally algebraic for every $\lambda\in X^\ast(\gT)$, hence to $\delta$ itself being locally algebraic. This gives the equivalence of (ii) and (iii). Since for a character $K^\times\to L^\times$ being locally algebraic is equivalent to being Hodge--Tate, and being Hodge--Tate is also stable under taking subquotients and extensions, the equivalence of (i) and (iii) also follows.

The last statement follows by dévissage, as in \cite[Remark 3.1.5]{BHS19}.
\end{proof}

\subsection{The schemes $X$ and $X^w$}\label{sec:Xnew}

Following \cite[\S 2]{BHS19}, we define the reductive group
\begin{align}
    \bG \colonequals &\Spec(L) \times_{\Spec(\Qp)} \Res_{K/\Qp}\gG_K \cong \underbrace{\gG_L \times \dots \times \gG_L}_{[K:\Qp] \text{ times}}. \label{def_phat_Gnew}
\end{align}
Similarly we define $\mathbf B$ and $\mathbf T$.
The Lie algebras of $\bG$, $\mathbf B$ and $\mathbf T$ are denoted by $\frakg$, $\frakb$ and $\frakt$ respectively. We write $\gW$ and $\bW$ for the Weyl groups of $(\gG,\gT)$ and $(\bG,\bT)$, respectively. The group $\bG$ will appear naturally in the following discussion when considering Hodge--Tate cocharacters attached to Hodge--Tate representations $\GK\to\gG(L)$, and weights of characters $\gT^\vee(K)\to L^\times$.

\begin{remark}\label{fiberfunctorforbG}
    Given an $L$-algebra $A$, an $L$-linear fiber functor $\eta\colon \Rep_L(\bG)\to \Vect_A$ is equivalent to the data of an $L$-linear fiber functor $\eta'\colon \Rep_L(\gG)\to \Vect_{A\otimes_{\Qp}K}$.
\end{remark}

We will use the following fact in the proof of \Cref{Twdelta}.

\begin{lemma}\label{invariantlemmanew}
For $A\in\Art_L$, the projection $\frakt\onto\frakt\GIT\bW$ induces a bijection $(\frakt\GIT\bW)(A)=\frakt(A)/\bW$. In other words, two points $x_1,x_2\in\frakt(A)$ belong to the same $\bW$-orbit if and only if their images under $\frakt\onto\frakt\GIT\bW$ coincide.
\end{lemma}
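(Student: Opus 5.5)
The plan is to compare the $A$-points of $\frakt$ and of $\frakt\GIT\bW$ through the quotient map $\pi\colon\frakt\to\frakt\GIT\bW$. By Chevalley's theorem, $\pi$ is finite and flat, and $\frakt\GIT\bW$ is an affine space. Since $\bW$ is a product of copies of $\gW$ acting factorwise on $\frakt=\prod\Lie(\gT_L)$, I first reduce to a single factor $(\Lie\gT_L,\gW)$. The statement has two halves:
\begin{enumerate}
\item surjectivity of $\frakt(A)\to(\frakt\GIT\bW)(A)$;
\item injectivity of $\frakt(A)/\bW\to(\frakt\GIT\bW)(A)$.
\end{enumerate}
I prove both by induction on the length of $A$, along small extensions $0\to I\to A'\to A\to 0$.

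\textbf{Step 1: residue field.} I first treat $A=L$, which is classical. Fibres of $\pi$ over $\overline{L}$-points are single $\bW$-orbits. Since $\frakt$ is split, the points and their orbits are defined over $L$. The subtle case is an $L$-point of $\frakt\GIT\bW$ whose fibre has no $L$-point. Because $\bW$ is a constant split group acting linearly, I handle this using an $L$-rational section, namely the Kostant slice. This gives both halves for $A=L$.

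\textbf{Step 2: deformations at regular points.} Fix $\bar x\in\frakt(L)$ and let $\bW_{\bar x}$ be its stabilizer. If $\bW_{\bar x}$ is trivial, then $\pi$ is étale at $\bar x$. Formal étaleness then gives a unique lift of every $A$-point of $\frakt\GIT\bW$ reducing to $\pi(\bar x)$, and this proves both halves directly. In general, I replace $\pi$ near $\bar x$ by the quotient map $\frakt\to\frakt\GIT\bW_{\bar x}$, which is étale over the image of $\bar x$ by Luna's slice theorem. This reduces the question to the case where $\bar x$ is fixed by the whole group, and, after translating, to $\bar x=0$.

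\textbf{Step 3: the fixed point $\bar x=0$, which is the main obstacle.} Here the graded structure of the invariants enters. The invariant ring is a polynomial ring in the basic invariants, of degrees $d_i\ge 2$ (for instance $d_i=2$ for type $A_1$). I need to lift along $A'\to A$ an $A'$-point of $\frakt\GIT\bW$ compatibly with a given $A$-point of $\frakt$, and to control how far two lifts with the same invariants can differ. The intended argument is to compare the differentials: the cotangent map of $\pi$ at $0$ vanishes in every degree $d_i\ge2$.

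I expect this step to fail in the stated generality. In rank one, take $\pi(x)=x^2$ with $A=L[\epsilon]/(\epsilon^2)$. Then $x=\epsilon$ and $x=0$ have the same image but are not $\gW$-conjugate, so injectivity fails. With $A=L[\epsilon]/(\epsilon^3)$, the point $y=\epsilon$ has no preimage, so surjectivity fails. Consequently, I would carry out the proof under the hypothesis satisfied in the application to \Cref{Twdelta}: the residue point (or the relevant weight) has trivial stabilizer in $\bW$. Under that hypothesis, Steps 1 and 2 give the full bijection.
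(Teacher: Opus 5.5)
You are right that the statement is false as written, and your rank-one examples are correct. Over $A=L[\epsilon]/(\epsilon^2)$, the points $\epsilon$ and $0$ of $\frakt(A)$ for $\SL_2$ have the same image in $\frakt\GIT\bW$ but lie in different $\bW$-orbits; for $\GL_2$ take $(\epsilon,-\epsilon)$ and $(0,0)$. Likewise, $y=\epsilon$ has no preimage.

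The paper's proof breaks exactly at the assertion $\Z[\frakt]\otimes_{\Z[\frakt]^\bW}L\cong L^{|\bW|}$. The fraction-field computation before it only controls the generic fibre of $\pi\colon\frakt\to\frakt\GIT\bW$. Over $\pi(0)$ the fibre is the coinvariant algebra, which is local and non-reduced of length $|\bW|$. Over an $L$-point not coming from $\frakt(L)$ it is a product of nontrivial field extensions. The idempotent-lifting argument is valid precisely when the special fibre is $L^{|\bW|}$, i.e. over $\pi(\bar x)$ with $\bar x\in\frakt(L)$ having trivial stabilizer. There it is the same mechanism as your Step 2: Hensel's lemma for the finite étale $A$-algebra $L[\frakt]\otimes_{L[\frakt]^\bW}A$, equivalently formal étaleness of $\pi$ at $\bar x$. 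This étaleness holds because $\pi$ is finite flat of degree $|\bW|$ and its fibre contains the $|\bW|$ distinct points $w\bar x$. This restricted version is all that \Cref{Twdelta} uses. There $y_2$ reduces to $\mathrm{d}\varpi_\HT$, which has trivial stabilizer in $\bW$ because $\varpi_\HT$ is regular by assumption (Reg). So the application survives, as you anticipated.

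Your Step 1, however, is wrong. The map $\frakt(L)\to(\frakt\GIT\bW)(L)$ is not surjective: take $y=c$ a non-square in your own rank-one example, or an irreducible characteristic polynomial for $\GL_n$. The Kostant slice does not help, since it is a section of $\frakg\to\frakg\GIT\bG$ and produces $L$-points of $\frakg$, not of $\frakt$. At $A=L$ only the injectivity half holds. It follows because invariants of a finite group separate orbits over $\overline L$ and $\bW$ is a constant group. The statement you can actually prove, and should state, is: for $\bar x\in\frakt(L)$ with trivial stabilizer, $\pi$ induces a bijection
\[ \{x\in\frakt(A) : x \bmod \frakm_A\in\bW\bar x\}/\bW \xrightarrow{\sim} \{y\in(\frakt\GIT\bW)(A) : y \bmod \frakm_A=\pi(\bar x)\}. \]
Its proof needs only injectivity at $A=L$, namely $\pi^{-1}(\pi(\bar x))(L)=\bW\bar x$, plus formal étaleness at $\bar x$. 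The Luna-slice reduction and Step 3 can then be dropped.
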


\begin{proof}
Since $\Lbar[\frakt]$ is a polynomial ring, the Chevalley--Shephard--Todd theorem implies that $\Lbar[\frakt]^\bW$ is also a polynomial ring. 
Since $\Lbar[\frakt]^\bW$ and $\Lbar[\frakt]$ are polynomial rings of the same finite dimension, the extension $\Frac(\Lbar[\frakt]^\bW)\subset\Frac(\Lbar[\frakt])$ of fraction fields is finite, and separable since $L$ is of characteristic 0. 
Therefore, $\Lbar[\frakt]$ is finite free over $\Lbar[\frakt]^\bW$.

The $\Frac(\Lbar[\frakt]^\bW)$-algebra $\Frac(\Lbar[\frakt]^\bW)\otimes_{\Lbar[\frakt]^\bW}\Lbar[\frakt]$ is contained in $\Frac(\Lbar[\frakt])$, hence of dimension at most $\lvert\bW\rvert$ as a $\Frac(\Lbar[\frakt]^\bW)$-vector space. Specializing at a regular point of $\Lbar[\frakt]$ shows that the $\Lbar[\frakt]^\bW$-rank of $\Lbar[\frakt]$ is at least $\lvert\bW\rvert$, which implies that $\Frac(\Lbar[\frakt])^\bW=\Frac(\Lbar[\frakt]^\bW)$.

Let $A'=\Z[\frakt]\otimes_{\Z[\frakt]^\bW}A$. Since $\Z[\frakt] \otimes_{\Z[\frakt]^\bW}L\cong L^{\lvert\bW\rvert}$, we obtain a map $A'\to L^{\lvert\bW\rvert}$. Lifting the corresponding orthogonal system of idempotents to $A$ gives us an isomorphism $A'\to A^{\lvert\bW\rvert}$ where the action of $\bW$ permutes the copies of $A$ on the right-hand side, so that every $\bW$-orbit of $L$-points of $A'$ has $\lvert\bW\rvert$ elements. The fiber above an $L$-point $\Z[\frakt]^\bW$ consists of $\lvert\bW\rvert$ points because of the previous paragraph, so it must coincide with a $\bW$-orbit.
\end{proof}

Following the approach of \cite{BHS19}, we will give a local description of the deformation space of trianguline $\gG$-representations of $\GK$ in terms of localizations of a certain variety $X$ attached to $\bG$. We first summarize some definitions and results of \cite[§2]{BHS19} and specialize them to our situation. Keep the notation of the previous subsection, and write $\Ad : \bG \to \GL(\frakg)$ for the adjoint action.

\begin{definition}[{cf. \cite[(2.1), (2.2)]{BHS19}}]\label{def:gtilde}
    We define 
    $$ \widetilde\frakg \colonequals \bG \times^{\bB} \frakb =  \{(g\bB,\psi) \in \bG/\bB \times \frakg \mid \Ad(g^{-1})\psi \in \frakb\}, $$
    where $\bG \times^{\bB} \frakb$ denotes the quotient of $\bG \times_{\Spec L} \frakb$ by the free right action of $\mathbf B$ defined as $(g,\psi) \cdot b \colonequals (gb,\Ad(b^{-1})\psi)$.
\end{definition}
The scheme $\widetilde\frakg$ is smooth and irreducible of dimension $\dim_L \frakg = \dim \gG \cdot [K:\Qp]$.

We define morphisms with source $\wtl\frakg=\bG \times^{\mathbf B} \frakb$ by giving a formula for a $\mathbf B$-equivariant morphism with source $\bG \times \frakb$. There is a map 
\begin{equation}\label{kappat}
    \wtl\frakg\longrightarrow\frakt, \quad (g\bB,\psi)\mapsto \ovl{\Ad(g^{-1}\psi)},
\end{equation}
where $\ovl{b}$ denotes the image of $b \in \frakb$ under the projection $\frakb \to \frakt$. Note that replacing $g$ with a different representative for $g\bB$ amounts to conjugating the right-hand side with an element of $\bB$, which does not modify the image of the projection to $\frakt$, so that the map is well-defined. 

\begin{definition}[{cf. \cite[(2.3)]{BHS19}}]\label{def_X} We define an $L$-scheme
$$ X \colonequals \widetilde \frakg \times_{\frakg} \widetilde \frakg = \{(g_1\bB,g_2\bB,\psi) \in \bG/\bB \times \bG/\bB \times \frakg \mid \Ad(g_1^{-1})\psi, \Ad(g_2^{-1})\psi \in \frakb\}.$$
\end{definition}
The points of $X$ parametrize two flags compatible with an element of $\frakg$.

We write $\pi \colon X \to \bG/\bB \times \bG/\bB$ for the inclusion $X \hookrightarrow \bG/\bB \times \bG/\bB \times \frakg$ followed by the projection to the first two factors.
For $i \in \{1,2\}$, we define 
\begin{equation}\label{kappa}
\kappa_i \colon X\longrightarrow \frakt, \quad
(g_1\bB, g_2\bB,\psi) \mapsto \ovl{\Ad(g_i^{-1})\psi}, 
\end{equation}
where $\ovl{b}$ denotes again the image of $b \in \frakb$ under the projection $\frakb \to \frakt$. These maps are well-defined for the same reason as \eqref{kappat}. 
The pair $(\kappa_1,\kappa_2)$ defines a map from $X$ to the variety
\begin{equation}
T=\frakt\times_{\frakt\GIT\bW}\frakt
\end{equation}
introduced in \cite[Section 2.5]{BHS19}. The irreducible components $T^w$ of $T$ are indexed by $w\in\bW$ as in \cite[Lemma 2.5.1]{BHS19}.

For $w \in \bW$ and some lift $\dot w \in N_{\bG}(\bT)(L) \subseteq \bG(L)$, we define $U^w \colonequals \bG(1,\dot w)(\bB \times \bB) \subseteq \bG/\bB \times \bG/\bB$ and $V^w \colonequals \pi^{-1}(U^w)$.
We let $X^w$ be the Zariski closure of $V^w$ in $X$ and let $\kappa_{i,w} : X^w \to \frakt$ denote the restriction of $\kappa_i$ to $X^w$. Then $(\kappa_{1,w},\kappa_{2,w})$ maps $X^w$ onto $T^w$. The morphisms $\kappa_i$ and $\kappa_{i,w}$ are flat by \cite[Proposition 2.3.3]{BHS19}. This will be used to establish another flatness result \Cref{flatness_result}.

By \cite[Proposition 2.2.5]{BHS19}, the scheme $X$ is locally a complete intersection, and its irreducible components are the subschemes $X^w$ for $w\in\bW$. In particular, each $X^w$ is Cohen--Macaulay of dimension $\dim X=\dim\frakg=\dim\bG$. 
We refer to \cite[\S 2]{BHS19} for further properties of $X$.


\subsection{The Hodge--Tate flag}\label{sec:HTflagsnew}
Let $A\in\Art_L$, and let $(W_A^+,\alpha_A)$ be a pair where $W_A^+$ is an almost de Rham $\gG$-$B^+_{\dR,A}$-representation, and $\alpha_A \colon \phi_{A\otimes_{\Qp}K}\circ \omega_{\gG}\xrightarrow{\sim} \omega_{\mathbb{G}_{a,A\otimes K}}\circ D_{\pdR,A}\circ \eta_{W_A}$ is a trivialization of $\eta_{W_A}\colonequals \phi_{\BdR}\circ \eta_{W_A^+}$. Given $V\in \Rep_L(\gG)$, we follow  \cite[(3.5) and (3.9)]{BHS19} and define a decreasing Hodge--Tate filtration $\Fil_{\eta_{W_A^+}(V)}^\bullet$ on $D_{\pdR}(\eta_{W_A}(V))$
as
$$ \Fil^i_{\eta_{W_A^+}(V)}\big(D_\pdR(\eta_{W_A}(V))\big) \colonequals \big(t^iB_{\pdR}^+\otimes_{B_\dR^+}\eta_{W_A^+}(V)\big)^{\GK}.$$
Via $\alpha^{-1}_A$, the filtration $\Fil_{\eta_{W_A^+}(V)}^{\bullet}$ induces a $\gG$-filtration $\mathcal{F}_{\HT,A}$ on $\phi_{A\otimes_{\Qp}K}\circ \omega_{\gG}$ which, under the decomposition $A\otimes_{\Qp} K\cong \oplus_{\tau\in \Sigma} A$, breaks into $\gG$-filtrations $\mathcal{F}_{\HT,A,\tau}$ on $\phi_A \circ \omega_{\gG}$ indexed by $\tau\in \Sigma$.

Write $W^+=W_A^+\otimes_AL$ and $W=W_A\otimes_AL$. When $A = L$, we drop $A$ from the notation.
\begin{definition}\label{def_HT}
The construction in \Cref{sec_cocharacters} attaches to the above $\gG$-filtration $\mathcal{F}_{\HT,\tau}$ a dominant cocharacter $-\varpi_{\HT,\tau}\in X_*(\gT)$. The tuple $\varpi_{\HT}=(\varpi_{\HT,\tau})_\tau\in X_*(\bT)$ of opposite cocharacters is called the \emph{Hodge--Tate cocharacter} of $W^+$.
\end{definition}

Note that $\varpi_\HT$ does not depend on the choice of $\alpha$. 

\begin{definition}\label{def:WA+reg}
    We say that the almost de Rham $\gG$-$B^+_{\dR,A}$-representation $W_A^+$ is \emph{regular} if the associated Hodge--Tate cocharacter $\varpi_\HT$ is regular (see \Cref{regcochar}).
\end{definition}


\begin{definition}\label{firstflag}
 Assume that $W^+$ is regular. We define $x_{W^+_A,\HT}\in (\bG/\bB)(A)$ to be the element specified by the Plücker data indexed by $\tau\in \Sigma$
 $$ \left(\big( \eta_{\mathcal{F}_{\HT,A,\tau}}^{-\langle\lambda,\varpi_{\HT,\tau}\rangle}(V_{\lambda}\otimes_L A)\subseteq V_{\lambda} \otimes_L A \big)_{\lambda\in X^*_+(\gT)}\right)_{\tau\in \Sigma}. $$
 This is well defined by \Cref{laststepfiltration}.
\end{definition}

For every $V\in\Rep_L(\gG)$ and embedding $\tau:K\into L$, we define the $\tau$-Hodge--Tate weights $h_{\tau,1},\ldots,h_{\tau,n}$ of $\eta_{W^+}(V)$ as the opposites of the jumps in $\Fil^\bullet_{\eta_{W^+}(V)}(D_\pdR(\eta_W(V)))\otimes_{K,\tau} L$ (i.e. the opposites of the integers $i$ such that $\Fil^{i+1}\subsetneq\Fil^{i}$). Note that this corresponds to our convention that the cyclotomic character of $\cG_{\Q_p}$ has Hodge--Tate weight 1.

\begin{remark}
We show that \Cref{firstflag} recovers the flag defined in \cite[after Definition 3.2.4]{BHS19} in the case $\gG=\GL_n$. Indeed, if $\mathrm{Std}$ denotes the standard representation of $\GL_n$, then the flag in \emph{loc. cit.} is the direct sum over $\tau$ of the flags on $D_{\pdR,\tau}(\eta_{W_A}(\mathrm{Std}))$ whose associated Hodge--Tate weights are strictly increasing. In particular, the Plücker datum attached to this flag corresponds to the choice of the line of lowest Hodge--Tate weight in $\eta_{W_A}(V_\lambda)$ for every $\lambda$. This coincides with the Pl\"ucker datum from \Cref{firstflag} since $\varpi_{\HT, \tau}$ is antidominant.
\end{remark}

Let $N_{W_A}$ be the nilpotent operator attached to $(W_A^+,\alpha_A)$ by \Cref{pdR_charanew}.

\begin{lemma}\label{pointgtilde1}
    Assume that $W^+$ is regular. The pair $(x_{W_A^+,\HT},N_{W_A})\in (\bG/\bB)(A) \times \frakg(A)$ defines an $A$-point of $\gtilde$.
\end{lemma}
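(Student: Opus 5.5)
We need to show $\Ad(g^{-1})N_{W_A} \in \frakb$, where $g$ represents $x_{W_A^+,\HT}$. Equivalently, $N_{W_A}$ lies in the Lie algebra of the stabilizer $g\bB g^{-1}$ of the flag. Working one embedding $\tau \in \Sigma$ at a time, via $A\otimes_{\Qp}K\cong\oplus_\tau A$ and \Cref{fiberfunctorforbG}, it suffices to prove this for $\gG$ over $A$ with the $\tau$-components $N_\tau$ and flag $x_\tau$.

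The plan is to characterize $\Lie(g\gB g^{-1})$ through the Plücker datum. By \Cref{example_plucker_datum}, $g\gB g^{-1}$ is the stabilizer of the lines $\scrL_\lambda = gV_\lambda^\lambda$ for $\lambda\in X^*_+(\gT)$. Its Lie algebra is therefore contained in the subalgebra of $\frakg$ of elements $X$ with $\mathrm d\rho_{V_\lambda}(X)\scrL_\lambda\subseteq\scrL_\lambda$ for all $\lambda$. Conversely, this subalgebra is the Lie algebra of the scheme-theoretic stabilizer of the Plücker datum. That stabilizer is $g\gB g^{-1}$ by \Cref{pluckerdatum}: a point of $\gG$ preserving the datum is induced from a $\gB$-automorphism of the reduction. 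Hence equality holds on $A$-points, and taking dual numbers $A[\epsilon]$ gives the Lie algebra statement. So it suffices to show that each $N_{W_A}(V_\lambda)$ preserves the line $\scrL_\lambda$.

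By \Cref{pdR_charanew}, $N_{W_A}(V)$ is the transport via $\alpha_A$ of the endomorphism $\nu_V$ of $D_{\pdR}(\eta_{W_A}(V))$ induced by $\nu_{B_\pdR}\otimes 1$. The line $\scrL_\lambda$ is the transport of $\Fil^{i}_{\eta_{W_A^+}(V_\lambda)}$ for $i=-\langle\lambda,\varpi_{\HT,\tau}\rangle$, the smallest nonzero step. The key point is Griffiths-type transversality in reverse, namely that $\nu$ preserves the Hodge--Tate filtration. Indeed, $\nu_{B_\pdR}$ is a $\BdR$-linear derivation with $\nu(\log t)=-1$. It maps $t^iB^+_\pdR=t^i\BdR^+[\log t]$ into itself, since it lowers the $\log t$-degree and is $t^i\BdR^+$-linear. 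It also commutes with $\GK$, as $g(\log t)=\log t+c$ with $c$ constant. Consequently $\nu\otimes1$ preserves $(t^iB^+_\pdR\otimes\eta_{W_A^+}(V))^{\GK}$. Hence $N$ preserves each filtration step, and in particular the extremal step $\scrL_\lambda$.

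The remaining work, and the main obstacle, is making this hold over $A$ rather than $L$. One must check that $\Fil^{i}$ at the extremal index is a rank-one direct summand compatible with the Plücker datum defining $x_{W_A^+,\HT}$; this is what \Cref{firstflag} asserts via \Cref{laststepfiltration} and regularity of $W^+$. One must also justify the scheme-theoretic stabilizer identification over Artinian $A$. For the latter, I would rely on the final clause of \Cref{pluckerdatum} applied over $A[\epsilon]$, considering $1+\epsilon N$ as an automorphism of the trivial torsor that respects the Plücker data.
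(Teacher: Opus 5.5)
Your proposal is correct and follows the same route as the paper. Both choose $g\in\bG(A)$ representing the Hodge--Tate flag, observe that $N_{W_A}$ preserves the Hodge--Tate filtration and hence the extremal lines $g\cdot(V_\lambda^\lambda\otimes_L A)$, and conclude that $\Ad(g^{-1})N_{W_A}\in\frakb(A)$. You also supply details the paper leaves implicit: why $\nu_{B_\pdR}$ preserves $t^iB^+_\pdR$ and commutes with $\GK$, and the dual-number Plücker argument that stabilizing every highest-weight line forces membership in $\frakb$. One small addition, which the paper makes explicit: a representative $g$ exists because $(\bG/\bB)(A)=\bG(A)/\bB(A)$ for Artinian $A$.
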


\begin{proof}
 Let $g\in \bG(A)$ be an element corresponding to $x_{W^+,\HT}\in (\bG/\bB)(A)=\bG(A)/\bB(A)$, where the last equality holds since $A$ is Artinian.
 
    Let $(V,\rho_V)\in \Rep_L(\gG)$. By construction, the nilpotent operator $\rho_V(N_{W_A})$ preserves the filtration $\Fil_{\eta_{W_A^+}(V)}^{\bullet}$ on $D_{\pdR}(\eta_{W_A}(V))$. In particular, for each $\tau \in \Sigma$ it preserves the Plücker datum $$g\cdot ( V_{\lambda}^{\lambda}\otimes_L A)=\eta_{\mathcal{F}_{\HT,A,\tau}}^{-\langle\lambda,\varpi_{\HT,\tau}\rangle}(V_{\lambda}\otimes_L A)\subseteq V_{\lambda} \otimes_L A, \quad \lambda\in X^*_+(\gT).$$
Consequently, $\Ad(g^{-1})N_{W_A}$ preserves $V_\lambda^{\lambda}\otimes_L A$ for all $\lambda\in X^*_+(\gT)$, and therefore, $\Ad(g^{-1})N_{W_A}\in \mathfrak{b}(A)$.
\end{proof}

There is a Sen operator $\Theta({\eta_{W_A^+}(V)})$ acting on the $A\otimes_{\Q_p}K_\infty$-module $\Delta_\Sen(\eta_{W_A^+}(V)/t\eta_{W_A^+}(V))$ defined in \cite{sencont}. Its characteristic polynomial has coefficients in $A\otimes_{\Q_p}K$: this is due to \cite[Theorem 5]{sencont} when $A=L$, and follows for instance from \cite[Lemma 3.7.5]{BHS19} for arbitrary $A\in\Art_L$. 
We define a map
\[ \pi_{W^+_A,\Sen}\colon \Rep_{L}(\gG)\longrightarrow A\otimes_{\Q_p}K \]
sending $V$ to the trace of $\Theta(\eta_{W_A^+}(V))$. It is a standard fact that for $V,V'\in\Rep_L(\gG)$,
\begin{itemize}
    \item $\Theta(\eta_{W_A^+}(V\oplus V'))=\big(\Theta(\eta_{W_A^+}(V)),\Theta(\eta_{W_A^+}(V'))\big)$,
    \item $\Theta(\eta_{W_A^+}(V\otimes_L V'))=\Theta(\eta_{W_A^+}(V))\otimes \id + \id \otimes \Theta(\eta_{W_A^+}(V'))$.
\end{itemize} 
By the Tannakian formalism, once we choose a trivialization of $\Delta_{\Sen}\circ \phi_{\mathbb{C}_p}\circ \eta_{W_A^+}$, the Sen operator produces an element $\Theta^{\gG}_{W_A^+}\in \Lie(\gG)(A\otimes_{\Q_p}K_{\infty})$. 


\begin{definition}\label{def_Senpar}
    The element $y_{W_A^+,\Sen}\in (\frakt\GIT\bW)(A)$ obtained this way is called the \emph{Sen parameter} associated to $W_A^+$.
\end{definition}

\subsection{Deformations of almost de Rham representations}\label{sec_adRdefs}

\begin{definition}\label{def_XW} Let $W$ be an almost de Rham $\gH$-$(L \otimes_{\Qp} \BdR)$-representation of $\GK$. We define a pseudofunctor $X_W \colon \Art_L \to \Gpd$ as follows:
\begin{enum}
    \item For every $A \in \Art_L$, let the objects of $X_W(A)$ be pairs $(W_A, \iota_A)$, where $W_A$ is an almost de Rham $\gH$-$A \otimes_{\Qp} \BdR$-representation of $\GK$, and $\iota_A \colon \phi_{L}\circ \eta_{W_A} \eqto \eta_W$ is an isomorphism of $\gH$-$L \otimes_{\Qp} \BdR$-representations of $\GK$. A morphism $(W_1, \iota_1) \to (W_2, \iota_2)$ is an isomorphism of $\gH$-$A \otimes_{\Qp} \BdR$-representations $\varphi \colon \eta_{W_1} \to \eta_{W_2}$ such that $\iota_1 = \iota_2 \circ \phi_L \circ \varphi$.
    \item For a morphism $f : A_1 \to A_2$ in $\Art_L$, the functor $X_W(f) \colon X_W(A_1) \to X_W(A_2)$ maps $(\eta_{W_{A_1}}, \iota_{A_1})$ to $(\phi_{A_2}\circ \eta_{W_{A_1}}, \iota_{A_1})$.
\end{enum}

\end{definition}
We could have defined the deformation problem without requiring $W_A$ to be almost de Rham, since this is automatic when $W$ is almost de Rham.

We have the following framed version of \Cref{def_XW}.

\begin{definition}
Let $(W,\alpha)$ be an almost de Rham $\gH$-$(L \otimes_{\Qp} \BdR)$-representation of $\GK$ equipped with a trivialization $\alpha$. We define a pseudofunctor $X_W^{\square} \colon \Art_L \to \Gpd$ as follows:
    \begin{enumerate}
        \item For every $A \in \Art_L$, let the objects of $X_W^{\square}(A)$ be triples $(W_A, \iota_A, \alpha_A)$, where $(W_A, \iota_A)$ is an object of $X_W(A)$ and $\alpha_A$ is a trivialization of $W_A$, such that the following diagram commutes:
        \begin{center}
            \begin{tikzcd}
        \phi_{A\otimes_{\Qp}K}\circ \omega_{\gH} \arrow[d,"{\phi_{L\otimes_{\Qp}K}}"] \arrow[rr,"{\alpha_A}"] & & \omega_{\mathbb{G}_{a, A\otimes K}}\circ D_{\pdR,A}\circ \eta_{W_A} \arrow[d, "{\iota_A\circ\phi_{L}}"]
                \\ \phi_{L\otimes_{\Qp}K}\circ \omega_{\gH} \arrow[rr,"\alpha"] & & \omega_{\Ga}\circ D_{\pdR}\circ \eta_{W}
            \end{tikzcd}
        \end{center}
        A morphism $(W_1, \iota_1, \alpha_1) \to (W_2, \iota_2, \alpha_2)$ is an isomorphism $\varphi \colon (W_1, \iota_1) \eqto (W_2, \iota_2)$ in $X_W(A)$ that is compatible with the trivializations.
        \item For a morphism $f \colon A_1 \to A_2$ in $\Art_L$, the functor $$X_W^{\square}(f) \colon X_W^{\square}(A_1) \longrightarrow X_W^{\square}(A_2)$$ maps $(\eta_{W_{A_1}}, \iota_{A_1}, \alpha_{A_1})$ to $(\phi_{A_2}\circ\eta_{W_{A_1}}, \iota_{A_1}, \alpha_{A_1} \otimes_{A_1} A_2)$.
    \end{enumerate}
\end{definition}

In fact, $X_{W}^{\square}$ is valued in setoids, so we will regard it as a functor taking values in sets.
There is a canonical map $X_{W}^{\square} \to X_W$ obtained by forgetting the trivialization.

For the rest of the section, we fix  an almost de Rham $\gG$-$(L \otimes_{\Qp} \BdR)$-representation $W$ of $\GK$ equipped with a trivialization $\alpha$. Given $(W_A, \iota_A, \alpha_A) \in X_{W}^{\square}(A)$, we have an element $N_{W_A} \in \frakg \otimes_L A$ obtained by \Cref{pdR_charanew}.
We denote by $\widehat \frakg$ the formal completion of $\frakg$ at $N_W$, i.e., as a functor on $A\in \Art_L$, we have 
$$\widehat\frakg(A) = \{x \in \frakg \otimes_L A \mid x \equiv N_W \,\mathrm{mod}\, \frakm_A\}.$$
\begin{proposition}[{cf. \cite[Corollary 3.1.6]{BHS19}}]\label{XW_and_g}
    The map $X^{\square}_{W}(A) \to \widehat \frakg(A), ~(W_A, \iota_A, \alpha_A) \mapsto N_{W_A}$ is an equivalence.
    In particular, the functor $X^{\square}_{W}$ is pro-representable by a complete noetherian local $L$-algebra $R^{\square}_{W} \cong L\br{x_1, \dots, x_{\dim\gG \cdot [K:\Qp]}}$.
\end{proposition}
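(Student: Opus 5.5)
The plan is to reduce everything to \Cref{pdR_charanew}, applied in families over $\Art_L$. Given $(W_A,\iota_A,\alpha_A)\in X^{\square}_W(A)$, \Cref{pdR_charanew} produces a nilpotent element $N_{W_A}\in\Lie(\gG)(A\otimes_{\Qp}K)=\frakg(A)$. By \Cref{fiberfunctorforbG}, $\frakg(A)$ is exactly $\Lie(\gG)(A\otimes_{\Qp}K)$. The commutative square in the definition of $X^{\square}_W$ says that $\alpha_A$ reduces to $\alpha$ modulo $\frakm_A$. Combined with the naturality of the construction of \Cref{pdR_charanew} under base change $A\to L$, this gives $N_{W_A}\equiv N_W\bmod\frakm_A$. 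So the map lands in $\widehat\frakg(A)$, and it is functorial in $A$.

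Next I will show the map is an equivalence. I will use the $A$-linear tensor equivalence $D_{\pdR,A}$ of \eqref{equiv_pdR_Ga_A}. A triple $(W_A,\iota_A,\alpha_A)$ is then the same as a fiber functor $\eta'\colon\Rep_L(\gG)\to\Rep_{A\otimes_{\Qp}K}(\Ga)$ together with a trivialization $\alpha'_A$ of its underlying module functor and a compatible identification of the reduction with $(\eta_W,\alpha)$. By \Cref{pdR_charanew}, pairs $(\eta',\alpha'_A)$ correspond bijectively to nilpotent elements of $\frakg(A)$, and they have no nontrivial automorphisms compatible with the trivialization. Hence the groupoid $X^\square_W(A)$ is discrete, and its isomorphism classes are the elements of $\widehat\frakg(A)$ whose image is nilpotent.

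The one genuine point, and the step I expect to be the main obstacle, is that every $x\in\widehat\frakg(A)$ is automatically nilpotent. An element $x\in\frakg(A)$ is nilpotent if and only if $\rho_V(x)$ is nilpotent for a faithful $V$, which holds if and only if the image of $x$ in $(\frakg\GIT\bG)(A)=(\frakt\GIT\bW)(A)$ is $0$. For $x\equiv N_W\bmod\frakm_A$ with $N_W$ nilpotent, this image is only congruent to $0$ modulo $\frakm_A$, so it need not vanish. So either the statement should be read with $\widehat\frakg$ meaning the deformations of $N_W$ in $\frakg$ and I am misreading nilpotence, or nilpotence must be dropped. I would resolve this as in \cite[Corollary 3.1.6]{BHS19}. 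The $\Ga$-action on $D_{\pdR}$ is given by $\exp(sN)$, and over an Artinian $A$ an endomorphism that is nilpotent modulo $\frakm_A$ is nilpotent, since $N^k$ has entries in $\frakm_A$ and $\frakm_A^e=0$. Hence $\exp(sN)$ is still a polynomial and defines an algebraic $\Ga$-representation. So the correspondence of \Cref{pdR_charanew} holds for all $x\in\widehat\frakg(A)$, with "nilpotent" automatic over Artinian coefficients.

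Finally, $\widehat\frakg$ is the formal completion of the affine space $\frakg$ at an $L$-point. It is therefore pro-represented by a power series ring in $\dim_L\frakg=\dim\gG\cdot[K:\Qp]$ variables, which gives $R^\square_W$.
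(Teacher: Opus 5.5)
Your argument is correct and is essentially the paper's: reduce to \Cref{pdR_charanew}, show that any $x\equiv N_W \bmod \frakm_A$ is nilpotent (some power has entries in $\frakm_A$, and $\frakm_A$ is nilpotent, which is what the paper's terse remark about $\frakg\otimes_L\frakm_A$ amounts to), then identify $\widehat\frakg(A)$ with $\frakg\otimes_L\frakm_A$ to get the power series ring. The ``obstacle'' you raise comes only from your intermediate claim that nilpotence is equivalent to vanishing of the image in $(\frakt\GIT\bW)(A)$, which fails for non-reduced $A$ (e.g.\ $\varepsilon\cdot\id\in\mathfrak{gl}_n(L[\varepsilon]/(\varepsilon^2))$ is nilpotent with nonzero trace); an algebraic $\Ga$-action only needs $N^k=0$, so nothing has to be dropped.
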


\begin{proof}
    By \Cref{pdR_charanew}, 
    it suffices to show that any deformation of $N_W$ along $\frakg \otimes_L A \to \frakg$ is a nilpotent element of $\frakg \otimes_L A$.
    The kernel of this map is $\frakg \otimes_L \frakm_A$ and thus consists only of nilpotent elements. This establishes the equivalence. Moreover, subtracting $N_W$ establishes a bijection $\widehat \frakg(A) \eqto \frakg \otimes_L \frakm_A$.
    The deformation problem is thus formally smooth of dimension $\dim_L \frakg = \dim\gG \cdot [K:\Qp]$.
\end{proof}

\subsubsection{Deformations of $W^+$}
Given an almost de Rham $\gG$-$(L \otimes_{\Qp} \BdR^+)$-representation $W^+$ such that $\eta_W=\phi_{\BdR}\circ \eta_{W^+}$, we define a pseudofunctor $X_{W^+} \colon \Art_L \to \Gpd$ in the same way as \Cref{def_XW}. Then the base change $\phi_{\BdR}$ induces a map $X_{W^+} \to X_W$, and we define $$X_{W^+}^{\square} \colonequals X_{W^+} \times_{X_W} X_W^{\square}.$$

\begin{proposition}[{cf. \cite[Theorem 3.2.5]{BHS19}}]\label{very_key_proposition_W+}
    Assume that $W^+$ is regular. Then the groupoid $X_{W^+}^{\square}$ is set-valued and is pro-representable and the functor 
\begin{align} \label{key_map+}
    X_{W^+}^{\square} \longrightarrow \gth, \quad (W^{+}_A, \iota_A, \alpha_A) \mapsto (x_{W_A^+,\HT}, ~N_{W_A}),
\end{align}
given by \Cref{pointgtilde1}, is an isomorphism. In particular, $X_{W^+}^{\square}$ is pro-representable by a formally smooth complete noetherian local $L$-algebra with residue field $L$ of dimension $\dim \gG_L \cdot [K:\Qp] = \dim \widetilde{\frakg}$.
\end{proposition}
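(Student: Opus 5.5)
The plan is to imitate the proof of \cite[Theorem 3.2.5]{BHS19}, reducing it to the $\GL_n$ statement through the Tannakian formalism. Two things need to be shown: that $X^{\square}_{W^+}$ is set-valued, and that the map \eqref{key_map+} is a bijection on $A$-points for every $A\in\Art_L$. Everything else follows from these. Since $\gth$ is the completion of the smooth scheme $\widetilde\frakg$ at a point, the isomorphism gives pro-representability by a formally smooth complete local ring, and the dimension is $\dim\widetilde\frakg=\dim\gG\cdot[K:\Qp]$.

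\textbf{Step 1: set-valuedness.} An automorphism of $(W_A^+,\iota_A,\alpha_A)$ is an automorphism of $\eta_{W_A^+}$ that reduces to the identity and is compatible with $\alpha_A$. After $\phi_{\BdR}$ it becomes an automorphism of $\eta_{W_A}$ compatible with the trivialization. Via $D_{\pdR,A}$ and $\alpha_A$, this is an element of $\uAut^\otimes(\phi_{A\otimes K}\circ\omega_\gG)$ fixing everything, hence trivial. Because $\eta_{W_A^+}(V)\to\eta_{W_A}(V)$ is injective, the original automorphism is trivial as well.

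\textbf{Step 2: describing $W_A^+$ by a filtration.} By \Cref{XW_and_g}, the datum $(W_A,\iota_A,\alpha_A)$ is the same as $N_{W_A}\in\widehat\frakg(A)$. The remaining datum is the $\GK$-stable $\gG$-lattice $W_A^+$. As in \cite[Proposition 3.2.2 / Lemma 3.2.3]{BHS19}, for each $V$ one considers the filtration $\Fil^\bullet_{\eta_{W_A^+}(V)}$ on $D_\pdR(\eta_{W_A}(V))$. The functor $V\mapsto(D_\pdR(\eta_{W_A}(V)),\Fil^\bullet)$ is an exact tensor functor to filtered $A\otimes K$-modules that is stable under $N$. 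Exactness and compatibility with tensor products come from the $\GL_n$ case, applied to every $V$ and to tensor products. Moreover $W_A^+$ can be recovered from it: $\eta_{W_A^+}(V)=\Fil^0(B_\pdR\otimes_K D_\pdR(\eta_{W_A}(V)))^{\nu=0}$, which is again a $\GL_n$ statement applied functorially in $V$.

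Under this description, $X^{\square}_{W^+}(A)$ is in bijection with pairs consisting of $N\in\widehat\frakg(A)$ and a $\gG$-filtration on $\phi_{A\otimes K}\circ\omega_\gG$ that deforms $\cF_{\HT}$, is stable under $N$, and has the same type as $\cF_{\HT}$. Here we use that deformations of $\gG$-filtrations over Artinian $A$ are splittable and of constant type, by rigidity of cocharacters.

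\textbf{Step 3: identifying filtrations with flags.} This is where regularity of $\varpi_\HT$ enters, and I expect it to be the main obstacle. By \Cref{laststepfiltration} and \cite[Proposition 6.1.15]{Lev13}, a $\gG$-filtration of regular type $\varpi_{\HT,\tau}$ is determined by its parabolic, which is a Borel subgroup, and therefore by a point of $\bG/\bB$. Concretely, the filtration is $g\cdot\fil_{\varpi_\HT}$, and $g\bB$ is read off from the Plücker datum of lowest lines as in \Cref{firstflag}, using \Cref{pluckerdatum}. Conversely, every $g\bB\in(\bG/\bB)(A)$ deforming $x_{W^+,\HT}$ gives the filtration $g\cdot\fil_{\varpi_\HT}$. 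The filtration is $N$-stable exactly when $\Ad(g^{-1})N\in\frakb$, since $\frakb$ is the stabilizer Lie algebra of $\fil_{\varpi}$ for regular $\varpi$. This is the converse of \Cref{pointgtilde1}.

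Combining Steps 2 and 3 identifies $X^{\square}_{W^+}(A)$ with $\gth(A)$, compatibly with \eqref{key_map+}. The delicate points are the faithfulness of the reconstruction in Step 2 for Artinian coefficients, and the claim that $N$-stability of the full $\gG$-filtration is equivalent to $\Ad(g^{-1})N\in\frakb$. I would prove the latter by checking it on a faithful representation, using that $\frakb=\frakg\cap\mathrm{Lie}$ of the stabilizer of the induced flag.
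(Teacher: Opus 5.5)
Your proposal is correct and follows the same route as the paper. It recovers $(W_A,\iota_A,\alpha_A)$ from $N_A$ via \Cref{XW_and_g}, turns the flag $g\bB$ into the $\gG$-filtration $g\cdot\fil_{\varpi_{\HT}}$, and applies the $\GL_n$ correspondence from \cite{BHS19} between $\GK$-stable lattices and $N$-stable filtrations, functorially in $V$, to construct $W_A^+$ and show it is unique. You also spell out several points the paper leaves implicit: that automorphisms are trivial, that $\Ad(g^{-1})N\in\frakb$ is exactly what makes $g\cdot\fil_{\varpi_{\HT}}$ stable under $N$, and that a $\gG$-filtration of regular type is determined by its Pl\"ucker lines.
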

\begin{proof}
 It remains to show that given $(x_A,N_A)\in \widehat{\widetilde{\frakg}}(A)$, there exists a unique $(W^{+}_A, \iota_A, \alpha_A)$ mapping to it. By \Cref{XW_and_g}, there exists $(W_A,\iota_A,\alpha_A)$ mapping to $N_A$. We need to show that $W_A$ is the base change of some $W_A^+$ with $x_{W_A^+,\HT}=x_A$. We do this by showing that $x_A$ induces compatible filtrations on $\eta_W(V)$ for every $V\in \Rep_L(\gG)$, and then applying \cite[Lemma 3.2.2]{BHS19} to identify this $\gG$-filtration with such a $W^+_A$.

Let $\tau\in \Sigma$. Given $V
\in \Rep_L(\gG)$, the Hodge-Tate cocharacter $\varpi_{\HT,\tau}$ induces a filtration $$\fil^i V =\sum_{\mu,\langle \mu,\varpi_{\HT,\tau} \rangle\ge i} V^{\mu},$$ where $V=\oplus_{\mu}V^{\mu}$ is the decomposition of $V$ as a $\gT$-representation. Let $g\in \gG(A)$ be the element such that $x_A=g\gB(A)\in \gG/\gB(A)$. Then the filtration we construct is $\Fil^i V\otimes_{L} A =g\cdot (\fil^i V\otimes_{L}A).$

If $x_A=x_{W_A^+,\HT}$, this recovers the Hodge-Tate filtration since by definition of $g$,
$$ \eta_{\mathcal{F}_{\HT,A,\tau}}^{-\langle\lambda,\varpi_{\HT,\tau}\rangle}(V_{\lambda}\otimes A)=g\cdot(V_\lambda^\lambda\otimes_L A)= g\cdot \fil^{-\langle\lambda,\varpi_{\HT,\tau}\rangle}(V\otimes_L A), $$
for $\lambda\in X_+^*(\gT)$.
\end{proof}

Write $\that$ for the formal completion of $\frakt$ at $0$.
The weight map $\kappa$ from \eqref{kappat} sends
$(x_{W^+,\HT},N_W)$ to $0 \in \frakt(L)$ (since $N_{W}$ is nilpotent), 
hence induces a morphism $\wh\kappa : \wh{\widetilde\frakg} \to \wh\frakt$. The composition
\begin{align*}
     X_{W+}^{\square} \overset{\eqref{key_map+}}{\eqto} \gth \xrightarrow{\wh\kappa} \wh{\frakt}
\end{align*}
factors through the projection $X_{W^+}^\square\to X_{W^+}$, inducing a map $\kappa_{W^+} \colon X_{\Wtri} \to \widehat\frakt$.
\subsubsection{Deformations of $\Wtri$}
We now assume that $\eta_{W}\colon \Rep_L(\gG)\to \Rep_{\GK}(L \otimes_{\Qp} \BdR)$ factors through an almost de Rham $\gB$-$(L \otimes_{\Qp} \BdR)$-representation $\eta_{\Wtri}\colon \Rep_L(\gB)\to \Rep_{\GK}(L \otimes_{\Qp} \BdR)$.
We define a pseudofunctor $X_{ \Wtri}^{\square} \colon \Art_L \to \Gpd$ as the fiber product
\begin{align}\label{XWWtridef}
    X_{\Wtri}^{\square} \colonequals X_{\Wtri} \times_{X_W} X_W^{\square}.
\end{align}
The $A$-points of $X_{ \Wtri}^{\square}$ can be described as triples $(W^{\triangle}_A, \iota_A, \alpha_A)$, where $W^{\triangle}_A$ is an almost de Rham $\gB$-$A \otimes_{\Qp} \BdR$-representation, $\iota_A \colon \phi_L\circ\eta_{W_A^{\triangle}} \eqto \eta_{\Wtri}$ is an isomorphism of $\gB$-$L \otimes_{\Qp} \BdR$-representations, and $\alpha_A$ is a trivialization of $W_A$ that is compatible with $\alpha$. Here, $\eta_{W_A}= \eta_{\Wtri_A}\circ\Res_{\gB}^{\gG}$.

By \Cref{Bredandflgs}, giving an $A\otimes_{\Qp}K$-valued point of $\gG/\gB$ is equivalent to giving a factorization of $\phi_{A\otimes_{\Qp}L}\circ\omega_{\gG}$ through $\Rep_L(\gB)$.
\begin{definition}\label{secondflag}
    We define $x_{\Wtri_A,\tri}\in (\bG/\bB)(A)$ to be the $A$-valued point of $\bG/\bB$ corresponding to the factorization $\alpha_A^{-1}\circ \omega_{\mathbb{G}_{a,A\otimes K}}\circ D_{\pdR,A}\circ \eta_{\Wtri_A}$.
\end{definition}
Recall that the $L$-scheme $\gtilde$ was introduced in \Cref{def:gtilde}.
 
\begin{lemma}\label{pointgtilde2}
    The pair $(x_{\Wtri_A,\tri}, N_{W_A}) \in (\bG/\bB)(A) \times \frakg(A)$ defines an $A$-point of $\gtilde$. 
\end{lemma}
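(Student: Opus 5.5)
Following the proof of \Cref{pointgtilde1}, I will choose an element $g\in\bG(A)$ representing $x_{\Wtri_A,\tri}$ and show that $\Ad(g^{-1})N_{W_A}\in\frakb(A)$. Since $A$ is Artinian, $(\bG/\bB)(A)=\bG(A)/\bB(A)$, so such a $g$ exists. The condition does not depend on the choice of representative, because $\frakb$ is $\Ad(\bB)$-stable.

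First I will unwind \Cref{secondflag} in Tannakian terms. The fiber functor $\omega_{\mathbb G_{a,A\otimes K}}\circ D_{\pdR,A}\circ\eta_{\Wtri_A}\colon\Rep_L(\gB)\to\Vect_{A\otimes_{\Qp}K}$, transported via $\alpha_A^{-1}$, is a factorization of $\phi_{A\otimes_{\Qp}K}\circ\omega_\gG$ through $\Rep_L(\gB)$. By \Cref{pluckerdatum} and \Cref{example_plucker_datum}, the corresponding flag is described, componentwise in $\tau\in\Sigma$, by its Plücker datum. For each $\lambda\in X^*_+(\gT)$ this datum is the line
\[ \alpha_A^{-1}\bigl(D_{\pdR,A}(\eta_{\Wtri_A}(V_\lambda^\lambda))\bigr)=g\cdot(V_\lambda^\lambda\otimes_L A)\subseteq V_\lambda\otimes_L(A\otimes_{\Qp}K). \]
Here $V_\lambda^\lambda\subseteq V_\lambda|_\gB$ is regarded as a $\gB$-subrepresentation, and the functor is applied to the inclusion.

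Second, I will check that this line is stable under $\rho_{V_\lambda}(N_{W_A})$. By \Cref{pdR_charanew} and the equivalence \eqref{equiv_pdR_Ga_A}, $N_{W_A}$ is the infinitesimal generator of the $\Ga$-action on $D_{\pdR,A}\circ\eta_{W_A}$, transported through $\alpha_A$. The inclusion $V_\lambda^\lambda\hookrightarrow V_\lambda$ is a morphism in $\Rep_L(\gB)$, and $\eta_{W_A}=\eta_{\Wtri_A}\circ\Res^\gG_\gB$. Hence $D_{\pdR,A}$ sends it to a morphism of $\Ga$-representations, i.e. to a $\Ga$-stable, and so $N$-stable, submodule. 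Consequently $\Ad(g^{-1})N_{W_A}$ preserves $V_\lambda^\lambda\otimes_L A$ for every $\lambda$.

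Third, I will conclude that $\Ad(g^{-1})N_{W_A}\in\frakb(A)$, exactly as at the end of \Cref{pointgtilde1}. The stabilizer of the standard Plücker datum is $\bB$, by \Cref{pluckerdatum}, so its Lie algebra stabilizer is $\frakb$. To justify this over Artinian $A$, I would use the dual numbers $A[\epsilon]$: the element $1+\epsilon\Ad(g^{-1})N_{W_A}\in\bG(A[\epsilon])$ preserves the Plücker datum, hence lies in $\bB(A[\epsilon])$ by the last part of \Cref{pluckerdatum}. This last identification, giving the functorial description of the stabilizer over nonreduced rings, is the only step needing care. Everything else is formal, coming from functoriality of $D_{\pdR,A}$.
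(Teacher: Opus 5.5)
Your proof is correct, but for this lemma the paper takes a different route. What you wrote is essentially the paper's proof of \Cref{pointgtilde1} applied to the second flag.

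The paper does not use Plücker data here. It observes that $g^{-1}$ identifies the whole factorization $\eta_x=\alpha_A^{-1}\circ\omega_{\mathbb{G}_{a,A\otimes K}}\circ D_{\pdR,A}\circ\eta_{\Wtri_A}$ with the standard forgetful functor $\omega_{\gB,A\otimes K}$ on all of $\Rep_L(\gB)$, compatibly with restriction to $\Rep_L(\gG)$. Transporting the $\Ga$-action on $D_{\pdR,A}\circ\eta_{\Wtri_A}$ along this identification gives a tensor-compatible family $\rho_V(g^{-1})N_V\rho_V(g)$ of endomorphisms of $\omega_{\gB}$, indexed by every $V\in\Rep_L(\gB)$. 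The Tannakian fact behind \Cref{pdR_charanew}, now applied with $\gH=\gB$, turns this family into an element of $\Lie(\gB)(A\otimes_{\Qp}K)=\frakb(A)$. Its image in $\frakg(A)$ is $\Ad(g^{-1})N_{W_A}$.

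The paper's route gets membership in $\frakb$ directly from the Tannakian description of $\Lie\gB$, with no stabilizer computation. It also exhibits $\Ad(g^{-1})N_{W_A}$ as the derivative of the homomorphism $\Ga\to\gB$ attached to $\Wtri_A$.

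Your route keeps only the highest-weight lines, so you do need the extra fact that the infinitesimal stabilizer of the standard Plücker datum is exactly $\frakb$. Your dual-numbers argument supplies this correctly. You apply the morphism part of \Cref{pluckerdatum} to left multiplication by $1+\epsilon\Ad(g^{-1})N_{W_A}$ on the trivial torsor over $A[\epsilon]\otimes_{\Qp}K$ with its standard $\gB$-reduction, and then use $\ker(\bB(A[\epsilon])\to\bB(A))=\frakb(A)$. This treats the two flags uniformly and spells out a step the paper leaves implicit at the end of \Cref{pointgtilde1}.
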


\begin{proof}   Let $g\in \bG(A)$ be an element corresponding to $x_{\Wtri_A,\tri}\in (\bG/\bB)(A)=\bG(A)/\bB(A)$. The fiber functor $$\eta_x\colonequals \alpha_A^{-1}\circ \omega_{\mathbb{G}_{a,A\otimes K}}\circ D_{\pdR,A}\circ \eta_{\Wtri_A}$$ is a factorization of the forgetful functor $\omega_{\gG,A\otimes K}$ fitting in the following commutative diagram \begin{center}  \begin{tikzcd}[column sep=large, row sep=large]\Rep_{L}(\gG)  \arrow[r] \arrow[rr, bend left=20, "\omega_{\gG,A\otimes K}"]   \arrow[d, equals] & \Rep_{L}(\gB)         \arrow[r, "\eta_x"]       \arrow[d, equals] & \Vect_{A\otimes K}     \arrow[d, "{g^{-1}\cdot -}"] \\   \Rep_{L}(\gG)     \arrow[r]  & \Rep_{L}(\gB)   \arrow[r, swap, "\omega_{\gB,A\otimes K}"'] & \Vect_{A\otimes K}. \end{tikzcd} \end{center} As in \Cref{pdR_charanew}, this diagram implies that $\Ad(g^{-1})N_{\Wtri_A}$ corresponds to a compatible family of endomorphisms $\rho_V(g^{-1}) N_V \rho_V(g) \in \End(V\otimes_L({A\otimes_{\Qp} K}))$ for all $(\rho_V,V) \in \Rep_L(\gB)$. Therefore, $\Ad(g^{-1})N_{W_A}\in \mathfrak{b}(A)$. \end{proof}


We denote the completion of $\gtilde$ at $(x_{\Wtri}, N_W)$ by $\gth$. 
We then have $(x_{\Wtri_A}, ~N_{W_A}) \in \gth(A)$.

\begin{corollary}[{cf. \cite[Corollary 3.1.9]{BHS19}}]\label{319} The groupoid $X_{\Wtri}^{\square}$ is set-valued and is pro-representable. The functor 
\begin{align}
    X_{ \Wtri}^{\square} \longrightarrow \gth, \quad (W^{\triangle}_A, \iota_A, \alpha_A) \mapsto \big(x_{\Wtri_A}, ~N_{W_A}\big)
    \label{key_map}
\end{align}
is an isomorphism. In particular, $|X_{ \Wtri}^{\square}|$ is pro-representable by a formally smooth complete noetherian local $L$-algebra with residue field $L$ of dimension $\dim \gG_L \cdot [K:\Qp] = \dim \widetilde{\frakg}$.
\end{corollary}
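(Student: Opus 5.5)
The plan is to reduce to \Cref{XW_and_g} together with a Tannakian description of $\gB$-reductions, exactly parallel to \Cref{very_key_proposition_W+}.

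\textbf{Set-valuedness.} An automorphism of $(W^{\triangle}_A,\iota_A,\alpha_A)$ induces, after applying $D_{\pdR,A}$ and using the trivialization, an automorphism of $\phi_{A\otimes_{\Qp}K}\circ\omega_{\gB}$. It must be compatible with $\alpha_A$, and $\alpha_A$ is an isomorphism of fiber functors on $\Rep_L(\gG)$. Restricting to a faithful $\gG$-representation $V$ then forces the induced automorphism of $\eta_{W_A}(V)$ to be the identity. Since $\Rep_L(\gB)$ is generated by subquotients of tensor constructions on $V|_{\gB}$ (cf. the argument of \Cref{three_prime}), the automorphism is trivial. Hence $X_{\Wtri}^\square$ is a setoid.

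\textbf{Well-definedness.} The map \eqref{key_map} lands in $\gth$ by \Cref{pointgtilde2}, and it reduces to $(x_{\Wtri},N_W)$ modulo $\frakm_A$.

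\textbf{Bijectivity.} Let $(x_A,N_A)\in\gth(A)$. By \Cref{XW_and_g} there is a unique $(W_A,\iota_A,\alpha_A)\in X_W^\square(A)$ with $N_{W_A}=N_A$. It remains to show that $\gB$-reductions $\Wtri_A$ of $\eta_{W_A}$ compatible with $\alpha_A$ and lifting $\Wtri$ correspond bijectively to flags $x_A$ with $\Ad(g^{-1})N_A\in\frakb(A)$, where $x_A=g\bB(A)$.

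Via \eqref{equiv_pdR_Ga_A} and $\alpha_A$, a $\gB$-reduction of $\eta_{W_A}$ is the same as a factorization of the $\Ga$-equivariant fiber functor $\eta'\colon\Rep_L(\gG)\to\Rep_{A\otimes K}(\Ga)$ through $\Rep_L(\gB)$. Forgetting the $\Ga$-action, such a factorization is by \Cref{Bredandflgs} and \Cref{fiberfunctorforbG} a point $g\bB\in(\bG/\bB)(A)$. The requirement that the factorization lift to $\Rep_{A\otimes K}(\Ga)$ means that the nilpotent operators $N_V$ preserve the induced structure; by \Cref{example_plucker_datum} this is the Plücker datum $g V_\lambda^\lambda$. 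Under the Lie-algebra description used in \Cref{pdR_charanew}, this condition is equivalent to $\Ad(g^{-1})N_A\in\frakb(A)$. This is the converse of \Cref{pointgtilde2}, and it is the step I expect to need care.

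To show that preserving lines $gV_\lambda^\lambda\otimes A$ for all dominant $\lambda$ forces $\Ad(g^{-1})N_A\in\frakb(A)$, I will use that $\frakb$ is the stabilizer in $\frakg$ of the Plücker lines. This holds scheme-theoretically, since $\gB$ is the stabilizer of the Plücker datum by \Cref{pluckerdatum}, so tangent-level stabilizers agree over $A$ as well. Conversely, given $\Ad(g^{-1})N_A\in\frakb$, the $\gB$-representation $V\otimes A$ with nilpotent operator $\Ad(g^{-1})N_A$ defines a $\Ga$-action commuting with $\gB$-structure, giving the factorization.

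Almost de Rhamness is automatic from \Cref{XW_and_g}. The reduction of the constructed $\Wtri_A$ is $\Wtri$, because the flag reduces to $x_{\Wtri}$, and uniqueness of reductions lifting a given flag is clear. This yields the isomorphism \eqref{key_map}.

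\textbf{Pro-representability and dimension.} These follow because $\gth$ is the completion of the smooth $L$-scheme $\gtilde$ of dimension $\dim\gG\cdot[K:\Qp]$ at an $L$-point.
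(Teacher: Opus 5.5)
Your proof is correct and follows the route the paper has in mind. The paper states this corollary without a separate proof, as the analogue of Corollary 3.1.9 of \cite{BHS19}: one combines \Cref{XW_and_g}, \Cref{pointgtilde2} and its converse through the equivalence $D_{\pdR,A}$, exactly as in the proof of \Cref{very_key_proposition_W+}. Your Plücker-stabilizer step only re-proves the direction already given by \Cref{pointgtilde2}; what bijectivity actually needs is your direct construction of the $\Ga$-action on $\Rep_L(\gB)$ from $\Ad(g^{-1})N_A\in\frakb(A)$, together with uniqueness of that lift, and you supply both.
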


As before, we write $\kappa_{\Wtri}$ for the composition
\begin{align*}
     X_{\Wtri}^{\square} \overset{\eqref{key_map}}{\eqto} \gth \xrightarrow{\wh\kappa} \wh{\frakt},
\end{align*}
which factors through the projection $X_{W^\triangle}^\square\to X_{\Wtri}$, inducing a map still denoted $\kappa_{\Wtri} \colon X_{\Wtri} \to \widehat\frakt$.

\subsection{Comparing deformations of $\gG$-$\PG$-modules and $\gG$-$\BdR$-representations}
\label{def_of_triangulations}

\begin{definition}\label{def_XM}
   Let $\cM$ be an $\gH$-$\PG$-module over $\cR_{K,L}[\tfrac{1}{t}]$. We define a pseudofunctor $X_{\cM} \colon \Art_L \to \Gpd$
    as follows: for $A \in \Art_L$, the objects of $X_{\cM}(A)$ are pairs $(\cM_A, j_A)$, consisting of an $\gH$-$(\varphi, \Gamma_K)$-module $\cM_A$ over $\cR_{K,A}[\tfrac{1}{t}]$, and an isomorphism $j_A \colon \cM_A \otimes_A L \eqto \cM$. The morphisms of $X_{\cM}(A)$ are given by the evident notion of isomorphisms of such pairs. For a map $A \to B$ in $\Art_L$ the map $X_{\cM}(A) \to X_{\cM}(B)$ is given by the evident base extension functor.
\end{definition}

We now let $\cM$ be a $\gG$-$\PG$-module over $\cR_{K,L}[\tfrac{1}{t}]$ that is equipped with a triangulation $\Mtri$ of parameter $\delta \colon \gT^{\vee}(K) \to L^{\times}$.
Recall from \Cref{defofparameters} that $\widehat\cT^\gT_{\delta}$ denotes the formal completion of $\cT^\gT$ at $\delta$, and that there is a morphism of formal schemes $\wt - \wt(\delta) \colon \wh\cT^\gT_{\delta} \to \wh\frakt$.

 By \Cref{class_rk_one_after_inv_t}, for any $A\in\Art_L$, every deformation $\Mtri_A\in X_{\Mtri}(A)$ is trianguline of some parameter $\delta_{A} \in \cT^\gT(A)$ (in the sense of \Cref{def_triangulation}). This allows us to define a morphism of groupoids $\omega_{\delta} \colon  X_{\Mtri} \to \widehat\cT^\gT_{\delta}$ of pseudofunctors by
$$ \omega_{\delta}\colon X_{\Mtri}(A) \longrightarrow \widehat\cT^\gT_{\delta}(A), \quad (\Mtri_A, j_A) \mapsto \delta_A. $$

\begin{lemma}\label{BHS3.3.9}
    The diagram of groupoids over $\Art_L$
    \begin{center}
        \begin{tikzcd}
            X_{\Mtri} \arrow[rr] \arrow[d,"\omega_\delta"] & & X_{W^{\triangle}} \arrow[d,"\kappa_{\Wtri}"]
             \\ \widehat\cT^\gT_{\delta} \arrow[rr, "\wt - \wt(\delta)"] & & \widehat{\frakt}
        \end{tikzcd}
    \end{center}
    is commutative.
\end{lemma}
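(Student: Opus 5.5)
The plan is to reduce to the torus, then to a rank-one statement which is the $\GL_1$ case already treated in \cite[Lemma 3.3.9]{BHS19}. Both compositions in the square are maps $X_{\Mtri}\to\that$. Since $\that$ is the completion of $\frakt=\Lie(\bT)$ and $\frakt(A)\cong X_*(\gT)\otimes_{\Z}(A\otimes_{\Qp}K)$, an $A$-point of $\that$ is determined by its pairings with all characters $\chi\in X^*(\gT)$. These pairings are the compositions with the induced maps $\frakt\to\Lie(\bT_{\Gm})=A\otimes_{\Qp}K$. So it suffices to prove, for each $\chi\in X^*(\gT)$, that both compositions agree after pushing out along $\chi$. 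In fact it is enough to do this for a basis of $X^*(\gT)$, after choosing a splitting $\gT\cong\Gm^n$.

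First I will check that both paths factor through the $\gT$-reduction $\Mtri_A\times^{\gB}\gT$.
\begin{enumerate}
\item For the left path: by definition (via \Cref{class_rk_one_after_inv_t}), $\omega_\delta(\Mtri_A)$ is the parameter of $\Mtri_A\times^\gB\gT$. Hence $\chi\circ(\wt(\delta_A)-\wt(\delta))$ is $\wt(\delta_A\circ\chi^\vee)-\wt(\delta\circ\chi^\vee)$, the weight of the rank-one module $\eta_{\Mtri_A}(\chi)$.
\item For the right path: unwind $\kappa_{\Wtri}$ using \Cref{319}. For $(W^\triangle_A,\iota_A,\alpha_A)$, the point is $(g\bB,N_{W_A})$ with $g$ representing $x_{\Wtri_A,\tri}$, and $\kappa$ sends it to the image of $\Ad(g^{-1})N_{W_A}$ in $\frakt$. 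By the diagram in the proof of \Cref{pointgtilde2}, $\Ad(g^{-1})N_{W_A}\in\frakb(A\otimes K)$ is exactly the $\Lie(\gB)$-element attached (as in \Cref{pdR_charanew}) to the trivialized $\gB$-fiber functor $D_{\pdR,A}\circ\eta_{\Wtri_A}$. Its image in $\frakt$ is therefore the nilpotent-operator element of the induced $\gT$-valued almost de Rham representation $W^\triangle_A\times^\gB\gT$.
\end{enumerate}
Pairing with $\chi$ gives the scalar by which $\nu$ acts on the rank-one module $D_{\pdR,A}(W_{\dR}(\eta_{\Mtri_A}(\chi)))$. This scalar is independent of the trivialization, because the torus is commutative, which confirms that the map factors through $X_{\Wtri}$. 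One compatibility to record here is that $W_\dR$ commutes with pushing out along $\gB\to\gT$, since both are defined by composing fiber functors.

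Now we are reduced to the rank-one claim. Let $\cM_A=\cR_{K,A}[\tfrac1t](\delta_A')$ be a rank-one deformation of $\cR_{K,L}[\tfrac1t](\delta')$, where $\delta'=\delta\circ\chi^\vee$ is locally algebraic. The claim is that the $\Ga$-action (the operator $\nu$) on $D_{\pdR,A}(W_\dR(\cM_A))\cong A\otimes_{\Qp}K$ is multiplication by $\wt(\delta'_A)-\wt(\delta')$. For this I would argue as follows. Twisting by the algebraic part and the smooth part of $\delta'$ changes neither side, so we may assume $\delta'=1$. Then $\delta'_A=1+\epsilon$ with values in $1+\frakm_A$. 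Using \cite[Lemma 3.3.6]{BHS19} and the explicit computation there of $W_\dR(\cR_{K,A}(\delta_A'))$, the $\GK$-action on $B_{\pdR}\otimes W_\dR$ makes $\log t$ interact with $\log\delta'_A(\chi^\cyc)$. This yields $\nu=\wt(\delta'_A)$, with the sign fixed by the convention $\nu(\log t)=-1$ and the cyclotomic character having weight $1$.

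The main obstacle is precisely this sign and normalization bookkeeping in the rank-one computation. The $\gG$-specific content, namely that $\Ad(g^{-1})N$ projected to $\frakt$ is the $\gT$-pushout's nilpotent operator, is formal from \Cref{pointgtilde2}. I would therefore cite \cite[Lemma 3.3.9]{BHS19}, whose $n=1$ case is exactly this rank-one statement, rather than recompute it.
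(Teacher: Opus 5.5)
Your proposal is correct and is the intended argument. The paper gives no separate proof of this lemma and treats it as the $\gG$-analogue of \cite[Lemma 3.3.9]{BHS19}; your reduction supplies exactly what is needed to transport that result: push out along $\gB\to\gT$, identify $\overline{\Ad(g^{-1})N_{W_A}}$ with the nilpotent element of the induced $\gT$-valued representation, and test against characters $\chi\in X^*(\gT)$. The rank-one input is \cite[Lemma 3.3.6 (ii)]{BHS19}, which the paper also cites for the same identity in the proof of \Cref{Twdelta}.
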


\section{Lifting nonabelian $\PG$-modules, and a formally smooth morphism}
\label{sec_form_smooth}

\subsection{Lifting nonabelian $\PG$-modules}
\label{sec_form_smooth1}

Let
\begin{equation} \label{SES_fs_new}
1 \to \gU \to \gP \xrightarrow{\pi} \gM \to 1
\end{equation}
be a short exact sequence of linear algebraic groups over $L$. We will consider two situations:
\begin{enumerate}
    \item[(I)] $\gU$ is isomorphic to a product of copies of $\Ga$. Since $\gP$ is a $\gU$-torsor over the affine scheme $\gM$, there exists a scheme-theoretic section $\sigma \colon \gM \to \gP$. This induces an adjoint action of $\gM$ on $\gU$ defined by $m \cdot u := \sigma(m)u\sigma(m)^{-1}$. It also induces a decomposition $\gP \cong \gU \times \gM$ as $L$-schemes.
    \item[(II)] $\gU$ is unipotent, $\gM$ is reductive, and $\pi$ admits a group-theoretic section $\sigma\colon \gM\to \gP$. Then $\gM$ also acts on $\gU$ by $m \cdot u := \sigma(m)u\sigma(m)^{-1}$, and $\gP$ is a semidirect product $\gU \rtimes \gM$.
\end{enumerate}

In both situations, we fix the identification $\gP \cong \gU \times \gM$ of $L$-schemes.

Let $A \in \Aff_L$, and fix an $\gM$-trivial $\gM$-$(\varphi,\Gamma_K)$-module $D_0$ over $\mathcal{R}_{K,A}$, with a corresponding $\gM$-$(\varphi,\Gamma_K)$-pair $(M_0,c_0)$ given by  \Cref{G_PG_Pair_is_triv_PG_mod}. If $A\in \Art_L$, we will also consider the $\gM$-$(\varphi,\Gamma_K)$-module $\cM_0 \colonequals D_0[\tfrac{1}{t}]$ over $\cR_{K,A}[\tfrac{1}{t}]$. Our goal is to study the lifts of $D_0$ (and of $\cM_0$) to $\gP$-$\PG$-modules.

\begin{definition}
    A \emph{lift} of $D_0$ to $\gP$ is a pair $(D,\iota)$, where $D$ is a $\gP$-$(\varphi,\Gamma_K)$-module over $\cR_{K,A}$ and $\iota\colon D \times^{\gP} \gM \xrightarrow{\sim} D_0$ is an isomorphism of $\gM$-$(\varphi,\Gamma_K)$-modules.
    An \emph{equivalence} between two lifts $(D_1,\iota_1)$ and $(D_2,\iota_2)$ is an isomorphism $f\colon D_1\to D_2$ of $\gP$-$(\varphi,\Gamma_K)$-modules such that $\iota_2 \circ p_*f=\iota_1$. We write $\Lift_{\gM}^{\gP}(D_0)$ for the set of equivalence classes of lifts of $D_0$ to $\gP$.
\end{definition}
The adjoint action of $\gM$ on $\gU$ given by $\sigma$ induces a twisted $\varphi$ and $\Gamma_K$ action on $\gU(\cR_{K,A})$ given by
$$\varphi\star u \colonequals M_0\cdot \varphi(u)\quad \text{and,}\quad  \gamma\star u\colonequals c_0(\gamma)\cdot \gamma(u),$$
for all $u\in \gU(\cR_{K,A})$ and $\gamma \in \Gamma_K$. We also get a structure of a $(\varphi,\Gamma_K)$-module on $V \colonequals \Lie \gU(\cR_{K,A})$ given by 
$$ \varphi \star v\colonequals\ad\big(\sigma(M_0)\big)(\varphi(v))\quad \quad  \text{ and, }\quad  \quad \ \gamma\star v \colonequals \ad\big(\sigma(c_0(\gamma))\big)(\gamma(v)) $$
for all $v\in V,$ and $\gamma\in \Gamma_K$,
where the $\varphi(v)$ and $\gamma(v)$ are given by the action on $\cR_{K,A}$ and the trivial action on $\Lie \gU$. We note that this is the structure of the $\PG$-module $\eta_{D_0}(\Lie\gU)$, seeing $\Lie \gU$ as an $\gM$-representation.
Our convention is that ``$\star$'' binds stronger than ``$\cdot$''.

For our purposes, we define nonabelian $(\varphi,\Gamma_K)$-cohomology, in analogy with \Cref{trivialphigammamodules}.

\begin{definition}\label{def_H1U_new} \phantom{a}
\begin{enumerate}
    \item[(1)] We define the pointed set
$$ H^{0,\star}_{\varphi, \Gamma_K}\big(\gU(\cR_{K,A})\big) \colonequals \left\{ M\in \gU(\mathcal{R}_{K,A}) \ \middle| \ M= \varphi \star M, \ \text{ and } \  \forall \gamma\in\Gamma_K, \ \ M = \gamma \star M \right\}.$$
    
     \item[(2)] Given $r>0$, we let $Z^{1,\star}_{\cont}(\Gamma_K,\gU(\cR_{K,A}^r))$ be the set of continuous $\Gamma_K$-cocycles $c\colon \Gamma_K\to \gU(\cR_{K,A}^r)$ with respect to the twisted action $\star$, that is,
\begin{align}\label{U_cocycle_new}
           c(\gamma_1\gamma_2) &= c(\gamma_1) \cdot \gamma_1 \star c(\gamma_2), \ \forall \gamma_1,\gamma_2\in \Gamma_K .
\end{align}
    We set $$\cZ^{1,\star}\big(\Gamma_K, \gU(\cR_{K,A})\big) := \bigcup_{r \to 0} Z^{1,\star}_{\cont}\big(\Gamma_K,\gU(\cR_{K,A}^r)\big).$$ 
    \item[(3)] We let $Z^{1,\star}_{\varphi, \Gamma_K}(\gU(\cR_{K,A}))$ be the pointed set of pairs $(M, c) \in \gU(\cR_{K,A}) \times \cZ^{1,\star}(\Gamma_K,\gU(\cR_{K,A}))$ satisfying
    \begin{align}
        M \cdot \varphi \star c(\gamma) = c(\gamma) \cdot \gamma \star M. \label{Mc_comp_U_new}
    \end{align}
    for all $\gamma\in \Gamma_K$. There is a natural right action of $\gU(\cR_{K,A})$ on $Z^{1,\star}_{\varphi, \Gamma_K}(\gU(\cR_{K,A}))$ via
    $$ (M,c) \cdot h \colonequals \big(h^{-1} \cdot M \cdot \varphi \star h, ~ [\gamma \mapsto h^{-1} \cdot c(\gamma) \cdot \gamma \star h]\big), $$
    and we define $H^{1,\star}_{\varphi, \Gamma_K}(\gU(\cR_{K,A}))$ as the quotient by this action.

    \item[(4)] If $A\in \Art_L$, we similarly define $H^{0,\star}_{\varphi, \Gamma_K}(\gU(\cR_{K,A}[\tfrac{1}{t}])) \subseteq \gU(\cR_{K,A}[\tfrac{1}{t}])$. We define $$\cZ^{1,\star}\big(\Gamma_K,\gU(\cR_{K,A}[\tfrac{1}{t}])\big) \subseteq Z^{1,\star}\big(\Gamma_K,\gU(\cR_{K,A}[\tfrac{1}{t}])\big)$$ as the subset of cocycles for the $\star$-action that satisfy the equivalent conditions of \Cref{topol_RKAoneovert}.
    We define $Z^{1,\star}_{\varphi, \Gamma_K}(\gU(\cR_{K,A}[\tfrac{1}{t}]))$ and $H^{1,\star}_{\varphi, \Gamma_K}(\gU(\cR_{K,A}[\tfrac{1}{t}]))$ analogously, replacing $\gU(\cR_{K,A})$ by $\gU(\cR_{K,A}[\tfrac{1}{t}])$.
\end{enumerate}
\end{definition}

Since $\gU$ is unipotent, and we are working in characteristic $0$, there is an isomorphism $\exp\colon \Lie(\gU)\xrightarrow{\sim} \gU$ of $L$-schemes which commutes with the adjoint action of $\gP$ (\cite[IV, \S 2, Proposition 4.1]{DG70}).

\begin{lemma}\label{expH0_new} 
    The exponential map induces a bijection of pointed sets
    $$ H^0_{\varphi, \Gamma_K}(V) \eqto H^{0,\star}_{\varphi, \Gamma_K}\big(\gU(\cR_{K,A})\big). $$
The analogous statement holds if $A\in \Art_L$, with $V$ replaced by $V[\tfrac{1}{t}]$ and $\cR_{K,A}$ by $\cR_{K,A}[\tfrac{1}{t}]$.
\end{lemma}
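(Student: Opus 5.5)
The plan is to show that $\exp$ intertwines the $\star$-actions on $\gU(\cR_{K,A})$ with the $\star$-actions on $V=\Lie\gU(\cR_{K,A})$. Once that holds, the fixed-point sets correspond under the bijection $\exp\colon V\eqto\gU(\cR_{K,A})$. Recall that $H^0_{\varphi,\Gamma_K}(V)$ is the kernel of the first Herr differential, which by \Cref{H1_comparison} is the same as the set of $v\in V$ with $\varphi\star v=v$ and $\gamma\star v=v$ for all $\gamma\in\Gamma_K$. So it suffices to prove, for every $v\in V$ and $\gamma\in\Gamma_K$,
\[
\exp(\varphi\star v)=\varphi\star\exp(v),\qquad \exp(\gamma\star v)=\gamma\star \exp(v).
\]

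Each identity factors into two compatibilities. The first concerns the ring automorphism: $\exp\colon\Lie\gU\to\gU$ is a morphism of $L$-schemes, so on $\cR_{K,A}$-points it commutes with the maps induced by any ring endomorphism of $\cR_{K,A}$. In particular, for $\varphi(u)$ and $\gamma(u)$ computed through the action on coefficients, we get $\exp(\varphi(v))=\varphi(\exp v)$ and $\exp(\gamma(v))=\gamma(\exp v)$. The second concerns the twist. The action of $m\in\gM(R)$ on $\gU$ is $u\mapsto\sigma(m)u\sigma(m)^{-1}$, that is, conjugation by an element of $\gP(R)$. By \cite[IV, \S 2, Proposition 4.1]{DG70}, $\exp$ is equivariant for the adjoint action of $\gP$, so $\exp(\ad(\sigma(m))w)=\sigma(m)\exp(w)\sigma(m)^{-1}=m\cdot\exp(w)$. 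Taking $m=M_0$ and $m=c_0(\gamma)$ and combining with the first compatibility gives both displayed identities. This argument works in both situations (I) and (II), since it only uses that $\sigma$ is a morphism of schemes and that the action is conjugation inside $\gP$.

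There is one subtle point: in situation (I), $\sigma$ is not a homomorphism, so $m\mapsto\sigma(m)u\sigma(m)^{-1}$ need not be a group action. It is nevertheless a well-defined conjugation by $\sigma(m)\in\gP(\cR_{K,A})$, and pointwise conjugation is all the argument uses. (Because $\gU$ is abelian in (I), this conjugation actually depends only on $m$.) Finally, the bijection sends $0$ to $1$, so it respects base points. For the version after inverting $t$ with $A\in\Art_L$, the same argument applies verbatim to $\cR_{K,A}[\tfrac1t]$-points, since none of the steps involves continuity. I do not expect a real obstacle; the only care needed is the identification of $H^0$ of the Herr complex with the simultaneous fixed points.
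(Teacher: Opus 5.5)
Your proof is correct and is essentially the argument the paper intends. The paper gives no written proof, relying on the $\gP$-equivariance of $\exp$ noted just before the lemma; together with the functoriality of $\exp$ in the coefficient ring, this shows that $\exp$ intertwines the two $\star$-actions and so matches their fixed points (and $H^0_{\varphi,\Gamma_K}$, defined via the standard complex, is directly the set of simultaneous fixed points, so your detour through the Herr complex is harmless but unnecessary).
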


\begin{lemma}\label{H1_Ext_U} We have bijections $H^{1,\star}_{\varphi, \Gamma_K}(\gU(\cR_{K,A})) \cong \Lift^\gP_{\gM}(D_0)$ and $H^{1,\star}_{\varphi, \Gamma_K}(\gU(\cR_{K,A}[\tfrac{1}{t}])) \cong \Lift^\gP_{\gM}(\cM_0)$ determined by a choice of lifting.
\end{lemma}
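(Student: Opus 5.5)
The plan is to reduce to pairs via \Cref{G_PG_Pair_is_triv_PG_mod} and then translate the pair conditions for $\gP$ into the twisted conditions on $\gU$.

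\textbf{Step 1: reduce to $\gP$-trivial lifts.} Let $(D,\iota)$ be a lift of $D_0$. Its $\gP$-torsor is trivial, since it is an $\gU$-torsor over the trivial $\gM$-torsor underlying $D_0$. In situation (II), a section is obtained by pushing along $\sigma$. In situation (I), the obstruction lies in $H^1_{\et}(\Spec \cR_{K,A},\Ga^n)$, which vanishes because $\cR_{K,A}$ is affine. In general we use that $\gU$ is a successive extension of copies of $\Ga$.

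\textbf{Step 2: normalize the pair.} By \Cref{G_PG_Pair_is_triv_PG_mod}, $D$ is given by a $\gP$-pair $(M,c)$. The isomorphism $\iota$ identifies $(\pi(M),\pi(c))$ with $(M_0,c_0)$ up to the action of some $h_0\in\gM(\cR_{K,A})$. Translating by $\sigma(h_0)$, we may assume $\pi(M)=M_0$ and $\pi(c)=c_0$ on the nose; this is where the chosen identification $\gP\cong\gU\times\gM$ enters. We can then write
\[
M=u\cdot\sigma(M_0), \qquad c(\gamma)=v(\gamma)\cdot\sigma(c_0(\gamma))
\]
with $u\in\gU(\cR_{K,A})$ and $v\colon\Gamma_K\to\gU(\cR_{K,A})$.

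\textbf{Step 3: translate the conditions.} We expand the cocycle condition for $c$ and the compatibility \eqref{Mc_comp_easy}, using the formulas $\varphi\star u=\sigma(M_0)\varphi(u)\sigma(M_0)^{-1}$ and $\gamma\star u=\sigma(c_0(\gamma))\gamma(u)\sigma(c_0(\gamma))^{-1}$.
\begin{itemize}
\item In situation (II), $\sigma$ is a homomorphism, so the cocycle condition becomes exactly \eqref{U_cocycle_new} and the compatibility becomes exactly \eqref{Mc_comp_U_new} for $(u,v)$.
\item In situation (I), the extra $2$-cocycle factor $z(c_0(\gamma_1),\gamma_1(c_0(\gamma_2)))\in\gU$ appears. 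The plan is to absorb it by redefining the identification so that $(M,c)\mapsto (u,v)$ is still a bijection onto a torsor, and to check the formulas carefully.
\end{itemize}
Here I expect the statement is really intended in situation (II), or via the section choice (``determined by a choice of lifting''). So I would fix one lift $(M_1,c_1)$ of $(M_0,c_0)$ and send any other lift to the \emph{difference} $(M M_1^{-1}, c\,c_1^{-1})$, twisted by the actions of $(M_1,c_1)$. This makes the same computation work in both cases.

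\textbf{Step 4: continuity and equivalences.} Continuity of $v$ follows from continuity of $c$ and $c_0$, using that $\gP\cong\gU\times\gM$ is a closed-immersion-compatible product. Over $\cR_{K,A}[\tfrac1t]$ this is done via \Cref{topol_RKAoneovert} and \Cref{pair_is_ind_cont}. Conversely, any twisted pair $(u,v)$ yields a $\gP$-pair, and the $\PG$-module condition is checked on a faithful representation via \Cref{three_prime} and \Cref{lattice_lemma}, the lattice being obtained from the filtration by $\gU$.

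For equivalences: an isomorphism of lifts compatible with $\iota$ is given by some $h\in\gP(\cR_{K,A})$ with $\pi(h)$ stabilizing $(M_0,c_0)$ and inducing the identity on $D_0$. Hence $\pi(h)=1$ and $h\in\gU(\cR_{K,A})$. Conjugation by such $h$ matches exactly the right action in \Cref{def_H1U_new}(3), so passing to quotients gives the claimed bijection.

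The main obstacle is Step 3 in the non-split case (I), where the bookkeeping of the twisted action requires basing at a chosen lift.
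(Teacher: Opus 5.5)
Your proposal is correct and follows the paper's proof. Both arguments get $\gP$-triviality of lifts from $H^1_{\et}(\Spec\cR_{K,A},\gU)=0$ by dévissage over copies of $\Ga$; your remark about ``pushing along $\sigma$'' does not by itself trivialize the given torsor, but the dévissage does. Both then translate via \Cref{G_PG_Pair_is_triv_PG_mod} into the $\star$-twisted cocycle and compatibility conditions, get continuity from \Cref{pair_is_ind_cont} and \Cref{topol_RKAoneovert}, handle the converse after inverting $t$ with a $\gU$-trivial flag and \Cref{lattice_lemma}, and match isomorphisms of lifts with the $\gU(\cR_{K,A})$-action. Your device of basing at a chosen lift in situation (I) is in fact slightly more careful than the paper, whose displayed computation tacitly uses the semidirect-product multiplication law. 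It agrees with the torsor description of \Cref{fiber_is_torsor_PG}: for abelian $\gU$ the twist by $(M_1,c_1)$ coincides with the $\star$-twist, so the difference map lands in $Z^{1,\star}_{\varphi,\Gamma_K}$.
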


\begin{proof}
    Any lift of $D_0$ to $\gP$ is $\gP$-trivial. Indeed, it is a $\gP$-torsor over an affine scheme whose pushforward along $\gP\to\gM$ recovers a trivial $\gM$-torsor. Since $H^1_{\et}(X,\gU) = 0$ by affineness, the exact sequence of pointed sets in étale cohomology implies that the $\gP$-torsor itself is trivial (see \cite[Lemma 3.17]{dedar2020}). 
    
    It follows from \Cref{G_PG_Pair_is_triv_PG_mod} that such a lift corresponds to a pair $(M,c) \in \gU(\cR_{K,A}) \times \Map(\Gamma_K, \gU(\cR_{K,A}))$ such that $((M, M_0), (c,c_0))$ is a $\gP$-$\PG$-pair. We first verify the algebraic properties of this pair:
    \begin{align*}
        \big(c(\gamma_1), c_0(\gamma_1)\big) \cdot \big(\gamma_1(c(\gamma_2)), \gamma_1(c_0(\gamma_2))\big) &= \big(c(\gamma_1)\cdot \gamma_1\star c(\gamma_2), c_0(\gamma_1) \gamma_1(c_0(\gamma_2))\big),
    \end{align*}
    and given that $c_0$ is a cocycle, the cocycle condition for $(c,c_0)$ is equivalent to $c$ satisfying relation \eqref{U_cocycle_new}.
    We further have
    \begin{align*}
        (M, M_0) \cdot \big(\varphi(c(\gamma)), \varphi(c_0(\gamma))\big) &= \big(M\cdot\varphi\star c(\gamma), M_0\varphi(c_0(\gamma))\big), \\
        (c(\gamma), c_0(\gamma)) \cdot \big(\gamma(M), \gamma(M_0)\big) &= \big(c(\gamma)\cdot\gamma\star M, c_0(\gamma)\gamma(M_0)\big).
    \end{align*}
    Hence, condition \eqref{Mc_comp_easy} is equivalent to \eqref{Mc_comp_U_new}.
    
    By \eqref{pair_comparison}, there is some $r > 0$ such that $(c, c_0) \in Z^1_{\cont}(\Gamma_K, \gP(\cR_{K,A}^r))$. Therefore, we have that $c \in Z^{1,\star}_{\cont}(\Gamma_K, \gU(\cR_{K,A}^r))$, which implies that $(M, c) \in Z^{1,\star}_{\varphi,\Gamma_K}(\gU(\cR_{K,A}))$.
    Conversely, we already know that $c_0 \in \cZ^1(\Gamma_K, \gM(\cR_{K,A}))$ and if we assume that $c \colon \Gamma_K \to \gU(\cR_{K,A})$ factors over some $\gU(\cR_{K,A}^r)$ and is continuous, then the same is true for $(c, c_0)$.
    This establishes a canonical bijection between $Z^{1,\star}_{\varphi,\Gamma_K}(\gU(\cR_{K,A}))$ and the set of $\gP$-trivialized lifts of $D_0$.

    It is an easy computation to verify that two $\gP$-trivialized lifts are isomorphic by an isomorphism which induces the identity on $\gM$ if and only if the corresponding pairs belong to the same orbit in $Z^{1,\star}_{\varphi,\Gamma_K}(\gU(\cR_{K,A}))$ under the right action of $\gU(\cR_{K,A})$ in \Cref{def_H1U_new}. This completes the proof of the first bijection.

    Let $\cM$ be a lift of $\cM_0$ to $\gP$. As in the previous case, $\cM$ is $\gP$-trivial.
    By \Cref{G_PG_Pair_is_triv_PG_mod} and the same calculation as above, $\cM$ corresponds to a pair $(M,c) \in \gU(\cR_{K,A}[\tfrac{1}{t}]) \times Z^{1,\star}(\Gamma_K, \gU(\cR_{K,A}[\tfrac{1}{t}]))$, such that $((M, M_0), (c,c_0))$ is a $\gP$-$\PG$-pair.
    By \Cref{pair_is_ind_cont} this implies that $(c,c_0) : \Gamma_K \to \gP(\cR_{K,A}[\tfrac{1}{t}])$ satisfies the equivalent conditions of \Cref{topol_RKAoneovert}. By the functoriality part of \Cref{topol_RKAoneovert}, this implies that $c$ satisfies these conditions.
    Therefore, we get that $(M,c) \in Z^{1,\star}_{\varphi,\Gamma_K}(\gU(\cR_{K,A}[\tfrac{1}{t}]))$.

    For the converse, assume that $(M,c) \in Z^{1,\star}_{\varphi,\Gamma_K}(\gU(\cR_{K,A}[\tfrac{1}{t}]))$, and 
    let $(\rho_V,V) \in \Rep_L(\gP)$. We need to show that the pair $(\rho_V(M, M_0), \rho_V(c,c_0))$ induces a $\PG$-module structure on $V\otimes_L \cR_{K,A}[\tfrac{1}{t}]$. Choose a maximal $\gP$-stable flag $0 = V_0 \subseteq \dots \subseteq V_t = V$ in $V$.
    Then $\gU$ acts trivially on the semisimplification $V^{\sss} = \bigoplus_{i=0}^{t-1} V_{i+1}/V_i$. Indeed, since $\gU$ is normal in $\gP$, the subspace $(V_{i+1}/V_i)^{\gU}$ is $\gP$-invariant. Given that $\gU$ is unipotent, this subspace is nonzero. This not being all of $V_{i+1}/V_i$ would allow us to refine the flag. Hence every constituent of $V^{\sss}$ is in fact an $\gM$-representation. In particular, the action of $(\rho_V(M, M_0), \rho_V(c,c_0))$ on each graded piece $V_{i+1}/V_i$ factors through $(\rho_V(M_0),\rho_V(c_0))$, and therefore induces a $\PG$-module structure on $(V_{i+1}/V_i)\otimes_L \cR_{K,A}[\tfrac{1}{t}]$.
    By induction and \Cref{lattice_lemma}, we conclude that $(\rho_V(M, M_0), \rho_V(c,c_0))$ induces a $\PG$-module structure on $V\otimes_L \cR_{K,A}[\tfrac{1}{t}]$.
    
    One checks again that isomorphism classes of lifts of $\cM_0$ are given by orbits under the right action of $\gU(\cR_{K,A}[\tfrac{1}{t}])$.
    Therefore, we obtain the bijection $H^{1,\star}_{\varphi,\Gamma_K}(\gU(\cR_{K,A}[\tfrac{1}{t}])) \cong \Lift^\gP_{\gM}(\cM_0)$ as in the previous case.
\end{proof}

\subsubsection{The case (I) of abelian $\gU$}

\begin{lemma}\label{expH1_new} 
    The exponential map induces isomorphisms
    $$ Z^1_{\varphi, \Gamma_K}(V) \eqto Z^{1,\star}_{\varphi, \Gamma_K}\big(\gU(\cR_{K,A})\big), \quad\quad H^1_{\varphi, \Gamma_K}(V) \eqto H^{1,\star}_{\varphi, \Gamma_K}\big(\gU(\cR_{K,A})\big). $$
    The analogous statement holds if $A\in \Art_L$ with $V$ replaced by $V[\tfrac{1}{t}]$ and $\cR_{K,A}$ by $\cR_{K,A}[\tfrac{1}{t}]$.
\end{lemma}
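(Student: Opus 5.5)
The plan is to reduce everything to the observation that, in case (I), $\exp\colon \Lie(\gU)\to\gU$ is an isomorphism of commutative algebraic groups. Choosing $\gU\cong\Ga^n$, $\exp$ is just the identity $\Lie(\gU)\cong L^n \cong \gU$, and it is a group homomorphism $(\Lie\gU,+)\to(\gU,\cdot)$. It is equivariant for the adjoint action of $\gP$ (the cited \cite[IV, \S 2, Proposition 4.1]{DG70}). Since $\gU$ is abelian and normal, the conjugation action of $\gP$ on $\gU$ factors through $\gM$. Hence the action $m\cdot u=\sigma(m)u\sigma(m)^{-1}$ is an honest algebraic group action of $\gM$ on $\gU$, even though $\sigma$ is only a scheme-theoretic section, and it corresponds under $\exp$ to $\ad(\sigma(m))$ on $\Lie\gU$.

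Taking $\cR_{K,A}$-points then gives a group isomorphism $\exp\colon V=\Lie\gU(\cR_{K,A})\to \gU(\cR_{K,A})$. Under it, the operators $v\mapsto \ad(\sigma(M_0))(\varphi(v))$ and $v\mapsto\ad(\sigma(c_0(\gamma)))(\gamma(v))$ correspond exactly to $\varphi\star$ and $\gamma\star$. This uses that $\exp$ is defined over $L$, so it commutes with the ring endomorphisms $\varphi,\gamma$ of $\cR_{K,A}$ applied coordinatewise.

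With this dictionary, I will verify the conditions one at a time, writing the abelian group $\gU(\cR_{K,A})$ additively.
\begin{itemize}
\item The cocycle condition \eqref{U_cocycle_new} becomes the usual additive cocycle condition $c(\gamma_1\gamma_2)=c(\gamma_1)+\gamma_1\star c(\gamma_2)$.
\item Relation \eqref{Mc_comp_U_new} becomes $M+\varphi\star c(\gamma)=c(\gamma)+\gamma\star M$, i.e.\ $(\varphi-1)c(\gamma)=(\gamma-1)M$. This is exactly the condition for $(M,c)$ to be a $1$-cocycle in the standard complex $C^\bullet_{\varphi,\Gamma_K}(V)$, read via the total complex of $\cC(\Gamma_K^\bullet,V)\xrightarrow{\varphi-1}\cC(\Gamma_K^\bullet,V)$ (up to a sign convention on the $\varphi$-component, which can be absorbed by replacing $M$ with $-M$).
\item The right action $(M,c)\cdot h$ becomes $(M+(\varphi-1)h,\ c+(\gamma-1)h)$, i.e.\ translation by coboundaries $d h$.
\end{itemize}
So $\exp$ identifies $Z^1$ with $Z^{1,\star}$ and the orbit sets with $H^1$. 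Here $H^1_{\varphi,\Gamma_K}(V)$ is taken in the standard-complex sense, which agrees with the Herr-complex version by \Cref{H1_comparison}.

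The remaining point is that the continuity and radius conditions match. Since $\exp$ is a linear isomorphism of affine spaces over $L$, it identifies $\gU(\cR^r_{K,A})$ with $V^r$ topologically for each $r$. Hence continuous cocycles valued in $\gU(\cR^r_{K,A})$ correspond to elements of $\Cont(\Gamma_K,V^r)$, and the unions over $r$ agree with $\cC(\Gamma_K,V)$.

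For $A\in\Art_L$ after inverting $t$, I will use the functoriality in $X$ in \Cref{topol_RKAoneovert}. Applied to $\exp$ and its inverse, both morphisms of finite type affine schemes, it shows that maps satisfying the conditions of \Cref{topol_RKAoneovert} correspond. These are precisely the elements of $\cC(\Gamma_K,V[\tfrac1t])$ defined via the $t^{-m}D^r$.

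I expect the main obstacle to be the sign and convention bookkeeping between the twisted nonabelian formulas and the standard complex differential. I will handle it by fixing the convention $d(M,c)=\big(\gamma\mapsto(\varphi-1)c(\gamma)-(\gamma-1)M\big)$ and checking that both the cocycle and coboundary identifications are compatible with it.
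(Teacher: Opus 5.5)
Your proposal is correct and takes essentially the same approach as the paper. In both, $\exp$ is a $\gP$-equivariant isomorphism of commutative group schemes, so it carries the twisted cocycle condition, relation \eqref{Mc_comp_U_new} and the right $\gU(\cR_{K,A})$-action to the standard-complex $1$-cocycle conditions and translation by $\delta^0_{\varphi,\Gamma}$-coboundaries; after inverting $t$, the continuity conditions are matched via \Cref{topol_RKAoneovert}. With the paper's convention $\delta^1_{\varphi,\Gamma}(c,d)=\big((\varphi-1)d-\delta^0(c),\delta^1(d)\big)$ the match is exact, so no sign adjustment of $M$ is needed.
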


\begin{proof}
    Since $\gU$ is abelian,   $\exp$ is an isomorphism of group schemes. Therefore, it induces a bijection $Z^1(\Gamma_K, V) \eqto Z^{1,\star}(\Gamma_K, \gU(\cR_{K,A}))$.
    A pair $(v,c) \in V \times Z^1(\Gamma_K, V)$ satisfies $\gamma(v) - v = \varphi(c(\gamma)) - c(\gamma)$ for all $\gamma \in \Gamma_K$ if and only if $(\exp(v), \exp(c))$ satisfies \eqref{Mc_comp_U_new}.
    Moreover, $c \in \cZ^1(\Gamma_K, V)$ if and only if $\exp(c) \in \cZ^{1,\star}(\Gamma_K,\gU(\cR_{K,A}))$. This gives the two isomorphisms.
    
    For the case where $t$ is inverted the argument is identical. We apply \Cref{topol_RKAoneovert} with the closed immersion $\gU \cong \bbA^n$ to conclude that $c \in \cZ^1(\Gamma_K, V[\tfrac{1}{t}])$ if and only if $\exp(c) \in \cZ^{1,\star}(\Gamma_K,\gU(\cR_{K,A}[\tfrac{1}{t}]))$.
\end{proof}

\begin{proposition}\label{existenceoflift}
    Assume that $H^{2}_{\varphi, \Gamma_K}(V)=0$. Then there exists a lift of $D_0$ to $\gP$. In particular, there is a lift of $\cM_0$ to $\gP$. 
\end{proposition}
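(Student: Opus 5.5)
The plan is to work in case (I), with the fixed scheme-theoretic section $\sigma\colon\gM\to\gP$, and to build an obstruction class in $H^2_{\varphi,\Gamma_K}(V)$ whose vanishing yields a lift. Since $\gU\cong\Ga^n$ is abelian, the failure of $\sigma$ to be a homomorphism is measured by an algebraic $2$-cocycle
$$z\colon\gM\times\gM\to\gU,\qquad \sigma(m_1)\sigma(m_2)=z(m_1,m_2)\,\sigma(m_1m_2).$$
With respect to the $\gM$-action $m\cdot u=\sigma(m)u\sigma(m)^{-1}$ on $\gU$, it satisfies the usual cocycle identity. By \Cref{H1_Ext_U} and \Cref{G_PG_Pair_is_triv_PG_mod}, a lift of $D_0$ is a $\gP$-$\PG$-pair of the form $((M,\sigma(M_0)),(c,\sigma\circ c_0))$, written multiplicatively in $\gP$, with $M\in\gU(\cR_{K,A})$ and $c\colon\Gamma_K\to\gU(\cR_{K,A}^r)$ continuous. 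We begin with the naive candidate $(\sigma(M_0),\sigma\circ c_0)$ and compute its defects, all of which lie in $\gU$:
\begin{itemize}
\item the cocycle defect $a(\gamma_1,\gamma_2)=\sigma(c_0(\gamma_1))\,\gamma_1(\sigma(c_0(\gamma_2)))\,\sigma(c_0(\gamma_1\gamma_2))^{-1}=z(c_0(\gamma_1),\gamma_1(c_0(\gamma_2)))$;
\item the compatibility defect $b(\gamma)$, measuring the failure of $\sigma(M_0)\varphi(\sigma(c_0(\gamma)))=\sigma(c_0(\gamma))\gamma(\sigma(M_0))$, which is again expressed through $z$.
\end{itemize}

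Taking logarithms via $\exp^{-1}$, which is a group isomorphism because $\gU$ is abelian, we regard $(a,b)$ as a $2$-cochain in the standard complex $C^\bullet_{\varphi,\Gamma_K}(V)=\Tot[\cC(\Gamma_K^\bullet,V)\xrightarrow{\varphi-1}\cC(\Gamma_K^\bullet,V)]$, with $V$ carrying the $\star$-structure. First we check that the cochains are continuous and land in some $\cR^r_{K,A}$. This holds because $z$ is algebraic, $c_0$ is continuous with values in $\gM(\cR^r_{K,A})$, and polynomial maps preserve continuity, as in the proof of \Cref{topol_RKAoneovert}. Second, a routine calculation using the cocycle identity for $z$, the cocycle property of $c_0$, and the compatibility \eqref{Mc_comp_easy} for $(M_0,c_0)$ shows that $(a,b)$ is a $2$-cocycle. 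Third, modifying the candidate to $(\exp(v)\sigma(M_0),\exp(c)\sigma(c_0))$ changes $(a,b)$ by the coboundary of $(v,c)\in C^1$. It follows that a lift exists if and only if the class of $(a,b)$ in $H^2$ of the standard complex vanishes.

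Next we invoke \Cref{H1_comparison}, whose proof gives a quasi-isomorphism between the standard complex and the Herr complex, to identify $H^2$ of the standard complex with $H^2_{\varphi,\Gamma_K}(V)$. This group vanishes by hypothesis, so there is a $(v,c)$ killing the obstruction, and hence a pair $(M,c)\in Z^{1,\star}_{\varphi,\Gamma_K}(\gU(\cR_{K,A}))$. By \Cref{H1_Ext_U} this gives a lift $D$ of $D_0$. The statement for $\cM_0$ then follows by taking $D[\tfrac1t]$, which lifts $\cM_0=D_0[\tfrac1t]$.

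The main obstacle is the second step, verifying the $2$-cocycle identity for $(a,b)$, in particular the mixed $\varphi$/$\Gamma_K$ component. This requires careful bookkeeping of the twisted $\star$-actions, the noncommutativity of $\gP$, and the convention that $\varphi$ maps $\cR^r$ to $\cR^{r/p}$. To keep this manageable we will first carry out the computation for an abstract group $\Gamma\times\langle\varphi\rangle$-style action on $\gP(R)$, where it is the classical fact that extensions by an abelian kernel are obstructed in $H^2$. We will then only verify that all maps involved preserve the continuity and radius conditions.
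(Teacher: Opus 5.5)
Your proposal is correct and is essentially the paper's own argument. The paper likewise builds the obstruction $2$-cocycle $\tilde z$ from the algebraic $2$-cocycle $z$ of a scheme-theoretic section (\Cref{is_two_cocycle_PG}), shows that a lift exists if and only if its class vanishes (\Cref{final_obs_lemma}), and concludes from $H^2_{\varphi,\Gamma_K}(V)=0$. The only difference is bookkeeping: the paper runs the obstruction theory over $\cR^r_{K,A}$ and then uses $H^2_{\varphi,\Gamma_K}(V)=\varinjlim_r H^2_{\varphi,\Gamma_K}(V^r)$, whereas you work directly with the colimit cochains $\cC(\Gamma_K^\bullet,V)$. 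Since that colimit complex is already what $H^2_{\varphi,\Gamma_K}$ denotes, your appeal to \Cref{H1_comparison} is harmless but unnecessary.
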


\begin{proof}
    $D_0$ is obtained by base change from some $\gM$-$\PG$-module $D_0^r$ over $\cR_{K,A}^r$ for $r$ small enough.
    Since $H^{2}_{\varphi, \Gamma_K}(V) = \varinjlim_r H^{2}_{\varphi, \Gamma_K}(V^r)$ the obstruction class to lifting $D_0^r$ to $\gP$ defined in \Cref{is_two_cocycle_PG} vanishes for suitable $r$ and by \Cref{final_obs_lemma} there exists a lift $D^r$ of $D_0^r$.
    So $D = D^r \otimes_{\cR_{K,A}^r} \cR_{K,A}$ is a lift of $D_0$.
\end{proof}



\subsubsection{The case (II) of nonabelian $\gU$}
Let 
\begin{align}
    \gU = \gU^0 \supseteq \gU^1 \supseteq \gU^2 \supseteq \cdots \label{central_series_of_P_new}
\end{align}
be the central series of $\gU$, so that $\gU^{n+1}=[\gU,\gU^n]$. 
We set $\gU_n \colonequals \gU/\gU^{n+1}$.
Clearly the conjugation action preserves the central series, so we also have homomorphisms $\gM \to \uAut(\gU^n)$, $\gM \to \uAut(\gU_n)$ and $\gM \to \uAut(\gU^n/\gU^{n+1})$ and $\gU^n$ is normal in $\gP$.

We have a central extension 
\begin{align}
    1 \to \gU^{n+1}/\gU^{n+2} \to \gU_{n+1}\to \gU_n \to 1, \label{U_extension_new}
\end{align} 
which realizes $\gU_{n+1}$ as a trivial $\gU^{n+1}/\gU^{n+2}$-torsor over $\gU_n$.
The short exact sequence \eqref{U_extension_new} is $\gM$-equivariant.
Passing to Lie algebras, we obtain a short exact sequence
\begin{align}
    0 \to \Lie \gU^{n+1} / \Lie \gU^{n+2} \to \Lie \gU_{n+1} \to \Lie \gU_n \to 0 \label{split_Lie_seq_new}
\end{align}
of $\gM$-representations. 
Since $\gM$ is reductive, \eqref{split_Lie_seq_new} is split as a sequence of $\gM$-representations, so we can exponentiate a section to a scheme-theoretic and $\gM$-equivariant section of \eqref{U_extension_new}. We have a structure of $\PG$-modules on $V_n\colonequals \Lie\gU_n(\cR_{K,A})$ and $V^n\colonequals \Lie\gU^n(\cR_{K,A})$ given by the $\star$-action.

\begin{proposition}\label{better_unipotent_H1}
    Assume that $V^n/V^{n+1}$ is cohomologically regular for all $n \geq 0$.
    Then the functors
    \begin{alignat*}{4}
        F_\gU\colon \Aff_A &\longrightarrow \Set \quad \quad \quad \quad \quad \quad  F_V\colon \Aff_A &&\longrightarrow \Set
            \\ A' &\mapsto H^{1,\star}_{\varphi,\Gamma_K}(\gU(\cR_{K,A'})) \quad \quad \quad \quad  A' &&\mapsto H^1_{\varphi, \Gamma_K}(V \widehat{\otimes}_A A')
    \end{alignat*}
    are isomorphic and representable by a quasi-Stein rigid analytic space, which is a vector bundle (in particular smooth) of relative dimension $\dim \gU \cdot [K:\Qp]$ over $\Sp(A)$.
\end{proposition}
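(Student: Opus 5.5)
We argue by induction on $n$ along the central series \eqref{central_series_of_P_new}, replacing $\gU$ by the quotients $\gU_n$. For $n$ large, $\gU^{n+1}=1$ and $\gU_n=\gU$, so it suffices to prove, for every $n$, that $F_{\gU_n}$ and $F_{V_n}$ are isomorphic and representable by a vector bundle of rank $\dim\gU_n\cdot[K:\Qp]$ over $\Sp(A)$.

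\textbf{The linear side.} By \Cref{coh_reg_ses} applied to the filtration of $V_n$ with graded pieces $V^m/V^{m+1}$, the module $V_n$ is cohomologically regular. Over $\Sp(A)$, \Cref{coh_char_A} and the long exact sequence show that $H^0$ and $H^2$ vanish fibrewise. By \cite[Theorem 4.4.5]{KPX}, the complex $C^\bullet_{\varphi,\gamma_K}(V_n)$ is then quasi-isomorphic to a finite projective $A$-module $P_n$ of rank $\dim\gU_n\cdot[K:\Qp]$ placed in degree $1$. This description is compatible with arbitrary affinoid base change $A\to A'$, again using the base change part of \cite[Theorem 4.4.5]{KPX}. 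Hence $F_{V_n}$ is represented by the geometric vector bundle attached to $P_n^\vee$ (its relative spectrum, analytified), which is quasi-Stein and smooth of the claimed relative dimension.

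\textbf{The nonabelian side.} For $n=0$ we have $\gU_0=\gU/\gU^1$, which is abelian (a vector group), so \Cref{expH1_new} gives $F_{\gU_0}\cong F_{V_0}$. For the inductive step we use the $\gM$-equivariant central extension \eqref{U_extension_new}, with kernel $\gZ\colonequals\gU^{n+1}/\gU^{n+2}$ abelian and Lie algebra $V^{n+1}/V^{n+2}$. Following Kim, we show that for each $A'$ the natural map $H^{1,\star}_{\varphi,\Gamma_K}(\gU_{n+1}(\cR_{K,A'}))\to H^{1,\star}_{\varphi,\Gamma_K}(\gU_n(\cR_{K,A'}))$ has the following properties.
\begin{enumerate}
\item It is surjective. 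Given a class on $\gU_n$, it defines a $\gU_n\rtimes\gM$-$\PG$-module by \Cref{H1_Ext_U}. Lifting it to $\gU_{n+1}\rtimes\gM$ is a case (I) problem for the extension $1\to\gZ\to\gU_{n+1}\rtimes\gM\to\gU_n\rtimes\gM\to1$. The relevant $H^2$ is $H^2$ of $V^{n+1}/V^{n+2}$ twisted by the given module. Since $\gZ$ is central in $\gU_{n+1}$, this twist only involves $\gM$, so the twisted module is just $V^{n+1}/V^{n+2}\wtimes_A A'$, whose $H^2$ vanishes by cohomological regularity. \Cref{existenceoflift} then gives a lift.
\item Its fibres are principal homogeneous spaces under $H^1_{\varphi,\Gamma_K}(V^{n+1}/V^{n+2}\wtimes_AA')$. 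The group acts by multiplying cocycle pairs by $\exp$ of abelian cocycles, using centrality. Transitivity on fibres follows from the standard twisting argument. Freeness of the action requires, via the twisting exact sequence, the vanishing of $H^0$ of the twisted $V_n$, which again holds by cohomological regularity.
\end{enumerate}
Using the $\gM$-equivariant scheme-theoretic section $s$ of \eqref{U_extension_new} obtained from the splitting of \eqref{split_Lie_seq_new}, we choose for each class on $\gU_n$ a lift functorially. This trivializes the torsor and gives a bijection $F_{\gU_{n+1}}\cong F_{\gU_n}\times F_{V^{n+1}/V^{n+2}}$, functorial in $A'$. Likewise, $0\to V^{n+1}/V^{n+2}\to V_{n+1}\to V_n\to0$ is split as $\gM$-representations, so $F_{V_{n+1}}\cong F_{V_n}\times F_{V^{n+1}/V^{n+2}}$. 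Combining these with the induction hypothesis gives $F_{\gU_{n+1}}\cong F_{V_{n+1}}$.

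\textbf{Main obstacle.} The hardest step is making the functorial lift in the previous paragraph well defined. A naive choice, applying $s$ to a representative cocycle pair $(M,c)$, generally does not yield a cocycle: it is off by a $\gZ$-valued $2$-cochain built from the cocycle defect of $s$. One must correct it canonically, but $H^2=0$ only gives existence of a correction. The correction is unique only up to an element of $H^1$, and must be made compatible with change of representative and with base change. If no canonical correction can be found, we will instead prove representability directly by gluing. Locally on $F_{\gU_n}$ (which is a vector bundle by induction), we trivialize the torsor by choosing a section over an affinoid cover, using the projectivity of $H^1$ of the graded pieces. This shows that $F_{\gU_{n+1}}$ is a torsor under a vector bundle over a vector bundle. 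Any such torsor is trivial over the quasi-Stein base, because coherent cohomology vanishes (Kiehl's Theorem B). This yields the vector bundle structure and, after choosing a global section, an isomorphism with $F_{V_{n+1}}$.
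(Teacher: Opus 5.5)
Your fallback argument is the paper's proof: induction along the central series, surjectivity of $H^{1,\star}_{\varphi,\Gamma_K}(\gU_{n+1}(\cR_{K,A'}))\to H^{1,\star}_{\varphi,\Gamma_K}(\gU_n(\cR_{K,A'}))$ from $H^2$-vanishing via \Cref{existenceoflift}, and freeness from $H^0$-vanishing on the graded pieces. It continues with local sections over affinoids making $F_{\gU_{n+1}}$ a torsor under a vector group over $F_{\gU_n}$, and triviality of that torsor over the quasi-Stein base by Theorem B, which gives a non-canonical isomorphism with $F_{V_{n+1}}$. You should simply drop the attempted canonical lift via the section $s$, which (as you observe) does not produce cocycles and admits no canonical correction, and commit to the gluing argument.
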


\begin{proof}
    The proof is inspired by that of \cite[Proposition 2]{Kim}.
    
    By \Cref{coh_reg_ses}, cohomological regularity for all $V^n/V^{n+1}$ implies cohomological regularity for all $V_n$ and all $V^n$. By cohomological regularity of $V$, $H^1_{\varphi, \Gamma_K}(V)$ is a projective $A$-module of rank $\dim \gU \cdot [K:\Qp]$ and the natural map $H^1_{\varphi, \Gamma_K}(V) \otimes_A A' \to H^1_{\varphi, \Gamma_K}(V \wtimes_A A')$ is an isomorphism. Therefore, the functor $F_V$ is representable by a vector bundle of the claimed dimension.

    We now construct the isomorphism between $F_{\gU}$ and $F_V$. We proceed by induction over the central series \eqref{central_series_of_P_new} of $\gU$.
    In particular, we will use cohomological regularity of all $V_n$, although in the following text it appears to only be used for $V_0$. For $n=0$, we have that $\gU = \gU_0$ is abelian, so by \Cref{H1_comparison} and \Cref{expH1_new}, we get an isomorphism $H^1_{\varphi, \Gamma_K}(V_0 \wtimes_A A') \cong H^{1,\star}_{\varphi, \Gamma_K}(\gU_0(\cR_{K,A'}))$, which is functorial in $A'$.
    We assume now that the claim holds when $\gU=\gU_n$, and we show that the same holds for $\gU = \gU_{n+1}$. In particular, $\gU^{n+1}$ is abelian and $\gU^{n+2} = 1$.

    Recall from \eqref{U_extension_new} that we have a central extension $\gU_{n+1} \to \gU_n$, which admits a scheme-theoretic section $e \colon \gU_n \to \gU_{n+1}$.
    Given that $\gU^{n+1}$ is in the center of $\gU_{n+1}$, the group $H^{1,\star}_{\varphi, \Gamma_K}(\gU^{n+1}(\cR_{K,A'}))$ acts on the set $H^{1,\star}_{\varphi, \Gamma_K}(\gU_{n+1}(\cR_{K,A'}))$ by multiplication. Let us prove that this action is free. 
    
    Suppose that $(M',c')\in H^{1,\star}_{\varphi, \Gamma_K}(\gU^{n+1}(\cR_{K,A'}))$, $(M,c)\in H^{1,\star}_{\varphi, \Gamma_K}(\gU_{n+1}(\cR_{K,A'}))$ are such that $(M'M,c'c)$ and $(M,c)$ are equivalent. Then there exists $u\in \gU_{n+1}(\mathcal{R}_{K,A'})$ such that
    $$ M' \cdot M = u^{-1} \cdot M \cdot \varphi \star u \quad \text{ and } \quad \ c'(\gamma) \cdot c(\gamma)= u^{-1} \cdot c(\gamma) \cdot \gamma \star u, \  \forall \gamma\in \Gamma_K. $$
    Projecting this onto $\gU_{n}(\mathcal{R}_{K,A'})$, the first equation gives
    $$ \overline{u} \cdot (\varphi \star \overline{u})^{-1} = [\overline{M}, \varphi \star \overline u], $$
    where $[\cdot,\cdot]$ denotes the commutator, and we obtain a similar equality 
    $$ \overline u \cdot (\gamma \star \overline u)^{-1} = [\overline{c(\gamma)}, \gamma \star \overline u] $$
    for the second equation.
    After projecting onto $\gU_0$, the right hand side vanishes by definition of $\gU_0$. Therefore, the image $\overline{\overline{u}}$ of $u$ in $\gU_0$ lies in $ H^{0,\star}_{\varphi, \Gamma_K}(\gU_0(\cR_{K,A'})) = H^0_{\varphi, \Gamma_K}(V_0 \otimes_A A')$, where the equality holds by \Cref{expH0_new}.
    This group vanishes as $V_0$ is cohomologically regular.
    Hence $\overline{u}\in (\gU^1/\gU^{n+1})(\mathcal{R}_{K,A'})$.
    Arguing by induction and using the vanishing $ H^{0,\star}_{\varphi, \Gamma_K}((\gU^i/\gU^{i+1})(\cR_{K,A'}))=H^0_{\varphi, \Gamma_K}((V^i/V^{i+1}) \wtimes_A A')=0$, we find $\overline{u}=1$, which means that $u\in \gU^{n+1}(\mathcal{R}_{K,A'})$. It follows that the class $(M',c')$ is trivial in $H^{1,\star}_{\varphi, \Gamma_K}(\gU^{n+1}(\cR_{K,A'}))$, and therefore the action is free. 
    We have a commutative diagram
\begin{center}    \begin{tikzcd}
            H^{1,\star}_{\varphi, \Gamma_K}\big(\gU_{n+1}(\cR_{K,A'})\big) \arrow[r]\arrow[d,"\cong"] &H^{1,\star}_{\varphi, \Gamma_K}\big(\gU_n(\cR_{K,A'})\big) \arrow[d,"\cong"] \\
            \Lift_{\gM}^{\gP}(D_0\widehat{\otimes}A') \arrow[r] & \Lift_{\gM}^{\gP/\gU^{n+1}}(D_0\widehat{\otimes}A'),\end{tikzcd}
    \end{center}
    where the vertical maps are isomorphisms by \Cref{H1_Ext_U}. For any $(M',c')\in \Lift_{\gM}^{\gP/\gU^{n+1}}(D_0\widehat{\otimes}A')$, the induced $\star$ action on $\gU^{n+1}$ coincides with that induced by $(M,c)$. Since $H^2_{\varphi, \Gamma_K}(V^{n+1}) = 0$ by cohomological regularity, we may apply \Cref{existenceoflift} to the extension $\gU^{n+1}\to \gP\to \gP/\gU^{n+1}$ and to any element of $\Lift_{\gM}^{\gP/\gU^{n+1}}(D_0)$. This yields the surjectivity of the map $$ H^{1,\star}_{\varphi, \Gamma_K}(\gU_{n+1}(\cR_{K,A'})) \longrightarrow H^{1,\star}_{\varphi, \Gamma_K}(\gU_n(\cR_{K,A'})).$$

    By \Cref{expH1_new}, the rigid analytic group $\cG(A') \colonequals H^{1,\star}_{\varphi, \Gamma_K}(\gU^{n+1}(\cR_{K,A'}))$ is locally isomorphic to $\mathbb G_{\mathrm a, A}^i$ for some $i\ge 1$, so torsors under it are $\mathbb G_{\mathrm a,A}^i$-torsors. The freeness of the action shows that $F_{\gU_{n+1}}$ is a $\mathcal{G}$-pseudotorsor over $F_{\gU_n}$.  For an affinoid $\Sp(A') \subseteq F_{\gU_n}$,  the surjectivity just established provides a lift of the corresponding element of $F_{\gU_n}(A')$ to $F_{\gU_{n+1}}(A')$, which defines a local section $\Sp(A') \to F_{\gU_{n+1}}$.  Hence $F_{\gU_{n+1}}$ is a torsor for the G-topology over $F_{\gU_n}$.
    Since $\Ga$-torsors over quasi-Stein spaces are split by \stackcite{03AJ} and Theorem B \cite[Theorem 2.4.2]{KiehlAB}, $F_{\gU_{n+1}}$ is a trivial $\mathbb{G}_{a,A}^i$-torsor over $F_{\gU_n}$.
    So we get an isomorphism
    $$ H^{1,\star}_{\varphi, \Gamma_K}\big(\gU^{n+1}(\cR_{K,A'})\big) \times H^{1,\star}_{\varphi, \Gamma_K}(\gU_{n}(\cR_{K,A'})) \xrightarrow{\sim} H^{1,\star}_{\varphi, \Gamma_K}\big(\gU_{n+1}(\cR_{K,A'})\big), $$
    that is functorial in $A'$.
    
    The vanishing of $H^2_{\varphi, \Gamma_K}(V^{n+1})$ also implies that $H^1_{\varphi, \Gamma_K}(V_{n+1}) \to H^1_{\varphi, \Gamma_K}(V_n)$ is surjective.
    Thus we also find that $F_{V_{n+1}}$ is a trivial $\cG$-torsor over $F_{V_n}$, and we get an isomorphism
    $$ H^1_{\varphi,\Gamma_K}
\big(V^{n+1}\widehat{\otimes}A'\big) \times H^1_{\varphi,\Gamma_K}(V_n\widehat{\otimes}A') \xrightarrow{\sim} H^1_{\varphi,\Gamma_K}(V_{n+1}\widehat{\otimes}A'), $$
    that is functorial in $A'$.
    By the induction hypothesis, we have isomorphisms $F_{\gU_n} \cong F_{V_n}$ and $F_{\gU^{n+1}} \cong F_{V^{n+1}}$, so we get a (non-canonical) isomorphism $F_{\gU_{n+1}} \cong F_{V_{n+1}}$.
\end{proof}

\subsection{Lifting $\PG$-modules with prescribed $B_{\dR}$-representation}
\label{sec_lift_PG_with_BdR_new}
In this section, we place ourselves in situation (I), where $\gU$ is abelian. 

Let $A \in \Art_L$.
Let $\eta_{W_0}$ be a trivializable almost de Rham $\gM$-$A \otimes_{\Qp} \BdR$-representation. 
This means that there exists an isomorphism of fiber functors $\alpha_A \colon \phi_{A \otimes_{\Qp} K}\circ \omega_{\gM}\eqto \omega_{\mathbb{G}_{a,A\otimes K}}\circ D_{\pdR,A}\circ \eta_{W_0}$. Seeing $\Lie(\gU)$ as a representation of $\gM$, we may consider the $\G_{\mathrm a, A \otimes_{\Qp} K}$-representation $\mathbb{V} \colonequals D_{\pdR,A}(\eta_{W_0}(\Lie(\gU)))$.

As for $\PG$-modules, we make the following definition.
\begin{definition}
    A \emph{lift} of $\eta_{W_0}$ to $\gP$ is a pair $(\eta_W,\iota)$, where $\eta_W$ is a $\gP$-$A \otimes_{\Qp} \BdR$-representation and $\iota\colon \eta_W \circ \Res_{\gP}^{\gM} \xrightarrow{\sim} \eta_{W_0}$ is an isomorphism of $\gM$-$A \otimes_{\Qp} \BdR$-representations.
    An \emph{equivalence} between two lifts $(\eta_{W_1},\iota_1)$ and $(\eta_{W_2},\iota_2)$ is an isomorphism $f \colon \eta_{W_1} \to \eta_{W_2}$ of $\gP$-$A \otimes_{\Qp} \BdR$-representations such that $\iota_2 \circ p_*f=\iota_1$. We write $\Lift_{\gM}^{\gP}(\eta_{W_0})$ for the set of equivalence classes of lifts of $\eta_{W_0}$ to $\gP$.
\end{definition}
The assumption that $\eta_{W_0}$ is trivializable ensures that every lift is trivializable as well: the fiber functor $\eta_{W_0}$ corresponds to an $\gM$-torsor over an affine scheme $X$, and any lift corresponds to $\gP$-torsor whose pushforward along $\gP\to\gM$ recovers the $\gM$-torsor. Given that $H^1_{\et}(X,\gU) = 0$ by affineness, the exact sequence of pointed sets in étale cohomology implies that the $\gP$-torsor is trivial.

\begin{lemma}\label{H1_Ext_U_Ga_case} There exists a lift of $\eta_{W_0}$ to $\gP$ and the choice of such a lift determines a bijection $H^1(\G_{\mathrm a, A \otimes_{\Qp} K}, \mathbb{V}) \cong \Lift_{\gM}^{\gP}(\eta_{W_0})$.
\end{lemma}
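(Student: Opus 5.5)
Via the equivalence $D_{\pdR,A}$ of \eqref{equiv_pdR_Ga_A}, I will reduce the statement to lifting a $\Ga$-action along $\gP\to\gM$ and then do the same computation as in \Cref{H1_Ext_U}, with algebraic group cohomology of $\Ga$ in place of $\PG$-cohomology. Since $D_{\pdR,A}$ is an exact equivalence of $A$-linear tensor categories, composing with it identifies $\gP$-$A\otimes_{\Qp}\BdR$-representations $\eta_W$ whose restriction to $\gM$ is $\eta_{W_0}$ (which is almost de Rham) with fiber functors $\Rep_L(\gP)\to\Rep_{A\otimes_{\Qp}K}(\Ga)$ lifting $D_{\pdR,A}\circ\eta_{W_0}$. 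Here one uses that almost de Rham representations are stable under extensions, and that every $\gP$-representation is an iterated extension of $\gM$-representations, as in the proof of \Cref{H1_Ext_U}; hence any lift of $\eta_{W_0}$ is automatically almost de Rham. After fixing the trivialization $\alpha_A$, and noting that every lift is trivializable by the torsor argument preceding the lemma, such a fiber functor is a homomorphism of group schemes $\Ga_{,A\otimes K}\to\gP_{A\otimes K}$ lifting the homomorphism $\rho_0\colon\Ga\to\gM$ determined by $\eta_{W_0}$ and $\alpha_A$. By \Cref{pdR_charanew} this amounts to a nilpotent $N_0\in\Lie\gM(A\otimes_{\Qp}K)$ with $\rho_0(s)=\exp(sN_0)$. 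Equivalence of lifts becomes conjugation by $\gU(A\otimes_{\Qp}K)$.

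For existence, I will lift $N_0$ to any element $N\in\Lie\gP(A\otimes_{\Qp} K)$ mapping to it; the map $\Lie\gP\to\Lie\gM$ is surjective because $\gU$ is smooth. Then $N$ is nilpotent in every representation, since its image in the semisimplification (an $\gM$-representation) is $N_0$, which is nilpotent. So $s\mapsto\exp(sN)$ is a homomorphism $\Ga\to\gP$ lifting $\rho_0$, and it gives a lift of $\eta_{W_0}$. Equivalently, the obstruction lies in $H^2(\Ga,\mathbb V)$, which vanishes in characteristic $0$.

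For the bijection, I will fix such a lift $\rho$ and use the scheme-theoretic section $\sigma$ to write every other lift as $s\mapsto u(s)\rho(s)$ with $u\colon\Ga\to\gU$. Because $\gU$ is abelian and $\exp$ identifies $\gU$ with $\Lie\gU$, the homomorphism condition becomes exactly the algebraic $1$-cocycle condition $u(s+s')=u(s)+\Ad(\rho(s))u(s')$ for the $\Ga$-module $\mathbb V=D_{\pdR,A}(\eta_{W_0}(\Lie\gU))$; the action factors through $\gM$ since $\gU$ is abelian. Conjugating by $v\in\gU(A\otimes_{\Qp}K)$ changes $u$ by the coboundary $v-\Ad(\rho(s))v$. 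Two lifts are equivalent as lifts exactly when they are conjugate by an element of $\gU$, since the equivalence must induce the identity on $\gM$. Hence the quotient is $H^1(\Ga_{,A\otimes K},\mathbb V)$, and this bijection depends on the chosen $\rho$. The main point to check is that the cocycle set computed with the section $\sigma$ is independent of $\sigma$; this holds because $\gU$ is abelian and normal, so the twisted action on $\gU$ is canonical.
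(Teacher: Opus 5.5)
Your proposal is correct and is essentially the paper's proof. You fix $\alpha_A$, identify lifts with homomorphisms $\G_{\mathrm a, A\otimes_{\Qp}K}\to\gP_{A\otimes_{\Qp}K}$ over $\rho_0$, which form a torsor under algebraic $1$-cocycles, and divide by $\gU(A\otimes_{\Qp}K)$-conjugation. Your exponentiated lift of $N_0$ is a concrete form of the paper's appeal to $H^2(\G_{\mathrm a, A\otimes_{\Qp}K},\mathbb V)=0$, and your observation that lifts are automatically almost de Rham fills in a point the paper leaves implicit.

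The one step to make explicit is that each lift admits a trivialization inducing exactly $\alpha_A$; otherwise its homomorphism only lifts a $\gM(A\otimes_{\Qp}K)$-conjugate of $\rho_0$. The paper gets this from surjectivity of $\gP(A\otimes_{\Qp}K)\to\gM(A\otimes_{\Qp}K)$ (e.g.\ via $\sigma$), and the same compatibility is what makes equivalences correspond to $\gU$-conjugation. This is the point needing care, rather than the $\sigma$-independence you flag at the end, which is automatic since $u(s)=\rho'(s)\rho(s)^{-1}$ requires no section.
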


\begin{proof}
   Fixing a trivialization $\alpha_A$ of $\eta_{W_0}$ corresponds, by \Cref{pdR_charanew}, to a homomorphism $h_0\colon \G_{\mathrm a, A \otimes_{\Qp} K} \to \gM_{A \otimes_{\Qp} K}$. It also fixes an isomorphism $\mathbb{V}\xrightarrow{\sim} \Lie(\gU)(A\otimes_{\Qp}K)$ induced by $\alpha_A^{-1}$, so we can see the latter space as a $\mathbb{G}_{a,A\otimes_{\Qp}K}$ representation. 
    Moreover, any lift of $\eta_{W_0}$ is trivializable, and a choice of trivialization yields a homomorphism $ h : \G_{\mathrm a, A \otimes_{\Qp} K} \to \gP_{A \otimes_{\Qp} K}$.

    By a standard computation in group theory, the set of lifts $h$ of $h_0$ can be identified with the set of algebraic cocycles $Z^1(\G_{\mathrm a, A \otimes_{\Qp} K}, \Lie \gU( A \otimes_{\Qp} K))$ once we fix a lift $\tilde h$. The existence of a lift is guaranteed, as $H^2(\G_{\mathrm a, A \otimes_{\Qp} K}, \mathbb{V}) = 0$ in characteristic $0$. Identifying such lifts with conjugation by elements of $\gU(A \otimes_{\Qp} K)$, the set of equivalence classes of lifts is identified with $H^1(\G_{\mathrm a, A \otimes_{\Qp} K}, \Lie \gU( A \otimes_{\Qp} K))$. 
    
    We are left to see that the map 
    $$ Z^1\big(\G_{\mathrm a, A \otimes_{\Qp} K}, \Lie \gU( A \otimes_{\Qp} K)\big) \cong \Hom_{A \otimes_{\Qp} K}(\Ga,\gP) \times_{\Hom_{A \otimes_{\Qp} K}(\Ga,\gM)} \{h_0\} \to \Lift_{\gM}^{\gP}( \eta_{W_0}) $$
    induces a bijection $H^1(\G_{\mathrm a, A \otimes_{\Qp} K}, \Lie \gU( A \otimes_{\Qp} K)) \eqto \Lift_{\gM}^{\gP}( \eta_{W_0})$.
    One notes that the precomposition with $H^1(\G_{\mathrm a, A \otimes_{\Qp} K},\mathbb{V})\xrightarrow{\sim} H^1(\G_{\mathrm a, A \otimes_{\Qp} K}, \Lie \gU( A \otimes_{\Qp} K))$ is independent of the choice of trivialization.

    For surjectivity, suppose that $\eta_W$ is a lift of $\eta_{W_0}$.
    Any trivialization of $\eta_W$ induces a trivialization of $\eta_{W_0}$.
    It differs from $\alpha_A$ by a torsor automorphism of $\gM_{A \otimes_{\Qp} K}$, i.e.,  an element of $\gM(A \otimes_{\Qp} K)$.
    Since the map $\gP(A \otimes_{\Qp} K) \to \gM(A \otimes_{\Qp} K)$ is surjective, we can arrange that the trivialization of $\eta_W$ recovers $\alpha_A$. This means that $\eta_W$ corresponds to a lift $h$ of $h_0$ as above.

    For injectivity, suppose $h_1$ and $h_2$ induce lifts $\eta_{W_1}$ and $\eta_{W_2}$ of $\eta_{W_0}$ which are equivalent.
    An equivalence $\eta_{W_1} \eqto \eta_{W_2}$ inducing the identity over $\eta_{W_0}$ is necessarily induced by an element of $\gU(A \otimes_{\Qp} K)$. Hence $h_1$ and $h_2$ define the same class in $H^1$.
\end{proof}

We let $\mathcal{M}_0$ be an $\gM$-trivial $\gM$-$\PG$-module  over $\cR_{K,L}[\tfrac{1}{t}]$, to which we associated 
a $\PG$-module $V[\tfrac{1}{t}]$. Let $\cM$ be a lift of $\cM_0$ to $\gP$. Then it is a $\gP$-trivial $\gP$-$(\varphi, \Gamma_K)$-module.  We set $\eta_{W_0}\colonequals W_{\dR}\circ\eta_{\mathcal{M}_0}$ and $\eta_{W}\colonequals W_{\dR}\circ\eta_{\mathcal{M}}$. 

\begin{theorem}\label{prescribed_lifting}
Assume that 
\begin{enumerate}
    \item[(1)] $V[\tfrac{1}{t}]$ is cohomologically regular,
    \item[(2)] the map $H^1_{\varphi,\Gamma_K}(V[\tfrac{1}{t}]) \to H^1(\GK, W_{\dR}(V[\tfrac{1}{t}]))$ is surjective,
    \item[(3)] $\eta
    _W$ is almost de Rham.
\end{enumerate}
 Then the morphism of pseudofunctors
    \begin{align}
        X_{\cM} \longrightarrow X_{\cM_0} \times_{X_{W_0}} X_{W} \label{general_formally_smooth}
    \end{align}
    is formally smooth.
\end{theorem}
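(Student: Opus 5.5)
We must show: for a small surjection $A_2 \onto A_1$ in $\Art_L$ with kernel $I$, given $\cM_{A_1} \in X_\cM(A_1)$ and $(\cM_{0,A_2}, W_{A_2}) \in (X_{\cM_0}\times_{X_{W_0}} X_W)(A_2)$ compatible with the image of $\cM_{A_1}$, there is $\cM_{A_2}$ lifting $\cM_{A_1}$ with $\cM_{A_2}\times^\gP\gM \cong \cM_{0,A_2}$ and $W_\dR\circ\eta_{\cM_{A_2}}\cong W_{A_2}$ compatibly. The plan is to complete the cube in two steps: first produce some lift $\cM'_{A_2}$ of $\cM_{0,A_2}$ to $\gP$ reducing to $\cM_{A_1}$, then correct it by a cohomology class so that its $\BdR$-realization becomes $W_{A_2}$.

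\textbf{Step 1: lifts of $\cM_{0,A_2}$.} All objects are trivial torsors by \Cref{trivialforB}-type arguments (or as in the proof of \Cref{H1_Ext_U}), so $\cM_{0,A_2}$ is given by an $\gM$-pair. Cohomological regularity of $V[\tfrac1t]$ together with \Cref{coh_reg_ses} and dévissage along $I$ gives $H^2_{\varphi,\Gamma_K}(V[\tfrac1t]\otimes_L A_2)=0$, so \Cref{existenceoflift} supplies a lift to $\gP$. To ensure compatibility with $\cM_{A_1}$, consider the map $\Lift^\gP_\gM(\cM_{0,A_2})\to\Lift^\gP_\gM(\cM_{0,A_1})$. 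By \Cref{H1_Ext_U} and \Cref{expH1_new}, these are torsors under $H^1_{\varphi,\Gamma_K}(V[\tfrac1t]\otimes A_i)$, and the map is equivariant for the reduction map on $H^1$. The vanishing of $H^2(V[\tfrac1t]\otimes I)$ makes $H^1(V[\tfrac1t]\otimes A_2)\to H^1(V[\tfrac1t]\otimes A_1)$ surjective, so we may adjust the lift to reduce to $\cM_{A_1}$. The set of such $\cM'_{A_2}$ is then a torsor under the image of $H^1_{\varphi,\Gamma_K}(V[\tfrac1t]\otimes_L I)$.

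\textbf{Step 2: matching the $\BdR$-side.} Since $\eta_W$ is almost de Rham by (3), and almost de Rhamness lifts along deformations (last part of \Cref{localgpdR}, or the dévissage in \cite[Remark 3.1.5]{BHS19}), both $W_\dR\circ\eta_{\cM'_{A_2}}$ and $W_{A_2}$ lie in $\Lift_\gM^\gP(\eta_{W_{0,A_2}})$ and reduce to the same element over $A_1$. By \Cref{H1_Ext_U_Ga_case}, which uses the vanishing of $H^2$ of $\Ga$ in characteristic $0$, together with the analogous compatibility argument, they differ by a class in $H^1(\GK, W_\dR(V[\tfrac1t])\otimes_L I)$. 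Here $H^1(\Ga,D_\pdR(\cdot))\cong H^1(\GK,\cdot)$ for almost de Rham representations, via \eqref{equiv_pdR_Ga_A}. One then checks that $W_\dR$ intertwines the two torsor structures, so that the comparison map is the natural map $H^1_{\varphi,\Gamma_K}\to H^1(\GK,W_\dR(\cdot))$. Concretely, via $\exp$ the twist by a $\PG$-cocycle of $V[\tfrac1t]$ is sent to the twist by its image cocycle. By hypothesis (2), tensored with the finite-dimensional space $I$, this map is surjective. Hence we can modify $\cM'_{A_2}$ by a preimage class to get $\cM_{A_2}$ with $W_\dR\circ\eta_{\cM_{A_2}}\cong W_{A_2}$, compatibly with all identifications, which completes the cube.

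\textbf{Main obstacle.} The delicate point is the bookkeeping of isomorphisms in the groupoid fibre product, together with the equivariance of the torsor structures under $W_\dR$. One must show that the difference of two lifts agreeing mod $I$ is measured by $H^1(\cdot\otimes I)$ on both sides, and that $W_\dR$ is compatible with these actions; abelianness of $\gU$ and the scheme section $\sigma$ give this via the explicit cocycle formulas. I would verify this first on cocycle level using the pair descriptions of \Cref{G_PG_Pair_is_triv_PG_mod} and \Cref{pdR_charanew}.
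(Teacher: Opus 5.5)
Your proposal is correct and takes essentially the same route as the paper. Both arguments identify lifts on the $(\varphi,\Gamma_K)$-side and on the $\BdR$-side with torsors under $H^1_{\varphi,\Gamma_K}$ and under $H^1$ of $\Ga$ (via \Cref{H1_Ext_U}, \Cref{expH1_new} and \Cref{H1_Ext_U_Ga_case}), and then complete the cube by lifting and correcting with a class whose existence comes from hypothesis (2).

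The only difference is packaging. The paper treats an arbitrary surjection $A_2\twoheadrightarrow A_1$: it uses cohomological regularity for base change of $H^1$, upgrades (2) to surjectivity over $A_2$ by Nakayama, and then quotes \cite[Lemma 3.4.5]{BHS19}. You instead reduce to small extensions and carry out that lemma by hand, using the long exact sequences attached to $0\to V[\tfrac1t]\otimes_L I\to V_{A_2}[\tfrac1t]\to V_{A_1}[\tfrac1t]\to 0$ together with (2) tensored with $I$.
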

\begin{proof}
    Let $f\colon A_2\twoheadrightarrow A_1$ be a surjection in $\Art_L$. Let $(\cM_{A_1},j_{A_1})\in X_{\cM}(A_1)$. For simplicity, in this proof we will suppress $j_{A_1}$ from the notation. Let $\cM_{0,A_1}\in X_{\cM_0}(A_1)$, $\eta_{W_{A_1}}\in X_{W}(A_1)$, and $\eta_{W_{0,A_1}}\in X_{W_0}(A_1)$ be the images of $\cM_{A_1}$ under \eqref{general_formally_smooth}. Let $(\cM_{0,A_2},\eta_{W_{A_2}})\in (X_{\cM_0} \times_{X_{W_0}} X_{W})(A_2)$ be a lift of $(\cM_{0,A_1},\eta_{W_{A_1}})$ along $f$. We will also suppress the equivalence $W_{\dR}\circ \eta_{\cM_{0,A_2}}\cong \eta_{W_{A_2}}\circ \Res_{\gP}^{\gM}$ lifting the corresponding equivalence on $A_1$.

    We need to find $\cM_{A_2}\in X_{\cM}(A_2)$ lifting $\cM_{A_1}$ along $f$ and $(\cM_{0,A_2},\eta_{W_{A_2}})$ along \eqref{general_formally_smooth} making the evident cube commute. Let $\mathbb{V}_{A_1}$ and $\mathbb{V}_{A_2}$ be the $\mathbb{G}_a$-representations obtained by applying $\eta_{W_{0,A_1}}$ and $\eta_{W_0,A_2}$ to $\Lie(\gU)$. We need to show that the top map of the following diagram is surjective:
    \begin{equation*}
    \begin{tikzcd}
    \Lift^\gP_{\gM}(\cM_{0,A_2}) \arrow[rrr, "(W_{\dR}\circ -) \times (\phi_{A_1}\circ -)"] \ar[d] &&& \Lift^\gP_{\gM}(\eta_{W_{0,A_2}}) \times_{\Lift^\gP_{\gM}(\eta_{W_{0,A_1}}) } \Lift^\gP_{\gM}(\cM_{0,A_1}) \ar[d] \\
    H^{1,\star}_{\varphi,\Gamma_K}(\gU(\cR_{K,A_2}[\tfrac{1}{t}])) \arrow[rrr] &&& H^1(\mathbb{G}_{a,A_2\otimes K}, \mathbb{V}_{A_2})\times_{H^1(\mathbb{G}_{a,A_1\otimes K}, \mathbb{V}_{A_1})} H^{1,\star}_{\varphi,\Gamma_K}(\gU(\cR_{K,A_1}[\tfrac{1}{t}])).
    \end{tikzcd} 
    \end{equation*}
    The left vertical map is the bijection of \Cref{H1_Ext_U}.
    The same applies to the right coordinate of the right vertical map.
    The left coordinate of the right vertical map is the bijection of \Cref{H1_Ext_U_Ga_case}, hence the right vertical map is bijective.
    Since the bijection constructed in \Cref{H1_Ext_U} is functorial, the diagram is commutative. 

    The situation is now very close to that in the proof of \cite[Theorem 3.4.4]{BHS19}. Let $V_{A_1}[\tfrac{1}{t}]$ and $V_{A_2}[\tfrac{1}{t}]$ be the $\PG$-modules associated to $\cM_{0,A_1}$ and $\cM_{0,A_2}$. 
    By \Cref{expH1_new} and \Cref{H1_comparison}, we have $H^{1,\star}_{\varphi,\Gamma_K}(\gU(\cR_{K,A_i}[\tfrac{1}{t}])) \cong H^1_{\varphi,\Gamma_K}(V_i[\tfrac{1}{t}])$, and by cohomological regularity of $V_i[\tfrac{1}{t}]$, we have an isomorphism
    \begin{align*}
       H^1_{\varphi,\Gamma_K}(V_{A_2}[\tfrac{1}{t}]) \otimes_{A_2} A_1 \cong H^1_{\varphi,\Gamma_K}(V_{A_1}[\tfrac{1}{t}]).
    \end{align*}    
    We have that $H^1(\mathbb{G}_{a,A_2\otimes K}, \mathbb{V}_{A_2}) \otimes_{A_2} A_1 \cong H^1(\mathbb{G}_{a,A_1\otimes K}, \mathbb{V}_{A_1})$, since the first $\Ga$-cohomology group is the cokernel of a nilpotent endomorphism, which is compatible with base extension. 

     The map $ H^1_{\varphi,\Gamma_K}(V_{A_2}[\tfrac{1}{t}]) \to H^1(\mathbb{G}_{a,A_2\otimes K}, \mathbb{V}_{A_2})$ is surjective since it is surjective after the base change $-\otimes_{A_2} L$ by our assumption. Therefore, we can apply \cite[Lemma 3.4.5]{BHS19} with $M = H^{1,\star}_{\varphi,\Gamma_K}(\gU(\cR_{K,A_2}[\tfrac{1}{t}]))$ and $N = H^1(\mathbb{G}_{a,A_2\otimes K}, \mathbb{V}_{A_2})$ to deduce that the horizontal maps in the diagram are surjective. This completes the proof of the theorem.
\end{proof}

\subsection{Application to $(\gG, \gB, \gT)$}
\label{application_to_GBT}

We apply the work of the previous sections to a generalization of the main results of \cite[§3.4]{BHS19}.
In the above notation, we choose $\gP = \gB$ and $\gM = \gT$.





\begin{lemma}\label{Breg_coh_reg}
    Let $A \in \Aff_L$. If $D_0$ is a $\gB$-regular $\gT$-$\PG$-module over $\cR_{K,A}$, then $V^{n+1}/V^{n+2}$ is cohomologically regular for all $n \geq 0$. The same statement holds after inverting $t$.
\end{lemma}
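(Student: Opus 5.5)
The plan is to reduce cohomological regularity of $V^{n+1}/V^{n+2}$ to the rank-one case and then invoke \Cref{coh_char_A} together with \Cref{coh_reg_ses}.

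First, I decompose the $\gT$-representation $\Lie\gU^{n+1}/\Lie\gU^{n+2}$ under the adjoint action. Since $\gT$ is split and acts semisimply, this quotient is a direct sum of root spaces $\frakg_\alpha$. The roots $\alpha$ that occur are positive roots of $\gB$ lying in the appropriate layer of the central series. Because $\gT$ is a torus, the section $\sigma$ is the inclusion $\gT\subseteq\gB$, and the $\star$-action on $V^{n+1}/V^{n+2}$ is exactly $\eta_{D_0}(\Lie\gU^{n+1}/\Lie\gU^{n+2})$. Exactness of the fiber functor then gives a decomposition
\[ V^{n+1}/V^{n+2}\cong\bigoplus_{\alpha}\eta_{D_0}(\alpha), \]
with $\alpha$ running over these positive roots.

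Next, I identify each summand with a rank-one object of character type. Zariski-locally on $A$, one has $\eta_{D_0}(\alpha)\cong\cR_{K,A}(\delta\circ\alpha^\vee)\otimes_A\mathscr L$ for a line bundle $\mathscr L$ with trivial actions, by \cite[Theorem 6.2.14]{KPX} and the description of parameters in \Cref{sec_param}. Cohomological regularity is a statement about finite projectivity and rank of $H^1$, and it is Zariski-local on $A$. Moreover, twisting by $\mathscr L$ only tensors the cohomology with $\mathscr L$, so I may assume the summand is $\cR_{K,A}(\delta\circ\alpha^\vee)$. By $\gB$-regularity (\Cref{def_Breg}), at each maximal ideal $\frakm$ the specialization of $\delta\circ\alpha^\vee$ lies in $\cT_0$. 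In particular it is neither $z^{\mathbf k}$ with $\mathbf k\in\Z^\Sigma_{\le0}$ nor $z^{\mathbf k}|N_{K/\Qp}(z)|$ with $\mathbf k\in\Z^\Sigma_{\geq1}$; I would check that the latter is of the form $\varepsilon\cdot(\text{algebraic})$. So \Cref{coh_char_A}(3) shows that $H^1_{\varphi,\gamma_K}$ is projective of rank $[K:\Qp]$. A finite direct sum of cohomologically regular modules is cohomologically regular, by \Cref{coh_reg_ses} or directly. This gives the statement over $\cR_{K,A}$.

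For the version with $t$ inverted, with $A\in\Art_L$, I apply the same decomposition to reduce to $\cR_{K,A}(\delta\circ\alpha^\vee)[\tfrac1t]$. For $A=L$, \Cref{coh_reg_param} applies once I know that neither $\delta\circ\alpha^\vee$ nor $|N_{K/\Qp}(z)|(\delta\circ\alpha^\vee)^{-1}$ is algebraic; this is precisely the condition $\delta\circ\alpha^\vee\in\cT_0$. For general Artinian $A$, I would use dévissage along $\frakm_A$, with the long exact sequence giving vanishing of $H^0$ and $H^2$. The Euler characteristic formula, or the Artinian analogue of the argument in \Cref{coh_char_A}, then yields freeness of $H^1$ of the right rank.

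The main obstacle I expect is bookkeeping rather than substance. One point is matching the twisted $\star$-action with $\eta_{D_0}(\Lie\gU)$ for a torus. The other is making sure the non-vanishing conditions in \Cref{coh_char_A} agree with the definition of $\cT_0$ via $\varepsilon$. The second of these needs care with the normalization $|N_{K/\Qp}(z)|$ versus $|z|_K$ and with $\varepsilon=\prod_\tau\tau(z)\cdot|z|_K$. With $t$ inverted, the extra subtlety is that over Artinian $A$ the vanishing of $H^2$ must be checked through the colimit over $t^{-j}$-lattices.
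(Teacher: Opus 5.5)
Your proposal is correct and follows the paper's proof: you decompose $\Lie\gU^{n+1}/\Lie\gU^{n+2}$ into positive root spaces, so that $V^{n+1}/V^{n+2}$ becomes a direct sum of rank-one objects with parameters $\delta\circ\alpha^\vee\in\cT_0$, and then you apply \Cref{coh_char_A} (and \Cref{coh_reg_param} once $t$ is inverted). Your extra care with the line-bundle twist, and the dévissage along $\frakm_A$ for Artinian $A$ (needed because \Cref{coh_reg_param} is only stated for $A=L$), fills in details the paper leaves implicit.
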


\begin{proof}
    As a $\gT$-representation, $\Lie \gU^{n+1}/\gU^{n+2}$ is a direct sum of characters $\alpha \colon \gT \to \Gm$ in $\Phi^+(\gB,\gT)$.
    Hence $V^{n+1}/V^{n+2}$ is a direct sum of $\cR_{K,A}(\delta_{\alpha})$ for $\alpha \in \Phi^+(\gB,\gT)$. By our assumption on $\gB$-regularity, $\delta_{\alpha}$ is a regular parameter. Therefore, the statement follows from \Cref{coh_char_A} and \Cref{coh_reg_param}.
\end{proof}

\begin{proposition}\label{unipotentH1}
Let $A \in \Aff_L$, and let $D_0$ be a $\gB$-regular $\gT$-$\PG$-module over $\cR_{K,A}$.
Then the functors
\begin{alignat*}{4}
    F_U\colon \Aff_A &\longrightarrow \Set \quad \quad \quad \quad \quad \quad  &F_V\colon \Aff_A &\longrightarrow \Set
        \\ A' &\mapsto H^{1,\star}_{\varphi,\Gamma_K}(\gU(\cR_{K,A'})) \quad \quad \quad \quad \quad \quad &A' &\mapsto H^1_{\varphi, \Gamma_K}(V) \otimes_A A'
    \end{alignat*}
are isomorphic and representable by a quasi-Stein rigid analytic space, which is a vector bundle, in particular smooth, of relative dimension $\dim \gU \cdot [K : \Qp]$ over $\Sp(A)$.
\end{proposition}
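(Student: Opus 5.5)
This is a direct specialization of \Cref{better_unipotent_H1} to the extension $1\to\gU\to\gB\to\gT\to 1$. This extension is in situation (II): $\gU$ is unipotent, $\gT$ is reductive, and the inclusion $\gT\hookrightarrow\gB$ is a group-theoretic section. The plan is to verify the hypotheses of \Cref{better_unipotent_H1} and then match the two functors.

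First, $D_0$ may be taken $\gT$-trivial. If it is not, we work Zariski-locally on $\Sp(A)$: by the remark following the definition of parameters, a $\gT$-$\PG$-module is locally of the form $\cR_{K,A}(\delta)$. Both functors are Zariski sheaves on $\Aff_A$. For $F_V$ this is clear since it is given by a finite projective module; for $F_U$ it follows via the identification with $\Lift^\gB_\gT$ in \Cref{H1_Ext_U}. Moreover, "vector bundle of relative dimension $d$" is local on the base. So we may assume $D_0\cong\cR_{K,A}(\delta)$ with pair $(M_0,c_0)$, which gives the $\star$-actions of \Cref{sec_form_smooth1}.

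Next, we check cohomological regularity. \Cref{Breg_coh_reg} says that each graded piece $V^{n}/V^{n+1}$ of the central series of $V=\eta_{D_0}(\Lie\gU)$ is a direct sum of $\cR_{K,A}(\delta\circ\alpha^\vee)$ for $\alpha\in\Phi^+(\gB,\gT)$. $\gB$-regularity of $\delta$ puts each $\delta\circ\alpha^\vee$ in $\cT_0$ at every point. By \Cref{coh_char_A}(3), each summand therefore has $H^1$ finite projective of rank $[K:\Qp]$, which is exactly the regularity hypothesis of \Cref{better_unipotent_H1}. That proposition then gives representability of $F_U$, the isomorphism with $A'\mapsto H^1_{\varphi,\Gamma_K}(V\wtimes_AA')$, and the vector bundle structure of relative dimension $\dim\gU\cdot[K:\Qp]$.

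The only remaining point, and the only genuine step, is to identify $H^1_{\varphi,\Gamma_K}(V\wtimes_AA')$ with $H^1_{\varphi,\Gamma_K}(V)\otimes_AA'$. Cohomological regularity, applied inductively through \Cref{coh_reg_ses}, gives $H^0=H^2=0$ for $V$ and for all of its pieces. The Herr complex is perfect by \cite[Theorem 4.4.5]{KPX} and commutes with base change. Its cohomology is therefore concentrated in degree $1$ and is a finite projective module there, so base change is exact. This base-change compatibility is the step I expect to need care: one has to ensure that the regularity hypothesis and the vanishing of $H^0$ and $H^2$ are preserved under arbitrary affinoid $A'\to A$. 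This holds because the pointwise conditions of \Cref{coh_char_A} are stable under pulling back maximal ideals. The quasi-Stein assertion follows, since $F_V$ is the total space of a vector bundle over the affinoid $\Sp(A)$.
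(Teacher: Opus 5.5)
Your proposal is correct and follows the paper's own argument: apply \Cref{better_unipotent_H1} to $1\to\gU\to\gB\to\gT\to 1$ (situation (II)), after checking cohomological regularity of the graded pieces via \Cref{Breg_coh_reg}. Your two extra steps are sound refinements rather than a different route. The reduction to the $\gT$-trivial case is implicitly assumed in the paper's setup of \Cref{sec_form_smooth1}. The identification $H^1_{\varphi,\Gamma_K}(V\wtimes_AA')\cong H^1_{\varphi,\Gamma_K}(V)\otimes_AA'$ is asserted inside the proof of \Cref{better_unipotent_H1}.
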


\begin{proof}
    We want to apply \Cref{better_unipotent_H1}, so we need to verify the hypotheses of \Cref{coh_char_A} for $V^{n+1}/V^{n+2}$.
    This follows from \Cref{Breg_coh_reg}.
\end{proof}

\begin{theorem}\label{thm_formal_smoothness}
    Let $\Mtri$ be a $\gB$-trivial $\gB$-$(\varphi, \Gamma_K)$-module over $\cR_{K,L}[\tfrac{1}{t}]$.
    Assume that the parameter $\delta$ of $\Mtri$ is locally algebraic and lies in $\cT^{\gT}_{0,\gB}(L)$.
    Let $\eta_{\Wtri} \colonequals W_{\dR}\circ \eta_{\Mtri}$.
    Then the morphism of pseudofunctors
    \begin{equation}
        X_{\Mtri} \longrightarrow  \widehat \cT^\gT_{\delta} \times_{\widehat \frakt} X_{\Wtri} \label{formally_smooth_morphism}
    \end{equation}
    is formally smooth.
\end{theorem}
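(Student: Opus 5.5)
We induct along the central series $\gU=\gU^0\supseteq\gU^1\supseteq\cdots$ of the unipotent radical, treating the parameter base case and each step via \Cref{prescribed_lifting}. Set $\gP_n\colonequals\gB/\gU^{n+1}$, so that $\gP_{-1}=\gT$, $\gP_N=\gB$ for $N$ large, and each $1\to\gU^{n+1}/\gU^{n+2}\to\gP_{n+1}\to\gP_n\to 1$ is in situation (I) with abelian kernel. Write $\cM_n\colonequals\Mtri\times^\gB\gP_n$ and $W_n\colonequals W_\dR\circ\eta_{\cM_n}$. Since $\gB$-trivial implies $\gP_n$-trivial, every $\cM_n$ is $\gP_n$-trivial, as required.

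\textbf{Step 1 (torus level).} We first show that $X_{\cM_{-1}}\to\wh\cT^\gT_\delta\times_{\that}X_{W_{-1}}$ is formally smooth; in fact $\omega_\delta$ already identifies $X_{\cM_{-1}}$ with $\wh\cT^\gT_\delta$ by \Cref{class_rk_one_after_inv_t}. Here $X_{W_{-1}}$ is the deformation functor of a $\gT$-$\BdR$-representation. After framing, this is the completion of $\frakt$ at $0$: $N$ is nilpotent in $\frakt$, hence the Sen weight. Compatibility with $\wt-\wt(\delta)$ is \Cref{BHS3.3.9}, so this map is an isomorphism at least on $\pi_0$. The $\gT$-torsor automorphisms have to be checked, but for tori they are unobstructed.

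\textbf{Step 2 (inductive step).} For each $n$, apply \Cref{prescribed_lifting} to $\gP_{n+1}\to\gP_n$, with $V[\tfrac1t]=\eta_{\cM_n}(\Lie\gU^{n+1}/\gU^{n+2})$. We verify its three hypotheses in turn.
\begin{enumerate}
\item Cohomological regularity is \Cref{Breg_coh_reg}: this module is a sum of $\cR_{K,L}[\tfrac1t](\delta\circ\alpha^\vee)$ over positive roots $\alpha$, with each $\delta\circ\alpha^\vee\in\cT_0$.
\item Almost de Rhamness holds because $\delta$ is locally algebraic, by \Cref{localgpdR}.
\item Surjectivity of $H^1_{\varphi,\Gamma_K}(V[\tfrac1t])\to H^1(\GK,W_\dR(V[\tfrac1t]))$ reduces to rank one characters $\cR_{K,L}[\tfrac1t](\delta')$ with $\delta'$ locally algebraic and in $\cT_0$. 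This is exactly the input of \cite[Theorem 3.4.4]{BHS19}, whose proof gives the surjectivity via \cite[Lemma 3.4.2 / 3.3.3]{BHS19}.
\end{enumerate}
We thus get formal smoothness of $X_{\cM_{n+1}}\to X_{\cM_n}\times_{X_{W_n}}X_{W_{n+1}}$.

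\textbf{Step 3 (composing).} Formally smooth maps are stable under base change and composition. Base changing the map of Step 2 along $X_{W_{n+1}}\to X_{W_n}$ and composing with the inductive hypothesis $X_{\cM_n}\to\wh\cT^\gT_\delta\times_{\that}X_{W_n}$ gives
\[ X_{\cM_{n+1}}\to \bigl(\wh\cT^\gT_\delta\times_{\that}X_{W_n}\bigr)\times_{X_{W_n}}X_{W_{n+1}}=\wh\cT^\gT_\delta\times_{\that}X_{W_{n+1}}. \]
For this identification we need $\kappa_{W_{n+1}}$ to factor through $X_{W_n}$. This holds since the weight is read off from $\gT$, i.e. it factors through $X_{W_{-1}}$.

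The main obstacle is the bookkeeping with pseudofunctors (groupoids rather than sets) in Step 3. Fiber products must be taken $2$-categorically, and formal smoothness has to be checked on objects, with the automorphisms handled separately. A second difficulty is hypothesis (2) for twisted characters, where we rely on \cite{BHS19}.
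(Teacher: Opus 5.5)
Your proposal is correct and is essentially the paper's proof. The paper likewise applies \Cref{prescribed_lifting} to each layer $\gB/\gU^{n+1}\to\gB/\gU^{n}$ of the central series, checks cohomological regularity via \Cref{Breg_coh_reg} and surjectivity of $H^1_{\varphi,\Gamma_K}\to H^1(\GK,W_\dR(-))$ via \cite[Lemma 3.4.3]{BHS19}, and assembles the layers by base change and composition. The only differences are cosmetic: the paper inducts downward from $X_{\Mtri}$ rather than upward from the torus, and it simply asserts the torus-level identifications $X_{\cM_0}\cong\widehat\cT^\gT_\delta$ and $X_{W_0}\cong\that$, which your Step 1 spells out.
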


\begin{proof}
    For $n\ge 0$, let $\cM_n\colonequals \Mtri\times^{\gB}\gB/\gU^n$, and let $\eta_{W_n}\colonequals W_{\dR}\circ \eta_{\cM_n}$. 
    Suppose that the map $X_{\Mtri}\to X_{\cM_{n+1}}\times_{X_{W_{n+1}}}X_{\Wtri}$ is formally smooth. Then assuming that $X_{\cM_{n+1}}\to X_{\cM_n}\times_{X_{W_n}}X_{W_{n+1}}$, it follows by base change and composition, that $X_{\Mtri}\to X_{\cM_n}\times_{X_{W_n}}X_{\Wtri}$. The conclusion of the theorem then follows by noting that $\widehat\cT^\gT_{\delta} \cong X_{\cM_0}$ and $\widehat{\frakt}\cong X_{W_0}$.

    To get the formal smoothness of the maps $X_{\cM_{n+1}}\to X_{\cM_n}\times_{X_{W_n}}X_{W_{n+1}}$, we apply \Cref{prescribed_lifting}. It remains to verify the hypothesis.
    Since $\delta$ is locally algebraic, \cite[Lemma 3.4.3]{BHS19} implies that the map $H^1_{\varphi,\gamma_K}(V^n/V^{n+1}[\tfrac{1}{t}]) \to H^1(\GK, W_{\dR}(V^n/V^{n+1}[\tfrac{1}{t}]))$ is surjective.
    Since $\delta \in \cT^{\gT}_{0,\gB}(L)$, cohomological regularity of $V^n/V^{n+1}[\tfrac{1}{t}]$ follows from \Cref{Breg_coh_reg}.
\end{proof}

\subsection{The space $\cS_\gB$ of $\gB$-regular rigidified $\gB$-$\PG$-modules}
\label{sec_SB}

\begin{definition}\label{def_reg_rig_PG}
Let $A \in \Aff_L$.
Let $\Dtri$ be a $\gB$-$\PG$-module over $\cR_{K,A}$.
\begin{enum}
    \item A \emph{rigidification} of $\Dtri$ is an isomorphism $\theta\colon\cR_{K,A}(\delta)\cong\Dtri\times^\gB\gT$.
    \item A \emph{$\gB$-regular rigidified} $\gB$-$\PG$-module over $\cR_{K,A}$ is a pair $(\Dtri,\theta)$ of a $\gB$-regular $\gB$-$\PG$-module $\Dtri$ over $\cR_{K,A}$ and a rigidification $\theta$ of $\Dtri$.\footnote{This notion is weaker than that of \cite[Definition 5.6]{dedar2020}, as our notion of $\gB$-regularity is weaker than the notion of regularity of \emph{loc. cit.}}
    \item An \emph{equivalence} between two $\gB$-regular rigidified $\gB$-$\PG$-modules $(\Dtri_1,\theta_1)$ and $(\Dtri_2,\theta_2)$ is an isomorphism of $\gB$-$\PG$-modules $f\colon\Dtri_1\cong\Dtri_2$ such that the isomorphism $f\times^\gB\gT\colon\Dtri_1\times^\gB\gT\cong\Dtri_2\times^\gB\gT$ satisfies $\theta_1=\theta_2\circ f\times^\gB\gT$.
\end{enum}
\end{definition}

\begin{definition}\label{def:SB}
    We define $\cS_\gB \colon \Aff_L \to \Gpd$ as the pseudofunctor which sends an affinoid $L$-algebra $A$ to the groupoid of $\gB$-regular rigidified $\gB$-$\PG$-modules over $\cR_{K,A}$.
\end{definition}

\begin{lemma}\label{SBtrivialAut}
    Let $A\in \Aff_L$. The groupoid $\cS_\gB(A)$ has trivial automorphisms.
    In particular, $\cS_\gB(A)$ is the set of equivalence classes of $\gB$-regular rigidified $\gB$-$\PG$-modules over $\cR_{K,A}$.
\end{lemma}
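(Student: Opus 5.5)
We need to show that an automorphism $f$ of $(\Dtri,\theta)$ is the identity. Such an $f$ is an automorphism of the $\gB$-$\PG$-module $\Dtri$ whose induced automorphism $f\times^\gB\gT$ of $\Dtri\times^\gB\gT$ satisfies $\theta=\theta\circ(f\times^\gB\gT)$. Since $\theta$ is an isomorphism, this forces $f\times^\gB\gT=\id$. So the plan is to show that an automorphism of $\Dtri$ inducing the identity on the $\gT$-quotient is trivial. The argument is a non-abelian $H^0$-vanishing statement, proved by induction along the central series of $\gU$.

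First I reduce to the trivial-torsor case. The question is local on $\Sp A$, and automorphisms glue, so I may assume $\Dtri$ is $\gB$-trivial, using \Cref{G_PG_Pair_is_triv_PG_mod} locally. This is justified because a $\gB$-torsor whose $\gT$-pushforward is trivial has vanishing $\gU$-obstruction on affinoids, exactly as in \Cref{H1_Ext_U}. So $\Dtri$ corresponds to a pair $((M,M_0),(c,c_0))$ with $(M_0,c_0)$ the $\gT$-pair of $D_0=\Dtri\times^\gB\gT$. An automorphism is an element $h\in\gB(\cR_{K,A})$ with $h^{-1}(M,M_0)\varphi(h)=(M,M_0)$ and the analogous identity for each $c(\gamma)$. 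The condition on the $\gT$-quotient says that $h\in\gU(\cR_{K,A})$.

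Next I translate into the $\star$-language of \Cref{def_H1U_new}. For $h=u\in\gU(\cR_{K,A})$, writing $\gB\cong\gU\rtimes\gT$, the equations become $u^{-1}\cdot M\cdot\varphi\star u=M$ and $u^{-1}\cdot c(\gamma)\cdot\gamma\star u=c(\gamma)$. I then argue exactly as in the freeness step of the proof of \Cref{better_unipotent_H1}. Projecting to $\gU_0=\gU/\gU^1$, which is abelian, the equations say that the image $\bar u$ lies in $H^{0,\star}_{\varphi,\Gamma_K}(\gU_0(\cR_{K,A}))$, because the commutator terms vanish there. By \Cref{expH0_new} this set equals $H^0_{\varphi,\Gamma_K}(V_0)$. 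By $\gB$-regularity and \Cref{Breg_coh_reg}, each $V^n/V^{n+1}$ is a sum of $\cR_{K,A}(\delta_\alpha)$ with $\delta_\alpha$ regular, so $H^0$ vanishes by \Cref{coh_char_A}. Hence $\bar u=1$, that is $u\in\gU^1$.

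I then iterate. On $\gU^n/\gU^{n+1}$ the commutators $[M,\varphi\star u]$ become trivial once $u\in\gU^n$, because $\gU^n/\gU^{n+1}$ is central in $\gU/\gU^{n+1}$. The same $H^0$-vanishing therefore gives $u\in\gU^{n+1}$, and by nilpotency $u=1$.

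The main obstacle is the bookkeeping at this central-series step. One must check that for $u\in\gU^n$ the class of $u^{-1}M(\varphi\star u)M^{-1}$ in $\gU^n/\gU^{n+1}$ really equals the twisted $\varphi$-action on $V^n/V^{n+1}$ applied to $\bar u$, minus $\bar u$. This uses that conjugation by $M\in\gU$ is trivial on that graded piece. Once this is checked, the lemma follows. The last claim of the lemma is then formal: a groupoid with trivial automorphism groups is equivalent to its set of isomorphism classes.
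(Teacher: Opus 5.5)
Your proposal is correct and is essentially the paper's own proof. With $\Dtri$ $\gB$-trivial, an automorphism compatible with $\theta$ is an element $b\in\gU(\cR_{K,A})$ satisfying the twisted fixed-point equations, and it is killed layer by layer along the central series using the vanishing of $H^{0,\star}$ (equivalently of $H^0_{\varphi,\Gamma_K}$ of the graded pieces of $\Lie\gU$), which $\gB$-regularity guarantees. The localization ``on $\Sp A$'' is unnecessary: $\theta$ already trivializes $\Dtri\times^\gB\gT$ over $\cR_{K,A}$, so $\Dtri$ is globally $\gB$-trivial by the argument of \Cref{H1_Ext_U}, as you yourself observe.
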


\begin{proof}
    Let $(D^{\triangle}, \theta) \in \cS_\gB(A)$.
    By the proof of \Cref{H1_Ext_U}, $\Dtri$ is $\gB$-trivial. 
    Let $(MM_0,cc_0)$ be the $\gB$-$\PG$-pair corresponding to $\Dtri$. Then by \Cref{G_PG_Pair_is_triv_PG_mod}, an automorphism $f$ of $\Dtri$ is given by an element $b\in \gB(\cR_{K,A})$ satisfying $$ b^{-1}MM_0\varphi(b)=MM_0,\quad \text { and, }\quad b^{-1}c(\gamma)c_0(\gamma)\gamma(b)=c(\gamma)c_0(\gamma),\  \forall\gamma\in \Gamma_K.$$
    These equations may be rewritten as
    $$ b^{-1}\cdot \varphi\star b=[b^{-1},M^{-1}], \quad \text{  and, } \quad   b^{-1}\cdot \gamma\star b= [b^{-1},c_0(\gamma)^{-1}].$$
    The condition $\theta\circ \pi_*f=\theta$ amounts to $\pi(b)=1$, that is $b\in \gU(\cR_{K,A})$.  
    Projecting the above equalities onto $\gU_0=\gU/[\gU,\gU]$, we find that $\overline{b}\in H^{0,\star}_{\varphi,\Gamma_K}(\gU_0(\cR_{K,A}))=\{1\}$ by $\gB$-regularity. Hence, $b\in \gU^1(\cR_{K,A})$, and for example $[b^{-1},M^{-1}]\in \gU^2(\cR_{K,A})$. We can then project again the equations onto $\gU_1$ and use $H^{0,\star}_{\varphi,\Gamma_K}(\gU_1(\cR_{K,A}))=\{1\}$ to deduce that $b\in \gU^2(\cR_{K,A})$. Arguing inductively, we find that $b\in \gU^n(\cR_{K,A})$  for all $n\ge 0$, and therefore $b=1$. 
\end{proof}

We obtain the following generalization of \cite[Theorem 5.8]{dedar2020}.\footnote{Our space $\cS_\gB$ contains the space of \cite[Definition 5.7]{dedar2020} as an open subspace. In particular, we recover his Theorem 5.8. Our result is more general, as we remove the hypothesis (T) of loc. cit.. We could also allow $\gB$ to be a solvable group with $\gB/\Ru(\gB) \cong \gT$ such that the roots of $\gT$ are non-trivial, compare \Cref{rmk_B_solvable}.}
\begin{theorem}\label{SB_representable}
    $\cS_\gB$ is representable by a rigid analytic space over $L$, which is a vector bundle, in particular smooth, of relative dimension $\dim \gU \cdot [K:\Qp]$ over $\cT_{0,\gB}^{\gT}$.
\end{theorem}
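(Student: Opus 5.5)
We prove representability of $\cS_\gB$ by working relatively over $\cT_{0,\gB}^{\gT}$, which is already a quasi-Stein rigid space representing parameters. First we construct the natural morphism $\cS_\gB \to \cT_{0,\gB}^{\gT}$, sending $(\Dtri,\theta)$ to the parameter $\delta$ of the rigidification. By \Cref{SBtrivialAut}, $\cS_\gB$ is set-valued. Moreover, it is a sheaf for the Tate topology: $\gB$-$\PG$-modules glue by \Cref{torsor_HPG_modules} and by the definition of $\PG$-modules over $\cR_{K,X}$, and the triviality of automorphisms makes the descent data effective and unique. Representability is local on the target, so it suffices to show that for each affinoid $\Sp(A)\subseteq \cT_{0,\gB}^{\gT}$ the fiber product $\cS_\gB\times_{\cT_{0,\gB}^{\gT}}\Sp(A)$ is representable by a vector bundle of rank $\dim\gU\cdot[K:\Qp]$ over $\Sp(A)$. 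We then glue these bundles using the sheaf property.

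Fix such an $A$, and let $D_0\colonequals\cR_{K,A}(\delta)$ be the $\gT$-trivial $\gT$-$\PG$-module attached to the universal character, with $\gT$-pair $(M_0,c_0)$. For an affinoid $A$-algebra $A'$, an $A'$-point of the fiber product is a $\gB$-$\PG$-module $\Dtri$ over $\cR_{K,A'}$ together with an isomorphism $\theta\colon D_0\wtimes_A A'\cong \Dtri\times^\gB\gT$. Up to equivalence in the sense of \Cref{def_reg_rig_PG}, this is precisely a lift of $D_{0,A'}$ to $\gB$ up to equivalence, that is, an element of $\Lift^{\gB}_{\gT}(D_{0,A'})$. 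We must check that the two notions of equivalence coincide. An equivalence of rigidified objects is an isomorphism $f$ with $\theta_1=\theta_2\circ(f\times^\gB\gT)$, which is exactly an equivalence of lifts with $\iota=\theta^{-1}$. We are in situation (II) with $\gP=\gB$ and $\gM=\gT$ reductive, and the section $\gT\into\gB$ is group-theoretic. So \Cref{H1_Ext_U} gives a bijection $\Lift^\gB_\gT(D_{0,A'})\cong H^{1,\star}_{\varphi,\Gamma_K}(\gU(\cR_{K,A'}))$, functorial in $A'$ because the $\star$-action is defined by the base change of $(M_0,c_0)$.

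Next we apply \Cref{unipotentH1}. Its hypothesis, $\gB$-regularity of $D_0$, holds since $\delta$ lands in $\cT_{0,\gB}^{\gT}$, and this feeds cohomological regularity of the graded pieces through \Cref{Breg_coh_reg}. The proposition then says that $A'\mapsto H^{1,\star}_{\varphi,\Gamma_K}(\gU(\cR_{K,A'}))$ is represented by a vector bundle over $\Sp(A)$ of relative dimension $\dim\gU\cdot[K:\Qp]$. Hence the fiber product is representable by this bundle. Smoothness of $\cS_\gB$ follows because a vector bundle over the smooth space $\cT_{0,\gB}^{\gT}$ is smooth.

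The main obstacle is the gluing step, and the reason is that the isomorphism $F_\gU\cong F_V$ in \Cref{better_unipotent_H1} is non-canonical. The identification of $\cS_\gB$ over $\Sp(A)$ with a vector bundle therefore need not be compatible on overlaps as a linear structure. We avoid this by gluing only the sheaf $\cS_\gB$ itself, which is canonical. A representable sheaf that is locally a vector bundle (or at least locally isomorphic to a smooth space of the given relative dimension) is representable globally, since its restrictions to an admissible affinoid covering are representable and agree on overlaps by uniqueness of representing objects. If one wants a genuine vector bundle structure globally, the plan is to use the canonical filtration by the central series. Each graded step is a torsor under the canonical vector bundle $H^1_{\varphi,\Gamma_K}(V^{n+1}/V^{n+2})$, and these torsors are trivial over quasi-Stein pieces. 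So $\cS_\gB$ is at least an iterated affine bundle, which suffices for the stated smoothness and dimension count.
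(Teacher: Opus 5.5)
Your proposal is correct and follows the paper's proof essentially step for step. You map to $\cT_{0,\gB}^{\gT}$, use \Cref{SBtrivialAut} and \Cref{H1_Ext_U} to identify the fiber over an affinoid $\Sp(A)$ with $A'\mapsto \Lift^{\gB}_{\gT}(D_{0,A'})\cong H^{1,\star}_{\varphi,\Gamma_K}(\gU(\cR_{K,A'}))$, then invoke \Cref{unipotentH1} and glue. Your remark that the isomorphism $F_\gU\cong F_V$ of \Cref{better_unipotent_H1} is non-canonical, so that one glues the functor $\cS_\gB$ itself rather than the local linear structures, is a fair refinement of the paper's terse ``by gluing'': what both arguments actually establish globally is representability and smoothness of relative dimension $\dim\gU\cdot[K:\Qp]$, with the vector bundle structure obtained only affinoid-locally on the base.
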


\begin{proof}
    There is a natural map $\cS_\gB \to \cT_{0,\gB}^{\gT}$ sending a $\gB$-regular rigidified $\gB$-$\PG$-module to its parameter.
    Let $\Sp(A) \subseteq \cT_{0,\gB}^{\gT}$ be an affinoid subdomain.
    Then by \Cref{SBtrivialAut}, $\cS_\gB \times_{\cT_{0,\gB}^{\gT}} \Sp(A) \to \Sp(A)$ represents the functor 
    $$\Aff_A \longrightarrow \Set, ~ A' \mapsto \Lift_{\gT}^{\gB}(D_{A'}),$$ where $D_{A'}$ is the universal $\gT$-trivial $\gT$-$\PG$-module over $\cT_{0,\gB}^{\gT}$ specialized to $A'$. By \Cref{H1_Ext_U}, we have $\Lift_{\gT}^{\gB}(D_{A'}) \cong H^{1,\star}_{\varphi,\Gamma_K}(\gU(\cR_{K,A'}))$. Hence by \Cref{unipotentH1}, the functor $\cS_\gB \times_{\cT_{0,\gB}^{\gT}} \Sp(A)$ is representable by a rigid analytic space, which is smooth of relative dimension $\dim \gU \cdot [K : \Qp]$ over $\Sp(A)$.
    The result follows by gluing and the smoothness of $\cT_{0,\gB}^{\gT}$.
\end{proof}

\section{Deformations of triangulable $\PG$-modules}\label{sec35}

In this section we will study the consequences of the formal smoothness result established in the previous section. We work in the setting of deformations of a $\gG$-$\PG$-module over $\cR_{K,L}$, as well as in the setting of deformations of a $\gG$-valued Galois representation, both equipped with a triangulation over $\cR_{K,L}[\tfrac{1}{t}]$.

\subsection{The case of a $\gG$-$\PG$-module over $\cR_{K,L}$}\label{deformationsofPG}

We fix a $\gG$-$\PG$-module $D$ over $\cR_{K,L}$. We define the groupoid $ X_{D}\colon \Art_L \to \Gpd$ of deformations of $D$ exactly as in \Cref{def_XM}. Setting $\cM=D[\tfrac{1}{t}]$, we obtain a natural morphism 
$$ X_D\longrightarrow X_{\cM} $$
of groupoids over $\Art_L$. We assume that $D$ equipped with a triangulation $D^{\triangle}$ with locally algebraic parameter $\delta\colon \gT^\vee(K)\to L^\times$, and set  $\Mtri=\Dtri[\tfrac{1}{t}]$. We also define the groupoid $X_{\Dtri}$ of deformations of $\Dtri$ as before. 

We fix a finite set $\Lambda\subset X_+^*(\gT^\der)$ of $\Q$-generators of $X^*_+(\gT^{\der})_\Q$, and a faithful representation $r\colon \gG\to \GL_m$ satisfying the hypotheses of \Cref{emb_lemma_asdfjkl} (if one exists). 
\begin{proposition}\label{triimmersion}
    Assume that either $\delta\in \cT^{\gT}_{\Lambda}(L)$ (resp. $\delta\in \cT^{\gT}_{0,\Lambda}(L)$), or $\delta\in \cT^{\gT}_{r}(L)$ (resp. $\delta\in \cT^{\gT}_{0,r}(L)$). Then the natural functor $X_{\Dtri} \to X_{D}$ (resp. $X_{\Mtri} \to X_{\cM}$) is relatively representable and is a closed immersion.
\end{proposition}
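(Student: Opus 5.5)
The plan follows \cite[Proposition 3.4.6]{BHS19} (compare also \cite[Lemma 3.6]{BC} for $\GL_n$). We split the claim into two parts. First, $X_{\Dtri}\to X_D$ is a monomorphism of groupoids: by \Cref{uniqueness_Lambda_2} (resp. \Cref{uniqueness_r_2} in the $r$-regular case), a deformation $D_A$ of $D$ admits at most one triangulation deforming $\Dtri$. Combined with the triviality of automorphisms inducing the identity on the $\gG$-side, this shows that the fibers of $X_{\Dtri}(A)\to X_D(A)$ are either empty or a point. Indeed, an automorphism of $\Dtri_A$ inducing the identity on $D_A$ is the identity, since $\Rep_L(\gB)$-valued fiber functors factoring a given $\gG$-fiber functor have no nontrivial automorphisms over it; this is the uniqueness part of \Cref{pluckerdatum}. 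Hence $X_{\Dtri}$ is a subfunctor of $X_D$.

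Second, we show that this subfunctor is relatively representable and that it is a closed condition. For relative representability we check Schlessinger-type conditions, i.e.\ that for $A\to C\leftarrow B$ in $\Art_L$ with $B\to C$ surjective and $D_{A\times_C B}$ a deformation, $D_{A\times_C B}$ is triangulable as soon as its images over $A$ and $B$ are. We use the Plücker description of \Cref{pluckercoordinatesphi-gamma}(3): a triangulation amounts to finitely many saturated rank-one submodules $\scrL_\lambda\subseteq\eta_{D}(V_\lambda)$ for $\lambda\in\Lambda$ (in the $r$-case, a full flag in $\eta_D(V_r)$ with $\gB$-compatibility via \Cref{emb_lemma_asdfjkl}). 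The rank-one submodules over $A$ and $B$ agree over $C$ by uniqueness, so their fiber product gives a rank-one submodule over $A\times_C B$. It remains saturated, since the quotient is a fiber product of $\PG$-modules and hence a $\PG$-module, as in \cite[§3.4]{BHS19}. The relations in (3) hold because they hold after projecting to $A$ and $B$, and $\eta(V)_{A\times_CB}=\eta(V)_A\times_{\eta(V)_C}\eta(V)_B$.

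For the closed immersion, the key tool is \Cref{rank_oner}. The line $\scrL_{\lambda,A}$ must be the image of $\Hom(\cR_{K,A}(\delta_A\circ\lambda^\vee),\eta_{D_A}(V_\lambda))$, so triangulability of $D_A$ is equivalent to the existence of a deformation $\delta_A$ of $\delta$ such that this Hom is free of rank one over $A$ and the induced maps are saturated. Following BHS we realize this as follows: over the formal completion $\wh\cT^\gT_\delta$, the $H^0$ module is the kernel of the map in a perfect complex (\cite[Theorem 4.4.5]{KPX}) whose base change commutes with formation of $H^0$ thanks to regularity ($H^0$ of the quotient vanishes). Freeness of rank one is then a closed condition given by a Fitting ideal, while saturatedness is open but automatic on Artinian thickenings of the closed point, because it holds at the residue field and lifts (a map $\cR_{K,A}\to M$ whose reduction is saturated is saturated, by Nakayama over $\cR_{K,A}$). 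Finally, the Plücker relations are equalities of submodules and hence closed. The case after inverting $t$ is identical, using the $[\tfrac{1}{t}]$-versions of \Cref{rank_oner} and \Cref{uniqueness_Lambda_2}, together with \Cref{class_rk_one_after_inv_t} to ensure character type.

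The main obstacle is the closed-immersion step: making precise that the locus in the hull of $X_D$ where the Plücker lines exist is cut out by an ideal. We expect to handle it by reducing to tangent-space and obstruction compatibility, i.e.\ to showing that $X_{\Dtri}\to X_D$ is injective on tangent spaces (which follows from uniqueness) and satisfies the lifting criterion for small extensions. Together with relative representability, this yields a closed immersion of pro-representable hulls, as in \cite[Proposition 3.4.6]{BHS19}.
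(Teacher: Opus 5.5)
Your proof is correct in substance but follows a different route from the paper, and it puts the difficulty in the wrong place. The paper follows \cite[Proposition 2.3.9]{BC}: it treats stability under fiber products as routine and proves only the injective-descent condition of Ramakrishna's criterion. That condition says: if $A\hookrightarrow A'$ and $D_A\otimes_A A'$ admits a triangulation deforming $\Dtri$, then so does $D_A$. For $\Lambda$-regular $\delta$ the paper sets $\scrL_{\lambda,A}\colonequals\scrL_{\lambda,A'}\cap\eta_{D_A}(V_\lambda)$, proves saturatedness using the regularity of the parameter, and checks the Pl\"ucker relations after $\otimes_A A'$. For $r$-regular $\delta$ it applies \cite{BC} to $\eta_{D_A}(V_r)$ and recovers the $\gB$-structure from the pushout description of $\Rep_L(\gB)$ in \cite[Lemma 5.11]{conti2022lifting}. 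You instead prove the monomorphism property and gluing along $A\times_C B$. In your route regularity enters only through \Cref{uniqueness_Lambda_2} and \Cref{uniqueness_r_2}. In the $r$-regular case you can glue the two $\gB$-reductions directly as torsors, so no flags in $\eta_D(V_r)$ are needed.

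That gluing already finishes the proof, so your ``main obstacle'' is not one. Fix $D_A\in X_D(A)$. Your first step identifies $h_A\times_{X_D}X_{\Dtri}$ with a subfunctor $G\subseteq h_A$, and your gluing gives Schlessinger's conditions for $G$. Hence $G\cong h_{R'}$, with the inclusion induced by a local map $A\to R'$. This map is surjective because $G(L[\varepsilon])\subseteq h_A(L[\varepsilon])$. So $G=h_{A/I}$, which is exactly a closed immersion. The paper's injective-descent condition then follows formally: the inclusion $A\hookrightarrow A'$ lies in $G(A')$, so it kills $I$, so $I=0$.

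You should therefore drop your last two paragraphs.
\begin{itemize}
\item A ``lifting criterion for small extensions'' is a smoothness-type condition and plays no role here. Read literally it would fail, since $X_{\Dtri}\to X_D$ is not formally smooth in general.
\item The Fitting-ideal sketch is unjustified as written. Your base-change claim for $H^0$ over $\wh\cT^\gT_\delta$ relies on a quotient $\eta(V_\lambda)/\scrL_\lambda$ that exists only once a triangulation is known, which is circular. Also, the condition is existentially quantified over $\delta_A$, so it is not evidently closed.
\end{itemize}

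The trade-off between the two routes: yours replaces the saturation-of-intersections argument by elementary patching of rank-one submodules and gets closedness for free. The paper instead verifies the closedness condition directly and treats the gluing as routine.
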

\begin{proof}
 By \Cref{uniqueness_Lambda_2} and \Cref{uniqueness_r_2}, we have an equivalence of groupoids 
\begin{equation}\label{eqgroupoids}
X_{\Dtri}\xrightarrow{\sim} X_{D}\times_{|X_{D}|}|X_{\Dtri}|
    \end{equation}
    over $\Art_L$. We show that $|X_{\Dtri}|\to |X_{D}|$ is relatively representable following the proof of \cite[Proposition 2.3.9]{BC} (see  \cite[Proposition 6.6]{dedar2020}). The only non-trivial condition to verify is the following: given an injective morphism $A\hookrightarrow A'$ in $\Art_L$ and a deformation $D_{A}\in |X_D|(A)$ such that $D_{A'}\colonequals D_A\otimes_A A'\in |X_{\Dtri}|(A')$, one has $D_{A}\in |X_{\Dtri}|(A)$. We first assume that $\delta\in \cT_{\Lambda}^{\gT}(L)$. For each $\lambda\in \Lambda$, let $\scrL_{\lambda,A'}\subseteq \eta_{D_{A'}}(V_\lambda)$ be the rank-one $\PG$-module associated to the triangulation of $D_{A'}$, and define
    $$ \mathscr{L}_{\lambda,A}\colonequals \scrL_{\lambda,A'} \cap \eta_{D_A}(V_{\lambda}) $$
    inside $\eta_{D_{A'}}(V_{\lambda})$ for $\lambda\in \Lambda$. By the argument in the proof of \cite[Proposition 2.3.9]{BC}, the submodules $\mathscr{L}_{\lambda,A}$ are saturated (here we use the fact that the triangulation on $\eta_{D_{A'}}(V_{\lambda})$ has regular parameters). To verify the compatibilities required in \Cref{pluckerdatum}, it suffices to check them after tensoring with $A'$, where they hold by assumption. Hence, the $\mathscr{L}_{\lambda,A}$ define a triangulation of $D_A$. 
    
    We now assume that $\delta\in \cT^{\gT}_r(L)$. Since $D_{A'}$ is trianguline, so is $\eta_{D_{A'}}(V_r)$. As $\delta\circ r^{\vee}\in \cT^m_{\reg}(L)$, the same argument as in \cite[Proposition 2.3.9]{BC} shows that $\eta_{D_A}(V_r)$ is trianguline. We therefore obtain a commutative diagram
   \begin{equation}\label{pushoutdiag}
  \begin{tikzcd}
    \Rep_{L}(\GL_m) \arrow[r,"r^*"] \arrow[d] & \Rep_{L}(\gG) \arrow[d,dashed] \arrow[r, "\eta_{D_A}"] & \PGcat_A^+ \\
    \Rep_{L}(\gB_m) \arrow[r,dashed] \arrow[urr] & \Rep_L(\gB) \arrow[ur,dashed]
  \end{tikzcd}
\end{equation}
By \cite[Lemma 5.11]{conti2022lifting}, the pushout of the left part of the diagram is $\Rep_L(\gB)$, and hence $\eta_{D_A}$ factors through $\Rep_L(\gB)$ as desired. 
\end{proof}

Since we assume that the parameter $\delta$ is locally algebraic, $\eta_W\colonequals W_{\dR}\circ \eta_\cM$ (resp. $\eta_{W^+} \colonequals W^{+}_{\dR}\circ \eta_D$) is an almost de Rham $\gG$-$(L\otimes_{\Qp} \BdR)$-representations (resp. $\gG$-$(L\otimes_{\Qp} \BdR^+)$-representation) of $\GK$. We set $\eta_{\Wtri}\colonequals W_{\dR}\circ \eta_{\Mtri}$, and fix a trivialization $\alpha$ of $\eta_W$ as in \Cref{deftrivialization}.

The functor $W^{+}_{\dR,A}$ induces a map $X_D \to X_{W^+}$.
Since $\eta_W = \phi_{\BdR}\circ \eta_{W^+}$, we have a natural map $X_{W^+} \to X_W$.
The situation is summarized by a $2$-commutative square
\begin{equation}\label{deformation_square}
\begin{tikzcd}
    X_D \arrow[r] \arrow[d] & X_{W^+} \arrow[d] \\
    X_{\cM} \arrow[r] & X_W.
\end{tikzcd}
\end{equation}

As a consequence of \Cref{Btensoreq}, we have the following analog of \cite[Proposition 3.5.1]{BHS19}. 

\begin{lemma}\label{G_351} The map 
\begin{align}
    X_D \longrightarrow X_{\cM} \times_{X_W} X_{W^+} \label{defo_equiv}
\end{align} 
induced by \eqref{deformation_square} is an equivalence.
\end{lemma}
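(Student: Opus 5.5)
The plan is to reduce to the $\GL$-case via the Tannakian formalism, exactly as in \cite[Proposition 3.5.1]{BHS19}, using \Cref{Btensoreq}. The key input is that for every $A \in \Art_L$ the functor
$$ \PGcat^+_{K,A} \longrightarrow \PGcat_{K,A} \times_{\Rep_{A\otimes_{\Qp}\BdR}(\GK)} \Rep_{A\otimes_{\Qp}\BdR^+}(\GK), \quad D_A \mapsto \big(D_A[\tfrac{1}{t}], W^+_{\dR,A}(D_A)\big), $$
is an equivalence of $A$-linear exact tensor categories. Here the target is the $2$-fiber product of categories, whose objects are triples $(\cM_A, W^+_A, \beta)$ with $\beta\colon W_{\dR,A}(\cM_A)\eqto \phi_{\BdR}(W^+_A)$. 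To prove this I will combine \Cref{Btensoreq} with the equivalence between $\PGcat^+_{K,A}$ and $A$-$B$-pairs from \cite[Proposition 3.5.1]{BHS19}. An $A$-$B$-pair $(M_e, M_{\dR}^+)$ is the same thing as an $A\otimes B_e$-representation $M_e$, an $A\otimes \BdR^+$-representation $M^+_{\dR}$, and an identification $\BdR\otimes M_e \cong \BdR\otimes M^+_{\dR}$. By \Cref{Btensoreq}, $W_{e,A}$ identifies $A\otimes B_e$-representations with $\PGcat_{K,A}$, and under this identification $\BdR\otimes_{B_e}$ corresponds to $W_{\dR,A}$. This gives the equivalence on objects and morphisms. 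Compatibility with tensor products and exactness follows since all three functors are exact tensor functors, and a sequence of $B$-pairs is exact if and only if both components are exact.

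Next, I will pass to fiber functors. Giving a $\gG$-$\PG$-module $\eta_{D_A}$ is then equivalent to giving a fiber functor from $\Rep_L(\gG)$ into the fiber product category. By the universal property of the $2$-fiber product, such a fiber functor is the same as a triple $(\eta_{\cM_A}, \eta_{W^+_A}, \beta)$, where $\beta$ is a tensor isomorphism $W_{\dR}\circ\eta_{\cM_A}\eqto \phi_{\BdR}\circ\eta_{W^+_A}$. I need to check that exactness and faithfulness of the composite fiber functor hold if and only if they hold for both components. Faithfulness is clear since, e.g., $\eta_{\cM_A}$ is faithful whenever $\eta_{D_A}$ is, and conversely. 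Exactness follows from the exactness statement above, together with the fact that \Cref{lattice_lemma}-type lattice issues do not arise, because $\eta_{D_A}$ is recovered from the $B$-pair.

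Finally, I will add the rigidifications $j_A$, $\iota_A$ and check compatibility with base change along $A\to A'$. The $2$-fiber product $X_{\cM}\times_{X_W}X_{W^+}$ has as objects exactly such triples together with compatible isomorphisms to the reductions at $L$. Morphisms in both groupoids correspond because the categorical equivalence is fully faithful. Thus \eqref{defo_equiv} is fully faithful and essentially surjective for each $A$, and it is natural in $A$.

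The main obstacle I expect is the first step. One must verify that $W^+_{\dR,A}$ and $W_{e,A}$ land in free modules, which is provided by \cite[Lemma 3.3.5, Proposition 3.5.1]{BHS19}. One must also verify that the $B$-pair equivalence is compatible with inverting $t$: an $\cR_{K,A}$-lattice $D_A \subset \cM_A$ corresponds to the $\BdR^+$-lattice $W^+_{\dR,A}(D_A)$ with the same $W_{e,A}$. This is what makes the fiber product description valid, and I would take it from \cite[§3.5]{BHS19}. Everything after that step is formal Tannakian bookkeeping.
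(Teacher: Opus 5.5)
Your proposal is correct and takes the same approach as the paper. The paper states the lemma as a direct consequence of \Cref{Btensoreq} together with the $A$-$B$-pair equivalence of \cite[Proposition 3.5.1]{BHS19}, passed through the Tannakian formalism. Your write-up just spells out the $2$-fiber-product bookkeeping (objects, morphisms, exactness, faithfulness, rigidifications and base change) that the paper leaves implicit.
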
 

We define the following fiber products of groupoids over $\Art_L$
\begin{align*}
 X_{D, \Mtri} \colonequals X_D \times_{X_{\cM}} X_{\Mtri}, \quad \quad  X_{W^+, \Wtri} \colonequals X_{\Wtri} \times_{X_W} X_{W^+}.
\end{align*}
Recall that we introduced in \Cref{sec_adRdefs} the groupoids $X_W^{\square}$, $X_{\Wtri}^\square=X_{\Wtri}\times_{X_W}X_W^{\square}$, and $X_{W^+}^\square=X_{W^+}\times_{X_W}X_W^{\square}$. We use them to define the following fiber products
\begin{flalign*}
     &X_{\cM}^{\square} \colonequals X_{\cM} \times_{X_W} X_W^{\square}, \quad\quad X_{D}^{\square}  \colonequals X_{D} \times_{X_W} X_W^{\square}, \quad\quad  X_{\Mtri}^{\square} \colonequals X_{\Mtri} \times_{X_W} X_W^{\square},
    \\   &X_{D, \Mtri}^{\square} \colonequals X_{D, \Mtri} \times_{X_W} X_W^{\square}, \quad X_{W^+,\Wtri}^{\square} \colonequals X_{W^+,\Wtri}\times_{X_W} X_W^{\square}.
\end{flalign*}

\begin{lemma}[{cf. \cite[Lemma 3.5.3]{BHS19}}]\label{rel_rep_XM_XW}
    The morphisms $X_{\cM} \to X_W$ and $X_{\cM^{\triangle}} \to X_{\Wtri}$ are relatively representable.
\end{lemma}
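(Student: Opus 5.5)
The plan is to reduce to the $\GL_n$ case, which is \cite[Lemma 3.5.3]{BHS19}, via a faithful representation, and to use Schlessinger-type criteria for relative representability. Recall that a morphism of groupoids $F\to G$ over $\Art_L$ is relatively representable if for every $A\in\Art_L$ and $y\in G(A)$ the fiber $F\times_G h_A$ is pro-representable. Equivalently, the fibers have trivial automorphisms, satisfy the Mayer--Vietoris (fiber product) condition, and are stable under the injectivity condition of \cite[Proposition 2.3.9]{BC}-type.

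For $X_{\cM}\to X_W$, an object of the fiber over $W_A\in X_W(A)$ is a pair consisting of $\cM_A\in X_{\cM}(A)$ and an isomorphism $W_{\dR}\circ\eta_{\cM_A}\cong\eta_{W_A}$. First I would establish triviality of automorphisms. An automorphism of $\cM_A$ inducing the identity on $W_{\dR}$ (and on the reduction to $L$) is checked after applying $\eta_{\cM_A}$ to a faithful $V\in\Rep_L(\gG)$. There it becomes an automorphism of a $\PG$-module over $\cR_{K,A}[\tfrac1t]$ that is the identity on the associated $B_{\dR}$-representation. Since $W_{e,A}$ is an equivalence (\Cref{Btensoreq}) and $B_e\to\BdR$ is injective, the automorphism is the identity on $W_e(\eta_{\cM_A}(V))$, hence trivial. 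Faithfulness of $V$ together with \Cref{tannakian_fundamental_equivalence} then gives triviality on the torsor.

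Next I would verify the fiber product condition: for $A_1\to A_0\leftarrow A_2$ with one map surjective, the natural map $X_{\cM}(A_1\times_{A_0}A_2)\to X_{\cM}(A_1)\times_{X_{\cM}(A_0)}X_{\cM}(A_2)$ should be an equivalence compatibly with $X_W$. By \Cref{Btensoreq}, $X_{\cM}$ is identified with deformations of the $\gG$-$B_e$-representation $W_e\circ\eta_{\cM}$. The statement is then the gluing of $\gG$-torsors over $A\otimes B_e$ with semilinear $\GK$-action along the fiber product of rings, together with gluing of the $\GK$-action. For vector bundles this is the Milnor patching argument used in \cite{BHS19}. For $\gG$-torsors I pass through the fiber functor description: glue $\eta(V)$ for every $V$ functorially and compatibly with $\otimes$, and note that exactness and faithfulness persist. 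Combining this with the analogous statement for $X_W$ shows that the fibers satisfy Schlessinger's (H1)–(H2) with equivalence. Finite-dimensionality of the tangent space of the fiber also comes through $V$: it injects into $H^1$ of the relevant $\PG$-module $\eta_{\cM}(\mathfrak g)$ with the $W_{\dR}$-component killed, and this is finite dimensional.

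The case $X_{\Mtri}\to X_{\Wtri}$ is the same argument with $\gB$ in place of $\gG$, using a faithful representation of $\gB$. Alternatively, one can combine the $\gG$ case with the Plücker description (\Cref{pluckercoordinatesphi-gamma}), using that $B_e$-sublines correspond uniquely to $\PG$-sublines after inverting $t$. The main obstacle is the gluing step for torsors over the non-noetherian rings $A\otimes B_e$: one has to make sure that the glued object is again free and that the glued fiber functor is exact. My plan is to reduce this to the rank-level statement of \cite[Lemma 3.5.3]{BHS19} applied representation-by-representation, and then invoke the Tannakian equivalence.
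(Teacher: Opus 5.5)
This is correct and is essentially the paper's proof: via the equivalence $W_e$ of \Cref{Btensoreq} you describe the fiber through $\gG$-$B_e$-representations, get trivial automorphisms from the injectivity of $B_e\to\BdR$, get the fiber-product condition from freeness, and control the tangent space through $H^1_{\varphi,\gamma_K}(\ad\cM)$. Your representation-by-representation Milnor gluing just spells out the paper's one-line freeness remark, so despite your opening sentence no genuine reduction of the statement to $\GL_n$ is made or needed.

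Two small corrections. First, the injectivity condition of \cite[Proposition 2.3.9]{BC} is a criterion for subfunctors and plays no role here. Second, the tangent space of the fiber does not inject into $H^1_{\varphi,\gamma_K}(\ad\cM)$: besides a deformation class it records a tangent vector of $\Spf(A)$ and an element of $H^0(\GK,W_{\dR}(\ad\cM))$ modulo the image of $H^0_{\varphi,\gamma_K}(\ad\cM)$, which is nonzero already for $\gG=\GL_1$ and $K\neq\Qp$. These extra pieces are finite-dimensional, so the conclusion survives. You should still justify finite-dimensionality of $H^1_{\varphi,\gamma_K}(\ad\cM)$ over $\cR_{K,L}[\tfrac{1}{t}]$ as the paper does, by dévissage along the triangulation to rank-one objects and passing to the colimit along $t^{-1}$.
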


\begin{proof} Let $A \in \Art_L$ and $W_A \in X_W(A)$.
    We need to show that $F \colonequals \Spf(A) \times_{X_W} X_{\cM}$ is set-valued and pro-representable by a noetherian ring in $\widehat \Art_L$.
    For $A' \in \Art_L$ an object in $F(A')$ is given by a map $A \to A'$, an object $\cM_{A'} \in X_{\cM}(A')$ and an isomorphism $\phi_{A'}\circ\eta_{W_A} \cong W_{\dR,A}\circ\eta_{\cM_{A'}}$.
    Using the equivalence between $\PGcat_{K,A}$ and the category of $A \otimes_{\Qp} B_e$-representations, an object of $F(A')$ is equivalently given by a map $A \to A'$, a Tannakian deformation $M_e$ of $W_e^{\gG}(\cM)$ over $A'$ and an isomorphism $W_A \otimes_A A' \cong B_{\dR} \otimes_{B_e} M_e$. This description implies that every automorphism of this groupoid is trivial. Hence, $F$ is set-valued.
    Using that $\eta_{W_A}(V)$ is free over $A$ and freeness of $\eta_{M_e}(V)$ for all $V \in \Rep_L(\gG)$, we see that $F$ preserves fiber products and, as a deformation functor, preserves the final object, so it preserves finite limits. Hence, by Grothendieck's criterion, $F$ is pro-representable.

    For finiteness of the tangent space $F(L[\varepsilon])$, we observe that the tangent space is base changed from the tangent space of $\Spf(A) \times X_{\cM}$.
    A standard computation shows that the tangent space of $X_{\cM}$ is given by $H^1_{\varphi, \gamma_K}(\ad \cM)$, which is finite-dimensional by a dévissage, using triangulinity of $\cM$ and cohomology of rank $1$ objects in \Cref{coh_char}, passing to a colimit along multiplication by $t^{-1}$.

    The argument for the second functor is essentially the same.
\end{proof}

\begin{corollary}
[{cf. \cite[Corollary 3.5.4]{BHS19}}]\label{lem_354} The morphisms $X_{\Mtri}^{\square} \to X_{\Wtri}^{\square}$, $X_{D, \Mtri} \to X_{W^+, \Wtri}$ and $X_{D, \Mtri}^{\square} \to X_{W^+, \Wtri}^{\square}$ are relatively representable.
\end{corollary}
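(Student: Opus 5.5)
The plan is to deduce all three statements formally from \Cref{rel_rep_XM_XW}, using only that relative representability is stable under base change and composition.

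First, for $X_{\Mtri}^{\square} \to X_{\Wtri}^{\square}$: by definition $X_{\Mtri}^{\square} = X_{\Mtri}\times_{X_W} X_W^{\square}$ and $X_{\Wtri}^{\square}=X_{\Wtri}\times_{X_W}X_W^{\square}$. Since $X_{\Mtri}\to X_W$ factors through $X_{\Wtri}\to X_W$, we can identify
\[
X_{\Mtri}^{\square}\cong X_{\Mtri}\times_{X_{\Wtri}} X_{\Wtri}^{\square}.
\]
Hence the map in question is the base change of $X_{\Mtri}\to X_{\Wtri}$ along $X_{\Wtri}^\square\to X_{\Wtri}$. That map is relatively representable by \Cref{rel_rep_XM_XW}, so its base change is as well.

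Second, for $X_{D,\Mtri}\to X_{W^+,\Wtri}$: I would use \Cref{G_351} to rewrite
\[
X_{D,\Mtri}=X_D\times_{X_\cM}X_{\Mtri}\simeq X_{W^+}\times_{X_W}X_{\Mtri}.
\]
I would then rewrite $X_{W^+,\Wtri}=X_{\Wtri}\times_{X_W}X_{W^+}$, and observe that
\[
X_{W^+}\times_{X_W}X_{\Mtri}\cong X_{W^+,\Wtri}\times_{X_{\Wtri}}X_{\Mtri}.
\]
This identifies the map as the base change of $X_{\Mtri}\to X_{\Wtri}$ along the projection $X_{W^+,\Wtri}\to X_{\Wtri}$, so it is again relatively representable. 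The main point to check is that these identifications are genuinely $2$-categorical equivalences of fiber products of groupoids, compatible with the maps to $X_{W^+,\Wtri}$. This amounts to a pasting lemma for $2$-cartesian squares together with the $2$-commutativity of \eqref{deformation_square}. It is the only step needing care, and I would spell it out with explicit objects $(D_A,\Mtri_A,\text{isos})$.

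Third, the framed version $X_{D,\Mtri}^{\square}\to X_{W^+,\Wtri}^{\square}$ is the base change of the previous map along $X_W^{\square}\to X_W$, since both sides are defined by $-\times_{X_W}X_W^\square$. It is therefore relatively representable.
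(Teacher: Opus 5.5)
Your proposal is correct and follows the same route as the paper. The paper invokes \Cref{G_351} and \Cref{rel_rep_XM_XW} and defers to the base-change argument of BHS19, Corollary~3.5.4, which is exactly what you do: you identify $X_{D,\Mtri}\simeq X_{W^+,\Wtri}\times_{X_{\Wtri}}X_{\Mtri}$ via \Cref{G_351}, and you get the two framed statements by base change along $X_W^{\square}\to X_W$. The pasting of $2$-cartesian squares is legitimate because the two composites $X_{\Mtri}\to X_{\cM}\to X_W$ and $X_{\Mtri}\to X_{\Wtri}\to X_W$ are $2$-isomorphic, which is the point you rightly flag as needing care.
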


\begin{proof}
    This follows from \Cref{G_351} and \Cref{rel_rep_XM_XW} as in the proof of \cite[Corollary 3.5.4]{BHS19}.
\end{proof}

\begin{lemma}[{cf. \cite[Corollary 3.5.6]{BHS19}}]\label{lem356}
    The morphisms $X_{\Mtri} \to X_{\Wtri}$, $X_{\Mtri}^{\square} \to X_{\Wtri}^{\square}$, $X_{D, \Mtri} \to X_{W^+, \Wtri}$, and $X_{D, \Mtri}^{\square} \to X_{W^+, \Wtri}^{\square}$ are formally smooth.
\end{lemma}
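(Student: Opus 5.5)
The first morphism $X_{\Mtri}\to X_{\Wtri}$ is the main case; the other three follow by base change. I would factor it as
\[ X_{\Mtri}\longrightarrow \widehat\cT^\gT_{\delta}\times_{\widehat\frakt}X_{\Wtri}\longrightarrow X_{\Wtri}. \]
By \Cref{trivialforB}, $\Mtri$ is $\gB$-trivial. Its parameter $\delta$ is locally algebraic, and I will assume it lies in $\cT^\gT_{0,\gB}(L)$: this holds under any of the regularity hypotheses in force, since $\cT_{0,\Lambda}^\gT$ and $\cT_{0,r}^\gT$ are contained in $\cT^\gT_{0,\gB}$. Then \Cref{thm_formal_smoothness} says the first arrow is formally smooth. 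The second arrow is the base change along $\kappa_{\Wtri}\colon X_{\Wtri}\to\widehat\frakt$ of $\wt-\wt(\delta)\colon\widehat\cT^\gT_\delta\to\widehat\frakt$. That map is formally smooth by \Cref{lem355}, so the second arrow is formally smooth too, and so is the composite. The factorization is legitimate, i.e.\ the first arrow really lands in the fiber product, because the square in \Cref{BHS3.3.9} commutes.

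For the framed version, note that $X_{\Mtri}^\square=X_{\Mtri}\times_{X_{\Wtri}}X_{\Wtri}^\square$. This holds since $X^\square_{\Wtri}=X_{\Wtri}\times_{X_W}X_W^\square$ and $X_{\Mtri}^\square=X_{\Mtri}\times_{X_W}X_W^\square$. So $X_{\Mtri}^\square\to X_{\Wtri}^\square$ is a base change of the unframed map and is formally smooth.

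For the $D$-version, use \Cref{G_351}, which gives $X_D\simeq X_\cM\times_{X_W}X_{W^+}$. From it,
\[ X_{D,\Mtri}=X_D\times_{X_\cM}X_{\Mtri}\simeq X_{\Mtri}\times_{X_W}X_{W^+}\simeq X_{\Mtri}\times_{X_{\Wtri}}\big(X_{\Wtri}\times_{X_W}X_{W^+}\big)=X_{\Mtri}\times_{X_{\Wtri}}X_{W^+,\Wtri}. \]
Hence $X_{D,\Mtri}\to X_{W^+,\Wtri}$ is the base change of $X_{\Mtri}\to X_{\Wtri}$ and is formally smooth. The same manipulation, after fibering everything with $X_W^\square$ over $X_W$, handles the last framed morphism.

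The only delicate points are bookkeeping with fiber products of groupoid-valued pseudofunctors (2-fiber products). Formal smoothness for morphisms of groupoids is stable under base change and composition when defined via essential surjectivity on lifting problems, and I would check this. The relative representability from \Cref{rel_rep_XM_XW} and \Cref{lem_354} ensures these are honest morphisms of functors after base change to representables, so the notion behaves as in \cite[Corollary 3.5.6]{BHS19}. I expect the main obstacle to be confirming that the regularity assumptions here imply the $\gB$-regularity needed by \Cref{thm_formal_smoothness}. If they do not, I would simply add $\delta\in\cT^\gT_{0,\gB}(L)$ as a hypothesis, as the paper does implicitly.
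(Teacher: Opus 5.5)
Your argument is the paper's proof. You factor $X_{\Mtri}\to X_{\Wtri}$ through $\widehat\cT^\gT_{\delta}\times_{\widehat\frakt}X_{\Wtri}$ using \Cref{thm_formal_smoothness}, \Cref{lem355} and \Cref{BHS3.3.9}, and you get the other three maps by base change via \Cref{G_351}. The paper base-changes only the first factor along $X_{W^+}\to X_W$ and then composes, which is equivalent to your base change of the composite.

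One correction concerns the hypothesis. The containment $\cT^\gT_{0,\Lambda}\subseteq\cT^\gT_{0,\gB}$ is false as the definitions stand. $\Lambda$-regularity constrains $\delta\circ(\mu-\lambda)^\vee$ with $\mu-\lambda$ a \emph{negative} combination of roots, and $\cT_0$ is not stable under inversion. For example, for $\GL_2$ with $\delta_1\delta_2^{-1}=\varepsilon$, every $\delta\circ(-j\alpha)^\vee=\varepsilon^{-j}$ lies in $\cT_0$, but $\delta\circ\alpha^\vee=\varepsilon$ does not. So that $\delta$ lies in $\cT^\gT_{0,\Lambda}$ but not in $\cT^\gT_{0,\gB}$.

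Your fallback of imposing $\delta\in\cT^\gT_{0,\gB}(L)$ explicitly is therefore genuinely needed. It is exactly what the paper's proof uses tacitly; the assumption is only stated after the lemma. For $\cT^\gT_{0,r}$ your containment claim is fine, since $r(\gB)\subseteq\gB_m$ makes every positive root a $\gB_m$-positive difference of $\gT$-weights of $V_r$.
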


\begin{proof}
    By base changing $\wt - \wt(\delta) \colon \wh\cT^\gT_{\delta} \to \wh\frakt$, we obtain maps $\wh\cT^\gT_{\delta} \times_{\wh\frakt} X_{\Wtri} \to X_{\Wtri}$ and $\wh\cT^\gT_{\delta} \times_{\wh\frakt} X_{W^+, \Wtri} \to X_{W^+, \Wtri}$, which are formally smooth by \Cref{lem355}. By composing with the formally smooth map $X_{\Mtri} \to \widehat \cT^\gT_{\delta} \times_{\widehat \frakt} X_{\Wtri}$ \eqref{formally_smooth_morphism} of \Cref{thm_formal_smoothness}, we find that $X_{\Mtri} \to X_{\Wtri}$ is formally smooth.
    By base changing \eqref{formally_smooth_morphism} along $X_{W^+} \to X_W$, and using \Cref{G_351}, we obtain a formally smooth map $X_{D, \Mtri} \to \wh\cT^\gT_{\delta} \times_{\wh\frakt} X_{W^+, \Wtri}$, so formal smoothness of the third map follows by composition.
    The statements for the second and the last map follow by base change.
\end{proof}

We introduce a groupoid of \emph{versal} (in the terminology of \cite[Proposition 3.5.7]{BHS19}) or \emph{rigidified} (in the terminology of \cite[Section 5.2]{dedar2020} and \cite[Section 3.1]{chenevier2010sur}) deformations of $\PG$-modules. 

We fix a rigidification $\theta\colon\cR_{K,A}[\frac{1}{t}](\delta)\cong\Mtri\times^\gB\gT$ of the $\gB$-$\PG$-module $\Mtri$, and we assume that $\delta\in \cT^{\gT}_{0,\gB}$. 
For $A\in\Art_L$, we define $X_{\Mtri}^\ver(A)$ to be the groupoid of triples $(\Mtri_A,\theta_A,\iota_A)$ where $(\Mtri_A,\theta_A)$ is a rigidified $\gB$-$\PG$-module over $\cR_{K,A}[\tfrac{1}{t}]$, and $\iota_A$ an isomorphism $\Mtri_A\otimes_AL\cong\Mtri$ such that $\theta\circ\iota_A\times^\gB\gT=\theta_A\otimes_AL$, as in \Cref{def_reg_rig_PG}.


\begin{proposition}[{cf. \cite[Proposition 3.5.7]{BHS19}}]\label{lem_357}
    The pseudofunctor $X_{ \Mtri}^{\square}$ is pro-representable. 
    The functor $|X_{ \Mtri}^{\square}|$ is pro-represented by a formally smooth complete noetherian local ring with residue field $L$ of dimension $[K : \Qp]\cdot(\dim \gG_L + \dim \gB_L)$.
\end{proposition}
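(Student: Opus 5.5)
The plan follows \cite[Proposition 3.5.7]{BHS19}: we factor $X_{\Mtri}^{\square}\to X^\square_{\Wtri}$ through the versal groupoid $X^\ver_{\Mtri}$. Since \Cref{319} identifies $X^\square_{\Wtri}$ with $\gth$, a formally smooth pro-representable functor of dimension $[K:\Qp]\dim\gG_L$, it then suffices to show two things: that $X_{\Mtri}^{\square}$ is pro-representable, and that the relative dimension of $X_{\Mtri}^{\square}\to X^\square_{\Wtri}$ is $[K:\Qp]\dim\gB_L$.

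First, pro-representability. By \Cref{rel_rep_XM_XW} the map $X_{\Mtri}\to X_{\Wtri}$ is relatively representable, and hence so is its base change $X_{\Mtri}^{\square}\to X_{\Wtri}^{\square}$. Since $X_{\Wtri}^\square$ is set-valued and pro-representable by \Cref{319}, $X_{\Mtri}^\square$ is set-valued and pro-representable as well. Formal smoothness of $|X_{\Mtri}^{\square}|$ follows from \Cref{lem356} composed with the formal smoothness of $\gth$. Both rings are complete noetherian local with residue field $L$, and formally smooth over $L$.

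Second, the dimension. It is enough to compute the tangent space, i.e.\ $\dim_L X_{\Mtri}^\square(L[\varepsilon])$. Because $X_{\Mtri}^\square = X_{\Mtri}\times_{X_W}X_W^\square$ and both functors are formally smooth, the dimension equals
\[
\dim |X_{\Mtri}|(L[\varepsilon]) \;+\; \dim \frakg \;-\; \dim(\text{infinitesimal automorphisms}),
\]
where the framing contributes $\frakg$ modulo the automorphisms of $W$ that come from $\Mtri$. I would make this cleaner with the rigidified groupoid $X^\ver_{\Mtri}$. By \Cref{SBtrivialAut}, adapted after inverting $t$ using $\gB$-regularity, its objects have no automorphisms. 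Moreover, forgetting $\theta$ is a $\widehat{\gT}$-torsor-like map. Combining \Cref{H1_Ext_U}, \Cref{expH1_new}, and the central-series induction of \Cref{better_unipotent_H1} with $t$ inverted, together with \Cref{class_rk_one_after_inv_t}, gives the tangent space of $X^\ver_{\Mtri}$: it is an extension of $T_\delta\cT^\gT$, of dimension $\dim\gT([K:\Qp]+1)$, by $H^1_{\varphi,\Gamma_K}(V[\tfrac1t])$, of dimension $[K:\Qp]\dim\gU$ by cohomological regularity (\Cref{Breg_coh_reg}). In total this is $[K:\Qp]\dim\gB+\dim\gT$.

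Passing from versal to framed amounts to adding the framing $\frakg\otimes K$ and removing $\frakt$ (the change of rigidification), with the automorphisms trivial. This yields
\[
[K:\Qp](\dim\gG+\dim\gB)+\dim\gT-\dim\gT,
\]
which is the claimed dimension. I expect the bookkeeping of automorphisms to be the main obstacle: one has to check that $X^\square_{\Mtri}$ is really a $\widehat{\gT}$-quotient of $X^\ver_{\Mtri}\times_{X_W}X^\square_W$ compatibly with the framing. The step I would try first is to verify that the natural map $X^{\ver}_{\Mtri}\times_{X_W}X_W^\square\to X^\square_{\Mtri}$ is formally smooth of relative dimension $\dim\gT$ (it is a torsor under $\widehat{\gT}$ acting on $\theta$). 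I would then compute the dimension of the source as $\dim X^\ver_{\Mtri}+\dim\frakg-\dim\frakb_{\mathrm{aut}}$, using the identification of $X_W^\square$ with $\widehat\frakg$ from \Cref{XW_and_g}.
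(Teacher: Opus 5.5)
Your proposal is correct and follows the paper's proof. Pro-representability comes from \Cref{319} and \Cref{lem_354}. The dimension comes from $\dim X^{\ver}_{\Mtri}=[K:\Qp]\dim\gB_L+\dim\gT_L$ together with the cartesian square in which the framing adds $[K:\Qp]\dim\gG_L$ and the rigidification removes $\dim\gT_L$; your $X^{\ver}_{\Mtri}\times_{X_W}X^\square_W$ is the same object as the paper's $X^{\ver}_{\Mtri}\times_{X_{\Mtri}}X^\square_{\Mtri}$. Formal smoothness, which the paper leaves implicit, you correctly obtain from \Cref{lem356} and \Cref{319}.

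Only the bookkeeping needs tidying:
\begin{itemize}
\item The rigidification removes $\dim\gT_L$, not $\dim\frakt=[K:\Qp]\dim\gT_L$.
\item The framing adds $\dim\frakg$, not the dimension of $\frakg\otimes K$.
\item There is no term $\dim\frakb_{\mathrm{aut}}$ to subtract: $X^{\ver}_{\Mtri}$ and $X^\square_W$ have trivial automorphisms, hence so does their fiber product over $X_W$.
\end{itemize}
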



\begin{proof} 
The pro-representability of $X_{ \Mtri}^{\square}$, and $\lvert X_{ \Mtri}^{\square}\rvert$, is immediate from \Cref{319} and \Cref{lem_354}. We now compute its dimension. By the same argument as in the proof of \Cref{SB_representable} and \Cref{SBtrivialAut} (but with inverting $t$), together with the fact that $\dim \cT_{0,\gB}^{\gT}=\dim\cT^{\gT}=([K:\Qp]+1)\cdot\dim \gT_L$, the dimension of $X_{\Mtri}^\ver$ is $[K:\Q_p]\cdot\dim\gB_L+\dim\gT_L$.
Now consider the cartesian diagram
    \begin{center}
    \begin{tikzcd}
        X_{\Mtri}^\ver\times_{X_{\Mtri}}X_{\Mtri}^\square \arrow[r] \arrow[d] & \arrow[d] X_{ \Mtri}^{\square}
        \\ X_{\Mtri}^\ver \arrow[r] & X_{\Mtri} 
    \end{tikzcd}       
    \end{center}
where the vertical arrows are the obvious ones, and the horizontal ones are obtained by forgetting the rigidification. 
 The vertical arrows are of relative dimension $[K:\Q_p]\cdot \dim\gG_L$ (coming from the choice of a framing) and the horizontal ones are of relative dimension $\dim\gT_L$ (coming from the choice of a rigidification). Therefore, the dimension of $X_{\Mtri}^\square$ is $[K:\Q_p] (\dim\gG_L+\dim\gB_L)$, as desired.
\end{proof}

\begin{corollary}[{cf. \cite[Corollary 3.5.8]{BHS19}}]\label{XW+prorep}\phantom{a}
\begin{enumerate}
\item The groupoid $X_{W^+,W^{\triangle}}^{\square}$ is a setoid and is pro-represented by the formal scheme $\wh X_{x_\pdR}$.
\item The groupoid $X_{D,\cM^\triangle}^\sq$ is a setoid and is pro-represented by a formal scheme which is formally smooth of relative dimension $[K:\Q_p]\cdot \dim \gB_L$ over $\wh X_{x_\pdR}$. 
\end{enumerate}
\end{corollary}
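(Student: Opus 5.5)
For part (1), I would use the fiber-product structure directly. By definition $X_{W^+,\Wtri}^{\square} = X_{W^+,\Wtri}\times_{X_W} X_W^{\square}$, which I rewrite as $X_{W^+}^{\square}\times_{X_W^{\square}} X_{\Wtri}^{\square}$. By \Cref{very_key_proposition_W+}, $X_{W^+}^{\square}$ is a setoid pro-represented by $\gth$, completed at $(x_{W^+,\HT},N_W)$; this uses the regularity of $W^+$, which I assume as in the introduction. By \Cref{319}, $X_{\Wtri}^{\square}$ is a setoid pro-represented by $\gth$, completed at $(x_{\Wtri,\tri},N_W)$. By \Cref{XW_and_g}, $X_W^{\square}\cong\wh\frakg$ via $N_{W_A}$. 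All three are setoids, so the fiber product of groupoids is a setoid. Both maps to $\wh\frakg$ are the projections $(x,N)\mapsto N$, so the fiber product is the completion of $\gtilde\times_\frakg\gtilde = X$ at the point $x_\pdR=(x_{\Wtri,\tri},x_{W^+,\HT},N_W)$.

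The one point needing care is that completion commutes with fiber products. I would verify this on $A$-points for $A\in\Art_L$: an $A$-point of $\wh X_{x_\pdR}$ is a pair of $A$-points of the two completed copies of $\gtilde$ with the same image in $\frakg(A)$. This is exactly the functorial description above, which also matches the ordering convention $(x_{\tri},x_{\HT},N)$ of the introduction.

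For part (2), \Cref{G_351} gives $X_{D,\Mtri}^{\square}\cong X_{\Mtri}^{\square}\times_{X_{\Wtri}^{\square}} X_{W^+,\Wtri}^{\square}$. By \Cref{lem_354} and \Cref{lem356}, the map $X_{D,\Mtri}^{\square}\to X_{W^+,\Wtri}^{\square}$ is relatively representable and formally smooth. It is a base change of $X_{\Mtri}^{\square}\to X_{\Wtri}^{\square}$, and relative representability implies the source is a setoid because the target is one. Combining with (1), $X_{D,\Mtri}^\square$ is pro-represented by a formal scheme formally smooth over $\wh X_{x_\pdR}$.

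The relative dimension of $X_{D,\Mtri}^\square$ over $\wh X_{x_\pdR}$ equals that of $X_{\Mtri}^{\square}\to X_{\Wtri}^{\square}$. Using \Cref{lem_357} and \Cref{319}, this is $[K:\Qp](\dim\gG+\dim\gB)-[K:\Qp]\dim\gG=[K:\Qp]\dim\gB$. For a formally smooth map between formally smooth complete local rings, relative dimension is the difference of dimensions, and it is preserved under base change.

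I expect the main obstacle to be bookkeeping rather than substance. One must check that the fiber-product identifications are compatible with the maps used to define $x_\pdR$, in particular that the trivialization $\alpha$ is the same on both sides. One must also check that relative representability indeed yields a setoid.
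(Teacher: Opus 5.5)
Your proposal is correct and follows the paper's proof. Part (1) comes from \Cref{very_key_proposition_W+} and \Cref{319}; you usefully spell out the identification with the completion of $\gtilde\times_{\frakg}\gtilde$ via \Cref{XW_and_g}, and you rightly flag that regularity of $W^+$ is needed. Part (2) combines (1) with \Cref{lem_354}, \Cref{lem356}, the base-change description from \Cref{G_351}, and the dimension count from \Cref{319} and \Cref{lem_357}, exactly as the paper does.
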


\begin{proof} The first statement follows directly from \Cref{very_key_proposition_W+} and \Cref{319}.
For the second, combining $(1)$ with \Cref{lem_354} shows that $X_{D,\cM^\triangle}^\sq=|X_{D,\cM^\triangle}^\sq|$ is representable. By \Cref{lem356}, the morphism $X_{D, \Mtri}^{\square} \to X_{W^+, \Wtri}^{\square}$ is formally smooth. Moreover, its relative dimension is equal to that of $X_{\Mtri}^\square \to X_{\Wtri}^\square$ since the latter is obtained from the former by base change. By \Cref{319} and \Cref{lem_357}, it is equal to $[K:\Q_p]\dim \gB_L$.
\end{proof}

\subsection{The point of $X$ attached to a regular triangulable $\PG$-module}\label{sec:defx} 

We keep the notations and assumptions of \Cref{deformationsofPG}. We further assume that $W^+$ is regular (\Cref{def:WA+reg}).

Consider:
\begin{itemize}
\item the nilpotent element $N_{W}\in \frakg(L)$ associated to $(W,\alpha)$ in \Cref{pdR_charanew},
\item the regular Hodge--Tate cocharacter $\varpi_\HT \in X_*(\bT)$ associated to $W^+$ in \Cref{def_HT},
\item the Hodge--Tate flag $x_{W^+,\HT}\in (\bG/\bB)(L)$ associated to $(W^+,\alpha)$ in \Cref{firstflag},
\item the Sen parameter $y_{W^+,\Sen}\in (\frakt\GIT\bW)(L)$ associated to $W^+$ in \Cref{def_Senpar},
\item the flag $x_{\Wtri,\tri}\in (\bG/\bB)(L)$ associated to $(\Wtri,\alpha)$ in \Cref{secondflag}, for $\eta_{\Wtri}=W_{\dR}\circ \eta_{\Mtri}$.
\end{itemize}
By \Cref{pointgtilde1} and \Cref{pointgtilde2}, we obtain an $L$-point of $X$:
$$ x_{\pdR}\colonequals (x_{\Wtri,\tri}, x_{W^+,\HT}, N_{W})\in X(L). $$
This is the $\gG$-analogue of the point defined in \cite[before Corollary 3.5.8]{BHS19}.

Given $w\in \bW$, recall that we defined $U^{w}= \bG(L)(1,\dot{w})(\bB\times \bB)(L) \subseteq (\bG/\bB\times \bG/\bB)(L)$, where $\dot{w}\in N_{\bG}(\bT)$ is some lift of $w$. We have a set-theoretic decomposition 
\[ (\bG/\bB\times \bG/\bB)(L)= \bigsqcup_{w\in \bW} U^w(L) \]
coming from the Bruhat decomposition of $\bG(L)$. 

\begin{definition}\label{def:wsat}
    We define $w_{\sat}\in \bW$ to be the element such that $(x_{\Wtri,\tri}, x_{W^+,\HT})\in U^{w_{\sat}}(L)$. 
\end{definition}

\begin{lemma}\label{Worbit}
The images of $\mathrm{d} \varpi_\HT $ and $\wt(\delta)$ in $(\frakt\GIT\bW)(L)$ coincide with $y_{W^+,\Sen}$.
\end{lemma}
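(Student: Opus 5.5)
The plan is to reduce everything to statements about traces in representations, using \Cref{effectonrhoV}. A point of $(\frakt\GIT\bW)(L)$ is determined by the homomorphism $\mathcal R(\bG)\otimes L\to L$ it induces. By \Cref{fiberfunctorforbG}, after the decomposition $L\otimes_{\Qp}K\cong\prod_{\tau\in\Sigma}L$, this amounts to giving, for each $\tau$, a trace functional $V\mapsto \tr_\tau(\cdot\,;V)$ on $\Rep_L(\gG)$. It therefore suffices to show that, for every $V\in\Rep_L(\gG)$ and every $\tau$, the three elements have the same $\tau$-component of trace on $V$.

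First, for $y_{W^+,\Sen}$, this $\tau$-trace is by definition the $\tau$-component of the trace of the Sen operator $\Theta(\eta_{W^+}(V))$. By the classical theory of Sen, this equals the sum of the $\tau$-Hodge--Tate weights of $\eta_{W^+}(V)$, counted with multiplicity and with the sign convention fixed in the paper. This holds because the representation is Hodge--Tate: it is almost de Rham, and its Sen operator is semisimple with integer eigenvalues (or at least has the correct generalized eigenvalues).

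Second, for $\mathrm d\varpi_\HT$, \Cref{effectonrhoV} shows that the trace on $V$ is $\sum_\mu \dim V^\mu\langle\mu,\varpi_{\HT,\tau}\rangle$. By the construction of \Cref{sec_cocharacters}, after splitting the Hodge--Tate $\gG$-filtration, the jumps of the filtration on $D_\pdR(\eta_W(V))_\tau$ are exactly the integers $\langle\mu,-\varpi_{\HT,\tau}\rangle$ with multiplicity $\dim V^\mu$. With the paper's sign convention (Hodge--Tate weights are the opposites of the jumps), the two sums agree.

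Third, for $\wt(\delta)$, I use the triangulation. The module $\eta_{\Mtri}(V)$ has a complete $\gB$-stable flag whose graded pieces are $\cR_{K,L}[\tfrac1t](\delta\circ\mu^\vee)$ for the $\gT$-weights $\mu$ of $V$, as in the proof of \Cref{rank_oner}. The Sen operator is additive in extensions, since the trace of $\Theta$ only depends on the semisimplification of $W_\dR$, or more precisely of a $\BdR^+$-lattice. One caveat is that $W^+$ comes from $D$ and not from $\Mtri$. However, inverting $t$ changes the lattice, and this shifts the Sen weights only by integers. So I compare instead in $\frakt\GIT\bW$ using the $\bW$-orbit, via \Cref{invariantlemmanew}. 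Here I intend to use that $\delta$ is locally algebraic (\Cref{localgpdR}): the weight of $\cR_{K,L}(\delta')$ for a character $\delta'$ gives its Sen weight, and the $t$-twists coming from the lattice $D\subset\cM$ are accounted for because $W^+$ is the lattice from $D$. The trace of $\Theta$ on $\eta_{W^+}(V)$ is then obtained by computing it on the triangulation $D^{\triangle}$ of $D$ itself, which has parameter $\delta$ (before inverting $t$). By \cite[Lemma 3.3.6]{BHS19}, this trace is $\sum_\mu\dim V^\mu\,\wt_\tau(\delta\circ\mu^\vee)$, which is precisely the trace of $\wt(\delta)$ on $V$ given by \Cref{effectonrhoV}.

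The main obstacle is this third step. One has to check that the Sen operator of $W_\dR^+(D^\triangle)$ on each graded piece $\cR_{K,L}(\delta\circ\mu^\vee)$ has eigenvalue exactly $\wt_\tau(\delta\circ\mu^\vee)$, with consistent signs, and that the trace of the Sen operator is additive across the $\gB$-filtration of $W^+_\dR(\eta_{D^\triangle}(V))$. I would handle both points through exactness of $W^+_\dR$ and the rank one computation in BHS, and then conclude by functoriality in $V$.
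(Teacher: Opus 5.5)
Your strategy is the same as the paper's. For each representation $V$ and each embedding $\tau$ you compare the Sen operator of $\eta_{W^+}(V)$ with $\mathrm d\varpi_\HT$, using that Sen weights of an almost de Rham object are its Hodge--Tate weights. You compare it with $\wt(\delta)$ using that Sen weights of a triangulable object are the weights of its parameter. The paper quotes this second fact from \cite[Proposition 2.9]{BHSmodulaire}; you derive it from exactness of $W^+_\dR$ along the $\gB$-flag of $\eta_{\Dtri}(V)$ together with the rank-one case.

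However, the reduction in your first paragraph is false, and the paper's write-up through \Cref{effectonrhoV} has the same defect. The additive map $V\mapsto\tr(\mathrm d\rho_V(m))$ does not determine the image of $m$ in $\frakt\GIT\bW$. It only sees linear $\gW$-invariant functions on $\Lie\gT$, i.e.\ the central component of $m$. For semisimple $\gG$ it vanishes identically, because $\Lie\gG=[\Lie\gG,\Lie\gG]$. For example, for $\gG=\SL_2$ one has $\Lie\gT\GIT\gW=\Spec L[a^2]$, while $\tr(\mathrm d\rho_V(\mathrm{diag}(a,-a)))=0$ for every $V$. So matching traces, as in your three steps, proves nothing beyond the center.

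The repair is cheap, because your inputs give full eigenvalue multisets rather than just their sums. For each $V$ and $\tau$, the generalized eigenvalues of $\Theta(\eta_{W^+}(V))_\tau$, with multiplicity, are the $\tau$-Hodge--Tate weights, namely $\langle\mu,\varpi_{\HT,\tau}\rangle$ with multiplicity $\dim V^\mu$. By the filtration of $W^+_\dR(\eta_{\Dtri}(V))$ with graded pieces $W^+_\dR(\cR_{K,L}(\delta\circ\mu^\vee))$, they are also $\wt_\tau(\delta\circ\mu^\vee)$ with multiplicity $\dim V^\mu$.

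Hence all three elements take the same value on each function $m\mapsto\tr(\mathrm d\rho_V(m)^k)$ for $k\geq 0$. These functions span $L[\Lie\gT]^{\gW}$. Indeed, in characteristic $0$ the space $S^k(X^*(\gT)\otimes L)^{\gW}$ is spanned by the sums $\sum_{w\in\gW}(w\lambda)^k$. Each such sum is a linear combination of the $\sum_\mu\dim V^\mu\,\mu^k$, since characters span $\Q[X^*(\gT)]^{\gW}$. As $\frakt\GIT\bW=\prod_\tau\Lie\gT\GIT\gW$, this proves the lemma.

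Finally, drop the detour in your third step. The Sen operator depends on the $\BdR^+$-lattice, not on the semisimplification of $W_\dR$. Integer shifts of Sen weights are not absorbed by $\bW$-orbits, so \Cref{invariantlemmanew} plays no role. In the setting of the lemma, $D$ itself carries the triangulation $\Dtri$ of parameter $\delta$ and $W^+=W^+_\dR\circ\eta_D$. You can therefore filter $W^+_\dR(\eta_{\Dtri}(V))$ directly, as you eventually do.
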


\begin{proof}
Let $\pi_{1},\pi_{2}\colon \Rep_L(\gG)\to L\otimes_{\Qp}K$ be the maps associated to $\mathrm{d} \varpi_\HT$ and $\wt(\delta)$, respectively, as in \Cref{sec_cocharacters}. We need to show that $\pi_{1}=\pi_{2}=\pi_{W^+,\Sen}$. Let $(V,\rho_V)\in\Rep_{L}(\gG)$, 
then $\pi_{W^+,\Sen}(V)$ is the trace of the Sen operator associated to $\eta_{W^+}(V)/t\eta_{W^+}(V)$, which is the sum of its Hodge--Tate weights. By the definition of $\varpi_\HT$, this is equal to $\pi_2(V)$. By \Cref{effectonrhoV}, $\pi_{1}(V)$ is the trace of $\rho_V(\wt(\delta))$. This coincides with the trace of the Sen operator by \cite[Proposition 2.9]{BHSmodulaire}.
\end{proof}

\begin{lemma}\label{wsatwx}
The element $w_\sat\in\bW$ satisfies $\Ad(w_{\sat}^{-1})(\wt(\delta))=\mathrm{d}\varpi_{\HT}$.
\end{lemma}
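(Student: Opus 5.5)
The plan is to compute both sides of the identity using the two flags defining $w_\sat$, via the maps $\kappa_1,\kappa_2\colon X\to\frakt$ of \eqref{kappa}. Choose $g_1,g_2\in\bG(L)$ representing $x_{\Wtri,\tri}$ and $x_{W^+,\HT}$. Since $(x_{\Wtri,\tri},x_{W^+,\HT})\in U^{w_\sat}(L)$, we may pick $g\in\bG(L)$ and $b_1,b_2\in\bB(L)$ with $g_1=gb_1$ and $g_2=g\dot w_\sat b_2$. After replacing the representatives, this gives $g_2=g_1\dot w_\sat$. This choice is harmless, because $\kappa_i$ does not depend on the representative.

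The point $x_\pdR$ involves the nilpotent element $N_W$, so $\kappa_i(x_\pdR)=0$ and cannot see weights. Instead I will use the semisimple Sen operator. After trivializing $\Delta_\Sen\circ\phi_{\C_p}\circ\eta_{W^+}$ compatibly with $\alpha$, the Sen operator gives $\Theta\in\frakg(\C_p\otimes K)$, and $\Theta$ is semisimple because $W^+$ comes from a de Rham-type situation with regular weights. The key steps are the following.

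First, the triangulation $\Wtri$ is stable under $\Theta$, so $\Ad(g_1^{-1})\Theta\in\frakb$. Its image in $\frakt$ is $\wt(\delta)$, by the $\GL_n$ computation of \cite[Proposition 2.9]{BHSmodulaire} applied to the Plücker lines $V_\lambda^\lambda$ after passing to graded pieces. This is the computation already used in \Cref{Worbit}.

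Second, $\Theta$ preserves the Hodge--Tate filtration, and on its graded pieces it acts by the Hodge--Tate weights. So $\Ad(g_2^{-1})\Theta\in\frakb$, with image $\mathrm d\varpi_\HT$ in $\frakt$. I fix the sign using the convention of \Cref{def_HT}, namely that the line $V_\lambda^\lambda$ corresponds to the step $-\langle\lambda,\varpi_{\HT,\tau}\rangle$. Regularity of $\varpi_\HT$ makes $\Theta$ regular semisimple. Hence $\Ad(g_2^{-1})\Theta$ lies in a unique Cartan subalgebra, and after conjugating by $\bB$ we may take it to be $\mathrm d\varpi_\HT$ itself.

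Third, since $\Ad(g_2^{-1})\Theta$ is now the regular element $\mathrm d\varpi_\HT\in\frakt$, we get
\[ \Ad(g_1^{-1})\Theta=\Ad(\dot w_\sat)\,\mathrm d\varpi_\HT. \]
This lies in $\frakt$, and it must be consistent with $b_2$: the centralizer of a regular element of $\frakt$ is $\bT$, so the $\bB$-adjustment above can be absorbed into the choice of $g$. Its projection to $\frakt$ is therefore $w_\sat(\mathrm d\varpi_\HT)$. Comparing with the first step yields $\wt(\delta)=\Ad(w_\sat)\mathrm d\varpi_\HT$, which is the claim.

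I expect the main obstacle to be making the first two steps rigorous in a Tannakian way. One has to produce a single trivialization over $\C_p\otimes K$ in which both flags, defined through $D_\pdR$ and $\alpha$, are simultaneously the flags stabilized by $\Theta$, and check that the Sen-operator descriptions transport through $D_\pdR$. Here I would use that $W$ is almost de Rham: $D_\pdR\otimes\C_p$ identifies with the Sen module, and the Sen operator becomes the semisimple part of the grading operator coming from the Hodge--Tate grading. I would verify this on each $V_\lambda$ and then assemble using \Cref{pluckerdatum}.
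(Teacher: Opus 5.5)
The gap is in your steps (1)--(2). Your step (3) is fine, and it does not even need regularity: if $\Ad(g_2^{-1})\Theta=\mathrm d\varpi_{\HT}+n$ with $n$ in the nilradical of $\frakb$, then $\Ad(g_1^{-1})\Theta=\Ad(\dot w_{\sat})\mathrm d\varpi_{\HT}+\Ad(\dot w_{\sat})n$, and the last term has zero $\frakt$-component.

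Steps (1)--(2) require a single $\Theta\in\frakg$ lying in both $\Ad(g_1)\frakb$ and $\Ad(g_2)\frakb$, where $g_1\bB=x_{\Wtri,\tri}$ and $g_2\bB=x_{W^+,\HT}$ are flags on $D_{\pdR}$. The Sen operator does not act on $D_{\pdR}(W)$. It acts on $W^+/tW^+$, which for almost de Rham $W^+$ is built from the \emph{graded} object $\gr^\bullet D_{\pdR}(W)$ of the Hodge--Tate filtration. The $\Theta$-stable flag there comes from reducing the saturated $\BdR^+$-triangulation $W^+_{\dR}\circ\eta_{\Dtri}$ modulo $t$. It corresponds to the flag of leading terms $\gr^\bullet(F_{\tri})=\bigoplus_i (F_{\tri}\cap\Fil^i)/(F_{\tri}\cap\Fil^{i+1})$, not to the triangulation flag $F_{\tri}$ itself.

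Identifying $D_{\pdR}\otimes\C_p$ with the Sen module requires choosing a splitting $s$ of the Hodge--Tate filtration, and $s(\gr^\bullet F_{\tri})=F_{\tri}$ only for splittings adapted to $F_{\tri}$. Already for $\dim D=2$ with $\Fil^1=\langle e_1\rangle$ and $F_{\tri}=\langle e_1+e_2\rangle$, lifting $\gr^0$ to $\langle e_2+ce_1\rangle$ gives a grading operator that stabilizes $F_{\tri}$ if and only if $c=1$. So for a general identification your claim $\Ad(g_1^{-1})\Theta\in\frakb$ is false. Checking things on each $V_\lambda$ and assembling via Plücker data does not address this.

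The missing idea is to split the Hodge--Tate filtration by a maximal torus contained in both Borel subgroups. Write $x_{W^+,\HT}=g\bB$ and $x_{\Wtri,\tri}=g\dot w_{\sat}^{-1}\bB$. Then $g\bT g^{-1}$ lies in both Borels and splits the Hodge--Tate filtration on each $\eta_W(V_\lambda)$ into the lines $gV_\lambda^{\mu}$.

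Once this is done, the Sen operator and the regular-semisimplicity argument are superfluous, and your step (1) reduces to the paper's direct computation:
\begin{itemize}
\item The Plücker line of the triangulation in $V_\lambda$ is $g\dot w_{\sat,\tau}^{-1}V_\lambda^\lambda=gV_\lambda^{w_{\sat,\tau}^{-1}\lambda}$, a weight line for $g\bT g^{-1}$. So the Hodge--Tate filtration restricted to it jumps at step $-\langle w_{\sat,\tau}^{-1}\lambda,\varpi_{\HT,\tau}\rangle$.
\item Because $\eta_{\Dtri}(V_\lambda^\lambda)\subseteq\eta_{D}(V_\lambda)$ is saturated, this restricted filtration is the Hodge--Tate filtration of $\cR_{K,L}(\delta\circ\lambda^\vee)$. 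Its weight is $\wt_\tau(\delta\circ\lambda^\vee)=\langle\lambda,\delta^{\alg}_\tau\rangle$ by \cite[Proposition 2.9]{BHSmodulaire}.
\item Letting $\lambda$ range over $X^*_+(\gT)$ gives $\delta^{\alg}=\Ad(w_{\sat})\varpi_{\HT}$, which is the claim.
\end{itemize}
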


\begin{proof}
Let us write $(x_{\Wtri,\tri}, x_{W^+,\HT})=g(w_{\sat}^{-1},1)(\bB(L)\times \bB(L))$ for some $g=(g_\tau)_{\tau\in \Sigma}\in \bG(L)$, and denote $w_{\sat}=(w_{\sat,\tau})_{\tau\in \Sigma}$. 
The Pl\"ucker datum associated with the flag $g_\tau\gB$ is given by $$(g_\tau V^{\lambda}_{\lambda}\subseteq V_\lambda)_{\lambda\in X^*_+(\gT)},$$ where $V_\lambda^\lambda$ denotes the highest weight line in $V_\lambda$ (see \Cref{example_plucker_datum}). 

By definition of the Hodge--Tate filtration $\cF_{\HT,A,\tau}$, the line in $V_\lambda$ corresponding to the Hodge--Tate flag is $gV_{\lambda}^{\lambda}=\eta_{\cF_{\HT,A,\tau}}^{-\langle \lambda,\varpi_{\HT,\tau}\rangle}(V_{\lambda})$. On the other hand, the line in $V_{\lambda}$ corresponding to the flag of the triangulation is $$gw_{\sat,\tau}^{-1}V_{\lambda}^{\lambda}= \alpha^{-1}\big(D_{\pdR}(W_{\dR}^+(\eta_{\Dtri}(V_{\lambda}^{\lambda}))\otimes_{\BdR^+}\BdR)\big)\otimes_{K, \tau} L, $$
where $\eta_{\Dtri}(V_{\lambda}^{\lambda})\cong \cR_{K,L}(\delta\circ\lambda^\vee)$ by definition of the parameter $\delta$. By \cite[Proposition 2.9]{BHSmodulaire}, its Hodge--Tate weight is $\wt_{\tau}(\delta\circ\lambda^\vee)=\langle \lambda,\delta^{\alg}_\tau\rangle$, where $\delta^{\alg}=\prod_{\tau\in \Sigma}\delta^{\alg}_{\tau}$ is the algebraic part of $\delta$. However, we have that $$gw_{\sat,\tau}^{-1}V_{\lambda}^{\lambda}= gV_{\lambda}^{w_{\sat,\tau}^{-1} \lambda},$$ which means that the jump of the induced Hodge--Tate filtration occurs at the $-\langle w_{\sat,\tau}^{-1}\lambda,\varpi_{\HT,\tau}\rangle$ step. 
We deduce that $\langle \lambda,\delta^{\alg}_\tau\rangle=\langle w_{\sat,\tau}^{-1}\lambda,\varpi_{\HT,\tau}\rangle$ for every $\lambda\in X_+^*(\gT)$. Therefore, we have $\delta^{\alg}=\Ad(w_{\sat})(\varpi_{\HT})$. 
\end{proof}


\subsection{Deformations with prescribed relative position}\label{defrelpos}

We keep the notations and assumptions of the previous two sections. In particular, recall that we associated with $D$ and the triangulation $\cM^\triangle$ a closed point $x_\pdR\in X(L)$.

Given $w\in\bW$, recall that $X^w$ is the irreducible component of $X$ defined as the closure of $V^w$. 
We denote by $\wh X_{x_\pdR}^w$ the completion of $X^w$ at $x_\pdR$ (it will be empty if $x_\pdR$ is not in the component $X^w$). We define a groupoid over $\Art_L$ by
$$ X^{\square,w}_{W^+,W^{\triangle}}\colonequals X^{\square}_{W^+,W^{\triangle}}\times_{\lvert X_{W^+,W^{\triangle}}^\square\rvert}\wh X_{x_\pdR}^w. $$
Because of the framing, all automorphisms in $X_{W^+}^\square$ are trivial, hence there are equivalences $X_{W^+,W^{\triangle}}^\square\xto{\sim}\lvert X_{W^+,W^{\triangle}}^\square\rvert$ and $X^{\square,w}_{W^+,W^{\triangle}}\xto{\sim}\lvert X^{\square,w}_{W^+,W^{\triangle}}\rvert$ of groupoids over $\Art_L$. 
From \Cref{XW+prorep} (1), we immediately deduce the following.

\begin{corollary}[{cf. \cite[Corollary 3.5.9]{BHS19}}]\label{Xsqwprorep}
The groupoid $X^{\square,w}_{W^+,W^{\triangle}}$ is pro-representable, and $\lvert X^{\square,w}_{W^+,W^{\triangle}}\rvert$ is pro-represented by the formal scheme $\wh X_{x_\pdR}^w$.
\end{corollary}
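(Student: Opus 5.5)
This is a formal consequence of \Cref{XW+prorep} (1), applied through the definition of $X^{\square,w}_{W^+,W^{\triangle}}$ as a fiber product. I would proceed as follows.

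First, by \Cref{XW+prorep} (1), the groupoid $X^{\square}_{W^+,W^{\triangle}}$ is a setoid. Concretely, because of the framing all its automorphisms are trivial, so the projection $X^{\square}_{W^+,W^{\triangle}}\to\lvert X^{\square}_{W^+,W^{\triangle}}\rvert$ is an equivalence. The same result says that $\lvert X^{\square}_{W^+,W^{\triangle}}\rvert$ is pro-represented by $\wh X_{x_\pdR}$; the identification is the one sending $x_A$ to $(x_{\Wtri_A,\tri},x_{W^+_A,\HT},N_{W_A})$, built from \Cref{very_key_proposition_W+} and \Cref{319}.

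Second, substitute this into the definition
\[ X^{\square,w}_{W^+,W^{\triangle}} = X^{\square}_{W^+,W^{\triangle}}\times_{\lvert X^{\square}_{W^+,W^{\triangle}}\rvert}\wh X^w_{x_\pdR}. \]
Since the first factor is equivalent to its set-valued truncation, the fiber product is equivalent to $\lvert X^{\square}_{W^+,W^{\triangle}}\rvert\times_{\wh X_{x_\pdR}}\wh X^w_{x_\pdR}$, which is just $\wh X^w_{x_\pdR}$. This gives both claims at once: the groupoid is pro-representable, and its truncation is pro-represented by $\wh X^w_{x_\pdR}$.

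Third, one has to check that the map $\wh X^w_{x_\pdR}\to\wh X_{x_\pdR}$ along which the fiber product is formed is the one induced by the closed immersion $X^w\hookrightarrow X$. This holds because completion commutes with closed immersions, so $\wh X^w_{x_\pdR}$ is the closed formal subscheme of $\wh X_{x_\pdR}$ cut out by the ideal of $X^w$. It follows that the fiber product of functors on $\Art_L$ is pro-represented by the quotient of the complete local ring of $X$ at $x_\pdR$ by the ideal of $X^w$.

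The only delicate point is bookkeeping with $2$-fiber products of groupoids. One must verify that the fiber product over the setoid $\lvert X^{\square}_{W^+,W^{\triangle}}\rvert$ agrees with the naive set-theoretic one; this holds because all automorphism groups are trivial. I do not expect any real obstacle. The degenerate case $x_\pdR\notin X^w$ is also covered: there both sides are empty.
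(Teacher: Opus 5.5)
Your argument is correct and is the paper's argument: the paper deduces the corollary ``immediately'' from \Cref{XW+prorep}(1). That result says $X^{\square}_{W^+,W^{\triangle}}$ is a setoid pro-represented by $\wh X_{x_\pdR}$, so the fiber product defining $X^{\square,w}_{W^+,W^{\triangle}}$ collapses to $\wh X^w_{x_\pdR}$. Your remarks on completing along the closed immersion $X^w\hookrightarrow X$ and on $2$-fiber products only spell out what the paper leaves implicit.
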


We define the groupoid $X^{w}_{W^+,W^{\triangle}}$ as the image of $X^{\square,w}_{W^+,W^{\triangle}}$ under the forgetful functor $X^{\square}_{W^+,W^{\triangle}}\to X_{W^+,W^{\triangle}}$, so that
\begin{equation}\label{Xwsq} X^{\square,w}_{W^+,W^{\triangle}} = X^w_{W^+,W^{\triangle}}\times_{X_{W^+,W^{\triangle}}}X^{\square}_{W^+,W^{\triangle}}. \end{equation}
We also define
\begin{align}\label{XDMtrisquarew}
X_{D,\cM^\triangle}^{\sq,w}\colonequals X_{D,\cM^\triangle}^\sq\times_{X_{W^+,W^{\triangle}}^\sq}X_{W^+,W^{\triangle}}^{\sq,w}, \quad\quad X_{D,\cM^\triangle}^{w}\colonequals X_{D,\cM^\triangle}\times_{X_{W^+,W^{\triangle}}}X_{W^+,W^{\triangle}}^{w}. 
\end{align}

The following is proved in the same way as \cite[Proposition 3.5.10]{BHS19}.

\begin{proposition}\label{3510}
The morphisms of groupoids
\[ X_{W^+,W^{\triangle}}^w\to X_{W^+,W^{\triangle}}, \quad X_{W^+,W^{\triangle}}^{\sq,w}\to X_{W^+,W^{\triangle}}^\sq, \quad X_{D,\Mtri}^w\to X_{D,\Mtri}, \quad X_{D,\Mtri}^{\sq,w}\to X_{D,\Mtri}^{\sq}, \]
are relatively representable and closed immersions.
\end{proposition}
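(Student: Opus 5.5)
We treat the four morphisms in turn, starting with the framed one $X_{W^+,\Wtri}^{\sq,w}\to X_{W^+,\Wtri}^\sq$ and then deducing the other three from it by base change and descent along the framing, as in \cite[Proposition 3.5.10]{BHS19}.

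\textbf{Step 1: the framed morphism for $(W^+,\Wtri)$.} By \Cref{XW+prorep} (1), $X_{W^+,\Wtri}^\sq$ is pro-represented by $\wh X_{x_\pdR}$. By \Cref{Xsqwprorep}, $X_{W^+,\Wtri}^{\sq,w}$ is pro-represented by $\wh X^w_{x_\pdR}$. The morphism between them is induced by the closed immersion $X^w\hookrightarrow X$ of schemes. Its completion at $x_\pdR$ is a closed immersion of formal schemes, since it corresponds to a surjection of complete local rings $\wh\cO_{X,x_\pdR}\onto\wh\cO_{X^w,x_\pdR}$. A morphism between pro-representable functors is automatically relatively representable, so Step 1 is done.

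\textbf{Step 2: the unframed morphism $X_{W^+,\Wtri}^w\to X_{W^+,\Wtri}$.} This is the main obstacle, because $X^w_{W^+,\Wtri}$ is defined as an image rather than a fiber product. The key point is that the framing torsor acts on $X^\sq_{W^+,\Wtri}$ through the adjoint action of $\bG$ on $X$, and this action preserves each component $X^w$. Concretely:
\begin{itemize}
\item changing $\alpha_A$ by an element $g\in\bG(A)$ lifting $1$ replaces $(x_{\tri},x_{\HT},N)$ by $(gx_{\tri},gx_{\HT},\Ad(g)N)$;
\item $X^w$ is $\bG$-stable, since $V^w$ is.
\end{itemize}
Hence a deformation lies in $X^w_{W^+,\Wtri}$ if and only if some framing of it lands in $\wh X^w_{x_\pdR}$, if and only if every framing does. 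This gives the cartesian square \eqref{Xwsq}. Since $X^\sq_{W^+,\Wtri}\to X_{W^+,\Wtri}$ is formally smooth and essentially surjective (framings always exist), relative representability and the closed immersion property descend from the framed case: for $\Spf(B)\to X_{W^+,\Wtri}$, the fiber product with $X^w$ becomes, after the smooth cover, a closed formal subscheme stable under the framing group, so it descends to a closed subfunctor of $\Spf(B)$. Alternatively, one can verify the Schlessinger-type relative representability conditions directly, using the fact that the condition ``lies in $X^w$'' is stable under injective maps $A\hookrightarrow A'$ because $X^w$ is closed.

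\textbf{Step 3: base change to $(D,\Mtri)$.} The remaining two morphisms, $X^{w}_{D,\Mtri}\to X_{D,\Mtri}$ and $X^{\sq,w}_{D,\Mtri}\to X^\sq_{D,\Mtri}$, are by \eqref{XDMtrisquarew} base changes of the two morphisms already treated. Relative representability and the closed immersion property are both stable under base change, so they follow immediately.
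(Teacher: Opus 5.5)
Your proposal is correct and follows essentially the same route as the paper, which proves this by deferring to \cite[Proposition 3.5.10]{BHS19}. That argument has the same three steps as yours. The framed morphism is a closed immersion because it is the base change of $\wh X^w_{x_\pdR}\hookrightarrow \wh X_{x_\pdR}$ (via \Cref{XW+prorep} and \Cref{Xsqwprorep}). The unframed morphism then follows by descent, using that $X^w$ is $\bG$-stable, so that \eqref{Xwsq} holds, and that a framing can always be chosen. The two $(D,\Mtri)$ morphisms follow by base change along \eqref{XDMtrisquarew}.
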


We let
\begin{equation}\label{def_Wx} \bW(x_\pdR) \colonequals \{w\in\bW\,\vert\,x_\pdR\in X^w(L)\}. \end{equation}

\begin{corollary}[{cf. \cite[Corollary 3.5.11]{BHS19}}]\label{three_five_eleven}
For every $w\in\bW(x_\pdR)$, the functor $X_{D,\cM^\triangle}^{\sq,w}$ is pro-representable by a noetherian complete local domain of residue field $L$ and dimension $[K:\Q_p](\dim\gG_L+\dim\gB_L)$, whose associated formal scheme is formally smooth over $\wh X_{x_\pdR}^w$.
\end{corollary}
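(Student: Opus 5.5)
The plan is to mirror the proof of \cite[Corollary 3.5.11]{BHS19}, combining the formal smoothness of \Cref{XW+prorep} (2) with the geometry of $X^w$ near $x_\pdR$. We start from the definition \eqref{XDMtrisquarew},
$$X_{D,\cM^\triangle}^{\sq,w}= X_{D,\cM^\triangle}^\sq\times_{X_{W^+,W^{\triangle}}^\sq}X_{W^+,W^{\triangle}}^{\sq,w}.$$
By \Cref{XW+prorep} (1) and \Cref{Xsqwprorep}, the second factor is pro-represented by $\wh X^w_{x_\pdR}$. By \Cref{XW+prorep} (2), the functor $X_{D,\cM^\triangle}^\sq$ is pro-represented by a formal scheme $\Spf R$ that is formally smooth of relative dimension $[K:\Q_p]\dim\gB_L$ over $\wh X_{x_\pdR}=\Spf \wh\cO_{X,x_\pdR}$. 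Base change along the closed immersion $\wh X^w_{x_\pdR}\hookrightarrow \wh X_{x_\pdR}$ (\Cref{3510}) then shows that $X_{D,\cM^\triangle}^{\sq,w}$ is pro-represented by $R^w\colonequals R\wtimes_{\wh\cO_{X,x_\pdR}}\wh\cO_{X^w,x_\pdR}$. This ring is noetherian, complete and local with residue field $L$, and $\Spf R^w$ is formally smooth over $\wh X^w_{x_\pdR}$. Since $w\in\bW(x_\pdR)$, the target is nonempty.

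Concretely, formal smoothness of relative dimension $d=[K:\Q_p]\dim\gB_L$ gives a noncanonical isomorphism $R\cong \wh\cO_{X,x_\pdR}\br{y_1,\dots,y_d}$. After base change this yields
$$R^w\cong \wh\cO_{X^w,x_\pdR}\br{y_1,\dots,y_d}.$$
For the dimension count: $X^w$ is equidimensional of dimension $\dim\frakg=[K:\Q_p]\dim\gG_L$ (\Cref{sec:Xnew}), so its completion at a closed point has the same dimension. Adding $d$ gives $[K:\Q_p](\dim\gG_L+\dim\gB_L)$.

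The main obstacle is showing that $R^w$ is a domain. It suffices that $\wh\cO_{X^w,x_\pdR}$ is a domain, since a power series ring over a domain is a domain. Here I would invoke the normality of $X^w$, proved in \cite[Theorem 2.3.6]{BHS19} via Cohen--Macaulayness \cite{zbMATH06155581} and Serre's criterion. Normality means the local ring $\cO_{X^w,x_\pdR}$ is a normal excellent local domain, and by excellence its completion is again normal, hence a domain. One point needs care: these results of \cite{BHS19} are stated for $\bG=\Res\GL_n$, so I would check that their proof, or the cited result of Bezrukavnikov--Riche, applies to an arbitrary split reductive $\bG$. The Cohen--Macaulay statement of \cite{zbMATH06155581} holds for general reductive groups, and the regularity in codimension one used for Serre's criterion comes from the general structure of $X$ in \cite[\S2]{BHS19}, which is formulated for a general reductive group.
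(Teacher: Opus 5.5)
Your proposal is correct and follows essentially the same route as the paper. You obtain pro-representability and formal smoothness of relative dimension $[K:\Q_p]\dim\gB_L$ over $\wh X^w_{x_\pdR}$ by base change from \Cref{XW+prorep} (which rests on \Cref{lem356}) along the closed immersion of \Cref{3510}, add this to $\dim \wh X^w_{x_\pdR}=[K:\Q_p]\dim\gG_L$ for the dimension, and deduce the domain property from the normality of $X^w$ \cite[Theorem 2.3.6]{BHS19}, which passes to the completion by excellence and then to formally smooth (power series) extensions.
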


\begin{proof}
The pro-representability follows from \Cref{3510} and \Cref{XW+prorep}.
By base change from \Cref{lem356}, and by \Cref{XW+prorep}, we have that $X_{D,\cM^\triangle}^{\sq,w}\to X_{W^+,\Wtri}^{\sq,w}$ is formally smooth of relative dimension $[K:\Q_p](\dim \gB_L)$. The dimension estimate then follows, as $|X_{W^+,\Wtri}^{\sq,w}|\cong \wh X_{x_\pdR}^w$ has dimension $[K:\Q_p](\dim \gG_L)$. Finally, by \cite[Theorem 2.3.6]{BHS19} and the proof of \cite[Corollary 3.5.11]{BHS19}, the local ring $\widehat{\mathcal{O}}_{X_{x_\pdR}^w}$ is a complete local normal domain, and so is any local ring that is formally smooth over it. 
\end{proof}

Recall from \Cref{sec:Xnew} that the irreducible components of $T=\frakt\times_{\frakt\GIT\bW}\frakt$ are of the form $T^w=\{(z,\Ad(w^{-1})z),z\in\frakt\}$ for $w\in\bW$. Moreover, the maps $\kappa_{1},\kappa_{2}$ from \eqref{kappa} induce a map $\wh X_{x_\pdR}\to\wh T_{(0,0)}$ and, after restriction, a map $\wh X_{x_\pdR}^w\to\wh T^{w}_{(0,0)}$ for every $w\in\bW$. 
We denote by $\Theta$ the composition 
$$ \Theta\colon X_{D,\Mtri}^\sq\longrightarrow X_{W^+,W^{\triangle}}^\sq\xto{\sim}\lvert X_{W^+,W^{\triangle}}^\sq\rvert\xto{\sim}\wh X_{x_\pdR}\longrightarrow \wh T_{(0,0)}. $$
The following is deduced from \cite[Lemma 2.5.2]{BHS19} as in the proof of \cite[Corollary 3.5.12]{BHS19}.

\begin{corollary}\label{3512}
For $w\in\bW(x)$ and $w^\prime\in\bW$, the morphisms
\begin{equation}\begin{aligned}
X_{D,\Mtri}^{\sq,w}&\into X_{D,\Mtri}^\sq\longrightarrow\wh T_{(0,0)}, \\
X_{D,\Mtri}^{w}&\into X_{D,\Mtri}\longrightarrow\wh T_{(0,0)},
\end{aligned}\end{equation}
factor through $\wh T^{w^\prime}_{(0,0)}\into\wh T_{(0,0)}$ if and only if $w^\prime=w$.
\end{corollary}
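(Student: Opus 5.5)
The plan is to reduce both statements to the corresponding fact on the local model, following the proof of \cite[Corollary 3.5.12]{BHS19}. First, for the framed version: by definition \eqref{XDMtrisquarew} and \Cref{Xsqwprorep}, the morphism $X_{D,\Mtri}^{\sq,w}\to\wh T_{(0,0)}$ factors as
\[ X_{D,\Mtri}^{\sq,w}\longrightarrow X_{W^+,\Wtri}^{\sq,w}\xto{\sim}\wh X^w_{x_\pdR}\longrightarrow \wh X_{x_\pdR}\longrightarrow\wh T_{(0,0)}, \]
and the composite $\wh X^w_{x_\pdR}\to\wh T_{(0,0)}$ is the completion of $(\kappa_{1,w},\kappa_{2,w})$, which lands in $T^w$. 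This gives the factorization through $\wh T^{w}_{(0,0)}$.

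For the converse, I use that the first map is formally smooth, by base change from \Cref{lem356} as in \Cref{three_five_eleven}, hence faithfully flat on complete local rings, so its ring map is injective. A factorization through $\wh T^{w'}_{(0,0)}$ therefore forces $\wh X^w_{x_\pdR}\to \wh T_{(0,0)}$ to factor through $\wh T^{w'}_{(0,0)}$. Now \cite[Lemma 2.5.2]{BHS19} states that $\wh X^w_{x_\pdR}\to\wh T_{(0,0)}$ factors through $\wh T^{w'}_{(0,0)}$ only if $w'=w$. Its input is the flatness of $\kappa_{i,w}$ from \cite[Proposition 2.3.3]{BHS19}, so it applies verbatim to our $\bG$. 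The point is that the image is then dense in $\wh T^w_{(0,0)}$, and $\wh T^w_{(0,0)}\not\subseteq \wh T^{w'}_{(0,0)}$ for $w'\neq w$, since $T^w\ne T^{w'}$ are distinct irreducible components of equal dimension passing through $(0,0)$.

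For the unframed version: $\Theta$ factors through $X_{D,\Mtri}$, because $\kappa_1,\kappa_2$ are invariant under change of trivialization, i.e.\ under the diagonal $\bG$-action on $X$, exactly as for $\kappa_{\Wtri}$ and $\kappa_{W^+}$. Moreover, $X^{\sq,w}_{D,\Mtri}=X^w_{D,\Mtri}\times_{X_{D,\Mtri}}X^\sq_{D,\Mtri}$ by \eqref{Xwsq}, and the framing map $X^\sq_{D,\Mtri}\to X_{D,\Mtri}$ is formally smooth and surjective on $A$-points, because $\bG$-torsors over $A\in\Art_L$ are trivial by \Cref{trivialtorsoroverA}. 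A morphism from $X^w_{D,\Mtri}$ factors through the closed subfunctor $\wh T^{w'}_{(0,0)}$ if and only if its pullback to $X^{\sq,w}_{D,\Mtri}$ does, since points lift. So the unframed statement follows from the framed one.

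The main obstacle is the converse direction: ensuring the factorization genuinely descends through the formally smooth map to $\wh X^w_{x_\pdR}$. This requires that $x_\pdR\in X^w$, i.e.\ $w\in\bW(x_\pdR)$, so that the ring is nonzero, together with checking that the inputs of \cite[Lemma 2.5.2]{BHS19} hold for general $\bG$.
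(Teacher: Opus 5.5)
Your proposal is correct and follows essentially the same route as the paper, which only says the result is deduced from \cite[Lemma 2.5.2]{BHS19} as in the proof of \cite[Corollary 3.5.12]{BHS19}. That proof argues as you do: push the factorization question down the formally smooth map $X^{\sq,w}_{D,\Mtri}\to\wh X^w_{x_\pdR}$ of \Cref{three_five_eleven}, which is injective on complete local rings, apply \cite[Lemma 2.5.2]{BHS19} there, and pass to the unframed functor through the formally smooth framing map, which is surjective on points.
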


\subsection{The case of a $\gG$-valued Galois representation}\label{caseofgaloisrep} 
Given $A\in \Art_L$, we let $\Rep_{A}(\GK)$ be the category of free of finite rank $A$-modules $V_A$
equipped with a continuous action of $\GK$.

Let $\rho\colon \GK\to \gG(L)$ be a continuous representation. We can associate with it a fiber functor
$$ \eta_\rho\colon \Rep_L(\gG)\longrightarrow \Rep_L(\GK). $$

\begin{definition}\label{defXrho}
     We define pseudofunctors $X_\rho,X_{\eta_\rho} \colon \Art_L \to \Gpd$ as follows:
\begin{enum}
    \item For every $A \in \Art_L$, let the objects of $X_\rho(A)$ to be continuous morphisms $\rho_A\colon \GK\to \gG(A)$ such that composing with $\gG(A)\to \gG(L)$ gives $\rho$, and the objects of $X_{\eta_\rho}(A)$ are pairs $(\eta_{\rho_A},j_A)$  where $\eta_{\rho_A}\colon \Rep_L(\gG)\to \Rep_A(\GK)$ is a fiber functor, and $j_A$ is an isomorphism $j_A\colon \phi_L\circ \eta_{\rho_A} \xrightarrow{\sim} \eta_\rho$. A morphism $(\eta_{\rho_A}, j_A) \to (\eta_{\rho_A'}, j_A')$ is an isomorphism $\varphi \colon \eta_{\rho_A} \xrightarrow{\sim} \eta_{\rho_A'}$ such that $j_A = j_A' \circ \phi_L \circ \varphi$.
    \item For a morphism $f \colon A_1 \to A_2$ in $\Art_L$, the functor $X_\rho(f) \colon X_\rho(A_1) \to X_\rho(A_2)$ maps $\rho_{A_1}$ to $\gG(f)\circ \rho_{A_1}$, and the functor $X_{\eta_\rho}(f) \colon X_{\eta_\rho}(A_1) \to X_{\eta_\rho}(A_2)$ maps $(\eta_{\rho_{A_1}},j_{A_1})$ to $(\phi_{A_2}\circ \eta_{\rho_{A_1}},j_{A_1})$.
\end{enum}
\end{definition}
Note that $X_\rho=|X_\rho|$, and is representable by the universal framed deformation ring $R_{\rho}$. 
There is a natural morphism
$$ X_\rho \longrightarrow X_{\eta_\rho} $$
that is relatively representable and formally smooth of relative dimension $\dim \gG$. 


Let $D$ be the $\gG$-$\PG$-module over $\cR_{K,L}$ whose corresponding fiber functor is $\eta_D=D_{\rig}\circ \eta_\rho$. By the arguments of \cite[Proposition 2.3.13]{BC}, the functor $D_{\rig}$ induces an equivalence $X_{\eta_\rho}\xrightarrow{\sim} X_D$. 
We assume that $D$ satisfies all the assumptions of Sections \ref{deformationsofPG} and \ref{sec:defx}, and we keep the notations of those sections. We can then define the following groupoids over $\Art_L$:
\begin{align*}
  X_{\rho}^{\square} &\colonequals X_{\rho}\times_{X_{D}} X_{D}^{\square},
\quad \quad & X_{\rho,\Mtri}&\colonequals X_\rho\times_{X_{D}}X_{D,\Mtri},
    \quad \quad &X_{\rho,\Mtri}^{\square} &\colonequals X_{\rho,\Mtri}\times_{X_\rho} X_\rho^{\square}.
\end{align*}
We similarly define the $w$-versions (for $w\in \bW$) of the groupoids in the bottom two lines as in \Cref{defrelpos}. Recall that we follow the convention of \cite{BHS19} and reserve the upper index ${}^\square$ for the framing on the associated almost de Rham representations ($X_\rho$ already corresponds to framed deformations in the usual sense).

It follows by base change that $X_\rho^{\square}\to X_{D}^{\square}$ is formally smooth of relative dimension $\dim \gG$. Similarly, given that $X_W^{\square}\to X_{W}$ is formally smooth of relative dimension $[K:\Qp]\cdot \dim \gG$, the same is true for $X_\rho^{\square} \to X_\rho$. 

\begin{theorem}\label{BHS3.6.2} Assume that either $\delta\in \cT^{\gT}_{0,\Lambda}$ or $\delta\in \cT^{\gT}_{0,r}$.
    \begin{enum} 
        \item The functor $|X_{\rho,\Mtri}|$ is pro-representable by a reduced equidimensional local complete noetherian ring $R_{\rho,\Mtri}$ of the residue field $L$ and dimension $\dim \gG+[K:\Qp]\cdot \dim \gB$.
        \item For every $w\in \bW(x_\pdR)$, the functor $|X_{\rho,\Mtri}^w|$ is pro-representable by $R_{\rho,\Mtri}^w\colonequals R_{\rho,\Mtri}/\mathfrak{p}_w$ where $\mathfrak{p}_w$ is a minimal prime ideal of $R_{\rho,\Mtri}$ and $R_{\rho,\Mtri}/\mathfrak{p}_w$ is a normal local ring.
        \item The morphism $|X_{\rho,\Mtri}^w|\to |X_{D,\Mtri}^w|\hookrightarrow |X_{D, \Mtri}| \xrightarrow{\Theta} \widehat{T}_{(0,0)}$ of groupoids over $\Art_L$ factors through $\widehat{T}^{w'}_{(0,0)}\hookrightarrow \widehat{T}_{(0,0)}$ if and only if $w'=w$.
    \end{enum}
\end{theorem}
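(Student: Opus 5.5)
The plan is to transport everything from the $D$-side to the $\rho$-side along the formally smooth morphism $X_\rho\to X_{\eta_\rho}\simeq X_D$, following \cite[Theorem 3.6.2]{BHS19}. Since $X_{\rho,\Mtri}=X_\rho\times_{X_D}X_{D,\Mtri}$ and $X_{\rho,\Mtri}^{\sq}=X_{\rho,\Mtri}\times_{X_\rho}X_\rho^\sq$, the morphism $X_{\rho,\Mtri}^{\sq}\to X_{D,\Mtri}^{\sq}$ is formally smooth of relative dimension $\dim\gG$. Likewise $X_{\rho,\Mtri}^\sq\to X_{\rho,\Mtri}$ is formally smooth of relative dimension $[K:\Qp]\dim\gG$, being a base change of $X_W^\sq\to X_W$. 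The same holds for the $w$-versions.

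For pro-representability, I would first show that $|X_{\rho,\Mtri}|\to X_\rho$ is relatively representable and a closed immersion. This follows by base change from \Cref{triimmersion}, using the hypothesis $\delta\in\cT^\gT_{0,\Lambda}$ or $\cT^\gT_{0,r}$, applied to $X_{D,\Mtri}=X_D\times_{X_\cM}X_{\Mtri}\to X_D$. Since $X_\rho$ is pro-represented by $R_\rho$, this gives a quotient $R_{\rho,\Mtri}$ of $R_\rho$. The same argument applied to \Cref{3510} shows that $|X^w_{\rho,\Mtri}|$ is a closed subfunctor, pro-represented by a quotient $R^w_{\rho,\Mtri}$.

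Next, $X^\sq_{\rho,\Mtri}$ is formally smooth over $X^\sq_{D,\Mtri}$, which by \Cref{XW+prorep}(2) is formally smooth over $\wh X_{x_\pdR}$. Hence the complete local ring of $|X^\sq_{\rho,\Mtri}|$ is a power series ring over $\wh{\cO}_{X,x_\pdR}$ of relative dimension $[K:\Qp]\dim\gB+\dim\gG$. On the other hand it is also a power series ring over $R_{\rho,\Mtri}$ in $[K:\Qp]\dim\gG$ variables. The scheme $X$ is reduced, equidimensional of dimension $[K:\Qp]\dim\gG$, with irreducible components $X^w$ (\cite[Proposition 2.2.5]{BHS19}). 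Its completion at $x_\pdR$ is therefore reduced and equidimensional, and each $\wh X^w_{x_\pdR}$ is a normal domain by \cite[Theorem 2.3.6]{BHS19}.

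These properties descend along formally smooth maps of complete local noetherian rings, since $R\llbracket y\rrbracket$ is reduced, equidimensional, or normal exactly when $R$ is. This gives (1), with dimension $[K:\Qp]\dim\gG+[K:\Qp]\dim\gB+\dim\gG-[K:\Qp]\dim\gG=\dim\gG+[K:\Qp]\dim\gB$. For (2), \Cref{three_five_eleven} shows that $R^{w,\sq}_{D,\Mtri}$ is a normal domain of full dimension. Hence $R^w_{\rho,\Mtri}$ is a normal domain of the same dimension as $R_{\rho,\Mtri}$, so its kernel $\mathfrak p_w$ is a minimal prime.

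For (3), the map to $\wh T_{(0,0)}$ factors through $|X^w_{D,\Mtri}|$, so one direction is immediate from \Cref{3512}. For the converse, suppose the map also factors through $\wh T^{w'}_{(0,0)}$. Pulling back along the formally smooth surjection $|X^{\sq,w}_{\rho,\Mtri}|\to|X^{\sq,w}_{D,\Mtri}|$ then shows that $X^{\sq,w}_{D,\Mtri}\to\wh T_{(0,0)}$ factors through $\wh T^{w'}_{(0,0)}$. This uses that formally smooth maps of complete local rings are faithfully flat, hence injective on rings, so factorization through a closed subscheme descends. \Cref{3512} then forces $w'=w$.

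The main obstacle I expect is the descent of reducedness and equidimensionality. It needs care in comparing the framed and unframed objects at the level of $|\cdot|$, since the groupoids need not be setoids before framing. I would handle this using the relative representability statements \Cref{lem_354} and \Cref{rel_rep_XM_XW}, as in \cite{BHS19}.
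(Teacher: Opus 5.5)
Your proposal is correct and follows the paper's proof essentially step by step. You embed $|X_{\rho,\Mtri}|$ and $|X^w_{\rho,\Mtri}|$ as closed subfunctors of $X_\rho$ by base change from \Cref{triimmersion} and \Cref{3510}, compare framed and unframed rings via power series, descend reducedness, equidimensionality and normality from $\wh X_{x_\pdR}$ and \Cref{three_five_eleven}, and invoke \Cref{3512} for (3). The only harmless variations are that you spell out the dimension count showing $\mathfrak p_w$ is minimal, and that you run the descent in (3) at the framed level, where everything is pro-representable and faithful flatness gives injectivity directly, whereas the paper argues through $|X_{D,\Mtri}|$.
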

\begin{proof}
By base change from \Cref{triimmersion}, the morphism $X_{D,\Mtri}\to X_D$ is relatively representable. The same holds for $X_{\rho,\Mtri}\to X_{\rho}$ also by base-change. Since $X_{\rho}$ is pro-representable, it follows that $X_{\rho,\Mtri}=|X_{\rho,\Mtri}|$ is pro-representable as well. We note that the local ring pro-representing $|X_{\rho,\Mtri}^\sq|$ is a formal power series ring over the one representing $|X_{\rho,\Mtri}|$. Then $(1)$ follows from $(2)$ of \Cref{XW+prorep} and the fact that $\wh X_{x_\pdR}$ is reduced and equidimensional. The latter holds by  \cite[Proposition 2.2.5, Theorem 2.2.6(i)]{BHS19}, using that $\mathcal{O}_{X,x_{\pdR }}$ is excellent, and hence these properties are stable under completion. 

By \Cref{3510}, $X_{D,\Mtri}^w\to X_{D,\Mtri}$ is relatively representable and a closed immersion. Hence so is $X_{\rho,\Mtri}^w\to X_{\rho,\Mtri}$. Since $X_{\rho,\Mtri}$ is pro-representable, $X_{\rho,\Mtri}^w$ is pro-representable by a quotient of the representing ring of $X_{\rho,\Mtri}$. As above, we find that the local ring pro-representing $|X_{\rho,\Mtri}^{\sq,w}|$ is a formal power series ring over the one representing $|X_{\rho,\Mtri}^w|$. Then $(2)$ follows from the formal smoothness of $X_{\rho,\Mtri}^{\sq,w}\to X_{D,\Mtri}^{\sq,w}$ and \Cref{three_five_eleven}. 

Finally, $(3)$ follows from the fact that $\Theta\colon X_{D,\Mtri} \to \widehat{T}_{(0,0)}$ factors through $|X_{D,\Mtri}|$, \Cref{3512}, and the formal smoothness of $X_{\rho,\Mtri}^w\to X_{D,\Mtri}^w$.
\end{proof}

Recall that in \Cref{def:wsat} we attached to $x_\pdR$ an element $w_\sat\in\bW$. 

\begin{proposition}[{cf. \cite[Proposition 3.6.4]{BHS19}}]\label{wsatpreceqw}
For every $w\in\bW(x_\pdR)$, $w_\sat\preceq w$. 
\end{proposition}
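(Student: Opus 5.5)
We follow the proof of \cite[Proposition 3.6.4]{BHS19}, which rests on a purely geometric fact about $X$: if $x_\pdR\in X^w(L)$ and $\pi(x_\pdR)\in U^{w'}(L)$, then $w'\preceq w$. By \Cref{def:wsat} we have $\pi(x_\pdR)\in U^{w_\sat}(L)$. So it suffices to show that for every $w\in\bW$ the closed subset $\pi(X^w)\subseteq\bG/\bB\times\bG/\bB$ lies in the union $\bigcup_{w'\preceq w}U^{w'}$.

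First we note that $\pi\colon X\to\bG/\bB\times\bG/\bB$ is proper, indeed projective. The reason is that $X$ is closed in $\bG/\bB\times\bG/\bB\times\frakg$, so $\pi$ is only affine in the $\frakg$-direction. We do not need properness, though. Since $\pi$ is continuous, $\pi(X^w)=\pi(\overline{V^w})$ is contained in $\overline{\pi(V^w)}\subseteq\overline{U^w}$.

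Next, by the Bruhat decomposition for $\bG\cong\gG_L^{[K:\Qp]}$, the $\bG$-orbit closure $\overline{U^w}$ equals $\bigsqcup_{w'\preceq w}U^{w'}$. Here $\preceq$ is the Bruhat order on $\bW=\gW^{\Sigma}$, taken componentwise. This is the standard closure relation for Schubert cells: one passes via $(g_1\bB,g_2\bB)\mapsto g_1^{-1}g_2$ to $\bB\backslash\bG/\bB$ and uses $\overline{\bB w\bB}=\bigsqcup_{w'\preceq w}\bB w'\bB$.

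Combining the two steps, $x_\pdR\in X^w$ gives $\pi(x_\pdR)\in\overline{U^w}$. Since the cells $U^{w'}$ are disjoint, the element $w_\sat$ with $\pi(x_\pdR)\in U^{w_\sat}$ must satisfy $w_\sat\preceq w$.

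The only point needing care is to check that the conventions match. The definition uses $U^w=\bG(1,\dot w)(\bB\times\bB)$, and the pair in \Cref{def:wsat} is ordered as (triangulation flag, Hodge--Tate flag), consistent with $x_\pdR=(x_{\Wtri,\tri},x_{W^+,\HT},N_W)$. So the relative position is read off from $g_1^{-1}g_2$ in the same order, and the closure relation holds for this ordering. This is the step I expect to require the most checking. It is not a substantive obstacle, since inverting $w$ preserves the Bruhat order, so either ordering convention gives the result.
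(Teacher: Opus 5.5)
Your argument is correct and is essentially the paper's: the paper observes $x_\pdR\in V^{w_\sat}\cap X^{w}$ and cites \cite[Lemma 2.2.4]{BHS19}, whose content is exactly your inclusion $\pi(X^w)\subseteq\overline{U^w}=\bigsqcup_{w'\preceq w}U^{w'}$, which you unpack via continuity of $\pi$ and the Bruhat closure relation. One correction to an aside you do not use: $\pi$ is not proper but affine, since its fiber over $(g_1\bB,g_2\bB)$ is the linear space $\Ad(g_1)\frakb\cap\Ad(g_2)\frakb\subseteq\frakg$.
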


\begin{proof}
Let $w\in\bW(x_\pdR)$. By definition of $w_{\sat}$, we have $x_\pdR\in V^{w_\sat}$, so that $x_\pdR\in V^{w_\sat}\cap X^{w}$. By \cite[Lemma 2.2.4]{BHS19}, we conclude that $w_\sat\preceq w$. 
\end{proof}

\section{A crystallinity criterion}
\label{sec:cryscrit}

The goal of this section is to prove that, under some reasonable assumptions on the parameter, we can deduce that a triangulable $\PG$-module over $\cR_{K,L}$ is crystalline. 
We start by recalling how the classical construction of filtered $\varphi$-modules attached to crystalline representations can be generalized to representations valued in reductive groups.

\subsection{Filtered $\gH$-$\varphi$-modules and crystalline representations}\label{sec:Gphimod}

Let $\sigma$ be the arithmetic Frobenius automorphism of $K_0$, and let $L'$ be a finite extension of $L$. We set $\sigma_{L'}\colonequals \sigma\otimes \id_{L'}\colon K_0\otimes_{\mathbb{Q}_p}L'\to K_0\otimes_{\mathbb{Q}_p}L'$, and $\Sigma(K,L') \colonequals \Hom_{\mathbb{Q}_p}(K,L')$. 

\begin{definition}\label{def_G_varphi_module} \phantom{a}
\begin{enumerate}
\item[(1)] A \emph{$\varphi$-module} over $K_0$ with coefficients in $L'$ is a free $K_0 \otimes_{\Qp} L'$-module $M$ of finite rank equipped with a $\sigma_L'$-semilinear bijection $\varphi \colon M \to M$. 
\item[(2)] A \emph{filtered $\varphi$-module} over $K_0$ with coefficients in $L'$ is a $\varphi$-module $M$ over $K_0$ with coefficients in $L'$ endowed with a decreasing, exhaustive and separated filtration of $M \otimes_{K_0} K$ by $K\otimes_{\Q_p}L'$-submodules.
    \item[(3)] A (filtered) \emph{$\gH$-$\varphi$-module} over $K_0$ with coefficients in $L'$ is a fiber functor $\eta_M$ from $\Rep_{L'}(\gH)$ to the category of (filtered) $\varphi$-modules with coefficients in $L'$. 
    \item[(4)] We say that a (filtered) $\gH$-$\varphi$-module is \emph{$\gH$-trivial} if its associated $\gH$-torsor is trivial. 
\end{enumerate}
\end{definition}


Consider pairs $(b,\underline{\mu})$, where $b\in \gH(K_0\otimes_{\mathbb{Q}_p}L')$ and $\underline{\mu}=(\mu_\tau)\in X_*(\gH_{L'})^{\Sigma(K,L')}$. By \Cref{sec_cocharacters}, the character $(-\mu_\tau)$ determines a $\gG$-filtration $\fil^\bullet_{\mu_\tau}$. To a pair $(b,\underline{\mu})$, we associate an $\gH$-trivial filtered $\gH$-$\varphi$-module as follows: given $V\in \Rep_{L'}(\gH)$, we equip $V\otimes_{\mathbb{Q}_p}K_0$ with a $\varphi$-action given by $\varphi(v\otimes x)=b(v\otimes \sigma(x))$, and we equip $V\otimes_{\mathbb{Q}_p}K$ with a filtration induced by $(\fil_{\mu_{\tau}}^\bullet)_{\tau}$.

Two pairs $(b,\underline{\mu})$ and $(b',\underline{\mu'})$ are said to be equivalent if there exists $h\in \gH(K_0\otimes_{\mathbb{Q}_p}L')$ such that $b'=hb\sigma_{L'}(h)^{-1}$, and $\fil^\bullet_{\mu'_\tau}=\fil^\bullet_{h\mu_\tau h^{-1}}$ for all $\tau\in \Sigma(K,L')$. 



\begin{lemma}[{cf. \cite[Proposition 4.27]{dedar2020}}]\label{framedfilteredphimods}
The set of isomorphism classes of $\gH$-trivial filtered $\gH$-$\varphi$-modules is in bijection with the set of equivalence classes of pairs $(b,\underline{\mu})$ as above.
\end{lemma}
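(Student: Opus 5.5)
This is the $\gH$-valued analogue of \Cref{G_PG_Pair_is_triv_PG_mod}, and I would prove it the same way: rigidify the underlying torsor by a section and read off the structure as group elements. Let $\eta_M$ be an $\gH$-trivial filtered $\gH$-$\varphi$-module. By \Cref{tannakian_fundamental_equivalence} (with $X=\Spec(K_0\otimes_{\Qp}L')$), the underlying torsor $\uHom^\otimes(\omega\otimes K_0\otimes L',\omega_{\Vect}\circ\eta_M)$ is trivial. Fixing a section $m$ gives an isomorphism of fiber functors $\omega_{\gH}\otimes_{L'}(K_0\otimes_{\Qp}L')\eqto\omega\circ\eta_M$.

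Transporting the Frobenius through $m$ gives, for each $V$, a $\sigma_{L'}$-semilinear bijection of $V\otimes_{\Qp}K_0$. Composing with $(\id\otimes\sigma)^{-1}$ makes it linear, and naturality plus tensor compatibility show that these linear automorphisms form a tensor automorphism of the neutral functor. Hence they are given by a unique $b\in\gH(K_0\otimes_{\Qp}L')$, exactly as in the proof of \Cref{G_PG_Pair_is_triv_PG_mod}.

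Next I would decompose $K\otimes_{\Qp}L'\cong\prod_{\tau\in\Sigma(K,L')}L'$, which is possible since $L'\supseteq L$ contains the Galois closure of $K$. The filtration on $V\otimes_{\Qp}K$ then becomes a family of $\gH$-filtrations $\cF_\tau$ of $\omega_{\gH}\otimes L'$ over the field $L'$. The key input is that each $\cF_\tau$ is splittable, so it equals $\fil^\bullet_{\mu_\tau}$ for some cocharacter $\mu_\tau$ defined over $L'$. For reductive $\gH$ this is \cite[Theorem 4.2.13]{DatOrlikRapoport} or \cite[Ch.~IV, Proposition 2.2.5]{SaavedraRivano}. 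Over a field of characteristic $0$, splittability holds for any linear algebraic group by Saavedra Rivano, or one can reduce to a Levi quotient. The splitting is defined over $L'$ itself because the filtration is given over $L'$, and \cite[Proposition 6.1.4]{Lev13} matches gradings with cocharacters. This produces the pair $(b,\underline\mu)$. Conversely, the construction preceding the lemma sends any pair to an $\gH$-trivial filtered module, and the two procedures are inverse. In particular, $\fil_{\mu_\tau}$ depends only on the filtration and not on the chosen splitting, which is exactly why the equivalence relation is stated in terms of filtrations rather than cocharacters.

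For the dependence on choices, replacing $m$ by $m\cdot h$ with $h\in\gH(K_0\otimes_{\Qp}L')$ changes $b$ to $hb\sigma_{L'}(h)^{-1}$, up to the side convention for $h$. It also transports each filtration by $h$, giving $\fil^\bullet_{h\mu_\tau h^{-1}}$, where $h$ acts through its image in $\gH(K\otimes L')$ and hence componentwise in $\tau$. Isomorphisms of trivial filtered modules are tensor isomorphisms compatible with $\varphi$ and filtrations, and after trivializing they are exactly such elements $h$. Therefore isomorphism classes correspond to equivalence classes of pairs.

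The main obstacle is the splitting step. One must make sure it takes place over $L'$, since a $K$-filtration could a priori give cocharacters only over an extension, and one must phrase the equivalence in terms of filtrations. Everything else is the same bookkeeping as in \Cref{G_PG_Pair_is_triv_PG_mod}, or can be cited from \cite[Proposition 4.27]{dedar2020}.
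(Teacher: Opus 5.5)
Your proof is correct. The paper gives no argument of its own here and simply cites \cite[Proposition 4.27]{dedar2020}; your trivialize-and-read-off proof (a section of the torsor gives $b$, a splitting of each $\cF_\tau$ over $L'$ gives $\mu_\tau$, and changing the section produces exactly the stated equivalence relation) is the standard Tannakian argument that citation supplies. The only non-formal input is the one you flagged: the splitting must exist over $L'$ for possibly non-reductive $\gH$, which is needed because the lemma is later applied to $\gB/\gU^i$ in \Cref{crystalline_extensions_exist}. In characteristic $0$ this holds because splittings exist fpqc-locally and form a torsor under a unipotent group over the field $L'$, which therefore has an $L'$-point.
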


By \cite[Lemma 4.22]{dedar2020}, any $\gH$-$\varphi$-module becomes trivial after base change to a finite extension of $L'$.


\begin{definition}\label{def:crys}
A $\PG$-module $D$ of rank $d$ over $\cR_{K,L'}$ is \emph{crystalline} if the $K_0\otimes_{\Qp}L$-module $D_{\cris}(D)\colonequals (D[\tfrac{1}{t}])^{\Gamma_K}$ has dimension $[L':\Qp]\cdot d$ over $K_0$. 
We define this way a functor $D_\cris$ from crystalline $\PG$-module $D$ over $\cR_{K,L'}$ to filtered $\gH$-$\varphi$-modules over $K_0$ with coefficients in $L'$.
An $\gH$-$\PG$-module $D$ over $\cR_{K,L'}$ is crystalline if, for every $V\in\Rep_{L'}(\gH)$, $\eta_D(V)$ is crystalline.
\end{definition}

By the main result of \cite{Ber08}, as reformulated in \cite[Proposition 1.2.9]{Benois}, we have the following.

\begin{proposition}\label{equivalencecrystallinecat}
The functor $\eta_{D}\mapsto D_\cris\circ\eta_D$ 
defines an equivalence of categories between crystalline $\gH$-$\PG$-modules over $\cR_{K,L}$ and filtered $\gH$-$\varphi$-modules over $K$ with coefficients in $L$.
\end{proposition}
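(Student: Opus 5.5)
The plan is to reduce to the $\GL_n$ statement of Berger by Tannakian formalism, treating $D_\cris$ as a tensor functor and checking that it preserves fiber functors.

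First, observe that on the full subcategory of crystalline $\PG$-modules over $\cR_{K,L}$, $D_\cris$ is an exact tensor functor to filtered $\varphi$-modules over $K_0$ with coefficients in $L$. It is also an equivalence onto the category of all (not necessarily weakly admissible) filtered $\varphi$-modules. This is the content of \cite{Ber08} as reformulated in \cite[Proposition 1.2.9]{Benois}. Compatibility with $\otimes$, duals and the unit object comes from the natural isomorphism $D[\tfrac{1}{t}] \cong \cR_{K,L}[\tfrac{1}{t}]\otimes_{K_0\otimes L} D_\cris(D)$, which identifies $D_\cris(D\otimes D')$ with $D_\cris(D)\otimes D_\cris(D')$. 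Exactness holds because the full subcategory of crystalline objects is closed under subquotients and extensions inside the $\PG$-modules that occur as values of a fiber functor. Exactness on filtered objects is exactness in the strict sense used in \Cref{sec:Gfil}: the exact sequences are those that are exact on every graded piece.

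Given this, composing with $D_\cris$ sends an $L$-linear exact faithful tensor functor $\eta_D\colon \Rep_L(\gH)\to \PGcat^+_{K,L}$ with crystalline values to an $L$-linear exact faithful tensor functor $D_\cris\circ\eta_D$. This is a filtered $\gH$-$\varphi$-module in the sense of \Cref{def_G_varphi_module}. Morphisms of fiber functors (tensor natural isomorphisms) correspond bijectively because $D_\cris$ is fully faithful and compatible with the tensor structures. Essential surjectivity goes through Berger's quasi-inverse $\mathcal{M}$. Given a filtered $\gH$-$\varphi$-module $\eta_M$, the composite $\mathcal{M}\circ\eta_M$ is again a tensor functor, since $\mathcal{M}$ is a tensor equivalence. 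It is exact and faithful because $\mathcal{M}$ is an exact equivalence, and its values are crystalline by construction. Then $D_\cris\circ\mathcal{M}\circ\eta_M\cong\eta_M$ as tensor functors.

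The main obstacle is the exactness claims, and I would address them directly. Exactness of $\mathcal{M}$, the functor sending filtered $\varphi$-modules to $\PG$-modules, is the delicate direction. In Berger's construction one modifies $\cR_{K,L}[\tfrac{1}{t}]\otimes D_\cris$ at $\varphi$-translates of $\zeta_{p^n}-1$ using the filtration. A sequence of filtered modules that is exact on graded pieces is split-exact filtration-wise over $K\otimes L$, so the modification is exact; I would verify this locally at each point $\zeta_{p^n}-1$, where it reduces to exactness of lattices $\sum_i t^{-i}\Fil^i$ in $\BdR^+$-modules. Faithfulness is then automatic: a faithful exact functor from a rigid abelian category is determined on the nonzero objects, and both $D_\cris$ and $\mathcal{M}$ send nonzero objects to nonzero objects.
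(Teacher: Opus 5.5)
Your proposal is correct and follows the same route as the paper. The paper simply invokes Berger's tensor equivalence between crystalline $\PG$-modules and filtered $\varphi$-modules, in the form of \cite[Proposition 1.2.9]{Benois}, and leaves implicit the Tannakian step of composing fiber functors with this exact tensor equivalence and its quasi-inverse, which is exactly what you spell out.

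One correction: exactness of $D_\cris$ on short exact sequences of crystalline objects does not come from closure of crystalline objects under extensions, which fails in general since $H^1_f\subsetneq H^1$ can occur. It comes from left exactness of $\Gamma_K$-invariants on $D[\tfrac{1}{t}]$ together with the rank count, and from the analogous argument on graded pieces for strictness of the filtration.
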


\subsubsection{The crystalline Frobenius} 
Let $\rho\colon\GK\to\gG(L)$ be continuous representation with associated fiber functor $\eta_\rho$, and let $\eta_D=D_\rig\circ\eta_\rho$ be the associated $\gG$-$\PG$-module over $\cR_{K,L}$.  
We say that $\rho$ is crystalline if $D$ is crystalline in the sense of \Cref{def:crys}.  
Since $D_\cris\circ \eta_D$ coincides with $D_\cris\circ\eta_\rho$, one recovers the usual definition of crystalline representation.
\begin{proposition}[{cf. \cite[Corollary 4.34]{dedar2020}}]
    Every crystalline representation $\rho
    \colon \GK\to\gG(L)$ is trianguline.
\end{proposition}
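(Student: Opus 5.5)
Here is the plan: pass to the filtered $\gG$-$\varphi$-module attached to $\rho$, take a $\varphi$-stable $\gB$-reduction of it, and transport that reduction back to $\PG$-modules. The equivalence of \Cref{equivalencecrystallinecat} makes this transport possible.

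First, set $\eta_D=D_\rig\circ\eta_\rho$ and $\eta_M\colonequals D_\cris\circ\eta_D$; this is a filtered $\gG$-$\varphi$-module over $K_0$ with coefficients in $L$. By \cite[Lemma 4.22]{dedar2020}, after replacing $L$ by a finite extension $L'$ (which \Cref{def_trianguline_rep} allows) the module $\eta_M$ becomes $\gG$-trivial. By \Cref{framedfilteredphimods} it is then given by a pair $(b,\underline{\mu})$ with $b\in\gG(K_0\otimes_{\Qp}L')$. Note that $K_0\otimes_{\Qp}L'\cong\prod_{K_0\hookrightarrow L'}L'$, and $\varphi^{f}$, where $f=[K_0:\Qp]$, acts $L'$-linearly on each factor through a single element $b_f=b\,\sigma(b)\cdots\sigma^{f-1}(b)\in\gG(L')$.

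Next, enlarge $L'$ further so that the semisimple part of $b_f$ lies in a split maximal torus and $b_f$ lies in a Borel subgroup $\gB'\subseteq\gG_{L'}$. This is possible because over an algebraically closed field every element of a connected reductive group lies in some Borel subgroup, and that Borel is already defined over a finite extension. We then propagate this Borel through the $f$ factors using $b$. Concretely, the $\varphi$-stable $\gB$-reductions of the $\gG$-$\varphi$-module correspond to $h\in\gG(K_0\otimes L')$ with $h^{-1}b\sigma(h)\in\gB$. Such an $h$ is obtained by choosing $h_0$ on one factor with $h_0^{-1}b_fh_0\in\gB$ and defining the other components recursively through $b$. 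This step is a Lang-type computation, and I expect it to be the main technical obstacle, because the semilinear structure across the factors must be handled carefully. By \Cref{borelindependent} we may assume $\gB'=\gB$.

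Finally, the $\gB$-reduction of the underlying $\varphi$-module automatically carries a filtration: restricting the $\gG$-filtration $\fil^\bullet_{\underline\mu}$ to $\Rep(\gB)$ gives a filtered $\gB$-$\varphi$-module. It does not matter that the filtration is not adapted to $\gB$, since filtered $\gB$-$\varphi$-modules only require a fiber functor to filtered objects, and this one is given by composing with $\Rep(\gG)\to\Rep(\gB)$ on the underlying spaces. One point still needs checking. \Cref{equivalencecrystallinecat} applies to filtered $\gB$-$\varphi$-modules which are weakly admissible, or more precisely which come from $\PG$-modules over $\cR$; by Berger's theorem every filtered $\varphi$-module arises from a crystalline $\PG$-module (not necessarily étale). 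Applying the equivalence with $\gH=\gB$ therefore produces a crystalline $\gB$-$\PG$-module $\Dtri$ together with an isomorphism $\Dtri\times^\gB\gG\cong D_{L'}$, by full faithfulness of the equivalence for $\gH=\gG$. Hence $D_{L'}$ is triangulable, and $\rho$ is trianguline.
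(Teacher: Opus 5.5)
Your outline (trivialize $D_\cris\circ\eta_\rho$ over some $L'$, conjugate the Frobenius into $\gB$, transport back through \Cref{equivalencecrystallinecat}) is the standard argument. It is also what the paper presupposes right after the statement, where it places $\varphi^f$ in $\gB(K_0\otimes_{\Qp}L')$. However, your third step contains a genuine gap. A $\gG$-filtration $\fil^\bullet_{\underline\mu}$ is a functor on $\Rep_{L'}(\gG)$. Precomposing with $\Res^{\gG}_{\gB}\colon\Rep(\gG)\to\Rep(\gB)$ converts functors on $\Rep(\gB)$ into functors on $\Rep(\gG)$, not the other way round. So your construction assigns no filtration to a $\gB$-representation that is not restricted from $\gG$ (for example a character of $\gT$, or the line $V_\lambda^\lambda$). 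Consequently you do not yet have a filtered $\gB$-$\varphi$-module to which \Cref{equivalencecrystallinecat} could be applied. What is needed is that the conjugated $\gG$-filtration is induced from a $\gB$-filtration, i.e. that it can be split by cocharacters factoring through $\gB$. Adaptedness to $\gB$ is therefore exactly the issue, not an irrelevance.

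This can be repaired as follows. Let $\mu'_\tau$ be the cocharacters obtained after conjugating by the $\tau$-components of your $h$, and let $P_\tau$ be the parabolic stabilizing $\fil_{\mu'_\tau}$. For every $p\in P_\tau(L')$, the cocharacter $p\mu'_\tau p^{-1}$ defines the same filtration. The intersection $P_\tau\cap\gB$ contains a split maximal torus $T'$, by the Bruhat decomposition. The image of $\mu'_\tau$ lies in a maximal split torus of $P_\tau$, and maximal split tori of $P_\tau$ are $P_\tau(L')$-conjugate. Hence some $p_\tau$ moves $\mu'_\tau$ into $X_*(T')\subseteq X_*(\gB)$. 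Then $(h^{-1}b\sigma(h),(p_\tau\mu'_\tau p_\tau^{-1})_\tau)$ is an honest filtered $\gB$-$\varphi$-module. By \Cref{framedfilteredphimods}, its pushforward to $\gG$ is equivalent to $(b,\underline\mu)$, and the rest of your argument goes through.

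Alternatively, you can bypass $\gB$-filtrations entirely. The $\varphi$-stable $\gB$-reduction gives $\varphi$-stable lines in the modules $D_\cris(\eta_D(V_\lambda))$. Give these lines the induced filtrations and apply Berger's equivalence. This produces saturated rank-one $\PG$-submodules of the $\eta_D(V_\lambda)$ satisfying the Plücker relations, so \Cref{pluckercoordinatesphi-gamma} applies.

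Conversely, the step you expected to be the main obstacle is routine. Write $K_0\otimes_{\Qp}L'=\prod_{i\in\Z/f}L'$, with $\sigma$ shifting the index. The condition $h^{-1}b\sigma(h)\in\gB$ then reads $h_i^{-1}b_ih_{i+1}\in\gB(L')$ for all $i$. Choosing $h_{i+1}=b_i^{-1}h_i$ for $0\le i\le f-2$ reduces it to $h_0^{-1}b_0b_1\cdots b_{f-1}h_0\in\gB(L')$. Here $b_0\cdots b_{f-1}$ is one component of $b\sigma(b)\cdots\sigma^{f-1}(b)\in\gG(K_0\otimes_{\Qp}L')$. This product is not a single element of $\gG(L')$; its components are all conjugate to one another, and it is one of them that you put in a Borel.
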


Set $f=[K_0:\Q_p]$, and assume that $\rho$ is crystalline. There exists a finite extension $L'/L$ such that $D_{\cris}\circ \eta_{\rho}$ is trivializable. Hence by \Cref{framedfilteredphimods}, we may associate to it a pair $(\varphi^f,\varpi_{\HT})$, where $\varpi_{\HT}=(\varpi_{\HT,\tau})_{\tau}$ is the Hodge--Tate cocharacter of $\rho$.


Assume that $L'$ is big enough so that the associated $\gG$-$\PG$-module $D_{L'}$ over $\cR_{K,L'}$ admits a triangulation $\Dtri$. 
We may then assume that $\varphi^f=(\varphi^f_\tau)_\tau\in \gB(K_0\otimes_{\Qp}L')=\prod_{\tau\in \Sigma(K_0,L')}\gB(L')$. We let  $\overline{\varphi}^f$ be the image of $\prod_\tau \varphi^f_\tau$ under the projection $\gB(L')\to\gT(L')$, which is independent of the choice of $\varphi^f$ in the $\sigma_{L'}$-conjugacy class by commutativity of $\gT$.

\begin{definition}\label{def:phiregsem}
We say that $\varphi^f$ is \emph{regular semisimple} if $\alpha(\overline{\varphi}^f)\ne 1$ for every root $\alpha\in\Phi(\gG,\gT)$.
\end{definition}

Given $x\in (L')^\times$, write $\unr(x)$ for the character $K^\times\to (L')
^\times$ that is trivial on $\cO_K^\times$ and maps a uniformizer of $K$ to $x$. 
\begin{proposition}[{c.f. \cite[Proposition 4.44]{dedar2020}}]
    The parameter $\delta\colon \gT^\vee(K)\to (L')^\times$ of the triangulation $\Dtri$ is given by
    $$ \delta= \unr_\gT(\overline{\varphi}^f)\cdot \delta^{\alg} $$
where $\unr_\gT(\overline{\varphi}^f)$ is the unique character satisfying $\unr_\gT(\overline{\varphi}^f)\circ\chi=\unr(\chi(\overline{\varphi}^f))$ for every $\chi\in X^\ast(\gT)$, and $\delta^{\alg}$ is the product of the characters
$$ \delta^{\alg}_{\tau} \colon \gT^\vee(K)\xrightarrow{\tau} \gT^\vee(L')\xrightarrow{\varpi_{\HT,\tau}^\vee} (L')^\times,$$
for $\tau\in \Sigma(K,L')$.
\end{proposition}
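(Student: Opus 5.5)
The plan is to reduce to the $\GL_n$ case through the Plücker datum of $\Dtri$, using that a character of $\gT^\vee(K)$ is determined by its compositions $\delta\circ\chi$ for $\chi\in X^*(\gT)$. By definition of $\unr_\gT$ and $\delta^{\alg}$, for every $\chi\in X^*(\gT)$ we have
\[
\unr_\gT(\overline{\varphi}^f)\circ\chi^\vee=\unr(\chi(\overline{\varphi}^f)),\qquad
\delta^{\alg}\circ\chi^\vee=\prod_\tau \tau^{\langle\chi,\varpi_{\HT,\tau}\rangle}.
\]
It therefore suffices to show that $\delta\circ\chi^\vee=\unr(\chi(\overline{\varphi}^f))\prod_\tau\tau^{\langle\chi,\varpi_{\HT,\tau}\rangle}$ for $\chi$ ranging over a generating set of $X^*(\gT)$. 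Since $X^*(\gT)$ is generated as a group by dominant weights, I will take $\chi=\lambda\in X^*_+(\gT)$.

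Fix such a $\lambda$. By \Cref{pluckercoordinatesphi-gamma}, the triangulation gives a saturated rank-one submodule $\scrL_\lambda=\eta_{\Dtri}(V_\lambda^\lambda)\cong\cR_{K,L'}(\delta\circ\lambda^\vee)$ of $\eta_D(V_\lambda)$. Since $D$ is crystalline, $\scrL_\lambda$ is a sub-$\PG$-module of a crystalline module, hence crystalline. Applying $D_\cris$ and \Cref{equivalencecrystallinecat}, we get a $\varphi$-stable line $D_\cris(\scrL_\lambda)\subseteq D_\cris(\eta_D(V_\lambda))$ with the induced filtration. We may choose the trivialization so that $\varphi^f\in\gB$. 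Then this line is $b\cdot(V_\lambda^\lambda\otimes K_0\otimes L')$, which is the line attached by \Cref{example_plucker_datum} to the Borel reduction, compatibly with $D_\cris$ of $\Dtri$. On this line $\varphi^f$ acts through the projection to $\gT$, namely by the scalar $\lambda(\overline{\varphi}^f)$ after taking the product over the $\tau$-components.

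For a crystalline rank-one module $\cR_{K,L'}(\delta_1)$, the standard description (the $\GL_1$ case, e.g. \cite[Proposition 2.9]{BHSmodulaire} together with Berger's dictionary) gives $\delta_1=\unr(\alpha)\prod_\tau\tau^{k_\tau}$. Here $\alpha$ is the eigenvalue of $\varphi^f$ on $D_\cris$, and the $k_\tau$ are the $\tau$-Hodge--Tate weights, up to the sign normalization of the paper. So it remains to identify the Hodge--Tate weight of the line $D_\cris(\scrL_\lambda)$ with $\langle\lambda,\varpi_{\HT,\tau}\rangle$. The filtration on the line is induced from $\fil_{-\varpi_{\HT,\tau}}$ on $V_\lambda$ through the pair $(b,\underline\mu)$, and the line $V_\lambda^\lambda$ is the $\lambda$-weight space. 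Its jump is therefore at $\langle\lambda,\varpi_{\HT,\tau}\rangle$ up to the sign convention, by the computation in the proof of \Cref{laststepfiltration}.

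The main obstacle is this last compatibility. We must check that the filtration induced on the triangulation line really sits at the weight $\lambda$ for the cocharacter $\varpi_{\HT,\tau}$, rather than at $w\lambda$ for some Weyl element $w$. Equivalently, we must justify choosing $h$ in \Cref{framedfilteredphimods} so that $\varphi^f\in\gB$ while $\mu_\tau$ is simultaneously the $\gT$-valued cocharacter $\varpi_{\HT,\tau}$. In general the relative position of the two flags (compare $w_\sat$ in \Cref{wsatwx}) would intervene, so the statement should be read with $\varpi_{\HT}$ normalized relative to the Borel stabilized by $\varphi^f$, as in \cite[Proposition 4.44]{dedar2020}. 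I will arrange this normalization first. Then I will verify the sign conventions (Hodge--Tate weight of $\chi^\cyc$ equal to $1$, $\unr$ sending a uniformizer to a geometric Frobenius) on the character $\lambda=\det$ for $\GL_1$, and conclude by multiplicativity in $\lambda$.
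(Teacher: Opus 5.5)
The paper gives no proof of this proposition; it quotes it from \cite[Proposition 4.44]{dedar2020}. Your argument supplies a self-contained proof and its mechanism is right. You reduce to dominant $\lambda$ and identify $\scrL_\lambda=\eta_{\Dtri}(V_\lambda^\lambda)\cong\cR_{K,L'}(\delta\circ\lambda^\vee)$ as a saturated, hence crystalline, submodule. Its $D_\cris$ is a $\varphi$-stable line carrying the induced filtration, and you read off $\delta\circ\lambda^\vee$ from the rank-one classification.

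Your caveat about $\varpi_{\HT}$ is essential, not a technicality. Take $\varpi_{\HT,\tau}$ to be the canonical antidominant cocharacter of \Cref{def_HT}. Then the formula already fails for a two-dimensional crystalline representation of $\cG_{\Qp}$ with Hodge--Tate weights $\{0,k\}$, $k>0$, whose Hodge line is not a Frobenius eigenline. For either refinement, the eigenline meets the Hodge line trivially, so $\delta_1$ has weight $k$. Hence $\delta^{\alg}$ is built from the nontrivial Weyl conjugate of $\varpi_{\HT}=(0,k)$, in accordance with \Cref{wsatwx}. The statement must therefore be read with $\varpi_{\HT,\tau}$ replaced by the Weyl conjugate that records the relative position of the triangulation flag and the $\tau$-Hodge filtration.

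\textbf{Frame normalization.} This normalization cannot in general be achieved by a single frame change $h$ as in \Cref{framedfilteredphimods}. To keep $\varphi^f\in\gB$ and the Borel reduction standard, $h$ must lie in $\gB(K_0\otimes_{\Qp}L')$. If $K/K_0$ is ramified, the Hodge parabolics $P_\tau$ for the several $\tau$ above one embedding of $K_0$ need not share a maximal torus of $\gB$; this already happens for $\GL_2$ with two distinct Hodge lines. You do not need such an $h$:
\begin{itemize}
\item Trivialize the $\gB$-torsor underlying $D_\cris\circ\eta_{\Dtri}$, which is possible by \Cref{H1_vanishing_groups}. This puts the lines in standard position $V_\lambda^\lambda$. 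Note that $\varphi^f\in\gB$ alone does not single out the triangulation's reduction when $\varphi^f$ is not regular.
\item Since $\gB(L')$ stabilizes $V_\lambda^\lambda$, compute the jump of the $\tau$-filtration on that line after moving the filtration alone by some $u_\tau\in\gB(L')$ into $\gT$-split form. The result depends only on the $(P_\tau,\gB)$ double coset.
\end{itemize}
Equivalently, and a little more economically, push to $\gT$ directly. The resulting pair $(\bar b,(\bar\mu_\tau)_\tau)$ with $\bar\mu_\tau\in X_*(\gT)$ describes $\Dtri\times^\gB\gT$. A basis of $X^*(\gT)$ then reduces you to rank one without dominant weights, and $\bar\mu_\tau$ is exactly the correctly normalized $\varpi_{\HT,\tau}$.

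\textbf{Unramified part.} The linear map $\varphi^f$ acts on the line by $\lambda$ of the $\gT$-image of a single component $\varphi^f_{\tau_0}$; all components give the same value. Equivalently, it acts by $\lambda(\prod_{\tau_0}\bar b_{\tau_0})$ if $\varphi=b\sigma$. Multiplying all $f$ components of $\varphi^f$, as ``product over the $\tau$-components'' suggests, would give an $f$-th power.
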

\begin{remark}\label{phireg}
The linearized Frobenius $\varphi^f$ is regular semisimple if and only if, for every simple root $\alpha\in \Delta(\gG,\gB)$, $\delta\circ\alpha^\vee$ is the product of an algebraic character with a non-trivial unramified character. 
\end{remark}

\subsection{Selmer groups for $B$-pairs and $B_e$-representations}
For convenience, we mostly work in the language of $B$-pairs in the rest of the section. At various points, we rely on the equivalence of categories between ($\gH$-)$B$-pairs and ($\gH$-)$\PG$-modules over $\cR_{K,L}$, as well as the equivalence of categories between ($\gH$-)$B_e\otimes_{\Qp}L$-representations of $\GK$, and ($\gH$-)$\PG$-modules over $\cR_{K,L}[\tfrac{1}{t}]$ (see the proof of \cite[Proposition 3.5.1]{BHS19}).

For a $p$-adic field $L$, we write $B_{\heartsuit,L}=B_\heartsuit\otimes_{\Qp}L$ for $\heartsuit\in \{e,\cris,\dR\}$, where $L$ is equipped with the trivial action of $\GK$. 
Let $W=(W_e,W_\dR^+)$ be an $L$-$B$-pair. 
We define a complex of $\GK$-modules $C^\bullet(W)$, concentrated in degrees $[0,1]$, as
\begin{align*} C^0(W)=W_e\oplus W_\dR^+&\xto{\delta_0} C^1(W)=W_\dR \\
(x,y)&\mapsto x-y,
\end{align*}
where we are implicitly identifying $W_e,W_\dR^+$ with submodules of $W_\dR$. 
For $i\in\bbN$, we define $H^i(\GK,W)=H^i(\GK,C^\bullet(W))$.

We denote by $W^\triv$ the trivial $L$-$B$-pair of rank 1. 
The discussion in \cite[Section 2.1]{NakClass} identifies $H^1(\GK,W)$ with the group of extensions of $W^\triv$ by $W$. 
Similarly, given a $B_{e,L}$-representation $W_e$, the Galois cohomology group $H^1(\GK,W_e)$ classifies extensions of $B_{e,L}$ by $W_e$.


Write $K_0$ for the maximal unramified extension of $\Q_p$ contained in $K$. 
We say that a $B_{e,L}$-representation $W_e$ is \emph{crystalline} (respectively, \emph{de Rham}) if $D_\cris^K(W_e)\colonequals(B_\cris\otimes_{B_e}W_e)^{\GK}$ is a free $K_0\otimes_{\Q_p}L$-module of the same rank as $W_e$ (respectively, $D_\dR^K(W_e)\colonequals(B_\dR\otimes_{B_e}W_e)^{\GK}$ is a free $K\otimes_{\Q_p}L$-module of the same rank as $W_e$). We say that an $L$-$B$-pair $W$ is crystalline (respectively, de Rham) if $W_e$ is. 
As for Galois representations, the implications crystalline $\implies$ de Rham $\implies$ Hodge--Tate hold.

Given a $B_{e,L}$-representation $W_e$, we define 
\[ H^1_f(\GK,W_e)\colonequals\ker\big(H^1(\GK,W_e)\longrightarrow H^1(\GK,B_\cris\otimes_{B_e}W_e)\big). \]
If $W_e$ is crystalline, one shows by standard arguments that a class in $H^1(\GK,W_e)$ corresponds to a crystalline extension of $B_{e,L}$ by $W_e$ if and only if it belongs to $H^1_f(\GK,W_e)$.

Given a $B$-pair $W$, we define, following \cite[Definition 2.4]{NakClass},
$$ H^1_f(\GK,W)\colonequals\ker\big(H^1(\GK,W)\longrightarrow H^1(\GK,B_\cris\otimes_{B_e}W_e)\big). $$
If $W$ is crystalline, then by \cite[Remark 2.5]{NakClass} a class in $H^1(\GK,W)$ corresponds to a crystalline extension of $W^\triv$ by $W$ if and only if it belongs to $H_f^1(\GK,W)$. 


Recall that to a character $\delta\colon K^\times\to L^\times$  we can attach a $\PG$-module $\cR_{K,L}(\delta)$, hence an $L$-$B$-pair $W(\delta)$, which coincides with that constructed in \cite[Section 1.4]{NakClass}. 

\begin{remark}\label{cryschar}
The $L$-$B$-pair $W(\delta)$ is crystalline if and only if $\delta\vert_{\cO_K^\times}$ is algebraic. This is clear from the construction in \cite[Section 1.4]{NakClass}: with the notation there, $$W(\delta)=W(L(\wtl\delta_0))\otimes W_0^{\otimes i}$$ where $\wtl\delta_0\colon \GK\to L^\times$ is a Galois character and $W_0$ is a crystalline $L$-$B$-pair (it is attached to a filtered $\varphi$-module $D_0$). Therefore, $W(\delta)$ is crystalline if and only if $\wtl\delta_0$ is, if and only if $\wtl\delta_0\vert_{\cO_K^\times}=\delta\vert_{\cO_K^\times}$ is algebraic, by the standard classification of crystalline Galois characters.
\end{remark}

Recall that $W_\dR^+/tW_\dR^+$ is a free $\C_p\otimes_{\Q_p}L$-module of the same rank as $W$, equipped with a Sen operator $\Theta_{W_\dR^+}$. We say that $W$ is \emph{Hodge--Tate} if $\Theta_{W_\dR^+}$ is diagonalizable and all of its eigenvalues belong to $\Z$. In such a case, we call these eigenvalues the \emph{Hodge--Tate weights} of $W$.
By \cite[Theorem 5]{sencont}, $\Theta_{W_\dR^+}$ can be represented by a matrix with coefficients in $K\otimes_{\Q_p}L$, and by writing $K\otimes_{\Q_p}L=\bigoplus_{\sigma:K\into\C_p}L$, we obtain a tuple of Hodge--Tate weights $\uk_\sigma=(k_{\sigma,i})_{i=1,\ldots,\rank W}$ for every $\sigma$. 

With the above definition, the Hodge--Tate weight of the cyclotomic character is 1 (as per our convention). Similarly to \cite[Section 6.3]{emertongee}, which uses the opposite convention, we say that the Hodge--Tate weights of a Hodge--Tate $L$-$B$-pair $W$ are \emph{slightly positive} if $k_{\sigma,i}>0$ for every $\sigma,i$ and if there exists one $\sigma$ such that $k_{\sigma,i}>1$ for every $i$. 
If $W,W'$ are two Hodge--Tate $L$-$B$-pairs with Hodge--Tate weights $\uk,\uk'$, we say that $\uk$ is \emph{slightly greater} than $\uk'$ if the Hodge--Tate weights of $W_1\otimes W_2^\vee$ are slightly positive; equivalently, if for every $i,j,\sigma$, $k_{\sigma,i}>k'_{\sigma,j}$, and there exists one $\sigma$ such that $k_{\sigma,i}>k'_{\sigma,j}+1$ for every $i,j$.

\begin{lemma}\label{H1f}
    If $W$ is a de Rham $L$-$B$-pair with slightly positive Hodge--Tate weights, then $H^1(\GK, W) = H^1_f(\GK, W)$.
\end{lemma}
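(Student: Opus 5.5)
We transpose the argument of \cite[Lemma 6.3.1]{emertongee} to the language of $B$-pairs. The plan is to compare dimensions: we will show that $\dim_L H^1_f(\GK,W)$ and $\dim_L H^1(\GK,W)$ both equal $[K:\Qp]\cdot\rank W$. Since $H^1_f(\GK,W)\subseteq H^1(\GK,W)$ by definition, equality of dimensions gives the claim.

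\textbf{Step 1 (full cohomology).} By Tate duality and the Euler characteristic formula for $B$-pairs \cite[Section 2]{NakClass}, equivalently for $\PG$-modules \cite{LiuCohAndDuality}, we have
\[
\dim H^1(\GK,W)=[K:\Qp]\rank W+\dim H^0(\GK,W)+\dim H^0(\GK,W^\vee(\chi^\cyc)).
\]
We then show that both $H^0$ terms vanish.
\begin{itemize}
\item $H^0(\GK,W)\subseteq (W_\dR^+)^{\GK}$, and the latter injects into $\Fil^0 D_\dR(W)$. Since all Hodge--Tate weights are positive, $\Fil^0 D_\dR(W)=0$ in our convention.
\item The Hodge--Tate weights of $W^\vee(\chi^\cyc)$ are $1-k_{\sigma,i}\le 0$. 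For some $\sigma$ they are all $\le -1$. An invariant vector of $W^\vee(\chi^\cyc)$ is a morphism $W(\chi^\cyc)^{-1}\to W^\vee$, i.e. a morphism $W\to W^\triv(\chi^\cyc)$. Its image would be a sub-$B$-pair of the rank-one object $W^\triv(\chi^\cyc)$ whose $\sigma$-weight must be at least that of a quotient of $W$, which forces a weight $\ge 2$ at the distinguished $\sigma$. This contradicts the fact that saturated sub-$B$-pairs of $W^\triv(\chi^\cyc)$ are $t^{n}$-twists with weight $\le 1$. Hence this $H^0$ vanishes too.
\end{itemize}
The second vanishing is the delicate point: it is precisely where the extra gap at one $\sigma$ in the slightly positive hypothesis is used.

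\textbf{Step 2 (the $f$ part).} We use the Bloch--Kato style formula for de Rham $B$-pairs \cite[Proposition 2.7]{NakClass}:
\[
\dim H^1_f(\GK,W)=\dim H^0(\GK,W)+\dim D_\dR(W)/\Fil^0 D_\dR(W).
\]
The first term is $0$ by Step 1. Since $\Fil^0 D_\dR(W)=0$, the second term is $\dim D_\dR(W)=[K:\Qp]\rank W$. Comparing with Step 1 yields equality.

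\textbf{Main obstacle.} The main difficulty is ensuring that Nakamura's formulas are available in exactly our normalization of Hodge--Tate weights (cyclotomic weight $1$), and that they hold for arbitrary de Rham $B$-pairs, not only for étale ones. If only the dual formula for $H^1_f$ is available, we will instead use the orthogonality $H^1_f(W)^\perp=H^1_f(W^\vee(\chi^\cyc))$. We then show $H^1_f(W^\vee(\chi^\cyc))=0$ via the same formula, because $D_\dR/\Fil^0$ of $W^\vee(\chi^\cyc)$ is $0$ when all its weights are $\le 0$, and its $H^0$ vanishes by Step 1.
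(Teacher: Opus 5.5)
Your overall strategy matches the paper's. You compare $\dim_L H^1_f(\GK,W)=\dim_L D^K_\dR(W)/\Fil^0+\dim_L H^0(\GK,W)$ with the Euler characteristic formula and Tate duality. Everything then reduces to $H^0(\GK,W^\vee(\chi^\cyc))=0$, i.e.\ $H^2(\GK,W)=0$. Your vanishing of $H^0(\GK,W)$ is correct but not needed, since that term cancels.

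Your justification of the key vanishing has the inequality backwards. A rank-one sub-$B$-pair of $W^\triv(\chi^\cyc)$ is a $t^{\mathbf n}$-multiple of it with $\mathbf n\ge 0$ (the lattice can only shrink). So its weights are $1+n_\sigma\ge 1$, not $\le 1$. An image of weight $\ge 2$ at the distinguished $\sigma$, for instance $t\cdot W^\triv(\chi^\cyc)$, is therefore no contradiction. Equivalently, nonzero maps between rank-one objects go from larger to smaller weight (think of $\cR_{K,L}\subseteq t^{-1}\cR_{K,L}$). So having all $\sigma$-weights of $W^\vee(\chi^\cyc)$ at most $-1$ does not prevent $W^\triv$ from mapping into it. Your fallback via $H^1_f(W)^\perp=H^1_f(W^\vee(\chi^\cyc))$ relies on the same vanishing.

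No argument can close this gap, because the statement is false for non-\'etale $B$-pairs. Take $W=W(\delta)$ with $\delta=z^{\mathbf k}|N_{K/\Qp}(z)|$ and $\mathbf k\in\Z^\Sigma$ slightly positive, e.g.\ $\mathbf k=(2,\dots,2)$. Then $W$ is crystalline by \Cref{cryschar}, with Hodge--Tate weights $\mathbf k$. Since $\mathbf k\in\Z^\Sigma_{\ge1}$, \Cref{coh_char}(2) gives $H^2(\GK,W)\cong L$; indeed $W^\vee(\chi^\cyc)\cong W(z^{\mathbf 1-\mathbf k})$ contains $W^\triv$. Hence $\dim_L H^1(\GK,W)=[K:\Qp]+1>[K:\Qp]=\dim_L H^1_f(\GK,W)$. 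For $K=\Qp$ this $W$ is $t^{k-1}\cR_{\Qp,L}(\varepsilon)$ with $k\ge 2$, and the extra extension classes are the familiar semistable non-crystalline ones.

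The paper's own one-line justification (``$W^\vee(1)$ cannot contain a copy of the trivial $L$-$B$-pair'') fails at exactly this point. Two correct versions are available:
\begin{itemize}
\item The lemma holds for \'etale $W$. A nonzero invariant of the Galois representation $W^\vee(\chi^\cyc)$ spans a saturated trivial subrepresentation, so weight $0$ would have to occur at every $\sigma$, which is impossible at the distinguished $\sigma$.
\item It holds for arbitrary de Rham $W$ under the extra hypothesis $H^2(\GK,W)=0$. For the rank-one pieces $W(\delta\circ\alpha^\vee)$ used in \Cref{lincrys}, this means $\delta\circ\alpha^\vee\neq z^{\mathbf k}|N_{K/\Qp}(z)|$, which is what $\gB$-regularity would provide.
\end{itemize}
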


\begin{proof}
By \cite[Proposition 2.7]{NakClass}, we have 
$$ \dim_L H^1_f(\GK, W) = \dim_L D^K_{\dR}(W)/\Fil^0 D^K_{\dR}(W) + \dim_L H^0(\GK, W). $$
By the Euler characteristic formula \cite[Theorem 4.3]{LiuCohAndDuality}, we have
$$ \dim_L H^1(\GK, W) = \dim_L H^0(\GK, W) + \dim_L H^2(\GK, W) + [K:\Qp] \rank W. $$
Since $W$ is de Rham, $\dim_L D_\dR^K(W)=[K:\Q_p]\rank W$, and since the Hodge--Tate weights of $W$ are all positive, $\Fil^0D_\dR^K(W)=0$. 
Hence,
$$ \dim_L H^1(\GK, W) - \dim_L H^1_f(\GK, W) = \dim_L H^2(\GK, W) = \dim_L H^0(\GK,W^\vee(1)), $$
where (1) denotes the Tate twist and the last equality follows from Tate duality \cite[Theorem 4.7]{LiuCohAndDuality}. Since there exists one $\sigma$ such that all of the $\sigma$-Hodge--Tate weights of $W$ are strictly greater than $1$, $W^\vee(1)$ cannot contain a copy of the trivial $L$-$B$-pair, so that $H^0(\GK,W^\vee(1))=0$, as desired. \end{proof}

Although \Cref{H1f} is enough for our purposes, we record the following generalization of \cite[Lemma 6.3.1]{emertongee} (keeping in mind that we are using opposite conventions for Hodge--Tate weights).

\begin{corollary}
Let $W,W'$ be two crystalline $L$-$B$-pairs with Hodge--Tate weights $\uk,\uk'$. If $\uk$ is slightly greater than $\uk'$, then every extension $0\to W\to W^{\prime\prime}\to W'\to 0$ is crystalline.
\end{corollary}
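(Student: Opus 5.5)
Write $W'' = (W''_e, W''^+_\dR)$ for the extension. The plan is to reduce to \Cref{H1f} applied to $\mathcal{H}om(W',W) = W\otimes W'^\vee$. Extensions of $W'$ by $W$ are classified by $H^1(\GK, W\otimes W'^\vee)$. This follows from the identification of $H^1(\GK,-)$ with extensions of $W^\triv$, after pulling back along $W^\triv \to W'\otimes W'^\vee$, or equivalently after twisting by $W'^\vee$.

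Step one: since $W$ and $W'$ are crystalline, so is $W\otimes W'^\vee$, and in particular it is de Rham. Its $\sigma$-Hodge--Tate weights are the differences $k_{\sigma,i}-k'_{\sigma,j}$. By the hypothesis that $\uk$ is slightly greater than $\uk'$, these weights are slightly positive. Hence \Cref{H1f} gives
\[
H^1(\GK, W\otimes W'^\vee)=H^1_f(\GK,W\otimes W'^\vee).
\]

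Step two is to pass from ``the class lies in $H^1_f$'' to ``$W''$ is crystalline''. I would tensor the extension $0\to W\to W''\to W'\to 0$ with $W'^\vee$ and pull back along the coevaluation $W^\triv\to W'\otimes W'^\vee$. This produces an extension $0\to W\otimes W'^\vee\to E\to W^\triv\to 0$ whose class is the class of $W''$. By \cite[Remark 2.5]{NakClass}, since $W\otimes W'^\vee$ is crystalline and the class lies in $H^1_f$, the extension $E$ is crystalline.

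It then remains to recover crystallinity of $W''$ from that of $E$. This is the step I expect to need the most care. One route is to observe that $W''$ is the pushout of $E\otimes W'$ along the evaluation map $W\otimes W'^\vee\otimes W'\to W$, so $W''$ is a quotient of $E\otimes W'$. The category of crystalline $B$-pairs is stable under tensor products and subquotients, because $D_\cris$ is exact and fully faithful on this category. So $E\otimes W'$ is crystalline, and hence so is $W''$.

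An alternative is to argue directly with dimensions of $D_\cris$. The image of the class of $W''$ in $H^1(\GK,B_\cris\otimes_{B_e}(W\otimes W'^\vee)_e)$ is zero. This is the obstruction to lifting to $D_\cris(W''_e)$ a basis of $D_\cris(W'_e)$ compatible with $W''_e\to W'_e$. Vanishing of the obstruction gives $\dim D_\cris(W'') = \dim D_\cris(W)+\dim D_\cris(W')$, which is crystallinity.
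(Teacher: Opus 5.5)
Your proposal is correct and follows the paper's proof. The paper also applies \Cref{H1f} to $W\otimes (W')^\vee$, and cites \cite[Proposition 1.26]{nekovarheight} for the step you prove by hand: that an extension of crystalline objects whose class lies in $H^1_f(\GK, W\otimes (W')^\vee)$ is crystalline. Your version of that step is fine: pull back along the coevaluation, apply \cite[Remark 2.5]{NakClass}, then recover $W''$ as a quotient of $E\otimes W'$. The only loose point is the justification for closure of crystalline $B$-pairs under subquotients, which should rest on left exactness of $D_\cris$ together with $\dim D_\cris\le\rank$, not on exactness of $D_\cris$ on the crystalline subcategory.
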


\begin{proof}
Thanks to \cite[Proposition 1.26]{nekovarheight}, it is enough to apply \Cref{H1f} to $W\otimes (W')^\vee$.
\end{proof}

    


\subsection{Crystalline extensions of $\gM$-$\PG$-modules to $\gP$}


The next two lemmas provide a characterization of crystalline $\gB$-$\PG$-modules as those for which a certain cocycle vanishes. This criterion will be central in showing that the crystalline classes in $H^1_{\varphi,\Gamma_K}(V)$ correspond to crystalline lifts.
\begin{lemma}\label{set_of_crystalline_extensions}
    The set
    \begin{align}
        \ker\big(H^1_{\varphi,\Gamma_K}(\gH(\cR_{K,L})) \longrightarrow H^1\big(\Gamma_K, \gH(\cR_{K,L}[\tfrac{1}{t}])\big)\big) \label{set_of_cris}
    \end{align}
    is contained in the set of equivalence classes of $\gH$-trivial crystalline $\gH$-$\PG$-modules. Here $H^1(\Gamma_K,-)$ denotes abstract group cohomology.
    If $\check H^1_{\et}(L, \gH) = \{*\}$, then it is equal to the set of crystalline $\gH$-$\PG$-modules.\footnote{The assumption that $\check H^1_{\et}(L, \gH) = \{*\}$ is not really necessary to give a characterization of the crystalline classes, but this is in general more complicated to formulate.}
\end{lemma}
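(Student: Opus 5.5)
We first make the map in \eqref{set_of_cris} explicit. By \Cref{G_PG_Pair_is_triv_PG_mod}, a class in $H^1_{\varphi,\Gamma_K}(\gH(\cR_{K,L}))$ is represented by an $\gH$-$\PG$-pair $(M,c)$. The map sends it to the class of the cocycle $c$, viewed as a $\Gamma_K$-cocycle with values in $\gH(\cR_{K,L}[\tfrac{1}{t}])$. This is well defined, since the action of $h\in\gH(\cR_{K,L})$ changes $c$ into $h^{-1}c(\gamma)\gamma(h)$, which is a coboundary modification.

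\emph{Containment.} Suppose the class of $c$ is trivial. Then there is $h\in\gH(\cR_{K,L}[\tfrac{1}{t}])$ with $c(\gamma)=h\gamma(h)^{-1}$ for all $\gamma$. Consider the pair $(M',c')\colonequals(h^{-1}M\varphi(h),\,h^{-1}c(\gamma)\gamma(h))$, which is equivalent to $(M,c)$ in $H^1_{\varphi,\Gamma_K}(\gH(\cR_{K,L}[\tfrac1t]))$. We have $c'\equiv 1$. Fix $V\in\Rep_L(\gH)$. Via the section $m\cdot h$, the module $\eta_D(V)[\tfrac1t]$ is identified with $V\otimes_L\cR_{K,L}[\tfrac1t]$ with $\Gamma_K$ acting only on the coefficients. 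Hence
\[(\eta_D(V)[\tfrac1t])^{\Gamma_K}\supseteq V\otimes_L\cR_{K,L}[\tfrac1t]^{\Gamma_K}=V\otimes_L(K_0'\otimes_{\Qp}L)\supseteq V\otimes_L(K_0\otimes_{\Qp}L).\]
So $D_\cris(\eta_D(V))$ has $K_0$-dimension at least $[L:\Qp]\dim V$. The reverse inequality is the standard upper bound $\dim_{K_0}D_\cris\le[L:\Qp]\rank$, which holds because $D_\cris$ of a $\PG$-module injects into $D_\cris$ of the associated $B$-pair. We would check this by appealing to the Berger equivalence of \Cref{equivalencecrystallinecat}. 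Consequently $\eta_D(V)$ is crystalline for every $V$, so $D$ is crystalline.

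\emph{Converse, assuming $\check H^1_{\et}(L,\gH)=\{*\}$.} Let $D$ be a crystalline $\gH$-$\PG$-module. By \Cref{equivalencecrystallinecat}, $D_\cris\circ\eta_D$ is a filtered $\gH$-$\varphi$-module over $K_0$ with coefficients in $L$. We will use the comparison isomorphism
\[D_\cris(\eta_D(V))\otimes_{K_0}\cR_{K,L}[\tfrac1t]\xrightarrow{\sim}\eta_D(V)[\tfrac1t],\]
which is $\Gamma_K$-equivariant with trivial action on $D_\cris$ and is functorial and tensor-compatible in $V$. This identifies the $\gH$-torsor of $D[\tfrac1t]$ with the base change of the $\gH$-torsor $Q$ underlying $D_\cris\circ\eta_D$ over $K_0\otimes_{\Qp}L$.

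The main obstacle is to show that $Q$ is trivial. Since $K_0\otimes_{\Qp}L$ is a product of finite extensions $L'$ of $L$, we plan to use the hypothesis $\check H^1_{\et}(L,\gH)=\{*\}$. This is where the hypothesis enters: if the factors are copies of $L$, triviality follows directly. In general one either needs the vanishing over those extensions too, or one uses $\varphi$ to descend the torsor to a $\varphi$-invariant form. As a fallback, we would use that $\varphi$ permutes the factors cyclically, so the torsor is determined by a single factor over $L$, whose $H^1$ is trivial by assumption.

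Once $Q$ is trivial, a trivializing section gives $h\in\gH(\cR_{K,L}[\tfrac1t])$ with $c(\gamma)=h\gamma(h)^{-1}$, because the trivialization is $\Gamma_K$-invariant. Hence the class of $D$ lies in the kernel \eqref{set_of_cris}. Finally, $D$ itself is $\gH$-trivial, since $D_\cris$-triviality together with the comparison gives $\gH$-triviality of $D[\tfrac1t]$. For the solvable groups used later this also follows from \Cref{trivialforB}.
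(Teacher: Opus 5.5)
Your proposal is correct and is essentially the paper's proof. The containment comes from the fact that $D$ is crystalline precisely when $\Gamma_K$ acts trivially on $D[\tfrac1t]$ in a suitable basis. The converse comes from triviality of the $\gH$-torsor underlying $D_\cris\circ\eta_D$ over $K_0\otimes_{\Qp}L$, combined with the comparison isomorphism, which gives a $\Gamma_K$-invariant trivialization of $D[\tfrac1t]$ and hence $c(\gamma)=h\gamma(h)^{-1}$.

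The step you single out as the main obstacle is immediate here. Since $L$ contains the Galois closure of $K$, we have $K_0\subseteq L$ and $K_0\otimes_{\Qp}L\cong L^{[K_0:\Qp]}$, so $\check H^1_{\et}(L,\gH)=\{*\}$ applies factor by factor; this is exactly how the paper uses the hypothesis. You should drop the ``fallback'': when $K_0\not\subseteq L$ the factors are copies of the compositum $K_0L$, not of $L$, so the hypothesis over $L$ says nothing about them.

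Your closing claim that $\gH$-triviality of $D[\tfrac1t]$ yields $\gH$-triviality of $D$ does not follow. It is also unnecessary: classes in $H^1_{\varphi,\Gamma_K}(\gH(\cR_{K,L}))$ are $\gH$-trivial by definition. The converse is meant for $\gH$-trivial crystalline modules, so one starts from a pair $(M,c)$, as the paper does.
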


\begin{proof}
    The first statement follows by composing with a faithful representation of $\gH$ and observing that a $\PG$-module $D$ over $\cR_{K,L}$ is crystalline if and only if $D[\tfrac{1}{t}]\cong (D[\tfrac{1}{t}])^{\Gamma_K}\otimes_{K_0}\cR_{K,L}$, that is, $D[\tfrac{1}{t}]$ is a trivial $\Gamma
    _K$-module. Now assume that $\check H^1_{\et}(L, \gH) = \{*\}$, and let $(M,c) \in Z^1_{\varphi,\Gamma_K}(\gH(\cR_{K,L}))$ be a class corresponding to a crystalline $\gH$-$\PG$-module.
    Letting  $\eta_{(M,c)}\colon \Rep_L(\gH) \to \PGcat_{K,L}^{+,\cris}$ be the associated fiber functor, we have the following diagram
\begin{center}
\begin{tikzcd}
    \Rep_L(\gH) \arrow[r, "\eta_{(M,c)}"'] \arrow[rrr, bend left=20, "\eta_c"]
    & \PGcat_{K,L}^{+,\cris} \arrow[rr] \arrow[drr, "D_{\cris}"']
    & & \Rep_{\cR_{K,L}[\frac{1}{t}]}(\Gamma_K) \\
    & & & \Vect_{K_0\otimes L} \arrow[u, "{-\otimes \cR_K[\frac{1}{t}]}"']
\end{tikzcd}
\end{center}
    where $D_{\cris}$ is the functor given by $D\mapsto (D[\tfrac{1}{t}])^{\Gamma_K}$ (forgetting the extra structure), and $\Rep_{\cR_{K,L}[\tfrac{1}{t}]}(\Gamma_K)$ is the category of $\cR_{K,L}[\tfrac{1}{t}]$-semilinear $\Gamma_K$-representations.
    The fiber functor $f\circ\eta_{(M,c)}$ corresponds to a torsor on $\Spec(K_0 \otimes_{\Qp} L)$, which is trivial by our assumption that $\check H^1_{\et}(L, \gH) = \{*\}$.
    It follows that the fiber functor $\eta_c$ is trivializable. Since $\eta_c$ lands in trivial $\Gamma_K$-representations, we conclude using the bijection between $H^1(\Gamma_K,\gH(\cR_{K,L}[\tfrac{1}{t}]))$ and the equivalence classes of $\gH$-trivial fiber functors $\Rep_L(\gH)\to \Rep_{\cR_{K,L}[\tfrac{1}{t}]}(\Gamma_K)$.
\end{proof}

\begin{lemma}\label{H1_vanishing_groups}
    Let $\gH$ be a connected solvable group over $L$ with composition factors only $\Gm$ and $\Ga$. Then $\check H^1_{\et}(L, \gH) = \{*\}$.
\end{lemma}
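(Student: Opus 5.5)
We argue by dévissage along a composition series of $\gH$, using the long exact sequence in nonabelian étale (Čech) cohomology over $\Spec L$. Since $\gH$ is connected and solvable over a field of characteristic $0$, we take a composition series by closed connected normal subgroups
\[ \gH=\gH_0\supseteq \gH_1\supseteq\cdots\supseteq \gH_m=1 \]
whose successive quotients $\gH_i/\gH_{i+1}$ are isomorphic to $\Gm$ or $\Ga$ over $L$. The hypothesis of the lemma is exactly that such a series exists, with each step defined over $L$ (split). We induct on $m$, with $m=0$ trivial.

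For the inductive step we use the short exact sequence $1\to\gH_{m-1}\to\gH\to\gH/\gH_{m-1}\to 1$, where $\gH_{m-1}$ is normal in $\gH$ and isomorphic to $\Gm$ or $\Ga$, and $\gH/\gH_{m-1}$ has a shorter composition series of the same kind. It gives an exact sequence of pointed sets
\[ \check H^1_{\et}(L,\gH_{m-1})\longrightarrow \check H^1_{\et}(L,\gH)\longrightarrow \check H^1_{\et}(L,\gH/\gH_{m-1}). \]
The right-hand term is trivial by induction. Hence every $\gH$-torsor $\cQ$ over $L$ reduces to an $\gH_{m-1}$-torsor: $\cQ/\gH_{m-1}$ is a trivial $\gH/\gH_{m-1}$-torsor, so it has an $L$-point, and the fiber of $\cQ\to\cQ/\gH_{m-1}$ over that point is an $\gH_{m-1}$-torsor $\cQ'$ with $\cQ'\times^{\gH_{m-1}}\gH\cong\cQ$. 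We then use the vanishing $\check H^1_{\et}(L,\Gm)=\Pic(L)=0$ (Hilbert 90) and $\check H^1_{\et}(L,\Ga)=H^1(L,\cO)=0$, the latter since $\Spec L$ is affine. So $\cQ'$ is trivial, hence $\cQ$ has an $L$-point and is trivial.

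The one point needing care is exactness at the middle term when $\gH_{m-1}$ is not central. Kernel-triviality still holds: an $\gH$-torsor mapping to the trivial class is induced from some $\gH_{m-1}$-torsor, which is exactly the reduction argument above (cf. \cite{DeligneMilne} or Serre's \emph{Cohomologie galoisienne}, I.5). Because we only need triviality of the whole set and we argue directly with $L$-points, no twisting argument is required. If the composition series is only normal step by step (subnormal) rather than normal in $\gH$, we instead induct from the top using $\gH_1\subseteq\gH$ normal with $\gH/\gH_1\in\{\Gm,\Ga\}$. The same argument applies: $\cQ/\gH_1$ is trivial, so $\cQ$ reduces to an $\gH_1$-torsor, and we apply induction to $\gH_1$. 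This version avoids the normality issue, and it is the one I would write down.
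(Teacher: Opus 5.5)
Your argument is correct, but it is organized differently from the paper's. The paper does not use an arbitrary composition series. It uses the intrinsic exact sequence $1\to\Ru(\gH)\to\gH\to\gH/\Ru(\gH)\to 1$ to reduce to two pieces. The first is the torus quotient, which the hypothesis forces to be a split torus, so Hilbert 90 applies. The second is the unipotent radical, which it handles via its central series; the layers are vector groups with trivial $H^1$ by additive Hilbert 90.

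Your top-down induction along a subnormal series $\gH\supseteq\gH_1\supseteq\cdots$ only ever uses that $\gH_1$ is normal in $\gH$ with $\gH/\gH_1\in\{\Gm,\Ga\}$. Your reduction of structure group via an $L$-point of $\cQ/\gH_1$ is exactly the kernel-triviality part of the long exact sequence. So you need no structure theory, and you use the hypothesis literally as stated; in particular you never have to argue separately that $\gH/\Ru(\gH)$ is split. In exchange, the paper's route works with canonical normal subgroups of $\gH$ and with central extensions at the unipotent stage, so the standard nonabelian long exact sequence applies with no discussion of subnormality.

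You were right to discard your bottom-up version, since for a composition series $\gH_{m-1}$ need not be normal in $\gH$. In the write-up, induct on the length of the series, with the statement that any $L$-group admitting such a subnormal series has trivial $\check H^1_{\et}(L,-)$. That way the connectedness hypothesis plays no role in the induction.
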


\begin{proof}
    By considering the unipotent radical of $\gH$ and using the long exact sequence, we reduce to the case of $\Gm$ and a unipotent group.
    For a unipotent group, we use the central series to reduce to the case of $\Ga$.
\end{proof}

Let $\gP$ be an algebraic group over $L$ of the form $\gP = \gU \rtimes \gM$, where $\gM$ is linear algebraic and $\gU$ is unipotent. 
We say that the Hodge--Tate weights of a Hodge--Tate $\gM$-$B$-pair $W_\gM$ have \emph{enough gaps with respect to $\gP$} if the Hodge--Tate weights of the $B$-$L$-pair $W_\gM\times^\gM\Lie\gU$ are slightly positive, where $\Lie\gU$ is equipped with the adjoint action of $\gM$. 

\begin{lemma}\label{crystalline_extensions_exist}
If $D_{\gB/\gU^i}$ is a $\gB/\gU^i$-trivial  crystalline $\gB/\gU^i$-$\PG$-module over $\cR_{K,L}$, then it admits a crystalline extension to $\gB/\gU^{i+1}$.
\end{lemma}

\begin{proof}
    By the equivalence of categories stated in \Cref{equivalencecrystallinecat}, it suffices to prove the statement in terms of filtered $\gB/\gU^i$-$\varphi$-modules.
    By \Cref{framedfilteredphimods}, the data of a $\gB/\gU^i$-trivial filtered $\gB/\gU^i$-$\varphi$-module is equivalent to specifying a pair $(b,\underline{\mu}=(\mu_\tau))\in \gB/\gU^i(K_0\otimes_{\mathbb{Q}_p}L)\times X_*((\gB/\gU^i)_L)^{\Sigma(K,L)}$. 
    As the category of representations of $\Gm$ is semisimple (in particular $H^2(\Gm, -) = 0$), there is no obstruction to lifting a homomorphism $\mathbb{G}_{\mathrm m,L} \to (\gB/\gU^i)_L$ to a homomorphism $\mathbb{G}_{\mathrm m,L} \to (\gB/\gU^{i+1})_L$. Alternatively, one can lift such a homomorphism directly by conjugating the image of $\Gm$ into the maximal torus of $\gB$. Moreover, by the surjectivity of the map $(\gB/\gU^{i+1})(K_0 \otimes_{\Qp} L) \to (\gB/\gU^i)(K_0 \otimes_{\Qp} L)$, we conclude that the desired lifting exists.
\end{proof}

\begin{theorem}\label{lincrys}
If $D_\gT$ is crystalline and its Hodge--Tate weights have enough gaps with respect to $\gB$, then any lift $D_{\gB}$ of $D_{\gT}$ to $\gB$ is crystalline.
\end{theorem}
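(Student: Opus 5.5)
We argue by induction along the central series $\gU=\gU^0\supseteq\gU^1\supseteq\cdots$, proving that for every $i$ the image $D_{\gB/\gU^{i}}\colonequals D_\gB\times^{\gB}\gB/\gU^{i}$ is crystalline. For $i=0$ we have $\gB/\gU^0=\gT$, and $D_{\gB/\gU^0}=D_\gT$ is crystalline by hypothesis. Since $\gU^{n}=1$ for $n$ large, the case $i=n$ gives the theorem. By \Cref{trivialforB} (and its evident analogue for the quotients $\gB/\gU^i$, which are again extensions of tori by unipotent groups) all objects involved are trivial torsors over $\cR_{K,L}$. Moreover, $\gB/\gU^{i}$ has composition factors only $\Gm$ and $\Ga$. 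Hence by \Cref{H1_vanishing_groups} and \Cref{set_of_crystalline_extensions}, crystallinity of $D_{\gB/\gU^i}$ is equivalent to its class lying in the kernel of $H^1_{\varphi,\Gamma_K}(\gB/\gU^i(\cR_{K,L}))\to H^1(\Gamma_K,\gB/\gU^i(\cR_{K,L}[\tfrac1t]))$.

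For the inductive step we apply situation (I) to the extension $1\to\gU^{i}/\gU^{i+1}\to\gB/\gU^{i+1}\to\gB/\gU^{i}\to1$, whose kernel is a vector group. By \Cref{crystalline_extensions_exist} there is a crystalline lift $D^{c}$ of $D_{\gB/\gU^i}$ to $\gB/\gU^{i+1}$. Taking $D^c$ as base point, \Cref{H1_Ext_U} together with \Cref{expH1_new} identifies the set of lifts with $H^1_{\varphi,\Gamma_K}(V_i)$, where $V_i=\eta_{D_{\gB/\gU^i}}(\Lie(\gU^i/\gU^{i+1}))$ and the action is twisted by the pair of $D^c$. The lift $D_{\gB/\gU^{i+1}}$ corresponds to some class $(v,c)$, and the associated pair is $(\exp(v)M^c,\exp(c)c^c)$. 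The module $V_i$ is a subquotient of $\eta_{D_\gT}(\Lie\gU)$ and is crystalline, being built from a crystalline $\gT$-module. Its Hodge--Tate weights are slightly positive by the enough-gaps hypothesis, since the weights of $\Lie(\gU^i/\gU^{i+1})$ are among those of $\Lie\gU$. Applying \Cref{H1f} in $B$-pair language, every class in $H^1_{\varphi,\Gamma_K}(V_i)$ is crystalline. That is, the image of $c$ in $H^1(\Gamma_K,V_i[\tfrac1t])$ is trivial, since crystalline extensions of the trivial object by a crystalline module have trivial $\Gamma_K$-action after inverting $t$.

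Finally, we combine these facts. Because $D^c$ is crystalline, after conjugating by some $h\in\gB/\gU^{i+1}(\cR_{K,L}[\tfrac1t])$ we may assume that $c^c\equiv1$. Conjugating by $h$ transforms the $\star$-twisted cocycle $\exp(c)$ into an untwisted cocycle valued in the abelian group $(\gU^i/\gU^{i+1})(\cR_{K,L}[\tfrac1t])$, on which $\Gamma_K$ now acts through the trivialized structure. Under this identification it is the exponential of the cocycle representing the image of $(v,c)$ in $H^1(\Gamma_K,V_i[\tfrac1t])$, which is a coboundary $\gamma\mapsto\gamma(u)-u$. Multiplying by $\exp(-u)$ trivializes the full cocycle of $D_{\gB/\gU^{i+1}}$. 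By \Cref{set_of_crystalline_extensions}, $D_{\gB/\gU^{i+1}}$ is therefore crystalline, which completes the induction.

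The main obstacle is this compatibility step. One must check that the torsor identification of \Cref{H1_Ext_U}, once base changed to $\cR_{K,L}[\tfrac1t]$ and composed with forgetting $\varphi$, matches the map $H^1_{\varphi,\Gamma_K}(V_i)\to H^1(\Gamma_K,V_i[\tfrac1t])$. This holds because $\exp$ is a group isomorphism on the abelian piece, and the twist by $c^c$ becomes trivial after the conjugation by $h$. I would carry out this cocycle bookkeeping explicitly.
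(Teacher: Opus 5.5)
Your proposal is correct and follows the paper's proof essentially step for step: induction along the central series, a crystalline base-point lift from \Cref{crystalline_extensions_exist}, identification of the fiber with $H^1_{\varphi,\Gamma_K}$ of the graded piece via $\exp$, and \Cref{H1f} together with \Cref{set_of_crystalline_extensions} and \Cref{H1_vanishing_groups} to conclude. The only difference is in the last step: the paper asserts that the left square of \eqref{six_diagram} commutes because the base point maps to $*$, whereas you make this compatibility explicit by conjugating with an $h$ that trivializes $c^c$. That conjugation turns the $\star$-cocycle into an ordinary $\Gamma_K$-cocycle, a worthwhile piece of bookkeeping that the paper leaves implicit.
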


\begin{proof}
    We will apply \Cref{H1_vanishing_groups} to $\gB$ and all of its subquotients.
    Let us write $D_{\gB/\gU^i}$ for the pushforward of $D_{\gB}$ via $\gB \to \gB/\gU^i$.
    By \Cref{set_of_crystalline_extensions} the class in $H^1_{\varphi, \Gamma_K}(\gT(\cR_{K,L}))$ representing $D_{\gT}$ maps to $1$ in $H^1(\Gamma_K, \gT(\cR_{K,L}[\tfrac{1}{t}]))$. Using the short exact sequence
    $$ 1\to \gU^i/\gU^{i+1}\to \gB/\gU^{i+1} \to \gB/\gU^i\to 1, $$
    we prove that $D_{\gB/\gU^{i+1}}$ is contained in \eqref{set_of_cris} assuming that the same thing holds for $D_{\gB/\gU^i}$. This would prove the theorem. 
    Consider the following diagram:
    \begin{equation} \label{six_diagram}
    \begin{tikzcd}
        H^{1,\star}_{\varphi,\Gamma_K}(\gU^i/\gU^{i+1}(\cR_{K,L})) \arrow[r, "\alpha"] \arrow[d] & H^1_{\varphi, \Gamma_K}(\gB/\gU^{i+1}(\cR_{K,L})) \arrow[r] \arrow[d] & H^1_{\varphi, \Gamma_K}(\gB/\gU^i(\cR_{K,L})) \arrow[d] \\
        H^{1,\star}\big(\Gamma_K, \gU^i/\gU^{i+1}(\cR_{K,L}[\tfrac{1}{t}])\big) \arrow[r, "\beta"] & H^1\big(\Gamma_K, \gB/\gU^{i+1}(\cR_{K,L}[\tfrac{1}{t}])\big) \arrow[r] & H^1\big(\Gamma_K, \gB/\gU^i(\cR_{K,L}[\tfrac{1}{t}])\big).
    \end{tikzcd}
    \end{equation}
    The sets on the left are identified with the fibers of the right horizontal maps above $D_{\gB/\gU^i}$ and $D_{\gB/\gU^i}[\tfrac{1}{t}]$ respectively. This identification is done by choosing a crystalline element $D'$ of the fiber in which $D_{\gB/\gU^{i+1}}$ is contained, which we know exists by \Cref{crystalline_extensions_exist}. So the map $\alpha$ is determined by mapping $0$ to $D'$. The map $\beta$ is defined by mapping $0$ to $*$.
    By \Cref{set_of_crystalline_extensions}, $D'$ maps to $*$ in $H^1(\Gamma_K, (\gB/\gU^{i+1})(\cR_{K,L}[\tfrac{1}{t}]))$, so that the left square commutes.
    Via the exponential map, we have the commutative diagram
    \begin{center}
    \begin{tikzcd}
        H^{1}_{\varphi,\Gamma_K}(V^i/V^{i+1}) \arrow[r,"\sim","\exp"'] \arrow[d] & H^{1,\star}_{\varphi,\Gamma_K}\big(\gU^i/\gU^{i+1}(\cR_{K,L})\big) \arrow[d]
        \\ H^{1}(\Gamma_K, V^i/V^{i+1}[\frac{1}{t}]) \arrow[r,"\sim", "\exp"'] & H^{1,\star}\big(\Gamma_K, \gU^i/\gU^{i+1}(\cR_{K,L}[\tfrac{1}{t}])\big),
    \end{tikzcd}       
    \end{center}
    where the top vertical map is an isomorphism by \Cref{expH1_new}, and the bottom one by a similar calculation. Consider a pair $(v,d)\in H^{1}_{\varphi,\Gamma_K}(V^i/V^{i+1})$ corresponding to an extension $0\to V^i/V^{i+1}\to E\to \cR_{K,L}\to 0$. By inverting $t$ and taking $\Gamma_K$-invariants in the short exact sequence, a standard argument shows that the extension $E$ is crystalline if and only if the image of $d$ in $H^{1}(\Gamma_K, V^i/V^{i+1}[\tfrac{1}{t}])$ is trivial. Therefore by \Cref{H1f}, the left vertical map in the diagram is zero, which means that the left vertical map in \eqref{six_diagram} is zero.
    Hence $D_{\gB/\gU^{i+1}}$ maps to $*$ in $H^1(\Gamma_K, \gB/\gU^{i+1}(\cR_{K,L}[\tfrac{1}{t}]))$, which implies that it is crystalline.
\end{proof}

We rely on \Cref{lincrys} to prove a criterion for a $\gG$-$\PG$-module over $\cR_{K,L}$ to be crystalline. This can be seen as a generalization of \cite[Lemma 2.10]{bhssmooth}. Note that, thanks to \Cref{lincrys}, we are able to remove the condition on the Frobenius eigenvalues.

Given a parameter $\delta \colon \gT^\vee(K)\to L^\times$, we write $\delta^\vee\colon K^\times\to\gT(L)$ for its dual. For every root $\alpha\in\Phi(\gG,\gB)$, we obtain a character $\alpha(\delta^\vee)\colon K^\times\to L^\times$, similarly to what we did in \Cref{sec:Breg}.

By writing $\frakt(L)=\Lie(\gT)(L\otimes_{\Q_p}K)$, $L\otimes_{\Q_p}K\cong\bigoplus_{\sigma:K\into L}L$, and choosing a splitting $\gT\cong\Gm^n$, we can identify the weight of a parameter $\delta$ with a tuple $\uk=(k_{\sigma,i})$ of elements of $L$ indexed by embeddings $\sigma\colon K\into L$ and $i\in\{1,\ldots,n\}$. 

\begin{corollary}\label{cryscriterion}
Let $D$ be a triangulable $\gG$-$\PG$-module over $\cR_{K,L}$, and let $\delta:\gT^\vee(K)\to L^\times$ be the parameter of a $\gB$-triangulation $D^\tri$ of $D$. Assume that:
\begin{enumerate}[label=(\roman*)]
\item $\delta\vert_{\gT^\vee(\cO_K)}$ is algebraic,
\item for every embedding $\sigma\colon K\into L$ and positive root $\alpha\in\Phi^+(\gG,\gB)$, $\wt_\sigma(\alpha(\delta^\vee))>0$,
\item for at least one $\sigma \in \Sigma$, $\wt_\sigma(\alpha(\delta^\vee))>1$ for every positive root $\alpha\in\Phi^+(\gG,\gB)$.
\end{enumerate}
Then $D$ is crystalline. 
\end{corollary}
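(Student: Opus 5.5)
The plan is to reduce everything to \Cref{lincrys}, applied to the triangulation $D^\tri$ viewed as a lift of its torus part $D_\gT \colonequals D^\tri\times^\gB\gT$. Since $D$ is determined by $D^\tri$ via $D \cong D^\tri\times^\gB\gG$, and the crystalline property is checked on $\eta_D(V)$ for $V\in\Rep_L(\gG)$ (whose restrictions to $\gB$ are $\gB$-representations), crystallinity of $D^\tri$ as a $\gB$-$\PG$-module implies crystallinity of $D$. So it suffices to verify the two hypotheses of \Cref{lincrys}: that $D_\gT$ is crystalline, and that its Hodge--Tate weights have enough gaps with respect to $\gB$. Before applying that theorem I will also reduce to the case where $D^\tri$ is $\gB$-trivial. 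By \Cref{trivialforB}, every $\gB$-$\PG$-module over $\cR_{K,L}$ is $\gB$-trivial, so this reduction is automatic. In particular $D_\gT \cong \cR_{K,L}(\delta)$ is $\gT$-trivial, as \Cref{lincrys} implicitly requires.

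\textbf{Step 1: $D_\gT$ is crystalline.} Choose a splitting $\gT\cong\Gm^n$, so that $D_\gT$ corresponds to the rank-one modules $\cR_{K,L}(\delta_i)$. Then $\eta_{D_\gT}(V)$ is a direct sum of rank-one objects $\cR_{K,L}(\delta\circ\chi^\vee)$ for the weights $\chi$ of $V$. By (i), each $\delta\circ\chi^\vee$ restricted to $\cO_K^\times$ is algebraic. By \Cref{cryschar}, each corresponding $B$-pair is therefore crystalline, and hence so is $\eta_{D_\gT}(V)$ for every $V$.

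\textbf{Step 2: enough gaps.} As a $\gT$-representation, $\Lie\gU$ is the direct sum of the root lines for $\alpha\in\Phi^+(\gG,\gB)$. Hence $W_{D_\gT}\times^\gT\Lie\gU$ is the direct sum of the rank-one $B$-pairs $W(\alpha(\delta^\vee))$. The $\sigma$-Hodge--Tate weight of $W(\delta')$ equals $\wt_\sigma(\delta')$, with the sign convention of \cite[Proposition 2.9]{BHSmodulaire} as used in \Cref{wsatwx}. Given that, conditions (ii) and (iii) say exactly that these weights are slightly positive. This is the definition of ``enough gaps with respect to $\gB$'' for $\gP=\gB=\gU\rtimes\gT$.

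\textbf{Step 3: conclude.} \Cref{lincrys} then gives that $D^\tri$ is crystalline, and therefore $D$ is crystalline.

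The main obstacle is sign bookkeeping in Step 2. I must check that the Hodge--Tate weight of $\cR_{K,L}(\delta')$, under the paper's convention that $\chi^\cyc$ has weight $1$, equals $\wt_\sigma(\delta')$ and not its negative. I must also check that the root action on $\Lie\gU$ corresponds to $\alpha(\delta^\vee)$ and not to $\alpha^{-1}$. I would verify both on $\GL_2$ with the character $z\mapsto z$ and the standard upper-triangular Borel; any mismatch would be absorbed into the sign conventions of (ii) and (iii).
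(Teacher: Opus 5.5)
Your proposal is correct and is essentially the paper's own proof. Condition (i) together with \Cref{cryschar} makes $\cR_{K,L}(\delta)$ crystalline, conditions (ii) and (iii) are exactly the ``enough gaps with respect to $\gB$'' condition for the root lines of $\Lie\gU$, and \Cref{lincrys} then gives crystallinity of $D^\tri$ and hence of $D$; your $\gB$-triviality remark via \Cref{trivialforB} is a harmless extra detail. The sign check comes out as you need, so nothing has to be ``absorbed'': under the paper's conventions the $\sigma$-Hodge--Tate weight of $\cR_{K,L}(\delta')$ is $\wt_\sigma(\delta')$ (this is how \cite[Proposition 2.9]{BHSmodulaire} is used in \Cref{wsatwx}, consistent with $\wt_\tau(\varepsilon)=1$ and $\chi^\cyc$ having weight $1$), and the root line for $\alpha$ contributes $\cR_{K,L}(\alpha(\delta^\vee))=\cR_{K,L}(\delta\circ\alpha^\vee)$.
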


\begin{proof}
By condition $(i)$ and \Cref{cryschar}, the $\gT$-$\PG$-module $\cR_{K,L}(\delta)$ is crystalline. By conditions $(ii),(iii)$, the Hodge--Tate weights of $\cR_{K,L}(\delta)$ have enough gaps with respect to $\gB$. Therefore, $\Dtri$ is crystalline by \Cref{lincrys}, and so is $D$.
\end{proof}

\section{The trianguline variety}\label{sec:trivar}

In this section, we define the trianguline variety for $\gG$.
This will recover \cite[Définition 2.4]{BHSmodulaire} in the case $\gG=\GL_n$, and the definition in de Daruvar's thesis via very regular parameters \cite[Definition 6.21]{dedar2020}, see \Cref{sec_indep_regularity}.

For basic notions regarding Zariski closed sets in rigid analytic spaces, we refer to \cite[\S 2.1]{BHSmodulaire}.

\subsection{Definition}

Consider our fixed continuous representation $\ovl\rho\colon\GK\to \gG(k)$. 
As usual, let $\Art_{\cO_L}$ be the category of local Artinian $\cO_L$-algebras with residue field $k$. 
We consider the functor $X_{\ovl\rho}\colon\Art_{\cO_L}\to\mathrm{Sets}$ of \emph{framed deformations} of $\ovl\rho$, associating with $A\in\Art_{\cO_L}$ the set of group homomorphisms $\GK\to \gG(A)$ that reduce to $\ovl\rho$ modulo the maximal ideal of $\cO_L$. By \cite{balaji_thesis}, $X_{\ovl\rho}$ is pro-representable by a Noetherian local $\cO_L$-algebra $R_{\ovl\rho}$. We still denote by $X_{\ovl\rho}$ the $\cO_L$-formal scheme $\Spf R_{\ovl\rho}$, and by $\cX_{\ovl\rho}$ its rigid analytic generic fiber in the sense of Berthelot, which is a rigid analytic space over $L$.

Let $\gG$ be our chosen split connected reductive group over $L$, with split maximal torus $\gT$ and Borel subgroup $\gB$ containing $\gT$. 

\begin{definition}
    Let $U_{\tri}(\rhobar) \subseteq \cX_{\rhobar} \times \cT^\gT$ be the subset of pairs $(\rho, \delta)$, such that $\delta$ is $\gB$-regular (\Cref{def_Breg}) and such that $D_{\rig}\circ \eta_{\rho}$ admits a triangulation with parameter $\delta$.
    We define the \emph{trianguline variety} $X_{\tri}(\rhobar)$ associated to $\rhobar$ as the Zariski closure of $U_{\tri}(\rhobar)$ in $\cX_{\rhobar} \times \cT^\gT$ endowed with the reduced subspace structure.
\end{definition}

\begin{remark}
    It follows from \cite[Corollary 15.29]{defG} that $\cX_{\rhobar}$ is normal, so that $\cX_{\rhobar} \times \cT^\gT$ is normal.
\end{remark}

We define
\begin{align}
    \omega' \colon X_{\tri}(\rhobar) \longrightarrow \cT^\gT \label{def_omega_prime}
\end{align}
as the composition of $X_{\tri}(\rhobar) \hookrightarrow \cX_{\rhobar} \times \cT^\gT$ with the projection to the second coordinate. Since $X_\tri(\ovl\rho)$ maps to $\cX_{\ovl\rho}$ by construction, we can pull-back along $\omega^\prime$ the $\GPG$-module over $\cR_{K,\cX_{\ovl\rho}}$ attached to the universal framed deformation of $\ovl\rho$, to obtain a $\GPG$-module $D$ over $\cR_{K,X_\tri(\ovl\rho)}$. By pulling back the universal character $\gT^\vee(K)\to\cO(\cT^\gT)^\times$ on $\cT^\gT$ along $\omega^\prime$, we obtain a continuous character $\delta^\tri \colon \gT^\vee(K)\to\cO(X_\tri(\ovl\rho))^\times$ 

\subsection{The space of rigidified trianguline representations}


In order to prove \Cref{thm_equidimensional} below, we define an auxiliary space of rigidified trianguline deformations of $\ovl\rho$ whose parameter satisfies one of the regularity conditions from \Cref{sec:regularity}.
The strategy is the same as in \cite{chenevier2010sur}, \cite{hellmann2012families}, \cite{HS_density_potentially}, \cite[Théorème 2.6]{BHSmodulaire}, and \cite[Theorem 6.22]{dedar2020}.

\begin{definition}
    Let $\cS_{\gB}(\rhobar) \colon \Aff_L^{\red} \to \Set$ be the functor on \emph{reduced} affinoid $L$-algebras such that $\cS_{\gB}(\rhobar)(A)$ is the set of equivalence classes of quadruples $(\rho, D^{\triangle},  \theta, \alpha)$, where $\rho \colon \GK \to \gG(A)$ is a continuous representation deforming $\rhobar$, in the sense that it corresponds to a map $\Sp(A) \to \cX_{\rhobar}$, and $(D^{\triangle}, \theta)$ is a rigidified $\gB$-$\PG$-module over $\cR_{K,A}$ with $\gB$-regular parameter equipped with an equivalence $$\alpha \colon \eta_{D^{\triangle}} \circ\Res_{\gB}^{\gG} \eqto D_{\rig, A}\circ \eta_\rho.$$
\end{definition}

If $X$ is a reduced rigid analytic space over $L$ and $D$ a $\gG$-$\PG$-module over $X$, de Daruvar \cite[§6.4]{dedar2020} following \cite{hellmann2012familiesPhiGamma} associates with $D$ a rigid analytic space $X^{\adm}$, an étale morphism $\pi \colon X^{\adm} \to X$ and a fiber functor $\eta_\cV \colon \Rep_L(\gG)\to \Rep_{\mathcal{O}(X^{\adm})}(\GK)$ such that $D_{\rig, X}\circ \eta_{\cV} \cong \eta_{\pi^* D}$. The morphism is universal among reduced spaces $\pi' \colon Y \to X$ such that $(\pi')^* D$ comes from a family of Galois representations \cite[Theorem 6.11, Corollary 6.13]{dedar2020}. We refer to $X^{\adm}$ as the \emph{admissible locus} of $D$.

\begin{proposition}\label{dimension_of_SB}
    The functor $\cS_{\gB}(\rhobar)$ is representable by a rigid analytic space which is relatively smooth of relative dimension $\dim \gG_L + \dim \gU_L \cdot [K :\Qp]$ over $\cT^{\gT}$. In particular, if $\cS_{\gB}(\rhobar) \neq \emptyset$, then it is smooth and equidimensional of dimension $\dim \gB_L \cdot ([K:\Qp]+2)$.
\end{proposition}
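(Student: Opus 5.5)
The plan is to identify $\cS_{\gB}(\rhobar)$ with an open subspace of a $\gG$-torsor over the admissible locus of the universal family on $\cS_\gB$, following \cite[Théorème 2.6]{BHSmodulaire} and \cite[Theorem 6.22]{dedar2020}. First, consider the space $\cS_\gB$ of \Cref{SB_representable}. It is a vector bundle of rank $\dim\gU\cdot[K:\Qp]$ over $\cT^\gT_{0,\gB}$, and it carries a universal rigidified $\gB$-$\PG$-module $\Dtri_{\mathrm{univ}}$. Push it forward along $\gB\to\gG$ to obtain a $\gG$-$\PG$-module $D_{\mathrm{univ}}=\Dtri_{\mathrm{univ}}\times^\gB\gG$ over $\cR_{K,\cS_\gB}$. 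Since $\cS_\gB$ is smooth, it is reduced. We may therefore apply \cite[Theorem 6.11, Corollary 6.13]{dedar2020} and form the admissible locus $\pi\colon\cS_\gB^{\adm}\to\cS_\gB$. This is étale, and it carries a family $\eta_\cV\colon\Rep_L(\gG)\to\Rep_{\cO(\cS_\gB^{\adm})}(\GK)$ with $D_{\rig}\circ\eta_\cV\cong\pi^*D_{\mathrm{univ}}$. It is universal among reduced spaces over $\cS_\gB$ on which $D_{\mathrm{univ}}$ comes from Galois representations.

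Second, rigidify the Galois side. The functor $\eta_\cV$ is a $\gG$-torsor with continuous $\GK$-action over $\cS_\gB^{\adm}$. Let $\cS^{\square}\to\cS_\gB^{\adm}$ be the $\gG$-torsor of trivializations of $\eta_\cV$, i.e. the torsor $\uHom^\otimes(\omega_\gG\otimes\cO,\omega\circ\eta_\cV)$ of \Cref{tannakian_fundamental_equivalence}. It is smooth of relative dimension $\dim\gG_L$. Over $\cS^\square$ we get a universal continuous homomorphism $\rho\colon\GK\to\gG(\cO)$.

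Third, impose the residual condition. The locus where $\rho$ has integral image and reduces to $\rhobar$ is an admissible open subspace, defined by the usual argument via a faithful representation: boundedness of the matrix coefficients and their reduction modulo the maximal ideal. Call it $\cS^\square_{\rhobar}$. On this locus $\rho$ defines a map to $\cX_{\rhobar}$. We then check, using reducedness and the universal properties, that $\cS^\square_{\rhobar}$ represents $\cS_\gB(\rhobar)$. Concretely, a quadruple $(\rho,\Dtri,\theta,\alpha)$ over reduced $A$ gives a map $\Sp A\to\cS_\gB$ via $(\Dtri,\theta)$. This map lifts to $\cS_\gB^{\adm}$ by universality, since $\alpha$ exhibits the pullback as coming from $\rho$. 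It then lifts further to the torsor via the trivialization determined by the framing of $\rho$, with $\alpha$ and the triviality of automorphisms from \Cref{SBtrivialAut} used for uniqueness.

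Relative smoothness over $\cT^\gT$ then follows by composition. The chain is: $\cS_\gB\to\cT^\gT_{0,\gB}\subseteq\cT^\gT$ is smooth of relative dimension $\dim\gU\cdot[K:\Qp]$; étale maps and open immersions add nothing; and the torsor contributes $\dim\gG_L$. The dimension count is
\[
\dim\gG+\dim\gU[K:\Qp]+\dim\gT([K:\Qp]+1)=\dim\gG+\dim\gB[K:\Qp]+\dim\gT,
\]
and $\dim\gG+\dim\gT=2\dim\gB$ gives $\dim\gB([K:\Qp]+2)$.

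The main obstacle is the representability and identification step. First, the functor is only defined on reduced affinoids, so the comparison must pass through the universal property of the admissible locus, which is stated only for reduced spaces. Second, one must check that the framing data really make the torsor of trivializations correspond exactly to pairs $(\rho,\alpha)$. I would handle this by working with the fiber functor directly and using \Cref{SBtrivialAut} to kill ambiguity.
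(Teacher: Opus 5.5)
Your proposal is correct and follows the same route as the paper. The paper also forms the admissible locus $\cS_\gB^{\adm}\to\cS_\gB$ of the universal $\gB$-$\PG$-module, which is possible because $\cS_\gB$ is reduced, then realizes $\cS_\gB(\rhobar)$ as a subspace of the $\gG$-torsor underlying the universal $\cV$ following \cite[Theorem 2.6]{BHSmodulaire} and \cite[Proposition 6.24]{dedar2020}, and reads off the relative dimension from \Cref{SB_representable}. Your dimension count $\dim\gG+\dim\gB\,[K:\Qp]+\dim\gT=\dim\gB\,([K:\Qp]+2)$ is correct, and your treatment of the residual locus and of the identification of points spells out what the paper leaves to those references.
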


\begin{proof}
    By \Cref{SB_representable}, $\cS_{\gB}$ carries a universal $\gB$-regular rigidified $\gB$-$\PG$-module.
    We denote by $\cS_{\gB}^{\adm} \to \cS_{\gB}$ the admissible locus of the universal $\gB$-$\PG$-module, which is well-defined as $\cS_{\gB}$ is reduced.
    It carries a universal representation $\cV\colon \Rep_{L}(\gG)\to \Rep_{\cS_{\gB}^{\adm}}(\GK)$. Following the proof of \cite[Theorem 2.6]{BHSmodulaire} (or \cite[Proposition 6.24]{dedar2020}), we see that $\cS_{\gB}(\rhobar)$ is representable by a subspace of the underlying $\gG$-torsor of $\cV$. The statement about dimension follows from \Cref{SB_representable}. 
\end{proof}

We consider the groupoid  $X_{\rho,\Dtri}\colonequals X_{\rho}\times_{X_D}X_{\Dtri}\colon \Art_L\to \Gpd$ (see \Cref{sec35} for the relevant definitions). 
\begin{lemma}\label{trianguline_def_ring}
    Assume that either $\delta\in \cT^{\gT}_{\Lambda}(L)$ or $\delta\in \cT^{\gT}_{r}(L)$.
    Then, $|X_{\rho, D^{\triangle}}|$ is pro-representable by a quotient of the universal framed deformation ring $R_{\rho}$.
\end{lemma}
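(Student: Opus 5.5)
The plan is to show that the forgetful map $|X_{\rho,\Dtri}| \to X_\rho$ is relatively representable and a closed immersion. Since $X_\rho$ is pro-represented by $R_\rho$, the subfunctor $|X_{\rho,\Dtri}|$ will then be pro-represented by a quotient of $R_\rho$. By definition $X_{\rho,\Dtri} = X_\rho \times_{X_D} X_{\Dtri}$, so this statement is a base change of the corresponding statement for $X_{\Dtri} \to X_D$.

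First, I invoke \Cref{triimmersion}. Under either hypothesis $\delta\in\cT^{\gT}_{\Lambda}(L)$ or $\delta\in\cT^{\gT}_r(L)$, it says that $X_{\Dtri}\to X_D$ is relatively representable and a closed immersion. I also use the equivalence \eqref{eqgroupoids}, $X_{\Dtri}\simeq X_D\times_{|X_D|}|X_{\Dtri}|$, which rests on \Cref{uniqueness_Lambda_2} and \Cref{uniqueness_r_2}. It says that a triangulation deforming $\Dtri$ is unique when it exists, so for a given deformation $D_A$ of $D$, being triangulable (deforming $\Dtri$) is a property rather than extra structure.

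Next I base change along $X_\rho\to X_D$, the composite $X_\rho\to X_{\eta_\rho}\xrightarrow{\sim} X_D$ induced by $D_\rig$. The base change of a relatively representable closed immersion is again one, so $X_{\rho,\Dtri}\to X_\rho$ is a relatively representable closed immersion. Because $X_\rho$ is set-valued, $X_{\rho,\Dtri}$ has no nontrivial automorphisms. Concretely, an object is $\rho_A$ together with a triangulation of $D_\rig\circ\eta_{\rho_A}$ deforming $\Dtri$, and such a triangulation is unique. Hence $X_{\rho,\Dtri}\simeq|X_{\rho,\Dtri}|$, and the latter is identified with the subfunctor of $X_\rho=\Spf R_\rho$ consisting of those $\rho_A$ for which $D_\rig\circ\eta_{\rho_A}$ is triangulable deforming $\Dtri$. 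A relatively representable closed immersion into $\Spf R_\rho$ is cut out by an ideal $I$, so the subfunctor is pro-represented by $R_\rho/I$.

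The main obstacle is the relative representability, specifically the closedness condition: if $A\hookrightarrow A'$ and $D_A\otimes_A A'$ is triangulable, then $D_A$ must be triangulable. This was handled inside \Cref{triimmersion} via the intersection argument of \cite[Proposition 2.3.9]{BC} on the Plücker lines (resp. on $\eta_{D_A}(V_r)$), together with the pushout \eqref{pushoutdiag}. So here the remaining work is the formal base-change bookkeeping, plus the observation that Schlessinger/Grothendieck conditions for a subfunctor of a pro-representable functor reduce to this closedness and the compatibility with fiber products furnished by uniqueness of triangulations.
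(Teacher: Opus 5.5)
Your proof is correct and matches the paper's argument: the paper also deduces the lemma by base-changing the relatively representable closed immersion $X_{\Dtri}\to X_D$ of \Cref{triimmersion} along $X_\rho\to X_D$, and then using that $X_\rho$ is pro-represented by $R_\rho$. Your extra remarks on set-valuedness via uniqueness of triangulations are consistent with that argument, but they add nothing beyond what \Cref{triimmersion} already supplies.
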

\begin{proof}
    This follows by base change from \Cref{triimmersion}, and from the pro-representability of $|X_{\rho}|$ by $R_{\rho}$.
\end{proof}

The next lemma is proved in the same way as \cite[Lemma 6.25]{dedar2020}.

\begin{lemma}\label{local_structure_of_SB}
    Let $x = (\rho, D^{\triangle},\theta, \alpha)$ be a closed point of $\cS_{\gB}(\rhobar)$ such that the parameter $\delta$ of $D^{\triangle}$ is either in $\cT^{\gT}_{\Lambda}(L)$ or in $\cT^{\gT}_{r}(L)$. Then $\widehat \cO_{\cS_{\gB}(\rhobar),x}$ is formally smooth of relative dimension $\dim \gT_L$ over $R_{\rho, D^{\triangle}}$.
\end{lemma}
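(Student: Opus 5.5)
We identify the completed local ring $\widehat\cO_{\cS_{\gB}(\rhobar),x}$ with the pro-representing object of a deformation functor. We then compare that functor with $|X_{\rho,\Dtri}|$ through a chain of morphisms, each of which is either an isomorphism or formally smooth of a known relative dimension.

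First, define the groupoid $X^{\ver}_{\rho,\Dtri}$ on $\Art_L$. Its objects over $A$ are quadruples $(\rho_A,\Dtri_A,\theta_A,\alpha_A)$ deforming $x$, so $\rho_A$ deforms $\rho$ and $(\Dtri_A,\theta_A)$ is a rigidified deformation of $(\Dtri,\theta)$. Two facts make this groupoid set-valued. Rigidified $\gB$-regular objects have no nontrivial automorphisms, by \Cref{SBtrivialAut} applied with Artinian coefficients. Also, $\rho_A$ is a framed deformation. Since $\cS_\gB(\rhobar)$ represents the moduli problem of such quadruples (\Cref{dimension_of_SB}), its completion at $x$ pro-represents $X^{\ver}_{\rho,\Dtri}$. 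One subtlety is that $\cS_\gB(\rhobar)$ was only defined on reduced affinoids. It is, however, representable by a rigid space, and its $A$-points for Artinian $A$ are described by the same data via the universal objects: the universal $\gB$-module on $\cS_\gB$ and the universal Galois representation on the admissible locus. So the identification holds on $\Art_L$-points as well.

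Second, consider the forgetful map $X^{\ver}_{\rho,\Dtri}\to X_{\rho,\Dtri}$, which drops $\theta_A$. The triangulation becomes part of the data once $\alpha_A$ is recorded, since $\alpha_A$ identifies $\Dtri_A\times^\gB\gG$ with $D_{\rig}(\rho_A)$. By \Cref{trianguline_def_ring}, $|X_{\rho,\Dtri}|$ is pro-represented by a quotient $R_{\rho,\Dtri}$ of $R_\rho$. Moreover, under the regularity hypothesis the triangulation of a deformation of $\rho$ deforming $\Dtri$ is unique when it exists (\Cref{uniqueness_Lambda_2}, \Cref{uniqueness_r_2}). Hence $X_{\rho,\Dtri}\simeq |X_{\rho,\Dtri}|$ up to the automorphisms of $\Dtri_A$ inducing identity on $D_A$. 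These are trivial because the triangulation is a sub-structure (Plücker data) of $D_A$.

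Third, analyze the fibres of the forgetful map. Given $\Dtri_A$, rigidifications $\theta_A$ of $\Dtri_A\times^\gB\gT$ lifting $\theta$ always exist. By \Cref{class_rk_one_after_inv_t} (without $t$ inverted), the $\gT$-module is of character type $\cR_{K,A}(\delta_A)$, so $\theta$ lifts. Such lifts form a torsor under $\ker(\Aut(\cR_{K,A}(\delta_A))\to\Aut(\cR_{K,L}(\delta)))=\ker(\gT(A)\to\gT(L))$. The group of $\gT$-automorphisms is $H^0_{\varphi,\gamma_K}$ of the trivial module tensored with $\Lie\gT$, i.e.\ $\gT(A)$. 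This kernel is pro-represented by a formal group of dimension $\dim\gT_L$. The identification of lifts up to equivalence requires care: changing $\theta_A$ by an element of $\gT(A)$ might be absorbed by an automorphism of $(\rho_A,\Dtri_A,\alpha_A)$. Such automorphisms are trivial, however, since $\alpha_A$ pins $\Dtri_A$ inside the framed $D_{\rig}(\rho_A)$. So the map is a torsor under $\widehat{\gT}_1$, and it is formally smooth of relative dimension $\dim\gT_L$. The main obstacle is this bookkeeping in the third step: checking that the $\gT(A)$-action on rigidifications is free modulo equivalences and that surjectivity on small extensions holds, i.e.\ that lifts of $\theta$ exist compatibly. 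Once that is done, passing to pro-representing rings gives $\widehat\cO_{\cS_{\gB}(\rhobar),x}\cong R_{\rho,\Dtri}\br{y_1,\dots,y_{\dim\gT}}$.
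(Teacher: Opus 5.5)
Your proposal is correct and follows the same route as the paper, which simply defers to \cite[Lemma 6.25]{dedar2020}, itself modeled on \cite[Th\'eor\`eme 2.6]{BHSmodulaire}: you identify $\widehat\cO_{\cS_\gB(\rhobar),x}$ with the deformation functor of quadruples, and then observe that forgetting $\theta$ gives a torsor under $\ker(\gT(A)\to\gT(L))$ over $|X_{\rho,\Dtri}|$, with no extra automorphisms because $\alpha_A$ pins $\Dtri_A$ inside $D_{\rig}(\rho_A)$. Two small corrections: the character-type statement you need over $\cR_{K,A}$ (without inverting $t$) is \cite[Theorem 6.2.14]{KPX}, as used in \S\ref{sec_param}, not \Cref{class_rk_one_after_inv_t}; and $\Aut(\cR_{K,A}(\delta_A))$ is $\gT(\cR_{K,A})^{\varphi=1,\Gamma_K=1}=\gT(A)$, not an $H^0$ with coefficients in $\Lie\gT$.
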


\subsection{Independence of the regularity condition}
\label{sec_indep_regularity} 
In de Daruvar's thesis, the trianguline variety is defined as the closure of the subspace $U_{\tri}^{\Omega}(\rhobar) \subseteq U_{\tri}(\rhobar)$, which consists of points $(\rho, \delta)$ such that $\delta$ is very regular \cite[Definition 6.21]{dedar2020}. This definition turns out to be equivalent to ours, as a consequence of \Cref{independence_of_regularity_condition} below.

\begin{lemma}\label{density_lemma}
    Let $X$ be a smooth connected rigid analytic space.
    Assume that $\emptyset \neq U \subseteq X$ is Zariski open.
    Then $U$ is Zariski dense.
\end{lemma}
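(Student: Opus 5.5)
The plan is to reduce to the irreducible case and then to the affinoid case, where the statement becomes a question about the vanishing locus of a coherent ideal in the spectrum of an integral domain. Since $X$ is smooth, it is normal, so its connected components are irreducible. As $X$ is connected, $X$ is irreducible (in the sense of Conrad \cite{ConradIrrComp}). Write $U = X\setminus Z$, where $Z = V(\cI)$ for a coherent ideal sheaf $\cI\neq\cO_X$. Showing $U$ is Zariski dense amounts to the following claim: any Zariski closed subset $Z'$ containing $U$ must be all of $X$.

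First reduce to the case $Z'=V(\cJ)$. Then the ideal $\cI\cJ$ cuts out $Z\cup Z' = X$ set-theoretically. It therefore suffices to show the following. If the union of two Zariski closed subsets $V(\cI)\cup V(\cJ)$ is all of $X$ and $V(\cI)\neq X$, then $V(\cJ)=X$.

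Work on an admissible affinoid covering by connected affinoids $\Sp A$. Each $A$ is a regular, hence normal, affinoid algebra. Since $\Sp A$ is connected, $A$ is an integral domain. On such an $\Sp A$, the hypothesis says $IJ\subseteq\sqrt{0}=0$. Because $A$ is a domain, $I=0$ or $J=0$.

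The main obstacle is passing from local to global: the conclusion $I=0$ might hold on some affinoids and $J=0$ on others. The set of points near which $\cI=0$ is open. Its complement is also open, because locally $J=0$ there. The overlaps are connected domains, so both sets are admissible opens and this gives an admissible covering. Connectedness of $X$ then forces one of them to be everything. If $\cI=0$ held everywhere we would have $Z=X$, hence $U=\emptyset$, contradicting the hypothesis. So $\cJ=0$ everywhere and $Z'=X$.

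Finally, one checks that the locus where a coherent ideal vanishes is an admissible open union of connected affinoids: on a connected affinoid in a normal space, a section of a coherent sheaf that vanishes on a nonempty open subdomain vanishes identically. This is the identity principle for domains.
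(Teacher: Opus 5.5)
Your argument is correct in substance and uses the same ingredients as the paper, but organizes them differently. The paper fixes $x\in U$ and quotes Conrad's chain lemma: every $y\in X$ is linked to $x$ by connected affinoids $Z_1,\dots,Z_n$ with $Z_i\cap Z_{i+1}\neq\emptyset$. It then propagates ``$U\cap Z_i\neq\emptyset$, hence Zariski dense in the integral affinoid $Z_i$'' along the chain, since a nonempty affinoid inside $Z_i\cap Z_{i+1}$ must meet $U$.

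You instead recast density as the statement that $X$ is not the union of $V(\mathcal{I})\neq X$ with another proper Zariski closed subset. You get the local dichotomy $I=0$ or $J=0$ from primality of $(0)$ in a domain, and you globalize by a clopen decomposition plus connectedness. Both proofs rest on three facts: connected affinoids in a smooth space are domains, an identity principle on overlaps, and connectedness of $X$. The paper delegates the connectedness bookkeeping to Conrad, while you do it by hand and prove directly that $X$ is not a union of two proper Zariski closed subsets. Your opening appeal to Conrad-irreducibility is never used afterwards.

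Three justifications in your globalization step need repair. Let $S$ be the set of points having an admissible open neighborhood on which $\mathcal{I}$ vanishes.

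First, $X\setminus S$ is open \emph{not} because $\mathcal{J}$ vanishes locally there, which is irrelevant, but by the identity principle. If $A$ is a domain and $\Sp B\subseteq\Sp A$ is a nonempty affinoid subdomain, then $A\to B$ is flat. So multiplication by any $a\neq 0$ stays injective on $B\neq 0$, whence $A\to B$ is injective, and $I\neq 0$ forces $IB\neq 0$. Thus each connected affinoid lies entirely in $S$ or entirely in $X\setminus S$.

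Second, ``the overlaps are connected domains'' is false in general and not needed. Fix an admissible covering of $X$ by connected affinoids. Both $S$ and $X\setminus S$ are then unions of its members, which is what makes them admissible opens forming an admissible covering, so connectedness applies.

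Third, the identity principle you quote holds for sections of $\cO_X$, hence of ideal sheaves. It fails for sections of arbitrary coherent sheaves, which may be torsion.
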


\begin{proof} Let $x \in U$.
For every point $y \in X$ there is a sequence $Z_1, \dots, Z_n$ of connected affinoids of $X$ such that $x \in Z_1$, $y \in Z_n$ and $Z_i \cap Z_{i+1} \neq \emptyset$ for all $i$ (see \cite[p. 492]{ConradIrrComp}).
Then $Z_1 \cap U$ is Zariski open and Zariski dense in $Z_1$ by smoothness. Since $Z_1 \cap Z_2 \neq \emptyset$, it contains an affinoid $V$. Since $V$ is not contained in any proper Zariski closed subset of $Z_1 \cap Z_2$, we have $V \cap U \neq \emptyset$. We continue by induction to conclude that $U \cap Z_i \neq \emptyset$ is Zariski dense in $Z_i$ for all $i$. It follows that $U$ is Zariski dense in $X$.
\end{proof}

\begin{proposition}\label{independence_of_regularity_condition}
    Let $\cT^* \subseteq \cT_{0,\gB}^{\gT}$ be a Zariski open and Zariski dense subset.
    Then its preimage $U_{\tri}^*(\rhobar)$ is Zariski dense in $U_{\tri}(\rhobar)$.
    In particular, $U_{\tri}^{\vreg}(\rhobar)$ is Zariski dense in $U_{\tri}(\rhobar)$.
\end{proposition}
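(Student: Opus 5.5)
The plan is to work on the space $\cS_{\gB}(\rhobar)$ rather than directly on $U_{\tri}(\rhobar)$, and to push density down along the surjection $\pi_{\rhobar}\colon \cS_{\gB}(\rhobar)\to U_{\tri}(\rhobar)$, $(\rho,\Dtri,\theta,\alpha)\mapsto(\rho,\delta)$. By \Cref{dimension_of_SB}, $\cS_{\gB}(\rhobar)$ is representable and relatively smooth over $\cT^{\gT}$, hence smooth. Write $f\colon \cS_{\gB}(\rhobar)\to\cT_{0,\gB}^{\gT}$ for the parameter map. Then $f^{-1}(\cT^*)$ is Zariski open in $\cS_{\gB}(\rhobar)$, and its image under $\pi_{\rhobar}$ is exactly $U^*_{\tri}(\rhobar)$.

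The first step is to show that $f^{-1}(\cT^*)$ meets every connected component $C$ of $\cS_{\gB}(\rhobar)$. Since $f$ is smooth, it is flat and hence open. The image $f(C)$ therefore contains a nonempty admissible open of $\cT_{0,\gB}^{\gT}$. Because $\cT^*$ is Zariski open and Zariski dense in the smooth space $\cT_{0,\gB}^{\gT}$, its complement is a proper Zariski closed subset of each connected component. Such a subset is nowhere dense, so it cannot contain a nonempty admissible open, and $f(C)\cap\cT^*\neq\emptyset$. By \Cref{density_lemma}, applied to each $C$, which is smooth and connected, $f^{-1}(\cT^*)\cap C$ is Zariski dense in $C$. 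Hence $f^{-1}(\cT^*)$ is Zariski dense in $\cS_{\gB}(\rhobar)$.

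Next comes the transfer to $U_{\tri}(\rhobar)$. Let $Z$ be the Zariski closure of $U^*_{\tri}(\rhobar)$ in $\cX_{\rhobar}\times\cT^{\gT}$. The map $\cS_{\gB}(\rhobar)\to\cX_{\rhobar}\times\cT^{\gT}$ is a morphism of rigid spaces. The preimage of $Z$ is therefore Zariski closed and contains $f^{-1}(\cT^*)$, so by the density just proved it is all of $\cS_{\gB}(\rhobar)$. Since $\pi_{\rhobar}$ surjects onto $U_{\tri}(\rhobar)$, it follows that $U_{\tri}(\rhobar)\subseteq Z$, which is the first claim.

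For the very regular case, I take $\cT^*=\cT^{\gT}_{\Omega}\cap\cT^{\gT}_{0,\gB}$. This is Zariski open, since it is cut out by the preimages of the complement of the discrete set $\cT_{\irreg}$ under finitely many maps $\cT^{\gT}\to\cT$. It is nonempty in every component, hence Zariski dense by \Cref{density_lemma}.

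The main obstacle I expect is the openness and component-counting argument in the first step. One has to make sure that the complement of $\cT^*$ is nowhere dense in every connected component of $\cT_{0,\gB}^{\gT}$, which follows from smoothness together with density of $\cT^*$. One also has to justify that smooth morphisms of rigid spaces are open.
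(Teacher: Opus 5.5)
Your proof is correct and follows the paper's argument. You show density of the preimage of $\cT^*$ in each smooth connected component of $\cS_{\gB}(\rhobar)$ via \Cref{density_lemma}, then push down along the surjection $\pi_{\rhobar}$ using Zariski continuity of morphisms. The only variation is the nonemptiness step: the paper shows $\cS_{\gB}\setminus\cS_{\gB}^*$ has positive codimension in the vector bundle $\cS_{\gB}$ and pulls this back along $\cS_{\gB}(\rhobar)\to\cS_{\gB}$, whereas you use openness of the smooth parameter map $\cS_{\gB}(\rhobar)\to\cT_{0,\gB}^{\gT}$ directly; both rest on the same flatness input.
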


\begin{proof}
    Denote by $\cS_{\gB}^*$ and $\cS_{\gB}^*(\rhobar)$ respectively the preimages of $\cT^*$ in $\cS_{\gB}$ and $\cS_{\gB}(\rhobar)$.
    By the universal property of $\pi$, we obtain a map $j : \cS_{\gB}^*(\rhobar) \to \cS_{\gB}(\rhobar)$ which fits in the following commutative square:
    \begin{center}
        \begin{tikzcd}
            \cS_{\gB}^*(\rhobar) \ar[r, "j"] \ar[d] & \cS_{\gB}(\rhobar) \arrow[d, "\pi"] \\
            \cS_{\gB}^* \ar[r] & \cS_{\gB}.
        \end{tikzcd}
    \end{center}
    By the universal property of the admissible locus, this square is cartesian. Thus $j$ is an open immersion.
    
    We first show that $\cS^*_{\gB}(\rhobar) \subseteq \cS_{\gB}(\rhobar)$ is Zariski dense.
    We may work with a fixed connected component of $\cS_{\gB}(\rhobar)$; we denote it by $\cS_{\gB}(\rhobar)^0$ and its preimage in $\cS_{\gB}^*(\rhobar)$ by $\cS_{\gB}^*(\rhobar)^0$.
    Since $\cS_{\gB}(\rhobar)^0$ is smooth by \Cref{SB_representable} and connected, it is sufficient by \Cref{density_lemma} to show that $\cS_{\gB}^*(\rhobar)^0 \neq \emptyset$.
    
    The very regular parameters are Zariski dense in $\cT_{0,\gB}^{\gT}$. 
    By \Cref{SB_representable}, $\cS_{\gB}$ is a vector bundle over $\cT_{0,\gB}^{\gT}$.
    Hence, $\cS_{\gB} \setminus \cS_{\gB}^*$ is Zariski closed of codimension $> 0$ in $\cS_{\gB}$.
    Since $\pi$ is étale and morphisms of rigid spaces are continuous for the Zariski topology (pullback of coherent ideals are coherent), $\cS_{\gB}(\rhobar)^0 \setminus \cS_{\gB}^*(\rhobar)^0$ is also Zariski closed of codimension $> 0$.
    Since $\cS_{\gB}(\rhobar)^0$ is a connected component, it is non empty, hence $\cS_{\gB}^*(\rhobar)^0 \neq \emptyset$.

    Consider the following commutative square, in which the vertical arrows are surjective by definition:
    \begin{center}
        \begin{tikzcd}
            \cS_{\gB}^*(\rhobar) \ar[r, "j"] \ar[d, twoheadrightarrow] & \cS_{\gB}(\rhobar) \arrow[d, "\pi_{\rhobar}", twoheadrightarrow] \\
            U_{\tri}^*(\rhobar) \ar[r] & U_{\tri}(\rhobar).
        \end{tikzcd}
    \end{center}
    Let $Z \subseteq U_{\tri}(\rhobar)$ be a Zariski closed subset with $U_{\tri}^*(\rhobar) \subseteq Z$.
    Then $\cS_{\gB}^*(\rhobar) \subseteq \pi_{\rhobar}^{-1}(Z)$, and by the density we have just shown, we obtain $\pi_{\rhobar}^{-1}(Z) = \cS_{\gB}(\rhobar)$.
    Hence $Z = U_{\tri}(\rhobar)$.
    We conclude that $U_{\tri}^*(\rhobar)$ is Zariski dense in $U_{\tri}(\rhobar)$.
\end{proof}

\begin{remark}\label{dimS*}
With the notation from the proof of \Cref{density_lemma}, $\cS^*_{\gB}(\rhobar)$ is Zariski open in $\cS_{\gB}(\rhobar)$, and we showed that it is also Zariski-dense. In particular, it is equidimensional of the same dimension as $\cS_{\gB}(\rhobar)$.
\end{remark}

\begin{remark}
    Suppose we apply \Cref{independence_of_regularity_condition} to the set $\cT^*$ of $\frakS$-regular and/or very regular trianguline parameters. We do not need to assume that $\frakS$-regular and/or very regular trianguline lifts exist.
    Indeed, the proof shows that every connected component of $\cS_{\gB}(\rhobar)$ also contains an $\frakS$-regular and very regular trianguline lift.
    We expect that every connected component of $\cX_{\rhobar}$ contains an $\frakS$-regular and very regular trianguline lift. We leave the verification of this to future work.
\end{remark}

\subsection{Smooth loci on the trianguline variety}


\begin{lemma}[{cf. \cite[Example 3.13]{dedar2020}}]\label{space_of_rigidifications}
    Let $X$ be a rigid analytic space over $L$ and let $D$ be a $\gT$-$\PG$-module over $\cR_{K,X}$ of parameter $\delta : \gT^{\vee}(K) \to \cO(X)^{\times}$.
    Then the functor $\Rig(D) \colon \Rig_X \to \Set$, which associates with a rigid analytic space $f \colon Y \to X$ the set of isomorphisms $\cR_{K,Y}(\delta) \to f^*D$ of $\gT$-$\PG$-modules, is representable by a $\gT$-torsor over $X$.
\end{lemma}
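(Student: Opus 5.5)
The plan is to reduce to the case of the trivial parameter on affinoids and then glue. The key observation is that a rigidification is an isomorphism of $\gT$-$\PG$-modules $\cR_{K,Y}(\delta)\to f^*D$. Any two such isomorphisms differ by an automorphism of $\cR_{K,Y}(\delta)$. By the Tannakian description (\Cref{G_PG_Pair_is_triv_PG_mod}, applied to the pair attached to $\cR_{K,Y}(\delta)$ on the trivial torsor), such an automorphism is an element $h\in\gT(\cR_{K,Y})$ with $h^{-1}M\varphi(h)=M$ and $h^{-1}c(\gamma)\gamma(h)=c(\gamma)$. Since $\gT$ is commutative, these conditions say $\varphi(h)=h$ and $\gamma(h)=h$. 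After choosing a splitting $\gT\cong\Gm^n$, we get $h\in\big((\cR_{K,Y}^{\varphi=1,\Gamma_K=1})^\times\big)^n=(\cO(Y)^\times)^n$, using the standard fact $H^0_{\varphi,\gamma_K}(\cR_{K,A})=A$ (this is \Cref{coh_char_A}/\Cref{coh_char} for the trivial character, upgraded to affinoid $A$ by the usual base-change argument). Thus $\uAut(\cR_{K,Y}(\delta))=\gT(\cO(Y))$, and $\Rig(D)$ is a $\gT$-pseudotorsor over $X$.

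The remaining points are representability and local nonemptiness, both of which reduce to the rank-one theory of \cite[Theorem 6.2.14]{KPX}. Fix a basis $\chi_1,\dots,\chi_n$ of $X^*(\gT)$, so that $D$ is determined by the rank-one $\PG$-modules $\eta_D(\chi_i)$ together with their tensor compatibilities; since $\gT\cong\Gm^n$, the fiber functor is determined by these $n$ line objects. By \cite[Theorem 6.2.14(1)]{KPX}, on any affinoid $\Sp A\subseteq X$ we have $\eta_D(\chi_i)\cong\cR_{K,A}(\delta\circ\chi_i)\otimes_A\scrL_i$ for a line bundle $\scrL_i$ on $\Sp A$. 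Here $\scrL_i=\Hom_{\PG}(\cR_{K,A}(\delta\circ\chi_i),\eta_D(\chi_i))$, and its formation commutes with base change by the same theorem. A rigidification of $f^*D$ is then exactly a tuple of trivializations of $f^*\scrL_i$, $i=1,\dots,n$. Hence $\Rig(D)$ is represented on $\Sp A$ by the $\Gm^n$-torsor $\mathrm{Isom}(\cO^n,\bigoplus\scrL_i)$, i.e.\ the product of the $\Gm$-torsors of frames of the $\scrL_i$. These local descriptions are canonical, being defined as $\Hom$-sheaves, so they glue over an admissible affinoid covering of $X$ to a $\gT$-torsor, and the independence from the splitting follows from the intrinsic pseudotorsor description above.

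The main obstacle is making sure the identification $\scrL_i=\Hom(\cR_{K,A}(\delta\circ\chi_i),\eta_D(\chi_i))$ is compatible with arbitrary base change $Y\to X$, including non-flat maps. I would handle this first for affinoid $Y$, using that $H^0_{\varphi,\gamma_K}$ of $\cR_{K,A}$ commutes with base change because the Herr complex of the trivial character is perfect with cohomology $A$ in degree $0$ (\cite[Theorem 4.4.5]{KPX} together with \Cref{coh_char_A}). General $Y$ then follows by gluing.
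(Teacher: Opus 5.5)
Your argument is correct and is essentially the one the paper defers to via \cite[Example 3.13]{dedar2020}. You split $\gT\cong\Gm^n$, use \cite[Theorem 6.2.14(1)]{KPX} to write $\eta_D(\chi_i)\cong\cR_{K,A}(\delta_{\chi_i})\otimes_A\scrL_i$, and identify rigidifications with trivializations of the $\scrL_i$, i.e.\ with the $\Gm^n$-torsor $\prod_i\mathrm{Isom}(\cO,\scrL_i)$. (Note that $\mathrm{Isom}(\cO^n,\bigoplus_i\scrL_i)$, as you first wrote it, would be a $\GL_n$-torsor.)

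The base-change point you single out as the main obstacle is immediate from $(\cR_{K,A'})^{\varphi=1,\Gamma_K=1}=A'$ for every affinoid $A'$. This gives $\Hom_{\PG}\big(\cR_{K,A'}(\delta_{\chi_i}),\cR_{K,A'}(\delta_{\chi_i})\otimes_{A'}\scrL'\big)=\scrL'$ for any line bundle $\scrL'$ on $A'$. Perfectness is not needed, and the Herr complex of the trivial character is not concentrated in degree $0$: its $H^1$ has rank $[K:\Qp]+1$.
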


If $D^{\triangle}$ is a $\gB$-$\PG$-module over $\cR_{K,X}$, we also write $\Rig(D^{\triangle}) \colonequals \Rig(D^{\triangle} \times^{\gB} \gT)$. We refer to $\Rig(D^{\triangle})$ as the \emph{space of rigidifications} of $D^{\triangle}$.

The following is a generalization of results of Kedlaya--Pottharst--Xiao and de Daruvar.

For a point $x=(\rho_x,\delta_x)$ of $X_\tri(\ovl\rho)$, defined over a field $\kappa(x)$, write $\eta_{D_x}=D_{\rig}\circ\eta_{\rho_x}$ for the associated $\PG$-module over $\cR_{K,\kappa(x)}$, and  let $\cM_x=D_x[\tfrac{1}{t}]$. 


\begin{lemma}\label{Zariski_dense_set_trick}
    Let $A\in \Aff_L$, let $0 < r \leq C(\pi_K)$, and let $S \subseteq \Sp(A)$ be a Zariski dense subset.
    Then $S \times A^1(0,r]$ is Zariski dense in $\Sp(A) \times A^1(0,r]$. In particular it defines a Zariski dense set of maximal ideals in $\Spec(\cR^r_{K,A})$. The same is true if we invert $t$ on $A^1(0,r]$, i.e. consider $\Spec(\cR^r_{K,A}[\tfrac{1}{t}])$.
\end{lemma}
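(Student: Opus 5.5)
The plan is to reduce to a statement about functions on a product of an affinoid with a closed annulus, and then pass to the limit over annuli. Zariski closed subsets of $\Sp(A)\times A^1(0,r]$ are vanishing loci of coherent ideal sheaves. It therefore suffices to show the following: if $f\in\cR^{r}_{K,A}=\cO(\Sp(A)\times A^1(0,r])$ vanishes on $S\times A^1(0,r]$, then $f=0$ (and more generally the same for sections of a coherent ideal, which we handle by working on each affinoid $\Sp(A)\times A^1[s,r]$). Since $\cR^r_{K,A}=\bigcap_{s}\cR^{[s,r]}_{K,A}$, it is enough to treat a single closed annulus $A^1[s,r]$. Here $\cR^{[s,r]}_{K,A}=\cR^{[s,r]}_K\wh\otimes_{\Qp}A$ is an affinoid algebra.

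For the annulus case, write $f=\sum_{i\in\Z}a_iT^i$ with $a_i\in K_0'\wh\otimes_{\Qp}A$, a finite free $A$-module. Fix a point $y\in A^1[s,r]$ defined over a finite extension, and a point $x\in S$. Then $f(x,y)=0$ for all such pairs. For fixed $x$, the function $y\mapsto f(x,y)$ is an element of $\cR^{[s,r]}_{K,\kappa(x)}$ vanishing at every point of the annulus. It is therefore zero, so every coefficient $a_i$ vanishes at $x$. Hence each $a_i$, viewed through a $\Qp$-basis of $K_0'$ as finitely many elements of $A$, vanishes on $S$. By Zariski density of $S$ in $\Sp(A)$ we get $a_i=0$, since $A$ is reduced or, more precisely, since the vanishing locus of $a_i$ is a Zariski closed set containing $S$. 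Thus $f=0$. For a coherent ideal $\cI$ on the annulus product whose zero set contains $S\times A^1[s,r]$, I apply this to each element of $\cI(\Sp(A)\times A^1[s,r])$. Replacing $A$ by $A_{\red}$ if necessary, which does not change Zariski closures, this shows $V(\cI)$ is everything. Gluing over $s\to 0$ gives density in $\Sp(A)\times A^1(0,r]$.

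For the statement about $\Spec(\cR^r_{K,A})$, the maximal ideals attached to points $(x,y)$ with $x\in S$ form a set whose closure is $V(J)$ with $J$ the ideal of functions vanishing on all of them. By the above, $J$ is zero modulo nilpotents, so the set is Zariski dense. For $\cR^r_{K,A}[\tfrac1t]$, localization gives an open subscheme of the spectrum. The points with $t(y)\neq 0$ still lie in the set, because $t$ has only a discrete set of zeros $\{\zeta_{p^n}-1\}$ on the annulus, so $S\times(A^1(0,r]\setminus V(t))$ is still dense by the same coefficient argument. Density in an open subscheme then follows from density of that subset in the whole spectrum.

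The main obstacle is the step identifying Zariski density in the rigid sense with vanishing of coefficients: it needs care with nonreduced $A$ and with infinitely many coefficients in the Fréchet intersection. I would handle it by working with the Laurent expansion on each closed annulus, where $\cR^{[s,r]}_{K,A}$ is a Banach module with orthogonal basis $\{T^i\}$ over $K_0'\wh\otimes A$.
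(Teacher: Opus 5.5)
The paper states this lemma without proof and uses it directly, so there is no argument of the authors' to compare with. Your coefficientwise argument is a correct proof. A few points deserve a sentence each when you write it up.

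First, points of $\Sp(A)\times A^1[s,r]$ are not literally pairs of points. Read $S\times A^1[s,r]$ as the preimage of $S$ under the projection. The fiber over $x\in S$ is $\Sp(\cR^{[s,r]}_K\wh\otimes_{\Qp}\kappa(x))$, a finite disjoint union of closed annuli over finite extensions of $\kappa(x)$. What you actually use is that this fiber is reduced and one-dimensional. That is what justifies ``vanishes at every point, hence zero'', and in the $t$-inverted case ``vanishes off finitely many points, hence zero''.

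Second, the statement in $\Spec(\cR^r_{K,A})$ needs a nilpotency exponent for $f$ that does not depend on $s$; passing to $A_{\mathrm{red}}$ alone does not quite give this. It is immediate, though. If $N$ is the nilradical of $A$ with $N^k=0$ and all $a_i\in K_0'\otimes_{\Qp}N$, then every Laurent coefficient of $f^k$ is a convergent sum of $k$-fold products of the $a_i$. Each such product is zero, so $f^k=0$ on every $A^1[s,r]$, hence in $\cR^r_{K,A}$.

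Third, in the variable $T$ the zeros of $t$ are not literally the points $\zeta_{p^n}-1$; that description holds only for the cyclotomic variable. All you need is that $t\neq 0$ has finitely many zeros on each closed annulus of each fiber, and this holds because $t$ stays nonzero after base change to each field factor.

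Finally, the ``main obstacle'' you flag is not really one. The argument runs coefficient by coefficient on a fixed closed annulus, so neither the infinitely many coefficients nor the Fr\'echet intersection causes any difficulty.
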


\begin{proposition}\label{blowupLambda}
Let $\Lambda\subset X_+^*(\gT^\der)$ be a finite set of $\Q$-generators of $X^*_+(\gT^{\der})_\Q$. There exist
\begin{itemize}
    \item a projective birational morphism $f \colon X^\prime\to X_\tri(\ovl\rho)$, which is a composition of blowups and normalizations,
    \item for every $\lambda\in \Lambda$, a line bundle $\mathscr{L}_{\lambda}$ on $X'$ and a morphism
\begin{equation}\label{ui} u_\lambda \colon \cR_{K,X^\prime}(f^*\delta^\tri\circ\lambda^\vee)\otimes_{X^\prime}\mathscr{L}_{\lambda}\longrightarrow \eta_{f^* D^\tri}(V_{\lambda}), \end{equation}
where $V_{\lambda}$ denotes the representation of $\gG$ of highest weight $\lambda$, and $D^\tri$ denotes the universal $\gG$-$\PG$-module over $X_\tri(\ovl\rho)$,
\item a Zariski open, Zariski dense subspace $U\subseteq X^\prime$ containing $f^{-1}(U_\tri^{\Lambda}(\rhobar))$,
\end{itemize}
such that:
\begin{itemize}
    \item[(1)] for every $\lambda\in \Lambda$, the cokernel of the dual map 
    \begin{equation*} u_\lambda^\vee \colon \eta_{f^* D^\tri}(V^\vee_{\lambda}) \longrightarrow \cR_{K,X^\prime}(-(f^*\delta^\tri)\circ\lambda^\vee) \otimes_{X^\prime}\mathscr{L}^\vee_{\lambda} \end{equation*}
    is locally annihilated by a power of $t$, and supported on $X^\prime\setminus U$;
    \item[(2)] every $x\in U$ admits an affinoid open neighborhood $U'$ contained in $U$ such that for every $\lambda\in \Lambda$, $u_\lambda\vert_{U'}$ defines a Pl\"ucker datum for $f^\ast D^\tri\vert_{U'}$, corresponding to a triangulation of parameter $f^\ast\delta^\tri\vert_{U'}$.
\end{itemize}
\end{proposition}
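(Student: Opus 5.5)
The plan is to reduce to the $\GL$-case, i.e. to \cite[Theorem 6.3.9]{KPX} applied to the rank-one sub-objects of the $\PG$-modules $\eta_{D^\tri}(V_\lambda)$ for $\lambda\in\Lambda$, one $\lambda$ at a time. I would then intersect the resulting good loci and check the Plücker compatibilities on a Zariski dense set.

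\textbf{Step 1 (one $\lambda$ at a time).} Fix $\lambda\in\Lambda$ and consider the $\PG$-module $\eta_{D^\tri}(V_\lambda)^\vee=\eta_{D^\tri}(V_\lambda^\vee)$ over $\cR_{K,X_\tri(\rhobar)}$ together with the character $(\delta^\tri\circ\lambda^\vee)^{-1}$. Take a point $(\rho,\delta)\in U^\Lambda_\tri(\rhobar)$. By \Cref{rank_oner}, $\Hom(\cR_{K}(\delta\circ\lambda^\vee),\eta_D(V_\lambda))$ is one-dimensional there, and the triangulation gives a saturated inclusion, hence a quotient $\eta_D(V_\lambda^\vee)\to\cR_K(\delta\circ\lambda^\vee)^{-1}$. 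This set is Zariski dense in $X_\tri(\rhobar)$ by \Cref{independence_of_regularity_condition} with $\cT^*=\cT^\gT_{0,\Lambda}$. So the hypotheses of \cite[Theorem 6.3.9]{KPX} (Zariski-dense existence of a surjection onto a rank-one object of the prescribed character, with $H^0$ of the twist generically rank one) hold. That theorem yields a proper birational $f_\lambda$, obtained by blowups and normalization, a line bundle $\mathscr L_\lambda$, and a map $\eta_{f_\lambda^*D^\tri}(V_\lambda^\vee)\to\cR_{K}(-(f_\lambda^*\delta^\tri)\circ\lambda^\vee)\otimes\mathscr L^\vee_\lambda$. Its cokernel is killed by a power of $t$ and is supported on a proper Zariski closed subset, and the map is surjective on a Zariski open dense $U_\lambda$ containing the preimage of the dense set. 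Dualizing gives $u_\lambda$.

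\textbf{Step 2 (combine over $\Lambda$).} I would do Step 1 successively: pull back to the previous $X'$ and apply \cite[Theorem 6.3.9]{KPX} again. A composite of blowups and normalizations stays projective birational. The preimage of the dense good locus stays Zariski dense, since $f$ is birational and an isomorphism over a dense open set. Set $U=\bigcap_\lambda U_\lambda$, which is open and dense, with $U_\lambda$ shrunk so that the surjectivity in (1) holds on $U$. This gives (1).

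\textbf{Step 3 (Plücker datum on $U$).} On an affinoid $U'\subseteq U$ where $\mathscr L_\lambda$ is trivial, each $u_\lambda$ is the inclusion of a saturated rank-one sub-$\PG$-module, since its dual is surjective. The conditions of \Cref{pluckercoordinatesphi-gamma}(3) are of two kinds:
\begin{itemize}
\item inclusions $\bigotimes\mathscr L_\lambda^{n_\lambda}\subseteq\eta(V_{\sum n_\lambda\lambda})$;
\item equalities for the relations in $R_\Lambda$.
\end{itemize}
Each can be written as the vanishing of a map of finite projective $\cR_{K,U'}$-modules. For the inclusions, the composite of $\bigotimes\mathscr L_\lambda^{n_\lambda}$ into the quotient by $\eta(V_{\sum n_\lambda\lambda})$ must vanish; the equalities are handled similarly, using saturation. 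These maps vanish at the points of $f^{-1}(U^\Lambda_\tri(\rhobar))\cap U'$, because there $\mathscr L_\lambda$ is the Plücker coordinate of the actual triangulation, by the uniqueness in \Cref{rank_oner}. By \Cref{Zariski_dense_set_trick} these points give a Zariski dense set of maximal ideals of $\cR^r_{K,U'}$, and $U'$ is reduced, so the maps vanish identically. Hence the $u_\lambda$ define a triangulation. Its parameter is then determined on $\lambda^\vee$ for $\lambda\in\Lambda$ and agrees with $f^*\delta^\tri$ on the dense set. Since $\Lambda$ only $\Q$-generates, I would also match the remaining part of the parameter, coming from $X^*(\gG)$ and torsion, by the same density argument applied to the characters $\eta(\chi)$ for $\chi\in X^*(\gG)$.

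\textbf{Main obstacle.} I expect the obstacle to be Step 3. It has two parts: making the density argument rigorous in the relative Robba setting, including that the relevant maps are defined over some $\cR^r$ uniformly, and pinning down the full parameter of the triangulation rather than only its values on $\Lambda$. For the latter, the first thing I would try is to use that the parameter is continuous in families and agrees with $f^*\delta^\tri$ on a Zariski dense set of points, so it agrees everywhere since $X'$ is reduced.
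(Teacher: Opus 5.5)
Your proposal follows the paper's proof essentially step for step. You apply \cite[Theorem 6.3.9]{KPX} successively to the $\eta(V_\lambda^\vee)$, using \Cref{rank_oner} and saturation at points of $U^\Lambda_\tri(\rhobar)$, and take $U$ where all the dual maps are surjective, so that the $u_\lambda$ are saturated inclusions. You then verify the Plücker inclusions and $R_\Lambda$-equalities as vanishing of maps into projective quotients, checked on the Zariski dense set of maximal ideals from \Cref{Zariski_dense_set_trick}; that is exactly the paper's argument. Your additional remarks make explicit two points the paper leaves implicit: citing \Cref{independence_of_regularity_condition} for the density of $U^\Lambda_\tri(\rhobar)$, and checking that the resulting triangulation has parameter exactly $f^\ast\delta^\tri$ on the part of $X^*(\gT)$ not seen by $\Lambda$ (the $X^*(\gG)$ and torsion directions).
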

\begin{proof}
    For $x\in U_\tri^\Lambda(\overline{\rho})$, let $D_x$ be the corresponding $\gG$-$\PG$-module, which admits a triangulation $\Dtri_x$ of parameter $\delta_x$. Then by \Cref{rank_oner}, 
$$\Hom_{\PGcat_{K,\kappa(x)}^{+}}(\cR_{K,\kappa(x)}(\delta_x \circ \lambda^{\vee}), \eta_{D_x}(V_{\lambda}))=H^0_{\varphi,\Gamma_K}(\eta_{D_x}(V_{\lambda})(-\delta_x\circ \lambda^\vee))$$ is free of rank one over $\kappa(x)$. Moreover, for a choice of basis of this space, the image of $\cR_{K,\kappa(x)}$ in $\eta_{D_x}(V_{\lambda})$ is saturated. Therefore, we may apply \cite[Theorem 6.3.9]{KPX} successively to $\eta_{D_x}(V_{\lambda}^\vee)$ for all $\lambda\in \Lambda$, since $\Lambda$ is finite. This yields a proper birational morphism $f\colon X'\to X_{\tri}(\overline{\rho})$, line bundles $\mathscr{L}_{\lambda}$ over $X'$, morphisms
    \begin{equation*} u_\lambda^\vee \colon \eta_{f^\ast D^\tri}(V^\vee_{\lambda}) \longrightarrow \cR_{K,X^\prime}(-(f^\ast\delta^\tri)\circ\lambda^\vee) \otimes_{X^\prime}\mathscr{L}^\vee_{\lambda} \end{equation*}
 for $\lambda\in \Lambda$, and a Zariski open subset $U$ of $X'$ containing $f^{-1}(U_{\tri}^{\Lambda}(\overline{\rho}))$ such that $(1)$ holds, and for all $x\in U$, the specialization
 $$ u_{\lambda,x}^\vee \colon \eta_{D_x}(V^\vee_{\lambda}) \longrightarrow \cR_{K,\kappa(x)}(-\delta_x\circ\lambda^\vee) $$
 is surjective.
 We let $u_{\lambda}$ be the dual map of $u_{\lambda}^\vee$, and it remains to show that $u_{\lambda}$ satisfies $(2)$. We can work locally on $X'$, and assume that $X'=\Sp(A)$ and that the line bundles $\mathscr{L}_{\lambda}$ are trivial. Indeed, the second and the third bullet point of the statement remain true: since $U$ is Zariski open it remains Zariski dense after being cut down to an affinoid. We first show that for $\mu = \sum_{\lambda \in \Lambda} n_\lambda \lambda \in X^*_+(\gT)$, the image of 
    $$ u_{\mu} \colon \bigotimes\nolimits_{\lambda \in \Lambda}  \cR_{K,A}(-  (f^*\delta^\tri)\circ\lambda^\vee)^{\otimes n_{\lambda}} \longrightarrow \eta_{f^*D^\tri}\big(\bigotimes\nolimits_{\lambda \in \Lambda} V_\lambda^{\otimes n_\lambda}\big) $$
    is contained in $\eta_{f^*D^\tri}(V_\mu)$.
    As $V_\mu$ is a direct summand of $\bigotimes\nolimits_{\lambda \in \Lambda} V_\lambda^{\otimes n_\lambda}$, the quotient $$W \colonequals \eta_{f^*D^\tri}\left(\bigotimes\nolimits_{\lambda \in \Lambda} V_\lambda^{\otimes n_\lambda}\right)/\eta_{f^*D^\tri}(V_\mu)$$ is  free over $\cR_{K,A}$. Given the projection
    $$ \pi \colon \eta_{f^*D^\tri}\left(\bigotimes\nolimits_{\lambda \in \Lambda} V_\lambda^{\otimes n_\lambda}\right) \longrightarrow W, $$
    we have to show $\pi \circ u_{\mu} = 0$. Since the source and target are free modules over $\cR_{K,A}$,
    it is sufficient to show $(\pi \circ u_{\mu}) \otimes_{\cR_{K,A}} \cR_{K,A}/\frakm = 0$ for a Zariski dense set of maximal ideals $\frakm$ of $\cR_{K,A}$. We get such a set by applying \Cref{Zariski_dense_set_trick} to $S=f^{-1}(U_\tri^{\Lambda}(\overline{\rho}))$, so we may assume that the maximal ideal $\frakm_s$ of $A$ corresponding to a point $s \in S$ maps into $\frakm$. We get $(\pi \circ u_{\mu}) \otimes_A \kappa(s) = 0$ since by our assumption on $S$, the $\im(u_{\lambda,s})$ define a Plücker datum for $s \in S$. 

Let $U'=\Sp(B)\subseteq U$ be a non-empty admissible open. The morphisms $u_{\lambda}|_{U'}$ are injective with saturated image: indeed, since both the source and target are finite projective modules over $\cR_{K,B}$, this is equivalent to $u_\lambda^\vee|_{U'}$ being surjective, which follows from $(1)$.

To show that the maps $u_{\lambda}$ define a Plücker datum on $U'$, it remains to show that the image of $u_{\mu}|_{U'}$ in $\eta_{f^*D^\tri|_{U'}}(V_{\mu})$ is independent of the expression of $\mu$ in terms of $\Lambda$. This follows by the same argument as above since the image is saturated, and so the quotient is a free $\cR_{K,B}$-module.
\end{proof}

\begin{proposition}\label{blowupr}
   Let $r\colon \gG\to \GL_m$ be a faithful representation satisfying the hypotheses of \Cref{emb_lemma_asdfjkl}. There exists a proper birational morphism $f\colon X'\to X_{\tri}(\overline{\rho})$, which is a composition of blowups and normalizations, together with a Zariski open Zariski dense subspace $U\subseteq X'$ containing $f^{-1}(U_{\tri}^r(\overline{\rho}))$, such the pullback $f^*D^{\tri}|_{U}$ admits a triangulation of parameter $f^*\delta^{\tri}|_{U}$.
\end{proposition}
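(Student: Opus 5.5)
The plan is to mimic the proof of \Cref{blowupLambda}, applying the Kedlaya--Pottharst--Xiao interpolation to the single $\GL_m$-valued family $\eta_{D^\tri}(V_r)$ rather than to the $V_\lambda$, $\lambda\in\Lambda$. For $x\in U_\tri^r(\rhobar)$, the $\GL_m$-$\PG$-module $\eta_{D_x}(V_r)$ carries the triangulation $\Dtri_x\times^\gB\gB_m$ of parameter $\delta_x\circ r^\vee\in\cT^m_{\reg}$. For each $1\le i\le m$, the $i$-th step $\mathrm{Fil}_i$ of this flag has wedge $\bigwedge^i\mathrm{Fil}_i\subseteq \bigwedge^i\eta_{D_x}(V_r)$, a saturated rank-one submodule of character type with parameter $\delta_x\circ(\mu_1+\dots+\mu_i)^\vee$. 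Regularity of $\delta_x\circ r^\vee$ gives, as in \Cref{rank_oner} or \Cref{GLner}, that the relevant $H^0$ spaces are one-dimensional. We then apply \cite[Theorem 6.3.9]{KPX} successively to the finitely many families $\bigwedge^i\eta_{D^\tri}(V_r)^\vee$, $i=1,\dots,m$. This produces a proper birational $f\colon X'\to X_\tri(\rhobar)$, a composition of blowups and normalizations, together with line bundles and maps $u_i$ into $\eta_{f^*D^\tri}(\bigwedge^iV_r)$. It also produces a Zariski open $U\supseteq f^{-1}(U^r_\tri(\rhobar))$ on which the duals $u_i^\vee$ are surjective. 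Density of $U$ follows because $U$ contains the preimage of the dense set $U^r_\tri(\rhobar)$ (\Cref{independence_of_regularity_condition}) and $f$ is birational.

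Next we show that on $U$ the $u_i$ assemble to a full flag of $\eta_{f^*D^\tri}(V_r)$, i.e.\ a $\gB_m$-triangulation of parameter $f^*\delta^\tri\circ r^\vee$. Over $U$ the images are saturated, by the surjectivity of the duals. The Plücker relations for $\GL_m$ (the images of the $u_i$ are decomposable, and $\mathrm{im}\,u_i$ is compatible with $\mathrm{im}\,u_{i+1}$) are closed conditions. They can be checked as vanishing of maps between finite projective $\cR_{K,A}$-modules, locally on an affinoid. Exactly as in the proof of \Cref{blowupLambda}, \Cref{Zariski_dense_set_trick} reduces them to the dense set of points $f^{-1}(U^r_\tri(\rhobar))$, where they hold. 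Equivalently, one can apply \Cref{pluckercoordinatesphi-gamma} for $\GL_m$ with $\Lambda$ the fundamental weights $\bigwedge^iV_r$. This yields a $\gB_m$-$\PG$-module over $U$ lifting $\eta_{f^*D^\tri}(V_r)$.

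Finally we descend from $\gB_m$ to $\gB$, as in the second half of \Cref{triimmersion}. The maps $\Rep_L(\GL_m)\to\Rep_L(\gG)\to\PGcat^+$ and $\Rep_L(\GL_m)\to\Rep_L(\gB_m)\to\PGcat^+$ agree. By \cite[Lemma 5.11]{conti2022lifting}, $\Rep_L(\gB)$ is the pushout, so $\eta_{f^*D^\tri}|_U$ factors through $\Rep_L(\gB)$ and gives a $\gB$-triangulation. The main obstacle is that the pushout statement must hold for fiber functors into $\PGcat^+_{K,X}$ over a non-point base, and that the resulting torus part must have parameter exactly $f^*\delta^\tri|_U$. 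For the first point we would work affinoid-locally; if the factorization argument is delicate there, we would instead characterize the $\gB$-reduction as the closed condition that the $\gB_m$-reduction lies in the closed sub-flag variety $\gG/\gB\subseteq\GL_m/\gB_m$, again checked on the dense set. For the parameter, the graded pieces of the flag on $V_r$ are $\cR(f^*\delta^\tri\circ\mu_i^\vee)$ twisted by line bundles, matching on a dense set. By \Cref{class_rk_one_after_inv_t}-type rigidity and \cite[Theorem 6.2.14]{KPX} they agree everywhere, and since $r$ is faithful the $\mu_i$ generate $X^*(\gT)_\Q$, which determines the $\gT$-parameter. If necessary we shrink $U$, keeping it open and dense, by absorbing the line bundles.
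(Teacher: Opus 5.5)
Your wedge-power strategy fails at its first step. You want to apply \cite[Theorem 6.3.9]{KPX} to $\bigwedge^i\eta_{D^\tri}(V_r)^\vee$ with $X_{alg}=U^r_\tri(\rhobar)$. For that you need, at every $x\in U^r_\tri(\rhobar)$, that $\Hom\big(\cR_{K,\kappa(x)}(\delta_x\circ(\mu_1+\dots+\mu_i)^\vee),\bigwedge^i\eta_{D_x}(V_r)\big)$ is one-dimensional.

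The argument of \Cref{rank_oner} gives this only if $\delta_x\circ(\mu_J-\mu_1-\dots-\mu_i)^\vee\in\cT_{\reg}$ for every $i$-element subset $J\neq\{1,\dots,i\}$, where $\mu_J=\sum_{j\in J}\mu_j$. But $\mu_J-(\mu_1+\dots+\mu_i)$ is a sum of up to $\min(i,m-i)$ differences $\mu_j-\mu_k$, while $r$-regularity only controls single differences. \Cref{GLner} does not help either: it is proved by induction on successive quotients of $V_r$, not via wedge powers.

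The failure is genuine. Take $\gG=\GL_4$, $r=\mathrm{id}$, $\rhobar$ trivial, and $\rho$ the direct sum of the unramified characters with parameters $\delta_j=\unr(a_j)$. Choose $a_j\in 1+p\cO_L$ pairwise distinct with $a_1a_2=a_3a_4$. Then $(\rho,\delta)\in U^r_\tri(\rhobar)$, but $\cR_{K,L}(\delta_1\delta_2)$ occurs twice in $\bigwedge^2\eta_D(V_r)$, so the relevant $H^0$ is two-dimensional.

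You could shrink $X_{alg}$ to the locus where all these sums are regular, which makes KPX applicable. But then you only get $U\supseteq f^{-1}(X_{alg})$, not $U\supseteq f^{-1}(U^r_\tri(\rhobar))$ as the statement requires, and recovering the missing points would need a separate argument. In effect you are running the argument of \Cref{blowupLambda} for $\GL_m$ with $\Lambda$ the fundamental weights. That regularity condition is strictly stronger than $r$-regularity.

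The paper avoids wedge powers altogether. It applies the full-flag global triangulation theorem \cite[Theorem 6.3.10]{KPX} directly to $\eta_{D^\tri}(V_r)$. The hypotheses of that theorem at the points of $U^r_\tri(\rhobar)$ concern the successive steps of the flag on $\eta_{D_x}(V_r)$. Hence they only involve the single differences $\delta_x\circ(\mu_j-\mu_k)^\vee$, which is exactly what $r$-regularity controls through the inductive argument behind \Cref{GLner}. This directly gives a $\gB_m$-triangulation of $\eta_{f^*D^\tri}(V_r)|_U$ with $U\supseteq f^{-1}(U^r_\tri(\rhobar))$.

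Your final step, descending from $\gB_m$ to $\gB$ via the pushout \eqref{pushoutdiag} coming from \cite[Lemma 5.11]{conti2022lifting}, is exactly the paper's. Your check that the resulting parameter is $f^*\delta^\tri|_U$ is a detail the paper leaves implicit.
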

\begin{proof}
    We apply \cite[Theorem 6.3.10]{KPX} to $X_{\tri}(\overline{\rho})$ and the $\PG$-module $\eta_{D^{\tri}}(V_r)$ over $\cR_{K,X_{\tri}(\overline{\rho})}$. By the diagram \eqref{pushoutdiag}, a triangulation on $\eta_{f^*D^{\tri}|_{U}}(V_r)$ induces one on $f^*D^{\tri}|_{U}$.
\end{proof}

The morphism $\cS_{\gB}(\rhobar) \to \cX_{\rhobar} \times \cT^{\gT}$ sending $(\rho, D^{\triangle},\theta, \alpha)$ to $(\rho, \delta)$, where $\delta$ is the parameter of $\Dtri$, has set-theoretic image $U_{\tri}(\rhobar)$, and thus factors through a morphism
\begin{align}
    \pi_{\rhobar} \colon \cS_{\gB}(\rhobar) \longrightarrow X_{\tri}(\rhobar). \label{def_pi_rhobar}
\end{align}
The situation is summarized in the following diagram:
\begin{equation}
\begin{tikzcd}
    \cS_{\gB}(\rhobar) \ar[r] \ar[rr, bend left, "\pi_{\rhobar}"] \ar[dr] & U_{\tri}(\rhobar) \ar[r] \ar[d] & X_{\tri}(\rhobar) \ar[r] & \cX_{\rhobar} \times \cT^{\gT} \ar[d] \\
    & \cT^{\gT, \gB}_0 \ar[rr] & & \cT^{\gT}.
\end{tikzcd}
\end{equation}

We introduce the loci in $\cS^{*}_{\gB}(\rhobar)$ and $U_\tri(\rhobar)$ corresponding to the regularity conditions on the parameter discussed in \Cref{sec:regularity}.
\begin{definition}
For $*\in\{\Lambda,r\}$, let $\cS^{*}_{\gB}(\rhobar)$ be the subset of points $(\rho, D^{\triangle},\theta, \alpha)$ of $\cS_{\gB}(\rhobar)$ such that the parameter $\delta$ of $D^{\triangle}$ belongs to $\cT^{\gT}_{*}(\kappa(x))$. 
Let $U_{\tri}^{*}(\rhobar)$ be the subset of points $(x,\delta)$ of $U_\tri(\rhobar)$ such that $\delta$ belongs to $\cT^{\gT}_{*}(\kappa(x))$. 
\end{definition}

\begin{theorem}\label{thm_equidimensional} Given $*\in\{\Lambda,r\}$, assume that $\cS^{*}_{\gB}(\rhobar) \neq \emptyset$. Then: 
    \begin{enum}
        \item $X_{\tri}(\rhobar)$ is equidimensional of dimension $\dim \gG_L + [K:\Qp] \cdot \dim \gB_L$;
        \item $\pi_{\rhobar}|_{\cS^{*}_{\gB}(\rhobar)}$ is smooth of relative dimension $\dim \gT_L$;
        \item $U^{*}_{\tri}(\rhobar)$ is Zariski open and Zariski dense in $X_{\tri}(\rhobar)$;
        \item $U_{\tri}^{*}(\rhobar)$ is smooth and $\omega'|_{U_{\tri}^{*}(\rhobar)}$ is smooth.
    \end{enum}
\end{theorem}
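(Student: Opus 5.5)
The approach follows the $\GL_n$ argument of \cite[Théorème 2.6]{BHSmodulaire} and \cite[Theorem 6.22]{dedar2020}, with $\cS_\gB(\rhobar)$ as the smooth auxiliary space.

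First I prove (2). Take a closed point $x=(\rho,\Dtri,\theta,\alpha)\in\cS^*_\gB(\rhobar)$ with parameter $\delta\in\cT^\gT_*$. By \Cref{local_structure_of_SB}, $\widehat\cO_{\cS_\gB(\rhobar),x}$ is formally smooth of relative dimension $\dim\gT_L$ over $R_{\rho,\Dtri}$. By \Cref{trianguline_def_ring}, $R_{\rho,\Dtri}$ is a quotient of $R_\rho$.

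Next I identify $R_{\rho,\Dtri}$ with the completed local ring of $X_\tri(\rhobar)$ at $\pi_{\rhobar}(x)$.

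\begin{itemize}
\item One inclusion: the universal triangulation on $\cS_\gB(\rhobar)$ shows that $\widehat{X_\tri(\rhobar)}_{\pi(x)}$ receives the image of $\cS_\gB(\rhobar)$ near $x$.
\item The other inclusion: deformations in $\widehat{X_\tri(\rhobar)}_{\pi(x)}$ carry triangulations. This follows from \Cref{blowupLambda}, respectively \Cref{blowupr}. Since $\pi(x)\in U^*_\tri(\rhobar)$, the blowup $f$ is an isomorphism over a neighbourhood of $\pi(x)$ inside the open $U$. Hence the universal $\gG$-$\PG$-module is triangulated there with parameter $\delta^\tri$.
\item By the uniqueness results \Cref{uniqueness_Lambda_1} and \Cref{uniqueness_r_1}, this triangulation is the unique one deforming $\Dtri$. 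So the natural map $\widehat{X_\tri(\rhobar)}_{\pi(x)}\to X_{\rho,\Dtri}$ is an isomorphism onto a closed subfunctor that contains all of $X_{\rho,\Dtri}$.
\end{itemize}

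Thus $\pi_{\rhobar}$ induces a formally smooth map of completed local rings of relative dimension $\dim\gT_L$ at every point of $\cS^*_\gB(\rhobar)$, which gives (2). In fact $\pi_{\rhobar}$ restricted to $\cS^*_\gB(\rhobar)$ is a $\gT$-torsor (the space of rigidifications, \Cref{space_of_rigidifications}) over $U^*_\tri(\rhobar)$.

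For (3) and (4), the previous step shows that $U^*_\tri(\rhobar)$ is smooth at each of its points. Its dimension is
\[
\dim\cS_\gB(\rhobar)-\dim\gT_L=\dim\gG_L+[K:\Qp]\dim\gB_L,
\]
by \Cref{dimension_of_SB}. Zariski density of $U^*_\tri(\rhobar)$ is \Cref{independence_of_regularity_condition}.

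Zariski openness takes a separate argument. On the open $U\subseteq X'$ from \Cref{blowupLambda}, the locus where $f^*\delta^\tri\in\cT^\gT_*$ is Zariski open, since $\cT^\gT_*$ is Zariski open in $\cT^\gT$. Over this locus the triangulations exist, so it lies in $U^*_\tri(\rhobar)$. Moreover $f$ is an isomorphism there, so its image is open and equals $U^*_\tri(\rhobar)$.

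Smoothness of $\omega'$ on $U^*_\tri(\rhobar)$ then follows from the smoothness of $\cS^*_\gB(\rhobar)\to\cT^\gT$ (\Cref{dimension_of_SB}) composed with the torsor structure of $\pi_{\rhobar}$.

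For (1), every irreducible component of $X_\tri(\rhobar)$ meets the Zariski dense open $U^*_\tri(\rhobar)$. Since $U^*_\tri(\rhobar)$ is smooth and equidimensional of the dimension computed above, each component has that dimension.

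I expect the main obstacle to be the openness of $U^*_\tri(\rhobar)$ together with the identification of $\widehat{X_\tri(\rhobar)}_{\pi(x)}$ with $X_{\rho,\Dtri}$. The difficulty is ensuring that near a regular point the triangulation given by the \cite{KPX} blowup genuinely has parameter $\delta^\tri$ and not some twisted parameter. I would first try to settle this using point (2) of \Cref{blowupLambda} at points of $f^{-1}(U^*_\tri(\rhobar))$.
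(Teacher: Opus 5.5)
\textbf{Gap: the claim that $f$ is an isomorphism near $y$.} Everything in your argument rests on the claim that, because $y=\pi_{\rhobar}(x)\in U^*_{\tri}(\rhobar)$, the map $f\colon X'\to X_{\tri}(\rhobar)$ of \Cref{blowupLambda} (resp.\ \Cref{blowupr}) is an isomorphism over a neighbourhood of $y$. Nothing provides this.

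Those propositions only give $f^{-1}(U^*_{\tri}(\rhobar))\subseteq U$ and triangulations over $U$. These triangulations already have parameter $f^*\delta^{\tri}$, so the ``twisted parameter'' worry you single out is not where the difficulty lies. Meanwhile, $f$ is an isomorphism only over \emph{some} Zariski dense open $W$, which has no known relation to $U^*_{\tri}(\rhobar)$. Whether $f$ is an isomorphism near $y$ depends on the local structure of $X_{\tri}(\rhobar)$ at $y$: a normalization step is not an isomorphism at a non-normal point, e.g.\ where several branches meet. That local structure is exactly what (2) is supposed to establish, so the step is circular.

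Without it you have no triangulation of the universal family near $y$, and hence no factorization $\wh{X_{\tri}(\rhobar)}_y\to X_{\rho_y,\Dtri_y}$. Your first inclusion only shows that $R_{\rho_y,\Dtri_y}$ is a quotient of $A=\wh\cO_{X_{\tri}(\rhobar),y}$, which does not rule out extra branches of $A$. Your (4) and (1) are then deduced from (2), so they inherit the gap.

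The same unjustified claim appears in your openness argument, but there it can simply be dropped. The preimage $f^{-1}(U^*_{\tri}(\rhobar))$ is exactly the Zariski open locus $U_0\subseteq U$ where $f^*\delta^{\tri}\in\cT^{\gT}_*\cap\cT^{\gT}_{0,\gB}$. Since $f$ is proper and surjective, this gives $U^*_{\tri}(\rhobar)=X_{\tri}(\rhobar)\setminus f(X'\setminus U_0)$, which is Zariski open.

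\textbf{How the paper avoids this.} The paper never needs control of $f$ at $y$.
\begin{itemize}
\item It proves (1) first, using only $W$. By uniqueness of triangulations, $\pi_{\rhobar}^{-1}(W)$ is the $\gT$-torsor $W^\square$ of rigidifications. Hence $\dim X_{\tri}(\rhobar)=\dim\cS^*_{\gB}(\rhobar)-\dim\gT_L$, and equidimensionality follows from that of $\cS^*_{\gB}(\rhobar)$.
\item For (2) it upgrades exactly your first inclusion. The map $A\br{X_1,\dots,X_d}\to B\cong R_{\rho_y,\Dtri_y}\br{x_1,\dots,x_d}$ (\Cref{local_structure_of_SB}) is surjective, because $R_{\rho_y}\to R_{\rho_y,\Dtri_y}$ is (\Cref{trianguline_def_ring}).
\item $A$ is a domain. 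It is reduced by excellence, and its normalization has a single point over $y$: $\pi_{\rhobar}$ factors through the normalization, and $\pi_{\rhobar}^{-1}(y)$ is a connected $\gT$-torsor by uniqueness of the triangulation at $y$.
\item Since $\dim A+\dim\gT_L=\dim B$ by (1), the surjection is an isomorphism. Openness in (3) is then deduced from this smoothness.
\end{itemize}

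To keep your route you would need an independent proof of the second inclusion. One possibility is to use all points of $f^{-1}(y)$ at once, together with the fact that $X_{\tri}(\rhobar)$ is reduced and $f$ is proper birational. Your proposal does not contain such an argument.
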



\begin{proof}
    We follow closely the proof of \cite[Théorème 2.6]{BHSmodulaire} and \cite[Theorem 6.22]{dedar2020}.
    Let $\rho \colon \GK \to \gG(\cO(X_{\tri}(\rhobar)))$ be the pullback of the universal representation along $X_{\tri}(\rhobar) \to \cX_{\rhobar}$, let $D^\tri$ be the associated $\gG$-$\PG$-module, and $\delta^\tri \colon \gT^{\vee}(K) \to \cO(X_{\tri}(\rhobar))^{\times}$ the pullback of the universal parameter along $X_{\tri}(\rhobar) \to \cT^{\gT}$.
    Then Propositions \ref{blowupLambda} and \ref{blowupr} provide us with data $f \colon X' \to X_{\tri}(\rhobar)$, $u_i$, $\mathscr{L}_i$ and $U \subseteq X'$ satisfying the conditions stated there. By intersecting $U$ with the preimage of the space of parameters in $\cT^{\gT}_*$ under the map $X' \to \cT^{\gT}$, we may assume that $f^*\delta^\tri|_U$ is in $\cT^{\gT}_*$.

    By \textit{loc. cit.}, we have a triangulation $D^{\triangle}$ of $f^* D^{\tri}|_U$ of parameter $f^*\delta^\tri|_U$. 
    Let $U^{\square} \colonequals \Rig(D^{\triangle})$ be the space of rigidifications of $D^{\triangle}$ as defined after \Cref{space_of_rigidifications}, which is a $\gT$-torsor over $U$. The pullback $D^{\triangle}|_{U^{\square}}$ of $D^{\triangle}$ to $U^{\square}$ admits a universal rigidification $\theta \colon \cR_{K,U^{\square}}(\delta|_{U^{\square}}) \to f^*D^{\tri}|_{U^{\square}}$.
    Since $D^{\tri}$ comes from a representation, the universal rigidification corresponds to a map $s \colon U^{\square} \to \cS^{*}_{\gB}(\rhobar)$ such that $\pi_{\rhobar} \circ s$ is equal to the composition $U^{\square} \to U \to X_{\tri}(\rhobar)$.

    Since $f$ is a composition of blow-ups and normalizations, there is a Zariski dense and Zariski open subset $W \subseteq X_{\tri}(\rhobar)$ such that $f^{-1}(W) \subseteq U$ and $f$ induces an isomorphism between $f^{-1}(W)$ and $W$. Let $W^{\square}$ be the space of rigidifications of $D^{\triangle}|_W$ as above.
    The space $\pi_{\rhobar}^{-1}(W)$ represents the functor $\Rig_L \to \Set$, which associates with a rigid analytic space $Y$ over $L$ a quadruple $(g, D^{\triangle}_Y,\theta, \alpha)$, where $g \colon Y \to W$ is a morphism in $\Rig_L$, $D^{\triangle}_Y$ is a $\gB$-$\PG$-module over $\cR_{K,Y}$, $\alpha : D^{\triangle}_Y \times^{\gB} \gG \to g^*\rho|_W$ is an isomorphism of $\gG$-$\PG$-modules and $\theta : \cR_{K,Y}(g^*\delta^\tri) \to D^{\triangle}_Y \times^{\gB} \gT$ is a rigidification. 
    Since $g^*\delta|_W$ is in $\cT^{\gT}_*$, \Cref{uniqueness_Lambda_1} and \Cref{uniqueness_r_1} imply that $(D^{\triangle}_Y, \alpha)$ is the unique triangulation of $g^*D$, hence must coincide with $D^{\triangle}|_W$. We conclude that $W^{\square}$ and $\pi_{\rhobar}^{-1}(W)$ represent the same functor and are therefore canonically isomorphic as spaces over $X_{\tri}(\rhobar)$.

    In the following computation, we use the facts that $W^{\square}$ is a $\gT$-torsor over $W$, that $W \subseteq X_{\tri}(\rhobar)$ is Zariski open and Zariski dense, that $\dim \pi_{\rhobar}^{-1}(W) = \dim \cS_{\gB}^{*}(\rhobar)$, and that $\cS_{\gB}^{*}(\rhobar)$ is equidimensional of the same dimension as $\cS_{\gB}(\rhobar)$ (see \Cref{dimS*}), that we computed in \Cref{dimension_of_SB}. In particular, it will follow that $X_{\tri}(\rhobar)$ is equidimensional.
    The equality $\dim \pi_{\rhobar}^{-1}(W) = \dim \cS^{*}_{\gB}(\rhobar)$ follows by equidimensionality of $\cS^{*}_{\gB}(\rhobar)$ and our assumption that $\cS^{*}_{\gB}(\rhobar) \neq \emptyset$, so that $W \neq \emptyset$ and $\pi_{\rhobar}^{-1}(W) \cong W^{\square} \neq \emptyset$ as a torsor over $W$. We have
    \begin{align*}
        \dim X_{\tri}(\rhobar) &= \dim W^{\square} - \dim \gT_L = \dim \pi_{\rhobar}^{-1}(W) - \dim \gT_L \\
        &= \dim \cS^{*}_{\gB}(\rhobar) - \dim \gT_L = \dim \gG_L + \dim \gB_L \cdot [K : \Qp]
    \end{align*}
    This completes the proof of (1).

    Let $x \in \cS^{*}_{\gB}(\rhobar)$, let $y \colonequals \pi_{\rhobar}(x) \in U^{*}_{\tri}(\rhobar)$, let $A \colonequals \widehat\cO_{X_{\tri}(\rhobar),y}$ and $B \colonequals \widehat \cO_{\cS^{*}_{\gB}(\rhobar),x}$. Let $d \colonequals \dim \gT_L$. We will show that $B$ is a formal power series ring over $A$ in $d$ variables.
    The projection map $X_{\tri}(\rhobar) \to \cX_{\rhobar}$ induces a continuous map $R_{\rho_y} \cong \widehat \cO_{\cX_{\rhobar}, \rho_y} \to \widehat\cO_{X_{\tri}(\rhobar),y}$, where the first isomorphism with the universal framed deformation ring $R_{\rho_y}$ of $\rho_y$ follows from \cite[Theorem 1.5.3]{balaji_thesis}.
    Let $D_y^{\triangle}$ be the triangulation of $\rho_y$ of parameter $\delta_y$, which is unique by \Cref{uniqueness_Lambda_1} and \Cref{uniqueness_r_1}. Then by \Cref{local_structure_of_SB}, we have an isomorphism $B \cong R_{\rho_y, D^{\triangle}_y}\br{x_1, \dots, x_d}$.

    We define a continuous $A$-linear map $f\colon A\br{X_1, \dots, X_d} \to B$ by sending $X_i \mapsto x_i$.
    By \Cref{trianguline_def_ring}, the map $R_{\rho_y} \to R_{\rho_y, D^{\triangle}_y}$ is surjective, so $f$ is surjective.
    By \Cref{dimension_of_SB} and part (1), we have $\dim A + \dim \gT_L = \dim B$, so it is sufficient\footnote{If the map were not an isomorphism the kernel would contain a nonzerodivisor $z$, which would form a regular sequence, so that $\dim A\br{X_1, \dots, X_d}/(z) = \dim A + d-1$.} to show that $A$ is a domain.
    Since $X_{\tri}(\rhobar)$ is reduced, by \cite[\S 7.2.6 Proposition 3]{BG84} $\cO_{X_{\tri}(\rhobar),y}$ is reduced, so by \cite[Theorem 1.1.3]{ConradIrrComp}, \stackcite{07NZ} and \stackcite{07QV} $A$ is reduced. Hence, we need to show that $A$ has a unique minimal prime ideal.
    Let $\widetilde X_{\tri}(\rhobar)$ be the normalization of $X_{\tri}(\rhobar)$.
    It suffices to show that the fiber over $y$ in the normalization is a point.

    Since $\pi_{\rhobar}$  contains a torsor over a Zariski dense and Zariski open subset of its target, $\pi_{\rhobar}$ is dominant.
    By the universal property of the normalization, crucially using the dominance of $\pi_{\rhobar}$, and the fact that $\cS^{*}_{\gB}(\rhobar)$ is smooth, there is a unique factorization $\cS^{*}_{\gB}(\rhobar) \xrightarrow{h} \widetilde X_{\tri}(\rhobar) \xrightarrow{\nu} X_{\tri}(\rhobar)$.  
    As shown earlier, there is a canonical isomorphism of $W^{\square}$ with the subspace $\pi_{\rhobar}^{-1}(W)\subset\cS_{\gB}^{*}(\rhobar)$. In particular, since $y\in U_\tri^*(\rhobar)\subset W$,
    the fiber of $\pi_{\rhobar}$ over $y$ is isomorphic to $\gT_L$ by the uniqueness of the triangulation (\Cref{uniqueness_Lambda_1} and \Cref{uniqueness_r_1}), and it is in particular connected.
    By this factorization, we have that $\nu^{-1}(\{y\})$ is contained in the image of $h$.
    Therefore $\nu^{-1}(\{y\})$ is connected, hence a singleton. This completes the proof of (2).

    Part (3) follows from (2) by the same argument as in the proof of \cite[Théorème 2.6]{BHSmodulaire}.
    Part (4) follows as in \cite[Théorème 2.6]{BHSmodulaire}.
\end{proof}

\subsection{Points on the trianguline variety are trianguline}

We show that, even outside of the smooth locus $U_\tri^*(\rhobar)$, the Galois representation $\rho_x$ attached to a point $(\rho_x,\delta_x)\in X_{\tri}(\overline{\rho})$ admits a triangulation. However the corresponding parameter need not coincide with $\delta_x$ in general, but differs from it by an algebraic character. This is precisely why we introduced $\Lambda$-regularity in place of the the very regularity condition of \cite{dedar2020}: choosing $\Lambda$ to be the Hilbert basis of $X_+^*(\gT^{\der})$ provides the necessary control over this discrepancy.

\begin{proposition}\label{prop:boundary}
    For all $x\in X_{\tri}(\overline{\rho})$, the associated $\PG$-module $D_x$ over $\cR_{K,\kappa(x)}$ admits a triangulation with some parameter $\widetilde{\delta}_x\colon \gT^\vee(K)\to \kappa(x)^\times$ such that $\tilde \delta_x \delta_x^{-1}$ is algebraic.
\end{proposition}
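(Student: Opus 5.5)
The plan is to use $\frakS$-regularity, with $\frakS$ the Hilbert basis of $X^*_+(\gT^{\der})$, together with the blow-up construction of \Cref{blowupLambda}, and then to correct the resulting line maps at a boundary point by saturating them.

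First, by \Cref{independence_of_regularity_condition}, $X_{\tri}(\rhobar)$ is also the Zariski closure of $U^{\frakS}_{\tri}(\rhobar)$. Apply \Cref{blowupLambda} with $\Lambda=\frakS$. This gives $f\colon X'\to X_{\tri}(\rhobar)$, line bundles $\mathscr L_\lambda$, and maps $u_\lambda$ for $\lambda\in\frakS$. Since $f$ is proper and birational, it is surjective. For $x\in X_{\tri}(\rhobar)$, choose $x'\in f^{-1}(x)$ and specialize the maps $u_\lambda$ at $x'$. Because the cokernel of $u_\lambda^\vee$ is killed by a power of $t$, the specialization $u_{\lambda,x'}\colon \cR_{K,\kappa(x')}(\delta_x\circ\lambda^\vee)\to \eta_{D_x}(V_\lambda)$ is nonzero. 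Its saturation is a rank-one saturated submodule $\scrL_\lambda$. By \cite[Lemma 6.2.13]{KPX} this submodule has the form $\cR_{K,\kappa(x')}(\delta_x\circ\lambda^\vee\cdot\prod_\tau\tau^{-k_{\lambda,\tau}})$ with $k_{\lambda,\tau}\ge0$; it is obtained by dividing by $\prod_\tau t_\tau^{k_{\lambda,\tau}}$. This is the same mechanism as in the $\GL_n$ case of \cite[\S 6.3]{KPX}.

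Second, the family $(\scrL_\lambda)_{\lambda\in\frakS}$ should satisfy the conditions in (3) of \Cref{pluckercoordinatesphi-gamma}. The relations hold generically on $X'$. The proof of \Cref{blowupLambda} showed that, on all of $X'$, the images of $u_\mu=\bigotimes u_\lambda^{\otimes n_\lambda}$ land in $\eta(V_\mu)$; the argument used freeness of the quotient and Zariski density. After specializing and saturating, the image of $\bigotimes\scrL_\lambda^{\otimes n_\lambda}$ therefore lies in the saturation of $\operatorname{im}(u_{\mu,x'})$ inside $\eta_{D_x}(V_\mu)$. A tensor product of saturated rank-one submodules of tensor factors is again saturated, since it is an $\cR$-line spanned by a pure tensor. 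The line $\bigotimes\scrL_\lambda^{\otimes n_\lambda}$ is the saturation of $\operatorname{im}(u_{\mu,x'})$, so it is contained in $\eta(V_\mu)$. For a relation $(n,m)\in R_\frakS$, both tensor products are saturations of the same image $u_{\mu,x'}$ — the two maps agree generically, hence everywhere — so they coincide. \Cref{pluckercoordinatesphi-gamma} then yields a triangulation of $D_x$, after a finite extension of scalars, whose parameter $\tilde\delta_x$ satisfies $\tilde\delta_x\circ\lambda^\vee=\delta_x\circ\lambda^\vee\cdot z^{-\mathbf k_\lambda}$ for each $\lambda\in\frakS$.

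Third, I need to show that $\tilde\delta_x\delta_x^{-1}$ is algebraic. Since $\frakS$ generates the monoid $X^*_+(\gT^{\der})$ and therefore the group $X^*(\gT^{\der})$, the ratio is algebraic on $X^*(\gT^{\der})$. The central part $X^*(\gG)$ is handled separately. For $\chi\in X^*(\gG)$, the module $\eta(\chi)$ is rank one and is its own Plücker line, so the two parameters agree there. By \Cref{decomp_weights}, this gives algebraicity on all of $X^*(\gT)$.

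I expect the main obstacle to be the compatibility step. It must hold at an arbitrary non-generic point, where several of the $u_{\lambda,x'}$ may drop saturation simultaneously. The delicate point is that the saturation of the tensor product equals the tensor product of the saturations. This is exactly where using a monoid generating set $\frakS$ matters: it avoids extracting roots, which would be needed if $\Lambda$ only generated over $\Q$ (as with quasi-fundamental weights), where the algebraic discrepancy could fail to be integral.
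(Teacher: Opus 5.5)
Your route is the paper's: take $\Lambda=\frakS$ in \Cref{blowupLambda}, saturate the specialized images $u_{\lambda,x'}$ at a preimage $x'$ of $x$, check condition (3) of \Cref{pluckercoordinatesphi-gamma}, and use that $\frakS$ generates $X^*(\gT^{\der})$ as a group. The containment $\bigotimes_\lambda\scrL_\lambda^{\otimes n_\lambda}\subseteq\eta_{D_x}(V_\mu)$ is handled correctly. The vanishing of $\pi\circ u_\mu$ from the proof of \Cref{blowupLambda} is an identity of maps on all of $X'$, so it specializes to $x'$, and $\eta_{D_x}(V_\mu)$ is a direct summand.

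The relation step, however, does not work as written. Put $u^{(n)}_\mu=\bigotimes u_\lambda^{\otimes n_\lambda}$ and $u^{(m)}_\mu=\bigotimes u_\lambda^{\otimes m_\lambda}$. Their sources are $\cR_{K,X'}(f^*\delta^\tri\circ\mu^\vee)\otimes\bigotimes\mathscr L_\lambda^{\otimes n_\lambda}$ and $\cR_{K,X'}(f^*\delta^\tri\circ\mu^\vee)\otimes\bigotimes\mathscr L_\lambda^{\otimes m_\lambda}$, which are not identified. Even after trivializing the $\mathscr L_\lambda$ locally, at $s\in f^{-1}(U^{\frakS}_\tri(\rhobar))$ the two maps agree only up to a scalar in $\kappa(s)^\times$. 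So what holds on the dense set is equality of \emph{images}, not of maps. Equality of images is not a closed condition: near $x'$ the image of $u^{(n)}_\mu$ need not be saturated, so $\eta(V_\mu)/\im(u^{(n)}_\mu)$ need not be projective there, and the density argument you used for the containment does not transfer. The identity ``saturation of a tensor product equals tensor product of saturations'', which you flag as delicate, is the easy part, since saturated lines are direct summands. The real difficulty is carrying the relation to a non-generic point.

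The paper handles this by inverting $t$. By property (1) of \Cref{blowupLambda}, the cokernel of $u_\lambda^\vee$ is $t$-power torsion on all of $X'$. Hence on an affinoid $\Sp(A)\subseteq X'$ the map $u^{(n)}_\mu[\tfrac{1}{t}]$ is injective with locally free cokernel $W'$ over $\cR_{K,A}[\tfrac{1}{t}]$. The composite of $u^{(m)}_\mu[\tfrac{1}{t}]$ with the projection to $W'$ vanishes at the Zariski-dense set of maximal ideals supplied by \Cref{Zariski_dense_set_trick}, hence vanishes identically. Specializing at $x'$ gives equality of the two lines after inverting $t$, and \cite[Proposition 2.2.2]{BC} turns this into equality of their saturations.

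An alternative that avoids inverting $t$: locally, $u^{(n)}_\mu\wedge u^{(m)}_\mu$ is a map of projective modules into $\bigwedge^2\eta(V_\mu)$ that vanishes on a dense set, hence everywhere. At $x'$ this forces the two images to span the same line over the fraction field, so their saturations agree.

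Your separate treatment of $X^*(\gG)$ is more careful than the paper's one-line conclusion. It does need the fact that $\eta_{D_x}(\chi)\cong\cR_{K,\kappa(x)}(\delta_x\circ\chi^\vee)$ for $\chi\in X^*(\gG)$. This holds on $U_\tri(\rhobar)$ and extends to the Zariski closure by density, comparing $\delta^\tri\circ\chi^\vee$ with the character attached to $\eta_D(\chi)$ by \cite[Theorem 6.2.14]{KPX}.
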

\begin{proof}
    Let $f\colon X'\to X_{\tri}(\overline{\rho})$ be the morphism of \Cref{blowupLambda} applied in the case $\Lambda=\mathfrak{S}$ is the Hilbert basis of $X_+^*(\gT^{\der})$. For any $x'\in f^{-1}(x)$ and all $\lambda\in \mathfrak{S}$,  we have $\eta_{D_{x'}}(V_{\lambda})\cong\eta_{D_x}(V_{\lambda})\otimes_{\kappa(x)}\kappa(x')$, so we may work with $x'$ in what follows.
    \\ Let $u_{\lambda,x'}^\vee \colon \eta_{D_{x'}}(V_{\lambda}^\vee)\to \cR_{K,\kappa(x')}(-\delta_{x'}\circ \lambda^\vee)$ be the specialization at $x'$ of the map $u_{\lambda}^\vee$ from \Cref{blowupLambda}. Since its cokernel is killed by a power of $t$, the map is non-trivial, and so its dual
    $$u_{\lambda,x'} \colon \cR_{K,\kappa(x')}(-\delta_{x'}\circ \lambda^\vee)\to  \eta_{D_{x'}}(V_{\lambda})$$
    is injective. Let $\mathscr{L}^{\sat}_{\lambda,x'}\subset \eta_{D_{x'}}(V_{\lambda})$ denote the saturation of its image. We will show that this collection of lines define a Plücker datum for $D_{x'}$. By the proof of \Cref{blowupLambda}, for any $\mu = \sum_{\lambda \in \mathfrak{S}} n_\lambda \lambda \in X^*_+(\gT^\der)$, the image of $\otimes_{\lambda\in \mathfrak{S}}(\mathscr{L}^{\sat}_{\lambda,x'})^{\otimes n_{\lambda}
    }$ lands in $\eta_{D_x'}(V_{\mu})$. It remains to show that if $\mu=\sum_{\lambda\in \mathfrak{S}}m_{\lambda}\lambda$ is another such  expression, then $\otimes_{\lambda\in \mathfrak{S}}(\mathscr{L}^{\sat}_{\lambda,x'})^{\otimes n_\lambda}=\otimes_{\lambda\in \mathfrak{S}}(\mathscr{L}^{\sat}_{\lambda,x'})^{\otimes m_\lambda}$ as submodules of $\eta_{D_x'}(V_{\mu})$. It suffices to prove this equality after inverting $t$ by \cite[Proposition 2.2.2]{BC}. For this, we may work locally on $X'$ and assume that $X'=\Sp(A)$. Let $W'$ be the cokernel of the map
    $$u_{\mu}[\tfrac{1}{t}] \colon \bigotimes\nolimits_{\lambda \in \Lambda}  \cR_{K,A}(-  (f^*\delta^\tri)\circ\lambda^\vee)^{\otimes n_{\lambda}}[\tfrac{1}{t}] \longrightarrow \eta_{f^*D^\tri}\big(V_{\mu})[\tfrac{1}{t}].$$
    Then $W'$ is a locally free $\cR_{K,A}[\tfrac{1}{t}]$-module. We need to show that $\otimes_{\lambda\in \mathfrak{S}}(\mathscr{L}^{\sat}_{\lambda,x'})^{\otimes m_\lambda}[\tfrac{1}{t}]$ is in the kernel of the projection to $W'$. It suffices to check this on a Zariski dense set of maximal ideals of $\cR_{K,A}[\tfrac{1}{t}]$, and such a set is provided by applying \Cref{Zariski_dense_set_trick} to $S=f^{-1}(U_\tri^{\frakS}(\overline{\rho}))$. This establishes that the  equality of tensor products holds, and we obtain a triangulation of $D_x$. Let $\widetilde \delta_x$ be the corresponding parameter.
    By \cite[Lemma 3.3.4]{BHS19}, $(\widetilde \delta_x \delta_x^{-1}) \circ \lambda^{\vee}$ is algebraic for every $\lambda \in \frakS$.
    Since $\frakS$ generates $X^*(\gT^{\der})$, it follows that $\widetilde \delta_x \delta_x^{-1}$ is algebraic.
    \end{proof}

\begin{remark}\label{rem:boundary}
    The same argument as in \Cref{prop:boundary} would give a $\gG$-analogue of \cite[Corollary 6.3.13]{KPX}, i.e. a triangulinity result over any rigid analytic space equipped with a densely pointwise $\frakS$-regular trianguline $\PG$-module.
\end{remark}

\begin{remark}
The proof of \Cref{prop:boundary} is constructive, in the sense that it produces a parameter $\wtl\delta_x$ and a triangulation of $D_x$ of this parameter rather than just showing that objects with the desired properties exist.
\end{remark}

Keep the notation of \Cref{prop:boundary}. Recall that to a Hodge--Tate representation $\rho\colon\GK\to\gG(L)$ we can attach a Hodge--Tate cocharacter $\varpi_\HT\colon\Gm\to\bT$ via \Cref{def_HT}, and that to an algebraic parameter $\delta\colon\gT^\vee(K)\to L^\times$ we can attach an algebraic cocharacter $\Gm\to\bT$ as in \Cref{algdual}. 

\begin{corollary}\label{lemmaKPX6313}
Let $x=(\rho_x,\delta_x)$ be a point of $X_\tri(\rhobar)$ and let $\widetilde \delta_x$ be the parameter of some triangulation of $\rho_x$.
The following are equivalent:
\begin{enumerate}[label=(\roman*)]
    \item $\rho_x$ is Hodge--Tate;
    \item $\rho_x$ is almost de Rham;
    \item $\delta_x$ is locally algebraic;
    \item $\wtl\delta_x$ is locally algebraic.
\end{enumerate}
Moreover, if any of the above hold, then both $\delta_x$ and $\wtl\delta_x$ are the product of a smooth character with an algebraic character whose dual is conjugate to the Hodge--Tate cocharacter $\varpi_{\HT,x}$.
\end{corollary}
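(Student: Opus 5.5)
The strategy is to pass through \Cref{localgpdR} and \Cref{prop:boundary}. By \Cref{prop:boundary}, $D_x$ admits a triangulation of some parameter, and for any two parameters of triangulations of points of $X_\tri(\rhobar)$ the ratio with $\delta_x$ will be handled as follows.

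First, (iii)$\Leftrightarrow$(iv). For the triangulation produced in \Cref{prop:boundary}, $\wtl\delta_x\delta_x^{-1}$ is algebraic, and algebraic characters are locally algebraic, so the two conditions are equivalent for that parameter. For an arbitrary triangulation of parameter $\wtl\delta_x$, \Cref{localgpdR} applied to $\cM_x=D_x[\tfrac1t]$ with the induced triangulation (which is of character type) shows that local algebraicity of $\wtl\delta_x$ is equivalent to $W_\dR\circ\eta_{\cM_x}$ being almost de Rham. This condition is intrinsic to $\rho_x$, so it is independent of the chosen triangulation. This gives (ii)$\Leftrightarrow$(iv), and (i)$\Leftrightarrow$(iv) likewise from \Cref{localgpdR}, since Hodge--Tate depends only on $\rho_x$ (equivalently on $\cM_x$). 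Combining these with the previous step gives all equivalences.

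For the final statement, assume the equivalent conditions hold and write $\wtl\delta_x=\delta^{\mathrm{sm}}\cdot\delta^{\alg}$, with $\delta^{\mathrm{sm}}$ smooth and $\delta^{\alg}$ algebraic. I would compare weights via the Sen parameter, as in \Cref{Worbit}. For each $V\in\Rep_L(\gG)$ the triangulation gives a filtration of $\eta_{D_x}(V)$ with rank-one graded pieces $\cR(\wtl\delta_x\circ\mu^\vee)$. Their Hodge--Tate weights, by \cite[Proposition 2.9]{BHSmodulaire}, are $\langle\mu,\delta^{\alg}_\tau\rangle$. Hence the multiset of $\tau$-Hodge--Tate weights of $\eta_{D_x}(V)$, namely the weights of $\varpi_{\HT,\tau}$ on $V$, agrees with the multiset $\{\langle\mu,(\delta^{\alg})^\vee_\tau\rangle\}$ weighted by the dimensions of the weight spaces of $V$. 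Two cocharacters of $\gT$ inducing the same characters on the representation ring $\Z[X^*(\gT)]^{\gW}$ (equivalently, the same image in $\frakt\GIT\bW$, cf.\ \Cref{invariantlemmanew} and \Cref{effectonrhoV}) are $\gW$-conjugate, so $(\delta^{\alg})^\vee$ is $\bW$-conjugate to $\varpi_{\HT,x}$. The same holds for $\delta_x$, since $\delta_x=\wtl\delta_x\cdot(\text{algebraic})$ for the \Cref{prop:boundary} triangulation, so $\delta_x$ is again smooth times algebraic; alternatively apply the same weight argument using $\delta_x\circ\lambda^\vee$ for dominant $\lambda$.

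The main obstacle is this last step for $\delta_x$ itself, since $\delta_x$ is not literally a triangulation parameter. The algebraic discrepancy might a priori shift weights, so conjugacy to $\varpi_{\HT,x}$ is not automatic. I would handle it by using the continuity of Sen weights on $X_\tri(\rhobar)$: on the Zariski dense $U_\tri(\rhobar)$, the images of $\wt(\delta)$ and of the Sen parameter in $\frakt\GIT\bW$ agree by \Cref{Worbit}. The two maps $X_\tri(\rhobar)\to\frakt\GIT\bW$ are analytic, so they agree everywhere. This gives that $\wt(\delta_x)$ and $\mathrm d\varpi_{\HT,x}$ have the same image in $\frakt\GIT\bW$, whence $\bW$-conjugacy via \Cref{invariantlemmanew}.
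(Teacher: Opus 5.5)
Your argument follows the paper's proof essentially step by step:
\begin{itemize}
\item (i), (ii) and (iv) via \Cref{localgpdR};
\item (iii)$\Leftrightarrow$(iv) via \Cref{prop:boundary};
\item the Sen-weight comparison of \Cref{Worbit} for $\wtl\delta_x$;
\item for $\delta_x$, extending the agreement between the image of $\wt(\delta)$ and the Sen parameter from a Zariski-dense set to all of $X_\tri(\rhobar)$, then specializing at $x$.
\end{itemize}
Your version of the last step is slightly more robust than the paper's. It uses the characteristic polynomial of the Sen operator at every point of $U_\tri(\rhobar)$, which is valid at any trianguline point by \cite[Proposition 2.9]{BHSmodulaire} and d\'evissage, with no local algebraicity needed. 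The paper instead needs locally algebraic points to be dense near $x$. Drop your aside about running the weight argument with $\delta_x\circ\lambda^\vee$ directly, since $\delta_x$ need not be the parameter of any triangulation.

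There is, however, one step that fails, in your write-up exactly as in the paper: the implication from (ii), (iii) or (iv) to (i). \Cref{localgpdR} obtains (i) by asserting that being Hodge--Tate is stable under extensions, and that is false. Here is a counterexample on $X_\tri(\rhobar)$.
\begin{itemize}
\item Take $\gG=\GL_2$, $K=\Qp$, and the regular parameters $(\unr(1+s),1)$ with $s\to0$.
\item $H^2_{\varphi,\gamma_K}$ vanishes along this family. Hence the family $H^1$ surjects at $s=0$ onto $H^1_{\varphi,\gamma_K}(\cR_{\Qp,L})=\Hom(\cG_{\Qp},L)$.
\item Its torsion accounts only for the unramified line, namely the specialization of the class $(\unr(1+s)-1)/s$.
\item So a family of nonsplit regular trianguline extensions specializes to a point $x=(\rho_x,(1,1))\in X_\tri(\rhobar)$. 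Here $\rho_x$ is a self-extension of the trivial character whose class has nonzero $\log\chi^{\cyc}$-component.
\end{itemize}
At this point $\delta_x$ is smooth and $\rho_x$ is almost de Rham, but its Sen operator is a nonzero nilpotent, so $\rho_x$ is not Hodge--Tate.

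Relatedly, your parenthetical that Hodge--Tate-ness depends only on $\cM_x$ is wrong: it depends on the $\BdR^+$-lattice. In $\BdR\otimes_{\Qp}\rho_x$, with $e_1$ spanning the sub and $e_2$ lifting the quotient, the stable lattice $\BdR^+e_1+t\BdR^+e_2$ has Hodge--Tate reduction $\C_p\oplus\C_p(1)$.

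What both arguments actually prove is (i)$\Rightarrow$(ii)$\Leftrightarrow$(iii)$\Leftrightarrow$(iv). To get a genuine equivalence, (i) should read ``$\rho_x$ has integral Sen weights''. The final assertion only needs almost de Rhamness, which suffices to define $\varpi_{\HT,x}$ via the filtration on $D_\pdR$, so your proof of it stands.
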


\begin{proof}
Since $\rho_x$ admits a triangulation of parameter $\wtl\delta_x$, $(i)$, $(ii)$ and $(iv)$ are equivalent by \Cref{localgpdR}. The equivalence between $(iii)$ and $(iv)$ follows from \Cref{prop:boundary} since the $\frakS$-regular points are Zariski-dense in $X_\tri(\rhobar)$ by \Cref{independence_of_regularity_condition}, so we only need to prove the last statement.

Assume that $\rho_x$ is almost de Rham. Since $\rho_x$ admits a triangulation of parameter $\wtl\delta_x$, by \Cref{Worbit} the weight of $\wtl\delta_x$ is conjugate to the derivative of the Hodge--Tate cocharacter, hence $\wtl\delta_x$ is the product of a smooth character with an algebraic character whose dual is conjugate to the Hodge--Tate cocharacter $\varpi_{\HT,x}$. This gives the last statement about $\wtl\delta_x$.

Assuming $(i)$ again, we prove the last statement about $\delta_x$. 
Let $U=\Spm A\subset X$ be an open affinoid neighborhood of $x$, and let $\delta_U\colon\gT^\vee(K)\to A^\times$ be the corresponding parameter. For an open dense set $S$ of points $y=(\rho_y,\delta_y)\in U$, $\delta_y$ is the parameter of a triangulation $\rho_y$, and moreover for a dense subset of such points, $\delta_y$ is locally algebraic, so that $\rho_y$ is almost de Rham by \Cref{localgpdR}. Therefore, by \Cref{Worbit}, the weight of $\delta_y$ coincides with the derivative of $\varpi_{\HT,y}$ for every $y\in S$. Since $S$ is dense in $U$, the weight of $\delta_U$ coincides with the trace of the Sen operator on $U$. By specializing at $x$, we deduce that the weight of $\delta_x$ coincides with the trace of the Sen operator at $x$. Since $\rho_x$ is almost de Rham, this implies that the weight of $\delta_x$ is the derivative of the Hodge--Tate cocharacter at $x$, i.e. that $\delta_x$ is of the required form.
%
\end{proof}



\subsection{The Weyl group elements attached to a point of the trianguline variety}\label{sec:weyl}

With the above notation, let $x=(\rho_x,\delta_x)$ be a point of $X_\tri(\ovl\rho)$ with $\delta_x$ locally algebraic. We attach to $x$ three cocharacters $\Gm\to\bT$, and two elements of the Weyl group $\bW$ measuring their distance. 

\begin{itemize}
\item[(a)] Since $\rho_x$ is Hodge--Tate, \Cref{def_HT} attaches to it an antidominant Hodge--Tate cocharacter
 $$\varpi_{\HT,x}\colon \Gm\longrightarrow \bT.$$ 

\item[(b)] 
By \Cref{lemmaKPX6313}, $\delta_x$ is the product of 
a smooth character $\delta_{x,0}:\gT^\vee(K)\to\kappa(x)^\times$ 
and an algebraic character $\delta_{x,1}\colon\gT^\vee(K)\to L^\times$. Let $\mu_{x,1}^\vee:\Gm\to\bT$ be the dual of the algebraic homomorphism underlying $\delta_{x,1}$. By \Cref{lemmaKPX6313}, there exists 
an element $\wdelta\in\bW$ such that 
\[ \mu_{x,1}^{\vee}=\Ad(\wdelta)\varpi_{\HT,x}. \]



\item[(c)] The $\PG$-module $D_x$ does not necessarily admit a triangulation of parameter $\delta_x$. 
%
%
However, \Cref{prop:boundary} produces a triangulation of a parameter $\wtl\delta_x\colon\gT^\vee(K)\to\Gm(L)$ such that $\wtl\delta_x\delta_x^{-1}$ is algebraic. 
By \Cref{lemmaKPX6313}, 
we can write $\wtl\delta_x=\wtl\delta_{x,0}\wtl\delta_{x,1}$ with $\wtl\delta_{x,0}$ smooth and $\wtl\delta_{x,1}$ algebraic. Let $\wtl\mu_{x,1}\colon\Gm\to\bT$ be the dual of the algebraic homomorphism underlying $\wtl\delta_{x,1}$. Again by \Cref{lemmaKPX6313}, there exists $w_{\sat}\in\bW$ such that 
\[ \wtl\mu_{x,1}^{\vee}=\Ad(w_{\sat})\varpi_{\HT,x}. \]
\end{itemize}

\begin{remark}
Note that $w_\delta$ only depends on $\delta$, while $w_\sat$ only depends on $\rho$ and $\delta_{x,0}$ (i.e. on the refinement of $\rho$ attached to $\delta$, in the sense of \cite[Definition 4.37]{dedar2020}). \\
The element $w_\delta$ is the analogue of the element $w$ defined before \cite[Lemma 3.7.4]{BHS19}. 
We introduced the notation $w_\delta$ in order to distinguish this element from an arbitrarily chosen $w\in\bW$. 

On the other hand, \Cref{wsatwx} shows that $w_\sat$ coincides with the element of the same name attached to the triangulation of $\rho_x$ of parameter $\wtl\delta_x$ by \Cref{def:wsat} (with $A=L$). The notation $w_\sat$ comes from \cite[Section 4.1]{mowlavi}.

As in \Cref{sec:defx}, we attach to $x$ a pair of flags $x_{\Wtri,\tri},x_{W^+,\HT}\in\bG/\bB(L)$, hence a point of $(\bG/\bB\times \bG/\bB)(L)=\bigsqcup_{w\in \bW}\bG(1,w)(\bB\times \bB)(L)$. By \Cref{wsatwx}, $w_\sat$ is the only element of $\bW$ such that $(x_{\Wtri,\tri},x_{W^+,\HT})\in g(w_\sat^{-1},1)(\bB\times \bB)(L)$ for some $g\in\bG$. In particular, $w_\sat$ is the analogue of the element $w_x$ defined in \cite[before Proposition 3.6.4]{BHS19}. 
\end{remark}

\begin{remark}\label{VX}
Analogously to \cite{BHS19}, we will show that the pair $(\wdelta,\wsat)$ locates the point $x$ over the scheme $X$ from \Cref{sec:Xnew}, namely the image of $x$ in $X$ will lie in $V^{\wsat}\cap X^{\wdelta}$: indeed, $\wsat$ determines the relative position of the Hodge--Tate filtration and the triangulation at $x$ (so that $x\in V^\sat$), while $\wdelta$ does so only over a punctured neighborhood of $x$ (so that $x$ only belongs to the Zariski closure $X^\wdelta$ of $V^{\wdelta}$). We refer to \Cref{three_seven_eight} for a proof of this fact. 
\end{remark}

\begin{definition}\label{def:dom}
We say that $x$ is \emph{dominant} (respectively, \emph{strictly dominant}) if $\mu_{x,1}^{\vee}$ (from (b) above) is dominant (respectively, strictly dominant). 
We say that $x$ is \emph{noncritical} if $w_\sat=1$. 
\end{definition}

\begin{remark}
Thanks to \Cref{wsatwx}, we know that $x$ is noncritical if and only the two flags $x_{\Wtri,\tri},x_{W^+,\HT}$ are in general position. We refer to \cite[Sections 2.2-2.3]{bhssmooth} for a discussion of the situation in the case $\gG=\GL_n$.
\end{remark}

Using our crystallinity criterion stated in \Cref{cryscriterion}, we prove a $\gG$-version of \cite[Lemma 3.12]{bhssmooth}. 
Recall that $\varpi_K$ is a uniformizer of $K$. Recall that $\Omega=\{\omega_1,\ldots,\omega_n\}$ denotes the set of quasi-fundamental weights of $\gG$, and for every $1 \leq i \leq n$ write $\omega_i=\lambda_i\omega_i'$ for a fundamental weight $\omega_i'$ of $\gG^\der$ (so that $\lambda_i\in\Z_{>0}$). Let $\alpha_1,\ldots,\alpha_n$ denote the corresponding simple roots. If $r_i$ is the simple reflection in $\gW$ attached to $\alpha_i$, then $r_i(\omega_i)=\omega_i-\lambda_i\alpha_i$ and $r_i(\omega_j)=\omega_j$ if $i\ne j$ : this follows from the fact that the fundamental weights form a basis of $X^\ast(\gT^\der)$ dual to the coroots. 
If $w\in\gW$ is arbitrary, writing it as a product of simple reflections shows that $w(\omega_i)$ is either $\omega_i-\lambda_i\alpha_i$ (if $r_i$ appears in $w$) or $\omega_i$ (if not).

Recall that the weight $\wt(\delta)$ of a parameter $\delta\colon T^\vee(K)\to L^\times$ is an element of $\frakt$. Given $\bw\in\bW$ and $t\in\frakt$, we write $\bw(t)$ for $\dot\bw(t)$, where $\dot\bw$ is a representative of $\bw$ in $N_\bG(\bT)$, with its adjoint action on $\frakt$.

\begin{lemma}\label{lemma:noncritical}
Assume that 
\begin{equation}\label{ktau} 
\wt_\tau(\delta_x\circ\alpha_i^\vee)>\max\{0, [K:K_0]v_p((\delta_x\circ\omega_i^\vee)(\varpi_K))\}.
\end{equation}
for every $\tau\colon K\into L$ and $1 \leq i \leq n$. 
Then $x$ is crystalline, strictly dominant and noncritical.
\end{lemma}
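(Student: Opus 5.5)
The plan follows \cite[Lemma 3.12]{bhssmooth}: compare the parameter $\delta_x$ with the parameter $\wtl\delta_x$ of an honest triangulation of $D_x$. The key tool is a slope argument on the Plücker lines in the representations $V_{\omega_i}$ of quasi-fundamental highest weight.

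\emph{Step 1 (set-up and strict dominance).} Since \eqref{ktau} forces the weights of $\delta_x\circ\alpha_i^\vee$ to be integers, $\delta_x$ is locally algebraic. By \Cref{lemmaKPX6313}, $\rho_x$ is Hodge--Tate, and we may write $\delta_x=\delta_{x,0}\delta_{x,1}$ and $\wtl\delta_x=\wtl\delta_{x,0}\wtl\delta_{x,1}$ as in \Cref{sec:weyl}. The dual cocharacters are $\mu^\vee_{x,1}=\Ad(\wdelta)\varpi_{\HT,x}$ and $\wtl\mu^\vee_{x,1}=\Ad(w_\sat)\varpi_{\HT,x}$. The hypothesis $\wt_\tau(\delta_x\circ\alpha_i^\vee)>0$ for all $\tau$ and all simple roots says exactly that $\mu^\vee_{x,1}$ is strictly dominant. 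This gives strict dominance, and since $\varpi_{\HT,x}$ is antidominant and regular, it pins down $\wdelta$ as the longest element of $\bW$. Moreover, the two algebraic parts differ by the Weyl group element $\bw\colonequals w_\sat\wdelta^{-1}$, and since $\wtl\delta_x\delta_x^{-1}$ is algebraic by \Cref{prop:boundary}, the smooth parts agree.

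\emph{Step 2 (slope argument, the main step).} By \Cref{pluckercoordinatesphi-gamma}, the triangulation of parameter $\wtl\delta_x$ gives a saturated rank-one submodule $\cR_{K,L}(\wtl\delta_x\circ\omega_i^\vee)\subseteq\eta_{D_x}(V_{\omega_i})$ for each $i$. Since $D_x$ comes from the Galois representation $\rho_x$, the module $\eta_{D_x}(V_{\omega_i})$ is étale. By Kedlaya's slope filtration, every rank-one $\PG$-submodule of it then has slope $\ge 0$, which gives
\[ v_p\big((\wtl\delta_x\circ\omega_i^\vee)(\varpi_K)\big)\ge 0. \]
Using the computation before the lemma, $\bw(\omega_i)$ is either $\omega_i$ or $\omega_i-\lambda_i\alpha_i$ (componentwise in $\tau$). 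In the second case,
\[ \wtl\delta_x\circ\omega_i^\vee=(\delta_x\circ\omega_i^\vee)\cdot z^{-\lambda_i\mathbf k_i}, \qquad \mathbf k_i=\big(\wt_\tau(\delta_x\circ\alpha_i^\vee)\big)_\tau , \]
up to the sign conventions, which must be checked carefully. Evaluating at $\varpi_K$ multiplies the valuation by a factor involving $[K:K_0]^{-1}$. The inequality \eqref{ktau} is designed so that the resulting slope becomes strictly negative, which is a contradiction. Hence $\bw$ fixes every $\omega_i$ for every $\tau$. Since the $\omega_i$ span $X^*(\gT^{\der})_\Q$, this gives $\bw=1$, so $w_\sat=\wdelta$ and $\wtl\delta_x=\delta_x$. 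Consequently $D_x$ is triangulable of parameter $\delta_x$, and by \Cref{wsatwx} the triangulation and Hodge--Tate flags are in general position, which is non-criticality in the sense of \Cref{def:dom}. I expect the main obstacle to be keeping the normalizations consistent: the sign conventions for $\varpi_\HT$ versus the weights of $\delta$, the factor $[K:K_0]$, and the $\tau$-by-$\tau$ bookkeeping of $\bw=(\bw_\tau)$. I would first verify these for $\gG=\GL_2$ against \cite{bhssmooth}.

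\emph{Step 3 (crystallinity).} Apply \Cref{cryscriterion} to the triangulation of parameter $\delta_x$. Conditions (ii) and (iii) there follow from \eqref{ktau} by positivity and integrality of the weights $\wt_\tau(\alpha(\delta^\vee))$, which are nonnegative integer combinations of the simple ones. The point I am least sure of is whether (iii) needs some weight to be at least $2$ in some $\tau$, which may require sharpening \eqref{ktau} or an extra remark. Condition (i) requires $\delta_{x,0}$ to be trivial on $\gT^\vee(\cO_K)$. I would obtain this by twisting away the finite-order character $\delta_{x,0}|_{\gT^\vee(\cO_K)}$ and arguing as in \cite{bhssmooth}; the twisted module is crystalline by \Cref{cryscriterion}, which makes $\rho_x$ potentially crystalline. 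I would then try to show the twist is trivial; if this is not automatic, I would add that hypothesis, which is implicit in the notion of a crystalline point.
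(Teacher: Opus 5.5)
Your argument is the paper's. It uses the slope bound $v_p((\wtl\delta_x\circ\omega_i^\vee)(\varpi_K))\ge 0$ for the Pl\"ucker line of the honest triangulation inside the \'etale module $\eta_{D_x}(V_{\omega_i})$. It compares $\wt(\delta_x)$ and $\wt(\wtl\delta_x)$ by a Weyl element $\bw$ that \eqref{ktau} forces to be trivial. It gets strict dominance from positivity of the weights, and crystallinity from \Cref{cryscriterion}. Several details need repair, and the paper's own proof shares most of them.

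\textbf{Step 2.} The dichotomy $w_\tau(\omega_i)\in\{\omega_i,\omega_i-\lambda_i\alpha_i\}$, taken from the discussion before the lemma, is false in general. For $\GL_3$ one has $\omega_1=2\alpha_1+\alpha_2$ and $\omega_1-w_0(\omega_1)=3\alpha_1+3\alpha_2$. What holds, and suffices, is the following: if $w(\omega_i)\neq\omega_i$, then $\omega_i-w(\omega_i)$ is a nonnegative integral combination of simple roots with positive $\alpha_i$-coefficient. Indeed, the only weight of $V_{\omega_i}$ with zero $\alpha_i$-coefficient relative to $\omega_i$ is $\omega_i$ itself, because $\langle\omega_i,\alpha_j^\vee\rangle=0$ for $j\neq i$. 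Since every $\wt_\tau(\delta_x\circ\alpha_j^\vee)$ is positive, the corresponding $\tau$-term is at least $\wt_\tau(\delta_x\circ\alpha_i^\vee)$, and the contradiction with \eqref{ktau} goes through unchanged.

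\textbf{Step 1.} Condition \eqref{ktau} does not force integrality, let alone local algebraicity of $\delta_x$; it says nothing about the centre. This is harmless, because local algebraicity of $\delta_x$ is the standing hypothesis of \Cref{sec:weyl}.

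\textbf{Noncriticality.} What Step 2 actually proves is $w_{\sat}=\wdelta$, which equals $w_0$ here. Equivalently, $\wtl\delta_x=\delta_x$ and the two flags are in general position. \Cref{def:dom} as printed requires $w_{\sat}=1$, which under the conventions of \Cref{def:wsat} and \Cref{wsatwx} would mean the two flags coincide. You should therefore say explicitly that $w_{\sat}=\wdelta$ is the notion you establish; the paper's proof also stops at $\bw=1$.

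\textbf{Step 3.} Your doubts are well founded. Condition \eqref{ktau} yields neither (i) nor (iii) of \Cref{cryscriterion}, and the crystallinity conclusion is false without them, so they must be added as hypotheses rather than proved.
\begin{itemize}
\item For (i): a ramified finite-order part of $\delta_x|_{\gT^\vee(\cO_K)}$ makes $\rho_x$ potentially crystalline but not crystalline. This happens already for a torus, where \eqref{ktau} is vacuous. So the twist in your last step need not be trivial.
\item For (iii): condition \eqref{ktau} only gives $\wt_\tau(\delta_x\circ\alpha_i^\vee)\ge 1$. Take $\GL_2$ and $K=\Qp$, where $\omega_1=\alpha$. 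A semistable non-crystalline point with $\delta_x=(\varepsilon,1)$, such as the Tate module of a Tate curve, has $\delta_x\circ\alpha^\vee=\varepsilon$ of weight $1$ and slope $0$. It therefore satisfies \eqref{ktau}.
\end{itemize}
The correct statement assumes that $\delta_x|_{\gT^\vee(\cO_K)}$ is algebraic, and that for some $\tau$ one has $\wt_\tau(\delta_x\circ\alpha_i^\vee)\ge 2$ for all $i$. Both hold where the lemma is used in \Cref{acc_property}, since there $\omega(z')$ is algebraic and the weights exceed $C$.
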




\begin{proof}
Let $V_i$ be the representation of $\gG$ of highest weight $\omega_i$. Since $D_\rig^\gG(\rho_x)$ admits a triangulation of parameter $\wtl\delta_x$ and $D_\rig^\gG(\rho_x)\times^\gG\GL(V_i)$ is étale, its subquotient $\cR_{K,L}(\wtl\delta_x\circ\omega_i^\vee)$ must have slope $\ge 0$, i.e.
\begin{equation}\label{vptilde} v_p((\wtl\delta_x\circ\omega_i^\vee)(\varpi_K))\ge 0. \end{equation}
By \Cref{prop:boundary}, $\wtl\delta_x\delta_x^{-1}$ is the algebraic character of weight $\wt(\wtl\delta_x)-\wt(\delta_x)$, and by \Cref{lemmaKPX6313}, $\wt(\delta_x)=\bw(\wt(\wtl\delta_x))$ for an element $\bw=(w_\tau)_\tau\in\bW$, with $w_\tau\in\gW$. Identifying weights with tuples of integers, \eqref{vptilde} implies 
\begin{align*} [K:K_0]v_p(\delta_x\circ\omega_i^\vee(\varpi_K))\ge\wt(\delta_x\circ\omega_i^\vee)-\wt(\wtl\delta_x\circ\omega_i^\vee)
&=\wt(\delta_x\circ\omega_i^\vee)-\bw(\wt(\delta_x\circ\omega_i^\vee)) \\ 
=\sum_\tau(\wt_\tau(\delta_x\circ\omega_i^\vee)-\wt_\tau(\delta_x\circ\omega_i^\vee)\circ w_\tau) &=
\sum_\tau\wt_\tau(\delta_x\circ(\omega_i-w_\tau(\omega_i))^\vee). \end{align*}
By the discussion before the lemma, $\omega_i-w_\tau(\omega_i)$ is either 0 or $\lambda_i\alpha_i$, so that if the reflection $r_i$ appears in $w_\tau$ the above equation amounts to 
\[ [K:K_0]v_p(\delta_x\circ\omega_i^\vee(\varpi_K))\ge\sum_\tau\lambda_i\wt_\tau(\delta_x\circ\alpha_i^\vee). \] 
By \eqref{ktau}, each term in the sum on the right-hand side is strictly positive and larger than the left-hand side, which leads us to a contradiction. We conclude that, for no choice of $i$ and $\tau$, the reflection $r_i$ appears in $w_\tau$, hence that $\bw$ is trivial.

If $\alpha$ is a positive root of $(\gG,\gB)$, then the $\tau$-weight of $\delta\circ\alpha^\vee$ is a linear combination of the $\wt_\tau(\delta_x\circ\alpha_i^\vee)$ with nonnegative coefficients, not all zero. Since each such $\tau$-weight is strictly positive by \eqref{ktau}, $x$ is strictly dominant. For the same reason, if $D^\triangle$ is a triangulation of $D$ of parameter $\delta$ and $\gU\subseteq\gB$ is the unipotent radical, then the Hodge--Tate weights of $\eta_{\Dtri}(\Lie(\gU))$ are slightly positive, hence $\rho$ is crystalline by \Cref{cryscriterion}.
\end{proof}

\subsection{Local irreducibility of the trianguline variety}\label{sec:locirr}


We combine the constructions of the previous sections to study the local geometry of $X_\tri(\rhobar)$. By \Cref{thm_equidimensional} (2), the open subspaces of the form $U_\tri^*(\rhobar)$ are smooth, so that the statements that we prove are only interesting for points in their complement.

Let $x=(\rho,\delta)$ be an element of $X_\tri(\rhobar)$ defined over a field $\kappa(x)$. We make the following assumptions on $x$, analogous to those in \cite[Remark 3.7.9]{BHS19}: 
\begin{enumerate}[label=\textup{(Reg)}]
    \item $\delta$ is locally algebraic and belongs to one of the sets $\cT^{\gT}_{*}(L)$ for $*\in\{\Lambda,r\}$, and the Hodge--Tate cocharacter $\varpi_{\HT}$ associated to $\rho$ is regular.\label{reg_hyp}
\end{enumerate}

We let $D$ be the $\gG$-$\PG$-module over $\cR_{K,L}$ whose corresponding fiber functor is $\eta_D=D_{\rig}\circ \eta_{\rho}$, and we let $\cM \colonequals D[\tfrac{1}{t}]$. By \Cref{prop:boundary}, $D$ admits a triangulation $D^{\triangle}$ of parameter $\wtl\delta$ such that $\wtl\delta\delta^{-1}$ is algebraic, and we let $\Mtri\colonequals D^{\triangle}[\tfrac{1}{t}]$. Such a triangulation is unique by  \Cref{uniqueness_Lambda_1} and \Cref{uniqueness_r_1}.

By \cite[Lemma 3.11]{defG} (compare \cite[Lemma 2.3.3 and Proposition 2.3.5]{kisinmoduli}), there is a canonical isomorphism of formal schemes
\begin{equation}\label{completerho} \wh \cX_{\ovl\rho,\rho}\cong X_\rho, \end{equation}
where $X_{\rho}$ is defined in \Cref{defXrho}. Explicitly, for $A\in\Art_{\kappa(x)}$, an $A$-point of $\wh\cX_{\ovl\rho,\rho}(A)$ corresponds to a map $\Spec A \to X_{\ovl\rho}$ sending the point $\lvert\Spec A\rvert$ to $\rho$, hence to a continuous representation $\rho_A\colon \GK\to \gG(A)$ that reduces to $\rho$ modulo $\mathfrak{m}_A$, which amounts to an $A$-point of $X_\rho$. 


The projection $X_\tri(\ovl\rho)\to\cX_{\ovl\rho}$ induces a morphism between formal completions
\begin{equation}\label{iotax} \iota_x \colon \wh{X_\tri(\ovl\rho)}_x\longrightarrow\wh\cX_{\ovl\rho,\rho}\cong X_\rho. \end{equation}

We prove the following $\gG$-valued version of \cite[Proposition 3.7.2]{BHS19}.

\begin{proposition}\label{factor_over_trianguline_deformations}
The morphism $\iota_x\colon \wh{X_\tri(\ovl\rho)}_x\to X_\rho$ factors through the closed immersion $X_{\rho,\cM^\triangle}\to X_\rho$, thus inducing a morphism 
\begin{align}
    \iota_x^\triangle \colon \wh{X_\tri(\ovl\rho)}_x\longrightarrow X_{\rho,\cM^\triangle}. \label{iotaxtri}
\end{align}
\end{proposition}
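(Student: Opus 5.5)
We follow the strategy of \cite[Proposition 3.7.2]{BHS19}, replacing the triangulation of a $\GL_n$-module by its Pl\"ucker datum. Since $X_{\rho,\cM^\triangle}\to X_\rho$ is a closed immersion, with uniqueness of deforming triangulations given by \Cref{uniqueness_Lambda_2} and \Cref{uniqueness_r_2}, the factorization is a pointwise condition: for every $A\in\Art_{\kappa(x)}$ and every $A$-point $\Spec A\to\wh{X_\tri(\ovl\rho)}_x$, the $\gG$-$\PG$-module $\cM_A=D_{\rho_A}[\tfrac1t]$ must admit a triangulation over $\cR_{K,A}[\tfrac1t]$ deforming $\Mtri$. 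Any $A$-point factors through $\Spec \cO_{X'',x''}/\frakm^n$ for a suitable point $x''$ of a finite cover, so we may construct the triangulation globally on a neighbourhood and then restrict to infinitesimal thickenings.

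The first step is to pass to the proper birational cover $f\colon X'\to X_\tri(\rhobar)$ of \Cref{blowupLambda} (case $*=\Lambda$) or \Cref{blowupr} (case $*=r$). For $x'\in f^{-1}(x)$, the completion $\wh{X'}_{x'}$ surjects onto $\wh{X_\tri(\rhobar)}_x$ in the scheme-theoretic sense, at least after taking the union over the finitely many points of the fibre. This holds because $f$ is proper and birational and $X_\tri(\rhobar)$ is reduced. Since $X_{\rho,\Mtri}$ is a closed subfunctor, it then suffices to show that the composite $\wh{X'}_{x'}\to X_\rho$ factors through $X_{\rho,\Mtri}$. We argue as in \cite{BHS19}, checking this on the Artinian quotients of $\wh\cO_{X',x'}$ and using that the ideal cutting out $X_{\rho,\Mtri}$ pulls back to zero.

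The second step works on an affinoid neighbourhood $\Sp(B)$ of $x'$. The maps $u_\lambda$ of \Cref{blowupLambda} have cokernels of their duals killed by a power of $t$, so after inverting $t$ they become isomorphisms onto saturated rank-one submodules $\scrL_\lambda[\tfrac1t]\subseteq\eta_{f^*D}(V_\lambda)[\tfrac1t]$ over all of $\Sp(B)$. By \cite[Proposition 2.2.2]{BC}, saturation after inverting $t$ is checked on the whole neighbourhood. These submodules form a Pl\"ucker datum over $\cR_{K,B}[\tfrac1t]$: this is verified on the Zariski dense set $S=f^{-1}(U_\tri^\Lambda(\rhobar))$ via \Cref{Zariski_dense_set_trick}, exactly as in the proof of \Cref{prop:boundary}. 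Base changing to $B/\frakm_{x'}^n$ and using \Cref{pluckercoordinatesphi-gamma}, we obtain a triangulation of $\cM_A$ over $\cR_{K,A}[\tfrac1t]$ with character-type parameter $f^*\delta$ twisted by an algebraic character. We then check that its reduction modulo $\frakm_A$ is a triangulation of $\cM$. The triangulation of $\cM$ deduced from \Cref{prop:boundary} was constructed from the same maps $u_\lambda$ and agrees after inverting $t$, so by uniqueness it coincides with $\Mtri$. The $r$-regular case is handled the same way using \cite[Theorem 6.3.10]{KPX} and the pushout diagram \eqref{pushoutdiag}.

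The main obstacle is the second step, specifically that the line bundles of the Pl\"ucker datum become saturated after inverting $t$ uniformly on a neighbourhood of $x'$, and not only on $U$. The cokernel statement of \Cref{blowupLambda}(1) must be exploited after inverting $t$. We may also need to shrink $X'$ further to make the Pl\"ucker relations hold at $x'$. For the relations we would first try density plus freeness of the quotient $W'$ after inverting $t$, as in \Cref{prop:boundary}.
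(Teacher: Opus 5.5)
Your proposal is correct and follows the paper's proof. You pass to the cover $f\colon X'\to X_\tri(\rhobar)$ of \Cref{blowupLambda} or \Cref{blowupr} and produce a triangulation of $f^*D[\tfrac1t]$ of parameter $f^*\delta$ on an affinoid neighbourhood of a preimage $x'$; combining \Cref{blowupLambda}(1) after inverting $t$ with the density argument of \Cref{prop:boundary} is what justifies this when $x'$ lies outside the open set $U$, a step the paper leaves implicit. You then pull back to Artinian points, identify the reduction with $\Mtri$ by uniqueness, and descend along the closed immersion $X_{\rho,\Mtri}\to X_\rho$.

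Your reduction step has two inaccuracies. An $A$-point of $\wh{X_\tri(\rhobar)}_x$ need not lift to $X'$ or to any finite cover. The fibre $f^{-1}(x)$ of a composition of blowups need not be finite. What you should use is the injectivity of $\wh\cO_{X_\tri(\rhobar),x}\to\prod_{x'\in f^{-1}(x)}\wh\cO_{X',x'}$ over all points of the fibre, which follows from $X_\tri(\rhobar)$ being reduced and $f$ being proper birational. Your second step already covers this, since it works for every $x'$.
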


\begin{proof}
Let $f \colon X^\prime\to X$ be the map produced by \Cref{blowupLambda} and \Cref{blowupr}. 
Let $U$ be an open affinoid neighborhood of $x$ in $X_\tri(\ovl\rho)$, so that $\wh{X_\tri(\ovl\rho)}_x\cong \wh U_x$. Let $x^\prime\in X^\prime(\kappa(x))$ be a preimage of $x$ under $f$, and let $V^\prime$ be an open affinoid neighborhood of $x^\prime$ contained in $f^{-1}(U)$ and satisfying the properties of \Cref{blowupLambda,blowupr}. 
Set $D'=f^\ast(D\vert_{U})\vert_{V'}$ and $\cM'=D'[\tfrac{1}{t}]$: by our choice of $V'$, $\cM'$ admits a triangulation $\cM^{\triangle,'}$ of parameter $(f^\ast\delta)\vert_{V'}$. At $x'$, $\cM^{\triangle,'}$ specializes to the triangulation of $\cM'_{x'}\cong\cM_x$ of parameter $\delta$, which is unique by assumption \ref{reg_hyp} and \Cref{uniqueness_Lambda_1}, \Cref{uniqueness_r_1}. 
Let $\iota : \Spm A\to V^\prime$ be a rigid map with $A\in\Art_L$, mapping the unique point of $\Spm A$ to $x'$. In particular, $\iota$ factors through a map $\Spm A\to \wh{V'_{x'}}$, and via pull-back along $\Spm A\to \wh{V'_{x'}}\to U_x\to \wh{X_\tri(\ovl\rho)}_x\to X_\rho$, $\Spm A$ comes equipped with a lift $\rho_A$ of $\rho$. Pulling back the triangulation of $\cM'$ along $\iota$ gives a triangulation $\Mtri_A$ of $\iota^\ast \cM'=D_{\rig,A}^\gG(\rho_A)$ that reduces to $\cM_x$ modulo $\mathfrak{m}_A$. In particular, mapping $\wh{V'_{x'}}(A)$ to $(\rho_A,\Mtri_A)$ defines a morphisms of groupoids $\wh{V'_{x'}}\to X_{\rho,\Mtri}$. Now $\wh{V'_{x'}}\to X_{\rho,\Mtri}\to X_\rho$ factors through the dominant map $\wh{V'_{x'}}\to \wh U_x$, and since $X_{\rho,\Mtri}\to X_\rho$ is a closed immersion by base change from \Cref{triimmersion}, we conclude that $\wh U_x\to X_\rho$ also factors through a map $\wh U_x\to X_{\rho,\Mtri}$.
%
\end{proof}

The following is proved exactly as \cite[Proposition 3.7.3]{BHS19}.

\begin{proposition}\label{373}
    The maps $\iota_x$ and $\iota_x^\triangle$ from \eqref{iotax}, \eqref{iotaxtri} are closed immersions.
\end{proposition}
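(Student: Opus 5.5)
We follow the strategy of \cite[Proposition 3.7.3]{BHS19}. Since $X_{\rho,\Mtri}\to X_\rho$ is a closed immersion (base change of \Cref{triimmersion}, using \ref{reg_hyp}) and $\iota_x=(X_{\rho,\Mtri}\to X_\rho)\circ\iota_x^\triangle$ by \Cref{factor_over_trianguline_deformations}, it suffices to prove that $\iota_x$ is a closed immersion. The closed immersion property for $\iota_x^\triangle$ then follows formally, because a morphism whose composite with a monomorphism is a closed immersion is itself a closed immersion.

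To show that $\iota_x$ is a closed immersion, we pass to complete local rings. Put $\kappa=\kappa(x)$. By \eqref{completerho}, the target $X_\rho$ is pro-represented by $\widehat\cO_{\cX_{\rhobar},\rho}$, and the source is pro-represented by $\widehat\cO_{X_\tri(\rhobar),x}$. The map $\iota_x$ is induced by the closed embedding $X_\tri(\rhobar)\hookrightarrow\cX_{\rhobar}\times\cT^\gT$ followed by the projection to $\cX_{\rhobar}$. On completed local rings, the closed embedding gives a surjection
\[
\widehat\cO_{\cX_{\rhobar},\rho}\wh\otimes_\kappa\widehat\cO_{\cT^\gT,\delta}\longrightarrow \widehat\cO_{X_\tri(\rhobar),x}.
\]
It therefore remains to show that the image of $\widehat\cO_{\cT^\gT,\delta}$ lies in the image of $\widehat\cO_{\cX_{\rhobar},\rho}$. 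Equivalently, on $A$-points for $A\in\Art_\kappa$, the character $\delta_A$ attached to a point of $\wh{X_\tri(\rhobar)}_x(A)$ must be determined functorially by $\rho_A$. Once this holds, surjectivity of $\widehat\cO_{\cX_{\rhobar},\rho}\to\widehat\cO_{X_\tri(\rhobar),x}$ follows, which is exactly the closed immersion property.

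To obtain this determination, we use the factorization $\iota_x^\triangle$. For an $A$-point $y$ of $\wh{X_\tri(\rhobar)}_x$, the construction in \Cref{factor_over_trianguline_deformations} produces a triangulation $\Mtri_A$ of $D_{\rig}(\rho_A)[\tfrac1t]$ whose parameter is $\delta_A$, the restriction of the universal $f^\ast\delta^\tri$. This is checked first on $\wh{V'_{x'}}$, and then descended to $\wh U_x$ along the dominant map $\wh{V'_{x'}}\to\wh U_x$. By the uniqueness results \Cref{uniqueness_Lambda_2} and \Cref{uniqueness_r_2} (in the version with $t$ inverted, using $\delta\in\cT^\gT_{0,*}$), a triangulation of $D_{\rig}(\rho_A)[\tfrac1t]$ deforming $\Mtri$ is unique. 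Its parameter is recovered as $\omega_\delta(\Mtri_A)$ via \Cref{class_rk_one_after_inv_t}. Hence $\delta_A=\omega_\delta(\Mtri_A)$ depends only on $\rho_A$. Concretely, the map $\wh{X_\tri(\rhobar)}_x\to\wh\cT^\gT_\delta$ factors as
\[
\wh{X_\tri(\rhobar)}_x\xrightarrow{\ \iota_x^\triangle\ }X_{\rho,\Mtri}\longrightarrow X_{\Mtri}\xrightarrow{\ \omega_\delta\ }\wh\cT^\gT_\delta ,
\]
and $X_{\rho,\Mtri}\to X_\rho$ is a monomorphism.

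The main obstacle is exactly this last compatibility: the parameter of the interpolated triangulation must agree with the universal $\delta$ on the nose, not only up to an algebraic twist as in \Cref{prop:boundary}. We plan to handle it by working on the neighbourhood $V'$ of \Cref{blowupLambda} and \Cref{blowupr}, where the triangulation is given with parameter $f^\ast\delta^\tri|_{V'}$ by construction. The point to check is that the map $\wh{V'_{x'}}\to\wh U_x$ is surjective on the relevant rings, so that the resulting factorization descends. For this we rely on $x$ lying in the locus where the parameter is regular, and on the reducedness of $X_\tri(\rhobar)$.
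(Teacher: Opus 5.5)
Your argument is the one the paper uses (it defers to \cite[Proposition 3.7.3]{BHS19}). The formal germ $\wh{X_\tri(\rhobar)}_x$ is a closed formal subscheme of $X_\rho\times\wh\cT^\gT_\delta$. Its projection to $\wh\cT^\gT_\delta$ is $\omega_\delta\circ\iota_x^\triangle$ (after forgetting $\rho_A$), by the construction in \Cref{factor_over_trianguline_deformations}. Since $X_{\rho,\Mtri}\to X_\rho$ is a closed immersion, $R_\rho\to\wh\cO_{X_\tri(\rhobar),x}$ is surjective, and both maps are closed immersions. Whether you treat $\iota_x$ or $\iota_x^\triangle$ first is immaterial.

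Your last paragraph needs one correction. To descend the equality $\omega'=\omega_\delta\circ\iota_x^\triangle$ from $\wh{V'_{x'}}$ to $\wh U_x$, you need the ring map $\wh\cO_{U,x}\to\wh\cO_{V',x'}$ to be \emph{injective}; this is the dominance invoked in \Cref{factor_over_trianguline_deformations}. Surjectivity is neither useful nor true in general. It would make $\wh{V'_{x'}}$ a closed formal subscheme of $\wh U_x$, and an identity of maps checked on a closed subscheme does not descend. It also fails at points of an exceptional divisor of a blowup.

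Injectivity comes from $X_\tri(\rhobar)$ being reduced and $f$ being proper, surjective and birational. Strictly, you should use $\wh\cO_{U,x}\hookrightarrow\prod_{x'\in f^{-1}(x)}\wh\cO_{X',x'}$, because a single $x'$ may only see one branch if $x$ lies on several. The same construction works at every $x'$ in the fibre.

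The regularity of $\delta$ plays no role in this dominance. It enters only through uniqueness of triangulations, i.e.\ through $X_{\rho,\Mtri}\to X_\rho$ being a monomorphism. That is what guarantees that $\iota_x^\triangle\circ\bigl(\wh{V'_{x'}}\to\wh U_x\bigr)$ coincides with the map defined by the triangulation of parameter $f^\ast\delta$ on $V'$.
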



From now on, we assume that $x=(\rho,\delta)$ is a point of $X_\tri(\ovl\rho)$ such that $\delta$ locally algebraic. Using the map $\Theta$ defined in \Cref{defrelpos}, we define the composition
$$ \Theta_x\colon \wh{X_{\tri}(\rhobar)}_x \xrightarrow{\iota_x^{\triangle}} X_{\rho,\Mtri} \longrightarrow X_{\eta_{\rho},\Mtri}\cong X_{D,\Mtri} \xrightarrow{\Theta} \wh{T}_{(0,0)}. $$
For $w\in\bW$, let $\wh T_{(0,0)}^w$ be the completion of $T^w$ at $(0,0)$. Let $\wdelta\in\bW$ be the element attached to $(x,\delta)$ in \Cref{sec:weyl}. 

\begin{lemma}[{cf. \cite[Lemma 3.7.4]{BHS19}}]\label{Twdelta}
The map $\Theta_x$ factors through $\wh T^\wdelta_{(0,0)}\into \wh T_{(0,0)}$.
\end{lemma}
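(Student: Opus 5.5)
The plan follows \cite[Lemma 3.7.4]{BHS19}. Since $T^\wdelta$ is an irreducible component of $T=\frakt\times_{\frakt\GIT\bW}\frakt$, and $\wh{X_\tri(\rhobar)}_x$ is reduced (it is the completion of a reduced excellent space at $x$), it suffices to show the factorization on a Zariski dense set of points of the source. Concretely, I would show that the ideal of $\wh T^\wdelta_{(0,0)}$ pulls back to zero in $\wh\cO_{X_\tri(\rhobar),x}$. I expect to obtain this by working on an affinoid neighbourhood $U$ of $x$ and checking it on a dense set of points of $U$ near $x$.

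The first step is to identify the two components of $\Theta_x$ on $A$-points. Let $(\rho_A,\Mtri_A)$ be the image of an $A$-point under $\iota_x^\triangle$, with $\Mtri_A$ of parameter $\wtl\delta_A$ deforming $\wtl\delta$. By \Cref{BHS3.3.9}, $\kappa_1\circ\Theta$ equals $\wt(\wtl\delta_A)-\wt(\wtl\delta)$. The second component $\kappa_2$ comes from the Hodge--Tate flag. It is the element of $\frakt(A)$ lying over the Sen parameter $y_{W^+_A,\Sen}$, normalized via the regular cocharacter $\varpi_\HT$, and read off from the Sen weights through $x_{W_A^+,\HT}$ and $N_{W_A}$. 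By construction both components have the same image in $\frakt\GIT\bW$, so the real task is to pin down which $\bW$-translate relates them.

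The second step compares $\wtl\delta_A$ with the universal parameter $\delta_A$ pulled back from $\cT^\gT$. The triangulation over the neighbourhood $V'$ from \Cref{blowupLambda}/\Cref{blowupr} has parameter $f^\ast\delta$, and it specializes at $x$ to the triangulation of parameter $\delta$. Under \ref{reg_hyp} we may therefore take $\wtl\delta_A=\delta_A$ on $A$-points coming from $\wh{V'}_{x'}$. Hence $\kappa_1\circ\Theta_x=\wt(\delta_A)-\wt(\delta)$.

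The third step uses density. On $U$, the function $\wt(\delta_U)$ on one side, and the Sen weights (the coefficients of the Sen polynomial, i.e.\ the image in $\frakt\GIT\bW$) on the other, are analytic. Take the dense set of points $y$ of $U$ that lie in $U_\tri^*(\rhobar)$, are locally algebraic with regular Hodge--Tate cocharacter, and for which $\delta_y$ has algebraic part $\Ad(\wdelta)\varpi_{\HT,y}$. For such $y$, \Cref{lemmaKPX6313} and \Cref{wsatwx} give $\wt(\delta_y)=\Ad(\wdelta)(\mathrm d\varpi_{\HT,y})$, with the same $\wdelta$ as at $x$, by continuity of the discrete invariant. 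Thus the pair $(\wt(\delta_y),\mathrm d\varpi_{\HT,y})$ lies in $T^\wdelta$. I then need to transport this to the formal neighbourhood. The point is to express the second component of $\Theta_x$ as a genuinely analytic function on $U$, namely the ordered Sen weights, which are well-defined near $x$ because $\varpi_\HT$ is regular, so the weights are distinct. Then $(\wt(\delta_U),\text{ordered Sen weights})$ is an analytic map $U\to T$ sending a dense set into the closed subset $T^\wdelta$, hence all of $U$ into it, and completing at $x$ gives the claim.

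The main obstacle I expect is this last identification. One must check that the second coordinate $\kappa_2$ of $\Theta_x$, defined via the deformed Hodge--Tate flag in $X$, really agrees with the analytic ordered-Sen-weight function on $U$ after completion. This should use \Cref{Worbit}, \Cref{invariantlemmanew}, and the regularity of $\varpi_\HT$ to lift uniquely from $\frakt\GIT\bW$ to $\frakt$ near $\mathrm d\varpi_\HT$. The rest is bookkeeping, exactly as in \cite{BHS19}.
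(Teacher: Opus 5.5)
Your first two steps agree with the paper: via \Cref{BHS3.3.9}, the first coordinate is $\wt(\delta_A)-\wt(\delta)$, with $\delta_A$ the pulled-back universal parameter. Step 3, however, rests on a false claim. You assert that on a dense set of nearby points $\wt(\delta_y)=\Ad(\wdelta)(\mathrm d\varpi_{\HT,y})$, with the same $\wdelta$ as at $x$, ``by continuity of the discrete invariant''. But $w_{\delta_y}$ is defined relative to the antidominant cocharacter $\varpi_{\HT,y}$, that is, by the real ordering of integer weights. That ordering is not $p$-adically locally constant: for $\GL_2$ and $K=\Qp$, the weights $(k_1+ap^N,k_2+bp^N)$ are $p$-adically close to $(k_1,k_2)$ for both signs of $(k_1+ap^N)-(k_2+bp^N)$.

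What is true, and is all you need, is $\Ad(\wdelta^{-1})\wt(\delta_y)=s(y)$. Here $s$ is the $p$-adically continuous lift of the Sen parameter through $\frakt\to\frakt\GIT\bW$ near the regular point $\mathrm d\varpi_{\HT,x}$. In general $s(y)\neq\mathrm d\varpi_{\HT,y}$, and your argument conflates the two. Phrased this way, you need no dense set with a prescribed Weyl element. Density is used only to show that the Sen parameter on $U$ is the image of $\wt(\delta_U)$, which is the input \cite[Lemma 3.7.6]{BHS19} in the paper. The Weyl element is then fixed at $x$ alone, as the paper does by applying \Cref{invariantlemmanew} near $\mathrm d\varpi_\HT$ and reducing modulo $\frakm_A$.

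The more serious gap is that what you call the main obstacle is the actual content of the lemma, and you do not supply it. You need, for every $A\in\Art_L$, that the image of $\kappa_{W^+}(W^+_A)+\mathrm d\varpi_\HT$ in $(\frakt\GIT\bW)(A)$ is the Sen parameter $y_{W^+_A,\Sen}$. Equivalently, the trace of the Sen operator on $\eta_{W^+_A}(V)$ must be computable from $N_{W_A}$ and the deformed Hodge--Tate flag $x_{W^+_A,\HT}$.

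This is a genuinely Artinian statement about $\gG$-$B_\dR^+$-representations, and your proposed tools do not give it. \Cref{Worbit} only treats $A=L$, and \Cref{invariantlemmanew} only gives uniqueness of lifts. Density on $U$ cannot reach it either: $x_{W^+_A,\HT}$ and $N_{W_A}$ exist only along almost de Rham deformations, not on the family over $U$. The paper takes this step from \cite[Lemma 3.7.5]{BHS19}. Without it, you have not identified your analytic map on $U$ with $\Theta_x$ on the formal neighbourhood, so the completion argument does not close.
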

\begin{proof}
    We consider the compositions
 \begin{align*} \Theta_{x,\Wtri} &\colon \wh{X_{\tri}(\rhobar)}_x \xrightarrow{\iota_x^{\triangle}} X_{\rho,\Mtri} \to X_{\eta_{\rho},\Mtri}\cong X_{D,\Mtri} \to X_{W^+,\Wtri} \to X_{\Wtri} \xrightarrow{\kappa_{\Wtri}} \wh{\mathfrak{t}}, \\
     \Theta_{x,W^+} &\colon \wh{X_{\tri}(\rhobar)}_x \xrightarrow{\iota_x^{\triangle}} X_{\rho,\Mtri} \to X_{\eta_{\rho},\Mtri}\cong X_{D,\Mtri} \to X_{W^+,\Wtri} \to X_{W^+} \xrightarrow{\kappa_{W^+}} \wh{\mathfrak{t}}. \end{align*}
     Then by definition of $T^{w_{\delta}}$, we need to show that $\Theta_{x,W^+}=\Ad(w_{\delta}^{-1})\Theta_{x,\Wtri}$.

     Let $A\in \Art_L$, and $x_A\in \wh{X_{\tri}(\rhobar)}_x(A)$. Let $(W_A^+,\Wtri_A)$ be the corresponding point in $X_{W^+,\Wtri}(A)$. We let $\delta_A=w_\delta(x_A)$. Then by \Cref{BHS3.3.9}, we have $\Theta_{x,\Wtri}(x_A)=\wt(\delta_A)-\wt(\delta)$.

     Consider the $A$-points $y_1\colonequals\kappa_\Wtri(\Wtri_A)+\wt(\delta)$ and $y_2\colonequals\kappa_{W^+}(W_A^+)+\mathrm{d}\varpi_\HT$ of $\frakt$. Let $\ovl y_1,\ovl y_2$ be their images in $(\frakt\GIT\bW)(A)$, corresponding to two maps $\pi_1,\pi_2:\cR(\gG)\to A\otimes_{\Qp}K$. The map $\pi_1$ sends $[V,\rho_V]$ to $\tr(\rho_V(N_{W_A})+\wt(\delta))=\tr(\mathrm{d}\rho_V(\wt(\delta_A)))$, where the equality follows from \cite[Lemma 3.3.6 (ii)]{BHS19}. The map $\pi_2$ sends $[V,\rho_V]$ to $\tr(\rho_V(N_{W_A})+\pi_{W^+,\Sen}(V))$ coincides with $\pi_{W_A^+,\Sen}(V)$ by \cite[Lemma 3.7.5]{BHS19}, and the latter equals $\tr(\mathrm{d}\rho_V(\wt(\delta_A)))$ by \cite[Lemma 3.7.6]{BHS19}. 
Therefore $\overline{y}_1=\overline{y}_2$, which by \Cref{invariantlemmanew}, implies that there exists there exists $w\in\bW$ such that $\Ad(w)y_1=y_2$. Reducing modulo $\frakm_A$ gives $\Ad(w_\delta)(\wt(\delta))=\mathrm{d}\varpi_\HT$, so that $w=w_{\delta}^{-1}$. 
\end{proof}

We prove the following description of the trianguline variety at $x$.

\begin{proposition}[{cf. \cite[Corollary 3.7.8]{BHS19}}]\label{three_seven_eight}
    The closed immersion $\iota_x^\triangle \colon \wh{X_{\tri}(\rhobar)}_x \hookrightarrow X_{\rho,\Mtri}$ induces an isomorphism $\wh{X_{\tri}(\rhobar)}_x \eqto X_{\rho,\Mtri}^\wdelta$. In particular, $x\in V^{\wsat}\cap X^{\wdelta}$.
\end{proposition}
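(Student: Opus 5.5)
The strategy mirrors \cite[Corollary 3.7.8]{BHS19}: first show that $\iota_x^\triangle$ lands in the closed subfunctor $X_{\rho,\Mtri}^\wdelta$, and then compare dimensions together with irreducibility.

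\emph{Step 1: factorization through the $\wdelta$-component.} By \Cref{373}, $\iota_x^\triangle$ is a closed immersion. By \Cref{Twdelta}, $\Theta_x$ factors through $\wh T^\wdelta_{(0,0)}$. We work with the framed version $\wh{X_\tri(\rhobar)}_x\times_{X_{\rho,\Mtri}}X^\square_{\rho,\Mtri}$, which is formally smooth over $\wh{X_\tri(\rhobar)}_x$. By \Cref{XW+prorep}, its image in $X^\square_{W^+,\Wtri}\cong\wh X_{x_\pdR}$ is a closed formal subscheme whose composite to $\wh T_{(0,0)}$ lands in $\wh T^{\wdelta}_{(0,0)}$.

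Since $\wh{X_\tri(\rhobar)}_x$ is reduced, we must show that each irreducible component of its image lies in $\wh X^\wdelta_{x_\pdR}$. By \Cref{thm_equidimensional}(1), the source has dimension $\dim\gG+[K:\Qp]\dim\gB$. The formally smooth maps in \Cref{three_five_eleven} are of fixed relative dimension. Hence each irreducible component of the framed source maps to a closed subset of $\wh X_{x_\pdR}$ of dimension $[K:\Qp]\dim\gG=\dim X^w$, so it is a full component $\wh X^w_{x_\pdR}$ for some $w$.

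By \Cref{3512}, $\wh X^w_{x_\pdR}$ maps into $\wh T^{\wdelta}_{(0,0)}$ only if $w=\wdelta$. Therefore every component lies in $X^{\wdelta}$, and $\iota^\triangle_x$ factors through $X^\wdelta_{\rho,\Mtri}$.

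The step I expect to be the main obstacle is the dimension count showing each component dominates a full $\wh X^w$. It requires that the map $\wh{X_\tri(\rhobar)}_x\to X_{\rho,\Mtri}$ composed with the forgetful maps preserves component dimensions. This needs the formal smoothness of $X_{\rho,\Mtri}^\square\to X_{D,\Mtri}^\square\to X^\square_{W^+,\Wtri}$ (\Cref{lem356}) and the fact that any closed subscheme of $X_{\rho,\Mtri}$ of full dimension is a union of components. The latter follows from equidimensionality in \Cref{BHS3.6.2}(1).

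\emph{Step 2: isomorphism.} Now $\wh{X_\tri(\rhobar)}_x\hookrightarrow X^\wdelta_{\rho,\Mtri}$ is a closed immersion. By \Cref{BHS3.6.2}(2), the target is pro-represented by the domain $R_{\rho,\Mtri}/\mathfrak p_\wdelta$ of dimension $\dim\gG+[K:\Qp]\dim\gB$, which equals the dimension of the source. A closed immersion of a formal scheme of the same dimension into the spectrum of a domain is an isomorphism, since a nonzero kernel would drop the dimension.

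\emph{Step 3: position of $x_\pdR$.} Nonemptiness of $X^\wdelta_{\rho,\Mtri}$ forces $x_\pdR\in X^\wdelta$. Moreover, $x_\pdR\in V^{\wsat}$ holds by \Cref{def:wsat}, together with the remark that the $\wsat$ of \Cref{sec:weyl} agrees with it via \Cref{wsatwx}.
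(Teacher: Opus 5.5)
Your proof is correct and follows essentially the paper's route, which defers to \cite[Corollary 3.7.8]{BHS19}. Since $\wh{X_{\tri}(\rhobar)}_x$ is reduced and equidimensional of the same dimension as $\lvert X_{\rho,\Mtri}\rvert$, the closed immersion $\iota_x^\triangle$ identifies it with a union of components $X^w_{\rho,\Mtri}$, and \Cref{Twdelta} together with \Cref{BHS3.6.2}(3) forces $w=\wdelta$; your detour through the framed space and $\wh X_{x_\pdR}$ just unpacks \Cref{BHS3.6.2}(2),(3). One remark in your ``obstacle'' paragraph needs fixing: the required input is that $\wh{X_{\tri}(\rhobar)}_x$ itself is reduced and equidimensional (by \Cref{thm_equidimensional}(1) and excellence), not equidimensionality of $X_{\rho,\Mtri}$, because a full-dimensional closed subscheme of an equidimensional space need not be a union of components.
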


\begin{proof}
    The proof goes exactly as that of \cite[Corollary 3.7.8]{BHS19}, using the fact that the dimensions of $\wh\cO_{X_\tri(\rhobar),x}$ and $\lvert X_{\rho, \Mtri}\rvert$ coincide by the dimension part of \Cref{SB_representable} and \Cref{BHS3.6.2}. 

\end{proof}

\begin{remark}
    From the last statement of \Cref{three_seven_eight}, one deduces that $\wdelta\in\bW(x)$, hence $w_\sat\preceq\wdelta$ by \Cref{wsatpreceqw}.
\end{remark}

\begin{theorem}[{cf. \cite[Corollary 3.7.10]{BHS19}}]\label{three_seven_ten}
    If $x$ satisfies \ref{reg_hyp}, then $X_{\tri}(\rhobar)$ is normal and hence irreducible, and Cohen--Macaulay at $x$.
\end{theorem}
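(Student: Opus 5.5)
The plan is to transport normality and the Cohen--Macaulay property from the local model $\wh X^{\wdelta}_{x_\pdR}$ to $\wh{\cO}_{X_\tri(\rhobar),x}$ along the chain of formally smooth morphisms \eqref{correspondence}, and then to descend from the completion to the local ring.

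First, \Cref{three_seven_eight} identifies $\wh{X_\tri(\rhobar)}_x$ with $X^{\wdelta}_{\rho,\Mtri}$. Since $\wdelta\in\bW(x_\pdR)$, \Cref{BHS3.6.2}(2) shows that $X^{\wdelta}_{\rho,\Mtri}$ is pro-represented by $R^{\wdelta}_{\rho,\Mtri}$, which is a normal local domain. That ring is built as follows. The framed version $X^{\sq,\wdelta}_{\rho,\Mtri}$ is pro-represented by a power series ring over $R^{\wdelta}_{\rho,\Mtri}$, it is formally smooth over $X^{\sq,\wdelta}_{D,\Mtri}$, and by \Cref{three_five_eleven} the latter is formally smooth over $\wh X^{\wdelta}_{x_\pdR}$. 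So $\wh\cO_{X_\tri(\rhobar),x}\cong R^{\wdelta}_{\rho,\Mtri}$, and a power series ring $R^{\wdelta}_{\rho,\Mtri}\br{y_1,\dots,y_m}$ over it is formally smooth over $\wh\cO_{X^{\wdelta},x_\pdR}$.

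Next I transfer the two properties along this formally smooth map.
\begin{enumerate}
\item The scheme $X^{\wdelta}$ is Cohen--Macaulay by Bezrukavnikov--Riche and normal by \cite[Theorem 2.3.6]{BHS19}. Since $X^{\wdelta}$ is of finite type over $L$, hence excellent, its completion $\wh\cO_{X^{\wdelta},x_\pdR}$ is still normal and Cohen--Macaulay.
\item A complete local ring formally smooth over it is a power series ring over it, so it inherits both properties.
\item Passing from $R^{\wdelta}_{\rho,\Mtri}\br{y_1,\dots,y_m}$ back down to $R^{\wdelta}_{\rho,\Mtri}$ is faithfully flat with regular fibres, so both properties descend, in particular the Cohen--Macaulay property.
\end{enumerate}
Hence $\wh\cO_{X_\tri(\rhobar),x}$ is a normal Cohen--Macaulay domain.

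Finally, I descend to the local ring $\cO_{X_\tri(\rhobar),x}$, which is excellent for rigid spaces. The map $\cO_{X_\tri(\rhobar),x}\to\wh\cO_{X_\tri(\rhobar),x}$ is faithfully flat, and normality and the Cohen--Macaulay property descend along faithfully flat maps of noetherian local rings. Irreducibility at $x$ then follows because the completion is a domain, so $x$ lies on a unique irreducible component; by \cite[Theorem 1.1.3]{ConradIrrComp}, local irreducibility in the rigid sense is read off from the minimal primes of the completion.

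The main obstacle is the bookkeeping at the middle step. One must check that \ref{reg_hyp} supplies exactly the hypotheses needed for \Cref{three_seven_eight} and \Cref{BHS3.6.2}, namely regularity of $\varpi_\HT$ and $\delta\in\cT^\gT_*$. One must also check that the triangulation used is the unique one with parameter $\wtl\delta$. After that, the argument is the same as in \cite[Corollary 3.7.10]{BHS19}.
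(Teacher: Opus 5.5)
Your proposal is correct and follows the same route as the paper: the chain of formally smooth morphisms from $\wh{X_{\tri}(\rhobar)}_x$ to $\wh X^{\wdelta}_{x_\pdR}$, with normality taken from \Cref{BHS3.6.2}(2) and the Cohen--Macaulay property transported from $X^{\wdelta}$ through its completion. The only difference is that you spell out the steps the paper leaves implicit: the ascent to the power series ring, and the descent along faithfully flat maps, including from the completion to the local ring.
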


\begin{proof}
We deduce the result from the properties of the scheme $X$ via the following chain of morphisms:
\begin{align*}
    \wh{X_{\tri}(\rhobar)}_x \xrightarrow{\cong} X_{\rho, \Mtri}^{w_\delta} \leftarrow X_{\rho, \Mtri}^{\square, w_\delta} \rightarrow X_{\eta_\rho, \Mtri}^{\square, w_\delta} \xrightarrow{\cong} X_{D, \Mtri}^{\square, w_\delta} \rightarrow X^{\square, w_\delta}_{W^+, \Wtri} \xrightarrow{\cong} \widehat X^{w_\delta}_{x_\pdR}.
\end{align*}
The first isomorphism follows from \Cref{three_seven_eight}, and the last from \Cref{Xsqwprorep}. By \Cref{three_five_eleven}, and the discussion in \Cref{caseofgaloisrep}, every morphism in this chain is formally smooth. 

    The first part of the theorem then follows from \Cref{BHS3.6.2}(2), which asserts that $ X_{\rho, \Mtri}^{w_\delta} = |X_{\rho, \Mtri}^{w_\delta}|$ is pro-representable by a normal local ring. For the second part, since $X^{w_\delta}$ is Cohen--Macaulay by \cite[Proposition 2.3.3]{BHS19}, the completion $\wh X^{w_\delta}_{x}$ is also Cohen--Macaulay (see \stackcite{07NX}).
\end{proof}

\subsection{The accumulation property}\label{sec:acc}

Throughout this section, let $x = (\rho,\delta)$ be a closed point of $X_{\tri}(\rhobar)$. We still work under the assumption \ref{reg_hyp}.

\begin{proposition}[{cf. \cite[Proposition 4.1.1]{BHS19}}]\label{flatness_result}
    If $x$ satisfies \ref{reg_hyp}, then the map $\omega'$ from \eqref{def_omega_prime} is flat in a neighborhood of $x$.
\end{proposition}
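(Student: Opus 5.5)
The plan is to follow \cite[Proposition 4.1.1]{BHS19} and transport the question to the scheme $X$ through the chain of formally smooth morphisms used in the proof of \Cref{three_seven_ten}. Since $X_{\tri}(\rhobar)$ and $\cT^{\gT}$ are rigid spaces, and flatness at a point can be tested on completed local rings (local rings are excellent, hence faithfully flat over their completions), it is enough to show that $\wh\cO_{X_{\tri}(\rhobar),x}$ is flat over $\wh\cO_{\cT^{\gT},\delta}$. Openness of the flat locus for morphisms of rigid spaces then yields flatness on a neighborhood of $x$.

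The first step is to describe the composition $\wh{X_{\tri}(\rhobar)}_x \to \wh\cT^\gT_\delta \xrightarrow{\wt-\wt(\delta)} \wh\frakt$. By \Cref{three_seven_eight} we have $\wh{X_{\tri}(\rhobar)}_x\cong X^{\wdelta}_{\rho,\Mtri}$. Here I must be careful: the completed map $\omega'$ records $\delta_A$, not the parameter $\wtl\delta_A$ of the triangulation $\Mtri_A$. However, $\wtl\delta\delta^{-1}$ is algebraic and constant in families, so $\delta_A=\wtl\delta_A\cdot(\delta\wtl\delta^{-1})$. Hence $\omega'$ coincides, up to a translation of $\wh\cT^\gT$, with $\omega_\delta\colon X_{\Mtri}\to\wh\cT^\gT_{\wtl\delta}$.

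The next step is to factor this through the smooth local model. By the proof of \Cref{lem356}, the morphism $X_{D,\Mtri}\to \wh\cT^\gT_{\wtl\delta}\times_{\that}X_{W^+,\Wtri}$ is formally smooth. After framing and restricting to the $\wdelta$-component, we obtain a formally smooth morphism
\[ X^{\square,\wdelta}_{\rho,\Mtri}\longrightarrow \wh\cT^\gT_{\wtl\delta}\times_{\that}\wh X^{\wdelta}_{x_\pdR}, \]
where the fiber product uses $\wt-\wt(\wtl\delta)$ on the left factor and $\kappa_{1}$ on the right factor (via \Cref{BHS3.3.9}). Since $\kappa_{1,\wdelta}\colon X^{\wdelta}\to\frakt$ is flat by \cite[Proposition 2.3.3]{BHS19}, its completion $\wh X^{\wdelta}_{x_\pdR}\to\that$ is flat. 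Base change then makes $\wh\cT^\gT_{\wtl\delta}\times_{\that}\wh X^{\wdelta}_{x_\pdR}\to\wh\cT^\gT_{\wtl\delta}$ flat. Composing with the formally smooth, hence flat, morphism above, the map $X^{\square,\wdelta}_{\rho,\Mtri}\to\wh\cT^\gT_{\wtl\delta}$ is flat.

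Finally, $X^{\square,\wdelta}_{\rho,\Mtri}$ is represented by a power series ring over the ring representing $X^{\wdelta}_{\rho,\Mtri}$, and this framing is compatible with the map to $\wh\cT^\gT$. Faithful flatness of $R\to R\br{y_1,\dots,y_m}$ therefore descends flatness to $X^{\wdelta}_{\rho,\Mtri}\cong\wh{X_{\tri}(\rhobar)}_x$ over $\wh\cT^\gT$. The main obstacle I expect is the bookkeeping in the middle step. I must check that the fiber product really sits inside $X^{\wdelta}$ and uses the correct $\kappa_i$, which depends on the ordering of the flags $(x_{\Wtri,\tri},x_{W^+,\HT})$ in $x_\pdR$, so that it is $\kappa_1$ that matches $\wt(\delta_A)$. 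I must also check that restricting the formally smooth map of \Cref{lem356} to $w$-components preserves formal smoothness; this holds by base change along the closed immersion of \Cref{3510}.
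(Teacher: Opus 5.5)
Your proposal is correct and follows essentially the same route as the paper, which just runs the proof of \cite[Proposition 4.1.1]{BHS19} verbatim with the $\gG$-versions of its inputs. That argument reduces to flatness of completed local rings and identifies $\wh{X_{\tri}(\rhobar)}_x$ with $X^{\wdelta}_{\rho,\Mtri}$ via \Cref{three_seven_eight}; it then uses the formally smooth map to $\wh\cT^\gT_{\delta}\times_{\that}\wh X^{\wdelta}_{x_\pdR}$ supplied by \Cref{thm_formal_smoothness}, \Cref{G_351} and \Cref{Xsqwprorep}, and concludes with the flatness of $\kappa_{1,\wdelta}$ from \cite[Proposition 2.3.3]{BHS19}. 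Your extra bookkeeping fills in details the paper leaves implicit: the shift between $\delta$ and $\wtl\delta$ is harmless because $\cR_{K,A}[\tfrac1t](\delta_A)\cong\cR_{K,A}[\tfrac1t](\delta_A\wtl\delta\delta^{-1})$, and flatness descends through the framing by faithful flatness of the power series extension.
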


\begin{proof}
    The proof of \cite[Proposition 4.1.1]{BHS19} goes through verbatim, using \cite[Proposition 2.3.3]{BHS19} once again and \Cref{thm_formal_smoothness} (use assumption \ref{reg_hyp}) in place of \cite[Theorem 3.4.4]{BHS19}, \Cref{G_351} in place of \cite[Proposition 3.5.1]{BHS19}, \Cref{Xsqwprorep} in place of \cite[Corollary 3.5.9]{BHS19} (the results of \Cref{sec35} requiring Hodge--Tate regularity) and \Cref{three_seven_eight} 
    in place of \cite[Corollary 3.7.8]{BHS19}.
\end{proof}

\noindent The \emph{weight space} $\cW^\gT$ is a rigid analytic space over $L$ representing the functor $A \mapsto \Hom^{\cont}(\gT^{\vee}(\cO_K), A^{\times})$. 
There is a flat morphism $\cT^\gT \to \cW^\gT$ obtained by restriction of characters $\gT^{\vee}(K) \to A^{\times}$ to $\gT^{\vee}(\cO_K)$.
We denote by $\omega$ the composition $X_{\tri}(\rhobar) \xrightarrow{\omega'} \cT^\gT \to \cW^\gT$. 

Following \cite[§3.3.1]{BC}, we say that a subset $Z \subseteq X_{\tri}(\rhobar)$ \emph{accumulates at a point $z$ of $X_{\tri}(\rhobar)$} if $z$ has a basis of open affinoid neighborhoods $U$ such that $U \cap Z$ is Zariski dense in $U$. 
Recall that in \Cref{sec:Gphimod} we attached a Frobenius operator $\varphi^f\in\gG(\Qpbar)$ to a crystalline representation $\GK\to\gG(L)$.

\begin{definition}\label{def_acc_property} Let $z$ be a point of $X_{\tri}(\rhobar)$ such that $\omega(z)$ is algebraic.
    We say that $X_{\tri}(\rhobar)$ has the \emph{accumulation property} at $z$ if for every $C > 0$ the set of crystalline strictly dominant points $z' = (\rho',\delta')$ such that
    \begin{enum}
        \item $\varphi^f$ is regular semisimple (\Cref{def:phiregsem}), 
        \item $\rho'$ is noncritical (\Cref{def:dom}), 
        \item for all positive roots $\alpha \in \Phi^+(\gB,\gT)$ the character $\omega(z') \circ \alpha^{\vee} \colon \cO_K^{\times} \to \gT^{\vee}(\cO_K) \to k(z')^{\times}$ is an algebraic character $\delta_{\mathbf k'} = \prod_{\tau \in \Sigma} \tau^{k_{\tau}'}$ with $\mathbf k' = (k_{\tau}')_{\tau \in \Sigma} \in \Z^{\Sigma}$ such that $k_{\tau}' > C$ for all $\tau \in \Sigma$,
    \end{enum}
    accumulates at $z$.
\end{definition}

\begin{proposition}[{cf. \cite[Proposition 4.1.4]{BHS19}}]\label{acc_property} Assume that $x=(\rho,\delta) \in X_{\tri}(\rhobar)$ satisfies \ref{reg_hyp} and that $\omega(x)$ is algebraic. Then $X_{\tri}(\rhobar)$ satisfies the accumulation property at $x$.
\end{proposition}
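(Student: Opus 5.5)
We follow the strategy of \cite[Proposition 4.1.4]{BHS19}, adapted to $\gG$. First, by \Cref{flatness_result} the map $\omega'$ is flat near $x$. Since $\cT^\gT\to\cW^\gT$ is flat, $\omega$ is flat near $x$, hence open. So for a basis of affinoid neighbourhoods $U$ of $x$, $\omega(U)$ is an admissible open neighbourhood of $\omega(x)$. We may shrink $U$ to lie inside the smooth locus from \Cref{thm_equidimensional}, away from its complement. This is possible because that complement is Zariski closed and nowhere dense, and $X_{\tri}(\rhobar)$ is irreducible at $x$ by \Cref{three_seven_ten}. Around the algebraic weight $\omega(x)$, the algebraic weights $\delta_{\mathbf k'}$ whose root components $\omega\circ\alpha^\vee$ have all $k'_\tau>C$, and which moreover satisfy the gap conditions of \Cref{lemma:noncritical} with respect to a bound on the slopes, accumulate in $\cW^\gT$. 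Their preimages under the flat map $\omega|_U$ should be Zariski dense in $U$, since every irreducible component of $U$ dominates an open subset of weight space.

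Second, we check the required properties at a point $z'=(\rho',\delta')\in U$ above such a weight. On $U$ the quantity $v_p((\delta\circ\omega_i^\vee)(\varpi_K))$ is bounded, because it is a continuous function on an affinoid. Choosing $C$ larger than $[K:K_0]$ times this bound, condition \eqref{ktau} holds at $z'$, with the weights $\wt_\tau(\delta'\circ\alpha_i^\vee)$ given by the algebraic part. \Cref{lemma:noncritical} then yields that $z'$ is crystalline, strictly dominant and noncritical. Here we use that $\delta'$ is locally algebraic and that $z'$ lies in $U^*_{\tri}(\rhobar)$, so $\rho'$ admits a triangulation of parameter $\delta'$ itself. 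Condition (3) of \Cref{def_acc_property} holds by construction.

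Third, we deal with regular semisimplicity of $\varphi^f$. By \Cref{phireg}, it amounts to requiring that for every simple root $\alpha$ the smooth part of $\delta'\circ\alpha^\vee$ is a nontrivial unramified character. On the fibre of $\omega$ this is the condition that the analytic function $\delta(\varpi_K)$, composed with $\alpha^\vee$ and divided by the algebraic part, differs from $1$. For each root, this condition cuts out a Zariski closed subset. We would argue that this subset does not contain any irreducible component of $\omega^{-1}(\omega(z'))\cap U$ near $x$. To do this, we use smoothness of $\omega'|_{U^*_{\tri}}$ from \Cref{thm_equidimensional}(4), so that the unramified part of the parameter varies freely in the fibres. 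Removing a nowhere dense Zariski closed set preserves Zariski density.

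The main obstacle is the density argument. We must show that the union over the chosen weights of the fibres, with the bad loci removed, is Zariski dense in $U$ rather than merely accumulating in $\cW^\gT$. We would handle this as in \cite{BHS19} and \cite[§3.3]{BC}. Flatness together with irreducibility of $U$ implies that every irreducible component maps onto an open subset of $\cW^\gT$. The chosen very regular algebraic weights are Zariski dense in every open subset of $\cW^\gT$. A Zariski closed subset of $U$ containing all these fibres would therefore surject onto a Zariski dense set of weights, and hence would have full dimension, which forces it to equal $U$.
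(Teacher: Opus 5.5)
\textbf{There is a genuine gap: the proposal does not handle the case where $x$ is a singular point of $X_{\tri}(\rhobar)$.}

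The first reduction already fails. You cannot shrink an affinoid neighbourhood $U$ of $x$ into the smooth locus $U^*_{\tri}(\rhobar)$ of \Cref{thm_equidimensional} unless $x$ already lies in it, and \ref{reg_hyp} does not guarantee that. A point satisfying \ref{reg_hyp} need not lie in $U_{\tri}(\rhobar)$ at all:
\begin{itemize}
\item by \Cref{prop:boundary}, $\rho$ is in general only triangulable with a parameter $\widetilde\delta$ that differs from $\delta$ by an algebraic character (critical points, $w_{\sat}\neq\wdelta$);
\item $\cT^\gT_*$ is not contained in $\cT^\gT_{0,\gB}$.
\end{itemize}
These non-smooth points are exactly where the proposition has content; at points of $U^*_{\tri}(\rhobar)$ the map $\omega'$ is smooth and accumulation is routine.

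This affects your later steps. Step 3 (``the unramified part varies freely in the fibres'') uses smoothness of $\omega'$, so it only controls the parts of the fibres lying in $U^*_{\tri}(\rhobar)$. The appeal in Step 2 to $z'\in U^*_{\tri}(\rhobar)$ is unsupported, though harmless, since \Cref{lemma:noncritical} works at any point via \Cref{prop:boundary}.

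If you meant instead to work on $U\cap U^*_{\tri}(\rhobar)$, which is legitimate for density questions, everything rests on your last paragraph, and that is not yet a proof. Three problems:
\begin{itemize}
\item Irreducibility of $X_{\tri}(\rhobar)$ at $x$ is a statement about the local ring. It does not by itself hand you irreducible affinoid neighbourhoods.
\item Algebraic weights are not Zariski dense in every open subset of $\cW^\gT$; they only accumulate at algebraic weights.
\item ``A Zariski closed subset whose image contains a Zariski-dense set of weights has full dimension'' is a Chevalley/constructibility argument. It does not transfer to rigid spaces, where images of affinoids need not be analytic. Making it rigorous would need extra input, such as generic flatness of $Z\to\cW^\gT$, together with density of the good locus in the relevant fibres.
\end{itemize}

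\textbf{The missing idea is normality}, which lets the paper avoid any fibre-dimension count at a singular point.
\begin{enumerate}
\item The paper first proves the property at points of the Zariski-open, $p$-adically dense locus $V$ of \Cref{blowupLambda,blowupr}, where the family carries a triangulation of parameter $\delta$. There it uses flatness and $p$-adic openness of $\omega$ (\Cref{flatness_result}). It also uses that, with $C$ chosen against the slopes on a fixed neighbourhood as in your Step 2, points over weights satisfying (3) are automatically crystalline, strictly dominant and noncritical by \Cref{lemma:noncritical}.
\item For general $x$, \Cref{three_seven_ten} gives that $\widehat{\cO}_{X_{\tri}(\rhobar),x}$ is normal. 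Since the normal locus of an excellent ring is Zariski open, one may take a normal affinoid neighbourhood $U$ of $x$.
\item For any connected affinoid neighbourhood $U'\subseteq U$ of $x$, pick $v\in V\cap U'$. Accumulation at $v$ shows that the Zariski closure of the good points in $U'$ contains an affinoid neighbourhood of $v$.
\item Since $U'$ is connected and normal, \cite[Lemma 2.1.4]{ConradIrrComp} forces this closure to be all of $U'$.
\end{enumerate}
This propagation from a nearby nice point replaces both your ``shrink into the smooth locus'' step and your dimension count.
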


\begin{proof}
Clearly, it is enough to prove the statement for sufficiently large $C$. Recall that $\varpi_K$ is a uniformizer of $K$. After fixing a splitting $\gT\cong\Gm^n$ and writing $\delta=(\delta_1,\ldots,\delta_n)$ accordingly, assume throughout the proof that $C>[K:K_0]\sum_{j=1}^n \lvert v_p(\delta_j(\varpi_K))\rvert$. Let $U_x$ be an open affinoid neighborhood of $x$ such that $v_p(\delta_j')=v_p(\delta_j)$ for every $x'=(\rho',\delta')$ in $U_x$ and $1\le j\le n$ (with the obvious notation for the $\delta_j'$).

We first assume that $x$ is contained in the Zariski open and $p$-adically dense subset $V$ of $X_{\tri}(\rhobar)$ of \Cref{blowupLambda,blowupr}.
    The algebraic characters in $\cW^{\gT}$ that satisfy (3) of \Cref{def_acc_property} accumulate at every algebraic weight in $\cW^{\gT}$, in particular at $\omega(x)$. Among these, those that also satisfy (2) of \Cref{def_acc_property} still accumulate at $\omega(x)$ by a standard argument. 
    By \Cref{phireg}, condition (1) holds away from the set of points $(\rho',\delta')$ such that $\alpha(\delta')^f$ is an algebraic character for at least one root $\alpha$, and such a set is Zariski-closed (its image in the space of parameters $\cT^\gT$ is contained in the union of the translates of $\cT_{0,\gB}^{\gT}$ by characters of order dividing $f$). Therefore, the set of points satisfying (1) and (3) still accumulates at $x$. 
    Thanks to \Cref{cryscriterion}, \Cref{lemma:noncritical}, and our choice of large enough $C$, for every $x' = (\rho', \delta')\in U_x$ with $\omega(x') \in \cW^{\gT}$ an algebraic character as above, $\rho'$ is crystalline strictly dominant noncritical.
    By \Cref{flatness_result} and \cite[Corollary 5.11]{BoschLuetkebohmert}, $x$ admits an open affinoid neighborhood $U$ such that the map $\omega\vert_U$ is $p$-adically open. In particular, $\omega(V\cap U)$ is $p$-adically open in $\cW^{\gT}$ and contains $\omega(x)$, so that the accumulation property holds at $x$.

    We now reduce the general case to the case when $x \in V$.
    As before, there is an affinoid neighborhood $U$ of $x$ such that $\omega(U)$ is $p$-adically open in $\cW^\gT$.
    By \Cref{three_seven_ten}, $\widehat{\cO}_{U,x}$ is normal, and since the normal locus of the excellent ring $\cO(U)$ is Zariski open, we can assume that $U$ is normal after possibly shrinking it.
    Let $U' \subseteq U$ be an arbitrary connected affinoid open neighborhood of $x$.
    Since $V \cap U' \neq \emptyset$ (by the $p$-adic density in \Cref{blowupLambda,blowupr}, which then also holds in all affinoids), we may pick $v \in V \cap U'$.
    Let $Z$ be the Zariski closure in $U'$ of the set of points $x' \in U'$ as in \Cref{def_acc_property}.
    By the accumulation property at $v$, $Z$ contains a small affinoid neighborhood $V'$ of $v$.
    Since $U'$ is connected and normal, \cite[Lemma 2.1.4]{ConradIrrComp} implies $Z = U'$. (Note that ``analytic subsets'' in \emph{loc. cit.} correspond to Zariski closed subsets in our terminology.)
\end{proof}

\section{An application: Trianguline lifts of global mod $p$ representations}\label{sec:application}


As an immediate consequence of our results, we improve the main result \cite[Theorem 3.16]{Fakhruddin_2022} by removing the assumption that $\gG$ has no factors of type $G_2,F_4$ or $E_8$. 

Let $F$ be a number field, $S$ a finite set of places of $F$, $\gG$ a connected, split reductive group over the ring of integers $\cO$ of a $p$-adic field $E$, and $\ovl\rho \colon \cG_{F,S}\to G(k)$ a $t$-odd continuous representation, in the sense of \cite[Definition 3.15]{Fakhruddin_2022}. Let $\ovl\mu \colon \cG_F\to(\gG/\gG^\der)(k)$ be the homomorphism induced by $\ovl\rho$ and let $\mu\colon \cG_F\to\gG(\cO)$ be a group homomorphism lifting $\ovl\mu$.

\begin{theorem}
Assume that 
\begin{itemize}
\item $p\gg_\gG 0$, meaning that $p$ is larger than a constant depending only on the root datum of $\gG$ (see the proof of \cite[Proposition 6.8]{FKP21}),
\item $\ovl\rho$ satisfies \cite[Assumption A]{Fakhruddin_2022},
\item up to possibly replacing $E$ with a finite extension, there exists for every finite place $v$ of $F$ a lift of $\ovl\rho\vert_{\cG_{F_v}}$ to $\gG(\cO)$ with determinant $\mu\vert_{\cG_{F_v}}$, which is trianguline and very regular if $v\mid p$.
\end{itemize}
Then there exist a finite set of places $S'\supset S$, an extension $E'\supset E$ with ring of integers $\cO'$, and a lift of $\ovl\rho$ to a $t$-odd representation $\rho\colon \cG_{F,S'}\to\gG(\cO')$ with determinant $\mu$, which is very regular trianguline at all $p$-adic places of $F$. Furthermore, we can ensure that $\rho(\cG_{F,S'})$ contains an open subgroup of $\gG^\der(\cO')$.
\end{theorem}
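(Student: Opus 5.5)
The plan is to run the proof of \cite[Theorem 3.16]{Fakhruddin_2022} unchanged, and to locate the single place where the hypothesis excluding factors of type $G_2$, $F_4$, $E_8$ (and $p>2$) is used. That hypothesis enters only through de Daruvar's results on the trianguline variety \cite[Theorems 5.10, 6.22]{dedar2020}. Those results are used to show that, at a place $v\mid p$, the local lifting ring cut out by very regular trianguline lifts behaves as needed: the relevant component is equidimensional of the expected dimension $\dim\gG+[F_v:\Qp]\dim\gB$, and it is smooth at very regular points. These are the inputs the Fakhruddin--Khare--Patrikis relative deformation method needs in order to produce, after adding auxiliary primes, a global lift lying on a chosen smooth component at each $v\mid p$. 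So the key step is to substitute our results for de Daruvar's.

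First I would take $\Lambda=\Omega$, the set of quasi-fundamental weights (\Cref{examples_for_Lambda}). Then $\Lambda$-regularity is exactly very regularity, and $U_\tri^\Omega(\rhobar)=U^{\vreg}_\tri(\rhobar)$. By the hypothesis on $\ovl\rho|_{\cG_{F_v}}$, there is a very regular trianguline lift, so $\cS^{\Omega}_\gB(\rhobar|_{\cG_{F_v}})\neq\emptyset$. I would also check that the very regular parameters are $\gB$-regular, so that the lift actually lies in $U_\tri$. \Cref{thm_equidimensional} then gives the following. The space $X_\tri(\rhobar|_{\cG_{F_v}})$ is equidimensional of the same dimension as in \cite{dedar2020}. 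The open set $U^{\Omega}_\tri$ is smooth and Zariski dense. The map $\omega'$ is smooth there.

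By \Cref{independence_of_regularity_condition}, our $X_\tri$ coincides with de Daruvar's closure of very regular points. This identifies the space used in \cite{Fakhruddin_2022} with ours, so no statement there has to be reformulated. The determinant condition is handled as in \cite{Fakhruddin_2022}, by intersecting with the fibre over $\mu|_{\cG_{F_v}}$ of the map to $\gG/\gG^\der$ deformations. Smoothness of $U^\Omega_\tri$ together with smoothness of $\omega'$ should give smoothness of that fibre at very regular points, with the dimension count unchanged.

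Next I would verify that every other ingredient of \cite{Fakhruddin_2022} is independent of the type of $\gG$. These are the local conditions at $v\nmid p$, the $t$-oddness, the condition $p\gg_\gG0$ from \cite{FKP21}, and the big-image conclusion. I would then quote the argument as is; it returns $S'$, $E'$ and $\rho$ very regular trianguline at all $v\mid p$, with image containing an open subgroup of $\gG^\der(\cO')$.

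The main obstacle I expect is in the first step. One has to check that \cite{Fakhruddin_2022} uses only the geometric outputs (dimension, smoothness, density of very regular points), and not de Daruvar's specific construction of $\cS_\gB$ or his stronger regularity notion \cite[Definition 5.4]{dedar2020}. Because our $\gB$-regularity is weaker, the open subspace he uses sits inside ours (see the footnote to \Cref{SB_representable}), so the same argument applies on that open subspace.
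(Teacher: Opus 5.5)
Your plan is the paper's: the excluded types enter \cite{Fakhruddin_2022} only through \cite[Lemma 2.6]{Fakhruddin_2022}, which is quoted from \cite{dedar2020}, and one substitutes results proved here. The gap is in your list of substitutes. You assert that \cite{Fakhruddin_2022} uses only dimension, smoothness and density of the very regular locus, and you take these from \Cref{thm_equidimensional} and \Cref{independence_of_regularity_condition}. But the quoted lemma has a third part, which the paper supplies with \Cref{373}: the map $\iota_x\colon \wh{X_\tri(\rhobar)}_x\to \wh\cX_{\rhobar,\rho}\cong X_\rho$ is a closed immersion. In other words, the completed local ring of $X_\tri(\rhobar)$ at $x$ is a quotient of the framed deformation ring of $\rho_x$.

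This does not follow from the statements of \Cref{thm_equidimensional}, which describe $X_\tri(\rhobar)$ as a subspace of $\cX_{\rhobar}\times\cT^\gT$. Smoothness and the dimension count there say nothing about whether the projection to $\cX_{\rhobar}$ is injective on tangent spaces; a priori the parameter could move to first order while $\rho$ stays fixed. This embedding is what lets the trianguline locus sit inside the local lifting space at $v\mid p$ as a local condition that is smooth of the expected dimension at the given lift. The same issue affects your treatment of the determinant, which you carry out on $X_\tri(\rhobar)$ rather than on its image in $\cX_{\rhobar}$. So the check you postpone to the end, that only dimension, smoothness and density are used, does not come out as you expect.

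The repair is to add \Cref{373} to your substitutions. At points of $U^*_\tri(\rhobar)$ the same conclusion can also be read off from the proof of \Cref{thm_equidimensional}(2). There one has isomorphisms
\[
\wh\cO_{\cS^*_\gB(\rhobar),x}\cong \wh\cO_{X_\tri(\rhobar),y}\br{X_1,\dots,X_d}\cong R_{\rho_y,D^\triangle_y}\br{x_1,\dots,x_d}
\]
via $X_i\mapsto x_i$. These give $\wh\cO_{X_\tri(\rhobar),y}\cong R_{\rho_y,D^\triangle_y}$, which is a quotient of $R_{\rho_y}$ by \Cref{trianguline_def_ring}.

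With that added, the rest of your plan is the paper's proof. That means taking $\Lambda=\Omega$ so that $\Lambda$-regularity is very regularity, identifying $X_\tri(\rhobar)$ with de Daruvar's closure of very regular points via \Cref{independence_of_regularity_condition}, and quoting the remainder of \cite{Fakhruddin_2022} unchanged.
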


\begin{proof}
The statement coincides with that of \cite[Theorem 3.16]{Fakhruddin_2022} without the assumption that $\gG$ has no factors of type $G_2,F_4$ or $E_8$, which only appears in \cite[Lemma 2.6]{Fakhruddin_2022}, which is quoted from \cite{dedar2020}. Without the extra assumption, statements (1), (2) of \cite[Lemma 2.6]{Fakhruddin_2022} are proved in our \Cref{thm_equidimensional}, and (3) in our \Cref{373}.
\end{proof}

%

\appendix

\section{Obstruction theory for non-abelian semilinear objects}
\label{sec_obs_thy}
The purpose of this appendix is to develop the obstruction theory for lifting non-abelian $H^1$-classes for $\PG$-modules.
We put ourselves in the following abstract setting.
Let $\Gamma$ be an arbitrary topological group, and let $\cO$ be a commutative topological\footnote{All proofs of this section also work for a condensed group $\Gamma$ and condensed coefficient rings.} ring.
Let $R$ be a commutative topological $\cO$-algebra, which has commuting $\cO$-linear actions of $\varphi$ and $\Gamma$, and assume that the action of $\Gamma$ is continuous.

Although in this paper we are only interested in the case $\Qp \subseteq \cO$, more specifically $\Gamma = \Gamma_K$ and $R=\cR^r_{K,A}$, there is no additional difficulty in formulating the results in this greater generality, and we do so for future reference.

We consider extension problems with values in algebraic groups, here the theory is very much parallel to that of group extensions and representations, but the computations are more involved.

Let $\gP$ be an $\cO$-group scheme, which sits in a short exact sequence
\begin{equation} \label{SES_lifting}
1 \to \gU \to \gP \to \gM \to 1,
\end{equation}
with $\gU$ a commutative closed subgroup scheme of $\gP$ (written additively), and we assume that there is a scheme-theoretic section $s : \gM \to \gP$.
Through that section, $\gU$ has a well-defined action $m \cdot u := s(m)us(m)^{-1}$ of $\gM$ by scheme-automorphisms and $\gP \cong \gU \times \gM$ as schemes.
We will describe the group multiplication of $\gP$ by a normalized algebraic $2$-cocycle $z : \gM \times \gM \to \gU$ by the following rule:
\begin{align}
    (u,m) \cdot (u',m') := (u + m \cdot u'+ z(m,m'), mm') \label{mult_in_P}
\end{align}

\subsection{Obstruction theory for $\Gamma$-modules}
\label{sec_obs_Gamma_mod}
Let us define an \emph{$\gM$-trivialized $\gM$-$\Gamma$-module} as an element of the set $Z^1_{\cont}(\Gamma, \gM(R))$.
Let us fix $c_0 \in Z^1_{\cont}(\Gamma, \gM(R))$.
We are interested in describing the set of liftings of $c_0$ along \eqref{SES_lifting} to an element of $Z^1_{\cont}(\Gamma, \gP(R))$.
We will see that it is a torsor under $Z^{1, \star}_{\cont}(\Gamma, \gU(R))$, where the $\star$ stands for a $c_0$-twisted action on $\gU(R)$:
\begin{align}
    \gamma \star u := c_0(\gamma) \cdot \gamma(u) \label{star_Gamma_action}
\end{align}

\begin{lemma}\label{lifting_sets_Gamma}
    For a continuous map $c' : \Gamma \to \gU(R)$ the condition that $(c',c_0) \in Z^1_{\cont}(\Gamma, \gP(R))$ is equivalent to
    \begin{align}
        \gamma_1 \star c'(\gamma_2) - c'(\gamma_1\gamma_2) + c'(\gamma_1) &= -z\left(c_0(\gamma_1), \gamma_1\big(c_0(\gamma_2)\big)\right) \label{cocyc_cond_Gamma}
    \end{align}
    for all $\gamma_1,\gamma_2 \in \Gamma$.
\end{lemma}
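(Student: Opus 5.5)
The plan is to expand the cocycle condition for the pair $(c',c_0)$ in $\gP(R)$ using the multiplication rule \eqref{mult_in_P}, and compare the two coordinates. By definition, $(c',c_0)\colon\Gamma\to\gP(R)=\gU(R)\times\gM(R)$ lies in $Z^1(\Gamma,\gP(R))$ if and only if
\[ (c'(\gamma_1\gamma_2),c_0(\gamma_1\gamma_2)) = (c'(\gamma_1),c_0(\gamma_1))\cdot\gamma_1\big((c'(\gamma_2),c_0(\gamma_2))\big) \]
for all $\gamma_1,\gamma_2\in\Gamma$. The first thing to note is that $\gamma_1$ acts on $\gP(R)$ through its action on $R$. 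Since the identification $\gP\cong\gU\times\gM$ and the section $s$ are defined over $\cO$ and the action of $\Gamma$ is $\cO$-linear, this action is coordinatewise: $\gamma_1(u,m)=(\gamma_1(u),\gamma_1(m))$.

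Next, apply \eqref{mult_in_P} to the right-hand side. It equals
\[ \Big(c'(\gamma_1)+c_0(\gamma_1)\cdot\gamma_1(c'(\gamma_2))+z\big(c_0(\gamma_1),\gamma_1(c_0(\gamma_2))\big),\; c_0(\gamma_1)\gamma_1(c_0(\gamma_2))\Big). \]
The $\gM$-coordinate identity is precisely the cocycle condition for $c_0$, which holds by assumption. By the definition \eqref{star_Gamma_action}, the term $c_0(\gamma_1)\cdot\gamma_1(c'(\gamma_2))$ is $\gamma_1\star c'(\gamma_2)$. So the $\gU$-coordinate identity, rearranged additively (recall $\gU$ is commutative and written additively), is exactly \eqref{cocyc_cond_Gamma}. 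Both implications follow at once from this comparison.

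Finally, continuity needs a brief remark. The topology on $\gP(R)$ is the one induced from $\gP\cong\gU\times\gM$ as schemes, in the sense of \cite{ConradTopologies}, and it is functorial and compatible with products. Hence $(c',c_0)$ is continuous if and only if both $c'$ and $c_0$ are continuous. Since $c'$ is assumed continuous and $c_0$ is continuous by hypothesis, membership in $Z^1_{\cont}$ reduces to the algebraic cocycle identity.

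I expect no real obstacle. The only point needing care is bookkeeping: making sure that the $\Gamma$-action commutes with the scheme-theoretic decomposition, which uses that $s$ and $z$ are defined over $\cO$, and that the sign conventions in \eqref{cocyc_cond_Gamma} match the rearrangement.
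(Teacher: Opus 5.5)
Your proposal is correct and matches the paper's proof: expand $(c'(\gamma_1),c_0(\gamma_1))\cdot\gamma_1\big((c'(\gamma_2),c_0(\gamma_2))\big)$ using \eqref{mult_in_P}, note that the $\gM$-coordinate identity is the cocycle condition for $c_0$, and rewrite the $\gU$-coordinate via the $\star$-action to obtain \eqref{cocyc_cond_Gamma}. Your remarks that $\Gamma$ acts coordinatewise and that continuity of $(c',c_0)$ reduces to continuity of $c'$ and $c_0$ are correct; they spell out points the paper leaves implicit.
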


\begin{proof}
    For any $\gamma_1,\gamma_2\in \Gamma$, we have
    \begin{align*}
         & \big(c'(\gamma_1),c_0(\gamma_1)\big) \cdot \left(\gamma_1\big(c'(\gamma_2)\big), \gamma_1\big(c_0(\gamma_2)\big)\right) \\
         &= \left(c'(\gamma_1)+c_0(\gamma_1) \cdot \gamma_1\big(c'(\gamma_2)\big) + z\left(c_0(\gamma_1), \gamma_1\big(c_0(\gamma_2)\big)\right), c_0(\gamma_1)\gamma_1\big(c_0(\gamma_2)\big)\right)
    \end{align*}
    Using the fact that $c_0 \in Z^1_{\cont}(\Gamma, \gM(R))$ and the definition of the $\star$-action, we see that $(c',c_0) \in Z^1_{\cont}(\Gamma, \gP(R))$ if and only if for all $\gamma_1,\gamma_2 \in \Gamma$,
    $$ c'(\gamma_1\gamma_2) = c'(\gamma_1) + \gamma_1 \star c'(\gamma_2) + z\left(c_0(\gamma_1), \gamma_1\big(c_0(\gamma_2)\big)\right), $$
    which is \eqref{cocyc_cond_Gamma}.
\end{proof}

\begin{lemma}\label{fiber_is_torsor}
    Pointwise multiplication $Z^{1, \star}_{\cont}(\Gamma, \gU(R)) \times \Cont(\Gamma, \gP(R)) \to \Cont(\Gamma, \gP(R))$ preserves the fiber of $c_0$ in $Z^1_{\cont}(\Gamma, \gP(R))$ and turns it into a set-theoretic torsor under $Z^{1, \star}_{\cont}(\Gamma, \gU(R))$.
\end{lemma}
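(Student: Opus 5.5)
The plan is to reduce everything to the explicit cocycle condition of \Cref{lifting_sets_Gamma}, working in the coordinates $\gP \cong \gU \times \gM$ given by the section $s$. In these coordinates every element of the fiber of $c_0$ has the form $(c',c_0)$ with $c' \colon \Gamma \to \gU(R)$. Since $\gU$ is a closed subgroup and $\gP(R)$ carries the topology induced from an embedding, continuity of $(c',c_0)$ is equivalent to continuity of $c'$. So the fiber is identified with the set of continuous $c'$ satisfying \eqref{cocyc_cond_Gamma}. I would first make precise what ``pointwise multiplication'' by $a \in Z^{1,\star}_{\cont}(\Gamma,\gU(R))$ does: by \eqref{mult_in_P}, $(a(\gamma),1)\cdot(c'(\gamma),c_0(\gamma)) = (a(\gamma)+c'(\gamma),c_0(\gamma))$. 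Here I use that $z$ is normalized, so $z(1,m)=0$, and that $1\cdot u = u$ because $s(1)=1$, which may be assumed after translating $s$. Thus the action is simply $c' \mapsto a + c'$ in the $\gU$-coordinate. It clearly preserves the $\gM$-component, and it is continuous because addition in $\gU(R)$ is continuous.

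The key step is to check that the fiber is preserved. The left-hand side of \eqref{cocyc_cond_Gamma} is additive in $c'$, because the $\star$-action \eqref{star_Gamma_action} is additive: $\gamma$ acts on $\gU(R)$ additively, and $\gM$ acts on the commutative group $\gU$ by group automorphisms, since conjugation in $\gP$ restricts to automorphisms of the normal commutative subgroup $\gU$ and is independent of the lift. The right-hand side depends only on $c_0$. So if $c'$ satisfies \eqref{cocyc_cond_Gamma} and $a$ satisfies the homogeneous equation, which by definition is the $\star$-cocycle condition $\gamma_1\star a(\gamma_2) - a(\gamma_1\gamma_2) + a(\gamma_1)=0$, then $a+c'$ also satisfies \eqref{cocyc_cond_Gamma}. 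The same subtraction shows that any two solutions $c'_1, c'_2$ differ by $a = c'_1 - c'_2$, and this $a$ satisfies the homogeneous equation and is continuous, so $a \in Z^{1,\star}_{\cont}(\Gamma,\gU(R))$. This gives transitivity, and freeness is immediate from $a + c' = c' \Rightarrow a = 0$. Compatibility with the group law on $Z^{1,\star}_{\cont}$, which is pointwise addition, is evident.

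I expect the only real obstacle to be bookkeeping. One point is confirming that the action of $\gM$ on $\gU$ is by group automorphisms even though $s$ is only a scheme-theoretic section, which follows since conjugation by $s(m)u''$ equals conjugation by $s(m)$ on commutative $\gU$. The other is keeping track of the normalization of $z$. A ``set-theoretic torsor'' allows the fiber to be empty, so no existence statement is required here; that is deferred to the obstruction class.
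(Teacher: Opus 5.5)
Your proposal is correct and is essentially the paper's argument. The paper likewise notes that the left-hand side of \eqref{cocyc_cond_Gamma} is the $\star$-coboundary $\delta^{1,\star}(c')$ while the right-hand side is independent of $c'$, so left multiplication by a $\star$-cocycle (which is addition in the $\gU$-coordinate) preserves the fiber, and any two lifts differ by a $\star$-cocycle; your extra bookkeeping (normalization of $z$, additivity of the $\star$-action, and continuity via the scheme isomorphism $\gP\cong\gU\times\gM$) only makes explicit what the paper leaves implicit.
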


\begin{proof}
    The right hand side of \eqref{cocyc_cond_Gamma} is a function on $\Gamma \times \Gamma$ that does not depend on $c'$.
    The left hand side of \eqref{cocyc_cond_Gamma} is $\delta^{1,\star}(c')(\gamma_1,\gamma_2)$, where $\delta^{1,\star}$ is the boundary map for the $\star$-action.
    Hence multiplication of $(c',c_0)$ by an element of $Z^{1, \star}_{\cont}(\Gamma, \gU(R))$ (which amounts to adding a cocycle to $c'$) does not change this equation, and the difference of each two $c'$ satisfying \eqref{cocyc_cond_Gamma} is an element of $Z^{1, \star}_{\cont}(\Gamma, \gU(R))$.
    Hence the set of liftings is a torsor under $Z^{1, \star}_{\cont}(\Gamma, \gU(R))$.
\end{proof}

To get an obstruction theory, we need to see that the right hand side of \eqref{cocyc_cond_Gamma} is a $2$-cocycle.

\begin{lemma}\label{is_two_cocycle_Gamma}
    The function 
    $$\overline z \colon \Gamma \times \Gamma \longrightarrow \gU(R), ~(\gamma_1, \gamma_2) \mapsto z\left(c_0(\gamma_1), \gamma_1\big(c_0(\gamma_2)\big)\right)$$ is a $2$-cocycle for the $\star$-action.
\end{lemma}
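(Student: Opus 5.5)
We have to show $\delta^{2,\star}(\overline z)=0$, i.e. for all $\gamma_1,\gamma_2,\gamma_3\in\Gamma$,
\[
\gamma_1\star\overline z(\gamma_2,\gamma_3)-\overline z(\gamma_1\gamma_2,\gamma_3)+\overline z(\gamma_1,\gamma_2\gamma_3)-\overline z(\gamma_1,\gamma_2)=0 .
\]
The argument rests on two inputs. The first is the algebraic $2$-cocycle identity for $z$. This identity is forced by associativity of the multiplication \eqref{mult_in_P} on $\gP=\gU\times\gM$. Comparing $((0,m)(0,m'))(0,m'')$ with $(0,m)((0,m')(0,m''))$ gives, for all points $m,m',m''$ of $\gM$,
\[
m\cdot z(m',m'')-z(mm',m'')+z(m,m'm'')-z(m,m')=0 .
\]
This holds as an identity of morphisms $\gM^3\to\gU$, so it holds on $R$-points. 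The second input is that $\Gamma$ acts on $R$ by ring automorphisms. Since $z$ is a morphism of schemes defined over $\cO$ and the action is $\cO$-linear, $\gamma(z(m,m'))=z(\gamma m,\gamma m')$ for $m,m'\in\gM(R)$. For the same reason, $\gamma(m\cdot u)=\gamma(m)\cdot\gamma(u)$.

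To set up the substitution, write $a=c_0(\gamma_1)$, $b=\gamma_1(c_0(\gamma_2))$ and $c=\gamma_1\gamma_2(c_0(\gamma_3))$, all in $\gM(R)$. The cocycle property of $c_0$ gives
\[
c_0(\gamma_1\gamma_2)=ab,\qquad \gamma_1\bigl(c_0(\gamma_2\gamma_3)\bigr)=bc .
\]
Substituting, each term becomes a value of $z$:
\begin{itemize}
\item $\overline z(\gamma_1\gamma_2,\gamma_3)=z(ab,c)$;
\item $\overline z(\gamma_1,\gamma_2\gamma_3)=z(a,bc)$;
\item $\overline z(\gamma_1,\gamma_2)=z(a,b)$;
\item $\gamma_1\star\overline z(\gamma_2,\gamma_3)=c_0(\gamma_1)\cdot\gamma_1\bigl(z(c_0(\gamma_2),\gamma_2(c_0(\gamma_3)))\bigr)=a\cdot z(b,c)$, using the compatibility of $z$ with $\gamma_1$.
\end{itemize}
With these substitutions the expression to be killed is exactly the algebraic cocycle identity evaluated at $(m,m',m'')=(a,b,c)$, so it vanishes.

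It remains to check continuity, so that $\overline z$ is a continuous $2$-cochain. The map $\overline z$ is the composite of the continuous map $(\gamma_1,\gamma_2)\mapsto(c_0(\gamma_1),\gamma_1(c_0(\gamma_2)))$ with the map $z\colon\gM(R)^2\to\gU(R)$. The latter is continuous for the topology of \cite{ConradTopologies}, since $z$ is a morphism of affine $\cO$-schemes. Continuity of $(\gamma,r)\mapsto\gamma(r)$ on $\Gamma\times\gM(R)$ follows by embedding $\gM$ into an affine space.

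The main obstacle is bookkeeping rather than any single hard step. One must derive the cocycle identity for $z$ with the correct conventions (additive notation in $\gU$, and the action being $m\cdot u=s(m)us(m)^{-1}$). One must also make sure the twisted action $\star$ produces precisely the term $a\cdot z(b,c)$. I would first verify the associativity computation carefully, because a sign or ordering slip there would propagate into the final identity.
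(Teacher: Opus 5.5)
Your proof is correct and follows the paper's argument. Both evaluate the algebraic $2$-cocycle identity for $z$ at $\bigl(c_0(\gamma_1),\gamma_1(c_0(\gamma_2)),\gamma_1\gamma_2(c_0(\gamma_3))\bigr)$, then match terms using the cocycle property of $c_0$ and the definition of the $\star$-action. You also make explicit several points the paper leaves implicit: the equivariance $\gamma(z(m,m'))=z(\gamma m,\gamma m')$, the continuity of $\overline z$, and the derivation of the identity for $z$ from associativity of \eqref{mult_in_P}.
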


\begin{proof}
    We need to show that for all $\gamma_1,\gamma_2,\gamma_3 \in \Gamma$,
\begin{align}
    \overline z(\gamma_1, \gamma_2)+\overline z(\gamma_1\gamma_2, \gamma_3)=\gamma_1 \star \overline z(\gamma_2, \gamma_3) + \overline z(\gamma_1, \gamma_2\gamma_3). \label{zbar_cocyc}
\end{align} 
The $2$-cocycle condition for $z$ says that for $m_1, m_2, m_3 \in \gM(R)$, we have
\begin{align}
    z(m_1, m_2) + z(m_1m_2, m_3) = m_1 \cdot z(m_2, m_3) + z(m_1, m_2m_3) \label{z_cocyc}
\end{align} 
in $\gU(R)$.
Taking $m_1 = c_0(\gamma_1)$, $m_2 = \gamma_1(c_0(\gamma_2))$ and $m_3 = \gamma_1\gamma_2(c_0(\gamma_3))$, we obtain
\begin{equation} \label{eq:z_cocycle_applied_Gamma} 
    \begin{split}
    &z\left(c_0(\gamma_1), \gamma_1\big(c_0(\gamma_2)\big)\right) + z\left(c_0(\gamma_1)\gamma_1\big(c_0(\gamma_2)\big), \gamma_1\gamma_2\big(c_0(\gamma_3)\big)\right) \\ 
    &= c_0(\gamma_1) \cdot z\left(\gamma_1\big(c_0(\gamma_2)\big), \gamma_1\gamma_2\big(c_0(\gamma_3)\big)\right) + z\left(c_0(\gamma_1), \gamma_1\big(c_0(\gamma_2)\gamma_2(c_0(\gamma_3))\big)\right). 
    \end{split}
\end{equation}
Using the fact that $c_0 \in Z^1_{\cont}(\Gamma, \gM(R))$, we have
\begin{align*}
    z\left(c_0(\gamma_1)\gamma_1\big(c_0(\gamma_2)\big), \gamma_1\gamma_2\big(c_0(\gamma_3)\big)\right) &= z\left(c_0(\gamma_1\gamma_2), \gamma_1\gamma_2\big(c_0(\gamma_3)\big)\right) = \overline z(\gamma_1\gamma_2,\gamma_3), \\
    z\left(c_0(\gamma_1), \gamma_1\big(c_0(\gamma_2)\gamma_2(c_0(\gamma_3))\big)\right) &= z\left(c_0(\gamma_1), \gamma_1\big(c_0(\gamma_2\gamma_3)\big)\right) = \overline z(\gamma_1, \gamma_2\gamma_3).
\end{align*}
By the definition of the $\star$-action, we have
\begin{align*}
    c_0(\gamma_1) \cdot \gamma_1(z(c_0(\gamma_2), \gamma_2(c_0(\gamma_3)))) = \gamma_1 \star z(c_0(\gamma_2), \gamma_2(c_0(\gamma_3))) = \gamma_1 \star \overline z(\gamma_2,\gamma_3).
\end{align*}
It follows that \eqref{eq:z_cocycle_applied_Gamma} is equivalent to \eqref{zbar_cocyc}, which completes the proof.
\end{proof}

We can now state the obstruction theory for $\Gamma$-modules over $R$.
The group $H^{2,\star}_{\cont}(\Gamma, \gU(R))$ is the defined as the $2$-nd cohomology group of the standard complex of continuous cocycles for the $\star$-action of $\Gamma$ on the abelian topological group $\gU(R)$. 

\begin{proposition}
    The equation \eqref{cocyc_cond_Gamma} is equivalent to $\delta^{1,\star}(c') = -\overline z$.
    Moreover, the torsor in \Cref{fiber_is_torsor} is trivial, if and only if $[\overline z] = 0$ in $H^{2,\star}_{\cont}(\Gamma, \gU(R))$.
\end{proposition}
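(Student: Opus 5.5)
The first assertion is a reformulation of \Cref{lifting_sets_Gamma}. The left hand side of \eqref{cocyc_cond_Gamma} is exactly the standard inhomogeneous coboundary $\delta^{1,\star}(c')(\gamma_1,\gamma_2) = \gamma_1\star c'(\gamma_2) - c'(\gamma_1\gamma_2) + c'(\gamma_1)$ for the $\star$-action \eqref{star_Gamma_action}. The right hand side is $-\overline z(\gamma_1,\gamma_2)$ by definition of $\overline z$. So \eqref{cocyc_cond_Gamma} says $\delta^{1,\star}(c')=-\overline z$.

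Before this, I would record two facts. First, $\overline z$ is continuous: $c_0$ is continuous, the action of $\Gamma$ on $R$ is continuous, and $z$ is algebraic, so it induces a continuous map $\gM(R)\times\gM(R)\to\gU(R)$ for the topology of \cite{ConradTopologies}. Hence $\overline z$ is a continuous cochain. Second, by \Cref{is_two_cocycle_Gamma} it is a $\star$-cocycle, so its class $[\overline z]\in H^{2,\star}_{\cont}(\Gamma,\gU(R))$ is defined.

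For the second assertion, I take ``the torsor is trivial'' to mean that it is nonempty, i.e. that a lift of $c_0$ exists; by \Cref{fiber_is_torsor} a nonempty torsor is trivial. The two directions are then as follows.

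\emph{Lift implies vanishing class.} If a lift $(c',c_0)\in Z^1_{\cont}(\Gamma,\gP(R))$ exists, then $c'$ is continuous, being the composition of the continuous map $(c',c_0)$ with the projection $\gP\cong\gU\times\gM\to\gU$. So $\overline z=\delta^{1,\star}(-c')$ is a continuous coboundary and $[\overline z]=0$.

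\emph{Vanishing class implies lift.} If $\overline z=\delta^{1,\star}(b)$ for a continuous $b\colon\Gamma\to\gU(R)$, set $c'=-b$. Then \eqref{cocyc_cond_Gamma} holds, and \Cref{lifting_sets_Gamma} shows $(c',c_0)\in Z^1_{\cont}(\Gamma,\gP(R))$. Its continuity follows because $\gP(R)\cong\gU(R)\times\gM(R)$ is a homeomorphism, the scheme isomorphism inducing a homeomorphism on $R$-points.

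The only delicate point is these topological compatibilities, namely the continuity of $\overline z$ and the product-homeomorphism, and both follow from the functoriality of the topology on points of affine schemes. The rest is bookkeeping.
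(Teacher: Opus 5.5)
Your proposal is correct and follows the paper's proof. The left-hand side of \eqref{cocyc_cond_Gamma} is literally $\delta^{1,\star}(c')$, and the chain ``$[\overline z]=0$ iff $\overline z$ is a coboundary iff \eqref{cocyc_cond_Gamma} has a solution iff the torsor is nonempty'' is exactly the paper's argument. The continuity checks you add are left implicit in the paper and are a sound supplement: that $\overline z$ is a continuous cochain, and that $c'$ is continuous iff $(c',c_0)$ is, via the homeomorphism $\gP(R)\cong\gU(R)\times\gM(R)$.
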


\begin{proof}
    The first claim holds by definition.
    Clearly $[\overline z] = 0$ if and only if $\overline z$ is a boundary, if and only if the equation \eqref{cocyc_cond_Gamma} can be solved by some $c'$, if and only if the torsor is trivial.
\end{proof}

\subsection{The pro-cyclic case for $\Gamma$-modules}
\label{pro_cyclic_case}

Assume that $\Gamma$ is a pro-cyclic group which contains $\Zp$ as an open subgroup.
In particular, $\Gamma$ is a product of $\Zp$ and a finite cyclic group of order prime to $p$.
We fix a topological generator $\gamma_0\in\Gamma$.

\begin{lemma}\label{inverter_augmetation}
    The map $\gamma_0 - 1 \colon \Zp\br{\Gamma} \to \Zp\br{\Gamma}$ is injective with image equal to the augmentation ideal $I$ of $\Zp\br{\Gamma}$.
    In particular, there is a continuous inverse $I \to \Zp\br{\Gamma}, ~\gamma-1 \mapsto \tfrac{\gamma-1}{\gamma_0-1}$.
\end{lemma}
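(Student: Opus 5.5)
The plan is to reduce to an explicit description of the Iwasawa algebra. Since $\Gamma$ is pro-cyclic and contains $\Zp$ as an open subgroup, we can write $\Gamma \cong \Zp \times \Delta$ with $\Delta$ cyclic of order $d$ prime to $p$. Then $\Zp\br{\Gamma} \cong \Zp\br{T}[\Delta]$, where $T = \gamma_1 - 1$ for a topological generator $\gamma_1$ of the $\Zp$-factor. Because $d$ is invertible in $\Zp$, the group algebra $\Zp[\Delta]$ is étale over $\Zp$. It therefore splits, after the finite unramified extension $\cO'$ generated by the $d$-th roots of unity, as a product of copies $\cO'$ indexed by the characters $\chi$ of $\Delta$. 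Working after this faithfully flat base change is harmless for both injectivity and the identification of the image. Under the splitting, $\Zp\br{\Gamma}\otimes\cO' \cong \prod_\chi \cO'\br{T}$, the generator $\gamma_0 = \gamma_1^{a}\delta_0$ (with $a\in\Zp^\times$ and $\delta_0$ generating $\Delta$) maps to the tuple $\big(\chi(\delta_0)(1+T)^{a}\big)_\chi$, and the augmentation ideal corresponds to the ideal $T\cdot\cO'\br{T}$ in the trivial-character factor times the whole ring in the other factors.

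Next I would check the components one by one. For $\chi \neq 1$, the root of unity $\chi(\delta_0)$ is $\neq 1$ of order prime to $p$, so $\chi(\delta_0)-1$ is a unit of $\cO'$. Since $(1+T)^a - 1 \in T\cO'\br{T}$, the element $\chi(\delta_0)(1+T)^a - 1$ is a unit in $\cO'\br{T}$. For $\chi = 1$, we have $(1+T)^a - 1 = aT\cdot u(T)$ with $u(0)=1$ and $a$ a unit, so this element equals $T$ times a unit. In both cases multiplication by $\gamma_0 - 1$ is injective, because $\cO'\br{T}$ is a domain. Its image is the unit ideal in the nontrivial factors and $T\cO'\br{T}$ in the trivial factor, which is exactly the augmentation ideal $I$ (tensored with $\cO'$). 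Descending along $\Zp\to\cO'$ then gives injectivity and $\im(\gamma_0-1)=I$ over $\Zp$.

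For the continuous inverse, I note that multiplication by $\gamma_0-1$ is a continuous bijection $\Zp\br{\Gamma}\to I$ between compact Hausdorff spaces, so it is a homeomorphism. Its inverse therefore sends $\gamma-1 \mapsto \frac{\gamma-1}{\gamma_0-1}$ continuously.

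The main obstacle I anticipate is making sure that $\gamma_0$ really has both a unit $\Zp$-exponent $a$ and a generating $\Delta$-component. I would deduce this from $\gamma_0$ being a topological generator: its image in both quotients $\Zp$ and $\Delta$ must generate them.
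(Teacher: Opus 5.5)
Your proof is correct, but it takes a different route from the paper.

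The paper never uses the structure $\Zp\br{\Gamma}\cong\Zp\br{T}[\Delta]$. For injectivity it passes to the finite cyclic quotients $\Gamma_i$. On $\Zp[\Gamma_i]$ the kernel of $\gamma_0-1$ is $\Zp\cdot\sum_{\gamma\in\Gamma_i}\gamma$. Compatibility under the transition maps forces the coefficient to be divisible by $\lvert\ker(\Gamma_j\to\Gamma_i)\rvert$, hence by arbitrarily large powers of $p$, hence zero. For the image, it uses that $(\gamma_0-1)\Zp\br{\Gamma}$ is compact, hence closed. So this ideal contains every $\gamma-1$ as a limit of $\gamma_0^n-1=(\gamma_0-1)(1+\gamma_0+\dots+\gamma_0^{n-1})$, and therefore contains all of $I$.

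That argument is softer and more general. It only needs the $p$-parts of the orders of the finite quotients to be unbounded. It requires no extension of scalars, no splitting of $\Zp[\Delta]$, and no faithfully flat descent.

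Your argument relies essentially on $\Delta$ being finite of order prime to $p$. You need this both for $\Zp[\Delta]$ to be étale and for $\chi(\delta_0)-1$ to be a unit. In exchange you get a fully explicit picture: after adjoining $\mu_d$, $\gamma_0-1$ is a unit in every nontrivial isotypic component and a unit multiple of $T$ in the trivial one. This makes injectivity, the identification of $I$, and the shape of the inverse map transparent.

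The continuity of the inverse is handled the same way in both, by compactness.
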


\begin{proof}
    Passing to finite quotients $\Gamma_i$ of $\Gamma$ an element of $\ker(\gamma_0-1)$ must map to $a \cdot \sum_{\gamma \in \Gamma_i} \gamma$ in $\Zp[\Gamma_i]$ for some $a\in \Zp$.
    Since $\Gamma$ contains an open pro-$p$ subgroup, it follows, that $a$ is infinitely $p$-divisible, so $a=0$.
    Since $\Zp\br{\Gamma}$ is profinite, the ideal generated by $\gamma_0 - 1$ is closed, so contains every element of the form $\gamma-1$ for $\gamma \in \Gamma$.
    It follows, that $(\gamma_0-1)$ is the augmentation ideal. As both $I$ and $\Zp\br{\Gamma}$ are profinite, the existence of a continuous inverse follows as well.
\end{proof}

\begin{lemma}\label{projectivity_lemma}
    The map $\big[\Zp\br{\Gamma} \xrightarrow{\gamma_0-1} \Zp\br{\Gamma}\big] \eqto \Zp\br{\Gamma^{\bullet+1}}$ given by the identity in degree $0$ and $\gamma \mapsto (\gamma, \gamma \gamma_0)$ in degree $1$ is a chain homotopy equivalence. 
\end{lemma}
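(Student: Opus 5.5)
The plan is to compare the two-term complex with the standard bar resolution of the trivial module $\Zp$ over the completed group ring $\Zp\br{\Gamma}$. Both complexes are augmented resolutions of $\Zp$ by projective (profinite, topologically free) $\Zp\br{\Gamma}$-modules, and a chain map lifting the identity on $\Zp$ between projective resolutions is automatically a chain homotopy equivalence. So the work splits into three checks: that both complexes are resolutions, that the given map is a chain map over the augmentation, and that the homotopy is continuous in the profinite setting.

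\textbf{Step 1: the two-term complex.} By \Cref{inverter_augmetation}, multiplication by $\gamma_0-1$ is injective on $\Zp\br{\Gamma}$ with image the augmentation ideal $I$. Hence
\[
0\to \Zp\br{\Gamma}\xrightarrow{\gamma_0-1}\Zp\br{\Gamma}\to\Zp\to 0
\]
is exact, and the two-term complex is a free resolution of $\Zp$.

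\textbf{Step 2: the bar complex.} The complex $\Zp\br{\Gamma^{\bullet+1}}$ is the homogeneous bar resolution with diagonal $\Gamma$-action. It is exact, with the standard contracting homotopy $(g_0,\dots,g_n)\mapsto(1,g_0,\dots,g_n)$, which is continuous. Each term is projective, since $\Zp\br{\Gamma^{n+1}}\cong\Zp\br{\Gamma}\widehat\otimes\Zp\br{\Gamma^n}$ is a completed free module.

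\textbf{Step 3: the map is a chain map.} I read the degree-one map as sending $\gamma$ to $(\gamma,\gamma\gamma_0)$, extended $\Gamma$-equivariantly and continuously. I need $d(\gamma,\gamma\gamma_0)=\gamma\gamma_0-\gamma$ under the convention $d(g_0,g_1)=g_1-g_0$. This is $\gamma(\gamma_0-1)$, which equals the image of $\gamma$ under $\gamma_0-1$ followed by the identity in degree $0$. The augmentations agree on both sides. If the paper's sign convention is the opposite one, I will adjust the sign in degree one so the square commutes.

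\textbf{Step 4: the homotopy inverse and homotopies.} This is the main obstacle. The comparison theorem gives a map back and the homotopies abstractly, but I must produce them continuously. Using the contracting homotopy on the bar side and the continuous inverse $I\to\Zp\br{\Gamma}$, $x\mapsto x/(\gamma_0-1)$, from \Cref{inverter_augmetation}, I define the inverse explicitly:
\begin{itemize}
\item in degree $0$, by the identity;
\item in degree $1$, by $(g_0,g_1)\mapsto \frac{g_1-g_0}{\gamma_0-1}$;
\item in degrees $\ge 2$, by zero.
\end{itemize}
One checks directly that this is a chain map, and that the composite on the two-term complex is the identity. For the composite on the bar side, I build the homotopy to the identity inductively, degree by degree, using the continuous contracting homotopy of the bar resolution. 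Continuity at each stage follows because every ingredient is continuous and the modules are profinite.
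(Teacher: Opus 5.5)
Your proof is correct. Its skeleton, comparing two projective resolutions of $\Zp$, is the paper's, but you handle the actual difficulty differently.

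The paper's entire argument is the claim you assert without proof in Step 2: that $\Zp\br{\Gamma^{i+1}}$ is projective in the category of \emph{profinite} $\Zp\br{\Gamma}$-modules. The paper deduces this from Serre's theorem that surjections of profinite modules admit continuous sections. Once that is known, the comparison theorem, run inside that category, yields a continuous $\Zp\br{\Gamma}$-linear inverse and homotopies automatically, so nothing like your Step 4 is needed. Calling the terms ``completed free'' does not by itself give projectivity there; the continuous-section theorem is exactly what supplies it.

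Your Step 4 bypasses this and works.
\begin{itemize}
\item The explicit retraction $\psi$, built from the continuous division by $\gamma_0-1$ of \Cref{inverter_augmetation}, is a chain map with $\psi\circ\phi=\id$ on the nose.
\item The homotopy $\phi\circ\psi\simeq\id$ can be built from the bar contracting homotopy $s$, which supplies continuous lifts along $d$ with no appeal to Serre.
\end{itemize}

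One point must be made explicit. The map $s$ is only $\Zp$-linear, so the naive recursion $h_n=s\circ(\id-\phi\psi-h_{n-1}d)$ on all of $\Zp\br{\Gamma^{n+1}}$ gives a non-equivariant homotopy. That is useless when you apply $\Hom_{\Zp\br{\Gamma}}(-,M)$ in \Cref{comparison_herr_Gamma_modules}. Instead, define $h_n$ by that formula only on the generators $(1,g_1,\dots,g_n)$. Then extend via the universal property of $\Zp\br{\Gamma^{n+1}}$ as the free profinite $\Zp\br{\Gamma}$-module on the profinite set $\Gamma^n$, sending $(g_0,\dots,g_n)$ to $g_0\,h_n(1,g_0^{-1}g_1,\dots,g_0^{-1}g_n)$. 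The element $y=(\id-\phi\psi-h_{n-1}d)(x)$ is a cycle by induction, since that statement is about equivariant maps. Hence $ds(y)=y$, the identity $dh_n+h_{n-1}d=\id-\phi\psi$ holds on generators, and therefore it holds everywhere.

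No sign adjustment is needed: with $d(g_0,g_1)=g_1-g_0$ the square commutes as stated.

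In short, the paper's route is shorter and gives a projectivity statement reusable for any comparison. Yours is more hands-on, avoids Serre's theorem by exploiting the explicit contracting homotopy, and exhibits $\phi$ as a split injection with an explicit retraction.
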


Here $\Zp\br{\Gamma^{\bullet+1}}$ is the completed version of the usual bar resolution of the trivial $\Zp\br{\Gamma}$-module $\Zp$.

\begin{proof}
    By \Cref{inverter_augmetation} the first complex is a projective resolution of the trivial module.
    The claim follows by the usual argument once we know that $\Zp\br{\Gamma^{i+1}}$ is projective in the category of profinite $\Zp\br{\Gamma}$-modules. This follows by the adjunction property of $\Zp\br{\Gamma} \wtimes -$ once we know that $\Zp\br{\Gamma^i}$ is projective in the category of profinite $\Zp$-modules. Given a surjection $M \to M'$ of profinite $\Zp$-modules, there exists a continuous section $M' \to M$ by \cite[§I.1 Proposition 1]{SerreBook}. A continuous map $\Gamma^i \to M$ hence lifts to $M'$ and both extend uniquely to a map on $\Zp\br{\Gamma^i}$ by the universal property.
\end{proof}

Assume that we are in the setting of \Cref{sec_obs_Gamma_mod} with $\Zp \subseteq \cO$, and that $M$ is a limit of complete normed $\Zp$-modules (we do not assume the norm to be bounded).

\begin{lemma}\label{comparison_herr_Gamma_modules}
    The map 
    \begin{align}
        \Cont(\Gamma^{\bullet}, M) \longrightarrow \big[M \xrightarrow{\gamma_0-1} M\big] \label{comp_map_Gamma}
    \end{align}
    given by the identity in degree $0$ and evaluation at $\gamma_0$ in degree $1$ is a chain homotopy equivalence.
\end{lemma}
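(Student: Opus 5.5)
The plan is to deduce the statement from \Cref{projectivity_lemma} by applying the functor $\Hom_{\cont,\Zp\br{\Gamma}}(-,M)$. First I will identify $\Cont(\Gamma^{i}, M)$ with $\Hom_{\cont,\Zp\br{\Gamma}}(\Zp\br{\Gamma^{i+1}}, M)$. The identification is the usual one between inhomogeneous and homogeneous cochains: $f$ corresponds to $(g_0,\dots,g_i)\mapsto g_0 f(g_0^{-1}g_1,\dots,g_{i-1}^{-1}g_i)$. It is compatible with the differentials of the completed bar resolution.

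For this identification to make sense, a continuous $\Gamma$-equivariant map $\Gamma^{i+1}\to M$ must extend uniquely to a continuous $\Zp\br{\Gamma}$-linear map $\Zp\br{\Gamma^{i+1}}\to M$. This is where the hypothesis on $M$ is used. Write $M=\varprojlim M_j$ with each $M_j$ a complete normed $\Zp$-module; it then suffices to treat each $M_j$ separately. A continuous map from a profinite set $S$ into a complete normed $\Zp$-module extends to $\Zp\br{S}$ as a limit of Riemann-type sums over finite quotients of $S$. These sums converge because the map is uniformly continuous and the norm is $p$-adically complete. Uniqueness follows because $\Zp[S]$ is dense in $\Zp\br{S}$.

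Similarly, $\Hom_{\cont,\Zp\br{\Gamma}}(\Zp\br{\Gamma},M)=M$ via evaluation at $1$. Under this identification, precomposition with $\gamma_0-1$ becomes $m\mapsto(\gamma_0-1)m$. Applying $\Hom_{\cont}(-,M)$ to the chain homotopy equivalence of \Cref{projectivity_lemma} therefore yields a chain homotopy equivalence
\[
\Cont(\Gamma^\bullet,M)\to\big[M\xrightarrow{\gamma_0-1}M\big],
\]
since additive functors preserve chain homotopies.

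It remains to check that this map agrees with \eqref{comp_map_Gamma}. In degree $0$ it is the identity. In degree $1$, a homogeneous cochain $F$ is sent to $F\circ(1\mapsto(1,\gamma_0))=F(1,\gamma_0)$. The corresponding inhomogeneous cochain is $f(\gamma_0)$, which is evaluation at $\gamma_0$, as required.

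The main obstacle is the extension and continuity step for a general limit of normed modules, and making sure the homotopies from \Cref{projectivity_lemma}, which are continuous $\Zp\br{\Gamma}$-linear maps between profinite modules, induce continuous maps on cochains. If this turns out to be delicate, I would instead construct the homotopy directly. For that I would use the continuous inverse $\frac{\gamma-1}{\gamma_0-1}$ from \Cref{inverter_augmetation}, acting on $M$, to write a $1$-cocycle $c$ as $c(\gamma)=\frac{\gamma-1}{\gamma_0-1}c(\gamma_0)$, and then build explicit higher contracting homotopies in the same way.
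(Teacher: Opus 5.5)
Your proposal is correct and follows the paper's proof. Both apply $\Hom_{\cont}(-,M)$ to the homotopy equivalence of \Cref{projectivity_lemma}, reduce to a single complete normed $\Zp$-module, and identify $\Hom_{\cont}(\Zp\br{\Gamma^{i+1}},M)$ with $\Cont(\Gamma^{i+1},M)$; the paper does this via an open lattice and the universal property for pro-discrete modules, you via Riemann sums, which amounts to the same thing. The continuity concern you flag is harmless, because precomposing continuous homomorphisms with the continuous homotopies of \Cref{projectivity_lemma} keeps them continuous, so your fallback construction is not needed.
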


\begin{proof}
    This is an immediate consequence of \Cref{projectivity_lemma} and the isomorphism $\Hom_{\cont}(\Zp\br{\Gamma^{i+1}}, M) \cong \Cont(\Gamma^{i+1}, M)$. To prove the latter, we may assume that $M$ is a complete normed $\Zp$-module.
    By a standard argument $M$ has an open lattice.
    Since $\Zp\br{\Gamma^{i+1}}$ is compact any map $\Zp\br{\Gamma^{i+1}} \to M$ takes values in an open lattice.
    The claim now follows form the universal property of the map $\Gamma^{i+1} \to \Zp\br{\Gamma^{i+1}}$ with respect to maps into pro-discrete $\Zp$-modules.
\end{proof}

\subsection{The standard complex for $(\varphi, \Gamma)$-cohomology}
\label{sec_std_cplx}

Let $M$ be a topological $R$-module with continuous semilinear $(\varphi, \Gamma)$-action.
We define the \emph{standard complex} for $(\varphi, \Gamma)$-cohomology as
\begin{align}
    C^{\bullet}_{\varphi, \Gamma}(M) \colonequals \Tot \big[\Cont(\Gamma^{\bullet},M) \xrightarrow{\varphi-1} \Cont(\Gamma^{\bullet},M) \big]. \label{std_cplx}
\end{align}
Concretely, this means that 
$C^n_{\varphi, \Gamma}(M) = \Cont(\Gamma^{n-1}, M) \oplus \Cont(\Gamma^n, M)$ with boundary maps $$\delta^n_{\varphi,\Gamma}(c,d) = \big((\varphi-\id)(d) - \delta^{n-1}(c), \delta^n(d)\big),$$ where
\begin{align*}
    \delta^n(f)(\gamma_1, \dots, \gamma_{n+1}) &= \gamma_1 f(\gamma_2, \dots, \gamma_{n+1}) \\
    &+ \sum\limits_{i=1}^{n} (-1)^i f(\gamma_1, \dots, \gamma_{i-1}, \gamma_i \gamma_{i+1}, \gamma_{i+2}, \dots, \gamma_{n+1}) + (-1)^{n+1} f(\gamma_1, \dots, \gamma_n)
\end{align*}
is the usual boundary map of the complex $\Map(\Gamma^{\bullet}, M)$ of inhomogeneous cochains.

We write $Z^i_{\varphi, \Gamma}(M)$, $B^i_{\varphi, \Gamma}(M)$ and $H^i_{\varphi, \Gamma}(M)$ for the groups of cocycles, coboundaries and cohomology classes of the complex $C^{\bullet}_{\varphi, \Gamma}(M)$. 
For convenience we spell out $\delta^n_{\varphi,\Gamma}$ in low degrees:
\begin{align*}
    \delta^0_{\varphi,\Gamma}(d) &= \big(\varphi(d)-d, \big[\gamma \mapsto \gamma(d) - d\big]\big), \\
    \delta^1_{\varphi,\Gamma}(c,d) &= \big(\big[\gamma \mapsto \varphi(d(\gamma))-d(\gamma) -(\gamma(c)-c)\big], \big[(\gamma_1,\gamma_2) \mapsto \gamma_1d(\gamma_2) - d(\gamma_1\gamma_2)+d(\gamma_1)\big]\big), \\
    \delta^2_{\varphi,\Gamma}(c,d) &= \big(\big[(\gamma_1,\gamma_2) \mapsto \varphi(d(\gamma_1,\gamma_2)) - d(\gamma_1,\gamma_2) - \big(\gamma_1c(\gamma_2)-c(\gamma_1\gamma_2)+c(\gamma_1)\big)\big], \delta^2(d)\big).
\end{align*}

The maps $\delta^0_{\varphi,\Gamma}$ and $\delta^1_{\varphi,\Gamma}$ also make sense in a non-abelian setting, where $M$ is just a group equipped with commuting actions of $\varphi$ and $\Gamma$. 
Our convention is then $\delta^0_{\varphi,\Gamma}(d) = (d^{-1}\varphi(d), [\gamma \mapsto d^{-1}\gamma(d)])$.
If $\gH$ is an $\cO$-group scheme, we will consider $\gH(R)$ with the induced $\varphi$- and $\Gamma$-actions or some twisted action in \Cref{obstruction_PG}. If $\gH$ is commutative, the entire standard complex still makes sense and we will also use $\delta^2_{\varphi,\Gamma}$ under this assumption.

\subsection{Obstruction theory for $\gH$-$(\varphi, \Gamma)$-modules}
\label{obstruction_PG}
Let $\gH$ be an algebraic group over $\mathcal{O}$. The set of $\gH$-trivialized $\gH$-$(\varphi, \Gamma)$-modules can be described in terms of nonabelian $(\varphi,\Gamma)$-$1$-cocycles.
In the spirit of the non-abelian first boundary map $\delta^1_{\varphi,\Gamma}$ from the previous section, we make the following definition:

\begin{definition}
A \emph{nonabelian $(\varphi, \Gamma)$-$1$-cocycle} with values in $\gH(R)$ is a pair $(w, c) \in  \gH(R) \times Z^1_{\cont}(\Gamma,  \gH(R))$ satisfying 
\begin{align}
    w \varphi(c(\gamma)) = c(\gamma) \gamma(w) \label{PG_commutation_rule}
\end{align}
for all $\gamma \in \Gamma$. We denote the set of nonabelian $(\varphi, \Gamma)$-$1$-cocycles by $Z^1_{\varphi, \Gamma}(\gH(R))$.
\end{definition}

Fix $(w_0,c_0) \in Z^1_{\varphi, \Gamma}(\gM(R))$.
The set of lifts of $(w_0,c_0)$ along \eqref{SES_lifting} to an element of $Z^1_{\varphi, \Gamma}(\gP(R))$ will be a torsor under $Z^{1, \star}_{\varphi, \Gamma}(\gU(R))$. Here the $\star$ stands for the $c_0$-twisted action of $\varphi$ and $\Gamma$ on $\gU(R)$, where the $\Gamma$-action is defined as in \eqref{star_Gamma_action}, and the $\varphi$-action is given by
\begin{align}
    \varphi \star u &:= w_0 \cdot \varphi(u). \label{star_varphi_action}
\end{align}

\begin{lemma}\label{lifting_sets}
    For $w' \in \gU(R)$ and a continuous map $c' \colon \Gamma \to \gU(R)$ the condition that $((w',w_0),(c',c_0)) \in Z^1_{\varphi, \Gamma}(\gP(R))$ is equivalent to \eqref{cocyc_cond_Gamma} and
    \begin{align} 
        w' + \varphi \star c'(\gamma) - (c'(\gamma) + \gamma \star w') &= z\big(c_0(\gamma),\gamma(w_0)\big) - z\big(w_0, \varphi\big(c_0(\gamma)\big)\big) \label{PG_additive_commutation_rule}
    \end{align}
    for all $\gamma \in \Gamma$.
\end{lemma}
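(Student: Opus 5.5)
The plan is a direct computation in the coordinates $\gP \cong \gU \times \gM$, exactly parallel to the proof of \Cref{lifting_sets_Gamma}. By definition, $((w',w_0),(c',c_0))$ lies in $Z^1_{\varphi,\Gamma}(\gP(R))$ if and only if two things hold. First, $(c',c_0)$ must lie in $Z^1_{\cont}(\Gamma,\gP(R))$. Second, the commutation rule \eqref{PG_commutation_rule} must hold for the pair $((w',w_0),(c',c_0))$.

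The first condition is handled entirely by \Cref{lifting_sets_Gamma}. Since $c_0$ is already a continuous cocycle, this condition is equivalent to \eqref{cocyc_cond_Gamma}. Continuity of $(c',c_0)$ is equivalent to continuity of $c'$, because $\gP(R)\cong \gU(R)\times\gM(R)$ as topological spaces.

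For the second condition, I will expand both sides of \eqref{PG_commutation_rule} using the multiplication rule \eqref{mult_in_P}. The maps $\varphi$ and $\gamma$ act on $\gP(R)$ coordinatewise, because they come from ring endomorphisms of $R$ and the decomposition $\gP\cong\gU\times\gM$ is one of schemes over $\cO$. Hence the left side is
\[ (w',w_0)\cdot\big(\varphi(c'(\gamma)),\varphi(c_0(\gamma))\big)=\big(w'+w_0\cdot\varphi(c'(\gamma))+z(w_0,\varphi(c_0(\gamma))),\ w_0\varphi(c_0(\gamma))\big), \]
and the right side is
\[ \big(c'(\gamma)+c_0(\gamma)\cdot\gamma(w')+z(c_0(\gamma),\gamma(w_0)),\ c_0(\gamma)\gamma(w_0)\big). \]
The $\gM$-components agree because $(w_0,c_0)\in Z^1_{\varphi,\Gamma}(\gM(R))$. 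By the definitions \eqref{star_Gamma_action} and \eqref{star_varphi_action}, we have $w_0\cdot\varphi(c'(\gamma))=\varphi\star c'(\gamma)$ and $c_0(\gamma)\cdot\gamma(w')=\gamma\star w'$. Equating the $\gU$-components and moving the $z$-terms to one side then gives exactly \eqref{PG_additive_commutation_rule}.

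I do not expect any real obstacle; the only point needing care is a convention. I must make sure that the $\gM$-action on $\gU$ defined through the section $s$ commutes with $\varphi$ and $\gamma$. This holds because $s$ and the action morphism are defined over $\cO$. This compatibility is what justifies writing $\varphi(m\cdot u)=\varphi(m)\cdot\varphi(u)$ implicitly, and applying $\varphi$ and $\gamma$ coordinatewise, in the expansion above.
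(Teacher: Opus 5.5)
Your proposal is correct and is essentially the paper's proof: reduce the cocycle condition to \Cref{lifting_sets_Gamma}, then expand both sides of \eqref{PG_commutation_rule} via \eqref{mult_in_P}, cancel the $\gM$-components using that $(w_0,c_0)$ is a cocycle, and rewrite the $\gU$-components with the $\star$-actions to get \eqref{PG_additive_commutation_rule}. Your closing worry about the $\gM$-action commuting with $\varphi$ and $\gamma$ is harmless but not needed here. The computation only uses that $\varphi$ and $\gamma$ act coordinatewise on $\gP(R)\cong\gU(R)\times\gM(R)$, which holds because the decomposition is one of $\cO$-schemes.
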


\begin{proof}
    For \eqref{cocyc_cond_Gamma} we proceed as in the proof of \Cref{lifting_sets_Gamma}.    
    We do the commutation rule \eqref{PG_additive_commutation_rule}.
    On the left hand side of \eqref{PG_commutation_rule} we have
    \begin{align*}
        (w',w_0) \cdot \left(\varphi\big(c'(\gamma)\big), \varphi\big(c_0(\gamma)\big)
    \right) &= \left(w' + w_0 \cdot \varphi\big(c'(\gamma)\big) + z\left(w_0, \varphi\big(c_0(\gamma)\big)\right), w_0\varphi\big(c_0(\gamma)\big)\right).
    \end{align*}
    On the right hand side, we have
    \begin{align*}
        \big(c'(\gamma), c_0(\gamma)\big) \cdot\big(\gamma(w'), \gamma(w_0)\big) &= \left(c'(\gamma) + c_0(\gamma) \cdot \gamma(w') + z\big(c_0(\gamma),\gamma(w_0)\big), c_0(\gamma) \gamma(w_0)\right).
    \end{align*}
    Since $(w_0,c_0)$ satisfies \eqref{PG_commutation_rule}, the condition can be rewritten as
    \begin{align*}
        w' + w_0 \cdot \varphi\big(c'(\gamma)\big) - \big(c'(\gamma) + c_0(\gamma) \cdot \gamma(w')\big) = z\big(c_0(\gamma),\gamma(w_0)\big) - z\left(w_0, \varphi\big(c_0(\gamma)\big)\right).
    \end{align*}
    This is \eqref{PG_additive_commutation_rule} using the definition of the $\star$-actions.
\end{proof}

\begin{lemma}\label{fiber_is_torsor_PG}
    Pointwise multiplication $Z^{1, \star}_{\varphi,\Gamma}(\gU(R)) \times \gP(R) \times \Cont(\Gamma, \gP(R)) \to \gP(R) \times \Cont(\Gamma, \gP(R))$ preserves the fiber of $(w_0,c_0)$ in $Z^1_{\varphi,\Gamma}(\gP(R))$  and turns it into a set-theoretic torsor under $Z^{1, \star}_{\varphi,\Gamma}(\gU(R))$.
\end{lemma}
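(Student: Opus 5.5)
The approach mirrors \Cref{fiber_is_torsor}, now with the $\varphi$-component added. By \Cref{lifting_sets}, a pair $\big((w',w_0),(c',c_0)\big)$ lies in the fiber of $(w_0,c_0)$ in $Z^1_{\varphi,\Gamma}(\gP(R))$ exactly when $(w',c')\in \gU(R)\times\Cont(\Gamma,\gU(R))$ satisfies the two equations \eqref{cocyc_cond_Gamma} and \eqref{PG_additive_commutation_rule}. The plan is to recognise their left-hand sides as the components of the non-abelian boundary map $\delta^1_{\varphi,\Gamma}$ for the $\star$-twisted action on the abelian group $\gU(R)$ (written additively).

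\textbf{Step 1.} Compare with the explicit formula for $\delta^1_{\varphi,\Gamma}$ from \Cref{sec_std_cplx}. For the $\star$-action, up to an overall sign in the first component, it reads
\[ \delta^{1,\star}_{\varphi,\Gamma}(w',c') = \big([\gamma\mapsto \varphi\star c'(\gamma)-c'(\gamma)-(\gamma\star w'-w')],\ [(\gamma_1,\gamma_2)\mapsto \gamma_1\star c'(\gamma_2)-c'(\gamma_1\gamma_2)+c'(\gamma_1)]\big). \]
Thus the lifting conditions say that $\delta^{1,\star}_{\varphi,\Gamma}(w',c')$ equals a fixed element $-\overline{z}_{\varphi,\Gamma}$. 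This element is built from $z$, $w_0$ and $c_0$ alone: its first component is the right-hand side of \eqref{PG_additive_commutation_rule}, and its second is $-\overline z$.

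\textbf{Step 2.} The operators $\varphi\star$ and $\gamma\star$ act additively on $\gU(R)$. Indeed, $m\cdot(-)$ is a group automorphism of the commutative group $\gU$, since it is conjugation by $s(m)$ in $\gP$ and $\gU$ is normal; and $\varphi$ and $\gamma$ are ring endomorphisms. Hence $\delta^{1,\star}_{\varphi,\Gamma}$ is a group homomorphism, and its kernel is $Z^{1,\star}_{\varphi,\Gamma}(\gU(R))$. Continuity is also preserved: a sum of continuous maps into $\gU(R)$ is continuous.

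\textbf{Step 3.} Check that the action is given by multiplication in $\gP$. By \eqref{mult_in_P}, $(u,1)\cdot(w',w_0)=(u+w',w_0)$, so left multiplication by $\gU(R)$ is addition in the first coordinate; here one uses that $z$ is normalized, so $z(1,m)=0$, and that $1\cdot u=u$. Multiplying pointwise by $(a,d)\in Z^{1,\star}_{\varphi,\Gamma}(\gU(R))$ therefore sends $(w',c')$ to $(w'+a,c'+d)$. This leaves $\delta^{1,\star}_{\varphi,\Gamma}$ unchanged, so the fiber is preserved. Any two solutions differ by an element of the kernel, and the action is free because addition is. Together these make the fiber a set-theoretic torsor, possibly empty.

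I expect the main obstacle to be Step 1, specifically bookkeeping the signs. One has to match the ordering in \eqref{PG_additive_commutation_rule} with the sign convention of $\delta^1_{\varphi,\Gamma}$, and confirm that the $w'$-terms really enter as $\gamma\star w'-w'$, given that $w'$ sits on the left in $(w',w_0)$. Everything else is formal.
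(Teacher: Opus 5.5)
Your proof is correct and follows the paper's argument. The right-hand sides of \eqref{cocyc_cond_Gamma} and \eqref{PG_additive_commutation_rule} do not depend on $(w',c')$, the left-hand sides together form $\delta^{1,\star}_{\varphi,\Gamma}(w',c')$, and one concludes as in \Cref{fiber_is_torsor}; your Steps 2 and 3 (additivity of the $\star$-actions, and $(u,1)\cdot(w',w_0)=(u+w',w_0)$ via normalization of $z$) just spell out what the paper leaves implicit. Your sign worry is unfounded: with the paper's convention $\delta^1_{\varphi,\Gamma}(c,d)(\gamma)=\varphi(d(\gamma))-d(\gamma)-(\gamma(c)-c)$, the left-hand side of \eqref{PG_additive_commutation_rule} is exactly the first component of $\delta^{1,\star}_{\varphi,\Gamma}(w',c')$, with no extra sign.
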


\begin{proof}
    The right hand side of \eqref{PG_additive_commutation_rule} is a function on $\Gamma$ that does not depend on $(w',c')$.
    The same applies to \eqref{cocyc_cond_Gamma} as in the proof of \Cref{fiber_is_torsor}.
    The left hand side of \eqref{PG_additive_commutation_rule} together with the left hand side of \eqref{PG_additive_commutation_rule} are $\delta^1_{\varphi, \Gamma}(w',c')(\gamma_1, \gamma_2)$.
    We conclude in the same way as in \Cref{fiber_is_torsor}.
\end{proof}

On the right hand side of \eqref{PG_additive_commutation_rule}, the negative of the $2$-cocycle associated to $z$ shows up:

\begin{lemma}\label{is_two_cocycle_PG}
    The pair $$ \tilde z = (F, \overline z) := \left(\big[\gamma \mapsto z\big(w_0, \varphi(c_0(\gamma))\big) - z\big(c_0(\gamma),\gamma(w_0)\big)\big], \big[(\gamma_1, \gamma_2) \mapsto z\big(c_0(\gamma_1), \gamma_1(c_0(\gamma_2))\big)\big]\right)$$ is a $(\varphi,\Gamma)$-$2$-cocycle in the standard complex \eqref{std_cplx} for the $\star$-twisted actions on $\gU(R)$, i.e. $\delta^{2,\star}_{\varphi, \Gamma}(\tilde z) = 0$.
\end{lemma}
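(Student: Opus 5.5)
We need to show $\delta^{2,\star}_{\varphi,\Gamma}(F,\overline z)=0$. Unwinding the explicit formula for $\delta^2_{\varphi,\Gamma}$ in \Cref{sec_std_cplx}, this amounts to two identities. The second is $\delta^{2,\star}(\overline z)=0$, i.e.\ that $\overline z$ is a $2$-cocycle for the $\star$-action of $\Gamma$; this is exactly \Cref{is_two_cocycle_Gamma}, which we quote verbatim. The remaining identity to verify, for all $\gamma_1,\gamma_2\in\Gamma$, is
\[
\varphi\star\overline z(\gamma_1,\gamma_2)-\overline z(\gamma_1,\gamma_2)=\gamma_1\star F(\gamma_2)-F(\gamma_1\gamma_2)+F(\gamma_1).
\]

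Rather than expanding both sides directly, I would derive this identity from the $2$-cocycle relation \eqref{z_cocyc} for $z$. The cleanest route is conceptual: $\gP(R)\cong\gU(R)\times\gM(R)$ with multiplication \eqref{mult_in_P}. Compute the product $s(w_0)\,\varphi(s(c_0(\gamma_1)))\,\varphi\gamma_1(s(c_0(\gamma_2)))$ in two ways. First, use associativity in $\gP$ together with the commutation rule \eqref{PG_commutation_rule} for $(w_0,c_0)$ and the cocycle identity for $c_0$, comparing the $\gU$-components. Alternatively, apply \eqref{z_cocyc} directly to well-chosen triples. Concretely, I would apply \eqref{z_cocyc} four times:
\begin{itemize}
\item to $(w_0,\varphi(c_0(\gamma_1)),\varphi\gamma_1(c_0(\gamma_2)))$;
\item to $(c_0(\gamma_1),\gamma_1(w_0),\gamma_1\varphi(c_0(\gamma_2)))$;
\item to $(c_0(\gamma_1),\gamma_1(c_0(\gamma_2)),\gamma_1\gamma_2(w_0))$;
\item to $(c_0(\gamma_1),\gamma_1(w_0)\gamma_1\varphi(c_0(\gamma_2)),\dots)$, as needed.
\end{itemize}
In each case we use $w_0\varphi(c_0(\gamma))=c_0(\gamma)\gamma(w_0)$ to identify the products of the first two and of the last two arguments.

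Each term in the target identity is then recognized via the $\star$-actions. For instance, $\varphi\star\overline z(\gamma_1,\gamma_2)=w_0\cdot z(\varphi c_0(\gamma_1),\varphi\gamma_1 c_0(\gamma_2))$, since $\varphi$ commutes with $z$ (because $z$ is algebraic and $\varphi$ is a ring endomorphism). Similarly, $\gamma_1\star F(\gamma_2)=c_0(\gamma_1)\cdot\gamma_1(F(\gamma_2))$. Summing the four instances of \eqref{z_cocyc} with signs should then cancel all the mixed terms and leave exactly the desired equation.

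The main obstacle is the bookkeeping: choosing the triples so that every term appears with the correct sign, and handling the $\gM$-action twisted by $w_0$ versus $c_0$. Here one must also use that $m\cdot u$ is an action only up to $z$ when $s$ is not a homomorphism. Indeed, in $\gP$ we have $s(m)s(m')=z(m,m')\,s(mm')$, so $m\cdot(m'\cdot u)=(mm')\cdot u$ still holds because $\gU$ is commutative and conjugation by $z(m,m')$ is trivial. This makes the rewrites legitimate. I would first carry out the computation in an abstract group with $\varphi,\gamma$ treated as commuting automorphisms, then specialize.
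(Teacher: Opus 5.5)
Your proposal is correct and is essentially the paper's proof. The $\Gamma^2$-component is \eqref{zbar_cocyc} from \Cref{is_two_cocycle_Gamma}. The first component follows by adding exactly your first three instances of \eqref{z_cocyc}; the paper uses the same three triples in the same order. In combining them, \eqref{PG_commutation_rule} matches $w_0\varphi(c_0(\gamma_1))=c_0(\gamma_1)\gamma_1(w_0)$ and $c_0(\gamma_2)\gamma_2(w_0)=w_0\varphi(c_0(\gamma_2))$. Note that the commutation rule alone does not do all the identifications: the cocycle relation for $c_0$ is what turns $\varphi(c_0(\gamma_1))\varphi\gamma_1(c_0(\gamma_2))$ and $c_0(\gamma_1)\gamma_1(c_0(\gamma_2))$ into the values at $\gamma_1\gamma_2$. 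Your fourth, loosely specified triple is unnecessary; with those three applications every term already cancels.
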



\begin{proof} The second entry of the equation $\delta^2_{\varphi, \Gamma}(\tilde z) = 0$ is condition \eqref{zbar_cocyc}.
The first entry of the equation $\delta^2_{\varphi, \Gamma}(\tilde z) = 0$ is the condition:
\begin{align}
\varphi \star  \overline z(\gamma_1, \gamma_2) - \overline z(\gamma_1, \gamma_2) - F(\gamma_1) - \gamma_1 \star F(\gamma_2) + F(\gamma_1\gamma_2) = 0. \label{first_entry}
\end{align}
We expand each term of \eqref{first_entry} using the definition of $F$, $\overline{z}$, and the $\star$-actions:
\begin{align*}
    \varphi \star \overline z(\gamma_1, \gamma_2) &= w_0 \cdot z\left(\varphi\big(c_0(\gamma_1)\big), \varphi\big(\gamma_1(c_0(\gamma_2))\big)\right), \\
    \overline z(\gamma_1, \gamma_2) &= z\left(c_0(\gamma_1), \gamma_1
    \big(c_0(\gamma_2)\big)\right), \\
    F(\gamma_1) &= z\left(w_0, \varphi\big(c_0(\gamma_1)\big)\right) - z\big(c_0(\gamma_1), \gamma_1(w_0)\big), \\
    \gamma_1 \star F(\gamma_2) &= c_0(\gamma_1) \cdot z\left(\gamma_1(w_0), \gamma_1\big(\varphi(c_0(\gamma_2))\big)\right) - c_0(\gamma_1) \cdot z\left(\gamma_1\big(c_0(\gamma_2)\big), \gamma_1\gamma_2(w_0)\right), \\
    F(\gamma_1\gamma_2) &= z\left(w_0, \varphi\big(c_0(\gamma_1\gamma_2)\big)\right) - z\big(c_0(\gamma_1\gamma_2), \gamma_1\gamma_2(w_0)\big).
\end{align*}
The left hand side of \eqref{first_entry} becomes
\begin{equation} \label{expansion_1}
    \begin{split}
    \mathrm{LHS} = & + w_0 \cdot z\left(\varphi\big(c_0(\gamma_1)\big), \varphi\big(\gamma_1(c_0(\gamma_2))\big)\right) - z\big(c_0(\gamma_1), \gamma_1\big(c_0(\gamma_2)\big)\big) \\
    & - z\left(w_0, \varphi(c_0(\gamma_1)\big)\right) + z\big(c_0(\gamma_1), \gamma_1(w_0)\big) \\
    & - c_0(\gamma_1) \cdot z\left(\gamma_1(w_0), \gamma_1\big(\varphi(c_0(\gamma_2))\big)\right) + c_0(\gamma_1) \cdot z\big(\gamma_1(c_0(\gamma_2)), \gamma_1\gamma_2(w_0)\big) \\
    & + z\left(w_0, \varphi\big(c_0(\gamma_1\gamma_2)\big)\right) - z\big(c_0(\gamma_1\gamma_2), \gamma_1\gamma_2(w_0)\big).
    \end{split}
\end{equation}
Taking in \eqref{z_cocyc} $m_1 = w_0$, $m_2 = \varphi(c_0(\gamma_1))$ and $m_3 = \varphi(\gamma_1(c_0(\gamma_2)))$ we obtain
\begin{equation} \label{eq:z_cocycle_applied_first} 
    \begin{split}
    0 = & \ z\left(w_0, \varphi\big(c_0(\gamma_1)\big)\right) + z\left(w_0\varphi(c_0(\gamma_1)), \varphi\big(\gamma_1(c_0(\gamma_2))\big)\right) \\ 
    & - w_0 \cdot z(\varphi(c_0(\gamma_1)), \varphi(\gamma_1(c_0(\gamma_2)))) - z\left(w_0, \varphi\big(c_0(\gamma_1)\gamma_1(c_0(\gamma_2))\big)\right).
    \end{split}
\end{equation}
Adding \eqref{expansion_1} + \eqref{eq:z_cocycle_applied_first}, then using the fact that $c_0 \in Z^1(\Gamma, \gM(R))$ to cancel the 7th term of \eqref{expansion_1} with the 4th term of \eqref{eq:z_cocycle_applied_first}, we obtain
\begin{equation} \label{expansion_2}
    \begin{split}
        \mathrm{LHS} = & - z\left(c_0(\gamma_1), \gamma_1
        \big(c_0(\gamma_2)\big)\right) + z\big(c_0(\gamma_1), \gamma_1(w_0)\big) \\
        & - c_0(\gamma_1) \cdot z\left(\gamma_1(w_0), \gamma_1\big(\varphi(c_0(\gamma_2))\big)\right) + c_0(\gamma_1) \cdot z\left(\gamma_1\big(c_0(\gamma_2)\big), \gamma_1\gamma_2(w_0)\right) \\
        & - z\big(c_0(\gamma_1\gamma_2), \gamma_1\gamma_2(w_0)\big) + z\left(w_0\varphi(c_0(\gamma_1)), \varphi\big(\gamma_1(c_0(\gamma_2))\big)\right).
    \end{split}
\end{equation}
Taking in \eqref{z_cocyc} $m_1 = c_0(\gamma_1)$, $m_2 = \gamma_1(w_0)$ and $m_3 = \gamma_1(\varphi(c_0(\gamma_2)))$, we obtain
\begin{equation} \label{eq:z_cocycle_applied_second} 
    \begin{split}
    0 = & \ z\big(c_0(\gamma_1), \gamma_1(w_0)\big) + z\left(c_0(\gamma_1)\gamma_1(w_0),\gamma_1\big(\varphi(c_0(\gamma_2))\big)\right) \\ 
    & - c_0(\gamma_1) \cdot z\left(\gamma_1(w_0), \gamma_1\big(\varphi(c_0(\gamma_2))\big)\right) - z\left(c_0(\gamma_1), \gamma_1\big(w_0\varphi(c_0(\gamma_2))\big)\right).
    \end{split}
\end{equation}
Subtracting \eqref{expansion_2} $-$ \eqref{eq:z_cocycle_applied_second}, using the commutation rule to cancel the 6th term of \eqref{expansion_2} with the 2nd term of \eqref{eq:z_cocycle_applied_second} we obtain
\begin{equation} \label{expansion_3}
    \begin{split}
        \mathrm{LHS} = & - z\left(c_0(\gamma_1), \gamma_1\big(c_0(\gamma_2)\big)\right) + c_0(\gamma_1) \cdot z\left(\gamma_1\big(c_0(\gamma_2)\big), \gamma_1\gamma_2(w_0)\right) \\
        & - z\big(c_0(\gamma_1\gamma_2), \gamma_1\gamma_2(w_0)\big) + z\left(c_0(\gamma_1), \gamma_1\big(w_0\varphi(c_0(\gamma_2))\big)\right).
    \end{split}
\end{equation}
Taking in \eqref{z_cocyc} $m_1 = c_0(\gamma_1)$, $m_2 = \gamma_1(c_0(\gamma_2))$ and $m_3 = \gamma_1(\gamma_2(w_0))$, using the commutation rule $w_0\varphi(c_0(\gamma_2)) = c_0(\gamma_2)\gamma_2(w_0)$ from \eqref{PG_commutation_rule}, we see that \eqref{expansion_3} $= 0$.
\end{proof}

We conclude with the statement of the obstruction theory for $(\varphi, \Gamma)$-modules over $R$.

\begin{lemma}\label{final_obs_lemma}
    The equations \eqref{cocyc_cond_Gamma} and \eqref{PG_additive_commutation_rule} are equivalent to $\delta^{1,\star}_{\varphi, \Gamma}(w',c') = -\tilde z$.
    Moreover, the torsor in \Cref{fiber_is_torsor_PG} is trivial if and only if $[\tilde z] = 0$ in $H^{2,\star}_{\varphi, \Gamma}(\gU(R))$.
\end{lemma}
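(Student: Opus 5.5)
The first claim is a matter of unwinding definitions, so I would begin there. Recall from \Cref{sec_std_cplx} that for the $\star$-twisted actions on the abelian group $\gU(R)$ one has
\[
\delta^{1,\star}_{\varphi,\Gamma}(w',c') = \Big(\big[\gamma \mapsto \varphi\star c'(\gamma) - c'(\gamma) - (\gamma\star w' - w')\big],\ \big[(\gamma_1,\gamma_2)\mapsto \gamma_1\star c'(\gamma_2) - c'(\gamma_1\gamma_2) + c'(\gamma_1)\big]\Big).
\]
The second component should be compared with the left-hand side of \eqref{cocyc_cond_Gamma}, and the first with the left-hand side of \eqref{PG_additive_commutation_rule}. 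Since $\gU$ is commutative, the terms of \eqref{PG_additive_commutation_rule} can be rearranged freely. Looking at the right-hand sides, \eqref{cocyc_cond_Gamma} reads $-\overline z$ and \eqref{PG_additive_commutation_rule} reads $-F$. This gives exactly $\delta^{1,\star}_{\varphi,\Gamma}(w',c') = -(F,\overline z) = -\tilde z$. The only care needed is with sign conventions in the total complex: the first component of $\delta^1$ is $(\varphi-1)d - \delta^0 c$, with $c = w'$ and $d = c'$.

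For the second claim, I would first use \Cref{lifting_sets} together with the computation just made: a pair $(w',c')$ lies in the fiber over $(w_0,c_0)$ (that is, $((w',w_0),(c',c_0)) \in Z^1_{\varphi,\Gamma}(\gP(R))$) if and only if $\delta^{1,\star}_{\varphi,\Gamma}(w',c') = -\tilde z$. I also have to check that continuity is preserved: $c'$ should be continuous precisely when $(c',c_0)$ is, and this holds because $\gP \cong \gU\times\gM$ as schemes, so the topology on $\gP(R)$ is the product topology. Granting this, the torsor of \Cref{fiber_is_torsor_PG} is nonempty, which for a torsor is the same as trivial, if and only if $-\tilde z$ lies in the image of $\delta^{1,\star}_{\varphi,\Gamma}$ on $C^1_{\varphi,\Gamma}(\gU(R)) = \gU(R)\oplus\Cont(\Gamma,\gU(R))$, i.e.\ $\tilde z \in B^{2,\star}_{\varphi,\Gamma}(\gU(R))$.

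By \Cref{is_two_cocycle_PG}, $\tilde z$ is a $2$-cocycle, and its continuity follows from continuity of $c_0$, of the $\Gamma$-action, and of the algebraic map $z$. It therefore defines a class in $H^{2,\star}_{\varphi,\Gamma}(\gU(R))$, and this class vanishes exactly when $\tilde z$ is a coboundary, which finishes the argument. I do not expect a genuine obstacle here, since the substantive cocycle computation was already done in \Cref{is_two_cocycle_PG}. The step most likely to go wrong is bookkeeping: matching the sign conventions of the total complex \eqref{std_cplx} with the orientation of \eqref{PG_additive_commutation_rule}.
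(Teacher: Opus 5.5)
Your proposal is correct and follows the paper's own argument. The first claim is the direct comparison of the two components of $\delta^{1,\star}_{\varphi,\Gamma}(w',c')$ with \eqref{PG_additive_commutation_rule} and \eqref{cocyc_cond_Gamma}, and your sign check giving $-(F,\overline z)$ is right. The second claim holds because the torsor is nonempty exactly when $-\tilde z$, a cocycle by \Cref{is_two_cocycle_PG}, is a coboundary. Your remarks on the continuity of $c'$ and of $\tilde z$ are points the paper leaves implicit.
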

\begin{proof}
    By \Cref{is_two_cocycle_PG}, we have that $\tilde z \in Z^{2,\star}_{\varphi, \Gamma}(\gU(R))$, so the class $[\tilde z] \in H^{2,\star}_{\varphi, \Gamma}(\gU(R))$ is well-defined.
    Clearly $[\tilde z] = 0$ if and only if $\tilde z$ is a boundary if and only if the equations \eqref{cocyc_cond_Gamma} and \eqref{PG_additive_commutation_rule} can be solved by some $(w',c')$ if and only if the torsor is trivial.
\end{proof}

\bigskip


\printbibliography

\bigskip

\end{document}